\title{Logarithmic adic spaces: some foundational results}
\author{Hansheng Diao, Kai-Wen Lan, Ruochuan Liu, and Xinwen Zhu}
\address{Yau Mathematical Sciences Center, Tsinghua University, Beijing 100084, China}
\email{hdiao@mail.tsinghua.edu.cn}
\address{University of Minnesota, 127 Vincent Hall, 206 Church Street SE, Minneapolis, MN 55455, USA}
\email{kwlan@math.umn.edu}
\address{Beijing International Center for Mathematical Research, Peking University, 5 Yi He Yuan Road, Beijing 100871, China}
\email{liuruochuan@math.pku.edu.cn}
\address{California Institute of Technology, 1200 East California Boulevard, Pasadena, CA 91125, USA}
\email{xzhu@caltech.edu}
\thanks{K.-W.\@\xspace Lan was partially supported by the National Science Foundation under agreement No.\@\xspace DMS-1352216, by an Alfred P.\@\xspace Sloan Research Fellowship, and by a Simons Fellowship in Mathematics.  R.\@\xspace Liu was partially supported by the National Natural Science Foundation of China under agreement Nos.\@\xspace NSFC-11571017 and NSFC-11725101, and by the Tencent Foundation.  X.\@\xspace Zhu was partially supported by the National Science Foundation under agreement Nos.\@\xspace DMS-1602092 and DMS-1902239, by an Alfred P.\@\xspace Sloan Research Fellowship, and by a Simons Fellowship in Mathematics.  Any opinions, findings, and conclusions or recommendations expressed in this writing are those of the authors, and do not necessarily reflect the views of the funding organizations.}
\newtheorem{thm}[equation]{{Theorem}}
\newtheorem{cor}[equation]{{Corollary}}
\newtheorem{lem}[equation]{{Lemma}}
\newtheorem{prop}[equation]{{Proposition}}
\newtheorem{exam}[equation]{{Example}}
\newtheorem{constr}[equation]{{Construction}}
\theoremstyle{definition}
\newtheorem{defn}[equation]{{Definition}}
\newtheorem{rk}[equation]{{Remark}}
\newtheorem{conv}[equation]{{Convention}}
\newcommand{\quash}[1]{}
\newcommand{\bD}{\mathbb{D}}
\newcommand{\bE}{\mathbb{E}}
\newcommand{\bF}{\mathbb{F}}
\newcommand{\bG}{\mathbb{G}}
\newcommand{\bJ}{\mathbb{J}}
\newcommand{\bK}{\mathbb{K}}
\newcommand{\bL}{\mathbb{L}}
\newcommand{\bM}{\mathbb{M}}
\newcommand{\bP}{\mathbb{P}}
\newcommand{\bQ}{\mathbb{Q}}
\newcommand{\bR}{\mathbb{R}}
\newcommand{\bT}{\mathbb{T}}
\newcommand{\bZ}{\mathbb{Z}}
\newcommand{\cB}{\mathcal{B}}
\newcommand{\cC}{\mathcal{C}}
\newcommand{\cE}{\mathcal{E}}
\newcommand{\cF}{\mathcal{F}}
\newcommand{\cG}{\mathcal{G}}
\newcommand{\cH}{\mathcal{H}}
\newcommand{\cI}{\mathcal{I}}
\newcommand{\cM}{\mathcal{M}}
\newcommand{\cN}{\mathcal{N}}
\newcommand{\cO}{\mathcal{O}}
\newcommand{\ho}{\widehat{\otimes}}
\newcommand{\OP}[1]{\operatorname{#1}}
\newcommand{\Em}{\hookrightarrow}                   
\newcommand{\Surj}{\twoheadrightarrow}              
\newcommand{\Mi}{\stackrel{\sim}{\to}}              
\newcommand{\Mapn}[1]{\stackrel{#1}{\to}}           
\newcommand{\can}{\Utext{can.}}                     
\newcommand{\Grpmu}{\boldsymbol{\mu}}               
\newcommand{\chr}{\OP{char}}                        
\newcommand{\coker}{\OP{coker}}                     
\newcommand{\pr}{\OP{pr}}                           
\newcommand{\rank}{\OP{rk}}                         
\newcommand{\Aut}{\OP{Aut}}
\newcommand{\Der}{\OP{Der}}                         
\newcommand{\Frac}{\OP{Frac}}                       
\newcommand{\Gal}{\OP{Gal}}                         
\newcommand{\Hom}{\OP{Hom}}
\newcommand{\Id}{\OP{Id}}                           
\newcommand{\im}{\OP{im}}                           
\newcommand{\Mor}{\OP{Mor}}
\newcommand{\Spec}{\OP{Spec}}
\newcommand{\Spa}{\OP{Spa}}
\newcommand{\Cont}{\OP{Cont}}
\newcommand{\Spf}{\OP{Spf}}
\newcommand{\GL}{\OP{GL}}
\newcommand{\M}{\mathrm{M}}                         
\newcommand{\cont}{\Utext{cont}}                    
\newcommand{\gp}{\Utext{gp}}                        
\newcommand{\inv}{\Utext{inv}}                      
\newcommand{\pro}{\Utext{pro-}}                     
\newcommand{\red}{\Utext{red}}                      
\newcommand{\tor}{\Utext{tor}}                      
\newcommand{\tr}{\Utext{tr}}                        
\newcommand{\fine}{\Utext{fine}}                    
\newcommand{\fs}{\Utext{fs}}                        
\newcommand{\Int}{\Utext{int}}                      
\newcommand{\Sat}{\Utext{sat}}                      
\newcommand{\Talg}[1]{\langle{#1}\rangle}           
\newcommand{\mono}[1]{e^{#1}}                       
\newcommand{\an}{\Utext{an}}                        
\newcommand{\Ex}{\wedge}                            
\newcommand{\Ab}{\mathrm{Ab}}                       
\newcommand{\Loc}{\mathrm{Loc}}                     
\newcommand{\Sh}{\mathrm{Sh}}                       
\newcommand{\et}{\Utext{\'et}}                      
\newcommand{\ket}{\Utext{k\'et}}                    
\newcommand{\fket}{\Utext{fk\'et}}                  
\newcommand{\Fket}{\Utext{Fk\'et}}                  
\newcommand{\Sets}{\Utext{Sets}}                    
\newcommand{\FSets}{\Utext{FSets}}                  
\newcommand{\proet}{\Utext{pro\'et}}                
\newcommand{\PFSets}{\Utext{PFSets}}                
\newcommand{\proket}{\Utext{prok\'et}}              
\newcommand{\profket}{\Utext{profk\'et}}            
\newcommand{\AC}[1]{\overline{#1}}                  
\newcommand{\sep}{\Utext{sep}}                      
\newcommand{\unip}{\Utext{\textit{u}}}              
\newcommand{\qunip}{\Utext{\textit{qu}}}            
\newcommand{\Utext}[1]{\text{\rm #1}}               
\newcommand{\Refenum}[1]{\Pth{\textrm{#1}}}
\newcommand{\Refeq}[1]{\Pth{#1}}
\newcommand{\Pth}[1]{{\rm (}#1{\rm )}}              
\newcommand{\Qtn}[1]{``#1''}                        
\newcommand{\parenthesis}[1]{\Pth{#1}}              
\newcommand{\apages}{pp.\@\xspace}                  
\newcommand{\aCh}{Ch.\@\xspace}                     
\newcommand{\aSec}{Sec.\@\xspace}                   
\newcommand{\aSecs}{Sec.\@\xspace}                  
\newcommand{\aLec}{Lec.\@\xspace}                   
\newcommand{\aLecs}{Lec.\@\xspace}                  
\newcommand{\aDef}{Def.\@\xspace}                   
\newcommand{\aLem}{Lem.\@\xspace}                   
\newcommand{\aProp}{Prop.\@\xspace}                 
\newcommand{\aThm}{Thm.\@\xspace}                   
\newcommand{\aThms}{Thm.\@\xspace}                  
\newcommand{\aCor}{Cor.\@\xspace}                   
\newcommand{\aRem}{Rem.\@\xspace}                   
\newcommand{\aEx}{Ex.\@\xspace}                     
\newcommand{\resp}{resp.\@\xspace}                  
\newcommand{\ie}{i.e.\@\xspace}                     
\newcommand{\eg}{e.g.\@\xspace}                     
\newcommand{\etc}{etc\xspace}                       
\newcommand{\Refcf}{cf.\@\xspace}                   
\numberwithin{equation}{subsection}
\begin{document}

\begin{abstract}
    We develop a theory of log adic spaces by combining the theories of adic spaces and log schemes, and study the Kummer \'etale and pro-Kummer \'etale topology for such spaces.  We also establish the primitive comparison theorem in this context, and deduce from it some related cohomological finiteness or vanishing results.
\end{abstract}

\maketitle

\tableofcontents

\section{Introduction}

There are two main goals of this paper.  Firstly, we would like to adapt many fundamental notions and features of the theory of log geometry for schemes, as in \cite{Kato:1989-lsfi, Kato:2021-lsfi-2, Kato:1989-lddt, Illusie:2002-fknle, Ogus:2018-LLG}, to the theory of adic spaces, as in \cite{Huber:1994-gfsra, Huber:1996-ERA}.  For example, we would like to introduce the notion of log adic spaces, which allow us to study the de Rham and \'etale cohomology of nonproper adic spaces by introducing the log de Rham and Kummer \'etale cohomology of proper adic spaces equipped with suitable log structures.  Secondly, we would like to adapt many foundational techniques in recent developments of $p$-adic geometry, as in \cite{Scholze:2012-ps, Kedlaya/Liu:2015-RPH, Scholze:2013-phtra, Scholze:2016-phtra-corr, Scholze/Weinstein:2020-BLG}, to the context of log geometry.  For example, we would like to introduce the pro-Kummer \'etale site, and show that log affinoid perfectoid objects form a basis for such a site, under suitable assumptions.  In particular, we would like to establish the \emph{primitive comparison theorem} and some related cohomological finiteness or vanishing results in this context.

Although a general formalism of log topoi has been introduced in \cite[\aSec 12.1]{Gabber/Ramero:2019-fart}, there are nevertheless several special features \Pth{such as the integral structure sheaves} or pathological issues \Pth{such as the lack of fiber products in general, or the necessary lack of noetherian property when working with perfectoid spaces} in the theory of adic spaces, which resulted in some complications in our adaption of many \Qtn{well-known arguments}; and we have chosen to spell out the modifications of such arguments in some detail, for the sake of clarity.  Moreover, this paper is intended to serve as the foundation for our development of a $p$-adic analogue of the Riemann--Hilbert correspondence in \cite{Diao/Lan/Liu/Zhu:lrhrv} \Pth{and forthcoming works such as \cite{Lan/Liu/Zhu:dcpdr}}.  Therefore, in addition to the above-mentioned goals, we have also included some foundational treatment of quasi-unipotent nearby cycles, following \Pth{and reformulating} Beilinson's ideas in \cite{Beilinson:1987-hgps}.

Here is an outline of this paper.

In Section \ref{sec-log-adic}, we introduce log adic spaces and study their basic properties.  In Section \ref{sec-monoid}, we review some basic terminologies of monoids.  In Section \ref{sec-log-adic-sp}, we introduce the definition and some basic notions of log adic spaces, and study some important examples.  In Section \ref{sec-chart}, we study the important notion of \emph{charts} in the context of log adic spaces, which are useful for defining the categories of coherent, fine, and fs log adic spaces, and for constructing fiber products in them.

In Section \ref{sec-log-sm-log-diff}, we study log smooth morphisms of log adic spaces, and their associated sheaves of log differentials.  In Section \ref{sec-log-sm}, we introduce the notion of log smooth and log \'etale morphisms, and show the existence of smooth toric charts for smooth fs log adic spaces.  In Sections \ref{sec-log-diff} and \ref{sec-log-diff-sh}, we develop a theory of log differentials for homomorphisms of log Huber rings and morphisms of coherent log adic spaces, and compare it with the theory in Section \ref{sec-log-sm}.

In Section \ref{sec-ket}, we study the Kummer \'etale topology of locally noetherian fs log adic spaces.  In Section \ref{sec-ket-site}, we introduce the Kummer \'etale site and study its basic properties.  In Section \ref{sec-Abhyankar}, we establish an analogue of \emph{Abhyankar's lemma} for rigid analytic varieties, and record some related general facts.  In Section \ref{sec-ket-coh-descent}, we study the structure sheaves and analytic coherent sheaves on the Kummer \'etale site, and show that their higher cohomology vanishes on affinoids.  In Section \ref{sec-ket-cov-descent}, we show that Kummer \'etale surjective morphisms satisfy effective descent in the category of finite Kummer \'etale covers, and define Kummer \'etale fundamental groups with desired properties.  In Section \ref{sec-ket-bc}, we study certain direct and inverse images of abelian sheaves on Kummer \'etale sites.  In Section \ref{sec-purity}, we establish some purity results for torsion Kummer \'etale local systems.

In Section \ref{sec-proket}, we study the pro-Kummer \'etale topology of locally noetherian fs log adic spaces.  In Section \ref{sec-proket-site}, based on the theory in Section \ref{sec-ket}, we introduce the pro-Kummer \'etale site, by following Scholze's ideas in \cite{Scholze:2013-phtra} and \cite{Scholze:2016-phtra-corr}.  In Section \ref{sec-proket-bc}, we study certain direct and inverse images of abelian sheaves on pro-Kummer \'etale sites.  In Section \ref{sec-log-aff-perf}, we introduce the log affinoid perfectoid objects, and show that they form a basis for the pro-Kummer \'etale topology, for locally noetherian fs log adic spaces over $\Spa(\bQ_p, \bZ_p)$.  In Section \ref{sec-proket-sheaves}, we introduce the completed structure sheaves and their integral and tilted variants on the pro-Kummer \'etale site, and prove various almost vanishing results for them.

In Section \ref{sec-loc-syst}, we study the Kummer \'etale cohomology of fs log adic spaces log smooth over a nonarchimedean base field $k$.  In Section \ref{sec-toric-chart}, we start with some preparations using the log affinoid perfectoid objects defined by towers over some associated toric charts.  In Section \ref{sec-thm-prim-comp}, we establish the \emph{primitive comparison theorem}, generalizing the strategy in \cite[\aSec 5]{Scholze:2013-phtra}, and deduce from it some finiteness results for the cohomology of torsion Kummer \'etale local systems.  In Section \ref{sec-lisse}, we introduce the notions of $\bZ_p$-, $\bQ_p$-, $\widehat{\bZ}_p$-, and $\widehat{\bQ}_p$-local systems, and record some finiteness results.  In Section \ref{sec-nearby}, as an application of the theory thus developed, we reformulate Beilinson's ideas in \cite{Beilinson:1987-hgps} and define the unipotent and quasi-unipotent nearby cycles in the rigid analytic setting.

In Appendix \ref{app-Kiehl}, we state a version of Tate's sheaf property and Kiehl's gluing property for the analytic and \'etale sites of adic spaces that are either locally noetherian or analytic stably adic.  This includes, in particular, a proof of Kiehl's property for coherent sheaves on \Pth{possibly nonanalytic} noetherian adic spaces which \Pth{as far as we know} is not yet available in the literature.

\subsection*{Notation and conventions}\label{sec-notation-conv}

By default, all monoids are assumed to be commutative, and the monoid operations are written additively \Pth{rather than multiplicatively}, unless otherwise specified.  For a monoid $P$, let $P^\gp$ denote its group completion.  For any commutative ring $R$ with unit and any monoid $P$, we denote by $R[P]$ the monoid algebra over $R$ associated with $P$.  The image of $a \in P$ in $R[P]$ will often be denoted by $\mono{a}$.  Then we have $\mono{a + b} = \mono{a} \cdot \mono{b}$ in $R[P]$, for all $a, b \in P$.

Group cohomology will always mean continuous group cohomology.

For each site $\cC$, the category of sheaves \Pth{\resp abelian sheaves} on $\cC$ is denoted by $\Sh(\cC)$ \Pth{\resp $\Sh_\Ab(\cC)$}, although the associated topos is denoted by $\cC^\sim$.

We shall follow \cite[\aLecs 2--7]{Scholze/Weinstein:2020-BLG} for the general definitions and results of Huber rings and pairs, adic spaces, and perfectoid spaces.  Unless otherwise specified, all Huber rings and pairs will be assumed to be complete.

We say that an adic space is \emph{locally noetherian} if it is locally isomorphic to $\Spa(R, R^+)$, where either $R$ is analytic \Pth{see \cite[\aRem 2.2.7 and \aProp 4.3.1]{Scholze/Weinstein:2020-BLG}} and \emph{strongly noetherian}---\ie, the rings
\[
    R\Talg{T_1, \ldots, T_n} = \Big\{ \sum_{i_1, \ldots, i_n \geq 0} a_{i_1, \ldots, i_n} T_1^{i_1} \cdots T_n^{i_n} \in R[[T_1, \ldots, T_n]] : a_{i_1, \ldots, i_n} \to 0 \Big\}
\]
are noetherian, for all $n \geq 0$; or $R$ is \Pth{complete, by our convention on Huber pairs, and} finitely generated over a noetherian ring of definition.  We say that an adic space is \emph{noetherian} if it is locally noetherian and \emph{qcqs} \Pth{\ie, quasi-compact and quasi-separated}.

We shall follow \cite[\aDef 1.2.1]{Huber:1996-ERA} for the definition for morphisms of locally noetherian adic spaces to be \emph{locally of finite type} \Pth{\emph{lft} for short}.  A useful fact is that a fiber product $Y \times_X Z$ of locally noetherian adic spaces exists when the first morphism $Y \to X$ is lft, in which case its base change \Pth{\ie, the second projection} $Y \times_X Z \to Z$ is also lft \Pth{see \cite[(1.1.1), \aProp 1.2.2, and \aCor 1.2.3]{Huber:1996-ERA}}.

An \emph{affinoid field} $(k, k^+)$ is a Huber pair in which $k$ is a \Pth{possibly trivial} nonarchimedean local field \Pth{\ie, a field complete with respect to a nonarchimedean multiplicative norm $| \cdot |: k \to \bR_{\geq 0}$}, and $k^+$ is an open valuation subring of $\cO_k := \{ x \in k : |x| \leq 1 \}$ \Pth{see \cite[\aDef 4.2.4]{Scholze/Weinstein:2020-BLG}}.  When $k$ is a nontrivial nonarchimedean field \Pth{\ie, a field that is complete with respect to a nontrivial nonarchimedean multiplicative norm}, we shall regard rigid analytic varieties over $k$ as adic spaces over $(k, \cO_k)$, by virtue of \cite[(1.1.11)]{Huber:1996-ERA}.

We shall follow \cite[\aSecs 1.6 and 1.7]{Huber:1996-ERA} for the definition and basic properties of unramified, smooth, and \'etale morphisms of locally noetherian adic spaces.  More generally, without the locally noetherian hypothesis, we say that a homomorphism $(R, R^+) \to (S, S^+)$ of Huber pairs is \emph{finite \'etale} if $R \to S$ is finite \'etale as a ring homomorphism, and if $S^+$ is the integral closure of $R^+$ in $S$.  We say that a morphism $f: Y \to X$ of adic spaces is \emph{finite \'etale} if, for each $x \in X$, there exists an open affinoid neighborhood $U$ of $x$ in $X$ such that $V = f^{-1}(U)$ is affinoid, and if the induced homomorphism of Huber pairs $\bigl(\cO_X(U), \cO_X^+(U)\bigr) \to \bigl(\cO_Y(V), \cO_Y^+(V)\bigr)$ is finite \'etale.  We say that a morphism $f: Y \to X$ of adic spaces is \emph{\'etale} if, for each $y \in Y$, there exists open neighborhood $V$ of $y$ in $Y$ such that the restriction of $f$ to $V$ factors as the composition of an open immersion, a finite \'etale morphism, and another open immersion.

Given any adic space $X$, we denote by $X_\et$ the category of adic spaces \'etale over $X$.  If fiber products exist in $X_\et$, then $X_\et$ acquires a natural structure of a site.  We say that $X$ is \emph{\'etale sheafy} if $X_\et$ is a site and if the \'etale structure presheaf $\cO_{X_\et}: U \mapsto \cO_U(U)$ is a sheaf.  \'Etale sheafiness is known when $X$ is either locally noetherian or a perfectoid space---see Appendix \ref{app-Kiehl} for more information.

A \emph{geometric point} of an adic space $X$ is a morphism $\eta: \xi = \Spa(l, l^+) \to X$, where $l$ is a separably closed nonarchimedean field.  For simplicity, we shall write $\xi \to X$, or even $\xi$, when the context is clear.  The image of the unique closed point $\xi_0$ of $\xi$ under $\eta: \xi \to X$ is called the \emph{support} of $\xi$.  Given any $x \in X$, we have a geometric point $\AC{x} = \Spa(\AC{\kappa}(x), \AC{\kappa}(x)^+)$ above $x$ \Pth{\ie, $x$ is the support of $\AC{x}$}, as in \cite[(2.5.2)]{Huber:1996-ERA}, where $\AC{\kappa}(x)$ is the completion of a separable closure of the residue field $\kappa(x)$ of $\cO_{X, x}$.  An \emph{\'etale neighborhood} of $\eta$ is a lifting of $\eta$ to a composition $\xi \to U \Mapn{\phi} X$ in which $\phi$ is \'etale.  For any sheaf $\cF$ on $X_\et$, the \emph{stalk} of $\cF$ at $\eta$ is $\cF_\xi := \Gamma\bigl(\xi, \eta^{-1}(\cF)\bigr) \cong \varinjlim \cF(V)$, where the direct limit runs through all \'etale neighborhoods $V$ of $\xi$.  \Pth{Recall that, by \cite[\aProp 2.5.5]{Huber:1996-ERA}, when $X$ is locally noetherian, geometric points form a conservative family for $X_\et$.}

An adic space $X = \Spa(R, R^+)$ is \emph{strictly local} if $R$ is a strictly local ring and if $X$ contains a unique closed point $x$ such that the support of the valuation $| \cdot(x) |$ is the maximal ideal of $R$.  We shall denote by $X(\xi) = \Spa(\cO_{X, \xi}, \cO_{X, \xi}^+)$ the strict localization of a geometric point $\xi \to X$ of a locally noetherian adic space $X$, as in \cite[(2.5.9) and \aLem 2.5.10]{Huber:1996-ERA}.  By the explicit description of the completion of $(\cO_{X, \xi}, \cO_{X, \xi}^+)$ as in \cite[\aProp 2.5.13]{Huber:1996-ERA}, $X(\xi)$ is a noetherian adic space, which is canonically isomorphic to $\xi$ when the support of $\xi$ is analytic.

As for almost mathematics, we shall adopt the following notation and conventions.  We shall denote by $M^a$ the almost module associated with a usual module $M$, depending on the context.  For usual modules $M$ and $N$, we shall say \Qtn{there is an \emph{almost isomorphism} $M \Mi N$} when there is an isomorphism $M^a \Mi N^a$ between the associated almost modules.  We shall write interchangeably both \Qtn{$M^a = 0$} and \Qtn{$M$ is almost zero}, with exactly the same meaning.

\subsection*{Acknowledgements}

This paper was initially based on a paper written by the first author, and we would like to thank Christian Johansson, Kiran Kedlaya, Teruhisa Koshikawa, Martin Olsson, Fucheng Tan, and Jilong Tong for helpful correspondences and conversations during the preparation of that paper.  We would also like to thank David Sherman and an anonymous referee for their careful reading and many helpful corrections, questions, and suggestions.  Moreover, we would like to thank the Beijing International Center for Mathematical Research and the California Institute of Technology for their hospitality.

\section{Log adic spaces}\label{sec-log-adic}

\subsection{Recollection on monoids}\label{sec-monoid}

In this subsection, we recollect some basics in the theory of monoids.  This is mainly to introduce the terminologies and fix the notation.  For more details, we refer the readers to \cite{Ogus:2018-LLG}.

\begin{defn}\phantomsection\label{def-monoid}
    \begin{enumerate}
        \item\label{def-monoid-fg}  A monoid $P$ is called \emph{finitely generated} if there exists a surjective homomorphism $\bZ_{\geq 0}^n \Surj P$ for some $n$.

        \item\label{def-monoid-int}  A monoid $P$ is called \emph{integral} if the natural homomorphism $P \to P^\gp$ is injective.

        \item\label{def-monoid-fine}  A monoid $P$ is called \emph{fine} if it is integral and finitely generated.

        \item\label{def-monoid-sat}  A monoid $P$ is called \emph{saturated} if it is integral and, for every $a \in P^\gp$ such that $n a \in P$ for some integer $n \geq 1$, we have $a \in P$.  A monoid that is both fine and saturated is called an \emph{fs monoid}.

        \item\label{def-monoid-sharp}  For any monoid $P$, we denote by $P^\inv$ the subgroup of invertible elements in $P$, and write $\overline{P} := P / P^\inv$.  A monoid $P$ is called \emph{sharp} if $P^\inv = \{ 0 \}$.

        \item\label{def-monoid-tor}  An sharp fs monoid is called a \emph{toric monoid}.
    \end{enumerate}
\end{defn}

\begin{rk}\label{rem-monoid-cat}
    Arbitrary direct and inverse limits exist in the category of monoids \Pth{see \cite[\aSec I.1.1]{Ogus:2018-LLG}}.  In particular, for a homomorphism of monoids $u: P \to Q$, we have $\ker(u) = u^{-1}(0)$, and $\coker(u)$ is determined by the conditions that $Q \to \coker(u)$ is surjective and that two elements $q_1, q_2 \in Q$ have the same image in $\coker(u)$ if and only if there exist $p_1, p_2 \in P$ such that $u(p_1) + q_1 = u(p_2) + q_2$.  In general, the induced map $P / \ker(u) \to \im(u)$ is surjective, but not necessarily injective.  \Pth{For a typical example, consider the homomorphism $u: \bZ_{\geq 0}^2 \to \bZ_{\geq 0}: (x_1, x_2) \mapsto x_1 + x_2$.  Then $\ker(u) = 0$ but $u$ is not injective.}  Therefore, the category of monoids is not abelian.  Nevertheless, if $P$ is a submonoid of $Q$, and if $u: P \to Q$ is the canonical inclusion, then we shall denote $\coker(u)$ by $Q / P$.  Note that $Q / P$ can be zero even when $P \neq Q$.
\end{rk}

\begin{rk}\label{rem-fg}
    It is not hard to show that a monoid $P$ is finitely generated if and only if $P^\inv$ is finitely generated \Pth{as a group} and $\overline{P} = P / P^\inv$ is finitely generated \Pth{as a monoid}.  \Pth{See \cite[\aProp I.2.1.1]{Ogus:2018-LLG}.}  A deeper fact is that a finitely generated \Pth{commutative} monoid $P$ is always \emph{finitely presented}; \ie, it is the coequalizer of some homomorphisms $\bZ_{\geq 0}^m \rightrightarrows \bZ_{\geq 0}^n$, for some $m, n$.  \Pth{See \cite[\aThm I.2.1.7]{Ogus:2018-LLG}.}  As a result, if $Q = \varinjlim_{i \in I} Q_i$ is a filtered direct limit of monoids, then any injective map $P \to Q$ lifts to an injective $P \to Q_i$, for some $i \in I$.  \Pth{The opposite assertion, that any surjective map $Q \to P$ lifts to a surjective $Q_i \to P$, for some $i \in I$, only requires the finite generation of $P$.}
\end{rk}

\begin{defn}\label{def-amalg-sum}
    Given any two homomorphisms of monoids $u_1: P \to Q_1$ and $u_2: P \to Q_2$, the \emph{amalgamated sum} $Q_1 \oplus_P Q_2$ is the coequalizer of $P \rightrightarrows Q_1 \oplus Q_2$, with the two homomorphisms given by $(u_1, 0)$ and $(0, u_2)$, respectively.
\end{defn}

\begin{lem}\label{lem-amalg-sum}
    In Definition \ref{def-amalg-sum}, suppose moreover that any of $P$, $Q_1$, or $Q_2$ is a group.  Then the natural map $Q_1 / P \to (Q_1 \oplus_P Q_2) / Q_2$ is an isomorphism.
\end{lem}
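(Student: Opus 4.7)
The plan is to prove the lemma by a universal-property argument via Yoneda, which in fact proceeds uniformly without relying on the group hypothesis on $P$, $Q_1$, or $Q_2$.

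First I would verify that the natural map is well-defined. The composition $Q_1 \to Q_1 \oplus_P Q_2 \to (Q_1 \oplus_P Q_2)/Q_2$ sends $u_1(p)$ to the class of $(u_1(p), 0)$, which equals the class of $(0, u_2(p))$ in $Q_1 \oplus_P Q_2$ by the defining relation of the amalgamated sum; this class lies in the image of the canonical inclusion $\iota_{Q_2}: Q_2 \to Q_1 \oplus_P Q_2$, hence maps to zero in the cokernel $(Q_1 \oplus_P Q_2)/Q_2$ by Remark \ref{rem-monoid-cat}. Therefore the composition factors through $Q_1/P = \coker(u_1)$, yielding the natural map in the statement.

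Next, I would construct the inverse using the universal property of the amalgamated sum. The pair consisting of the canonical projection $\pi: Q_1 \Surj Q_1/P$ and the zero map $Q_2 \to Q_1/P$ satisfies $\pi \circ u_1 = 0 = 0 \circ u_2$, so by the universal property of Definition \ref{def-amalg-sum} it defines a homomorphism $w: Q_1 \oplus_P Q_2 \to Q_1/P$. Since $w \circ \iota_{Q_2}$ is zero by construction, $w$ further factors through the cokernel $(Q_1 \oplus_P Q_2)/Q_2$, producing a map $\phi: (Q_1 \oplus_P Q_2)/Q_2 \to Q_1/P$.

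Finally, I would check that $\phi$ and the natural map are mutually inverse by a direct chase on representatives: the composite $Q_1/P \to (Q_1 \oplus_P Q_2)/Q_2 \to Q_1/P$ sends $[q_1] \mapsto [(q_1,0)] \mapsto [q_1]$, while the composite $(Q_1 \oplus_P Q_2)/Q_2 \to Q_1/P \to (Q_1 \oplus_P Q_2)/Q_2$ sends $[(q_1, q_2)] \mapsto [q_1] \mapsto [(q_1, 0)]$, which equals $[(q_1, q_2)]$ because $(q_1, q_2) = (q_1, 0) + (0, q_2)$ in $Q_1 \oplus_P Q_2$ and $(0, q_2)$ lies in $\iota_{Q_2}(Q_2)$. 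I do not foresee a serious obstacle: the argument is formal and does not require the group hypothesis, which presumably enters the authors' own proof only to streamline some intermediate step (for instance to allow rewriting $u_1(-p)$ when $P$ is a group, or to identify $Q_1/P$ directly with a group quotient when $Q_1$ is a group).
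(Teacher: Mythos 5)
Your proof is correct, and it in fact establishes more than the stated lemma: the natural map is an isomorphism with \emph{no} hypothesis on $P$, $Q_1$, or $Q_2$. Your argument is the universal-property one: for any monoid $M$, a homomorphism out of $(Q_1 \oplus_P Q_2)/Q_2$ is a pair $(f_1, f_2)$ with $f_1 \circ u_1 = f_2 \circ u_2$ and $f_2 = 0$, i.e.\ just an $f_1$ with $f_1 \circ u_1 = 0$, i.e.\ a homomorphism out of $Q_1/P = \coker(u_1)$; your maps $\psi$ and $\phi$ realize this identification, and the verification that they are mutually inverse is sound --- the key point $[(q_1, q_2)] = [(q_1, 0)]$ holds because $(q_1, q_2) = (q_1, 0) + \iota_{Q_2}(q_2)$ and the cokernel congruence described in Remark \ref{rem-monoid-cat} absorbs translates by elements of the image of $Q_2$. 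The paper takes a genuinely different route: surjectivity is declared clear, and injectivity is checked directly using the one-step description, from \cite[\aProp I.1.1.5]{Ogus:2018-LLG}, of when two elements $(q_1, q_2)$ and $(q_1', q_2')$ of $Q_1 \oplus Q_2$ become equal in $Q_1 \oplus_P Q_2$ --- namely, exactly when there exist $a, b \in P$ with $q_1 + u_1(a) = q_1' + u_1(b)$ and $q_2 + u_2(b) = q_2' + u_2(a)$. That description is precisely where the group hypothesis enters: without it the congruence defining the amalgamated sum is only \emph{generated} by such elementary relations, and one would have to chase a chain of moves (which, incidentally, still yields injectivity, consistent with your stronger conclusion). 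What the paper's route buys is an explicit handle on the pushout congruence under the group hypothesis, which is the situation in all of its applications (where $Q_2 = \cO^\times$ is a group); what yours buys is generality and independence from any structural description of $Q_1 \oplus_P Q_2$. Your closing guess about why the authors impose the hypothesis is essentially right.
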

\begin{proof}
    The surjectivity is clear.  As for the injectivity, by assumption and by \cite[\aProp I.1.1.5]{Ogus:2018-LLG}, two elements $(q_1, q_2), (q_1', q_2') \in Q_1 \oplus Q_2$ have the same image in $Q_1 \oplus_P Q_2$ if and only if there exist $a, b \in P$ such that $q_1 + u_1(a) = q_1' + u_1(b)$ and $q_2 + u_2(b) = q_2' + u_2(a)$.  Therefore, for $q_1, q_1' \in Q_1$, if they have the same image in $(Q_1 \oplus_P Q_2) / Q_2$---\ie, there exist $q_2, q_2' \in Q_2$ such that $(q_1, q_2)$ and $(q_1', q_2')$ have the same image in $Q_1 \oplus_P Q_2$---then there exist $a, b \in P$ such that $q_1 + u_1(a) = q_1' + u_1(b)$.  Thus, $q_1$ and $q_1'$ have the same image in $Q_1 / P$.
\end{proof}

\begin{defn}\label{def-int-sat}
    For any monoid $P$, let $P^\Int$ denote the image of the canonical homomorphism $P \to P^\gp$.  For any integral monoid $P$, let
    \[
        P^\Sat := \{ a \in P^\gp : \Utext{$n a \in P$, for some $n \geq 1$} \}.
    \]
    For a general monoid $P$ not necessarily integral, we write $P^\Sat$ for $(P^\Int)^\Sat$.
\end{defn}

\begin{rk}\label{rem-int-sat}
    The functor $P \mapsto P^\Int$ is the left adjoint of the inclusion from the category of integral monoids into the category of all monoids.  Similarly, $P \to P^\Sat$ is the left adjoint of the inclusion from the category of saturated monoids into the category of integral monoids.
\end{rk}

\begin{lem}\label{lem-ama-int}
    Let $P \to Q_1$ and $P \to Q_2$ be homomorphisms of monoids.  Then $(Q_1 \oplus_P Q_2)^\Int$ can be naturally identified with the image of $Q_1 \oplus_P Q_2$ in $Q^\gp_1 \oplus_{P^\gp} Q_2^\gp$.  Moreover, if $P$, $Q_1$, and $Q_2$ are integral and if any of these monoids is a group, then $Q_1 \oplus_P Q_2$ is also integral.
\end{lem}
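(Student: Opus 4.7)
The plan is to prove the two assertions in sequence, relying on the universal property of group completion for the first part and on the explicit description of elements of the amalgamated sum recalled in the proof of Lemma \ref{lem-amalg-sum} for the second part.

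For the first assertion, I would begin by observing that the group completion functor $M \mapsto M^\gp$ is left adjoint to the inclusion of abelian groups into commutative monoids, and hence preserves colimits. Applied to the pushout square defining $Q_1 \oplus_P Q_2$ \Pth{\ie, the coequalizer in Definition \ref{def-amalg-sum}}, this yields a canonical isomorphism $(Q_1 \oplus_P Q_2)^\gp \cong Q_1^\gp \oplus_{P^\gp} Q_2^\gp$, where the latter amalgamated sum can equivalently be computed in the category of abelian groups. By Definition \ref{def-int-sat}, $(Q_1 \oplus_P Q_2)^\Int$ is the image of the canonical map $Q_1 \oplus_P Q_2 \to (Q_1 \oplus_P Q_2)^\gp$, which under the above identification is precisely the image in $Q_1^\gp \oplus_{P^\gp} Q_2^\gp$, as asserted.

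For the second assertion, assuming that $P$, $Q_1$, $Q_2$ are integral and that one of them is a group, in view of the first part it suffices to show that the canonical homomorphism $Q_1 \oplus_P Q_2 \to Q_1^\gp \oplus_{P^\gp} Q_2^\gp$ is injective \Pth{so that $(Q_1 \oplus_P Q_2)^\Int \cong Q_1 \oplus_P Q_2$}. To this end, suppose $(q_1, q_2), (q_1', q_2') \in Q_1 \oplus Q_2$ have the same image in $Q_1^\gp \oplus_{P^\gp} Q_2^\gp$.  Then there exists some $p \in P^\gp$ satisfying $q_1 = q_1' + u_1(p)$ in $Q_1^\gp$ and $q_2' = q_2 + u_2(p)$ in $Q_2^\gp$; writing $p = a - b$ with $a, b \in P$, this gives $q_1 + u_1(b) = q_1' + u_1(a)$ in $Q_1^\gp$ and $q_2 + u_2(a) = q_2' + u_2(b)$ in $Q_2^\gp$. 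Since $Q_1$ and $Q_2$ are integral, these identities already hold in $Q_1$ and $Q_2$, respectively.  The characterization of equality in $Q_1 \oplus_P Q_2$ recalled in the proof of Lemma \ref{lem-amalg-sum}---valid precisely because one of $P$, $Q_1$, $Q_2$ is a group---then implies \Pth{taking the roles of $a, b$ there to be $b, a$ here} that $(q_1, q_2)$ and $(q_1', q_2')$ have the same image in $Q_1 \oplus_P Q_2$, as desired.

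The main obstacle lies in invoking the parametric description of equality in the amalgamated sum: without the assumption that one of $P$, $Q_1$, $Q_2$ is a group, the defining congruence on $Q_1 \oplus Q_2$ would in general be generated by arbitrary finite zig-zags, making direct comparison with the group pushout much more delicate.  The integrality hypothesis plays a separate but crucial role, allowing us to lift the identities produced in $Q_1^\gp$ and $Q_2^\gp$ back to $Q_1$ and $Q_2$.
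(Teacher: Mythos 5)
Your proof is correct. Note that the paper itself gives no argument here: it simply cites \cite[\aProp I.1.3.4]{Ogus:2018-LLG}, so your write-up supplies the standard proof that the reference contains. Both halves of your argument are sound: the identification $(Q_1 \oplus_P Q_2)^\gp \cong Q_1^\gp \oplus_{P^\gp} Q_2^\gp$ does follow from group completion being a left adjoint \Pth{together with the observation, which you rightly make explicit, that the pushout of groups in monoids agrees with the one in abelian groups}, and your reduction of integrality to injectivity into this group pushout, followed by lifting the relations to $Q_1$ and $Q_2$ via their integrality and invoking the explicit description of the congruence from the proof of Lemma \ref{lem-amalg-sum} \Pth{which is exactly where the hypothesis that one of $P$, $Q_1$, $Q_2$ is a group enters}, is the correct and standard route.
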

\begin{proof}
    See \cite[\aProp I.1.3.4]{Ogus:2018-LLG}.
\end{proof}

\begin{lem}\label{lem-int-sat-quot}
    The quotient of an integral \Pth{\resp a saturated} monoid by a submonoid is also integral \Pth{\resp saturated}.  In particular, for any fs monoid $P$, the quotient $\overline{P} = P / P^\inv$ is a toric monoid.
\end{lem}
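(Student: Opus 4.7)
The plan is to exploit the amalgamated-sum presentation $Q/P = Q \oplus_P 0$ from Definition~\ref{def-amalg-sum}, where $0$ denotes the trivial monoid (which is a group). For the integral assertion, Lemma~\ref{lem-ama-int} applies directly: the submonoid $P$ is integral whenever $Q$ is, so $Q/P$ is integral, and is identified with the image of $Q$ in $Q^{\gp} \oplus_{P^{\gp}} 0 = Q^{\gp}/P^{\gp}$ (noting that $P^{\gp} \hookrightarrow Q^{\gp}$ since $Q$ is integral).

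For the saturated assertion, I would interpolate through the submonoid $M := Q + P^{\gp} \subseteq Q^{\gp}$, so that, by the integral case, $Q/P$ is identified with $M/P^{\gp}$ inside $Q^{\gp}/P^{\gp}$. The heart of the argument is then to verify that $M$ itself is saturated in $Q^{\gp}$: given $x \in Q^{\gp}$ with $n x = q + p_1 - p_2$ for some $q \in Q$, $p_1, p_2 \in P$, and $n \geq 1$, the identity
\[
    n(x + p_2) = q + p_1 + (n-1)\, p_2 \in Q,
\]
together with saturation of $Q$, forces $x + p_2 \in Q$, so $x \in M$. Saturation then descends to the quotient: if $\bar{x} \in Q^{\gp}/P^{\gp}$ satisfies $n\bar{x} \in M/P^{\gp}$, then any lift $x \in Q^{\gp}$ of $\bar{x}$ satisfies $nx \in M + P^{\gp} = M$, hence $x \in M$ by saturation of $M$, and so $\bar{x} \in M/P^{\gp} = Q/P$.

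The \emph{in particular} statement then reduces to showing that $\overline{P} = P/P^*$ is fine, saturated, and sharp for any fs monoid $P$. Finite generation is inherited from $P$ via images of generators, and integrality and saturation come from the first half of the lemma applied to $P^* \subseteq P$. Sharpness follows from the observation that, in any integral monoid $P$, the relation $a + b \in P^*$ with $a, b \in P$ forces $a \in P^*$: letting $c \in P$ be the inverse of $a + b$, one has $a + (b + c) = 0$ in $P$, so $b + c$ is an inverse of $a$ in $P$. Thus any invertible class in $\overline{P}$ lifts to an element of $P^*$ and is therefore trivial.

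The only nontrivial ingredient is the saturation check for $M = Q + P^{\gp}$; everything else is a direct unwinding of definitions inside $Q^{\gp}$, and I anticipate no serious obstacles.
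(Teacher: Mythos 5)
Your proof is correct and takes essentially the same route as the paper's: the decisive computation for saturation --- that $n(x+p_2) = q + p_1 + (n-1)\,p_2$ lies in the ambient saturated monoid, which is therefore forced to contain $x+p_2$ --- is exactly the paper's step $n(a+q_2) = b + q_1 + (n-1)q_2$. The detour through $M = Q + P^\gp$, the use of Lemma~\ref{lem-ama-int} in place of a direct citation of \cite[\aProp I.1.3.3]{Ogus:2018-LLG} for integrality, and the explicit sharpness check for the ``in particular'' clause are only organizational differences from the paper's argument.
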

\begin{proof}
    Let $Q$ be any submonoid of an integral monoid $P$.  By \cite[\aProp I.1.3.3]{Ogus:2018-LLG}, $P / Q$ is also integral.  Suppose moreover that $P$ is saturated.  For any $\overline{a} \in (P / Q)^\Sat$, by definition, there exists some $n \geq 1$ such that $n \overline{a} \in P / Q$.  That is, there exist $b \in P$ and $q_1, q_2 \in Q$ such that $n a = b + (q_1 - q_2)$ in $P^\gp$.  Then $n (a + q_2) = b + q_1 + (n - 1) q_2$, and hence $a + q_2 \in P$ and $\overline{a} \in P / Q$.  Thus, $P / Q = (P / Q)^\Sat$ is also saturated.
\end{proof}

\begin{lem}\label{lem-monoid-split}
    Let $P$ be an integral monoid, and $u: P \to Q$ a surjective homomorphism onto a toric monoid $Q$.  Suppose that $\ker(u^\gp) \subset P$.  Then $u$ admits a \Pth{noncanonical} section.  In particular, for any fs monoid $P$, the canonical homomorphism $P \to \overline{P}$ admits a \Pth{noncanonical} section.
\end{lem}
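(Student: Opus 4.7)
The plan is to reduce to a splitting problem for abelian groups by passing to group completions, and then check that the resulting section actually lands in $P$.

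First, I would observe that since $Q$ is toric, $Q^\gp$ is a finitely generated free abelian group. Indeed, $Q$ is finitely generated, so $Q^\gp$ is too; and $Q^\gp$ is torsion-free, because if $a \in Q^\gp$ satisfies $na = 0$ for some $n \geq 1$, then $na = 0 \in Q$ forces $a \in Q$ by saturation, and similarly $-a \in Q$, so $a \in Q^*$, whence $a = 0$ by sharpness. Applying $(-)^\gp$ to the surjection $u$, we obtain a surjection $u^\gp : P^\gp \Surj Q^\gp$. Since $Q^\gp$ is free, the short exact sequence $0 \to \ker(u^\gp) \to P^\gp \to Q^\gp \to 0$ splits, yielding a group-theoretic section $s^\gp : Q^\gp \to P^\gp$ of $u^\gp$.

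Next, I would show that $s^\gp(Q) \subset P$, so that $s := s^\gp|_Q$ is the desired section. The key observation is that $\ker(u^\gp) \subset P^*$: by hypothesis $\ker(u^\gp) \subset P$, and any element $k$ of this kernel has $-k$ also in the kernel and hence in $P$, so $k$ is invertible in $P$. Now, given $q \in Q$, pick any lift $p \in P$ with $u(p) = q$; then $p - s^\gp(q) \in \ker(u^\gp) \subset P^*$, so in $P^\gp$ we can write
\[
    s^\gp(q) = p + \bigl(s^\gp(q) - p\bigr),
\]
a sum of an element of $P$ and an element of $P^* \subset P$. Hence $s^\gp(q) \in P$, and $s := s^\gp|_Q : Q \to P$ is a monoid homomorphism with $u \circ s = \Id_Q$.

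Finally, for the \emph{in particular} statement, I would apply the first assertion to the canonical surjection $P \to \overline{P} = P / P^*$, where $P$ is fs. The quotient $\overline{P}$ is toric by Lemma \ref{lem-int-sat-quot}, and the kernel of $P^\gp \to \overline{P}^\gp = P^\gp / P^*$ is $P^*$, which is contained in $P$. The hypotheses are thus satisfied, and a section exists.

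The only mildly subtle point is the identification $\ker(u^\gp) \subset P^*$, which is what allows the lift $s^\gp(q)$ to be repaired into an element of $P$; once this is noticed, the rest is routine.
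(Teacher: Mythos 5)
Your proof is correct and follows essentially the same route as the paper's: establish that $Q^\gp$ is finitely generated and torsion-free (hence free) using saturation and sharpness, split $u^\gp$ at the level of group completions, and then use $\ker(u^\gp) \subset P$ to correct the lift of each $q \in Q$ into $P$. The only cosmetic difference is that you pass through the observation $\ker(u^\gp) \subset P^*$, whereas the paper notes directly that $s(q) - \widetilde{q}$ lies in the subgroup $\ker(u^\gp) \subset P$ (which already contains both an element and its negative), so $s(q) \in \widetilde{q} + P \subset P$.
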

\begin{proof}
    For $a \in Q^\gp$, if $n a = 0$ for some $n \geq 1$, then $a = 0$, as $Q$ is saturated and sharp.  Hence, $Q^\gp$ is torsion-free, $Q^\gp \cong \bZ^r$ for some $r$, and the projection $u^\gp: P^\gp \to Q^\gp$ admits a section $s: Q^\gp \to P^\gp$.  It remains to show that $s(Q) \subset P$.  For each $q \in Q$, choose any $\widetilde{q} \in P$ lifting $q$.  Then $s(q) - \widetilde{q} \in P^\gp$ lies in $\ker(u^\gp) \subset P$, and therefore $s(q) \in \widetilde{q} + P \subset P$, as desired.
\end{proof}

\begin{constr}\label{constr-monoid-loc}
    Let $P$ be a monoid, and $S$ a subset of $P$.  There exists a monoid $S^{-1} P$ together with a homomorphism $\lambda: P \to S^{-1} P$ sending elements of $S$ to invertible elements of $S^{-1} P$ satisfying the universal property that any homomorphism of monoids $u: P \to Q$ with the property that $u(S) \subset Q^\inv$ uniquely factors through $S^{-1} P$.  The monoid $S^{-1} P$ is called the \emph{localization of $P$ with respect to $S$}.  Concretely, let $T$ denote the submonoid of $P$ generated by $S$.  Then, as a set, $S^{-1} P$ consists of equivalence classes of pairs $(a, t) \in P \times T$, where two such pairs $(a, t)$ and $(a', t')$ are equivalent if there exists some $t'' \in T$ such that $a + t' + t'' = a' + t + t''$.  The monoid structure of this set is given by $(a, t) + (a', t') = (a + a', t + t')$.  The homomorphism $\lambda$ is given by $\lambda(a) = (a, 0)$.
\end{constr}

\begin{rk}\label{rem-int-sat-loc}
    The localization of an integral \Pth{\resp saturated} monoid is still integral \Pth{\resp saturated}.
\end{rk}

\begin{rk}\label{rem-ama-loc}
    Let $P \to Q_1$ and $P \to Q_2$ be homomorphisms of monoids, and let $S$ be a subset of $P$.  Let $S_1$, $S_2$, and $S_3$ denote the images of $S$ in $Q_1$, $Q_2$, and $Q_1 \oplus_P Q_2$, respectively.  Then the natural homomorphism
    \[
        Q_1 \oplus_P Q_2 \to (S_1^{-1} Q_1) \oplus_{S^{-1} P} (S_1^{-1} Q_2)
    \]
    factors through an isomorphism
    \[
        S_3^{-1}(Q_1 \oplus_P Q_2) \Mi (S_1^{-1} Q_1) \oplus_{S^{-1} P} (S_2^{-1} Q_2),
    \]
    by the universal properties of the objects.
\end{rk}

\begin{defn}\label{def-monoid-hom}
    Let $u: P \to Q$ be a homomorphism of monoids.
    \begin{enumerate}
        \item We say it is \emph{local} if $P^\inv = u^{-1}(Q^\inv)$.

        \item We say it is \emph{sharp} if the induced homomorphism $P^\inv \to Q^\inv$ is an isomorphism.

        \item We say it is \emph{strict} if the induced homomorphism $\overline{P} \to \overline{Q}$ is an isomorphism.

        \item We say it is \emph{exact} if the induced homomorphism $P \to P^\gp \times_{Q^\gp} Q$ is an isomorphism.  \Pth{When $P$ and $Q$ are integral and canonically identified as submonoids of $P^\gp$ and $Q^\gp$, respectively, we simply need $P = (u^\gp)^{-1}(Q)$.}
    \end{enumerate}
\end{defn}

\subsection{Log adic spaces}\label{sec-log-adic-sp}

In this subsection, we give the definition of log adic spaces, introduce some basic notions, and study some important examples.

\begin{conv}\label{conv-et-sheafy}
    From now on, we shall only work with adic spaces that are \emph{\'etale sheafy}.  \Pth{They include locally noetherian adic spaces and perfectoid spaces.}
\end{conv}

\begin{defn}\label{def-log-str}
    Let $X$ be an \Pth{\'etale sheafy} adic space.
    \begin{enumerate}
        \item A \emph{pre-log structure} on $X$ is a pair $(\cM_X, \alpha)$, where $\cM_X$ is a sheaf of monoids on $X_\et$ and $\alpha: \cM_X \to \cO_{X_\et}$ is a morphism of sheaves of monoids, called the structure morphism.  \Pth{Here $\cO_{X_\et}$ is equipped with the natural multiplicative monoid structure.}

        \item Let $(\cM, \alpha)$ and $(\cN, \beta)$ be pre-log structures on $X$.  A morphism from $(\cM, \alpha)$ to $(\cN, \beta)$ is a morphism $\cM \to \cN$ of sheaves of monoids that is compatible with the structure morphisms $\alpha$ and $\beta$.

        \item A pre-log structure $(\cM_X, \alpha)$ on $X$ is called a \emph{log structure} if the morphism $\alpha^{-1}(\cO_{X_\et}^\times) \to \cO_{X_\et}^\times$ induced by $\alpha$ is an isomorphism.  In this case, we call the triple $(X, \cM_X, \alpha)$ a \emph{log adic space}.  We shall simply write $(X, \cM_X)$ or $X$ when the context is clear.

        \item We say that a sheaf of monoids $\cM$ on $X_\et$ is \emph{integral} \Pth{\resp \emph{saturated}} if it is a sheaf of integral \Pth{\resp saturated} monoids.  A pre-log structure $(\cM_X, \alpha)$ on $X$ is called \emph{integral} \Pth{\resp \emph{saturated}} if $\cM_X$ is.  We say that a log adic space $(X, \cM_X, \alpha)$ is \emph{integral} \Pth{\resp \emph{saturated}} if $\cM_X$ is.

        \item For a log structure $(\cM_X, \alpha)$ on $X$, we set $\overline{\cM}_X := \cM_X / \alpha^{-1}(\cO_{X_\et}^\times)$, called the \emph{characteristic} of the log structure.

        \item For a pre-log structure $(\cM_X, \alpha)$ on $X$, we have the \emph{associated log structure} $({^a}\cM_X, {^a}\alpha)$, where ${^a}\cM_X$ is the pushout of $\cO_{X_\et}^\times \leftarrow \alpha^{-1}(\cO_{X_\et}^\times) \to \cM_X$ in the category of sheaves of monoids on $X_\et$, and where ${^a}\alpha: {^a}\cM_X \to \cO_{X_\et}$ is canonically induced by the natural morphism $\cO_{X_\et}^\times \to \cO_{X_\et}$ and the structure morphism $\alpha: \cM_X \to \cO_{X_\et}$ \Pth{\Refcf{} \cite[\aSec 12.1.6]{Gabber/Ramero:2019-fart}}.  Again, we shall simply write ${^a}\cM_X$ when the context is clear.

        \item A morphism $f: (Y, \cM_Y, \alpha_Y) \to (X, \cM_X, \alpha_X)$ of log adic spaces is a morphism $f: Y \to X$ of adic spaces together with a morphism of sheaves of monoids $f^\sharp: f^{-1}(\cM_X) \to \cM_Y$ compatible with $f^\sharp: f^{-1}(\cO_{X_\et}) \to \cO_{Y_\et}$, $f^{-1}(\alpha_X): f^{-1}(\cM_X) \to f^{-1}(\cO_{X_\et})$, and $\alpha_Y: \cM_Y \to \cO_{Y_\et}$.  In this case, we have the log structure $f^*(\cM_X)$ on $Y$ associated with the pre-log structure $f^{-1}(\cM_X) \to f^{-1}(\cO_{X_\et}) \to \cO_{Y_\et}$.  The morphism $f$ is called \emph{strict} if the induced morphism $f^*(\cM_X) \to \cM_Y$ is an isomorphism.

        \item A morphism $f: (Y, \cM_Y) \to (X, \cM_X)$ is called \emph{exact} if, at each geometric $\AC{y}$ of $Y$, the induced homomorphism $\bigl(f^*(\cM_X)\bigr)_{\AC{y}} \to \cM_{Y, {\AC{y}}}$ is exact.

        \item A log adic space is called \emph{noetherian} \Pth{\resp \emph{locally noetherian}, \resp \emph{quasi-compact}, \resp \emph{quasi-separated}, \resp \emph{affinoid}, \resp \emph{perfectoid}, \resp \emph{stably uniform}, \resp \emph{analytic}} if its underlying adic space is.

        \item A morphism of log adic spaces is called \emph{lft} \Pth{\resp \emph{quasi-compact}, \resp \emph{quasi-separated}, \resp \emph{separated}, \resp \emph{proper}, \resp \emph{finite}, \resp \emph{surjective}} if the underlying morphism of adic spaces is.  As usual, a separated \Pth{\resp proper} log adic space over $\Spa(k, k^+)$ is a locally noetherian log adic space with a separated \Pth{\resp proper} structure morphism to $\Spa(k, k^+)$.
    \end{enumerate}
\end{defn}

\begin{rk}\label{rem-log-str-assoc}
    As explained in \cite[\aSec 12.1.6]{Gabber/Ramero:2019-fart}, the functor of taking associated log structures from the category of pre-log structures to the category of log structures on $X$ is the left adjoint of the natural inclusion functor from the category of log structures to the category of pre-log structures on $X$.
\end{rk}

\begin{lem}\label{lem-int-sat-stalk}
    A sheaf of monoids $\cM$ on an adic space $X_\et$ is integral \Pth{\resp saturated} if and only if $\cM_{\AC{x}}$ is integral \Pth{\resp saturated} at each geometric point $\AC{x}$ of $X$.  In particular, a log adic space $(X, \cM_X, \alpha)$ is integral \Pth{\resp saturated} if and only if $\cM_{X, \AC{x}}$ is integral \Pth{\resp saturated} at each geometric point $\AC{x}$ of $X$.
\end{lem}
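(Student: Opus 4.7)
The plan is to reduce the global statement to standard facts about filtered colimits of monoids, using the conservativity of geometric points for $X_\et$ (which holds for locally noetherian $X$ by \cite[\aProp 2.5.5]{Huber:1996-ERA}, and likewise for perfectoid $X$, so in particular under the conventions of this paper). The two key observations are: (i) integrality and saturation are preserved by filtered colimits in the category of monoids, and (ii) the sheaf condition combined with conservativity of stalks allows global verifications of equalities and of liftings along $\cM \to \cM^\gp$.

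For the forward direction, I would argue that each stalk $\cM_{\AC{x}} = \varinjlim_U \cM(U)$ (over étale neighborhoods of $\AC{x}$) is integral (resp.\ saturated) because any witness of a failure of cancellation---or, for saturation, any element $a \in \cM_{\AC{x}}^\gp$ with $na \in \cM_{\AC{x}}$---descends to some étale neighborhood $V$, where the corresponding condition is satisfied by the assumption on $\cM(V)$, and then propagates back to the stalk. No new ideas are needed beyond the definition of stalks as filtered colimits.

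For the reverse direction, suppose every $\cM_{\AC{x}}$ is integral. Given $a, b, c \in \cM(U)$ with $a + b = a + c$, integrality of each stalk yields $b_{\AC{x}} = c_{\AC{x}}$ for every geometric point $\AC{x}$ of $U$; hence there is an étale covering on which $b$ and $c$ agree, and the sheaf property forces $b = c$ in $\cM(U)$. Thus $\cM$ is a sheaf of integral monoids. For saturation, integrality being already established, consider a section $a$ of $\cM^\gp$ over an étale $U \to X$ with $na$ lifting to $\cM(U)$ for some $n \geq 1$. Stalkwise saturation gives, at each geometric point $\AC{x}$, an element $a_{\AC{x}} \in \cM_{\AC{x}}$ mapping to the germ of $a$; after refining to an étale cover $\{U_i \to U\}$, these provide local sections $a_i \in \cM(U_i)$ whose images in $\cM^\gp$ coincide with $a|_{U_i}$. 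By the already-proved sheaf-level integrality (equivalently, injectivity of $\cM \to \cM^\gp$), the $a_i$'s agree on overlaps, and the sheaf property glues them to a section of $\cM(U)$ lifting $a$. The final assertion about log adic spaces is then immediate from the definition.

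The main technical point is not any individual step, but arranging the two conditions (integrality, saturation) so that each is verified stalkwise using the \emph{same} conservativity input; the subtlety is that saturation as a sheaf-theoretic property implicitly uses injectivity of $\cM \to \cM^\gp$ to go from local liftings to global ones, which is why integrality has to be established first. There is no serious obstacle beyond carefully invoking the conservativity of geometric points in the generality of étale sheafy adic spaces of interest.
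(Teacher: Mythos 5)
Your argument is correct. The paper itself gives no argument here: it simply cites \cite[\aLem 12.1.18(ii)]{Gabber/Ramero:2019-fart}, and what you have written is essentially the standard proof of that cited fact, so there is no substantive divergence in method. The two points that actually need care are exactly the ones you flag: the use of conservativity of geometric points for $X_\et$ (available in the paper's setting, e.g.\ by \cite[\aProp 2.5.5]{Huber:1996-ERA} in the locally noetherian case), and the need to establish integrality of the sections first so that $\cM(V) \to (\cM(V))^\gp$ is injective, which is what lets the locally defined lifts of $a$ agree on overlaps and glue; with that ordering, the saturation step goes through as you describe.
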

\begin{proof}
    This follows from the proof of \cite[\aLem 12.1.18(ii)]{Gabber/Ramero:2019-fart}.
\end{proof}

\begin{rk}\label{rem-stalk-sharp}
    For a log adic space $(X, \cM_X, \alpha)$ and a geometric point $\AC{x}$ of $X$, it follows that $\cM_{X, \AC{x}}^\inv = \alpha^{-1}(\cO_{X_\et, \AC{x}}^\times) \Mi \cO_{X_\et, \AC{x}}^\times$ \Pth{\ie, the homomorphism $\cM_{X, \AC{x}} \to \cO_{X, \AC{x}}$ is local and sharp}.  Hence, $\overline{\cM}_{X, \AC{x}} \cong \cM_{X, \AC{x}} / \alpha^{-1}(\cO_{X_\et, \AC{x}}^\times)$ is a sharp monoid.  If $\cM_{X, \AC{x}}$ is integral \Pth{which is the case when $(X, \cM_X, \alpha)$ is integral, by Lemma \ref{lem-int-sat-stalk}}, then $\overline{\cM}_{X,\AC{x}}^\gp \cong \cM_{X, \AC{x}}^\gp / \alpha^{-1}(\cO_{X_\et, \AC{x}}^\times)$, and $\cM_{X, \AC{x}} \to \overline{\cM}_{X, \AC{x}}$ is exact.
\end{rk}

\begin{rk}\label{rem-stalk-strict}
    Let $f: (Y, \cM_Y, \alpha_Y) \to (X, \cM_X, \alpha_X)$ be a morphism of log adic spaces.  At each geometric point $\AC{y}$ of $Y$, since $f^\sharp_{\AC{y}}: \cO_{X_\et, f(\AC{y})} \to \cO_{Y_\et, \AC{y}}$ is local, and since $f^\sharp_{\AC{y}}: \cM_{X, f(\AC{y})} \cong f^{-1}(\cM_{X})_{\AC{y}} \to \cM_{Y, \AC{y}}$ is by definition compatible with $f^\sharp_{\AC{y}}: \cO_{X_\et, f(\AC{y})} \cong f^{-1}(\cO_{X_\et})_{\AC{y}} \to \cO_{Y_\et, \AC{y}}$, we see that $f^\sharp_{\AC{y}}: \cM_{X, f(\AC{y})} \to \cM_{Y, \AC{y}}$ is local as in Definition \ref{def-monoid-hom}.  By Lemma \ref{lem-amalg-sum}, $\bigl(\overline{f^*(\cM_X)}\bigr)_{\AC{y}} \cong \overline{\cM}_{X, f(\AC{y})}$.  Therefore, $f$ is strict if and only if $\overline{\cM}_{X, f(\AC{y})} \Mi \overline{\cM}_{Y, \AC{y}}$, \ie, $f^\sharp_{\AC{y}}: \cM_{X, f(\AC{y})} \to \cM_{Y, \AC{y}}$ is strict, at each geometric point $\AC{y}$ of $Y$.
\end{rk}

Here are some basic examples of log adic spaces.

\begin{exam}\label{ex-log-adic-sp-triv}
    Every \Pth{\'etale sheafy} adic space $X$ has a natural log structure given by $\alpha: \cM_X = \cO_{X_\et}^\times \Mapn{\can} \cO_{X_\et}$.  We call it the \emph{trivial log structure} on $X$.
\end{exam}

\begin{exam}\label{ex-log-adic-sp-pt}
    A \emph{log point} is a log adic space whose underlying adic space is $\Spa(l, l^+)$, where $l$ is a nonarchimedean local field.  We remark that the underlying topological space may not be a single point.
\end{exam}

\begin{exam}\label{ex-log-adic-sp-pt-sep-cl}
    In Example \ref{ex-log-adic-sp-pt}, if $l$ is \emph{separably closed}, then the \'etale topos of $\Spa(l, l^+)$ is equivalent to the category of sets \Pth{see \cite[\aCor 1.7.3 and \aProp 2.3.10, and the paragraph after (2.5.2)]{Huber:1996-ERA}}.  In this case, a log structure of $\Spa(l, l^+)$ is given by a homomorphism of monoids $\alpha: M \to l$ inducing an isomorphism $\alpha^{-1}(l^\times) \Mi l^\times$.  For simplicity, by abuse of notation, we shall sometimes introduce a log point by writing $s = (\Spa(l, l^+), M)$.  Also, we shall simply denote by $s$ the underlying adic space $\Spa(l, l^+)$, when the context is clear.
\end{exam}

\begin{exam}\label{ex-log-adic-sp-tilt}
    Let $(X, \cM_X, \alpha_X)$ be a \emph{perfectoid log adic space}; \ie, a log adic space whose underlying adic space $X$ is a perfectoid space \Pth{see Definition \ref{def-log-str}}.  Let $\cM_{X^\flat} := \varprojlim \cM_X$, where the transition maps are given by sending a section to its $p$-th multiple.  Let $X^\flat$ be the tilt of $X$.  Then there is a natural morphism of sheaves of monoids $\alpha_{X^\flat}: \cM_{X^\flat} \to \cO_{X^\flat_\et}$ making $(X^\flat, \cM_{X^\flat}, \alpha_{X^\flat})$ a perfectoid log adic space, called the \emph{tilt} of $(X, \cM_X, \alpha_X)$.  Note that the isomorphism $\alpha_X^{-1}(\cO_{X_\et}^\times) \Mi \cO_{X_\et}^\times$ induces an isomorphism $\alpha_{X^\flat}^{-1}(\cO_{X^\flat_\et}^\times) \to \cO_{X^\flat_\et}^\times$ by taking inverse limit.
\end{exam}

We would like to study log adic spaces of the form $\Spa(R[P], R^+[P])$, whenever $(R, R^+)$ is a Huber pair.

\begin{lem}\label{lem-monoid-alg-Huber}
    Suppose that $(R, R^+)$ is a Huber pair with a ring of definition $R_0 \subset R$, which is adic with respect to a finitely generated ideal $I$.  Let us equip $R[P]$ with the topology determined by the ring of definition $R_0[P]$ such that $\{ I^m R_0[P] \}_{m \geq 0}$ forms a basis of open neighborhoods of $0$.  Then $(R[P], R^+[P])$ is also a Huber pair.
\end{lem}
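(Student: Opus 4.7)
The plan is to verify the three axioms of a Huber pair: that $R[P]$ is a Huber ring with $R_0[P]$ as a ring of definition, that $R^+[P]$ is an open subring, and that $R^+[P]$ is integrally closed in $R[P]$ and contained in its power-bounded subring.

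First I would address the Huber ring structure by taking $IR_0[P]$ as the candidate ideal of definition of $R_0[P]$: it is finitely generated by the generators of $I$, and since $(IR_0[P])^m = I^m R_0[P]$, the prescribed neighborhood basis of $0$ is precisely the $IR_0[P]$-adic one, which makes $R_0[P]$ an adic ring. The nontrivial step is to verify that this extends to a topological ring structure on the larger ring $R[P]$. Given $x = \sum_{a \in P} r_a \mono{a} \in R[P]$ with finite support and any $m \geq 0$, continuity of multiplication in the topological ring $R$ supplies an $N$ with $r_a I^N \subset I^m R_0$ for every $a$ in the (finite) support, whence $x \cdot I^N R_0[P] \subset I^m R_0[P]$. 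Combined with the obvious $(I^m R_0[P])^2 \subset I^{2m} R_0[P]$, this shows $R[P]$ is a topological ring with $R_0[P]$ an open ring of definition.

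Next, to verify the Huber pair conditions, I would first replace $R_0$ by $R_0 \cap R^+$ (still a ring of definition) so that $R_0 \subset R^+$; then $R_0[P] \subset R^+[P]$, making $R^+[P]$ open. To show that any $x = \sum_i r_i \mono{a_i} \in R^+[P]$ is power-bounded, I would invoke the standard fact that $R_1 := R_0[r_1, \ldots, r_k]$ is bounded in $R$ (since each $r_i \in R^+ \subset R^\circ$ and finitely many power-bounded elements generate a bounded subring), hence is itself a ring of definition of $R$. Consequently, for every $m$ there exists $N$ with $I^N R_1 \subset I^m R_0$; since all $x^n$ lie in $R_1[P]$, I would then conclude $(I^N R_0[P]) \cdot x^n \subset I^N R_1[P] \subset I^m R_0[P]$, which exhibits $\{x^n\}_{n\ge 0}$ as a bounded subset. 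For the integral closure of $R^+[P]$ in $R[P]$, one reduces to the integral closure of $R^+$ in $R$ by a standard argument exploiting the monoid grading on $R[P]$ (passing to $P^\gp$ where convenient).

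The main obstacle is the continuity verification in the first step, where one must control multiplication by an arbitrary element of $R[P]$ using only the Huber ring structure of $R$; the rest of the proof rests on the two standard observations invoked above, namely continuity of multiplication in $R$ and the boundedness of subrings generated by finitely many power-bounded elements.
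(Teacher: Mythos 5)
Your proposal is sound and, on the routine verifications, is actually more complete than the paper's own proof. The paper disposes of openness in one line ($R^+$ is open in $R$, so $I^n R_0 \subset R^+$ for some $n$ and hence $I^n R_0[P] \subset R^+[P]$), says nothing about why $R[P]$ is a topological ring for the prescribed topology or why $R^+[P]$ lands in $R[P]^\circ$, and declares that "we only need to check that $R^+[P]$ is integrally closed in $R[P]$." Your continuity check (multiplication by a fixed $x \in R[P]$ is controlled because its support is finite and multiplication by each coefficient is continuous in $R$) and your power-boundedness argument via the bounded subring $R_0[r_1, \ldots, r_k]$ are correct and are exactly the standard Huber-ring facts one would cite; the detour through $R_0 \cap R^+$ is unnecessary for openness but harmless.

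The one place where you are thinner than the paper is the step the paper treats as the only real content, namely the integral closedness of $R^+[P]$ in $R[P]$. The paper first reduces to finitely generated $P$ by writing $P$ as a filtered union of finitely generated submonoids (an integral dependence relation involves only finitely many elements of $P$), and then cites \cite[\aThm 4.42]{Bruns/Gubeladze:2009-PRK}. Your "standard argument exploiting the monoid grading \ldots passing to $P^\gp$" is the right mechanism for the monoids that actually occur in this paper, but as sketched it needs $P$ to be cancellative (so that $P \Em P^\gp$ and $R[P]$ is honestly $P^\gp$-graded) and $P^\gp$ to be torsion-free (so that the grading group is totally orderable, the integral closure of a graded subring is again graded, and one can compare leading coefficients to see that a homogeneous element $r\,\mono{a}$ integral over $R^+[P]$ has $r$ integral over $R^+$). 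The lemma as stated imposes no hypotheses on $P$, so to match its generality you would either have to restrict the class of monoids or fall back on the reduction-plus-citation route that the paper takes.
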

\begin{proof}
    Note that $R^+[P]$ is open in $R[P]$ because $R^+$ is open in $R$.  We only need to check that $R^+[P]$ is integrally closed in $R[P]$.  By writing $P$ as the direct limit of its finitely generated submonoids, we may assume that $P$ is finitely generated.  But this case is standard \Pth{see, for example, \cite[\aThm 4.42]{Bruns/Gubeladze:2009-PRK}}.
\end{proof}

\begin{rk}\label{rem-monoid-alg-Huber-compl}
    Let $(R\Talg{P}, R^+\Talg{P})$ denote the completion of $(R[P], R^+[P])$.  Since taking completions of Huber pairs does not alter the associated adic spectra, we can identify $\Spa(R[P], R^+[P])$ with $\Spa(R\Talg{P}, R^+\Talg{P})$ \Pth{not just as adic spaces, but also as log adic spaces} whenever it is convenient to do so.
\end{rk}

\begin{lem}\label{lem-str-noe}
    Let $P$ be a finitely generated monoid.  Suppose that $R$ is either
    \begin{enumerate}[label=(\arabic*), ref=\arabic*]
        \item analytic and strongly noetherian; or

        \item \Pth{complete, by our convention, and} finitely generated over a noetherian ring of definition.
    \end{enumerate}
    Then so is $R\Talg{P}$ \Pth{which is complete by definition}.  As a result, $\Spa(R\Talg{P}, R^+\Talg{P})$ is a noetherian adic space when $\Spa(R, R^+)$ is, and $\Spa(R\Talg{P}, R^+\Talg{P})$ is \'etale sheafy \Pth{see Corollary \ref{cor-et-sheafy}}.  Moreover, the formation of the canonical morphism $\Spa(R\Talg{P}, R^+\Talg{P}) \to \Spa(R, R^+)$ is compatible with rational localizations on the target $\Spa(R, R^+)$.
\end{lem}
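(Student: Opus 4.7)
The plan is to first use the finite generation of $P$ to realize $R\Talg{P}$ as a quotient of a Tate algebra over $R$, and then treat the two cases in parallel. Fixing a surjection $\phi\colon \bZ_{\geq 0}^n \Surj P$ yields a continuous surjective ring homomorphism $R[T_1, \ldots, T_n] \Surj R[P]$, and since the topology on $R[P]$ agrees with the quotient topology from this surjection (both sides being $I$-adic over a common ring of definition $R_0$, with $I$ its ideal of definition), the induced map $R\Talg{T_1, \ldots, T_n} \Surj R\Talg{P}$ on completions will also remain surjective. This will serve as the key reduction step.

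For Case (i), I would deduce that $R\Talg{P}$ is noetherian as a quotient of the noetherian ring $R\Talg{T_1, \ldots, T_n}$, and that it is analytic since the image of a topologically nilpotent unit of $R$ in $R\Talg{P}$ remains a topologically nilpotent unit. For strong noetherianness, I would use the identification $R\Talg{P}\Talg{S_1, \ldots, S_m} \cong R\Talg{S_1, \ldots, S_m}\Talg{P}$ (both sides being the $I$-adic completion of $R[P, S_1, \ldots, S_m]$, by transitivity of completion) and apply the previous argument to $R\Talg{S_1, \ldots, S_m}$, which remains analytic and strongly noetherian. For Case (ii), I would observe that $R = R_0[r_1, \ldots, r_k]$ is itself noetherian by Hilbert's basis theorem, and its topology is $IR$-adic with $IR$ finitely generated, so $R$ qualifies as its own noetherian ring of definition; the same observation applied to $R\Talg{P}$ (whose noetherianness again follows from the reduction step) will then conclude this case.

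For the consequences, noetherianness of $\Spa(R\Talg{P}, R^+\Talg{P})$ is immediate, and \'etale sheafiness follows from the Conventions for locally noetherian adic spaces. For compatibility with rational localizations $(R', R'^+)$ of $(R, R^+)$ arising from a presentation $R' = R\Talg{f_i/g}$, I would argue that the preimage in $\Spa(R\Talg{P}, R^+\Talg{P})$ is the analogous rational subset defined by the images of the $f_i$'s and $g$ in $R\Talg{P}$ (which still generate an open ideal, since $IR\Talg{P}$ is the ideal of definition), and that the identification $R\Talg{P}\Talg{f_i/g} \cong R\Talg{f_i/g}\Talg{P} = R'\Talg{P}$ again follows from transitivity of completion applied to the common underlying algebra $R[P, T_1, \ldots, T_m]/(gT_i - f_i)$ with its $I$-adic topology. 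The main technical obstacle will be verifying these ``completion-in-stages'' identifications cleanly, including the corresponding statements for the integral subrings $R^+\Talg{\cdot}$.
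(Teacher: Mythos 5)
Your treatment of case (i), the reduction via a surjection $\bZ_{\geq 0}^n \Surj P$, and the compatibility with rational localizations all run along the same lines as the paper's proof (the paper realizes $R\Talg{P}\Talg{T_1, \ldots, T_n}$ directly as a quotient of $R\Talg{T_1, \ldots, T_{r + n}}$ rather than commuting the two completions, but the idea is the same). One small slip there: ``analytic'' does not mean that $R$ has a topologically nilpotent \emph{unit} (that would be Tate); you should instead argue that the topologically nilpotent elements of $R$ generate the unit ideal and that their images still do so in $R\Talg{P}$.

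The genuine gap is in case (ii). You assert that the topology on $R$ is the $IR$-adic topology and hence that $R$ is its own noetherian ring of definition. This is false in general: a ring of definition must be an open subring whose induced topology is $J$-adic for a finitely generated ideal $J$ \emph{of that subring}, and as soon as some element of $I$ becomes a unit in $R$ (e.g.\ $R = \bQ_p = \bZ_p[1/p]$ with $R_0 = \bZ_p$ and $I = (p)$, which satisfies the hypotheses of case (ii)) the ideal $IR$ is the unit ideal and the $IR$-adic topology is indiscrete, not the given topology. As a result your argument only shows that $R\Talg{P}$ is noetherian as an abstract ring, which is not what the lemma asserts; the conclusion is the topological condition that $R\Talg{P}$ is finitely generated over a noetherian \emph{ring of definition}. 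The missing step, which is what the paper supplies, is to exhibit such a subring: since $P$ is finitely generated, $R_0[P]$ is noetherian, hence so is its $I R_0[P]$-adic completion $R_0\Talg{P}$; the image of $R_0\Talg{P}$ in $R\Talg{P}$ is then an open adic subring, i.e.\ a noetherian ring of definition, over which $R\Talg{P}$ is generated by the images of the finitely many generators $u_1, \ldots, u_n$ of $R$ over $R_0$.
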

\begin{proof}
    Suppose that $R$ is analytic and strongly noetherian.  Since $P$ is finitely generated, there is some surjection $\bZ_{\geq 0}^r \Surj P$, which induces a continuous surjection $R\Talg{T_1, \ldots, T_r} \cong R\Talg{\bZ_{\geq 0}^r} \Surj R\Talg{P}$.  In this case, $R\Talg{P}\Talg{T_1, \ldots, T_n}$ is a quotient of $R\Talg{\bZ_{\geq 0}^r}\Talg{T_1, \ldots, T_n} \cong R\Talg{T_1, \ldots, T_{r + n}}$, for each $n \geq 0$, which is noetherian as $R$ is strongly noetherian.  Hence, $R\Talg{P}$ is also analytic and strongly noetherian.

    Alternatively, suppose that $R$ is generated by some $u_1, \ldots, u_n$ over a noetherian ring of definition $R_0$, with an ideal of definition $I \subset R_0$.  Since $R_0[P]$ is noetherian as $P$ is finitely generated, its $I R_0[P]$-adic completion $R_0\Talg{P}$ is also noetherian.  Then the image of $R_0\Talg{P}$ is a noetherian ring of definition of $R\Talg{P}$, over which $R\Talg{P}$ is generated by the images of $u_1, \ldots, u_n$, as desired.

    In both cases, the formation of $\Spa(R\Talg{P}, R^+\Talg{P}) \to \Spa(R, R^+)$ is clearly compatible with rational localizations on $\Spa(R, R^+)$.
\end{proof}

In a different direction, we would like to show that, under certain condition on $P$, if $(R, R^+)$ is a perfectoid affinoid algebra, then $(R\Talg{P}, R^+\Talg{P})$ also is.  In this case, $(R\Talg{P}, R^+\Talg{P})$ is \'etale sheafy \Pth{see Corollary \ref{cor-et-sheafy}, again}.
\begin{defn}\label{def-monoid-n-div}
    For each integer $n \geq 1$, a monoid $P$ is called \emph{$n$-divisible} \Pth{\resp \emph{uniquely $n$-divisible}} if the $n$-th multiple map $[n]: P \to P$ is surjective \Pth{\resp bijective}.
\end{defn}

\begin{lem}\label{lem-monoid-alg-perf}
    Suppose that $(R, R^+)$ is a perfectoid Huber pair.  If a monoid $P$ is uniquely $p$-divisible, then $(R\Talg{P}, R^+\Talg{P})$ is also a perfectoid Huber pair.  Moreover, the formation of the canonical morphism $\Spa(R\Talg{P}, R^+\Talg{P}) \to \Spa(R, R^+)$ is compatible with rational localizations on the target $\Spa(R, R^+)$.
\end{lem}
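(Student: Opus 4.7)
The plan is to verify directly that $(R\Talg{P}, R^+\Talg{P})$ satisfies the standard conditions defining a perfectoid Huber pair, following the formulation of Fontaine (as in \cite{Scholze/Weinstein:2017-blpag}): $R\Talg{P}$ should be a complete uniform Tate ring admitting a pseudo-uniformizer $\varpi \in R^+\Talg{P}$ with $\varpi^p \mid p$, for which the Frobenius $R^+\Talg{P}/\varpi \to R^+\Talg{P}/\varpi^p$ is an isomorphism. I would start by fixing such a $\varpi$ in $R^+$ coming from the perfectoid structure of $R$, and note that, by Lemma \ref{lem-monoid-alg-Huber} applied with $R_0 = R^+$ and $I = (\varpi)$, the ring $R^+\Talg{P}$ is the $\varpi$-adic completion of $R^+[P]$ and is open and integrally closed in $R\Talg{P}$.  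The Tate property and the divisibility $\varpi^p \mid p$ are then automatic.

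The heart of the argument is the Frobenius condition. For surjectivity, given a lift $\sum_{a \in P} r_a e^a \in R^+\Talg{P}$ of an arbitrary class modulo $\varpi^p$, the perfectoid assumption allows me to pick $s_a \in R^+$ with $s_a^p \equiv r_a \pmod{\varpi^p}$, and the unique $p$-divisibility of $P$ allows me to define $a' \in P$ as the unique preimage of $a$ under $[p]$.  Setting $g := \sum_{a \in P} s_a e^{a'}$ then formally satisfies $g^p \equiv \sum_{a \in P} r_a e^a \pmod{\varpi^p}$; convergence of $g$ in $R^+\Talg{P}$ follows because the integral closedness of $R^+$ lets us deduce $s_a \in \varpi^n R^+$ from $s_a^p \in \varpi^{np} R^+$ (take $p$-th roots of $(s_a/\varpi^n)^p \in R^+$), so that $s_a \to 0$ in the $\varpi$-adic topology.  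Injectivity of Frobenius is proved by essentially the same extraction-of-$p$-th-roots argument, and uniformity of $R\Talg{P}$ then follows from the standard fact that a complete Tate ring with surjective Frobenius modulo a pseudo-uniformizer and with an integrally closed ring of definition is automatically uniform.

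The compatibility with rational localizations reduces, as in Lemma \ref{lem-str-noe}, to the observation that the formation $R \mapsto R\Talg{P}$ is functorial in Huber pairs and commutes with the construction $R \mapsto R\Talg{f_1/g, \ldots, f_n/g}$ that defines rational localizations: explicitly, $R\Talg{P}\Talg{f_1/g, \ldots, f_n/g} \cong R\Talg{f_1/g, \ldots, f_n/g}\Talg{P}$ as completed Huber pairs, and their integral elements correspond under this identification.  The main obstacle is the convergence step in the Frobenius argument: making sure the formal series defining the $p$-th-root lift $g$ actually converges in the completed monoid algebra.  This is exactly the point where both hypotheses---perfectoidness of $R$ (via integral closedness of $R^+$) and unique $p$-divisibility of $P$ (to produce the exponents $a'$)---are essentially used.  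All other steps are either formal or inherited from the corresponding properties of $R$.
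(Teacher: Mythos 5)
Your proof is correct in outline but takes a genuinely different route from the paper's. The paper first settles the characteristic-$p$ case, where unique $p$-divisibility makes $R^+[P]$ perfect, and then deduces the mixed-characteristic case from the tilting equivalence \cite[\aThm 6.2.11]{Scholze/Weinstein:2017-blpag} via an explicit Witt-vector computation showing $R^+\Talg{P} \cong W(R^{\flat+}\Talg{P})/(\xi)$; a byproduct of that route is the identification of the tilt of $R\Talg{P}$ with $R^\flat\Talg{P}$, which is implicitly convenient elsewhere in the paper. You instead verify Fontaine's axioms directly on the ring of definition $R^+\Talg{P}$, which avoids Witt vectors entirely but shifts the burden onto a precise citation: you need the criterion that a complete Tate ring with $\varpi^p \mid p$ admitting an open, bounded, integrally closed ring of definition $R_0$ with $\Phi: R_0/\varpi \to R_0/\varpi^p$ bijective is perfectoid (Bhatt--Morrow--Scholze, or the corresponding statements in \cite{Kedlaya:2019-AWS}). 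Your parenthetical ``standard fact'' is indeed true; in fact uniformity already follows from $R^+\Talg{P}$ being open, bounded, and integrally closed in $R\Talg{P}$ (Lemma \ref{lem-monoid-alg-Huber}), with no Frobenius input: any power-bounded $x$ satisfies $(\varpi^{N-1}x)^m \in R^+\Talg{P}$ for all large $m$, hence $\varpi^{N-1}x$ is integral over $R^+\Talg{P}$ and $(R\Talg{P})^\circ \subseteq \varpi^{-1}R^+\Talg{P}$.

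One step is wrong as written, though easily repaired: your convergence argument for $g = \sum_a s_a \mono{a'}$. The congruence $s_a^p \equiv r_a \pmod{\varpi^p}$ retains no information beyond $\varpi^p$, so from $r_a \in \varpi^{np}R^+$ you can only conclude $s_a^p \in \varpi^p R^+$, hence $s_a \in \varpi R^+$ --- not $s_a \in \varpi^n R^+$ for $n > 1$. Thus ``$s_a \to 0$'' does not follow for an arbitrary choice of the $s_a$. The fix is to choose $s_a = 0$ whenever $r_a \in \varpi^p R^+$, which happens for all but finitely many $a$; then $g$ is a finite sum, and since $R^+\Talg{P}/\varpi^p \cong (R^+/\varpi^p)[P]$ only finitely many terms were ever relevant to surjectivity modulo $\varpi^p$. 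With that repair, and the routine $R^\circ$-versus-$R^+$ bookkeeping (one uses $\varpi R^\circ \subseteq R^{\circ\circ} \subseteq R^+$ to transfer the divisibility $\varpi^p \mid p$ and the Frobenius surjectivity from $R^\circ$ to $R^+$), your argument goes through.
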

\begin{proof}
    If $p R = 0$, then the unique $p$-divisibility of $P$ implies that $R^+[P]$ is perfect, and so is its completion $R^+\Talg{P}$.  Also, it is clear that $(R\Talg{P})^\circ = R^\circ\Talg{P}$ in $R\Talg{P}$.  Hence, $R\Talg{P}$ is uniform, and $(R\Talg{P}, R^+\Talg{P})$ is a perfectoid Huber pair.

    In general, let $(R^{\flat}, R^{\flat+})$ be the tilt of $(R, R^+)$.  Let $\varpi \in R$ be a pseudo-uniformizer of $R$ satisfying $\varpi^p | p$ in $R^\circ$ and admitting a sequence of $p$-th power roots $\varpi^{\frac{1}{p^n}}$, so that $\varpi^\flat = (\varpi, \varpi^{\frac{1}{p}}, \ldots) \in R^{\flat\circ}$ is a pseudo-uniformizer of $R^\flat$, as in \cite[\aLem 6.2.2]{Scholze/Weinstein:2020-BLG}.  Let $\xi$ be a generator of $\ker(\theta: W(R^{\flat+}) \to R^+)$, which can be written as $\xi = p + [\varpi^\flat] a$ for some $a \in W(R^{\flat+})$, by \cite[\aLem 6.2.8]{Scholze/Weinstein:2020-BLG}.  By the first paragraph above and the tilting equivalence \Pth{see \cite[\aThm 6.2.11]{Scholze/Weinstein:2020-BLG}}, it suffices to show that
    \[
        R^+\Talg{P} \cong W(R^{\flat+}\Talg{P}) / (\xi).
    \]
    For this purpose, note that there is a natural homomorphism
    \[
        \theta': W(R^{\flat+}\Talg{P}) \to R^+\Talg{P}
    \]
    induced by the surjective homomorphism
    \[
        R^{\flat+}\Talg{P} \to (R^{\flat+} / \varpi^\flat)[P] \cong (R^+ / \varpi)[P]
    \]
    and the universal property of Witt vectors, and $\theta'$ is surjective because both its source and target are complete.  Since $\xi = p + [\varpi^\flat] a$, we have
    \[
        W(R^{\flat+}\Talg{P}) / (\xi, [\varpi^\flat]) = W(R^{\flat+}\Talg{P}) / (p, [\varpi^\flat]) \cong (R^{\flat+} / \varpi^\flat)[P] \cong (R^+ / \varpi)[P].
    \]
    Since $\theta'([\varpi^\flat]) = \varpi$, by induction, we see that the homomorphism
    \[
        W(R^{\flat+}\Talg{P}) / (\xi, [\varpi^\flat]^n) \to (R^+ / \varpi^n)[P]
    \]
    induced by $\theta'$ is an isomorphism, for each $n \geq 1$.  Thus, since $W(R^{\flat+}\Talg{P}) / (\xi)$ is $[\varpi^\flat]$-adically complete and separated, $\ker(\theta')$ is generated by $\xi$, as desired.

    Finally, the formation of $\Spa(R\Talg{P}, R^+\Talg{P}) \to \Spa(R, R^+)$ is clearly compatible with rational localizations on $\Spa(R, R^+)$, as in Lemma \ref{lem-str-noe}.
\end{proof}

\begin{rk}\label{rk-lem-monoid-alg-perf}
    In Lemma \ref{lem-monoid-alg-perf}, perfectoid Huber pairs are Tate by our convention following \cite[\aLec 6]{Scholze/Weinstein:2020-BLG}, but the statement of the lemma remains true for more general analytic perfectoid Huber pairs as in \cite{Kedlaya:2019-AWS}, by using \cite[\aLem 2.7.9]{Kedlaya:2019-AWS} instead of \cite[\aThm 6.2.11]{Scholze/Weinstein:2020-BLG}.
\end{rk}

\begin{defn}\label{def-P-log}
    When $X = \Spa(R[P], R^+[P]) \cong \Spa(R\Talg{P}, R^+\Talg{P})$ is \'etale sheafy, we denote by $P_X$ the constant sheaf on $X_\et$ defined by $P$.  Then the natural homomorphism $P \to R\Talg{P}$ of monoids induces a pre-log structure $P_X \to \cO_{X_\et}$ on $X$, whose associated log structure we simply denote by $P^{\log}$.
\end{defn}

\begin{conv}\label{conv-P-log}
    From now on, when $\Spa(R\Talg{P}, R^+\Talg{P})$ is \'etale sheafy and regarded as a log adic space, we shall endow it with the log structure $P^{\log}$ as in Definition \ref{def-P-log}, unless otherwise specified.
\end{conv}

Let us continue with some more examples of log adic spaces.

\begin{exam}\label{ex-log-adic-sp-monoid}
    Given any locally noetherian adic space $Y$ with trivial log structure as in Example \ref{ex-log-adic-sp-triv}, and given any finitely generated monoid $P$, by gluing the morphisms $\Spa(R\Talg{P}, R^+\Talg{P}) \to \Spa(R, R^+)$ as in Lemma \ref{lem-str-noe} over the noetherian affinoid open $\Spa(R, R^+)$ in $Y$, where each $\Spa(R\Talg{P}, R^+\Talg{P})$ is equipped with the structure of a log adic space as in Definition \ref{def-P-log} and Convention \ref{conv-P-log}, we obtain a morphism $X \to Y$ of log adic spaces, which we shall denote by $Y\Talg{P} \to Y$.  In this case, we shall also denote the log structure of $X = Y\Talg{P}$ by $P^{\log}$.
\end{exam}

\begin{exam}\label{ex-log-adic-sp-toric}
    If $P$ is a toric monoid as in Definition \ref{def-monoid}\Refenum{\ref{def-monoid-tor}}, then we say that $X = \Spa(k\Talg{P}, k^+\Talg{P})$ is an \emph{affinoid toric log adic space}.  This is a special case of Example \ref{ex-log-adic-sp-monoid} with $Y = \Spa(k, k^+)$, and is closely related to the theory of toroidal embeddings and toric varieties \Pth{see, \eg, \cite{Kempf/Knudsen/Mumford/Saint-Donat:1973-TE-1} and \cite{Fulton:1993-ITV}}.  Roughly speaking, such affinoid toric log adic spaces provide affinoid open subspaces of the rigid analytification of toric varieties, which are then also useful for studying local properties of more general varieties or rigid analytic varieties which are locally modeled on toric varieties.  Note that the underlying spaces of affinoid toric log adic spaces are always normal, by \cite[\aSec 7.3.2, \aProp 8]{Bosch/Guntzer/Remmert:1984-NAA}, \cite[IV-2, 7.8.3.1]{EGA}, and \cite[\aThm 1]{Hochster:1972-itcmp} \Pth{\Refcf{} \cite[\aThm 4.1]{Kato:1994-ts}}.
\end{exam}

\begin{exam}\label{ex-log-adic-sp-disc}
    A special case of Example \ref{ex-log-adic-sp-toric} is when $P \cong \bZ_{\geq 0}^n$ for some integer $n \geq 0$.  In this case, we obtain
    \[
        X = \Spa(k\Talg{P}, k^+\Talg{P}) \cong \bD^n := \Spa(k\Talg{T_1, \ldots, T_n}, k^+\Talg{T_1, \ldots, T_n}),
    \]
    the \emph{$n$-dimensional unit disc}, with the log structure of $\bD^n$ associated with the pre-log structure given by $\bZ_{\geq 0}^n \to k\Talg{T_1, \ldots, T_n}: (a_1, \ldots ,a_n) \mapsto T_1^{a_1} \cdots T_n^{a_n}$.
\end{exam}

The following proposition provides many more examples of log adic spaces coming from locally noetherian log formal schemes.
\begin{prop}\label{prop-log-adic-sp-fs}
    The canonical fully faithful functor from the category of locally noetherian formal schemes to the category of locally noetherian adic spaces defined locally by $\Spf(A) \mapsto \Spa(A, A)$ \Pth{as in \cite[\aSec 4.1]{Huber:1994-gfsra}} canonically extends to a fully faithful functor from the category of locally noetherian log formal schemes \Pth{as in \cite[\aSec 12.1]{Gabber/Ramero:2019-fart}} to the category of locally noetherian log adic spaces \Pth{introduced in this paper}.
\end{prop}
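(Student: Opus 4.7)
The plan is to construct the functor by pulling back log structures along the canonical specialization morphism of ringed sites, and then to verify full faithfulness stalk by stalk.

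First, I would define the functor on objects. For a locally noetherian log formal scheme $(\mathfrak{X}, \cM_\mathfrak{X}, \alpha_\mathfrak{X})$, let $X$ be its associated locally noetherian adic space. The assignment sending an \'etale formal scheme $\mathfrak{Y} \to \mathfrak{X}$ (locally of the form $\Spf(B) \to \Spf(A)$) to the associated \'etale adic space $Y \to X$ (locally $\Spa(B,B) \to \Spa(A,A)$) is functorial and cover-preserving, and hence defines a morphism $\pi: (X_\et, \cO_{X_\et}) \to (\mathfrak{X}_\et, \cO_{\mathfrak{X}_\et})$ of ringed sites. Composing with $\alpha_\mathfrak{X}$ produces a pre-log structure $\pi^{-1}(\cM_\mathfrak{X}) \to \cO_{X_\et}$ on $X_\et$, whose associated log structure I take to be $(\cM_X, \alpha_X)$. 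Functoriality in morphisms of log formal schemes is manifest from the construction.

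For full faithfulness, given a morphism $g: (X, \cM_X) \to (X', \cM_{X'})$ of the associated log adic spaces, the underlying morphism $X \to X'$ descends uniquely to $f: \mathfrak{X} \to \mathfrak{X}'$ by the existing full faithfulness. The induced map $g^\sharp: f_\ad^{-1}(\cM_{X'}) \to \cM_X$, combined with the identification $\cM_{X'} \cong \pi'^{-1}(\cM_{\mathfrak{X}'})^a$, the adjunction between associated log structures and pre-log structures (Remark \ref{rem-log-str-assoc}), and the functoriality $f_\ad^{-1} \pi'^{-1} = \pi^{-1} f^{-1}$, yields a morphism of pre-log structures $\pi^{-1}(f^{-1}(\cM_{\mathfrak{X}'})) \to \cM_X$ on $X_\et$. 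I would then argue stalk by stalk: at every geometric point $\AC{x}$ of $X$ lying over a geometric point $\AC{z}$ of $\mathfrak{X}$, the natural map $\cM_{\mathfrak{X}, \AC{z}} \to \cM_{X, \AC{x}}$ exhibits the latter as the associated log structure of the former along the local homomorphism $\cO_{\mathfrak{X}, \AC{z}} \to \cO_{X, \AC{x}}$, inducing by Lemma \ref{lem-amalg-sum} an isomorphism $\overline{\cM}_{\mathfrak{X}, \AC{z}} \Mi \overline{\cM}_{X, \AC{x}}$, so that the composite on stalks lifts uniquely through $\cM_{\mathfrak{X}, \AC{z}}$ in a way compatible with $\cO_{\mathfrak{X}, \AC{z}} \to \cO_{X, \AC{x}}$.

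The main obstacle is globalizing these stalk-level lifts into a genuine morphism of sheaves $f^{-1}(\cM_{\mathfrak{X}'}) \to \cM_\mathfrak{X}$ on $\mathfrak{X}_\et$, since the \'etale sites of $\mathfrak{X}$ and $X$ differ substantially---rational opens of $X$, for instance, are never of the form $\pi^{-1}(\mathfrak{Y})$. To handle this, I would reduce to the affine case $\mathfrak{X} = \Spf(A)$, $X = \Spa(A,A)$, where $\cO_\mathfrak{X}(\mathfrak{X}) = A = \cO_X(X)$, and use the pushout description of $\cM_X$ to show that $\cM_\mathfrak{X} \cong \pi_*(\cM_X) \times_{\pi_*(\cO_{X_\et})} \cO_{\mathfrak{X}_\et}$ as sheaves on $\mathfrak{X}_\et$. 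The stalk-level descent then lands in this fiber product and hence assembles to a morphism $f^{-1}(\cM_{\mathfrak{X}'}) \to \cM_\mathfrak{X}$ of sheaves of monoids on $\mathfrak{X}_\et$, compatible with $\alpha_\mathfrak{X}$ by construction. One finally checks that this construction is inverse to the object-level functor applied to morphisms, completing the verification of full faithfulness.
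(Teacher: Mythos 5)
Your construction of the functor is the same as the paper's: the paper likewise defines $\cM_X$ as the log structure associated with the pre-log structure $\lambda^{-1}(\cM_{\mathfrak{X}}) \to \lambda^{-1}(\cO_{\mathfrak{X}_\et}) \to \cO_{X_\et}$ along the canonical morphism of sites $\lambda: X_\et \to \mathfrak{X}_\et$ (your $\pi$), the only cosmetic difference being that the paper records the factorization through $\cO_{X_\et}^+$. For full faithfulness the paper offers only the phrase \Qtn{fully faithful by adjunction}, i.e.\ the combination of Remark \ref{rem-log-str-assoc} with the adjunction $(\lambda^{-1}, \lambda_*)$; this is exactly the skeleton you set up, so your overall route is not different---you are attempting to supply the details the paper omits.

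Two of those details do not hold as stated. First, the claimed isomorphism $\overline{\cM}_{\mathfrak{X}, \AC{z}} \Mi \overline{\cM}_{X, \AC{x}}$ at \emph{every} geometric point $\AC{x}$ of $X$ is false. Lemma \ref{lem-amalg-sum} gives $\cM_{\mathfrak{X},\AC{z}} / \alpha^{-1}(\cO_{X,\AC{x}}^\times) \Mi \overline{\cM}_{X,\AC{x}}$, and this agrees with $\overline{\cM}_{\mathfrak{X},\AC{z}}$ only when no new elements of $\cM_{\mathfrak{X},\AC{z}}$ become invertible in $\cO_{X,\AC{x}}$. At analytic points this fails: for $\mathfrak{X} = \Spf(\bZ_p)$ with the log structure generated by $p$, at the analytic point $\eta$ of $X = \Spa(\bZ_p, \bZ_p)$ one has $\cM_{X, \eta} \cong \bQ_p^\times$ and $\overline{\cM}_{X, \eta} = 0$, while $\overline{\cM}_{\mathfrak{X}, \AC{z}} \cong \bZ_{\geq 0}$; a lift of a stalkwise map through $\cM_{\mathfrak{X}, \AC{z}} \to \cM_{X, \eta}$ then need not exist. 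The stalk argument can only be run at the points in the image of the canonical section $\mathfrak{X} \to X$ (equivalently, after applying $\lambda_*$), not on all of $X$. Second, and more importantly, the identification $\cM_{\mathfrak{X}} \cong \pi_*(\cM_X) \times_{\pi_*(\cO_{X_\et})} \cO_{\mathfrak{X}_\et}$ is where essentially all of the content of full faithfulness sits, and you only assert it. Note moreover that since $\cO_{\mathfrak{X}_\et}(\mathfrak{U}) = \cO_{X_\et}\bigl(\lambda^{-1}(\mathfrak{U})\bigr)$ for an \'etale $\mathfrak{U} = \Spf(B)$ \Ref{by \cite[\aThm 2.5]{Huber:1994-gfsra}; \Refcf{} Proposition \ref{prop-et-coh-descent}}, your fiber product degenerates to $\pi_*(\cM_X)$, so the real claim is $\cM_{\mathfrak{X}} \Mi \lambda_*(\cM_X)$, \ie, that the unit of adjunction loses and gains nothing after sheafifying the pushout over covers of $\lambda^{-1}(\mathfrak{U})$ in $X_\et$ that do not come from $\mathfrak{X}_\et$ \Pth{rational localizations in particular}. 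Until that is proved, the verification of full faithfulness is not complete.
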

\begin{proof}
    Given any locally noetherian log formal scheme $(\mathfrak{X}, \cM_{\mathfrak{X}})$, let $X$ denote the adic spaces associated with the formal scheme $\mathfrak{X}$, with a canonical morphism of sites $\lambda: X_\et \to \mathfrak{X}_\et$, as in \cite[\aLem 3.5.1]{Huber:1996-ERA}.  By construction, we have a canonical morphism $\lambda^{-1}(\cO_{\mathfrak{X}_\et}) \to \cO_{X_\et}^+$.  Let $\cM_X$ be the log structure of $X$ associated with the pre-log structure $\lambda^{-1}(\cM_{\mathfrak{X}}) \to \lambda^{-1}(\cO_{\mathfrak{X}_\et}) \to \cO_{X_\et}^+ \to \cO_{X_\et}$.  Then the assignment $(\mathfrak{X}, \cM_{\mathfrak{X}}) \mapsto (X, \cM_X)$ gives the desired functor, which is fully faithful by adjunction.
\end{proof}

\begin{defn}[{\Refcf{} \cite[\aDef III.2.3.1]{Ogus:2018-LLG}}]\label{def-imm}
    We say that a morphism $f : Y \to X$ of log adic spaces is an \emph{open immersion} \Pth{\resp a \emph{closed immersion}} if the underlying morphism of adic spaces is an open immersion \Pth{\resp a closed immersion} and if the morphism $f^\sharp: f^{-1}(\cM_X) \to \cM_Y$ is an isomorphism \Pth{\resp a surjection}.  We say that $f$ is an immersion if it is a composition of a closed immersion of log adic spaces followed by an open immersion of log adic spaces.  We say that $f$ is \emph{strict} if it is a strict morphism of log adic spaces.
\end{defn}

\begin{exam}\label{ex-imm}
    Let $(X, \cM_X, \alpha_X)$ be a log adic space and $\imath: Z \to X$ an immersion of adic spaces.  Let $(Z, \cM_Z, \alpha_Z)$ be the log adic space associated with the pre-log structure $\imath_\et^{-1}(\cM_X) \to \imath_\et^{-1}(\cO_{X_\et}) \to \cO_{Z_\et}$.  Then the induced morphism $(Z, \cM_Z, \alpha_Z) \to (X, \cM_X, \alpha_X)$ of log adic spaces is a \emph{strict} immersion.  It is an open \Pth{\resp a closed} immersion exactly when the immersion $\imath$ of adic spaces is.
\end{exam}

\begin{rk}\label{rem-imm-ex}
    More generally, by the same argument as in \cite[the paragraph after \aDef III.2.3.1]{Ogus:2018-LLG}, a closed immersion is strict when it is exact.
\end{rk}

\subsection{Charts and fiber products}\label{sec-chart}

In this subsection, we introduce the notion of \emph{charts} for log adic spaces.  Compared with the corresponding notion for log schemes, a notable difference is that the definition of charts for a log adic space $X$ involves not just $\cO_{X_\et}$ but also $\cO_{X_\et}^+$.  Based on this notion, we also introduce the category of coherent \Pth{\resp fine, \resp fs} log adic spaces and study the fiber products in it.

\begin{defn}\label{def-chart}
    Let $(X, \cM_X, \alpha)$ be a log adic space.  Let $P$ be a monoid, and let $P_X$ denote the associated constant sheaf of monoids on $X_\et$.  A \emph{\Pth{global} chart of $X$ modeled on $P$} is a morphism of sheaves of monoids $\theta: P_X \to \cM_X$ such that $\alpha\bigl(\theta(P_X)\bigr) \subset \cO_{X_\et}^+$ and such that $\theta$ canonically induces \Pth{by the universal property of pushouts} an isomorphism ${^a}P_X \Mi \cM_X$ from the log structure ${^a}P_X$ associated with the pre-log structure $\alpha\circ\theta: P_X \to \cO_{X_\et}$.  We call the chart \emph{finitely generated} \Pth{\resp \emph{fine}, \resp \emph{fs}} if $P$ is finitely generated \Pth{\resp fine, \resp fs}.
\end{defn}

\begin{rk}\label{rem-def-chart}
    Giving a morphism $\theta: P_X \to \cM_X$ such that $\alpha\bigl(\theta(P_X)\bigr) \subset \cO_{X_\et}^+$ as in Definition \ref{def-chart} is equivalent to giving a homomorphism $P \to \cM_X(X)$ of monoids whose composition with $\alpha(X): \cM_X(X) \to \cO_{X_\et}(X)$ factors through $\cO_{X_\et}^+(X)$.  If the monoid $P$ is finitely generated, and if the underlying adic space $X$ is over some affinoid adic space $\Spa(R, R^+)$, then giving such a homomorphism $P \to \cM_X(X)$ whose composition with $\alpha(X)$ factors through $\cO_{X_\et}^+(X)$ is equivalent to giving a morphism $f: (X, \cM_X) \to (\Spa(R\Talg{P}, R^+\Talg{P}), P^{\log})$ of log adic spaces, whenever $\Spa(R\Talg{P}, R^+\Talg{P})$ is an \'etale sheafy adic space.  In this case, $\theta$ is a chart if and only if the morphism $f$ is strict.  We imposed the condition $\alpha\bigl(\theta(P_X)\bigr) \subset \cO_{X_\et}^+$ in Definition \ref{def-chart} because we will make crucial use of morphisms $f: (X, \cM_X) \to \bigl(\Spa(R\Talg{P}, R^+\Talg{P}), P^{\log}\bigr)$ as above in this paper.
\end{rk}

\begin{rk}\label{rem-def-chart-rel}
    In Remark \ref{rem-def-chart}, if the underlying adic space $X$ is over some locally noetherian adic space $Y$, then giving a morphism $\theta: P_X \to \cM_X$ such that $\alpha\bigl(\theta(P_X)\bigr) \subset \cO_{X_\et}^+$ is also equivalent to giving a morphism $g: X \to Y\Talg{P}$ as in Example \ref{ex-log-adic-sp-monoid}, in which case $\theta$ is a chart if and only if the morphism $g$ is strict.  Moreover, if $X$ is itself locally noetherian, then we can take $Y = X$, and obtain a closed immersion $h: X \to X\Talg{P}$, in which case $\theta$ is a chart if and only if $h$ is a strict closed immersion.
\end{rk}

\begin{rk}\label{rem-chart-stalk}
    Let $\theta: P_X \to \cM_X$ be a chart of a log adic space $(X, \cM_X, \alpha)$.  By Lemma \ref{lem-amalg-sum} and Remark \ref{rem-stalk-sharp}, for each geometric point $\AC{x}$ of $X$, we obtain a canonical isomorphism $P / (\alpha \circ \theta)^{-1}(\cO_{X_\et, \AC{x}}^\times) \Mi \cM_{X, \AC{x}}/\alpha^{-1}(\cO_{X_\et, \AC{x}}^\times) \cong \overline{\cM}_{X, \AC{x}}$.  In particular, the composition $P_X \Mapn{\theta} \cM_X \to \overline{\cM}_X$ is surjective.
\end{rk}

\begin{defn}\label{def-log-adic-sp-fs}
    A \emph{quasi-coherent} \Pth{\resp \emph{coherent}, \resp \emph{fine}, \resp \emph{fs}} \emph{log adic space} is a log adic space $X$ that \'etale locally admits some charts modeled on some monoids \Pth{\resp finitely generated monoids, \resp fine monoids, \resp fs monoids}.  \Pth{Quasi-coherent log adic spaces will not play any important role in this paper.}
\end{defn}

\begin{lem}\label{lem-chart-fg-Ogus}
    Let $(X, \cM_X, \alpha)$ be a log adic space, and $\theta: P_X \to \cM_X$ a chart modeled on some monoid $P$.  Suppose that there is a finitely generated monoid $P'$ such that $\theta$ factors as $P_X \to P'_X \Mapn{\theta'} \cM_X$ and such that $\alpha \circ \theta': P'_X \to \cO_{X_\et}$ factors through $\cO_{X_\et}^+$.  Then, \'etale locally on $X$, there exists a chart $\theta'': P''_X \to \cM_X$ modeled on some finitely generated monoid $P''$ such that $\theta'$ factors through $\theta''$.
\end{lem}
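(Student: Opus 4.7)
The strategy is to work \'etale-locally at a geometric point $\AC{x}$ of $X$ and to construct $S''$ by adjoining to $S'$ the images in $\cM$ of finitely many witnesses from $S$. Since $\theta$ is a chart, Remark \ref{rem-chart-stalk} gives that $S \to \overline{\cM}_{\AC{x}}$ is surjective; factoring through $S'$, this makes $\overline{\cM}_{\AC{x}}$ a finitely generated monoid, presented as a quotient $S' \Surj \overline{\cM}_{\AC{x}}$. Applying the associated-log-structure functor to the factorization $S_X \to S'_X \to \cM$, and using $^a(S_X) \cong \cM$, the induced morphism $^a(S'_X) \to \cM$ is already surjective, and the lemma amounts to enlarging $S'$ so that this map becomes an isomorphism at $\AC{x}$. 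By Remark \ref{rem-stalk-sharp}, this is in turn equivalent to making the quotient map $S''/(\alpha\circ\theta'')^{-1}(\cO^\times_{X,\AC{x}}) \to \overline{\cM}_{\AC{x}}$ bijective.

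Because $S'$ is finitely generated, the congruence on $S'$ presenting $\overline{\cM}_{\AC{x}}$ is itself finitely generated, by R\'edei's classical theorem for finitely generated commutative monoids. Choose finitely many generating pairs $(s_i, s_i') \in S' \times S'$ of this congruence. Using the amalgamated-sum description $\cM_{\AC{x}} \cong S \oplus_{K_{\AC{x}}} \cO^\times_{X,\AC{x}}$ supplied by the chart $\theta$ (with $K_{\AC{x}} := (\alpha\circ\theta)^{-1}(\cO^\times_{X,\AC{x}})$), each such relation is witnessed by finitely many elements $t_{ij} \in S$ with $\alpha\theta(t_{ij}) \in \cO^\times_{X,\AC{x}}$, together with the corresponding equalities in $\cM$. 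Let $U$ be a sufficiently small \'etale neighborhood of $\AC{x}$ over which all the $\theta(t_{ij})$ and the witnessing equalities are defined, and let $S''$ be the submonoid of $\cM(U)$ generated by $\theta'(S')$ and the $\theta(t_{ij})$. Then $S''$ is finitely generated, the composition $\alpha: S'' \to \cO(U)$ lands in $\cO^+(U)$, and $\theta'|_U$ factors through the inclusion $\theta'': S''_U \to \cM|_U$.

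By construction, the map $S''/(\alpha\circ\theta'')^{-1}(\cO^\times_{X,\AC{x}}) \to \overline{\cM}_{\AC{x}}$ is bijective, so $\theta''$ is a chart at $\AC{x}$, and hence on a possibly smaller \'etale neighborhood of $\AC{x}$, by the \'etale-local nature of the isomorphism condition for sheaves of monoids. The main obstacle is the explicit construction of the witnesses $t_{ij}$: it is precisely where R\'edei's finite presentability of the congruence on the finitely generated $S'$ is combined with the amalgamated-sum description of $\cM_{\AC{x}}$ afforded by the chart $\theta$, and it is the one place where the hypothesis that $\theta$ itself is already a chart (rather than merely some compatible pre-log structure into $\cM$) is genuinely used.
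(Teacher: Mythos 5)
There is a genuine gap: the monoid $S''$ you construct is too small, because it omits the units of $\cO_{X, \AC{x}}$ that relate $\theta'(S')$ to $\theta(S)$. What the surjectivity of $S_X \to \overline{\cM}$ (Remark \ref{rem-chart-stalk}) actually gives is that, \'etale locally, $\theta'(s'_i) = \theta(s_i) \, f_i$ for each generator $s'_i$ of $S'$, with $s_i \in S$ and $f_i \in \cO_X^\times(X)$; these $f_i$ in general lie neither in $\theta(S)$ nor in $\theta'(S')$. Your $S''$ is generated by $\theta'(S')$ together with elements $\theta(t_{ij})$ for $t_{ij} \in S$, so it never acquires the $f_i$, and your final claim that $S'' / (\alpha \circ \theta'')^{-1}(\cO_{X, \AC{x}}^\times) \to \overline{\cM}_{X, \AC{x}}$ is bijective fails. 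Concretely, take $X = \Spa(\bQ_p\Talg{T}, \bZ_p\Talg{T})$ with its standard log structure, $S = \bZ_{\geq 0}$ with $\theta(1) = T$, $S' = \bZ_{\geq 0}^2$ with $S \to S'$ given by $1 \mapsto (1, 0)$, and $\theta'(1, 0) = T$, $\theta'(0, 1) = p T$. At a geometric point over $T = 0$ one has $(\alpha \circ \theta)^{-1}(\cO_{X, \AC{x}}^\times) = \{ 0 \}$, so the only available witness is $t = 0$ and your $S''$ is the submonoid of $\cM$ generated by $T$ and $p T$; this is free of rank two with trivial unit part, whereas $\overline{\cM}_{X, \AC{x}} \cong \bZ_{\geq 0}$. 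The underlying problem with the ``witnessing'' step is that the equalities supplied by the amalgamated-sum description compare elements $\theta(\sigma)$, $\theta(\sigma')$ of $\theta(S)$, not $\theta'(s_i)$ and $\theta'(s_i')$; translating between the two reintroduces the units $f_i$, which cannot be absorbed into elements of $\theta(S)$.

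Once you do adjoin the $f_i$, you hit the one genuinely new difficulty of the adic setting, which your write-up never confronts and which is the reason this lemma is reproved rather than quoted from Ogus: a unit $f_i$ of $\cO_{X, \AC{x}}$ need not lie in $\cO_{X, \AC{x}}^+$, so adjoining it naively can destroy the requirement that $\alpha \circ \theta''$ factor through $\cO_{X_\et}^+$. The paper's proof uses the description $\cO_{X, \AC{x}}^+ = \{ f : |f(\AC{x})| \leq 1 \}$ to arrange, after a further \'etale localization, that for each $i$ at least one of $f_i$, $f_i^{-1}$ is power-bounded, adjoins a free generator $e_i$ mapping to whichever one is, and imposes the relation $(s'_i, 0) \sim (\beta(s_i), e_i)$ or $(s'_i, e_i) \sim (\beta(s_i), 0)$ accordingly; the chart property is then verified by checking that $S_X / (\alpha \circ \theta)^{-1}(\cO_X^\times) \to S''_X / (\alpha \circ \theta'')^{-1}(\cO_X^\times)$ is surjective on generators, which also renders your appeal to R\'edei's theorem on finitely generated congruences unnecessary.
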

\begin{proof}
    The proof is similar to \cite[\aProp II.2.2.1]{Ogus:2018-LLG}, except that, when compared with charts on log schemes, charts $\theta$ on log adic spaces $(X, \cM_X, \alpha)$ are subject to the additional requirement that $\alpha \circ \theta$ factors through $\cO_{X_\et}^+$.

    Let $\{ a'_i \}_{i \in I}$ be a finite set of generators of $P'$.  Since $P_X \to \overline{\cM}_X$ is surjective \Pth{by Remark \ref{rem-chart-stalk}}, \'etale locally on $X$, there exist some $a_i \in P$ and $f_i \in \cO_X^\times(X)$ such that $\theta'(a'_i) = \theta(a_i) \, f_i$, for all $i \in I$.  By \cite[(1) in the proof of \aProp 2.5.13]{Huber:1996-ERA}, for each geometric point $\AC{x}$ of $X$, we have
    \[
        \cO_{X_\et, \AC{x}}^+ = \{ f \in \cO_{X_\et, \AC{x}} : |f(\AC{x})| \leq 1 \}
    \]
    in $\cO_{X_\et, \AC{x}}$.  By Remark \ref{rem-fg}, up to further \'etale localization on $X$, we may assume that, for each $i \in I$, at least one of $f_i$ and $f_i^{-1}$ is in $\cO_X^+(X)$.  Consider the homomorphism $P' \oplus \bZ_{\geq 0}^I \to \cM_X(X)$ sending $(a'_i, 0) \mapsto \theta(a_i)$ and sending
    \[
        \begin{cases}
            (0, e_i) \mapsto f_i, & \Utext{if $f_i \in \cO_X^+(X)$;} \\
            (0, e_i) \mapsto f_i^{-1}, & \Utext{if $f_i \not \in \cO_X^+(X)$ but $f_i^{-1} \in \cO_X^+(X)$,}
        \end{cases}
    \]
    where $e_i$ denotes the $i$-th standard basis element of $\bZ_{\geq 0}^I$.  Let $\beta$ denote the homomorphism $P \to P'$, and let $P''$ be the quotient of $P' \oplus \bZ_{\geq 0}^I$ modulo the relations
    \[
        \begin{cases}
            (a'_i, 0) \sim (\beta(a_i), e_i), & \Utext{if $f_i \in \cO_X^+(X)$;} \\
            (a'_i, e_i) \sim (\beta(a_i), 0), & \Utext{if $f_i \not \in \cO_X^+(X)$ but $f_i^{-1} \in \cO_X^+(X)$.}
        \end{cases}
    \]
    By construction, $P' \oplus \bZ_{\geq 0}^I \to \cM_X(X)$ factors through an induced homomorphism $\theta'': P'' \to \cM_X(X)$, and $\alpha \circ \theta''$ factors through $\cO_X^+$, as desired.

    It remains to check that the log structure associated with the pre-log structure $\alpha \circ \theta'': P''_X \to \cM_X \to \cO_{X_\et}$ coincides with $\cM_X$; \ie, the natural morphism ${^a}P_X \to {^a}P''_X$ induced by $P \to P''$ is an isomorphism.  It is injective because the composition ${^a}P_X \to {^a}P''_X \to \cM_X$ is an isomorphism.  It is also surjective, because the induced morphism $P_X / (\alpha \circ \theta)^{-1}(\cO_{X_\et}^\times) \to P_X'' / (\alpha \circ \theta'')^{-1}(\cO_{X_\et}^\times)$ is surjective, since the target is generated by the images of $a'_i$ which lift to the images of $a_i$ in the source.
\end{proof}

\begin{lem}\label{lem-int-sat-chart}
    Let $(X, \cM_X, \alpha)$ be a locally noetherian coherent log adic space, $P$ a monoid, and $P_X \to \cM_X$ a chart.  Suppose that $(X, \cM_X, \alpha)$ is integral \Pth{\resp saturated}, in which case $P_X \to \cM_X$ factors through $P^\Int_X \to \cM_X$ \Pth{\resp $P^\Sat_X \to \cM_X$}.  Then $P^\Int_X \to \cM_X$ \Pth{\resp $P^\Sat_X \to \cM_X$} is also a chart.
\end{lem}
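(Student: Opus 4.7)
The plan is to verify the two defining conditions of a chart from Definition \ref{def-chart} for the given morphisms $\theta^\Int: P^\Int_X \to \cM_X$ and $\theta^\Sat: P^\Sat_X \to \cM_X$: first, that the composition with $\alpha$ factors through $\cO_{X_\et}^+$; and second, that the log structure associated with the resulting pre-log structure is canonically isomorphic to $\cM_X$.  The factorizations themselves are ensured by the hypothesis together with the integrality \Pth{\resp saturatedness} of $\cM_X$ and the universal properties of $P^\Int$ and $P^\Sat$ \Ref{Remark \ref{rem-int-sat}}, applied stalkwise via Lemma \ref{lem-int-sat-stalk}.

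For the image condition in the integral case, I would note that $P \to P^\Int$ is surjective \Pth{as $P^\Int$ is by definition the image of $P$ in $P^\gp$}, so every section of $P^\Int_X$ \'etale locally lifts to a section of $P_X$, and its image under $\alpha \circ \theta^\Int$ agrees with that of the lift under the original $\alpha \circ \theta$, which lies in $\cO_{X_\et}^+$ by the chart hypothesis.  For the saturated case, given any $a \in P^\Sat$, some positive multiple $na$ lies in $P^\Int$, so $(\alpha \circ \theta^\Sat)(a)^n = (\alpha \circ \theta^\Sat)(na) \in \cO^+_{X, \AC{x}}$ at each geometric point $\AC{x}$; since $\cO^+_{X, \AC{x}} = \{ f \in \cO_{X, \AC{x}} : |f(\AC{x})| \leq 1 \}$ by \cite[\aProp 2.5.13]{Huber:1996-ERA} is integrally closed in $\cO_{X, \AC{x}}$, it follows that $(\alpha \circ \theta^\Sat)(a) \in \cO^+_{X, \AC{x}}$ as well.

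For the second condition, I would invoke the universal property of the associated-log-structure functor \Ref{Remark \ref{rem-log-str-assoc}}, writing $Q$ for $P^\Int$ \Pth{\resp $P^\Sat$}.  Since $\cM_X$ is already a log structure receiving the pre-log structure $Q_X \to \cO_{X_\et}$, adjunction produces a canonical morphism of log structures $\psi: {^a}Q_X \to \cM_X$.  Conversely, the composition $P_X \to Q_X \to {^a}Q_X$ exhibits ${^a}Q_X$ as a log structure receiving the original pre-log structure $P_X \to \cO_{X_\et}$; combined with the chart hypothesis $\cM_X \cong {^a}P_X$, adjunction produces a reverse morphism $\phi: \cM_X \to {^a}Q_X$.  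Both compositions $\psi \circ \phi$ and $\phi \circ \psi$ are induced by the identity on the underlying pre-log structures $P_X \to \cO_{X_\et}$ and $Q_X \to \cO_{X_\et}$, respectively, so by the uniqueness clause of the universal property they coincide with the identities on $\cM_X$ and on ${^a}Q_X$, yielding ${^a}Q_X \cong \cM_X$ as desired.

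The expected main obstacle is little more than careful bookkeeping in the universal property argument; the only minor subtlety is the use of integral closedness of $\cO^+$ in $\cO$ at geometric stalks to handle the saturated image condition, which is immediate from Huber's pointwise description of $\cO^+$ but worth noting explicitly.
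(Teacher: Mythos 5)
Your treatment of the integral case is correct and is essentially the paper's argument: the surjectivity of $P \to P^\Int$ gives both the factorization of $\alpha \circ \theta^\Int$ through $\cO_{X_\et}^+$ and the surjectivity of ${^a}P_X \to {^a}P^\Int_X$, while injectivity follows because the composite with $\cM_X$ is an isomorphism.  Your handling of the image condition in the saturated case \Pth{via integral closedness of $\cO^+_{X, \AC{x}}$ in $\cO_{X, \AC{x}}$} also matches the paper.

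The gap is in the final adjunction step for the saturated case.  Writing $Q = P^\Sat$ and $u: P_X \to Q_X$, your $\phi: \cM_X \cong {^a}P_X \to {^a}Q_X$ is characterized only by the identity $\phi \circ \theta^\Sat \circ u = c \circ u$, where $c: Q_X \to {^a}Q_X$ is the canonical map.  To conclude that $\phi \circ \psi$ is \Qtn{induced by the identity on $Q_X \to \cO_{X_\et}$}, you need $\phi \circ \theta^\Sat = c$ on all of $Q_X$, not just on $u(P_X)$; and since $P^\Int \to P^\Sat$ is \emph{not} surjective in general, this does not follow from the universal property.  \Pth{One cannot patch this by noting that $n a \in P^\Int$ for some $n \geq 1$: the discrepancy $\phi(\theta^\Sat(a)) - c(a)$ is then only seen to be an $n$-torsion element of $\alpha^{-1}(\cO_{X, \AC{x}}^\times) \cong \cO_{X, \AC{x}}^\times$, which need not vanish, e.g., when $\alpha(\theta^\Sat(a)) = 0$.}  What your argument actually establishes is $\psi \circ \phi = \Id_{\cM_X}$, \ie, that $\psi: {^a}P^\Sat_X \to \cM_X$ is a split epimorphism; the missing injectivity of $\psi$ \Pth{equivalently, surjectivity of $\phi$} is where the paper does the real work.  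There, one passes to the quotients by units at a geometric point $\AC{x}$ and shows that the canonical map $P / (\alpha \circ \beta)^{-1}(\cO_{X, \AC{x}}^\times) \to P^\Sat / (\alpha \circ \beta')^{-1}(\cO_{X, \AC{x}}^\times)$ is an isomorphism, by identifying $\bigl((\alpha \circ \beta')^{-1}(\cO_{X, \AC{x}}^\times)\bigr)^\gp$ with $(\beta^\gp)^{-1}\bigl(\alpha^{-1}(\cO_{X, \AC{x}}^\times)\bigr)$ and using, crucially, that $\overline{\cM}_{X, \AC{x}} \cong P / (\alpha \circ \beta)^{-1}(\cO_{X, \AC{x}}^\times)$ is \emph{saturated}, so that it coincides with its own saturation.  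A telltale sign of the gap is that your argument would prove the saturated case using the saturatedness of $\cM_X$ only to obtain the factorization through $P^\Sat_X$, whereas it must enter the proof of the isomorphism itself.
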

\begin{proof}
    Suppose that $(X, \cM_X, \alpha)$ is integral.  Since $P_X \to \cM_X$ is a chart, the composition ${^a}P_X \to {^a}P_X^\Int \to \cM_X$ is an isomorphism, and hence the induced morphism ${^a}P_X \to {^a}P_X^\Int$ is injective.  Since $P \to P^\Int$ is surjective, the composition $P_X^\Int \to \cM_X \to \cO_{X_\et}$ factors through $\cO_{X_\et}^+$, and the induced map ${^a}P_X \to {^a}P_X^\Int$ is surjective and hence is an isomorphism.

    Suppose that $(X, \cM_X, \alpha)$ is saturated.  Then the chart $P_X \to \cM_X$ factors as a composition of $P_X \to P^\Sat_X \to \cM_X$.  Since $\cO_{X_\et}^+$ is integrally closed in $\cO_{X_\et}$, the composition $P^\Sat_X \to \cM_X \to \cO_{X_\et}$ factors through $\cO_{X_\et}^+$.  It remains to show that the induced morphism ${^a}P_X \to {^a}P_X^\Sat$ is an isomorphism.  By Remark \ref{rem-chart-stalk}, it suffices to show that, at each geometric point $\AC{x}$ of $X$, if we denote by $\beta: P \to \cM_{X, \AC{x}}$ and $\beta': P^\Sat \to \cM_{X, \AC{x}}$ the induced homomorphisms, then the canonical homomorphism
    \begin{equation}\label{eq-lem-int-sat-chart-1}
        P / (\alpha \circ \beta)^{-1}(\cO_{X_\et, \AC{x}}^\times) \to P^\Sat / (\alpha \circ \beta')^{-1}(\cO_{X_\et, \AC{x}}^\times)
    \end{equation}
    is an isomorphism.  Let $\overline{\beta}$ denote the composition of $P \to \cM_{X, \AC{x}} \to \overline{\cM}_{X, \AC{x}}$.  Since $\ker(\cM_{X, \AC{x}}^\gp \to \overline{\cM}_{X, \AC{x}}^\gp) = \cM_{X, \AC{x}}^\inv = \alpha^{-1}(\cO_{X_\et, \AC{x}}^\times)$, because $\cM_{X, \AC{x}}$ is integral \Pth{see Remark \ref{lem-int-sat-stalk}}, we obtain
    \[
        \ker(\overline{\beta}^\gp) = (\beta^\gp)^{-1}\bigl(\alpha^{-1}(\cO_{X_\et, \AC{x}}^\times)\bigr).
    \]
    Since $P^\gp / \ker(\overline{\beta}^\gp) \cong \overline{\cM}^\gp_{X, \AC{x}} \cong P^\gp / \bigl((\alpha \circ \beta)^{-1}(\cO_{X_\et, \AC{x}}^\times)\bigr)^\gp$, we obtain
    \[
        \bigl((\alpha \circ \beta)^{-1}(\cO_{X_\et, \AC{x}}^\times)\bigr)^\gp = (\beta^\gp)^{-1}\bigl(\alpha^{-1}(\cO_{X_\et, \AC{x}}^\times)\bigr).
    \]
    Since $(\alpha \circ \beta)^{-1}(\cO_{X_\et, \AC{x}}^\times) \subset (\alpha \circ \beta')^{-1}(\cO_{X_\et, \AC{x}}^\times) \subset (\beta^\gp)^{-1}\bigl(\alpha^{-1}(\cO_{X_\et, \AC{x}}^\times)\bigr)$, we obtain
    \[
        \bigl((\alpha \circ \beta')^{-1}(\cO_{X_\et, \AC{x}}^\times)\bigr)^\gp = (\beta^\gp)^{-1}\bigl(\alpha^{-1}(\cO_{X_\et, \AC{x}}^\times)\bigr).
    \]
    By Lemma \ref{lem-int-sat-quot} and the above, we see that the natural homomorphism
    \begin{equation}\label{eq-lem-int-sat-chart-2}
        P^\Sat / (\alpha \circ \beta')^{-1}(\cO_{X_\et, \AC{x}}^\times) \to P^\gp / \ker(\overline{\beta}^\gp)
    \end{equation}
    is injective, whose image is contained in $\bigl(P / (\alpha\circ\beta)^{-1}(\cO_{X_\et, \AC{x}}^\times)\bigr)^\Sat$.  Moreover, the composition of \Refeq{\ref{eq-lem-int-sat-chart-1}} and \Refeq{\ref{eq-lem-int-sat-chart-2}} induces the canonical homomorphism
    \begin{equation}\label{eq-lem-int-sat-chart-3}
        P / (\alpha \circ \beta)^{-1}(\cO_{X_\et, \AC{x}}^\times) \to \bigl(P / (\alpha \circ \beta)^{-1}(\cO_{X_\et, \AC{x}}^\times)\bigr)^\Sat.
    \end{equation}
    By Lemmas \ref{lem-int-sat-quot} and \ref{lem-int-sat-stalk}, $P / (\alpha \circ \beta)^{-1}(\cO_{X_\et, \AC{x}}^\times) \cong \overline{\cM}_{X, \AC{x}}$ is saturated.  Thus, \Refeq{\ref{eq-lem-int-sat-chart-3}} is an isomorphism, and so is \Refeq{\ref{eq-lem-int-sat-chart-1}}, as desired.
\end{proof}

\begin{prop}\label{prop-int-fine}
    Let $(X, \cM_X, \alpha)$ be a locally noetherian coherent log adic space.  Then it is fine \Pth{\resp fs} if and only if it is integral \Pth{\resp saturated}.
\end{prop}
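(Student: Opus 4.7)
The plan is to prove both directions separately, treating the fine/integral and fs/saturated variants in parallel.

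For the ``only if'' direction, by Lemma \ref{lem-int-sat-stalk} it suffices to verify that each stalk $\cM_{X, \AC{x}}$ is integral (resp.\ saturated). I would pick, \'etale locally near $\AC{x}$, a fine (resp.\ fs) chart $\theta: P_X \to \cM_X$. Unwinding the amalgamated sum presentation of the associated log structure and taking stalks, one obtains a canonical identification
\[
    \cM_{X, \AC{x}} \cong P \oplus_S \cO_{X, \AC{x}}^\times,
\]
where $S := (\alpha \circ \theta)^{-1}(\cO_{X, \AC{x}}^\times) \subset P$. Since $\cO_{X, \AC{x}}^\times$ is a group, Lemma \ref{lem-ama-int} immediately yields integrality of $\cM_{X, \AC{x}}$. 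For the saturated case, Remark \ref{rem-chart-stalk} identifies $\overline{\cM}_{X, \AC{x}}$ with $P / S$, which is saturated by Lemma \ref{lem-int-sat-quot}; one then invokes the elementary observation that any integral monoid $M$ whose $M^*$ is a group and whose $M / M^*$ is saturated must itself be saturated---indeed, given $a \in M^\gp$ with $n a \in M$, its image in $(M / M^*)^\gp$ lifts to some $b \in M$, so that $a - b \in M^* \subset M$, and hence $a \in M$.

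For the ``if'' direction, I would use coherence to choose, \'etale locally, a chart $\theta: P_X \to \cM_X$ with $P$ finitely generated. Integrality of $\cM_X$ forces $\theta$ to factor through $P^\Int_X$, and in the saturated case, the integral closedness of $\cO_{X_\et}^+$ in $\cO_{X_\et}$ together with the saturatedness of $\cM_X$ forces $\theta$ to factor through $P^\Sat_X$; these are the preliminary assertions of Lemma \ref{lem-int-sat-chart}. That lemma then promotes $P^\Int_X \to \cM_X$ (resp.\ $P^\Sat_X \to \cM_X$) to a chart. Since $P^\Int$ is fine, and since by Gordan's lemma the saturation of a fine monoid is again finitely generated and hence fs, one concludes that $(X, \cM_X)$ is fine (resp.\ fs).

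The main obstacle I anticipate is the saturated case of the ``only if'' direction: saturation of the amalgamated sum $P \oplus_S \cO_{X, \AC{x}}^\times$ does not follow from Lemma \ref{lem-ama-int}, which only gives integrality. The group-by-saturated extension observation above sidesteps a direct analysis of the pushout. Beyond this, all steps reduce to applications of Lemmas \ref{lem-ama-int}, \ref{lem-int-sat-quot}, \ref{lem-int-sat-stalk}, and \ref{lem-int-sat-chart}, together with the standard finite generation of the saturation of a fine monoid.
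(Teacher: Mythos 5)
Your proposal is correct and follows essentially the same route as the paper: the ``if'' direction via Lemma \ref{lem-int-sat-chart}, and the ``only if'' direction stalk-by-stalk via Lemma \ref{lem-ama-int}, Remark \ref{rem-chart-stalk}, Lemma \ref{lem-int-sat-quot}, and Lemma \ref{lem-int-sat-stalk}. The only cosmetic difference is that where you give the short group-by-saturated extension argument to pass from saturatedness of $\overline{\cM}_{X, \AC{x}}$ to that of $\cM_{X, \AC{x}}$, the paper simply cites \cite[\aProp I.1.3.3]{Ogus:2018-LLG}.
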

\begin{proof}
    If $(X, \cM_X, \alpha)$ is integral \Pth{\resp saturated}, then it is fine \Pth{\resp fs} by Lemma \ref{lem-int-sat-chart}.  Conversely, if $(X, \cM_X, \alpha)$ is fine, then it is integral by Lemma \ref{lem-ama-int}.  Suppose that $(X, \cM_X, \alpha)$ admits an fs chart $\theta: P_X \to \cM_X$.  By Remark \ref{rem-chart-stalk}, we have $P / (\alpha \circ \theta)^{-1}(\cO_{X_\et, \AC{x}}^\times) \Mi \cM_{X, \AC{x}} / \alpha^{-1}(\cO_{X_\et, \AC{x}}^\times) \cong \overline{\cM}_{X, \AC{x}}$ at each geometric point $\AC{x}$ of $X$.  By Lemma \ref{lem-int-sat-quot}, $\overline{\cM}_{X, \AC{x}}$ is saturated, because $P$ is.  By \cite[\aProp I.1.3.5]{Ogus:2018-LLG}, $\cM_{X, \AC{x}}$ is also saturated, because $\cM_{X, \AC{x}}$ is integral and $\overline{\cM}_{X, \AC{x}}$ is saturated.  Thus, $(X, \cM_X, \alpha)$ is saturated, by Lemma \ref{lem-int-sat-stalk}.
\end{proof}

\begin{lem}\label{lem-fs-split-int}
    Let $(X, \cM_X, \alpha)$ be a fine \Pth{\resp fs} log adic space.  For any geometric point $\AC{x}$ of $X$, the monoid $\overline{\cM}_{X, \AC{x}}$ is sharp fine \Pth{\resp toric---\ie, sharp fs}, and the canonical homomorphism $\cM_{X, \AC{x}} \to \overline{\cM}_{X, \AC{x}}$ admits a splitting $s$ that factors through the preimage of $\cO_{X_\et, \AC{x}}^+$ in $\cM_{X, \AC{x}}$.
\end{lem}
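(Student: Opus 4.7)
The plan is to prove the two assertions in turn. For the first, take an \'etale neighborhood of $\AC{x}$ admitting a chart $\theta: P_X \to \cM_X$ with $P$ an fs monoid (which exists by definition). By Remark \ref{rem-chart-stalk} we have a canonical isomorphism $P/(\alpha\circ\theta)^{-1}(\cO_{X,\AC{x}}^\times) \Mi \overline{\cM}_{X,\AC{x}}$. The source is finitely generated (as a quotient of $P$), saturated by Lemma \ref{lem-int-sat-quot} (being a quotient of the fs monoid $P$ by a submonoid), and sharp by Remark \ref{rem-stalk-sharp}; hence $\overline{\cM}_{X,\AC{x}}$ is toric.

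Set $Q := \overline{\cM}_{X,\AC{x}}$ and let $u: \cM_{X,\AC{x}} \to Q$ be the canonical projection. Since $(X, \cM_X)$ is fs and thus integral, $\cM_{X,\AC{x}}$ is integral by Lemma \ref{lem-int-sat-stalk}, and $\ker(u^\gp) = \cM_{X,\AC{x}}^* \subset \cM_{X,\AC{x}}$. Lemma \ref{lem-monoid-split} therefore yields a monoid section $s_0: Q \to \cM_{X,\AC{x}}$ of $u$; the remaining task is to correct $s_0$ by a character $\chi: Q \to \cM_{X,\AC{x}}^* \cong \cO_{X,\AC{x}}^\times$ so that $s(q) := s_0(q) + \chi(q)$ satisfies $\alpha(s(q)) \in \cO_{X,\AC{x}}^+$ for every $q \in Q$.

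To construct $\chi$, let $\Gamma_{\AC{x}}$ denote the value group at $\AC{x}$ and define the monoid homomorphism $\mu: Q \to \Gamma_{\AC{x}} \cup \{0\}$ by $\mu(q) := |\alpha(s_0(q))(\AC{x})|$. Since $\Gamma_{\AC{x}} \cup \{0\}$ has no zero-divisors, $F := \mu^{-1}(\Gamma_{\AC{x}})$ is a face of $Q$, and $\mu|_F$ extends to a group homomorphism $\mu^\gp: F^\gp \to \Gamma_{\AC{x}}$. Since $F$ is a face of the toric monoid $Q$, the quotient $Q/F$ is again toric (saturated by Lemma \ref{lem-int-sat-quot}, and sharp by the face property), so $(Q/F)^\gp \cong Q^\gp/F^\gp$ is torsion-free, and hence $F^\gp$ is a direct summand of $Q^\gp$. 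The valuation map $|\cdot(\AC{x})|: \cO_{X,\AC{x}}^\times \to \Gamma_{\AC{x}}$ is surjective, as units of $\cO_{X,\AC{x}}$ surject onto units of the residue field $\AC{\kappa}(x)$, whose value group is $\Gamma_{\AC{x}}$ by definition. We may therefore lift $q \mapsto \mu^\gp(q)^{-1}$ on $F^\gp$ to a character $F^\gp \to \cO_{X,\AC{x}}^\times$ and extend via the splitting to $\chi^\gp: Q^\gp \to \cO_{X,\AC{x}}^\times$, letting $\chi$ be its restriction to $Q$. Then $s := s_0 + \chi$ is a section of $u$ with $|\alpha(s(q))(\AC{x})| = \mu(q) \cdot |\chi(q)(\AC{x})|$, which equals $1$ for $q \in F$ and $0$ for $q \in Q \setminus F$, so is always $\leq 1$; hence $\alpha(s(q)) \in \cO_{X,\AC{x}}^+$ as required. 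The hardest point is the construction of $\chi$, which relies on the torsionfreeness of $Q^\gp/F^\gp$ for a face of a toric monoid, together with the surjectivity of the valuation map on units at the geometric point $\AC{x}$.
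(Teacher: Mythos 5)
Your proof is correct, and the first half (toricness of $\overline{\cM}_{X,\AC{x}}$ via a chart, Remark \ref{rem-chart-stalk}, Lemma \ref{lem-int-sat-quot}, and Remark \ref{rem-stalk-sharp}) matches the paper's argument in substance. The correction of the section $s_0$, however, is genuinely different. The paper's proof is more elementary: it embeds $Q := \overline{\cM}_{X,\AC{x}}$ into some $\bZ_{\geq 0}^{r'}$ \Pth{via \cite[\aCor I.2.2.7]{Ogus:2018-LLG}}, picks the single generator $a_{i_0}$ whose image has the largest valuation $\gamma_{i_0} > 1$ \Pth{if any}, and twists $s_0$ by $f_0^{n_1 + \cdots + n_{r'}}$ where $f_0 = \alpha(s_0(a_{i_0}))^{-1}$, using only the estimate $\gamma_i \leq \gamma_{i_0}$ on generators to get $|\alpha(s(q))(\AC{x})| \leq 1$. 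You instead build a character $\chi: Q \to \cO_{X,\AC{x}}^\times$ that \emph{exactly} cancels the valuation of $\alpha \circ s_0$ on the face $F$ where it is nonzero; this requires the observations that $Q/F$ is toric \Pth{so $F^\gp$ is a direct summand of the free group $Q^\gp$} and that $\cO_{X,\AC{x}}^\times$ surjects onto the value group at $\AC{x}$ \Pth{which holds because the support of the valuation is the maximal ideal of the strictly local ring, so nonzero values are attained on units}. Your route is more involved but buys a stronger normalization, namely $|\alpha(s(q))(\AC{x})| \in \{0, 1\}$ for all $q$, whereas the paper only needs and only gets $\leq 1$. One small point worth making explicit: the final step uses the identification $\cO_{X,\AC{x}}^+ = \{ f \in \cO_{X,\AC{x}} : |f(\AC{x})| \leq 1 \}$ from \cite[(1) in the proof of \aProp 2.5.13]{Huber:1996-ERA}, which the paper cites at the corresponding place and you leave implicit.
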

\begin{proof}
    Let $P := \overline{\cM}_{X, \AC{x}}$, which is finitely generated because $X$ is fine.  Under the assumption that $X$ is fine \Pth{\resp fs}, by Proposition \ref{prop-int-fine} and Lemma \ref{lem-int-sat-stalk}, $\cM_{X, \AC{x}}$ is integral \Pth{\resp saturated}, and so its sharp quotient $P$ is sharp fine \Pth{\resp toric}.  By Lemma \ref{lem-monoid-split}, the surjective homomorphism $f: \cM_{X, \AC{x}} \to P$ admits a section $s_0$.  We need to modify this into a section $s: P \to \cM_{X, \AC{x}}$ such that $(\alpha \circ s)(P)\subset \cO_{X_\et, \AC{x}}^+$.

    By \cite[(1) in the proof of \aProp 2.5.13]{Huber:1996-ERA}, we have
    \[
        \cO_{X_\et, \AC{x}}^+ = \{ f \in \cO_{X_\et, \AC{x}} : | f(\AC{x}) | \leq 1 \}
    \]
    and
    \[
        \{ f \in \cO_{X_\et, \AC{x}} : | f(\AC{x}) | > 1 \} \subset \cO_{X_\et, \AC{x}}^\times
    \]
    in $\cO_{X_\et, \AC{x}}$.  Let $\{ a_1, \ldots, a_r \}$ be a finite set of generators of $P$.  For each $i$, let $\gamma_i := | \alpha(s_0(a_i))(\AC{x}) |$.  If $\gamma_i \leq 1$ for all $i$, then we set $f_0 := 1$ in $\cO_{X_\et, \AC{x}}$.  Otherwise, there exists some $i_0$ such that $\gamma_{i_0} > 1$ and $\gamma_{i_0} \geq \gamma_i$, for all $i$.  Then $\alpha(s_0(a_{i_0}))$ admits an inverse $f_0$ in $\cO_{X_\et, \AC{x}}$, so that $| f_0(\AC{x}) | = \gamma_{i_0}^{-1}$.  By \cite[\aCor I.2.2.7]{Ogus:2018-LLG}, we can identify $P$ with a submonoid of $\bZ_{\geq 0}^{r'}$, for some $r' \geq 0$, so that we can describe elements of $P$ by $r'$-tuples of integers.  Then the homomorphism
    \[
        s: P \to \cM_{X, \AC{x}}: (n_1, \ldots, n_{r'}) \mapsto f_0^{n_1 + \cdots + n_{r'}} s_0\bigl((n_1, \ldots, n_{r'})\bigr)
    \]
    satisfies $(\alpha \circ s)(P)\subset \cO_{X_\et, \AC{x}}^+$, as desired.
\end{proof}

\begin{prop}\label{prop-chart-stalk-chart-fs}
    Let $(X, \cM_X, \alpha)$ be an fine log adic space, and $\AC{x}$ any geometric point of $X$.  Then $X$ admits, \'etale locally at $\AC{x}$, a chart modeled on $\overline{\cM}_{X, \AC{x}}$.
\end{prop}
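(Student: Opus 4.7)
Set $P := \overline{\cM}_{X, \AC{x}}$, which is toric, and take the section $s: P \to \cM_{X, \AC{x}}$ of the quotient $\cM_{X, \AC{x}} \to \overline{\cM}_{X, \AC{x}}$ with $(\alpha \circ s)(P) \subseteq \cO_{X, \AC{x}}^+$ provided by Lemma~\ref{lem-fs-split-int}. Since $X$ is fs, after shrinking we may assume there is an fs chart $\theta: Q_X \to \cM_X$ on an \'etale neighborhood $U$ of $\AC{x}$. The goal is to lift $s$ to a morphism $\psi: P_V \to \cM_V$ on a smaller \'etale neighborhood $V$ of $\AC{x}$ and verify that it is itself a chart.

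To lift, pick generators $p_1, \ldots, p_r$ of $P$ and lift each $s(p_i)$ to a section $\psi(p_i) \in \cM_X(V)$ on a sufficiently small $V$. Since $P$ is fine and hence finitely presented, after further shrinking $V$ the finitely many defining relations already hold among the $\psi(p_i)$, giving a monoid homomorphism $\psi: P \to \cM_X(V)$ with $\psi_{\AC{x}} = s$; shrink $V$ once more to ensure $(\alpha \circ \psi)(P) \subseteq \cO_X^+(V)$. For each generator $q_j$ of $Q$, let $\bar q_j \in P$ denote the image of $\theta(q_j)$ under $\cM_{X, \AC{x}} \to P$; since $\theta(q_j)$ and $\psi(\bar q_j)$ have the same image in $\overline{\cM}_{X, \AC{x}}$, they differ by a unit, so after one final shrinking of $V$ there exist $v_j \in \cO_X^\times(V)$ with $\theta(q_j) = \psi(\bar q_j) + v_j$ on $V$.

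To check that $\psi_V$ is a chart, it suffices by Remark~\ref{rem-chart-stalk} and the identification of units to verify that at each geometric point $\AC{y}$ of $V$ the induced map $P/K_y \to \overline{\cM}_{V, \AC{y}}$ is an isomorphism, where $K_y := (\alpha \circ \psi)^{-1}(\cO^\times_{V, \AC{y}})$. Surjectivity is immediate: the identity $\theta(q_j) = \psi(\bar q_j) + v_j$ gives $\overline{\psi(\bar q_j)} = \overline{\theta(q_j)}$ in $\overline{\cM}_{V, \AC{y}}$, and the $\overline{\theta(q_j)}$ generate $\overline{\cM}_{V, \AC{y}}$ by Remark~\ref{rem-chart-stalk} applied to the chart $\theta$.

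The injectivity is the step I expect to be the main obstacle. Since $P/K_y$ is integral by Lemma~\ref{lem-int-sat-quot}, it reduces to showing $(\psi^\gp)^{-1}(\cM^*_{V, \AC{y}}) \subseteq K_y^\gp$ in $P^\gp$. For such an $a$, use surjectivity of $Q \to P$ to write $a = \sum_j n_j \bar q_j$ in $P^\gp$ with $n_j \in \bZ$. Substituting $\psi(\bar q_j) = \theta(q_j) - v_j$ gives $\psi^\gp(a) = \theta^\gp\bigl(\sum_j n_j q_j\bigr) - \sum_j n_j v_j$, forcing $\theta^\gp\bigl(\sum_j n_j q_j\bigr) \in \cM^*_{V, \AC{y}}$. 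By Remark~\ref{rem-chart-stalk} applied to $\theta$, the kernel of $Q^\gp \to \overline{\cM}_{V, \AC{y}}^\gp$ equals $\bigl((\alpha \circ \theta)^{-1}(\cO^\times_{V, \AC{y}})\bigr)^\gp$, so one may write $\sum_j n_j q_j = k_1 - k_2$ with $k_1, k_2 \in (\alpha \circ \theta)^{-1}(\cO^\times_{V, \AC{y}})$. Extending $q_j \mapsto v_j$ to a monoid homomorphism $v: Q \to \cO_X^\times(V)$, which is consistent because both $\theta$ and $\psi$ precomposed with $Q \to P$ are monoid homomorphisms, we find $\psi(\bar k_i) = \theta(k_i) - v(k_i) \in \cM^*_{V, \AC{y}}$, so $\bar k_i \in K_y$ and $a = \bar k_1 - \bar k_2 \in K_y^\gp$, as desired.
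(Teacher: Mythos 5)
Your proof is correct and follows the same route as the paper's: take the splitting $s: \overline{\cM}_{X, \AC{x}} \to \cM_{X, \AC{x}}$ with $(\alpha \circ s)(\overline{\cM}_{X, \AC{x}}) \subset \cO_{X, \AC{x}}^+$ provided by Lemma \ref{lem-fs-split-int} and spread it out to an \'etale neighborhood. The paper's proof simply asserts that the spread-out morphism is a chart, whereas you verify this at every nearby geometric point by comparison with an ambient fs chart $Q$; that verification is sound and supplies the details the paper leaves implicit.
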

\begin{proof}
    By Lemma \ref{lem-fs-split-int}, we have a splitting $s: P := \overline{\cM}_{X, \AC{x}} \to \cM_{X, \AC{x}}$ such that $(\alpha_{\AC{x}} \circ s)(P) \subset \cO_{X_\et, \AC{x}}^+$.  Since $P$ is fine because $X$ is fine, by Remark \ref{rem-fg}, up to \'etale localization on $X$, the splitting $s$ lifts to a morphism $\widetilde{s}: P_X \to \cM_X$ such that $(\alpha \circ \widetilde{s})(P_X) \subset \cO_{X_\et}^+$ \Pth{see Remark \ref{rem-fg}}.  Then the composition of $P_X \to \cM_X$ with $\alpha$ is a pre-log structure, whose associated log structure ${^a}P_X \to \cO_{X_\et}$ factors through ${^a}\widetilde{s}: {^a}P_X \to \cM_X$ \Pth{and $\alpha$}.  The induced ${^a}\widetilde{s}_{\AC{x}}: {^a}P_{X, \AC{x}} \to \cM_{X, \AC{x}}$ is an isomorphism, because the quotients of both sides by the isomorphic preimages of $\cO_{X_\et, \AC{x}}^\times$ induce the canonical isomorphism $P \Mi \overline{\cM}_{X, \AC{x}}$, by construction.  Hence, up to further \'etale localization on $X$, we may assume that ${^a}\widetilde{s}: {^a}P_X \to \cM_X$ is an isomorphism, because the quotients of both sides by the isomorphic preimages of $\cO_{X_\et}^\times$ induce the canonical morphism $P_X \to \overline{\cM}_X$ \Pth{again, see Remark \ref{rem-fg}}.  As a result, $\widetilde{s}: P_X \to \cM_X$ is a chart modeled on $P = \overline{\cM}_{X, \AC{x}}$, as desired.
\end{proof}

\begin{exam}\label{ex-log-adic-sp-pt-fs}
    An \emph{fs log point} is a log point \Pth{as in Example \ref{ex-log-adic-sp-pt}} that is an fs log adic space.  In the setting of Example \ref{ex-log-adic-sp-pt-sep-cl}, by Remark \ref{rem-stalk-sharp}, a log point $s = (\Spa(l, l^+), M)$ with $l$ separably closed is an fs log point exactly when $M / l^\times$ is toric \Pth{\ie, sharp fs}.  In this case, by Lemmas  \ref{lem-monoid-split} and \ref{lem-fs-split-int}, there always exists a homomorphism of monoids $M / l^\times \to M$ splitting the canonical homomorphism $M \to M / l^\times$ and defining a chart of $s$ modeled on $M$.
\end{exam}

\begin{exam}\label{ex-log-adic-sp-pt-fs-split}
    A special case of Example \ref{ex-log-adic-sp-pt-fs} is
    a \emph{split fs log point} \ie, a log point of the form $s = (X, \cM_X) \cong (\Spa(l, l^+), \cO_{X_\et}^\times \oplus P_X)$ for some \Pth{necessarily} toric monoid $P$.  This is equivalent to a log point $(\Spa(L, L^+), M)$, where $L$ is the completion of a separable closure $l^\sep$ of $l$, with a $\Gal(l^\sep / l)$-equivariant splitting of the homomorphism $M \to M / L^\times$.  We also remark that this is the same as a $\Gal(l^\sep / l)$-equivariant splitting of the homomorphism $M^\gp \to M^\gp / L^\times$.
\end{exam}

\begin{exam}\label{ex-log-adic-sp-div}
    Let $D$ be an effective Cartier divisor on a normal rigid analytic variety $X$ over a nonarchimedean field $k$, and let $\imath: D \Em X$ denote the associated closed immersion.  By viewing $X$ as a noetherian adic space, we equip $X$ with the log structure $\alpha: \cM_X \to \cO_{X_\et}$ defined by setting
    \[
        \cM_X(V) = \{ f \in \cO_{X_\et}(V) : \Utext{$f$ is invertible on the preimage of $X - D$} \},
    \]
    for each object $V \to X$ in $X_\et$, with $\alpha(V): \cM_X(V) \to \cO_{X_\et}(V)$ given by the natural inclusion.  This makes $X$ a locally noetherian fs log adic space.  \Pth{The normality of $X$ is necessary for showing that the log structure $\cM_X$ is indeed saturated.}  Then $X - D$ is the maximal open subspace of $X$ over which $\cM_X$ is trivial.  Note that, in Example \ref{ex-log-adic-sp-disc}, the log structure of $X \cong \bD^n$ can be defined alternatively as above by the closed immersion $\imath: D := \{ T_1 \cdots T_n = 0 \} \Em \bD^n$.
\end{exam}

The following special case is useful in many applications:
\begin{exam}\label{ex-log-adic-sp-ncd}
    Let $X$, $D$, and $k$ be as in Example \ref{ex-log-adic-sp-div}.  Suppose moreover that $X$ is smooth.  We say that $D$ is a \emph{\Pth{reduced} normal crossings divisor} of $X$ if, \'etale locally on $X$---or equivalently \Pth{by \cite[\aLem 3.1.5]{deJong/vanderPut:1996-ecras}}, analytic locally on $X$, up to replacing the base field $k$ with a finite separable extension---$X$ and $D$ are of the form $S \times \bD^m$ and $S \times \{ T_1 \cdots T_m = 0 \}$, where $S$ is a smooth connected rigid analytic variety over $k$, and $\iota: D \Em X$ is the pullback of the canonical closed immersion $\{ T_1 \cdots T_m = 0 \} \Em \bD^m$.  \Pth{This definition is justified by \cite[\aThm 1.18]{Kiehl:1967-drkab}.}  Then we equip $X$ with the fs log structure defined as in Example \ref{ex-log-adic-sp-div}, which is compatible with the one of $\bD^m$ as in Example \ref{ex-log-adic-sp-disc} via pullback.
\end{exam}

The following example will be useful when studying the geometric monodromy and nearby cycles of \'etale local systems \Qtn{\emph{along the boundary}}:
\begin{exam}\label{ex-log-adic-sp-ncd-strict-cl-imm}
    Let $X$, $D$, and $k$ be as in Example \ref{ex-log-adic-sp-ncd}.  Suppose that $\{ D_j \}_{j \in I}$ is the set of irreducible components of $D$ \Pth{see \cite{Conrad:1999-icrs}}.  For each $J \subset I$, as locally closed subspaces of $X$, consider $X_J := X \cap \bigl(\cap_{j \in J} \, D_j\bigr)$, $D_J := \cup_{J \subsetneq J' \subset I} \, X_{J'}$, and $U_J := X_J - D_J$.  By pulling back the log structure from $X$ to $X_J$ and $U_J$, respectively, we obtain log adic spaces $(X_J^\partial, \cM_{X_J^\partial})$ and $(U_J^\partial, \cM_{U_J^\partial})$ \Pth{with strict immersions to $X$}.  When $X_J$ is also smooth and so $D_J$ is a normal crossings divisor, we equip $X_J$ with the fs log structure defined by $D_J$ as in Example \ref{ex-log-adic-sp-div}, whose restriction to $U_J$ is then the trivial log structure.  If we also consider $D^J := \cup_{j \in I - J} \, D_j$, and let $X^J$ denote the same adic space $X$ but equipped with the fs log structure defined by $D^J$ as in Example \ref{ex-log-adic-sp-div}, then $\cM_{X_J}$ and $\cM_{U_J} = \cO_{U_J, \et}^\times$ are nothing but the log structures pulled back from $X^J$.  Moreover, since $D^J \subset D$, there is a canonical morphism of log adic spaces $X \to X^J$; and since $D_J = D^J \cap X_J$, this morphism induces a canonical morphism of log adic spaces $X_J^\partial \to X_J$, whose underlying morphism of adic spaces is an isomorphism.  Since $X$ and $D$ is \'etale locally of the form $S \times \bD^m$ and $S \times \{ T_1 \cdots T_m = 0 \}$ for some smooth $S$ over $k$, it follows that $X_J$ is \'etale locally of the form $S \times \bD^{m - |J|}$, in which case the log structures $\cM_{X_J^\partial}$ and $\cM_{X_J}$ are associated with the pre-log structures $\bZ_{\geq 0}^m \to \cO_{X_J^\partial, \et}$ and $\bZ_{\geq 0}^{m - |J|} \to \cO_{X_J, \et}$, and we have a direct sum $\cM_{X_J^\partial} \cong \cM_{X_J} \oplus (\bZ_{\geq 0}^J)_{X_J}$.
\end{exam}

\begin{defn}\label{def-chart-mor}
    Let $f: (Y, \cM_Y, \alpha_Y) \to (X, \cM_X, \alpha_X)$ be a morphism of log adic spaces.  A \emph{chart} of $f$ consists of charts $\theta_X: P_X \to \cM_X$ and $\theta_Y: Q_Y \to \cM_Y$ and a homomorphism $u: P \to Q$ of monoids such that the diagram
    \[
        \xymatrix{ {P_Y} \ar^-u[r] \ar^-{\theta_X}[d] & {Q_Y} \ar^-{\theta_Y}[d] \\
        {f^{-1}(\cM_X)} \ar^-{f^\sharp}[r] & {\cM_Y} }
    \]
    commutes.  We say that the chart is \emph{finitely generated} \Pth{\resp \emph{fine}, \resp \emph{fs}} if both $P$ and $Q$ are finitely generated \Pth{\resp fine, \resp fs}.  When the context is clear, we shall simply say that $u: P \to Q$ is the chart of $f$.
\end{defn}

\begin{exam}\label{ex-log-adic-sp-ncd-cov}
    Let $P := \bZ_{\geq 0}^n$ and let $Q$ be a toric submonoid of $\frac{1}{m} \bZ_{\geq 0}^n$ containing $P$, for some $m \geq 1$.  Then the canonical homomorphism $u: P \to Q$ induces a morphism $f: Y := \Spa(k\Talg{Q}, k^+\Talg{Q}) \to X := \Spa(k\Talg{P}, k^+\Talg{P}) \cong \bD^n$ of normal adic spaces, whose source and target are equipped with canonical log structures as in Examples \ref{ex-log-adic-sp-toric} and \ref{ex-log-adic-sp-disc}, making $f: Y \to X$ a morphism of fs log adic spaces.  Moreover, these log structures on $X$ and $Y$ coincide with those on $X$ and $Y$ defined by $D = \{ T_1 \cdots T_n = 0 \} \Em X$ and its pullback to $Y$, respectively, as in Example \ref{ex-log-adic-sp-div}.  A chart of $f: Y \to X$ is given by the canonical charts $P \to \cM_X(X)$ and $Q \to \cM_Y(Y)$ and the above $u: P \to Q$.
\end{exam}

\begin{prop}\label{prop-chart-mor-exist}
    Let $f: Y \to X$ be a morphism of coherent log adic spaces, and let $P \to \cM_X(X)$ be a chart modeled on a finitely generated monoid $P$.  Then, \'etale locally on $Y$, there exist a chart $Q \to \cM_Y(Y)$ modeled on a finitely generated monoid $Q$ and a homomorphism $P \to Q$, which together provide a chart of $f$.
\end{prop}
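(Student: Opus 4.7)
The plan is to combine a locally chosen chart of $Y$ with the pullback of the chart of $X$, and then refine the resulting pre-chart to an honest chart via Lemma \ref{lem-chart-fg-Ogus}.

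First, since $Y$ is coherent, after replacing $Y$ by an \'etale cover I would choose a chart $\theta_Y^0: (Q_0)_Y \to \cM_Y$ modeled on a finitely generated monoid $Q_0$. Meanwhile, the morphism $f$ together with the given chart $\theta_X: P_X \to \cM_X$ produces a morphism of sheaves of monoids $P_Y = f^{-1}(P_X) \to f^{-1}(\cM_X) \Mapn{f^\sharp} \cM_Y$, and hence a homomorphism $\beta: P \to \cM_Y(Y)$.

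Next I would form $Q := P \oplus Q_0$, which is again finitely generated, and define $\theta': Q_Y \to \cM_Y$ by the pair $(\beta, \theta_Y^0)$. The chart $\theta_Y^0$ then factors through $\theta'$ via the natural inclusion $Q_0 \Em Q$. Moreover, the composition $\alpha_Y \circ \theta'$ factors through $\cO_{Y_\et}^+$: on the $Q_0$-summand this holds because $\theta_Y^0$ is a chart, and on the $P$-summand it follows from the fact that $\alpha_X \circ \theta_X$ lands in $\cO_{X_\et}^+$ (since $\theta_X$ is a chart) together with the fact that the morphism $f^\sharp$ carries $f^{-1}(\cO_{X_\et}^+)$ into $\cO_{Y_\et}^+$.

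Finally I would apply Lemma \ref{lem-chart-fg-Ogus} to the chart $\theta_Y^0$ equipped with the factorization through $\theta'$: \'etale locally on $Y$ there exists a chart $\theta_Y: Q'_Y \to \cM_Y$ modeled on some finitely generated monoid $Q'$ such that $\theta'$ factors through $\theta_Y$ via a homomorphism $Q \to Q'$. Composing $P \Em Q = P \oplus Q_0 \to Q'$ yields the desired homomorphism $u: P \to Q'$, and by construction the triple $(\theta_X, \theta_Y, u)$ satisfies the commutativity in Definition \ref{def-chart-mor}, providing a chart of $f$. The substantive work is all absorbed into Lemma \ref{lem-chart-fg-Ogus}; the remaining content of the proof is just the formal combination of the input data and the verification of the $\cO^+$-condition, so I do not anticipate any serious obstacle beyond keeping track of this bookkeeping.
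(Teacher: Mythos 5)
Your proposal is correct and follows essentially the same route as the paper: pull back the chart of $X$, take the direct sum with a locally chosen finitely generated chart of $Y$, check the $\cO^+$-condition, and invoke Lemma \ref{lem-chart-fg-Ogus} (with $S$ the chart of $Y$ and $S'$ the direct sum) to refine this to an honest chart through which everything factors. The only difference is notational.
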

\begin{proof}
    Up to \'etale localization on $Y$ and $X$, we may assume that $(X, \cM_X)$ and $(Y, \cM_Y)$ are modeled on some finitely generated monoids $P$ and $Q'$, respectively.  Then the composition of $P_Y \cong f^{-1}(P_X) \to f^{-1}(\cM_X) \to \cM_Y$ induces a morphism $P_Y \to (P \oplus Q')_Y \to \cM_Y$.  Note that $P \oplus Q'$ is finitely generated, and that the composition $(P \oplus Q')_Y \to \cM_Y \to \cO_{Y_\et}$ factors through $\cO_{Y_\et}^+$.  By applying Lemma \ref{lem-chart-fg-Ogus} to $Q'_Y \to (P \oplus Q')_Y \to \cM_Y$, we see that, \'etale locally, $(P \oplus Q')_Y \to \cM_Y$ factors as $(P \oplus Q')_Y \to Q_Y \to \cM_Y$, where $Q_Y \to \cM_Y$ is a chart modeled on a finitely generated monoid $Q$.  Thus, the composition $P \to P \oplus Q' \to Q$ gives a chart of $f$, as desired.
\end{proof}

\begin{prop}\label{prop-chart-mor-exist-fine}
    Any morphism between fine \Pth{\resp fs} log adic spaces \'etale locally admits fine \Pth{\resp fs} charts.
\end{prop}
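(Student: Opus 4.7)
The plan is to start from the finitely generated chart of $f$ produced by Proposition \ref{prop-chart-mor-exist} and then replace its target monoid by its integralization or saturation. Since $X$ is fine \Pth{\resp fs}, \'etale locally on $X$ we may choose a fine \Pth{\resp fs} chart $\theta_X: P_X \to \cM_X$ with $P$ fine \Pth{\resp fs}. Applying Proposition \ref{prop-chart-mor-exist} to $f$ with this chart then yields, after further \'etale localization on $Y$, a finitely generated chart $\theta_Q: Q_Y \to \cM_Y$ together with a homomorphism $u: P \to Q$ forming a chart of $f$.

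The next step is to replace $Q$ by $Q^{\Int}$ in the fine case, or by $Q^{\Sat}$ in the fs case; both remain finitely generated \Pth{the latter by Gordan's lemma applied to $Q^{\Int}$}. To justify the replacement, I first claim that $\cM_Y$ is integral \Pth{\resp saturated} as a sheaf of monoids. Indeed, at any geometric point $\AC{y}$, an \'etale-local fine \Pth{\resp fs} chart $\theta': Q'_Y \to \cM_Y$ exhibits $\cM_{Y, \AC{y}}$ as the amalgamated sum $Q' \oplus_S \cO_{Y, \AC{y}}^\times$ for some submonoid $S \subset Q'$. Lemma \ref{lem-ama-int} gives integrality; in the saturated case, Remark \ref{rem-chart-stalk} and Lemma \ref{lem-int-sat-quot} show that $\overline{\cM}_{Y, \AC{y}}$ is saturated, from which saturation of $\cM_{Y, \AC{y}}$ follows using integrality and the fact that $\cM_{Y, \AC{y}}^* = \cO_{Y, \AC{y}}^\times$ is a group. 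Lemma \ref{lem-int-sat-stalk} upgrades this to integrality \Pth{\resp saturation} of the sheaf $\cM_Y$.

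Consequently, $\theta_Q$ factors through $Q^{\Int}_Y$ \Pth{\resp $Q^{\Sat}_Y$}, and Lemma \ref{lem-int-sat-chart} immediately gives that this factored morphism is itself a chart. The composition of $u$ with $Q \to Q^{\Int}$ \Pth{\resp $Q \to Q^{\Sat}$} then supplies the homomorphism of monoids needed to assemble a fine \Pth{\resp fs} chart of $f$. The only mildly delicate point is the saturation argument at stalks outlined above; every other step is a formal consequence of the cited results, so I expect no substantial obstacle.
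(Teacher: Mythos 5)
Your proposal is correct and follows essentially the same route as the paper: produce a finitely generated chart $P \to Q$ of $f$ via Proposition \ref{prop-chart-mor-exist} \Pth{starting from a fine, \resp fs, chart of $X$} and then replace $Q$ by $Q^\Int$ \Pth{\resp $Q^\Sat$} using Lemma \ref{lem-int-sat-chart}. The stalkwise verification that $\cM_Y$ is integral \Pth{\resp saturated} is correct but is already the content of Proposition \ref{prop-int-fine}, which is what the paper relies on at that step.
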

\begin{proof}
    By Proposition \ref{prop-chart-stalk-chart-fs}, \'etale locally, $X$ admits a chart modeled on a fine \Pth{\resp fs} monoid $P$.  By Proposition \ref{prop-chart-mor-exist}, $f$ admits, \'etale locally on $Y$, a chart $P \to Q$ with finitely generated $Q$.  By Lemma \ref{lem-int-sat-chart}, the induced $Q^\Int_Y \to \cM_Y$ \Pth{\resp $Q^\Sat_Y \to \cM_Y$} is also a chart of $Y$.  Hence, the composition of $P \to Q \to Q^\Int$ \Pth{\resp $P \to Q \to Q^\Sat$} is a fine \Pth{\resp fs} chart of $f$.
\end{proof}

\begin{prop}\phantomsection\label{prop-adj-int-sat}
    \begin{enumerate}
        \item\label{prop-adj-int-sat-1} The inclusion from the category of noetherian \Pth{\resp locally noetherian} fine log adic spaces to the category of noetherian \Pth{\resp locally noetherian} coherent log adic spaces admits a right adjoint $X \mapsto X^\Int$, and the corresponding morphism of underlying adic spaces is a closed immersion.

        \item\label{prop-adj-int-sat-2} The inclusion from the category of noetherian \Pth{\resp locally noetherian} fs log adic spaces to the category of noetherian \Pth{\resp locally noetherian} fine log adic spaces admits a right adjoint $X \mapsto X^\Sat$, and the corresponding morphism of underlying adic spaces is finite and surjective.
    \end{enumerate}
\end{prop}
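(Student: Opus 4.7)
The plan is to adapt the scheme-theoretic constructions of \cite[\aSec III.2]{Ogus:2018-LLG} to the adic setting, working \'etale locally via charts and gluing by the universal property.

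For part \Refenum{\ref{prop-adj-int-sat-1}}, suppose first that $X$ is noetherian and admits a chart $\theta: P_X \to \cM_X$ modeled on a finitely generated monoid $P$, with $X$ mapping to some affinoid $\Spa(R, R^+)$.  By Remark \ref{rem-def-chart}, this chart corresponds to a strict morphism $X \to X_P := \Spa(R\Talg{P}, R^+\Talg{P})$, and $X_P$ is noetherian by Lemma \ref{lem-str-noe}.  The kernel of the surjection $P \to P^\Int$ is generated as a congruence by finitely many pairs, so the induced surjection $R\Talg{P} \to R\Talg{P^\Int}$ has finitely generated kernel, yielding a closed immersion $X_{P^\Int} \to X_P$.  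Since closed immersions are lft, the fiber product $X^\Int := X \times_{X_P} X_{P^\Int}$ exists as a noetherian adic space and maps to $X$ via a closed immersion; equipped with the chart $P^\Int \to \cM_{X^\Int}$ pulled back from $X_{P^\Int}$, it is a fine log adic space.  The universal property is now immediate: for a morphism $f: Y \to X$ from a noetherian fine log adic space $Y$, the composition $P_Y \to f^{-1}(\cM_X) \to \cM_Y$ factors uniquely through $P^\Int_Y$ (by Remark \ref{rem-int-sat} applied stalkwise, using Lemma \ref{lem-int-sat-stalk}), producing a unique morphism $Y \to X_{P^\Int}$ over $X_P$ and hence a unique lift $Y \to X^\Int$.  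In particular, the construction is independent of the chosen chart, and the local $X^\Int$'s glue over \'etale covers of a general locally noetherian coherent $X$.

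For part \Refenum{\ref{prop-adj-int-sat-2}}, one proceeds analogously with $P^\Sat$ in place of $P^\Int$.  For fine $P$, the saturation $P^\Sat$ is again fine (by a Gordan-type argument; see \cite[\aSec I.2]{Ogus:2018-LLG}), so $R[P^\Sat]$ is finitely generated as an $R[P]$-algebra.  Since every $a \in P^\Sat$ satisfies $na \in P$ for some $n \geq 1$, each $\mono{a}$ is integral over $R[P]$; combined with finite generation as an algebra, this makes $R[P^\Sat]$ a finite $R[P]$-module, and hence after completion $R\Talg{P^\Sat}$ is a finite $R\Talg{P}$-module (using that $R\Talg{P}$ is noetherian by Lemma \ref{lem-str-noe}).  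Thus $X_{P^\Sat} \to X_P$ is finite; it is surjective on the adic-spectral level because every continuous valuation on $R\Talg{P}$ extends to one on its integral extension $R\Talg{P^\Sat}$.  Setting $X^\Sat := X \times_{X_P} X_{P^\Sat}$, the projection to $X$ is finite and surjective by base change, and the pullback chart modeled on $P^\Sat$ makes $X^\Sat$ an fs log adic space.  The universal property follows as in part \Refenum{\ref{prop-adj-int-sat-1}} from Remark \ref{rem-int-sat}: for any fs $Y$ and morphism $f: Y \to X$, the composition $P_Y \to \cM_Y$ factors uniquely through $P^\Sat_Y$, since $\cM_Y$ is saturated stalkwise by Lemma \ref{lem-int-sat-stalk}.

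The main obstacle is verifying that the \'etale-local constructions glue, which is reduced by the universal property just established to the observation that two different charts on a common \'etale neighborhood produce canonically isomorphic fiber products---automatic once the universal property is in hand.  A secondary technical point is the surjectivity claim in part \Refenum{\ref{prop-adj-int-sat-2}}, which relies on the adic-space version of going-up for integral extensions of Huber pairs.
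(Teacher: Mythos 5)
Your construction is essentially the paper's: the fiber product $X \times_{X_P} X_{P^?}$ is, at the level of rings, exactly the paper's $R^? = R \otimes_{\bZ[P]} \bZ[P^?]$ with $R^{?+}$ the integral closure of $R^+ \otimes_{\bZ[P]} \bZ[P^?]$ (the paper records precisely this identification in Remark \ref{rem-adj-int-sat-rel}), the closed-immersion and finite-surjective claims are argued the same way, and the universal property is verified identically by factoring the chart through $P^?$ via the adjunction of Remark \ref{rem-int-sat}. One small citation point: to know that the composition $P_Y \to \cM_Y$ factors through $P^?_Y$ you need to know that $\cM_Y$ is integral \Pth{resp.\ saturated} when $Y$ is fine \Pth{resp.\ fs}; that is Proposition \ref{prop-int-fine}, not Lemma \ref{lem-int-sat-stalk} alone, which only reduces the question to stalks.

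The step that is not ``automatic'' is the gluing. Charts exist only \'etale-locally, so what the universal property gives you on the overlaps $X_i \times_X X_j$ of an \'etale covering is a descent datum for the finite morphisms $X_i^? \to X_i$, together with the uniqueness needed for the cocycle condition; it does not give effectivity of that datum. The paper closes this by \'etale descent for coherent sheaves (Proposition \ref{prop-et-coh-descent}), descending the finite $\cO$-algebra defining $\widetilde{X}^? \to \widetilde{X} = \coprod_i X_i$ to a finite morphism $X^? \to X$ \Pth{and descending the \'etale sheaf of monoids, which is essentially formal}. As written, your argument only produces $X^?$ when a global chart exists; you should add an appeal to some such effectivity statement to obtain the general locally noetherian case.
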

\begin{proof}
    In case \Refenum{\ref{prop-adj-int-sat-1}} \Pth{\resp \Refenum{\ref{prop-adj-int-sat-2}}}, let $? = \Int$ \Pth{\resp $\Sat$} in the following.

    Suppose that $X = \Spa(R, R^+)$ is noetherian affinoid and admits a global chart modeled on a finitely generated \Pth{\resp fine} monoid $P$, so that we have a homomorphism $P \to R$ of monoids, inducing a homomorphism $\bZ[P] \to R$ of rings.  Let $R^? := R \otimes_{\bZ[P]} \bZ[P^?]$, and let $R^{?+}$ denote the integral closure of $R^+ \otimes_{\bZ[P]} \bZ[P^?]$ in $R^?$.  Since $P$ is finitely generated, $\bZ[P^?]$ is a finite $\bZ[P]$-algebra, and $(R^?, R^{?+})$ is equipped with a unique topology extending that of $(R, R^+)$, which is not necessarily complete.  Let $X^? := \Spa(R^?, R^{?+})$ \Pth{which, as usual, depends only on the completion of $(R^?, R^{?+})$}, with the log structure induced by $P^? \to \cO_{X^?}(X^?) = R \otimes_{\bZ[P]} \bZ[P^?]: a \mapsto 1 \otimes \mono{a}$ \Pth{where $\mono{a}$ denotes the image of $a \in P^?$ in $\bZ[P^?]$, by our convention}.  Clearly, the natural projection $X^? \to X$ is a closed immersion \Pth{\resp finite and surjective morphism} of log adic spaces.  We claim that, if $(Y, \cM_Y)$ is a fine \Pth{\resp fs} log adic space, then each morphism $f: (Y, \cM_Y) \to (X, \cM_X)$ of log adic spaces factors through $X^?$, yielding $\Mor(Y, X) \cong \Mor(Y, X^?)$.  Indeed, by Proposition \ref{prop-int-fine}, the induced morphism $P_Y \cong f^{-1}(P_X) \to f^{-1}(\cM_X) \to \cM_Y$ factors through $P^?_Y$, and hence $Y \to X$ factors through $Y \to X^?$, as desired.

    In general, there exists an \'etale covering of $X$ by affinoids $X_i = \Spa(R_i, R^+_i)$ such that each $X_i$ admits a global chart modeled on a finitely generated \Pth{\resp fine} monoid \Pth{see Definition \ref{def-log-adic-sp-fs}}.  Consider $\widetilde{X} = \coprod_i \, X_i$.  By the affinoid case treated in the last paragraph, we obtain a finite morphism $\widetilde{X}^? \to \widetilde{X}$, which is equipped with a descent datum.  By \'etale descent of coherent sheaves \Pth{see Proposition \ref{prop-et-coh-descent}}, $\widetilde{X}^? \to \widetilde{X}$ descends to a locally noetherian adic space $X^? \to X$.  Also, the \'etale sheaf of monoids descends \Pth{essentially by definition}.  Finally, by Proposition \ref{prop-chart-mor-exist-fine} and the local construction in the previous paragraph, the formation $X \mapsto X^?$ is functorial, as desired.
\end{proof}

\begin{rk}\label{rem-adj-sat}
    For a noetherian \Pth{\resp locally noetherian} coherent log adic space $X$, we shall simply denote by $X^\Sat$ the fs log adic space $(X^\Int)^\Sat$.  By combining the two cases in Proposition \ref{prop-adj-int-sat}, the functor $X \mapsto X^\Sat$ from the category of noetherian \Pth{\resp locally noetherian} coherent log adic spaces to the category of noetherian \Pth{\resp locally noetherian} fs log adic spaces is the right adjoint of the inclusion from the category of noetherian \Pth{\resp locally noetherian} fs log adic spaces to the category of noetherian \Pth{\resp locally noetherian} coherent log adic spaces.
\end{rk}

\begin{rk}\label{rem-adj-int-sat-f-et}
    By construction, both the functors $X \mapsto X^\Int$ and $X \mapsto X^\Sat$ send strict and finite \Pth{\resp \'etale} morphisms to strict and finite \Pth{\resp \'etale} morphisms.
\end{rk}

\begin{rk}\label{rem-adj-int-sat-rel}
    Again by construction, when $X$ is a locally noetherian log adic space over a locally noetherian fs log adic space $Y$ and admits a global chart modeled on a finitely generated \Pth{\resp fine} monoid $P$, for $? = \Int$ \Pth{\resp $\Sat$}, we have $X^? \cong X \times_{Y\Talg{P}} Y\Talg{P^?}$ as adic spaces, where $Y\Talg{P}$ and $Y\Talg{P^?}$ are as in Example \ref{ex-log-adic-sp-monoid}.  \Pth{Note that the fiber product $X \times_{Y\Talg{P}} Y\Talg{P^?}$ exists because the morphism $Y\Talg{P^?} \to Y\Talg{P}$ is lft when $P$ is finitely generated.}
\end{rk}

Now, let us study fiber products in the category of locally noetherian coherent \Pth{\resp fine, \resp fs} log adic spaces:
\begin{prop}\phantomsection\label{prop-fiber-prod-log-adic}
    \begin{enumerate}
        \item\label{prop-fiber-prod-log-adic-1}  Finite fiber products exist in the category of locally noetherian log adic spaces when the corresponding fiber products of the underlying adic spaces exist.  Moveover, finite fiber products of locally noetherian coherent log adic spaces over locally noetherian coherent log adic spaces are coherent \Pth{when defined}.  The forgetful functor from the category of locally noetherian log adic spaces to the category of locally noetherian adic spaces respects finite fiber products \Pth{when defined}.

        \item\label{prop-fiber-prod-log-adic-2}  Finite fiber products exist in the category of locally noetherian fine \Pth{\resp fs} log adic spaces when the corresponding fiber products of the underlying adic spaces exist.
    \end{enumerate}
\end{prop}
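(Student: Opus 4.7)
The plan is as follows. For part \Refenum{\ref{prop-fiber-prod-log-adic-1}}, given morphisms $f : (Y, \cM_Y) \to (X, \cM_X)$ and $g : (Z, \cM_Z) \to (X, \cM_X)$ of locally noetherian log adic spaces whose underlying fiber product $W := Y \times_X Z$ exists as an adic space, I would equip $W$ with the log structure $\cM_W$ associated with the pre-log structure
\begin{equation*}
    p_Y^{-1}(\cM_Y) \oplus_{p_X^{-1}(\cM_X)} p_Z^{-1}(\cM_Z) \to \cO_{W_\et},
\end{equation*}
where $p_X, p_Y, p_Z$ denote the projections from $W$ to $X, Y, Z$, the amalgamated sum is formed in the category of sheaves of monoids on $W_\et$ using $p_X^{-1}(f^\sharp)$ and $p_X^{-1}(g^\sharp)$, and the structure morphism to $\cO_{W_\et}$ is determined by combining $p_Y^{-1}(\alpha_Y)$ and $p_Z^{-1}(\alpha_Z)$ (which agree on $p_X^{-1}(\cM_X)$) via the universal property. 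The universal property of $(W, \cM_W)$ as a fiber product then follows by combining the universal property of the underlying adic-space fiber product, the universal property of amalgamated sums of sheaves of monoids, and the adjointness in Remark \ref{rem-log-str-assoc}. That the forgetful functor respects these fiber products is immediate.

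For the coherence claim, I would work \'etale-locally: assume $X$, $Y$, $Z$ admit global charts modeled on finitely generated monoids, and use Proposition \ref{prop-chart-mor-exist} to obtain, after further \'etale localization on $Y$ and $Z$, chart homomorphisms $u_Y : P \to Q$ for $f$ and $u_Z : P \to R$ for $g$ with $P, Q, R$ finitely generated. Then the composition $(Q \oplus_P R)_W \to p_Y^{-1}(\cM_Y) \oplus_{p_X^{-1}(\cM_X)} p_Z^{-1}(\cM_Z) \to \cM_W$ yields a chart of $W$ modeled on the finitely generated monoid $Q \oplus_P R$, so $W$ is coherent.

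For part \Refenum{\ref{prop-fiber-prod-log-adic-2}}, I would bootstrap from part \Refenum{\ref{prop-fiber-prod-log-adic-1}} using Proposition \ref{prop-adj-int-sat} and Remark \ref{rem-adj-sat}. Given the setup in the category of fine \Pth{\resp fs} log adic spaces, form the coherent fiber product $W$ as above, then take $W^\Int$ \Pth{\resp $W^\Sat$}. Its underlying adic space exists because $W^\Int \to W$ \Pth{\resp $W^\Sat \to W$} is a closed immersion \Pth{\resp finite surjective} on underlying adic spaces, and closed subspaces and finite covers of existing locally noetherian adic spaces exist. The universal property follows by chaining the right adjunction of Proposition \ref{prop-adj-int-sat} to part \Refenum{\ref{prop-fiber-prod-log-adic-1}}: any morphism from a fine \Pth{\resp fs} log adic space $T$ to $Y$ and $Z$ over $X$ factors uniquely through $W$ in the coherent category, and hence uniquely through $W^\Int$ \Pth{\resp $W^\Sat$} by adjointness.

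The main obstacle is the explicit identification of the fiber product in part \Refenum{\ref{prop-fiber-prod-log-adic-1}}, namely unwinding Definition \ref{def-log-str}\Refenum{7} so that giving a morphism $(T, \cM_T) \to (W, \cM_W)$ compatible with the projections matches the data of two compatible morphisms $p_Y^{-1}(\cM_Y) \to \cM_T$ and $p_Z^{-1}(\cM_Z) \to \cM_T$ agreeing on $p_X^{-1}(\cM_X)$. Once the compatibility is tracked through Remark \ref{rem-stalk-strict} and Remark \ref{rem-log-str-assoc}, the rest is formal. One must additionally verify that the local charts constructed in the coherence argument patch via \'etale descent, which is routine given that the amalgamated-sum construction is compatible with pullback along \'etale morphisms.
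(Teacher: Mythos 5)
Your proposal is correct and follows essentially the same route as the paper: the same amalgamated-sum pre-log structure on the underlying fiber product for part (1), the same chart $Q \oplus_P R$ via Proposition \ref{prop-chart-mor-exist} for coherence, and the same passage through $(\,\cdot\,)^\Int$ and $(\,\cdot\,)^\Sat$ via the adjunction of Proposition \ref{prop-adj-int-sat} for part (2). (The final remark about patching charts by \'etale descent is unnecessary, since coherence is by definition an \'etale-local condition.)
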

\begin{proof}
   As for \Refenum{\ref{prop-fiber-prod-log-adic-1}}, let $Y \to X$ and $Z \to X$ be morphisms of locally noetherian log adic spaces such that the fiber product $W := Y \times_X Z$ of the underlying adic spaces is defined.  Let $\pr_Y$, $\pr_Z$, and $\pr_X$ denote the natural projections from $W$ to $Y$, $Z$, and $X$, respectively, and equip $W$ with the log structure associated with the pre-log structure $\pr^{-1}_Y(\cM_Y) \oplus_{\pr_X^{-1}(\cM_X)} \pr_Z^{-1}(\cM_Z) \to \cO_{W_\et}$.  Then the log adic space thus obtained clearly satisfies the desired universal property.  Suppose moreover that $X$, $Y$, and $Z$ are all coherent.  By Proposition \ref{prop-chart-mor-exist}, \'etale locally, $Y \to X$ and $Z \to X$ admit charts $P \to Q$ and $P \to R$, respectively, where $P$, $Q$, and $R$ are all finitely generated monoids, in which case $W$ is \Pth{by construction} modeled on the finitely generated monoid $S := Q \oplus_P R$, and hence is coherent.

   As for \Refenum{\ref{prop-fiber-prod-log-adic-2}}, let $Y \to X$ and $Z \to X$ be morphisms of locally noetherian fine \Pth{\resp fs} log adic spaces such that the fiber product $Y \times_X Z$ of the underlying adic spaces is defined, in which case we equip it with the structure of a coherent log adic space as in \Refenum{\ref{prop-fiber-prod-log-adic-1}}.  Then, by Proposition \ref{prop-adj-int-sat}, $Y \times^\fine_X Z := (Y \times_X Z)^\Int$ \Pth{\resp $Y \times^\fs_X Z := (Y \times_X Z)^\Sat$} satisfies the desired universal property.
\end{proof}

\begin{rk}\label{rem-fiber-prod-chart}
    Let $P \to Q$ and $P \to R$ be homomorphisms of finitely generated \Pth{\resp fine, \resp fs} monoids, and let $S^? := (Q \oplus_P R)^?$, where $? = \emptyset$ \Pth{\resp $\Int$, \resp $\Sat$}.  Let $Y$ be a locally noetherian fs log adic space.  By Remark \ref{rem-def-chart} and Proposition \ref{prop-fiber-prod-log-adic} \Pth{and the construction in its proof}, $Y\Talg{S^?}$ is canonically isomorphic to the fiber product of $Y\Talg{Q}$ and $Y\Talg{R}$ over $Y\Talg{P}$ in the category of noetherian coherent \Pth{\resp fine, \resp fs} log adic spaces.
\end{rk}

\begin{rk}\label{rem-fiber-prod-chart-int-sat}
    Let $P \to Q$ and $P \to R$ be fine \Pth{\resp fs} charts of morphisms $Y \to X$ and $Z \to X$, respectively, of locally noetherian fine \Pth{\resp fs} log adic spaces such that $Y \times_X Z$ is defined.  Then $Y \times^\fine_X Z$ \Pth{\resp $Y \times^\fs_X Z$} is modeled on $(Q \oplus_P R)^\Int$ \Pth{\resp $(Q \oplus_P R)^\Sat$}.
\end{rk}

\begin{rk}\label{rem-fiber-prod-forget}
    The forgetful functor from the category of locally noetherian fine \Pth{\resp fs} log adic spaces to the category of locally noetherian adic spaces does not respect fiber products \Pth{when defined}, because the underlying adic spaces may change under the functor $X \mapsto X^\Int$ \Pth{\resp $X \mapsto X^\Sat$}.
\end{rk}

\begin{conv}\label{conv-fib-prod}
    From now on, all fiber products of locally noetherian fs log adic spaces are taken in the category of fs ones unless otherwise specified.  For simplicity, we shall omit the superscript \Qtn{$\fs$} from \Qtn{$\times$}.
\end{conv}

We will need the following analogue of Nakayama's \emph{Four Point Lemma} \cite[\aProp 2.2.2]{Nakayama:1997-lec}:
\begin{prop}\label{prop-lem-four-pt}
    Let $f: Y \to X$ and $g: Z \to X$ be two lft morphisms of locally noetherian fs log adic spaces, and assume that $f$ is exact.  Then, given any two points $y \in Y$ and $z \in Z$ that are mapped to the same point $x \in X$, there exists some point $w \in W := Y \times_X Z$ that is mapped to $y \in Y$ and to $z \in Z$.
\end{prop}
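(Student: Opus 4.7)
The plan is to reduce the problem to a monoidal statement via fs charts and then imitate the combinatorial heart of Nakayama's original argument in the scheme setting. First, working \'etale locally around $x$, $y$, and $z$, and using Propositions \ref{prop-chart-stalk-chart-fs} and \ref{prop-chart-mor-exist-fine}, I would reduce to the case where $X = \Spa(A, A^+)$, $Y = \Spa(B, B^+)$, and $Z = \Spa(C, C^+)$ are affinoid, with fs charts $u\colon P \to Q$ of $f$ and $v\colon P \to R$ of $g$, and with $P \cong \overline{\cM}_{X, \AC{x}}$, $Q \cong \overline{\cM}_{Y, \AC{y}}$, and $R \cong \overline{\cM}_{Z, \AC{z}}$.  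A short check using Definition \ref{def-monoid-hom}\Pth{4}, Remark \ref{rem-stalk-sharp}, and Lemma \ref{lem-amalg-sum} shows that, under this normalization, exactness of $f$ at $\AC{y}$ is equivalent to exactness of the monoid homomorphism $u$.

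Next, I would factor $W = Y \times_X Z$ through the coherent and fine fiber products.  By Proposition \ref{prop-fiber-prod-log-adic}\Refenum{\ref{prop-fiber-prod-log-adic-1}} and Remark \ref{rem-fiber-prod-chart-int-sat}, $W^{\Utext{coh}} := Y \times^{\Utext{coh}}_X Z$ has underlying affinoid Huber pair built from $B \otimes_A C$ and is modeled on $S := Q \oplus_P R$; by Remark \ref{rem-adj-int-sat-rel}, $W^\Int \hookrightarrow W^{\Utext{coh}}$ is the closed immersion cut out by the ideal generated by the differences $\mono{s_1} - \mono{s_2}$ as $s_1, s_2 \in S$ range over pairs with the same image in $S^\gp$; and $W \to W^\Int$ is finite and surjective by Proposition \ref{prop-adj-int-sat}\Refenum{\ref{prop-adj-int-sat-2}}.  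Since $f$ is lft, so is $W^{\Utext{coh}} \to Z$, and the standard theory of adic-space fiber products \Ref{see \cite[(1.1.1) and \aProp 1.2.2]{Huber:1996-ERA}} produces at least one point $w_0 \in W^{\Utext{coh}}$ above the given pair $(y, z)$.  It therefore suffices to lift $w_0$ to $W^\Int$, after which the finite surjection $W \to W^\Int$ furnishes the required $w \in W$.

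For this lift, the task is to show that the continuous valuation on the Huber pair underlying $W^{\Utext{coh}}$ corresponding to $w_0$ kills every generator $\mono{s_1} - \mono{s_2}$ of the integralization ideal.  Writing $s_i = (q_i, r_i)$, the hypothesis that $s_1$ and $s_2$ coincide in $S^\gp = Q^\gp \oplus_{P^\gp} R^\gp$ gives some $p \in P^\gp$ with $q_1 - q_2 = u^\gp(p)$ and $r_2 - r_1 = v^\gp(p)$.  The exactness of $u$, combined with the compatibility between the faces of $P$, $Q$, and $R$ cut out by the valuations $v_x$, $v_y$, and $v_z$, allows one to modify $(s_1, s_2)$ within its equivalence class so that $p$ can be chosen to lie in $P$ itself; in that case, the defining amalgamation relation of $S$ already forces $\mono{s_1} = \mono{s_2}$ in $B \otimes_A C$, and hence at $w_0$.

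The main obstacle is this final step: identifying the combination of face analysis and exactness that converts the monoidal congruence in $S^\gp$ into an actual equality in $S$ after localization at the face selected by the valuation $v_{w_0}$.  This is the adic-space analog of the combinatorial heart of Nakayama's original proof \cite[\aProp 2.2.2]{Nakayama:1997-lec}, and care will be required to isolate the valuation-theoretic features specific to adic spaces from the purely monoidal input that is common to both the scheme and the adic setting.
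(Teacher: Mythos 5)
There is a genuine gap, and it sits exactly where you put the flag. Your skeleton agrees with the paper's: the paper reduces via \cite[\aLem 1.1.10]{Huber:1996-ERA} to the case where $X$, $Y$, $Z$ are geometric points with the same underlying $\Spa(l, l^+)$, normalizes the charts so that $P = \overline{\cM}_{X, \AC{x}}$, $Q = \overline{\cM}_{Y, \AC{y}}$, $R = \overline{\cM}_{Z, \AC{z}}$, and then identifies $W$ with $\Spa(l, l^+) \times_{\Spa(l\Talg{S}, l^+\Talg{S})} \Spa(l\Talg{S^\Sat}, l^+\Talg{S^\Sat})$ for $S := Q \oplus_P R$; nonemptiness then comes down to showing that $\ker\bigl(l\Talg{S} \to l\Talg{S^\Sat}\bigr)$ is contained in the ideal $I$ generated by $\{\mono{a} : a \in S,\ a \neq 0\}$. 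This is precisely your "final step," and you have not supplied it; but it is the \emph{only} place in the whole argument where the exactness hypothesis on $f$ is used, so a proof that leaves it as an acknowledged obstacle has not proved the proposition.

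Moreover, the intermediate claim you aim for is too strong and is false. You propose to show that whenever $s_1 = s_2$ in $S^\gp$ one can arrange $\mono{s_1} = \mono{s_2}$ already in $S$; that would say $S = Q \oplus_P R$ is integral, which fails in general even for exact $u$ (this is why the passage to $W^\Int$ and $W^\Sat$ is needed at all). What is actually required is much weaker. After the normalization at geometric points, every nonzero element of $Q$ (\resp $R$) maps into the maximal ideal of $\cO_{Y, \AC{y}}$ (\resp $\cO_{Z, \AC{z}}$), hence into the support of the relevant valuation; so $v_{w_0}$ automatically kills $\mono{s_1} - \mono{s_2}$ unless exactly one of $s_1, s_2$ is $0$ in $S$. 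The only statement you need is therefore: if $s = 0$ in $S^\Int$ then $s = 0$ in $S$, i.e.\ $S$ is quasi-integral. This is the content of \cite[\aLem 2.2.6]{Nakayama:1997-lec}: since $u$ is exact and $R$ is sharp, the homomorphism $\phi: P^\gp \to Q^\gp \oplus R^\gp$, $a \mapsto (u^\gp(a), -v^\gp(a))$, satisfies $\phi^{-1}(Q \oplus R) = \{0\}$, whence $S$ is quasi-integral and $P \to S$ is injective. Two further points you should not elide: (i) obtaining a chart $u: P \to Q$ of $f$ with \emph{simultaneously} $P \cong \overline{\cM}_{X, \AC{x}}$ and $Q \cong \overline{\cM}_{Y, \AC{y}}$ is not an immediate consequence of Propositions \ref{prop-chart-stalk-chart-fs} and \ref{prop-chart-mor-exist-fine}; the paper gets it only after reducing to log points, using the splittings of Lemma \ref{lem-monoid-split} \Pth{via the argument of \cite[\aLem 2.2.3]{Nakayama:1997-lec}}; and (ii) the reduction to geometric points via \cite[\aLem 1.1.10]{Huber:1996-ERA} is also what trivializes the "face analysis" you worry about, since after that reduction the face selected by the valuation is just $\{0\}$ and no localization of $S$ is needed.
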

In order to prove Proposition \ref{prop-lem-four-pt}, it suffices to treat the case where $X$, $Y$, and $Z$ are geometric points, and where $x$, $y$ and $z$ are the respective unique closed points.  By \cite[\aLem 1.1.10]{Huber:1996-ERA}, it suffices to prove the following:
\begin{lem}\label{lem-four-pt-geom}
    Let $f: Y \to X$ and $g: Z \to X$ be morphisms of fs log adic spaces such that the underlying adic spaces of $X$, $Y$, and $Z$ are $\Spa(l, l^+)$ for the same complete separably closed nonarchimedean field $l$, and such that the underlying morphisms of adic spaces of $f$ and $g$ are the identity morphism.  Assume that $f$ is exact.  Then $W = Y \times_X Z$ is nonempty.
\end{lem}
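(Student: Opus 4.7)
The plan is to reduce the lemma to a purely combinatorial statement about monoid amalgamated sums and to exploit the exactness of $f$ in a crucial way.

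Since $l$ is separably closed, the \'etale site of $\Spa(l, l^+)$ is trivial \Ref{see Example \ref{ex-log-adic-sp-pt-sep-cl}}, so by Propositions \ref{prop-chart-stalk-chart-fs} and \ref{prop-chart-mor-exist-fine} we can choose compatible \emph{global} fs charts.  Let $P, Q, R$ denote the toric monoids $\overline{\cM}_{X, \bar{x}}$, $\overline{\cM}_{Y, \bar{y}}$, $\overline{\cM}_{Z, \bar{z}}$, and let $u : P \to Q$ and $v : P \to R$ be the induced homomorphisms.  Exactness of $f$ translates to exactness of $u$, which (combined with sharpness of $P$) implies $u$ is injective.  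Moreover, since $l$ is a field and $P, Q, R$ are sharp, the composite chart morphisms $P \to l$, $Q \to l$, $R \to l$ send every nonzero element to $0 \in l$; chart compatibility then forces $u$ and $v$ to be nondegenerate in the sense of sending nonzero to nonzero.

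By Remark \ref{rem-fiber-prod-chart-int-sat} and the explicit construction in the proof of Proposition \ref{prop-adj-int-sat}, the underlying adic space of $W$ is $\Spa(R^{\Sat}, R^{\Sat +})$, where
\[
    R^{\Sat} \;=\; l \otimes_{\bZ[N]} \bZ[N^{\Sat}] \qquad\text{and}\qquad N := Q \oplus_P R.
\]
It therefore suffices to show $R^{\Sat} \neq 0$.  An unpacking of the Ogus description \cite[\aProp I.1.1.5]{Ogus:2018-LLG} of the amalgamated-sum relation using only the integrality of $P, Q, R$ shows that $N$ is integral; a further unpacking of the relation ``$a + b = 0$ in $N$'', this time invoking exactness of $u$ (to force the witness $p \in P^\gp$ into $P$) along with sharpness of $R$ and nondegeneracy of $v$ (to successively force $v(p) = 0$, then $p = 0$), shows that $N$ is sharp as well.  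Consequently $\bZ[N] \to l$ factors through the augmentation $\bZ[N] \to \bZ$, and the $N^{\Sat}$-grading of $\bZ[N^{\Sat}]$ gives
\[
    R^{\Sat} \;\cong\; l \otimes_{\bZ} \Big( \bigoplus_{c \in N^{\Sat} - I} \bZ \cdot [c] \Big),
\]
where $I$ denotes the monoid ideal of $N^{\Sat}$ generated by the images of $N - \{0\}$.  Thus $R^{\Sat} \neq 0$ if and only if $0 \notin I$, i.e., no nonzero element of $N$ maps to a unit of $N^{\Sat}$.

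The latter claim, which I expect to be the main obstacle, is verified by an argument parallel to the sharpness proof.  If the image of $a = [(a_Y, a_Z)] \in N$ in $N^{\Sat}$ were a unit, then $-na$ would lie in $N^\Int$ for some $n \geq 1$, providing $(b_Y, b_Z) \in Q \oplus R$ and $p \in P^\gp$ with $u^\gp(p) = na_Y + b_Y \in Q$ and $-v^\gp(p) = na_Z + b_Z \in R$.  Exactness of $u$ yields $p \in P$; sharpness of $R$ forces $v(p) \in R \cap (-R) = \{0\}$, so $a_Z = 0$; nondegeneracy of $v$ then gives $p = 0$, whence $a_Y = 0$ as well.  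This shows $R^{\Sat} \neq 0$, and hence $W$ is nonempty.
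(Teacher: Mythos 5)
Your argument follows the same overall route as the paper's---reduce to sharp fs charts $u: P \to Q$ and $v: P \to R$ modeled on the characteristic monoids, form $N = Q \oplus_P R$, and show that $l \otimes_{\bZ[N]} \bZ[N^\Sat]$ has nonempty adic spectrum---but the two proofs diverge at the combinatorial core. The paper quotes \cite[\aLem 2.2.6]{Nakayama:1997-lec} (whose hypothesis is precisely your computation that exactness of $u$ and sharpness of $R$ force $\phi^{-1}(Q \oplus R) = 0$) to conclude that $N$ is sharp and \emph{quasi-integral}, and then deduces nonemptiness from the inclusion of $\ker\bigl(l\Talg{N} \to l\Talg{N^\Sat}\bigr)$ in the augmentation ideal $I$, the point being that the point of $\Spa(l\Talg{N}, l^+\Talg{N})$ cut out by $I$ then lifts along the integral extension of $l\Talg{N^\Sat}$ over the image of $l\Talg{N}$. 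You instead prove the stronger statement that no nonzero element of $N$ becomes invertible in $N^\Sat$, which yields the nonvanishing of the ring directly from the monomial grading; your verification (exactness pushes the witness $p$ into $P$, sharpness of $R$ gives $v(p) = 0$, nondegeneracy of $v$ gives $p = 0$) is correct, and it is legitimate to invoke the single-witness description of the amalgamation relation in the direction you use it, since that relation is a congruence containing the one defining $Q \oplus_P R$. Two small repairs: your parenthetical claim that $N$ is \emph{integral} is unjustified (amalgamated sums of integral monoids need not be integral unless one of them is a group, \Refcf{} Lemma \ref{lem-ama-int}) but also unused---only sharpness of $N$ enters; and passing from the nonvanishing of $l \otimes_{\bZ[N]} \bZ[N^\Sat]$ to the nonemptiness of its adic spectrum deserves a sentence, since an adic spectrum is empty exactly when the completion vanishes (harmless here, as the ring is a finite $l$-algebra).

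The one genuine gap is at the very beginning. Propositions \ref{prop-chart-stalk-chart-fs} and \ref{prop-chart-mor-exist-fine} give you \emph{some} fs charts of $f$ and $g$, but not charts in which $P$, $Q$, $R$ are literally the sharp characteristic monoids and $u$, $v$ are the induced maps on characteristics---and your whole argument (the vanishing of the chart maps on nonzero elements, the nondegeneracy of $u$ and $v$, and the identification of $W$ with the adic spectrum of $l \otimes_{\bZ[N]} \bZ[N^\Sat]$) depends on having exactly these. Producing them amounts to choosing splittings $\cM \cong l^\times \oplus \overline{\cM}$ for $X$, $Y$, and $Z$ that are \emph{compatible} with $f^\sharp$ and $g^\sharp$, equivalently compatible retractions $\cM^\gp \to l^\times$; this is not formal (one must, for instance, extend a character of $P^\gp$ along $v^\gp$ into $l^\times$, which requires an argument), and it is exactly the step for which the paper invokes the proof of \cite[\aLem 2.2.3]{Nakayama:1997-lec}. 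You should supply or cite that argument.
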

\begin{proof}
    Let $\AC{x}$, $\AC{y}$, and $\AC{z}$ be the unique closed points of $X$, $Y$, and $Z$, respectively, and let $P = \overline{\cM}_{X, \AC{x}}$, $Q = \overline{\cM}_{Y, \AC{y}}$, and $R = \overline{\cM}_{Z, \AC{z}}$.  Let $u: P \to Q$ and $v: P \to R$ be the corresponding maps of monoids.  By \cite[\aProp I.4.2.1]{Ogus:2018-LLG}, $u$ is exact.

    Consider the homomorphism $\phi: P^\gp \to Q^\gp \oplus R^\gp: a \mapsto \bigl(u^\gp(a), -v^\gp(a)\bigr)$.  Since $u$ is exact and $R$ is sharp, $\phi^{-1}(Q \oplus R)$ is trivial.  By \cite[\aLem 2.2.6]{Nakayama:1997-lec}, the sharp monoid $S := Q \oplus_P R$ is \emph{quasi-integral} \Pth{\ie, if $a + b = a$, then $b = 0$}, and the natural homomorphism $P \to Q \oplus_P R$ is injective.

    Note that $f$ and $g$ admit charts modeled on $u: P \to Q$ and $v: P \to R$, respectively.  This is because, by the proof of \cite[\aLem 2.2.3]{Nakayama:1997-lec}, there exist compatible homomorphisms $(\cM_X(X))^\gp \to l^\times$, $(\cM_Y(Y))^\gp \to l^\times$, and $(\cM_Z(Z))^\gp \to l^\times$ such that the compositions $l^\times \to (\cM_X(X))^\gp \to l^\times$, $l^\times \to (\cM_Y(Y))^\gp \to l^\times$, and $l^\times \to (\cM_Z(Z))^\gp \to l^\times$ are the identity homomorphisms.  Therefore, the morphisms $f^*(\cM_X) \to \cM_Y$ and $g^*(\cM_X) \to \cM_Z$ can be \Pth{noncanonically} identified with $\Id \oplus u: l^\times \oplus P \to l^\times \oplus Q$ and $\Id \oplus v: l^\times \oplus P \to l^\times \oplus R$, respectively.

    Consequently, $W \cong \Spa(l, l^+) \times_{\Spa(l\Talg{S}, l^+\Talg{S})} \Spa(l\Talg{S^\Sat}, l^+\Talg{S^\Sat})$.  The image of $\Spa(l, l^+) \to \Spa(l\Talg{S}, l^+\Talg{S})$ consists of equivalence classes of valuations on $l\Talg{S}$ \Pth{bounded by $1$ on $l^+$} whose support contains the ideal $I$ of $l\Talg{S}$ generated by $\{ \mono{a}: a \in S, \, a \neq 0 \}$.  On the other hand, the kernel of $l\Talg{S} \to l\Talg{S^\Sat}$, which is generated by $\{ \mono{a} - \mono{b} : \Utext{$a, b \in S$, $a = b$ in $S^\Int$}\}$, is contained in $I$ because $S$ is quasi-integral.  Thus, $W$ is nonempty, as desired.
\end{proof}

\section{Log smoothness and log differentials}\label{sec-log-sm-log-diff}

\subsection{Log smooth morphisms}\label{sec-log-sm}

\begin{defn}\label{def-log-sm}
    Let $f: Y \to X$ be a morphism between locally noetherian fs log adic spaces.  We say that $f$ is \emph{log smooth} \Pth{\resp \emph{log \'etale}} if, \'etale locally on $Y$ and $X$, the morphism $f$ admits an fs chart $u: P \to Q$ such that
    \begin{enumerate}
        \item\label{def-log-sm-1} the kernel and the torsion part of the cokernel \Pth{\resp the kernel and cokernel} of $u^\gp: P^\gp \to Q^\gp$ are finite groups of order invertible in $\cO_X$; and

        \item\label{def-log-sm-2} $f$ and $u$ induce a morphism $Y \to X \times_{X\Talg{P}} X\Talg{Q}$ of log adic spaces \Pth{\Refcf{} Remark \ref{rem-def-chart-rel}} whose underlying morphism of adic spaces is \'etale.
    \end{enumerate}
\end{defn}

\begin{rk}\label{rem-def-log-sm}
    In Definition \ref{def-log-sm}, the fiber product in \Refenum{\ref{def-log-sm-2}} exists and the morphism $f: Y \to X$ is lft, because $X\Talg{Q} \to X\Talg{P}$ and hence the first projection $X \times_{X\Talg{P}} X\Talg{Q} \to X$ is lft when $Q$ is finitely generated.  Hence, fiber products involving log smooth or log \'etale morphisms always exist.
\end{rk}

\begin{prop}\label{prop-log-sm-bc}
    Base changes of log smooth \Pth{\resp log \'etale} morphisms \Pth{by arbitrary morphisms between locally noetherian fs log adic spaces, which are justified by Remark \ref{rem-def-log-sm}} are still log smooth \Pth{\resp log \'etale}.
\end{prop}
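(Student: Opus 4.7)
The plan is to reduce to the level of fs charts, then exploit the stability of étale morphisms of adic spaces under base change, combined with the compatibility of the saturation functor with strict étale morphisms (Remark \ref{rem-adj-int-sat-f-et}).

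The statement is étale local on source and target, so after suitably shrinking $Y$, $X$, and $X'$ via Definition \ref{def-log-sm} and Proposition \ref{prop-chart-mor-exist-fine}, we may assume the following: $f$ admits an fs chart $u: P \to Q$ satisfying conditions \Refenum{\ref{def-log-sm-1}} and \Refenum{\ref{def-log-sm-2}} of Definition \ref{def-log-sm}, and $g: X' \to X$ admits an fs chart $v: P \to P''$ refining the chosen chart $P$ of $X$. Set $R := (Q \oplus_P P'')^{\Sat}$. Then by Remark \ref{rem-fiber-prod-chart-int-sat}, the base change $f': Y' \to X'$ admits the fs chart $u': P'' \to R$, and it suffices to verify that $u'$ also satisfies conditions \Refenum{\ref{def-log-sm-1}} and \Refenum{\ref{def-log-sm-2}}.

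For \Refenum{\ref{def-log-sm-1}}, group completion, being a left adjoint, commutes with pushouts of monoids, yielding $R^\gp \cong Q^\gp \oplus_{P^\gp} (P'')^\gp$; hence $u'^\gp$ is the pushout of $u^\gp$ along $P^\gp \to (P'')^\gp$. A direct computation shows that $\coker(u'^\gp) \cong \coker(u^\gp)$, and that $\ker(u'^\gp)$ is a quotient of $\ker(u^\gp)$. Consequently these groups are finite of order invertible in $\cO_X$, and therefore also in $\cO_{X'}$ via the structure map of $g$, as required.

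For \Refenum{\ref{def-log-sm-2}}, Remark \ref{rem-fiber-prod-chart} gives the identification $X'\Talg{R} \cong X'\Talg{P''} \times_{X'\Talg{P}} X'\Talg{Q}$ in the category of locally noetherian fs log adic spaces, and iterated pullback-cancellation yields a canonical isomorphism
\[
    X' \times_{X'\Talg{P''}} X'\Talg{R} \cong \bigl(X \times_{X\Talg{P}} X\Talg{Q}\bigr) \times_X X'.
\]
Under this identification, the morphism $Y' \to X' \times_{X'\Talg{P''}} X'\Talg{R}$ corresponds to the base change of $Y \to X \times_{X\Talg{P}} X\Talg{Q}$ along $g$, computed in the fs category. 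The main obstacle is showing that this fs base change is étale on underlying adic spaces, given that the original morphism $Y \to X \times_{X\Talg{P}} X\Talg{Q}$ is étale only on underlying adic spaces. This is handled in two steps: first, in the coherent category \Pth{\ie, with fiber products taken as in Proposition \ref{prop-fiber-prod-log-adic}\Refenum{\ref{prop-fiber-prod-log-adic-1}}}, the morphism $Y \to X \times_{X\Talg{P}} X\Talg{Q}$ is strict étale, and its coherent base change along $g$ is again strict étale on underlying adic spaces, by Proposition \ref{prop-fiber-prod-log-adic}\Refenum{\ref{prop-fiber-prod-log-adic-1}} together with the base-change stability of étale morphisms of adic spaces; second, applying the saturation functor $(\cdot)^\Sat$ produces the required fs morphism, and by Remark \ref{rem-adj-int-sat-f-et} this functor preserves strict étale morphisms. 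The log étale case follows from the identical argument, using that the stronger version of \Refenum{\ref{def-log-sm-1}} is likewise preserved under pushouts of abelian groups.
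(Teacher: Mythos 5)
Your proof is correct and follows essentially the same route as the paper's: reduce to fs charts via Proposition \ref{prop-chart-mor-exist-fine}, identify the chart of the base change as $(Q \oplus_P P'')^{\Sat}$ via Remark \ref{rem-fiber-prod-chart-int-sat}, verify condition \Refenum{\ref{def-log-sm-1}} using $\bigl((Q\oplus_P P'')^{\Sat}\bigr)^\gp \cong Q^\gp \oplus_{P^\gp} (P'')^\gp$, and verify condition \Refenum{\ref{def-log-sm-2}} by base-changing the strict \'etale morphism $Y \to X \times_{X\Talg{P}} X\Talg{Q}$ and invoking Remark \ref{rem-adj-int-sat-f-et} to pass through saturation. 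The only difference is that you spell out the coherent-versus-fs bookkeeping more explicitly than the paper does.
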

\begin{proof}
    Suppose that $Y \to X$ is a log smooth \Pth{\resp log \'etale} morphism of locally noetherian fs log adic spaces, with a chart $P \to Q$ satisfying the conditions in Definition \ref{def-log-sm}.  Let $Z \to X$ be any morphism of locally noetherian fs log adic spaces.  By Proposition \ref{prop-chart-mor-exist-fine}, up to \'etale localization, we may assume that $Z \to X$ admits an fs chart $P \to R$.  By Remark \ref{rem-fiber-prod-chart-int-sat}, $Z \times_X Y$ is modeled on $S := (R \oplus_P Q)^\Sat$.  By Remark \ref{rem-adj-int-sat-rel}, $Z \times_{X\Talg{P}} X\Talg{Q} \cong Z \times_{Z\Talg{R}} Z\Talg{S}$.  By Remark \ref{rem-adj-int-sat-f-et}, the morphism $Z \times_X Y \to Z \times_{Z\Talg{R}} Z\Talg{S}$ induces an \'etale morphism of underlying adic spaces.  It remains to note that $R^\gp \to \big((Q \oplus_P R)^\Sat\big)^\gp$ satisfies the analogue of Definition \ref{def-log-sm}\Refenum{\ref{def-log-sm-1}}, by the assumption on $P^\gp \to Q^\gp$ and the fact that $\big((Q\oplus_PR)^\Sat\big)^\gp \cong (Q \oplus_P R)^\gp \cong Q^\gp \oplus_{P^\gp} R^\gp$.
\end{proof}

\begin{prop}\label{prop-log-sm-chart}
    Let $f: Y \to X$ be a log smooth \Pth{\resp log \'etale} morphism of locally noetherian fs log adic spaces.  Suppose that $X$ is modeled on a global fs chart $P$.  Then, \'etale locally on $Y$ and $X$, there exists an injective fs chart $u: P \to Q$ of $f$ satisfying the conditions in Definition \ref{def-log-sm}.  Moreover, if $P$ is torsion-free, we can choose $Q$ to be torsion-free as well.
\end{prop}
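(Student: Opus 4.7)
The plan is to start from the fs chart provided by log smoothness, and to replace its source monoid $P'$ by the prescribed $P$.  By Definition \ref{def-log-sm}, \'etale locally on $Y$ and $X$, I may choose an fs chart $v: P' \to Q'$ of $f$ satisfying conditions \Refenum{\ref{def-log-sm-1}} and \Refenum{\ref{def-log-sm-2}}.  I would first compare the two charts $\theta_X: P_X \to \cM_X$ and $\theta'_X: P'_X \to \cM_X$ on $X$: by Remark \ref{rem-chart-stalk}, both $P$ and $P'$ surject onto $\overline{\cM}_{X, \AC{x}}$ at each geometric point $\AC{x}$, inducing isomorphisms after quotienting by the preimages of $\cO_{X, \AC{x}}^\times$.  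Applying Lemma \ref{lem-fs-split-int} to pick a splitting $\overline{\cM}_{X, \AC{x}} \to \cM_{X, \AC{x}}$ landing in the preimage of $\cO_{X, \AC{x}}^+$, together with the splitting provided by Lemma \ref{lem-monoid-split} for $P \to \overline{P}$, I would construct a homomorphism $\phi: P' \to P$ and a unit assignment $\epsilon: P' \to \cO_X^\times$ on a sufficiently small \'etale neighborhood of $\AC{x}$ such that $\theta'_X(p') = \theta_X(\phi(p')) \cdot \epsilon(p')$ for every $p' \in P'$, by lifting a finite generating set of $P'$ through such expressions, much in the spirit of the proof of Lemma \ref{lem-chart-fg-Ogus}.

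Next I would form the fs monoid $Q := (P \oplus_{P'} Q')^\Sat$ using $v$ and $\phi$ as structural maps \Pth{recalling from Definition \ref{def-int-sat} that saturation implicitly integralizes first}.  The homomorphism $P \to Q$ will serve as an fs chart of $f$ on the corresponding \'etale neighborhood of $Y$: mapping the $P$-factor to $\cM_Y$ via $f^\sharp \circ \theta_X$, and the $Q'$-factor via its existing chart, with the unit discrepancy $\epsilon$ absorbed by further \'etale localization on $Y$, again as in the proof of Lemma \ref{lem-chart-fg-Ogus}.  Condition \Refenum{\ref{def-log-sm-1}} in Definition \ref{def-log-sm} would follow because $Q^\gp \cong P^\gp \oplus_{P'^\gp} Q'^\gp$ by Lemma \ref{lem-ama-int}, so the kernel and torsion cokernel of $P^\gp \to Q^\gp$ inherit the invertible-order property from those of $P'^\gp \to Q'^\gp$.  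Condition \Refenum{\ref{def-log-sm-2}} would follow from Remarks \ref{rem-adj-int-sat-rel} and \ref{rem-fiber-prod-chart}, which yield a canonical isomorphism $X \times_{X\Talg{P}} X\Talg{Q} \cong X \times_{X\Talg{P'}} X\Talg{Q'}$ of fs log adic spaces, so that the required \'etaleness of $Y$ over this fiber product is inherited from the assumption on $v$.

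For the injectivity of $P \to Q$, its kernel is contained in $\ker(P^\gp \to Q^\gp)$, a finite subgroup of order invertible on $X$; I would absorb this kernel by a further fs-pushout of $Q$ that adjoins formal inverses of its image, using the invertibility of the relevant order in $\cO_X$.  For the torsionfree assertion, when $P$ is torsionfree I would arrange $Q'$ itself to be torsionfree by splitting off the torsion of $Q'^\gp$ using Lemma \ref{lem-monoid-split}---possible because $\ker(v^\gp)$ has invertible order and $P^\gp$ is torsionfree---so that the resulting $Q^\gp \cong P^\gp \oplus_{P'^\gp} Q'^\gp$ remains torsionfree.  The main obstacle is the globalization in the first step: while at each geometric point $\AC{x}$ the splittings of Lemmas \ref{lem-fs-split-int} and \ref{lem-monoid-split} readily produce a stalkwise $\phi_{\AC{x}}$, promoting these into a genuine monoid homomorphism $\phi: P' \to P$ on an \'etale neighborhood---together with a compatible $\epsilon$---requires a careful use of the finite generation of $P'$, together with the fact that the ambiguity is controlled by sections of $\cO_X^\times$.
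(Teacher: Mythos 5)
Your overall shape---reducing to a saturated amalgamated sum so that $Q^\gp \cong P^\gp \oplus_{P'^\gp} Q'^\gp$ controls Definition \ref{def-log-sm}\Refenum{\ref{def-log-sm-1}}---is reasonable, but the first step, on which everything else rests, has a genuine gap.  You need a monoid homomorphism $\phi: P' \to P$ lifting the identification $P'/(\alpha\circ\theta')^{-1}(\cO^\times_{X,\AC{x}}) \cong \overline{\cM}_{X,\AC{x}} \cong P/(\alpha\circ\theta)^{-1}(\cO^\times_{X,\AC{x}})$ up to units, and the lemmas you cite do not produce one.  Lemma \ref{lem-fs-split-int} gives a section $\overline{\cM}_{X,\AC{x}} \to \cM_{X,\AC{x}}$ whose image has no reason to lie in the image of $P$; and Lemma \ref{lem-monoid-split} splits $P \to \overline{P} = P/P^*$, whereas the relevant quotient is $P \to P/F$ for the face $F = (\alpha\circ\theta)^{-1}(\cO^\times_{X,\AC{x}})$, which is in general strictly larger than $P^*$, so that $F^\gp \not\subset P$ and the hypothesis of Lemma \ref{lem-monoid-split} fails.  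What localization does produce is a map $P' \to F^{-1}P$, but that changes the source monoid, which the statement does not permit.  The paper avoids this entirely by going in the opposite direction: it uses Lemma \ref{lem-chart-fg-Ogus} to embed both $P$ and the given chart $P_1$ into a common enlarged chart $P_2$ \Pth{so a map \emph{out of} $P$ exists by construction}, replaces the chart of $f$ by $P_2 \to (P_2 \oplus_{P_1} Q_1)^\Sat$, and only at the end recovers a chart with source exactly $P$ via Niziol's cartesian-square trick, defining $Q$ as the preimage of $\cM_{Y, \AC{y}}$ inside a group $H$ as in \Refeq{\ref{eq-prop-log-sm-chart-Niziol}}.

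There is a second gap in the injectivity repair.  Adjoining formal inverses to $Q$ cannot make $P \to Q$ injective: the kernel of $P^\gp \to Q^\gp = P^\gp \oplus_{P'^\gp} Q'^\gp$ is $\phi^\gp\bigl(\ker(v^\gp)\bigr)$, so once this is nontrivial no enlargement of $Q$ inside $Q^\gp$ helps; one must enlarge $Q^\gp$ itself to a group $H$ containing $P^\gp$ and then re-verify Definition \ref{def-log-sm}\Refenum{\ref{def-log-sm-2}}, which the paper does by exhibiting the modification as a rational localization followed by an \'etale cover of degree invertible in $\cO_X$.  Relatedly, your verification of condition \Refenum{\ref{def-log-sm-2}} via $X \times_{X\Talg{P}} X\Talg{Q} \cong X \times_{X\Talg{P'}} X\Talg{Q'}$ silently identifies two different morphisms $X \to X\Talg{P'}$---the one from the chart $\theta'$ and the composite through $X\Talg{P}$---which differ by the unit twist $\epsilon$; undoing such a twist requires extending $\epsilon$ from $P'$ to $Q'$, possibly after extracting roots of unity on an \'etale cover, which is precisely the delicate point in the paper's torsionfreeness argument and cannot be dismissed as a further localization.
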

\begin{proof}
    This is an analogue of the smooth and \'etale cases of \cite[\aLem 3.1.6]{Kato:1989-lddt}.

    Suppose that, \'etale locally, $f$ admits a chart $P_1 \to Q_1$ satisfying the conditions in Definition \ref{def-log-sm}.  We may assume that $X = \Spa(R, R^+)$ is a noetherian affinoid log adic space.  Let us begin with some preliminary reductions.

    Firstly, we may assume that $P_X \to \cM_X$ factors through $(P_1)_X \to \cM_X$.  Indeed, by Lemma \ref{lem-chart-fg-Ogus}, \'etale locally, $X$ admits an fs chart $P_2$ such that the canonically induced morphism $(P \oplus P_1)_X \to \cM_X$ factors through $(P_2)_X \to \cM_X$.  Let $Q_2$ be $(P_2 \oplus_{P_1} Q_1)^\Sat$.  Then $Q_2^\gp \cong P_2^\gp \oplus_{P_1^\gp} Q_1^\gp$ \Pth{\Refcf{} the proof of Proposition \ref{prop-log-sm-bc}} and hence $P_2 \to Q_2$ is also an fs chart of $f$ satisfying the conditions in Definition \ref{def-log-sm}.

    Secondly, we may assume that $P_1 \to Q_1$ is injective.  Indeed, since $P_1^\gp$ and $Q_1^\gp$ are finitely generated, and since $K := \ker(P_1^\gp \to Q_1^\gp)$ is finite, there exists some finitely generated abelian group $H_1$ fitting into a cartesian diagram
    \[
        \xymatrix{ {P_1^\gp} \ar@{^(->}[r] \ar@{->>}[d] & {H_1} \ar@{->>}[d] \\
        {P_1^\gp / K} \ar@{^(->}[r] & {Q_1^\gp} }
    \]
    such that
    \[
        \coker(P_1^\gp \Em H_1) \cong \coker(P_1^\gp / K \Em Q_1^\gp),
    \]
    and so that
    \[
        K \cong \ker(P_1^\gp \Surj P_1^\gp / K) \cong \ker(H_1 \Surj Q_1^\gp).
    \]
    For any geometric point $\AC{y}$ of $Y$, let $Q_2$ be the preimage of $Q_1$ under $H_1 \to Q_1^\gp$.  Note that $Q_2$ is fs, $Q_2^\gp = H_1$, and $P_1 \to Q_2$ is injective.  We claim that $P_1 \to Q_2$ is an fs chart of $f$, \'etale locally at $\AC{y}$ and $f(\AC{y})$, satisfying the conditions in Definition \ref{def-log-sm}.  By Remark \ref{rem-chart-stalk}, $Q_1 \to \overline{\cM}_{Y, \AC{y}}$ is surjective with kernel given by the preimage of $\cO_{Y_\et, \AC{y}}^\times$.  Since $Q_2 / K \cong Q_1$, the induced homomorphism $Q_2 \to \overline{\cM}_{Y, \AC{y}}$ satisfies the same properties, and $P_1 \to Q_2$ is an fs chart of $f$, \'etale locally at $\AC{y}$ and $f(\AC{y})$.  By construction, it satisfies the condition \Refenum{\ref{def-log-sm-1}} in Definition \ref{def-log-sm}.  It also satisfies the condition \Refenum{\ref{def-log-sm-2}} in Definition \ref{def-log-sm}, because $Y \to X \times_{X\Talg{P_1}} X\Talg{Q_2}$ is the composition of $Y \to X \times_{X\Talg{P_1}} X\Talg{Q_1} \to Y \to X \times_{X\Talg{P_1}} X\Talg{Q_2}$, and $X\Talg{Q_1} \to X\Talg{Q_2}$ is \'etale, by \cite[\aProp 1.7.1]{Huber:1996-ERA}, as $|K|$ is invertible in $\cO_X$.  Thus, the claim follows.

    Thirdly, we claim that, up to further modifying $P_1 \to Q_1$, we can find $H$ fitting into a cartesian diagram of finitely generated abelian groups:
    \begin{equation}\label{eq-prop-log-sm-chart-Niziol}
        \xymatrix{ {P^\gp} \ar@{^(->}[r] \ar@{->}[d] & {H} \ar@{->}[d] \\
        {P_1^\gp} \ar@{^(->}[r] & {Q_1^\gp.} }
    \end{equation}
    Given such an $H$, let $Q$ be the preimage of $Q_1$ via $H \to Q_1^\gp$.  Since $P$ and $P_1$ are both fs charts of $X$, the homomorphism $P^\gp \to P_1^\gp$ induces an isomorphism after passing to quotients of the source and target by the preimages of $\cO_{X_\et, f(\AC{y})}^\times$.  Since \Refeq{\ref{eq-prop-log-sm-chart-Niziol}} is Cartesian, and since $Q_1$ is an fs chart of $Y$, the analogous statement for $Q$, $Q_1$, and $\cO_{Y_\et, \AC{y}}^\times$ is also true.  Thus, $u: P \to Q$ is an injective fs chart of $f$, \'etale locally at $\AC{y}$ and $f(\AC{y})$, satisfying the conditions of Definition \ref{def-log-sm}.

    Let us verify the claim by modifying the arguments in the proofs of \cite[\aLem 3.1.6]{Kato:1989-lddt} and \cite[\aLem 2.8]{Niziol:2008-ktls-1}.  \Pth{We need to modify the arguments because our requirement that charts induce morphisms to $\cO_{X_\et}^+$ and $\cO_{Y_\et}^+$ can be affected by localizations of monoids.}  Let $\overline{G} := \im(P^\gp \to P_1^\gp)$, $G_1 := P_1^\gp / \overline{G}$, and $W := \coker(P_1^\gp \to Q_1^\gp)$, and consider the pushout $0 \to G_1 \to T_1 \to W \to 0$ of the extension $0 \to P_1^\gp \to Q_1^\gp \to W \to 0$ via $P_1^\gp \to G$.  By assumption, there exists some integer $n \geq 1$ invertible in $\cO_X$ which annihilates the torsion part of $W$.  Since $K_1 := \ker(P_1^\gp \to \overline{\cM}_{X, f(\AC{y})}^\gp)$ is finitely generated, there exists some finitely generated abelian group $K_2$ such that $K_1 \cong n K_2 = \{ n k : k \in K_2 \}$.  Let $H_2$ denote the pushout of $Q_1^\gp \leftarrow K_1 \to K_2$, which contains $Q_1^\gp$ as a finite index subgroup.  Let $P_2 := \{ a \in H_2 : n a \in P_1 \}$ and $Q_2 := \{ b \in H_2: n b \in Q_1 \}$, which are fs monoids because $P_1$ and $Q_1$ are.  Note that $P_1$, $P_2$, $Q_1$, and $Q_2$ are all submonoids of $H_2$.  Let $G_2 := P_2^\gp / \overline{G}$, and let $0 \to G_2 \to T_2 \to W \to 0$ be defined by pushout as before.  Since $n$ is invertible in $\cO_X$, the induced homomorphism $K_1 \to \cO_{X_\et, f(\AC{y})}^\times$ lifts to some homomorphism $K_2 \to \cO_{X_\et, f(\AC{y})}^\times$.  Hence, up to further \'etale localization, we may assume that $P_1 \to Q_1$ lifts to an fs chart $P_2 \to Q_2$ of $f$ at $\AC{y}$ and $f(\AC{y})$, which still satisfies the conditions in Definition \ref{def-log-sm}.  Given any torsion element $w$ of $W$ of order $m$ \Pth{which necessarily divides $n$}, let $t_1 \in T_1$ be any lifting of $w$.  Then $g_1 := m t_1 \in G_1 = \coker(P^\gp \to P_1^\gp)$.  Since $P^\gp \to P_1^\gp / K_1 \Mi \overline{\cM}_{X, f(\AC{y})}^\gp$ is surjective \Pth{by Remark \ref{rem-chart-stalk} again}, $g_1$ lifts to some $k_1 \in K_1$, which is the $m$-th multiple of some $k_2 \in K_2$ with image $g_2$ in $G_2$.  Then $t_2 := t_1 - g_2 \in T_2$ is a lifting of $w$ which satisfies $m t_2 = g_1 - m g_2 = 0$.  Hence, $\bZ t_2 \subset T_2$ defines a lifting of $\bZ w \subset W$.  Since $w$ is arbitrary, the homomorphism $T_2 \to W$ of finitely generated abelian groups splits, and the preimage $\overline{H}$ of the split image of $W$ in $Q_2^\gp$ is an extension $0 \to \overline{G} \to \overline{H} \to W \to 0$ whose pushout via $\overline{G} \to P_1^\gp$ recovers $0 \to P_1^\gp \to Q_1^\gp \to W \to 0$.  Since $\overline{H}$ is finitely generated, there is some surjection $F \Surj \overline{H}$ from a finitely generated free abelian group, and the preimage $E$ of $\overline{G}$ is also finitely generated free and lifts to some $E \to P^\gp$.  Then the claim follows by taking $H$ to be the pushout of $P^\gp \leftarrow E \to F$.

    Finally, if $P$ is torsion-free, let us show that we can take $Q$ to be torsion-free as well.  We learned the following argument from \cite[\aProp A.2]{Nakayama:1998-nclsf}.  Consider the torsion submonoid $Q_\tor$ of $Q$, which is necessarily contained in $Q^\inv$; and choose any splitting $s$ of $\pi: Q \Surj Q' := Q / Q_\tor$.  Let $n$ be any integer invertible in $\cO_X$ which annihilates the torsion in $\coker(u^\gp)$.  Since $P$ is torsion-free, the composition $u' = \pi \circ u: P \to Q'$ is injective, and $Q_\tor$ is also annihilated by $n$.  Let $S$ be the finite \'etale $R$-algebra obtained from $R\Talg{Q_\tor}$ by formally joining the $n$-th roots of $\mono{a}$, for all $a \in Q_\tor$; and let $S^+$ be the integral closure of $R^+\Talg{Q_\tor}$ in $S$.  Then the morphism $Z := \Spa(S, S^+) \to X\Talg{Q_\tor} = \Spa(R\Talg{Q_\tor}, R^+\Talg{Q_\tor})$ over $X$ is finite \'etale and surjective, with base change $Z\Talg{Q'} \to X\Talg{Q}$.  Consider the composition $v = s \circ \pi \circ u: P \to Q$.  Then $u - v: P \to Q$ factors through $P \to Q_\tor$, which extends to some $\phi: Q' \to S^\times_\tor$; and $a \mapsto \phi(a) a$, for $a \in Q'$, induces an isomorphism between the two compositions $g, h: Z\Talg{Q'} \to X\Talg{Q} \to X\Talg{P}$ induced by $u, v$, respectively.  Since $u: P \to Q$ is a chart of $f$, the induced morphism $Y \to X \times_{X\Talg{P}} X\Talg{Q}$ is \'etale, whose pullback is an \'etale morphism $Y \times_{X\Talg{Q_\tor}} Z \to X \times_{X\Talg{P}, g} Z\Talg{Q'}$.  The target is isomorphic to $X \times_{X\Talg{P}, h} Z\Talg{Q'}$, and hence is \'etale over $X \times_{X\Talg{P}} X\Talg{Q'}$, by the above explanation.  Consequently, the morphism $Y \to X \times_{X\Talg{P}} X\Talg{Q'}$ induced by $f$ and $u': P \to Q'$ is \'etale, and so $u'$ is also an injective fs chart of $f$, as desired.
\end{proof}

\begin{prop}\label{prop-log-sm-compos}
    Compositions of log smooth \Pth{\resp log \'etale} morphisms are still log smooth \Pth{\resp log \'etale}.
\end{prop}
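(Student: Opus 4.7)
The plan is to work étale locally and assemble the charts of the two given morphisms into a chart of the composition. Let $f: Y \to X$ and $g: Z \to Y$ be log smooth \Pth{\resp log \'etale} morphisms between locally noetherian fs log adic spaces; we aim to show that $h := f \circ g: Z \to X$ is log smooth \Pth{\resp log \'etale}. By Propositions \ref{prop-chart-mor-exist-fine} and \ref{prop-log-sm-chart}, I would first arrange, \'etale locally on $X$ and $Y$, that $X$ carries a global fs chart $P$ and that $f$ admits an injective fs chart $u: P \to Q$ satisfying the conditions of Definition \ref{def-log-sm}; then $Y$ is modeled on $Q$. Applying Proposition \ref{prop-log-sm-chart} once more to $g$ with this chart $Q$ of $Y$, we obtain, after further \'etale localization on $Z$, an injective fs chart $v: Q \to R$ of $g$ satisfying the same conditions. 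The composite $w := v \circ u: P \to R$ is then an fs chart of $h$, and it remains to verify the two conditions of Definition \ref{def-log-sm} for $w$.

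For condition \Refenum{\ref{def-log-sm-1}}, I would use the standard exact sequences attached to a composition of abelian group homomorphisms. Since $\ker w^\gp = (u^\gp)^{-1}(\ker v^\gp)$, we have the short exact sequence
\[
    0 \to \ker u^\gp \to \ker w^\gp \to \im u^\gp \cap \ker v^\gp \to 0,
\]
so $|\ker w^\gp|$ is finite and divides $|\ker u^\gp| \cdot |\ker v^\gp|$. Similarly, $\im w^\gp \subset \im v^\gp$ yields the short exact sequence
\[
    0 \to K \to \coker w^\gp \to \coker v^\gp \to 0,
\]
where $K := Q^\gp / (\im u^\gp + \ker v^\gp)$ is the quotient of $\coker u^\gp$ by the image of the finite group $\ker v^\gp$; hence $|K_\tor|$ divides $|(\coker u^\gp)_\tor|$. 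Left-exactness of the torsion-subgroup functor then shows that $|(\coker w^\gp)_\tor|$ divides $|K_\tor| \cdot |(\coker v^\gp)_\tor|$, and all of these orders are invertible in $\cO_X$ \Pth{orders invertible in $\cO_Y$ are also invertible in $\cO_X$ via the morphism $Y \to X$}. The log \'etale case is handled by the same computations, with the torsion parts of the cokernels replaced by the full cokernels.

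For condition \Refenum{\ref{def-log-sm-2}}, associativity of fs fiber products gives $X \times_{X\Talg{P}} X\Talg{R} \cong (X \times_{X\Talg{P}} X\Talg{Q}) \times_{X\Talg{Q}} X\Talg{R}$. Log smoothness of $f$ provides an \'etale morphism $Y \to X \times_{X\Talg{P}} X\Talg{Q}$, whose base change along $X\Talg{R} \to X\Talg{Q}$ produces an \'etale morphism $Y \times_{X\Talg{Q}} X\Talg{R} \to X \times_{X\Talg{P}} X\Talg{R}$. A direct computation using $\cO_X\Talg{R} \cong \cO_X\Talg{Q} \otimes_{\bZ[Q]} \bZ[R]$ and the chart $Q \to \cO_Y$ identifies $Y \times_{X\Talg{Q}} X\Talg{R}$ canonically with $Y\Talg{R} \cong Y \times_{Y\Talg{Q}} Y\Talg{R}$. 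Composing with the \'etale morphism $Z \to Y \times_{Y\Talg{Q}} Y\Talg{R}$ arising from log smoothness of $g$ yields the desired \'etale morphism $Z \to X \times_{X\Talg{P}} X\Talg{R}$.

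The principal technical point is the initial matching of charts: the target monoid $Q$ of the chart of $f$ must simultaneously serve as a source monoid for some chart of $g$. This is exactly what Proposition \ref{prop-log-sm-chart} affords after further \'etale localization, once $Y$ has been equipped with the global chart $Q$. The remainder of the argument is elementary: exact sequences of monoid group completions for condition \Refenum{\ref{def-log-sm-1}}, and a compatibility of fs fiber products among the auxiliary spaces $X\Talg{-}$, $Y\Talg{-}$ for condition \Refenum{\ref{def-log-sm-2}}.
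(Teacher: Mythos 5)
Your overall strategy is exactly the one the paper intends: the paper's own proof of this proposition is the single sentence ``This follows from Definition \ref{def-log-sm} and Proposition \ref{prop-log-sm-chart},'' and your write-up supplies precisely the missing details---use Proposition \ref{prop-log-sm-chart} twice to produce composable charts $u: P \to Q$ and $v: Q \to R$, then verify the two conditions of Definition \ref{def-log-sm} for $w = v \circ u$. The chart-matching step, the two exact sequences controlling $\ker w^\gp$ and $(\coker w^\gp)_\tor$, and the identification $Y \times_{X\Talg{Q}} X\Talg{R} \cong Y \times_{Y\Talg{Q}} Y\Talg{R}$ for condition \Refenum{\ref{def-log-sm-2}} are all fine \Pth{though the intermediate object in that chain should be $Y \times_{Y\Talg{Q}} Y\Talg{R}$ rather than $Y\Talg{R}$ itself; your two endpoints are the correct ones, and no saturation intervenes since all the amalgamated sums are along identity maps}.

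The one step you should not wave away is the parenthetical ``orders invertible in $\cO_Y$ are also invertible in $\cO_X$ via the morphism $Y \to X$.'' The implication goes the other way: pullback of functions is a ring homomorphism $\cO_X(U) \to \cO_Y(f^{-1}(U))$, so invertibility in $\cO_X$ implies invertibility in $\cO_Y$, not conversely---$Y$ could map into a locus of $X$ on which an integer is invertible even though it is not invertible on all of $X$. Definition \ref{def-log-sm} applied to $g: Z \to Y$ only guarantees that the orders attached to $v$ are invertible in $\cO_Y$ \Pth{more precisely, in the \'etale localization of $Y$ over which $v$ is a chart}, whereas condition \Refenum{\ref{def-log-sm-1}} for $w$ asks for invertibility in $\cO_X$. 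If one reads the invertibility condition on the source---as in Kato's original definition, and as the paper itself does for Kummer \'etale morphisms in Definition \ref{def-ket-mor}---your argument closes immediately, since invertibility in $\cO_Y$ does pull back to invertibility in $\cO_Z$. Under the literal target-side reading of Definition \ref{def-log-sm}, an extra localization is needed: since supports of valuations correspond under $Y \to X$, the relevant integers are nonvanishing at every point of $X$ in the image of the chosen localization of $Y$, and one should replace the \'etale localization of $X$ by the locus where they are invertible. Either way, this point deserves an explicit sentence rather than a reversed implication; the rest of your argument is correct as written.
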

\begin{proof}
    This follows from Definition \ref{def-log-sm} and Proposition \ref{prop-log-sm-chart}.
\end{proof}

\begin{prop}\label{prop-log-sm-strict}
    If $f: Y \to X$ is log smooth \Pth{\resp log \'etale} and strict, then the underlying morphism of adic spaces is smooth \Pth{\resp \'etale}.
\end{prop}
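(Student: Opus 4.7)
The plan is to reduce to a local computation in monoid algebras.  Fix a geometric point $\AC{y}$ of $Y$ with $\AC{x} := f(\AC{y})$, and work \'etale locally around $\AC{y}$.  Using Proposition \ref{prop-chart-stalk-chart-fs}, I choose \'etale locally at $\AC{x}$ a chart of $X$ modeled on the toric monoid $P := \overline{\cM}_{X, \AC{x}}$.  By Proposition \ref{prop-log-sm-chart}, \'etale locally at $\AC{y}$ there is an injective fs chart $u : P \to Q$ of $f$ satisfying both conditions of Definition \ref{def-log-sm}; in particular, the induced morphism $Y \to W := X \times_{X\Talg{P}} X\Talg{Q}$ is \'etale as a morphism of underlying adic spaces.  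Localizing $Q$ at $S_Q$, the preimage of $\cO_{Y, \AC{y}}^\times$ in $Q$, and passing to the corresponding rational subdomain of $Y$ (as in the third step of the proof of Proposition \ref{prop-log-sm-chart}), I may further assume $\overline{Q} \cong \overline{\cM}_{Y, \AC{y}}$, while leaving $u^\gp$ and the \'etaleness of $Y \to W$ undisturbed.

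By strictness and Remark \ref{rem-stalk-strict}, $\overline{\cM}_{X, \AC{x}} \Mi \overline{\cM}_{Y, \AC{y}}$, whence $\overline{u} : P = \overline{P} \to \overline{Q}$ is an isomorphism.  Since $P$ is sharp and $u$ is injective, $u^{-1}(Q^*) = P^* = 0$ and $Q = u(P) + Q^*$, and a direct check shows that the natural homomorphism $P \oplus Q^* \to Q$, $(p, \alpha) \mapsto u(p) + \alpha$, is an isomorphism of monoids.  Consequently $\bZ[Q] \cong \bZ[P] \otimes_\bZ \bZ[Q^*]$, and taking the appropriate Huber-ring completions and base-changing along the chart $R\Talg{P} \to R$ (with $R := \cO_X(X)$) yields $R \hat\otimes_{R\Talg{P}} R\Talg{Q} \cong R \hat\otimes_\bZ \bZ[Q^*]$.

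To finish, I apply the snake lemma to the two exact sequences $0 \to P^* \to P^\gp \to \overline{P}^\gp \to 0$ and $0 \to Q^* \to Q^\gp \to \overline{Q}^\gp \to 0$, with vertical maps $u^*$, $u^\gp$, $\overline{u}^\gp$; since $\overline{u}^\gp$ is an isomorphism, this yields $\ker(u^*) \cong \ker(u^\gp)$ and $\coker(u^*) \cong \coker(u^\gp)$.  With $P^* = 0$, we get $Q^* = \coker(u^*) \cong \coker(u^\gp)$, whose torsion subgroup (resp.\@ the whole group) is finite of order invertible in $R$ by Definition \ref{def-log-sm}\Refenum{\ref{def-log-sm-1}}.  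Writing $Q^* \cong \bZ^r \oplus F$ with $F$ finite of order invertible in $R$ (and $r = 0$ in the \'etale case), $R \hat\otimes_\bZ \bZ[Q^*]$ is a finite \'etale extension of an $r$-dimensional Tate torus over $R$, and hence smooth (resp.\@ \'etale) over $R$.  This shows $W \to X$ is smooth (resp.\@ \'etale); composing with the \'etale morphism $Y \to W$ then yields the claim.

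The main obstacle will be the careful bookkeeping needed to verify that the localization refinement of the chart preserves the \'etaleness of $Y \to W$, and that the identification of completed tensor products with Tate algebras is correct.  The monoidal splitting $Q \cong P \oplus Q^*$ and the resulting smoothness/\'etaleness of the Tate-algebra extension $R \to R \hat\otimes_\bZ \bZ[Q^*]$ are otherwise routine once the setup is in place.
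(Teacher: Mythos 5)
Your proof is correct and follows essentially the same route as the paper's: reduce via Proposition \ref{prop-log-sm-chart} to an injective fs chart with $\overline{Q} \cong \overline{\cM}_{Y, \AC{y}}$, use strictness to see that the sharp quotients agree so that only the unit parts of the monoids differ, and conclude from Definition \ref{def-log-sm}\Refenum{\ref{def-log-sm-1}} together with the smoothness \Pth{\resp \'etaleness} of relative tori and of group algebras of finite groups of invertible order.  Your normalization of $P$ to be sharp \Pth{so $P^* = 0$}, the explicit splitting $Q \cong P \oplus Q^*$, and the snake-lemma bookkeeping are just a more explicit rendering of the paper's reduction to the morphism $X\Talg{Q^*} \to X\Talg{P^*}$, for which the paper simply cites Huber.
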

\begin{proof}
    \'Etale locally at geometric points $\AC{y}$ of $Y$ and $f(\AC{y})$ of $X$, by Propositions \ref{prop-chart-stalk-chart-fs} and \ref{prop-log-sm-chart}, we may assume that $f: Y \to X$ admits an injective fs chart $u: P = \overline{\cM}_{X, f(\AC{y})} \to Q$ as in Definition \ref{def-log-sm}, where the torsion part $K_\tor$ of $K := \coker(u^\gp: P^\gp \to Q^\gp)$ is a finite group of order invertible in $\cO_X$, and where $K$ itself is finite when $f$ is log \'etale.  Since $f$ is strict, by Remark \ref{rem-stalk-strict}, $P = \overline{\cM}_{X, f(\AC{y})} \cong \overline{\cM}_{Y, \AC{y}}$.  Hence, we can identify $K$ with $\ker(Q^\gp \to \overline{\cM}_{Y, \AC{y}}^\gp)$, so that $u^\gp(P^\gp) \cap K = 0$ in $Q^\gp$.  Since $K$ is a finitely generated abelian group, we have a decomposition $K_\tor \oplus \bigl(\oplus_{i = 1}^r \bZ a_i\bigr) \Mi K$, for some elements $a_i \in K$ which are necessarily mapped to $\cO_{Y_\et, \AC{y}}^\times$.  Up to replacing $a_i$ with $- a_i$, for each $1 \leq i \leq r$, we may assume that $a_i$ is mapped to $\cO_{Y_\et, \AC{y}}^+$.  Let $Q' := u(P) \oplus K_\tor \oplus \bigl(\oplus_{i = 1}^r \bZ_{\geq 0} a_i\bigr)$ in $Q^\gp$.  Then $Q^\gp \to \cM_{Y, \AC{y}}^\gp$ maps $Q'$ to $\cM_{Y, \AC{y}}$, the induced map $Q' \to \cO_{Y_\et, \AC{y}}$ factors through $\cO_{Y_\et, \AC{y}}^+$, and the induced map $Q' \to \overline{\cM}_{Y, \AC{y}}$ is surjective.  In this case, up to further \'etale localization, $u': P \to Q'$ is also an injective fs chart of $f$.  Thus, it suffices to show that $X\Talg{Q'} \to X\Talg{P}$ is smooth \Pth{\resp \'etale} at the image of $\AC{y}$.  Since $X\Talg{Q'} \cong X\Talg{P} \times_X X\Talg{K_\tor} \times_X X\Talg{\bZ_{\geq 0}^r}$ over $X\Talg{P}$, it remains to note that, by \cite[\aCor 1.6.10 and \aProp 1.7.1]{Huber:1996-ERA}, $X\Talg{K_\tor} \times_X X\Talg{\bZ_{\geq 0}^r}$ is smooth \Pth{\resp \'etale} over $X$, because $K_\tor$ is a finite groups of order invertible in $\cO_X$, and because $r = 0$ when $K$ itself is finite \Pth{\ie, when $f$ is log \'etale}.
\end{proof}

\begin{defn}\label{def-str-smooth}
    If $f$ satisfies the condition in Proposition \ref{prop-log-sm-strict}, we say that $f$ is \emph{strictly smooth} \Pth{\resp \emph{strictly \'etale}}, or simply \emph{smooth} \Pth{\resp \emph{\'etale}}, when the context is clear.
\end{defn}

\begin{defn}\label{def-log-sm-base-field}
    Let $(k, k^+)$ be an affinoid field.  A locally noetherian fs log adic space $X$ is called \emph{log smooth over} $\Spa(k, k^+)$ if there is a log smooth morphism $X \to \Spa(k, k^+)$, where $\Spa(k, k^+)$ is endowed with the trivial log structure.  When $X$ is log smooth \Pth{\resp smooth} over $\Spa(k, \cO_k)$, we simply say that $X$ is \emph{log smooth} \Pth{\resp \emph{smooth}} \emph{over $k$}.
\end{defn}

Local structures of log smooth log adic spaces can be described by \emph{toric charts}, by the following proposition:
\begin{prop}\label{prop-toric-chart}
    Let $X$ be an fs log adic space log smooth over $\Spa(k, k^+)$, where $(k, k^+)$ is an affinoid field.  Then, \'etale locally on $X$, there exist a sharp fs monoid $P$ and a strictly \'etale morphism $X \to \Spa(k\Talg{P}, k^+\Talg{P})$ that is a composition of rational localizations and finite \'etale morphisms.
\end{prop}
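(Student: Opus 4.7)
The plan is to first apply Proposition \ref{prop-log-sm-chart} with the base chart $P_0 = 0$ (which is legitimate because $\Spa(k, k^+)$ has trivial log structure, modeled by the torsionfree, global fs chart $0$) to obtain, \'etale locally on $X$, an injective fs chart $0 \hookrightarrow Q$ of the log smooth morphism $X \to \Spa(k, k^+)$, with $Q$ torsionfree fs, such that the induced morphism $X \to Y := \Spa(k\Talg{Q}, k^+\Talg{Q})$ is \'etale on underlying adic spaces. In this special case, Proposition \ref{prop-log-sm-chart} does not require any \'etale localization on the base, because the trivial chart $P_0 = 0$ is already global; only $X$ is \'etale-localized. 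By Remark \ref{rem-def-chart-rel}, this chart makes $X \to Y$ strict, so by Definition \ref{def-str-smooth}, $X \to Y$ is strictly \'etale.

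Next, since $Q$ is torsionfree and fs, the unit group $Q^*$ is finitely generated and torsionfree, so $Q^* \cong \bZ^r$ for some $r \geq 0$. By Lemma \ref{lem-monoid-split}, we obtain a splitting $Q \cong \bZ^r \oplus \overline{Q}$ with $\overline{Q}$ toric. Set $P := \bZ_{\geq 0}^r \oplus \overline{Q}$, which is again sharp fs. The inclusion $P \hookrightarrow Q$ induces a morphism $Y \to Z := \Spa(k\Talg{P}, k^+\Talg{P})$, which realizes $Y$ as the rational open subspace of $Z$ where the $r$ free generators $T_1, \ldots, T_r$ of $\bZ_{\geq 0}^r$ become invertible; that is, the rational subset $\{v : |T_i(v)| \geq 1, \, i = 1, \ldots, r\}$, a finite intersection of rational subsets of the noetherian affinoid adic space $Z$. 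A direct pushout computation shows that $Y \to Z$ is strict, because $\overline{Q}$ is sharp and the pre-log structure on $Y$ coming from $P \to k\Talg{Q}$ produces the same associated log structure as $Q \to k\Talg{Q}$: both pushouts collapse to $\overline{Q} \oplus \cO_Y^\times$. Hence $Y \to Z$ is a strictly \'etale rational localization.

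Finally, we decompose the strictly \'etale morphism $X \to Y$, \'etale-locally on $X$, as a composition of rational localizations and finite \'etale morphisms. By the definition of \'etale morphisms of adic spaces recorded in the notation section, each point of $X$ has an open neighborhood on which $X \to Y$ factors as the composition of an open immersion, a finite \'etale morphism, and another open immersion. Since rational subsets form a basis for the topology on affinoid adic spaces, after further rational (hence \'etale) localization on $X$ and suitable choice of intermediate rational opens in the target, we may arrange both open immersions to be rational localizations. Each intermediate space inherits a log structure by pullback from $Y$, so every morphism in the decomposition is strict. Composing with the strict rational localization $Y \to Z$ yields the required strictly \'etale morphism $X \to \Spa(k\Talg{P}, k^+\Talg{P})$, expressible as a composition of rational localizations and finite \'etale morphisms. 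The main subtlety lies in this last decomposition step, namely the rewriting of open immersions as rational localizations, which nonetheless follows from the elementary structure of affinoid adic spaces.
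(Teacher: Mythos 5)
Your proof is correct and follows essentially the same route as the paper's: split the unit part $\bZ^r$ off the chart monoid $Q$, set $P := \bZ_{\geq 0}^r \oplus \overline{Q}$, and realize the torus factor as a rational subset of the polydisc factor $\Spa(k\Talg{P}, k^+\Talg{P})$. The only real difference is that you arrange $Q$ to be torsionfree up front by invoking the torsionfree refinement in Proposition \ref{prop-log-sm-chart}, whereas the paper allows $Q_\tor \neq 0$ and splits it off as the finite \'etale factor $k\Talg{Q_\tor}$ over $k$ (using that $|Q_\tor|$ is prime to $\chr(k)$); both variants work, and any \'etale localization of the base $\Spa(k, k^+)$ that your cited proposition might entail would in any case only contribute a further finite \'etale morphism, so it does not affect the conclusion.
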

\begin{proof}
    By Proposition \ref{prop-log-sm-chart}, \'etale locally on $X$, there exists a torsion-free fs monoid $Q$ and a strictly \'etale morphism $X \to \Spa(k\Talg{Q}, k^+\Talg{Q})$.  We may further assume that $X \to \Spa(k\Talg{Q}, k^+\Talg{Q})$ is a composition of rational localizations and finite \'etale morphisms.  By Lemma \ref{lem-monoid-split}, $Q \to \overline{Q}$ splits, and hence there is a decomposition $Q \Mi \overline{Q} \oplus Q^\inv \Mi \overline{Q} \oplus \bZ^r$, for some $r$.  Let $P := \overline{Q} \oplus \bZ_{\geq 0}^r$, which is a sharp fs monoid.  Since $P \to Q$ is a localization of monoids, as in Construction \ref{constr-monoid-loc}, $\Spa(k\Talg{Q}, k^+\Talg{Q}) \to \Spa(k\Talg{P}, k^+\Talg{P})$ is a rational localization, whose pre-composition with $X \to \Spa(k\Talg{Q}, k^+\Talg{Q})$ gives the desired morphism.
\end{proof}

\begin{cor}\label{cor-sm-toric-chart}
    Let $X$ and $(k, k^+)$ be as in Proposition \ref{prop-toric-chart}.  Suppose moreover that underlying adic space of $X$ is smooth over $\Spa(k, k^+)$.  Then, \'etale locally on $X$, there exists a strictly \'etale morphism $X \to \bD^n$ \Pth{see Example \ref{ex-log-adic-sp-disc}} that is a composition of rational localizations and finite \'etale morphisms.
\end{cor}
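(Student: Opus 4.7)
The plan is to refine the strictly \'etale morphism $X \to Y := \Spa(k\Talg{P}, k^+\Talg{P})$ produced by Proposition \ref{prop-toric-chart} \Pth{with $P$ a sharp fs monoid and the morphism itself a composition of rational localizations and finite \'etale morphisms}, so as to replace $P$ by $\bZ_{\geq 0}^n$ by exploiting the additional smoothness hypothesis on $X$ and further localizing both on $Y$ and on $X$.

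Fix a geometric point $\AC{x}$ of $X$ with image $\AC{y}$ in $Y$, and let $F := (\alpha \circ \theta)^{-1}(\cO_{X, \AC{x}}^\times) \subset P$, which is a face of $P$ since $\cO_{X, \AC{x}}$ is local.  By Remark \ref{rem-chart-stalk}, $P / F \cong \overline{\cM}_{X, \AC{x}}$; in particular, $P / F$ is toric.  Since $P$ is sharp and saturated, $P^\gp$ is torsionfree \Pth{if $n a = 0$ in $P^\gp$, then $a, -a \in P^\Sat = P$, forcing $a \in P^* = \{ 0 \}$}, and therefore $F^\gp \cong \bZ^r$ for some $r \geq 0$.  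Applying Lemma \ref{lem-monoid-split} to the surjection $F^{-1} P \Surj P / F$ \Pth{whose kernel $F^\gp$ is contained in $F^{-1} P$} yields a splitting $F^{-1} P \cong (P / F) \oplus F^\gp$.  Let $Y' := \Spa(k\Talg{F^{-1} P}, k^+\Talg{F^{-1} P})$, a rational subdomain of $Y$ obtained by inverting all elements of $F$, and replace $X$ by an open neighborhood of $\AC{x}$ contained in the preimage of $Y'$.  Via the chosen splitting,
\[
    Y' \cong \Spa(k\Talg{P / F}, k^+\Talg{P / F}) \times_k \Gm{k}^r,
\]
and the restricted morphism $X \to Y'$ remains strictly \'etale.

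Since $X$ is smooth over $(k, k^+)$ and $X \to Y'$ is \'etale, $Y'$ is smooth over $(k, k^+)$ at the image of $\AC{x}$; and since $\Gm{k}^r$ is smooth, the first factor $Z := \Spa(k\Talg{P / F}, k^+\Talg{P / F})$ must be smooth at the image $\AC{z}$ of $\AC{x}$.  By the definition of $F$, every element of $P / F \setminus \{ 0 \}$ has image in the maximal ideal of $\cO_{X, \AC{x}}$, so $\AC{z}$ maps to the unique torus-fixed closed point $\mathfrak{m}$ of $\Spec k[P / F]$.  The classical toric smoothness criterion---namely, $\Spec k[R]$ with $R$ sharp fs is smooth at its torus-fixed closed point if and only if $R \cong \bZ_{\geq 0}^s$---together with the agreement of the completed local rings of $Z$ at $\AC{z}$ and of $\Spec k[P / F]$ at $\mathfrak{m}$, then forces $P / F \cong \bZ_{\geq 0}^s$ for some $s \geq 0$.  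Consequently $Z \cong \bD^s$, and $Y' \cong \bD^s \times_k \Gm{k}^r$ embeds as a rational open subdomain of $\bD^{s + r}$ via the open immersion $\Gm{k}^r \Em \bD^r$.  The composition $X \to Y' \Em \bD^{s + r}$ is then the desired strictly \'etale morphism to a polydisc, which inherits from the morphism to $Y$ the property of being a composition of rational localizations and finite \'etale morphisms.

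The main obstacle is verifying rigorously that smoothness of $Z$ at $\AC{z}$ implies regularity of $\Spec k[P / F]$ at $\mathfrak{m}$, so that the classical toric criterion may be invoked.  This should follow from comparing completed local rings under the natural morphism $\Spa(k\Talg{P / F}, k^+\Talg{P / F}) \to \Spec k[P / F]$, which induces an isomorphism on completions at the torus-fixed closed point and thereby transfers regularity between the two settings.
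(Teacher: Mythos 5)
Your overall strategy---reduce to the sharp quotient $\overline{\cM}_{X, \AC{x}}$ and then use the fact that a sharp fs monoid with regular monoid algebra must be free---is the same idea that drives the paper's proof, but your implementation has a genuine gap at the localization step. The space $Y' = \Spa(k\Talg{F^{-1} P}, k^+\Talg{F^{-1} P})$ is the rational subset of $Y$ on which $|\mono{f}| = 1$ for all $f \in F$ \Pth{since both $\mono{f}$ and $\mono{-f}$ lie in $k^+\Talg{F^{-1} P}$}, whereas membership of $f$ in $F = (\alpha \circ \theta)^{-1}(\cO_{X, \AC{x}}^\times)$ only says that $\alpha(\theta(f))$ is invertible in the local ring, \ie, that $|\alpha(\theta(f))(\AC{x})| \neq 0$; it does not force $|\alpha(\theta(f))(\AC{x})| = 1$.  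For instance, with $X = Y = \bD = \Spa(k\Talg{T}, k^+\Talg{T})$, $P = \bZ_{\geq 0}$, and $\AC{x}$ a point with $0 < |T(\AC{x})| < 1$, one has $F = P$ and $Y' = \{ |T| = 1 \}$, which does not contain the image of $\AC{x}$.  So there is in general no open neighborhood of $\AC{x}$ mapping into $Y'$, and the decomposition $Y' \cong \Spa(k\Talg{P / F}, k^+\Talg{P / F}) \times_k \Gm{k}^r$, the point $\AC{z}$, and everything downstream are unavailable.  \Pth{Replacing $Y'$ by the genuine open locus where the $\mono{f}$ are invertible, a union of rational subsets $\{ |\mono{f}| \geq |c| \}$, repairs the containment but destroys the identification of the coordinate ring with a monoid algebra, so the product decomposition must be reargued; this rescaling issue is exactly what Lemma \ref{lem-fs-split-int} deals with.}

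The second issue is the one you flag yourself: passing from smoothness of $Z$ at an adic point over the vertex to regularity of $\Spec k[P / F]$ at $\mathfrak{m}$, and thence to freeness of $P / F$.  The paper sidesteps both problems by starting not from the statement of Proposition \ref{prop-toric-chart} but from its proof: there one already has, \'etale locally, a strictly \'etale morphism from $X$ to $\Spa(k\Talg{\overline{Q}} \ho_k k\Talg{W_1^\pm, \ldots, W_r^\pm}, k^+\Talg{\overline{Q}} \ho_{k^+} k^+\Talg{W_1^\pm, \ldots, W_r^\pm})$ with $\overline{Q}$ sharp fs and the $W_i$ invertible on $X$, so smoothness of $X$ over $\Spa(k, k^+)$ gives smoothness of $\Spa(k\Talg{\overline{Q}}, \cO_k\Talg{\overline{Q}})$ over $k$, whence $\overline{Q} \cong \bZ_{\geq 0}^t$ by \cite[\aProp 4.45]{Bruns/Gubeladze:2009-PRK}; composing with the open immersion $\Gm{k}^r \Em \bD^r$ then yields the chart, exactly as in your final paragraph.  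If you wish to keep your route, you should first invoke Proposition \ref{prop-chart-stalk-chart-fs} to replace the chart, \'etale locally at $\AC{x}$, by one modeled on $\overline{\cM}_{X, \AC{x}}$ \Pth{whose proof, via Lemma \ref{lem-fs-split-int}, performs the needed rescaling}, and then carry out the regularity transfer you only sketch.
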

\begin{proof}
    As in the proof of Proposition \ref{prop-toric-chart}, \'etale locally on $X$, there is a torsionfree fs monoid $Q$ and a strictly \'etale morphism $X \to Y := \Spa(k\Talg{Q}, k^+\Talg{Q})$ that is a composition of rational localizations and finite \'etale morphisms.  Consider the canonical morphism $Y \to Z := \Spec(k[Q])$.  Note that $Z = \Spec(k[Q])$ admits a stratification by locally closed subschemes of the form $Z_F := \Spec(k[F^\gp])$, where $F$ are the faces of $Q$ \Pth{see \cite[\aSecs I.1.4 and I.3.4]{Ogus:2018-LLG}}, which is a closed subscheme of the open subscheme $Z_{(F)} := \Spec(k[Q_F])$ of $Z$, where $Q_F$ denotes the localization of $Q$ with respect to $F$ \Pth{as in Construction \ref{constr-monoid-loc}}.  By \cite[\aSec I.3.6]{Ogus:2018-LLG}, given any closed point $z$ of $Z_F$ with residue field $\kappa(z)$, the completion $\cO_{Z, z}^\wedge$ of the local ring $\cO_{Z, z}$ is isomorphic to $\kappa(z)[[Q_F]]$.

    For each $F$, let $Y_F := Y \times_Z Z_F$ and $Y_{(F)} := Y \times_Z Z_{(F)}$.  Then we have a canonical open immersion of adic spaces $Y_{(F)} \to \Spa(k, k^+) \times_{\Spec(k)} Z_{(F)}$, by comparing the construction of both sides using Lemma \ref{lem-monoid-alg-Huber}.  \Pth{For the construction of such fiber products of adic spaces with schemes, see \cite[\aProp 3.8 and its proof]{Huber:1994-gfsra}.}  For any $F$ such that $Y_F$ meets the image of $X \to Y$, we claim that $Q_F \cong \bZ_{\geq 0}^s \oplus \bZ^t$, for some $s$ and $t$.  Assuming this claim, then $Z_{(F)}$ admits an open immersion into $(\bP^1_k)^n$, for $n := s + t$, and we obtain an open immersion $Y_{(F)} \to \Spa(k, k^+) \times_{\Spec(k)} (\bP^1_k)^n$ of log adic spaces, where the log structure on the target is defined \Pth{via fiber product} by naturally covering each factor $\Spa(k, k^+) \times_{\Spec(k)} \bP^1_k$ with two log adic spaces $\Spa(k\Talg{T}, k^+\Talg{T})$ and $\Spa(k\Talg{T^{-1}}, k^+\Talg{T^{-1}})$ isomorphic to $\bD$ \Pth{see Example \ref{ex-log-adic-sp-disc}}.  Therefore, up to further localization on $X$, we may assume that $X \to Y$ extends to a strictly \'etale morphism $X \to \bD^n$, which is still a composition of rational localizations and finite \'etale morphisms.  Thus, the corollary follows from the claim.

    It remains to verify the claim.  Since it only concerns monoids $Q_F$ as above, we may base change to $\Spa(k, \cO_k)$, and assume that $k^+ = \cO_k$, so that $X$ is a smooth rigid analytic variety over $k$.  For any $F$ such that $Y_F$ meets the image of the \'etale morphism $X \to Y$, since $X$ is smooth over $k$, and since the open immersion $Y_{(F)} \to \Spa(k, k^+) \times_{\Spec(k)} Z_{(F)} \cong Z_{(F)}^\an$ maps $Y_F$ to $Z_F^\an$, we see that $Z$ is smooth over $k$ at some closed point $z$ of $Z_F$, so that $\cO_{Z, z}^\wedge \cong \kappa(z)[[Q_F]]$ \Pth{as explained above} is regular.  Since the localization $Q_F$ is torsionfree fs as $Q$ is, by decomposing $Q_F \Mi \overline{Q_F} \oplus Q_F^\inv \Mi \overline{Q_F} \oplus \bZ^t$, for some $t$, as in the proof of Proposition \ref{prop-toric-chart}, we obtain $\kappa(z)[[Q_F]] \Mi \kappa(z)[[\overline{Q_F}]][[T_1, \ldots, T_t]]$.  Hence, the regularity of $\kappa(z)[[Q_F]]$ implies that of $\kappa(z)[[\overline{Q_F}]]$.  Since $\overline{Q_F}$ is fine and sharp, by \cite[\aLem I.1.11.7]{Ogus:2018-LLG}, we have $\overline{Q_F} \cong \bZ_{\geq 0}^s$, for some $s$, and the claim follows.
\end{proof}

\begin{defn}\label{def-toric-chart}
    A strictly \'etale morphism $X \to \Spa(k\Talg{P}, k^+\Talg{P})$ as in Proposition \ref{prop-toric-chart} is called a \emph{toric chart}.  A strictly \'etale morphism $X \to \bD^n$ as in Corollary \ref{cor-sm-toric-chart} is called a \emph{smooth toric chart}.
\end{defn}

\begin{exam}\label{ex-log-adic-sp-ncd-chart}
   Let $X$, $D$, and $k$ be as in Example \ref{ex-log-adic-sp-ncd}.  We claim that, \'etale locally, $X$ admits a smooth toric chart $X \to \bD^n$, where $n = \dim(X)$.  In order to see this, we may assume that, up \'etale localization, $X$ is $S \times \bD^m$ as in Example \ref{ex-log-adic-sp-ncd}, and that there is a morphism \Pth{of adic spaces with trivial log structures} $S \to \bT^{n - m} = \Spa(k\Talg{T_1^\pm, \ldots, T_{n - m}^\pm}, \cO_k\Talg{T_1^\pm, \ldots, T_{n - m}^\pm})$ that is a composition of finite \'etale morphisms and rational localizations.  Then the composition of $X = S \times \bD^m \to \bT^{n - m} \times \bD^m \Em \bD^{n - m} \times \bD^m \cong \bD^n$ is a desired smooth toric chart.  In particular, $X$ is log smooth over $k$.
\end{exam}

\subsection{Log differentials}\label{sec-log-diff}

In this subsection, we develop a theory of log differentials from scratch.  We first introduce log structures and log differentials for Huber rings.

\begin{defn}\phantomsection\label{def-log-Huber}
    \begin{enumerate}
        \item A \emph{pre-log Huber ring} is a triple $(A, M, \alpha)$ consisting of a \Pth{not necessarily complete} Huber ring $A$, a monoid $M$, and a homomorphism $\alpha: M \to A$ of multiplicative monoids called a \emph{pre-log structure}.  We sometimes denote a pre-log Huber ring just by $(A, M)$, when the pre-log structure $\alpha$ is clear from the context.

        \item A \emph{log Huber ring} is a pre-log Huber ring $(A, M, \alpha)$ where $A$ is \emph{complete} and where the induced homomorphism $\alpha^{-1}(A^\times) \to A^\times$ is an isomorphism.  In this case, $\alpha$ is called a \emph{log structure}.

        \item Given a pre-log Huber ring $(A, M, \alpha)$, let us still denote by $\alpha$ the composition of $M \Mapn{\alpha} A \Mapn{\can} \widehat{A}$, where $\widehat{A}$ denotes the completion of $A$.  Then we define the \emph{associated log Huber ring} to be $(\widehat{A}, {^a}M, \widehat{\alpha})$, where ${^a}M$ is the pushout of $\widehat{A}^\times \leftarrow \alpha^{-1}(\widehat{A}^\times) \rightarrow M$ in the category of monoids, which is equipped with the canonical homomorphism $\widehat{\alpha}: {^a}M \to \widehat{A}$.  In this case, $\widehat{\alpha}$ is called the \emph{associated log structure}.

        \item A homomorphism $f: (A, M, \alpha) \to (B, N, \beta)$ of pre-log Huber rings consists of a continuous homomorphism $f: A \to B$ of Huber rings and a homomorphism of monoids $f_\sharp: M \to N$ such that $\beta \circ f_\sharp = f \circ \alpha$.  In this case, we have a canonically induced morphism $(B, M, \beta \circ f_\sharp) \to (B, N, \beta)$ of pre-log Huber rings, and we say that $f$ is \emph{strict} if the associated morphism of log Huber rings is an isomorphism.  In general, any homomorphism $f: (A, M) \to (B, N)$ of log Huber rings factors as $(A, M) \to \bigl(B, f_*(M)\bigr) \to (B, N)$.
    \end{enumerate}
\end{defn}

\begin{defn}\label{def-der-mod}
    Let $f: (A, M, \alpha) \to (B, N, \beta)$ be a homomorphism of pre-log Huber rings.  Given any complete topological $B$-module $L$, a \emph{derivation} from $(B, N, \beta)$ to $L$ over $(A, M, \alpha)$ \Pth{or an \emph{$(A, M, \alpha)$-derivation} of $(B, N, \beta)$ to $L$} consists of a continuous $A$-linear derivation $d: B \to L$ and a homomorphism of monoids $\delta: N \to L$ such that $\delta\bigl(f_\sharp(m)\bigr) = 0$ and $d\bigl(\beta(n)\bigr) = \beta(n) \, \delta(n)$, for all $m \in M$ and $n \in N$.  We denote the set of all $(A, M, \alpha)$-derivations from $(B, N, \beta)$ to $L$ by $\Der^{\log}_A(B, L)$.  It has a natural $B$-module structure induced by that of $L$.  If $M = \alpha^{-1}(A^\times)$ and $N = \beta^{-1}(B^\times)$, then $\Der^{\log}_A(B, L)$ is simply $\Der_A(B, L)$, the usual $B$-module of continuous $A$-derivations from $B$ to $L$, and we shall omit the superscript \Qtn{$\log$} from the notation.
\end{defn}

\begin{rk}\label{rem-def-der-mod}
    In Definition \ref{def-der-mod}, $(d, \delta)$ naturally extends to a log derivation on $(B, {^a}N, \beta)$, and the $B$-module $\Der^{\log}_A(B, L)$ remains unchanged if we replace $(B, N, \beta)$ with $(B, {^a}N, \beta)$.  In addition, $\delta$ naturally extends to a group homomorphism $\delta^\gp: ({^a}N)^\gp \to L$.
\end{rk}

\begin{defn}\label{def-log-Huber-tft}
    A homomorphism $f: (A, M, \alpha) \to (B, N, \beta)$ of pre-log Huber rings is called \emph{topologically of finite type} \Pth{or \emph{tft} for short} if $A$ and $B$ are complete, $f: A \to B$ is topologically of finite type \Pth{as in \cite[\aSec 3]{Huber:1994-gfsra}}, and $N^\gp / \bigl((f_*(M))^\gp \beta^{-1}(B^\times)\bigr)$ is a finitely generated abelian group.
\end{defn}

Now, let $f: (A, M, \alpha) \to (B, N, \beta)$ be tft, as in Definition \ref{def-log-Huber-tft}.  Consider the monoid algebra $(B \ho_A B)[N]$ over $B \ho_A B$ associated with the monoid $N$, and for each $n \in N$, its element $\mono{n}$ corresponding to $n$ \Pth{by our convention}.  Let $I$ be its ideal generated by $\{ \mono{f_\sharp(m)} - 1 \}_{m \in M}$ and $\{ (\beta(n) \otimes 1) - (1 \otimes \beta(n)) \, \mono{n} \}_{n \in N}$.  Note that, if $n \in \beta^{-1}(B^\times)$, then $\mono{n} = \beta(n) \otimes \beta(n)^{-1}$ in $ \bigl( (B \ho_A B)[N] \bigr)/I$.  Let $J$ be the kernel of the homomorphism
\begin{equation}\label{eq-def-log-diff-aff-diag}
    \Delta_{\log}: \bigl( (B \ho_A B)[N] \bigr) / I \to B
\end{equation}
sending $b_1 \otimes b_2$ to $b_1 b_2$ and all $\mono{n}$ to 1.  We set
\begin{equation}\label{eq-def-log-diff-aff}
    \Omega^{\log}_{B / A} := J / J^2,
\end{equation}
and define $d_{B / A}: B \to \Omega^{\log}_{B / A}$ and $\delta_{B / A}: N \to \Omega^{\log}_{B / A}$ by setting
\[
    d_{B / A}(b) = \overline{(b \otimes 1) - (1 \otimes b)}
\]
and
\[
    \delta_{B / A}(n) = \overline{\mono{n} - 1}.
\]
A short computation shows that $d_{B / A}$ is an $A$-linear derivation, and that $\delta_{B / A}$ is a homomorphism of monoids satisfying the required properties in Definition \ref{def-der-mod}.

As observed in Remark \ref{rem-def-der-mod}, $\delta_{B / A}$ naturally extends to a group homomorphism $\delta^\gp_{B / A}: N^\gp \to \Omega^{\log}_{B / A}$ such that $\delta_{B / A}^\gp\bigl(f_\sharp^\gp(M^\gp)\bigr) = 0$.  Then $\Omega^{\log}_{B / A}$ is generated as a $B$-module by $\ker(B \ho_A B \to B)$ and $\{ \delta^\gp_{B/A}(n) \}$, where $n$ runs through a set of representatives of generators of $N^\gp / \bigl((f_*(M))^\gp \beta^{-1}(B^\times)\bigr)$.  More precisely,
\begin{equation}\label{eq-def-log-diff-aff-quot}
    \Omega^{\log}_{B / A} \cong \bigl(\Omega_{B / A} \oplus ( B \otimes_\bZ N^\gp ) \bigr) \big/ R,
\end{equation}
where $\Omega_{B / A}$ is the usual $B$-module of continuous differentials \Pth{see \cite[\aDef 1.6.1 and (1.6.2)]{Huber:1996-ERA}}, and where $R$ is the $B$-module generated by
\begin{equation}\label{eq-def-log-diff-aff-quot-mod}
    \{ (d\beta(n), -\beta(n) \otimes n) : n \in N \} \cup \{ (0, 1 \otimes f_\sharp(m)) : m \in M \}.
\end{equation}
In particular, $\Omega^{\log}_{B / A}$ is a finite $B$-module.  Therefore, $\Omega^{\log}_{B / A}$ is complete with respect to its natural $B$-module topology, and $d_{B / A}$ is continuous.

\begin{prop}\label{prop-log-diff-univ}
    Under the above assumption, $(\Omega^{\log}_{B / A}, d_{B / A}, \delta_{B / A})$ is a universal object among all $(A, M, \alpha)$-derivations of $(B, N, \beta)$.
\end{prop}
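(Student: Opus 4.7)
The plan is to adapt the standard universality argument for log differentials (cf.\@\xspace \cite{Kato:1989-lsfi}) to the topological setting via a square-zero extension trick. Given any $(A, M, \alpha)$-derivation $(d, \delta)$ from $(B, N, \beta)$ to a complete topological $B$-module $L$, I would produce the required continuous $B$-linear map $\phi: \Omega^{\log}_{B / A} \to L$ satisfying $\phi \circ d_{B / A} = d$ and $\phi \circ \delta_{B / A} = \delta$, and then deduce uniqueness from the explicit generating set for $\Omega^{\log}_{B / A}$ already recorded just before the statement.

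First I would equip $B \oplus L$ with the product topology and the ring structure $(b_1, l_1)(b_2, l_2) = (b_1 b_2, b_1 l_2 + b_2 l_1)$, so that $L$ becomes a square-zero ideal and the projection $\pi: B \oplus L \to B$ a continuous $A$-algebra homomorphism.  Next I would define
\[
    \psi: (B \ho_A B)[N] \to B \oplus L, \quad b_1 \otimes b_2 \mapsto (b_1 b_2, b_2 \, d(b_1)), \quad \mono{n} \mapsto (1, \delta(n)).
\]
The Leibniz rule for $d$ and the additivity of $\delta$ make $\psi$ a ring homomorphism, and the two families of generators of $I$ are killed: $\mono{f^\sharp(m)} - 1 \mapsto (0, \delta(f^\sharp(m))) = 0$ by $\delta \circ f^\sharp = 0$, while $(\beta(n) \otimes 1) - (1 \otimes \beta(n)) \, \mono{n} \mapsto (\beta(n), d\beta(n)) - (\beta(n), \beta(n) \, \delta(n)) = 0$ by $d\beta(n) = \beta(n) \, \delta(n)$.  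Hence $\psi$ factors through $(B \ho_A B)[N] / I$.

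Since $\pi \circ \psi = \Delta_{\log}$, the restriction of $\psi$ sends $J$ into $L$, and because $L \cdot L = 0$, it further descends to a $B$-linear map $\phi: \Omega^{\log}_{B / A} = J / J^2 \to L$.  A direct unwinding gives $\phi(d_{B / A}(b)) = d(b)$ and $\phi(\delta_{B / A}(n)) = \delta(n)$, as required.  Continuity of $\phi$ is automatic because $\Omega^{\log}_{B / A}$ carries the quotient topology of a finite free $B$-module and $L$ is a topological $B$-module.  For uniqueness, any such $\phi$ is determined by its values on the $B$-module generating set $\{ d_{B / A}(b) \}_{b \in B} \cup \{ \delta^\gp_{B / A}(n) \}_{n}$ of $\Omega^{\log}_{B / A}$ identified just before the statement.

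I do not expect a substantive obstacle; the argument parallels the algebraic case.  The only points of genuine care are sign bookkeeping---the convention $d_{B / A}(b) = \overline{(b \otimes 1) - (1 \otimes b)}$ forces the somewhat asymmetric choice $b_1 \otimes b_2 \mapsto (b_1 b_2, b_2 \, d(b_1))$ in $\psi$ rather than the other natural one---and verifying that topological pathologies do not intervene, which is automatic since the output map factors through a finite $B$-module whose topology is already pinned down.
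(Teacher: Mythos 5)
Your proof is correct and follows essentially the same route as the paper: both construct the square-zero extension $B * L$, extend $B \ho_A B \to B * L$ over the monoid algebra by $\mono{n} \mapsto (1, \delta(n))$, check the generators of $I$ die, and descend to $J / J^2$. The only difference is your (more careful) choice of which tensor factor $d$ is applied to, which the paper glosses over as a routine sign check.
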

\begin{proof}
    Let $(d, \delta)$ be a derivation from $(B, N, \beta)$ to some complete topological $B$-module $L$ over $(A, M, \alpha)$.  We turn the $B$-module $B \oplus L$ into a complete topological $B$-algebra, which we denote by $B * L$, with the multiplicative structure defined by $(b_1, x_1) \, (b_2, x_2) = (b_1 b_2, b_1 x_2 + b_2 x_1)$.  Note that the $A$-linear derivation $d$ gives rise to a continuous homomorphism of topological $B$-algebras $B \ho_A B \to B * L$ sending $b_1 \otimes b_2$ to $\bigl(b_1 b_2, b_1 d(b_2)\bigr)$.  This extends to a homomorphism $(B \ho_A B)[N] \to B * L$ by sending $\mono{n}$ to $\bigl(1, \delta(n)\bigr)$, for each $n \in N$.  By the conditions in Definition \ref{def-der-mod}, this homomorphism factors through $\bigl((B \ho_A B)[N]\bigr)/I$, inducing a homomorphism $\bigl((B \ho_A B)[N]\bigr) / I \to B * L$ which we denote by $\varphi$.  By construction, the composition of $\varphi$ with the natural projection $B * L \to B$ recovers \Refeq{\ref{eq-def-log-diff-aff-diag}}.  Therefore, $\varphi$ induces a continuous morphism of $B$-modules $\overline{\varphi}: \Omega^{\log}_{B / A} = J / J^2 \to L$.  Now, a careful chasing of definitions verifies that $\overline{\varphi} \circ d_{B / A} = d$ and $\overline{\varphi} \circ \delta_{B / A} = \delta$, as desired.
\end{proof}

Given any complete topological $B$-module $L$, there is a natural forgetful functor $\Der_A^{\log}(B, L) \to \Der_A(B, L)$ defined by $(d, \delta) \mapsto d$.  The following lemma is obvious:
\begin{lem}\label{lem-log-diff-strict}
    If $f: (A, M, \alpha) \to (B, N, \beta)$ is a strict homomorphism of log Huber rings, then the canonical morphism $\Der_A^{\log}(B, L) \to \Der_A(B, L)$ is an isomorphism, for every complete topological $B$-module $L$.  Consequently, the canonical morphism $\Omega_{B / A} \to \Omega^{\log}_{B / A}$ is an isomorphism.
\end{lem}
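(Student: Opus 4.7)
The plan is to show directly that, under the strictness hypothesis, a log derivation $(d, \delta)$ is completely determined by $d$, and conversely that every continuous $A$-linear derivation $d$ extends uniquely to a log derivation. By Definition \ref{def-log-Huber}\Refenum{4}, strictness means that $N$ is \Pth{up to canonical isomorphism} the pushout of $B^\times \leftarrow (\beta\circ f^\sharp)^{-1}(B^\times) \to M$ in the category of monoids, so $N$ is generated by the images of $M$ and of $B^\times$, and every element of $N$ is represented by a pair $(m, u) \in M \oplus B^\times$.  The first log derivation condition $\delta\bigl(f^\sharp(m)\bigr) = 0$ fixes $\delta$ on the image of $M$, while applying the second condition $d\bigl(\beta(n)\bigr) = \beta(n)\,\delta(n)$ to $n = u \in B^\times \subset N$ \Pth{for which $\beta(n) = u$} forces $\delta(u) = u^{-1} d(u)$.

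The only substantive point to verify---and the step I expect is worth spelling out---is consistency on the overlap of the two generating sets.  If $m \in (\beta\circ f^\sharp)^{-1}(B^\times)$ with $u := \beta(f^\sharp(m)) = f(\alpha(m)) \in B^\times$, then the two candidate rules give $\delta(f^\sharp(m)) = 0$ and $\delta(u) = u^{-1} d\bigl(f(\alpha(m))\bigr)$, and these agree because $A$-linearity of $d$ yields $d\bigl(f(a)\bigr) = d\bigl(f(a) \cdot 1\bigr) = f(a) \, d(1) = 0$ for every $a \in A$.  This is the only place strictness is used; once the consistency holds, a direct check shows that setting $\delta$ to be $0$ on $f^\sharp(M)$ and $u \mapsto u^{-1} d(u)$ on $B^\times \subset N$ descends along the pushout to a well-defined monoid homomorphism $N \to L$ that, together with $d$, satisfies both conditions of Definition \ref{def-der-mod}.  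This produces a two-sided inverse to the forgetful map and proves the first claim.

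The second claim then follows formally by Yoneda.  The functors $L \mapsto \Der_A(B, L)$ and $L \mapsto \Der^{\log}_A(B, L)$ on complete topological $B$-modules are corepresented by $\Omega_{B/A}$ and $\Omega^{\log}_{B/A}$, respectively \Ref{see \cite[(1.6.2)]{Huber:1996-ERA} and Proposition \ref{prop-log-diff-univ}}, and the first assertion gives a natural isomorphism of these functors whose inverse is the canonical forgetful transformation; the universal property therefore produces the claimed canonical isomorphism $\Omega_{B/A} \Mi \Omega^{\log}_{B/A}$ of the corepresenting objects.
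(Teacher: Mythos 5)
Your proof is correct, and it spells out exactly the argument the paper has in mind: the paper states this lemma without proof (introducing it as ``obvious''), the point being precisely that strictness makes $N$ a pushout of $B^\times$ and $M$, on which $\delta$ is forced to be $0$ on $f^\sharp(M)$ and $u \mapsto u^{-1}d(u)$ on $B^\times$, with consistency coming from $d \circ f = 0$. Your identification of the overlap check as the one substantive step, and the corepresentability argument for the second claim, are both accurate.
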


\begin{defn}\label{def-log-Huber-thick}
    A homomorphism $(D, T, \mu) \to (D', T', \mu')$ of log Huber rings is called a \emph{log thickening of first order} if it satisfies the following conditions:
    \begin{enumerate}
        \item\label{def-log-Huber-thick-1} The underlying homomorphism $D \to D'$ of Huber rings is surjective, whose kernel $H$ is a closed ideal satisfying $H^2 = 0$.

        \item\label{def-log-Huber-thick-2} The log structure $\mu': T' \to D'$ is canonically induced by $\mu: T \to D$.

        \item\label{def-log-Huber-thick-3} The subgroup $1 + H$ of $D^\times \cong T^\inv$ \Pth{via the log structure $\mu$} acts freely on $T$, and induces an isomorphism $T / (1 + H) \Mi T'$ of monoids.
    \end{enumerate}
\end{defn}

\begin{rk}\label{rem-def-log-Huber-thick}
    The condition \Refenum{\ref{def-log-Huber-thick-3}} in Definition \ref{def-log-Huber-thick} is automatic when $T^\inv$ acts freely on $T$; or, equivalently, when $T^\inv \to T^\gp$ is injective.  \Pth{In this case, $T$ is \emph{$u$-integral}, as in \cite[\aDef I.1.3.1]{Ogus:2018-LLG}.}  In particular, the condition \Refenum{\ref{def-log-Huber-thick-3}} is satisfied when $T$ is \emph{integral} \Pth{\ie, $T \to T^\gp$ is injective}.
\end{rk}

Consider the following commutative diagram
\begin{equation}\label{eq-log-Huber-thick}
    \xymatrix{ {(A, M, \alpha)} \ar_-f[d] \ar[r] & {(D, T, \mu)} \ar[d] \\
    {(B, N, \beta)} \ar^-g[r] \ar@{.>}[ru]^(.4){\widetilde{g}} & {(D', T', \mu')} }
\end{equation}
of solid arrows given by homomorphisms of log Huber rings, in which the arrow $(D, T, \mu) \to (D', T', \mu')$ is a log thickening of first order as in Definition \ref{def-log-Huber-thick}.

\begin{defn}\label{def-log-Huber-formal-log-sm-unram-et}
    A homomorphism $f: (A, M, \alpha) \to (B, N, \beta)$ of log Huber rings is called \emph{formally log smooth} \Pth{\resp \emph{formally log unramified}, \resp \emph{formally log \'etale}} if, for any diagram as in \Refeq{\ref{eq-log-Huber-thick}}, there exists at least one \Pth{\resp at most one, \resp exactly one} lifting $\widetilde{g}: (B, N, \beta) \to (D, T, \mu)$ of $g$, as the dotted arrow in \Refeq{\ref{eq-log-Huber-thick}}, making the whole diagram commute.  If $M = \alpha^{-1}(A^\times)$ and $N = \beta^{-1}(B^\times)$, then we simply say that $f: A \to B$ \Pth{the underlying ring homomorphism of Huber rings} is \emph{formally smooth} \Pth{\resp \emph{formally unramified}, \resp \emph{formally \'etale}} \Pth{\Refcf{} the formal lifting conditions in \cite[\aDef 1.6.5]{Huber:1996-ERA}}.
\end{defn}

\begin{rk}\label{rem-rigid-formal-sm-unram-et}
    Let $k$ be a nontrivial nonarchimedean field.  By \cite[\aProp 1.7.11]{Huber:1996-ERA}, a tft homomorphism $f: A \to B$ of Tate $k$-algebras is formally smooth \Pth{\resp formally unramified, \resp formally \'etale} if and only if the induced morphism $\Spa(B, B^\circ) \to \Spa(A, A^\circ)$ is smooth \Pth{\resp unramified, \resp \'etale} in the sense of classical rigid analytic geometry.
\end{rk}

\begin{rk}\label{rem-def-log-Huber-formal-log-sm-unram-et-compos-bc}
    It follows easily from the definition that we have the following:
    \begin{enumerate}
        \item\label{rem-def-log-Huber-formal-log-sm-unram-et-compos-bc-1} Formally log smooth \Pth{\resp formally unramified, \resp formally \'etale} homomorphisms are stable under compositions and completed base changes.

        \item\label{rem-def-log-Huber-formal-log-sm-unram-et-compos-bc-2} If a homomorphism $f: (A, M, \alpha) \to (B, N, \beta)$ is formally log \'etale, then a homomorphism $g: (B, N, \beta) \to (C, O, \gamma)$ is formally log smooth \Pth{\resp formally log \'etale} if and only if $g \circ f: (A, M, \alpha) \to (C, O, \gamma)$ is.
    \end{enumerate}
\end{rk}

\begin{lem}\label{lem-def-log-Huber-formal-log-sm-unram-et-strict}
    If the homomorphism $f: (A, M, \alpha) \to (B, N, \beta)$ is a strict homomorphism of log Huber rings, then it is \emph{formally log smooth} \Pth{\resp \emph{formally log unramified}, \resp \emph{formally log \'etale}} if the underlying homomorphism of Huber rings $f: A \to B$ is \emph{formally smooth} \Pth{\resp \emph{formally unramified}, \resp \emph{formally \'etale}}.  If $A^\times \cong M^\inv \to M^\gp$ is injective, then the converse is true.
\end{lem}
\begin{proof}
    Suppose we are given any diagram as in \Refeq{\ref{eq-log-Huber-thick}}, with the top horizontal row denoted by $h: (A, M, \alpha) \to (D, T, \mu)$ in this proof.  Since the homomorphism $f: (A, M, \alpha) \to (B, N, \beta)$ is strict, each $n \in N$ is of the form $n = b + f_\sharp(m)$ for some $b \in B^\times$ and $m \in M$, where $m$ is uniquely determined by $n$ up to an element of $A^\times$.  Hence, any homomorphism $\widetilde{g}: B \to D$ of Huber rings lifting $g: B \to D'$ uniquely extends to a homomorphism $\widetilde{g}: (B, N, \beta) \to (D, T, \mu)$ of log Huber rings lifting $g: (B, N, \beta) \to (D', T', \mu')$, by setting $\widetilde{g}_\sharp(n) = \widetilde{g}(b) + h_\sharp(m)$, for each $n = b + f_\sharp(m) \in N$ as above.  Hence, the formal lifting properties without log structures imply those with log structures.

    Conversely, suppose we are given a diagram as in \Refeq{\ref{eq-log-Huber-thick}}, but without any log structures.  Nevertheless, we can \emph{define} $\mu: T \to D$ and $\mu': T' \to D'$ to be the log structures associated with $M \Mapn{\alpha} A \Mapn{f} D$ and $N \Mapn{\beta} B \Mapn{g} D'$, respectively.  Since $A^\times \cong M^\inv \to M^\gp$ is injective, $A^\times$ acts freely on $M$.  Therefore, by choosing any set-theoretic section of $M \to \overline{M}$, we obtain a bijection $A^\times \times \overline{M} \to M$ compatible with the actions of $A^\times$ \Pth{on $A^\times$ and $M$}, which induces a bijection $D^\times \times \overline{M} \to T$ compatible with the actions of $D^\times$ \Pth{on $D^\times$ and $T$}.  As a result, $1 + H \subset D^\times \cong T^\inv$ \Pth{via $\mu$} acts freely on $T$, and $(D, T, \mu) \to (D', T', \mu')$ is a log thickening of first order, as in Definition \ref{def-log-Huber-thick}.  Thus, we obtain a full diagram as in \Refeq{\ref{eq-log-Huber-thick}}, with log structures, and the formal lifting properties with log structures imply those without, as desired.
\end{proof}

We have the \emph{first fundamental exact sequence} for log differentials, as follows:
\begin{thm}\phantomsection\label{thm-log-diff-fund}
    \begin{enumerate}
        \item\label{thm-log-diff-fund-1}  A composition $(A, M, \alpha) \Mapn{f} (B, N, \beta) \Mapn{g} (C, O, \gamma)$ of tft homomorphisms of log Huber rings leads to an exact sequence
            \[
                C \otimes_B \Omega^{\log}_{B / A} \to \Omega^{\log}_{C / A} \to \Omega^{\log}_{C / B} \to 0
            \]
            of finite topological $C$-modules \Pth{\Refcf{} \cite[\aProp 1.6.3]{Huber:1996-ERA}}, where the first map sends $c \otimes d_{B / A}(b)$ and $c \otimes \delta_{B / A}(n)$ to $c \, d_{C / A}\bigl(g(b)\bigr)$ and $c \, \delta_{C / A}\bigl(g_\sharp(n)\bigr)$, respectively, and the second map sends $d_{C / A}(c)$ and $\delta_{C / A}(l)$ to $d_{C / B}(c)$ and $\delta_{C / B}(l)$, respectively.

        \item\label{thm-log-diff-fund-2}  If the homomorphism $g: (B, N, \alpha) \to (C, O, \gamma)$ is formally log smooth, then $C \otimes_B \Omega^{\log}_{B / A} \to \Omega^{\log}_{C / A}$ is injective, and the short exact sequence
            \[
                0 \to C \otimes_B \Omega^{\log}_{B / A} \to \Omega^{\log}_{C / A} \to \Omega^{\log}_{C / B} \to 0
            \]
            is split in the category of topological $C$-modules.

        \item\label{thm-log-diff-fund-3}  If $g$ is formally log unramified, then $\Omega^{\log}_{C / B} = 0$.

        \item\label{thm-log-diff-fund-4}  If $g$ is formally log \'etale, then $\Omega^{\log}_{C / A} \cong C \otimes_B \Omega^{\log}_{B / A}$.

        \item\label{thm-log-diff-fund-5}  If $g \circ f$ is formally log smooth, then the converses of \Refenum{\ref{thm-log-diff-fund-2}}, \Refenum{\ref{thm-log-diff-fund-3}}, and \Refenum{\ref{thm-log-diff-fund-4}} hold.
    \end{enumerate}
\end{thm}
\begin{proof}
    Since the homomorphisms of log Huber rings are all tft, $C \otimes_B \Omega^{\log}_{B / A}$, $\Omega^{\log}_{C / A}$, and $\Omega^{\log}_{C / B}$ are finite $C$-modules.  Thus, to prove the exactness in \Refenum{\ref{thm-log-diff-fund-1}}, it suffices to show that, for any complete topological $C$-module $H$, the induced sequence
    \begin{equation}\label{eq-thm-log-diff-fund-dual}
        0 \to \Hom_C(\Omega^{\log}_{C / B}, H) \to \Hom_C(\Omega^{\log}_{C / A}, H) \to \Hom_C(C \otimes_B \Omega^{\log}_{B / A}, H)
    \end{equation}
    is exact.  By Proposition \ref{prop-log-diff-univ}, this sequence is nothing but
    \[
        0 \to \Der^{\log}_B(C, H) \to \Der^{\log}_A(C, H) \to \Der^{\log}_A(B, H),
    \]
    whose exactness is obvious.  Therefore, \Refenum{\ref{thm-log-diff-fund-1}} follows.

    In the rest of the proof, let $H$ be a complete topological $C$-module, and let $(d, \delta): (B, N, \beta) \to H$ be an $(A, M, \alpha)$-derivation.  Let $C * H$ be the $C$-algebra defined as in the proof of Proposition \ref{prop-log-diff-univ}, equipped with the log structure $(\gamma * \Id): O \oplus H \to C * H: (a, b) \mapsto (\gamma(a), \gamma(a) \, b)$, and denote by $(C, O, \gamma) * H$ the log Huber ring thus obtained.  Note that $(C, O, \gamma) * H \to (C, O, \gamma)$ is a log thickening of first order, as in Definition \ref{def-log-Huber-thick}, because the action of $1 + H$ on $O \oplus H$ is free.

    We claim that there is a natural bijection between the set of extensions of $(d, \delta)$ to $(A, M, \alpha)$-derivations $(\widetilde{d}, \widetilde{\delta}): (C, O, \gamma) \to H$ and the set of homomorphisms of log Huber rings $h: (C, O, \gamma) \to (C, O, \gamma) * H$ making the diagram
    \begin{equation}\label{eq-thm-log-diff-fund-thick-special}
        \xymatrix{ {(B, N, \beta)} \ar[r] \ar[d]_-g & {(C, O, \gamma) * H} \ar[d] \\
        {(C, O, \gamma)} \ar@{.>}[ru]^(.4)h \ar[r]^-\Id & {(C, O, \gamma)} }
    \end{equation}
    commute.  Here the upper horizontal map is a homomorphism of log Huber rings sending $(b, n)$ to $\bigl((g(b), d(b)), (g_\sharp(n), \delta(n))\bigr)$, and the right vertical one is the natural projection.  To justify the claim, for each map $h': (C, O) \to (C * H, O \oplus H)$ lifting the projection $(C * H, O \oplus H) \to (C, O)$, let us write $h' = \bigl((\Id, \widetilde{d}), (\Id, \widetilde{\delta})\bigr)$.  Then a short computation shows that $h'$ is a homomorphism of log Huber rings if and only if $\widetilde{d}$ is a derivation and $\widetilde{\delta}$ is a homomorphism of monoids such that $\widetilde{d}(\gamma(x)) = \gamma(x) \, \widetilde{\delta}(x)$ for all $x \in O$, and the claim follows.

    Thus, if $(C, O, \gamma)$ is formally log unramified over $(B, N, \beta)$, then the natural map $\Der^{\log}_A(C, H) \to \Der^{\log}_A(B, H)$ is surjective for each finite $C$-module $H$.  \Pth{Since $C \otimes_B \Omega^{\log}_{B / A}$, $\Omega^{\log}_{C / A}$, and $\Omega^{\log}_{C / B}$ are finite $C$-modules, it suffices to consider finite $C$-modules $H$ in this paragraph.}  In other words, the natural map $\Hom_C(\Omega^{\log}_{C / A}, H) \to \Hom_C(C \otimes_B \Omega^{\log}_{B / A}, H)$ is surjective, and therefore $C \otimes_B \Omega^{\log}_{B / A} \to \Omega^{\log}_{C / A}$ is injective, yielding \Refenum{\ref{thm-log-diff-fund-3}}.  Similarly, if $(C, O, \gamma)$ is formally log smooth over $(B, N, \beta)$, then $\Hom_C(\Omega^{\log}_{C / A}, H) \to \Hom_C(C \otimes_B \Omega^{\log}_{B / A}, H)$ is surjective, and we can justify \Refenum{\ref{thm-log-diff-fund-2}} by taking $H = C \otimes_B \Omega^{\log}_{B / A}$, which shows that the natural map $\Hom_C(\Omega^{\log}_{C / B}, H) \to \Hom_C(\Omega^{\log}_{C / A}, H)$ is injective and admits a left inverse splitting \Refeq{\ref{eq-thm-log-diff-fund-dual}}.  By combining \Refenum{\ref{thm-log-diff-fund-2}} and \Refenum{\ref{thm-log-diff-fund-3}}, we obtain \Refenum{\ref{thm-log-diff-fund-4}}.

    Finally, let us prove \Refenum{\ref{thm-log-diff-fund-5}}.  Suppose we are given a commutative diagram
    \begin{equation}\label{eq-thm-log-diff-fund-thick}
        \xymatrix{ {(B, N, \beta)} \ar_-g[d] \ar[r]^-u & {(D, T, \mu)} \ar[d]^-i \\
        {(C, O, \gamma)} \ar^-v[r] \ar@{.>}[ru]^(.4){\widetilde{v}} & {(D', T', \mu')} }
    \end{equation}
    of solid arrows, with $i$ a log thickening of first order.  If $g \circ f$ is formally log smooth, then there exists some lifting $w: (C, O, \gamma) \to (D, T, \mu)$ of $v$ making the diagram
    \[
        \xymatrix{ {(A, M, \alpha)} \ar_-{g \circ f}[d] \ar[r]^-{u \circ f} & {(D, T, \mu)} \ar[d]^-i \\
        {(C, O, \gamma)} \ar^-v[r] \ar@{.>}[ru]^(.4)w & {(D', T', \mu')} }
    \]
    commute.  Note that this $w$ might not satisfy $u = g \circ w$, but $u - g \circ w$ defines a derivation $(d, \delta): (B, N, \beta) \to H = \ker(D \to D')$.  Hence, we obtain a homomorphism $\widetilde{g}: (B, N, \beta) \to (C, O, \gamma) * H$ extending $g: (B, N, \beta) \to (C, O, \gamma)$, as before.  Since $H \subset D$ by definition, $w$ canonically extends to a homomorphism $\widetilde{w}: (C, O, \gamma) * H \to (D, T, \mu)$ sending $H$ canonically into $D$.  By combining these, we can extend \Refeq{\ref{eq-thm-log-diff-fund-thick}} to a commutative diagram
    \begin{equation}\label{eq-thm-log-diff-fund-thick-ext}
        \xymatrix{ {(A, M, \alpha)} \ar_-f[d] \ar@/^1pc/[rrd]^-{u \circ f} \\
        {(B, N, \beta)} \ar_-g[d] \ar[r]_-{\widetilde{g}} \ar@/^1.25pc/[rr]^(.35)u & {(C, O, \gamma) * H} \ar[d] \ar[r]_-{\widetilde{w}} & {(D, T, \mu)} \ar[d]^-i \\
        {(C, O, \gamma)} \ar^-\Id[r] \ar@{.>}[ru]^(.4)h & {(C, O, \gamma)} \ar^-v[r] \ar@{.>}[ru]^(.4){\widetilde{v}} & {(D', T', \mu')} }
    \end{equation}
    of solid arrows, and any $h: (C, O, \gamma) \to (C, O, \gamma) * H$ making the diagram \Refeq{\ref{eq-thm-log-diff-fund-thick-ext}} commute canonically induces $\widetilde{v} := \widetilde{w} \circ h: (C, O, \gamma) \to (D, T, \mu)$ making the diagrams \Refeq{\ref{eq-thm-log-diff-fund-thick}} and \Refeq{\ref{eq-thm-log-diff-fund-thick-ext}} commute.  Moreover, $h$ is uniquely determined by $\widetilde{v} = \widetilde{w} \circ h$, because if $h'$ is another such map such that $\widetilde{v} = \widetilde{w} \circ h = \widetilde{w} \circ h'$, then $\widetilde{w} \circ (h - h') = 0$, but $(h - h')(C) \subset H$ and $\widetilde{w}|_H: H \to D$ is \Pth{by definition} the canonical injection.  Thus, if the conclusion in \Refenum{\ref{thm-log-diff-fund-2}} \Pth{\resp \Refenum{\ref{thm-log-diff-fund-3}}, \resp \Refenum{\ref{thm-log-diff-fund-4}}} holds, then $C \otimes_B \Omega^{\log}_{B / A} \to \Omega^{\log}_{C / A}$ is injective and splits \Pth{\resp is surjective, \resp is bijective}, and so $\Hom_C(\Omega^{\log}_{C / A}, H) \to \Hom_C(C \otimes_B \Omega^{\log}_{B / A}, H)$ is surjective \Pth{\resp injective, \resp bijective}.  By the first three paragraphs of this proof, and by the relation between $h$ and $\widetilde{v}$ explained above, there exists at least one \Pth{\resp at most one, \resp exactly one} $\widetilde{v}$ making the diagrams \Refeq{\ref{eq-thm-log-diff-fund-thick}} and \Refeq{\ref{eq-thm-log-diff-fund-thick-ext}} commute.  Since \Refeq{\ref{eq-thm-log-diff-fund-thick}} is arbitrary, $g$ is formally log smooth \Pth{\resp formally log unramified, \resp formally log \'etale}, as desired.
\end{proof}

\begin{lem}\label{lem-lft-formal}
    In order to verify that a tft homomorphism of log Huber pairs $f: (A, M, \alpha) \to (B, N, \beta)$ is formally smooth \Pth{\resp formally unramified, \resp formally \'etale}, it suffices to verify the corresponding lifting condition in Definition \ref{def-log-Huber-formal-log-sm-unram-et} only for all commutative diagrams \Refeq{\ref{eq-log-Huber-thick}} in which the underlying homomorphism of Huber rings $A \to D'$ is tft and in which $H = \ker(D \to D')$ is a finite $D'$-module.
\end{lem}
\begin{proof}
    Given any diagram \Refeq{\ref{eq-log-Huber-thick}}, since $f: (A, M, \alpha) \to (B, N, \beta)$ is tft, the homomorphism $B \to D'$ factors through a complete topological $B$-subalgebra $\breve{D}'$ of $D'$ that is tft over $A$.  By lifting the topological generators of $D'$ over $A$, there exists some complete topological $A$-subalgebra $\breve{D}$ of $D$ such that the composition of the homomorphisms $\breve{D} \to D \to D'$ factors through a surjection $\breve{D} \to \breve{D}'$ such that $\breve{H} := \ker(\breve{D} \to \breve{D}') \subset H$ is a finite $\breve{D}'$-module.  Let $\breve{\mu}: \breve{T} \to \breve{D}$ and $\breve{\mu}': \breve{T}' \to \breve{D}'$ denote the pullbacks of $\mu: T \to D$ and $\mu': T' \to D'$, respectively.  Note that $\breve{H}^2 = 0$ because $H^2 = 0$, and $1 + \breve{H}$ acts freely on $\breve{T}$ because $1 + H$ acts freely on $T$.  Then \Refeq{\ref{eq-log-Huber-thick}} extends to a commutative diagram
    \[
        \xymatrix{ {(A, M, \alpha)} \ar_-f[d] \ar[r] & {(\breve{D}, \breve{T}, \breve{\mu})} \ar[d] \ar[r] & {(D, T, \mu)} \ar[d] \\
        {(B, N, \beta)} \ar^-{\breve{g}}[r] \ar@{.>}[ru]^(.4){\widetilde{\breve{g}}} \ar@/_1.25pc/[rr]^-g & {(\breve{D}', \breve{T}', \breve{\mu}')} \ar[r] & {(D', T', \mu')} }
    \]
    of solid arrows, in which $(\breve{D}, \breve{T}, \breve{\mu}) \to (\breve{D}', \breve{T}', \breve{\mu}')$ is a log thickening of first order.  Since any dotted arrow $\widetilde{\breve{g}}$ lifting $\breve{g}$ in the above diagram induces a dotted arrow $\widetilde{g}$ lifting $g$ in \Refeq{\ref{eq-log-Huber-thick}}, we obtain the formally log smooth case of this lemma.

    It remains to establish the formally log unramified case of this lemma.  Given any liftings $\widetilde{g}$ and $\widetilde{g}'$ of $g$ in \Refeq{\ref{eq-log-Huber-thick}}, their difference $\widetilde{g} - \widetilde{g}'$ defines a derivation $(d, \delta): (B, N, \beta) \to H = \ker(D \to D')$ over $A$, which corresponds to a morphism $\Omega^{\log}_{B / A} \to H$ of $B$-modules, by Proposition \ref{prop-log-diff-univ}.  Since $f$ is tft, $\Omega^{\log}_{B / A}$ is a finite $B$-module.  Thus, in order to show that $f$ is formally log unramified, it suffices to show that, when $H$ is a finite $B$-module, all morphisms $\Omega^{\log}_{B / A} \to H$ as above are zero.  As in the proof of Theorem \ref{thm-log-diff-fund}, this can be verified using only diagrams \Refeq{\ref{eq-log-Huber-thick}} in which $(D, T, \mu) \to (D', T', \mu')$ is $(B, N, \beta) * H \to (B, N, \beta)$, where the underlying homomorphism $A \to B$ is tft and $H$ is a finite $B$-module.
\end{proof}

\begin{defn}\label{def-log-Huber-R-alg}
    Let $u: P \to Q$ be a homomorphism of fine monoids, and let $R$ be a Huber ring.  Then we have the pre-log Huber ring $P \to R\Talg{P}: \, a \mapsto \mono{a} $ \Pth{\resp $Q \to R\Talg{Q}: \, a \mapsto \mono{a}$}, with the topology given in Lemma \ref{lem-monoid-alg-Huber}.  In this case, we say that $R\Talg{P}$ is a pre-log Huber $R$-algebra.  By abuse of notation, we shall still denote by $R\Talg{P}$ \Pth{\resp $R\Talg{Q}$} the log Huber $R$-algebras thus obtained.
\end{defn}

\begin{prop}\label{prop-log-diff-monoid}
    Let $u: P \to Q$ and $R$ be as in Definition \ref{def-log-Huber-R-alg}.  If the kernel and the torsion part of the cokernel \Pth{\resp the kernel and the cokernel} of $u^\gp: P^\gp \to Q^\gp$ are finite groups of orders invertible in $R$, then $R\Talg{Q}$ is formally log smooth \Pth{\resp formally log \'etale} over $R\Talg{P}$.  In this case, the map $\delta: Q^\gp \to \Omega^{\log}_{R\Talg{Q} / R\Talg{P}}$ induces an isomorphism of finite free $R\Talg{Q}$-modules
    \[
        \Omega^{\log}_{R\Talg{Q} / R\Talg{P}} \cong R\Talg{Q} \otimes_\bZ \bigl(Q^\gp / u^\gp(P^\gp)\bigr).
    \]
\end{prop}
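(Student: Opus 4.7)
The plan is to first compute $\Omega^{\log}_{R\Talg{Q}/R\Talg{P}}$ directly from Proposition \ref{prop-log-diff-univ}, and then to establish formal log smoothness \Pth{\resp formal log \'etaleness} by translating the lifting problem of Definition \ref{def-log-sm-unram-et} into an extension problem for finitely generated abelian groups, to which one applies an $\OP{Ext}^1$-calculation exploiting the invertibility hypotheses. For the first part, I would fix any finite $R\Talg{Q}$-module $L$ and note that an $R\Talg{P}$-log derivation $(d, \delta)$ of $(R\Talg{Q}, Q)$ to $L$ \Ref{as in Definition \ref{def-der-mod}} is determined by $\delta$: the identity $d(\mono{q}) = \mono{q}\,\delta(q)$ fixes $d$ on the monomials, and hence on all of $R\Talg{Q}$ by $R$-linearity and continuity. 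The only constraint on $\delta: Q \to L$ is that it be a monoid homomorphism with $\delta \circ u = 0$, which by Remark \ref{rem-def-der-mod} amounts to a group homomorphism $Q^\gp/u^\gp(P^\gp) \to L$. Conversely, any such $\delta$ extends \Pth{via the Leibniz rule} to a well-defined continuous log derivation, the consistency on $R[Q]$ being ensured by the additivity of $\delta$. The claimed isomorphism $\Omega^{\log}_{R\Talg{Q}/R\Talg{P}} \cong R\Talg{Q} \otimes_\bZ \bigl(Q^\gp/u^\gp(P^\gp)\bigr)$ then follows from Proposition \ref{prop-log-diff-univ}.

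For the formal smoothness/\'etaleness, consider a square as in \eqref{eq-log-Huber-thick} with source $(R\Talg{P}, P) \to (R\Talg{Q}, Q)$. Since $R\Talg{Q}$ is generated as a topological $R\Talg{P}$-algebra by $\{\mono{q}\}_{q \in Q}$, giving a lift $\widetilde{g}$ is the same as giving a monoid homomorphism $\widetilde{\psi}: Q \to O$ extending the given $P \to O$ and lifting the induced $\psi: Q \to O'$; continuity of the resulting ring map will follow from that of $g$ because $H$ is closed. I would then pass to group completions and form the pullback $G := O^\gp \times_{(O')^\gp} Q^\gp$, which sits in an abelian group extension
\[
    0 \to H \to G \to Q^\gp \to 0,
\]
using $(O')^\gp \cong O^\gp/(1+H)$ and the identification $1+H \cong H$ via $h \mapsto 1+h$ \Pth{valid since $H^2 = 0$}; the given $P \to O$ together with $u$ then provides a section $\sigma_0: P^\gp \to G$ of the pullback extension along $u^\gp$.

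The final and most delicate step is the $\OP{Ext}$-bookkeeping. Setting $K := \ker(u^\gp)$ and $C := \coker(u^\gp)$, and using that $H$ is an $R$-module, the invertibility hypothesis will yield $\Hom(K, H) = \OP{Ext}^1(K, H) = \OP{Ext}^1(C, H) = 0$ \Pth{and, in the \'etale case, also $\Hom(C, H) = 0$}. The $\Hom/\OP{Ext}^1$ long exact sequences attached to $0 \to K \to P^\gp \to u^\gp(P^\gp) \to 0$ and $0 \to u^\gp(P^\gp) \to Q^\gp \to C \to 0$ will then give $\OP{Ext}^1(Q^\gp, H) \cong \OP{Ext}^1(P^\gp, H)$ and surjectivity of $\Hom(Q^\gp, H) \to \Hom(P^\gp, H)$. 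Since the pullback extension is split by $\sigma_0$, the full extension $G$ is also split, and any splitting $\sigma: Q^\gp \to G$ can be adjusted by an element of $\Hom(Q^\gp, H)$ so that $\sigma \circ u^\gp = \sigma_0$. Projecting to $O^\gp$ yields $\widetilde{\psi}^\gp: Q^\gp \to O^\gp$, whose restriction to $Q$ automatically lands in $O$ because any fiber of $O^\gp \to (O')^\gp$ above an element of $O'$ equals $y_0 \cdot (1+H) \subset O$ for any chosen lift $y_0 \in O$ \Pth{using $1+H \subset O^*$}. In the \'etale case, the additional vanishing $\Hom(C, H) = 0$ yields uniqueness of $\widetilde{\psi}$. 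I expect the main obstacle to lie in this $\OP{Ext}$-bookkeeping; the verification that the lift takes values in $O$ rather than just $O^\gp$, and the continuity of the resulting ring map, should be routine.
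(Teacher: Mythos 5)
Your proposal is correct and follows essentially the same route as the paper's proof: both reduce the lifting problem to extending group homomorphisms $Q^\gp \to O^\gp$ via the cartesian square $O = O^\gp \times_{(O')^\gp} O'$ \Pth{using $1 + H \cong H$ since $H^2 = 0$}, and both identify $\Der^{\log}_{R\Talg{P}}(R\Talg{Q}, L)$ with $\Hom\bigl(Q^\gp / u^\gp(P^\gp), L\bigr)$ to get the formula for $\Omega^{\log}_{R\Talg{Q}/R\Talg{P}}$. The only difference is one of explicitness: where the paper simply asserts that the set of lifts $Q^\gp \to O^\gp$ is a torsor under $\Hom(G, H)$ for $G = Q^\gp/u^\gp(P^\gp)$, you spell out the $\Hom/\OP{Ext}^1$ vanishing that makes this torsor nonempty \Pth{and a singleton in the \'etale case}.
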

\begin{proof}
    Consider the commutative diagram
    \begin{equation}\label{eq-prop-log-diff-monoid-1}
        \xymatrix{ {R\Talg{P}} \ar[d] \ar[r] & {(D, T, \mu)} \ar[d] \\
        {R\Talg{Q}} \ar[r] & {(D', T', \mu')} }
    \end{equation}
    as in \Refeq{\ref{eq-log-Huber-thick}}, in which $(D, T, \mu) \to (D', T', \mu')$ is a log thickening of first order as in Definition \ref{def-log-Huber-thick}.  This gives rise to a commutative diagram of monoids
    \begin{equation}\label{eq-prop-log-diff-monoid-2}
        \xymatrix{ {P} \ar[d] \ar[r] & {T} \ar[d] \\
        {Q} \ar[r] & {T',} }
    \end{equation}
    which in turn induces a commutative diagram of abelian groups
    \begin{equation}\label{eq-prop-log-diff-monoid-3}
        \xymatrix{ {P^\gp} \ar[d] \ar[r] & {T^\gp} \ar[d] \\
        {Q^\gp} \ar[r] & {(T')^\gp.} }
    \end{equation}
    Note that there is a natural bijection between the set of homomorphisms of log Huber $R$-algebras $R\Talg{Q} \to (D, T, \mu)$ extending \Refeq{\ref{eq-prop-log-diff-monoid-1}} and the set of homomorphisms of monoids $Q \to T$ extending \Refeq{\ref{eq-prop-log-diff-monoid-2}}.  By using the cartesian diagram
    \[
        \xymatrix{ {T} \ar[d] \ar[r] & {T^\gp} \ar[d] \\
        {T'} \ar[r] & {(T')^\gp,} }
    \]
    and the fact that $P$ and $Q$ are fine monoids, we see that there is also a bijection between the set of desired homomorphisms $R\Talg{Q} \to (D, T, \mu)$ and the set of group homomorphisms $Q^\gp \to T^\gp$ extending \Refeq{\ref{eq-prop-log-diff-monoid-3}}.

    Since $(D, T, \mu) \to (D', T', \mu')$ is a log thickening of first order as in Definition \ref{def-log-Huber-thick}, we have $\ker(T^\gp \to (T')^\gp) = \mu^{-1}(1 + H) \cong H$.  Let $G = Q^\gp / u^\gp(P^\gp)$.  Since the kernel and the torsion part of the cokernel \Pth{\resp the kernel and the cokernel} of $u^\gp$ are finite groups of orders invertible in $R$, the set of desired homomorphisms $Q^\gp \to T^\gp$ is a torsor under $\Hom(G, H) \cong \Hom(G / G_\tor, H)$, where $G_\tor$ is the torsion subgroup of $G$.  This proves the first statement of the proposition.

    On the other hand, for any finite $R\Talg{Q}$-module $L$, by the same argument as in the proof of Theorem \ref{thm-log-diff-fund}, there is a bijection between $\Der^{\log}_{R\Talg{P}}(R\Talg{Q}, L)$ and the set of $h: R\Talg{Q} \to R\Talg{Q} * L$ extending the following commutative diagram
    \[
        \xymatrix{ {R\Talg{P}} \ar[d] \ar[r] & {R\Talg{Q} * L} \ar[d] \\
        {R\Talg{Q}} \ar@{.>}[ru]^-h \ar[r]^-\Id & {R\Talg{Q}.} }
    \]
    Then $\Hom_{R\Talg{Q}}(\Omega^{\log}_{R\Talg{Q} / R\Talg{P}}, L) \cong \Der^{\log}_{R\Talg{P}}(R\Talg{Q}, L) \cong \Hom(G / G_\tor, L)$, by the previous paragraph.  The second statement of the proposition follows.
\end{proof}

\begin{cor}\label{cor-log-diff-monoid-strict-cl-imm}
    Let $u: P \to Q$ and $R$ as in Definition \ref{def-log-Huber-R-alg} such that the kernel and the torsion part of the cokernel of $u^\gp: P^\gp \to Q^\gp$ are finite groups of orders invertible in $R$.  Let $Q'$ be any fine monoid, and let $S$ denote the log Huber ring associated with the pre-log Huber ring $\widetilde{Q} := Q \oplus Q' \to R\Talg{Q}$ \Pth{mapping $Q' - \{ 0 \}$ to $0$}, so that the surjective homomorphism $R\Talg{\widetilde{Q}} \to S$ of log Huber rings is strict.  Then the map $\delta: \widetilde{Q}^\gp \to \Omega^{\log}_{S / R\Talg{P}}$ induces an isomorphism
    \[
        \Omega^{\log}_{S / R\Talg{P}} \cong R\Talg{Q} \otimes_\bZ \bigl(\widetilde{Q}^\gp / u^\gp(P^\gp)\bigr).
    \]
    If, in addition, the torsion part of $(Q')^\gp$ is a finite group whose order is invertible in $R$, then we also have
    \[
        \Omega^{\log}_{S / R\Talg{P}} \cong R\Talg{Q} \otimes_{R\Talg{\widetilde{Q}}} \Omega^{\log}_{R\Talg{\widetilde{Q}} / R\Talg{P}}.
    \]
\end{cor}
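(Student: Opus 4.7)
The plan is to compute $\Omega^{\log}_{S/R\Talg{P}}$ directly from its explicit description, and then derive the second identification by applying Proposition \ref{prop-log-diff-monoid} to $P \to \widetilde{Q}$ under the extra torsion hypothesis. First I would unpack that, by the strictness and surjectivity of $R\Talg{\widetilde{Q}} \to S$, the underlying Huber ring of $S$ is $R\Talg{Q}$ and the pre-log structure $\beta: \widetilde{Q} \to R\Talg{Q}$ sends $(q, 0) \mapsto \mono{q}$ and $(q, q') \mapsto 0$ for $q' \in Q' - \{0\}$.  Using the explicit presentation
\[
    \Omega^{\log}_{S/R\Talg{P}} \cong \bigl(\Omega_{R\Talg{Q}/R\Talg{P}} \oplus (R\Talg{Q} \otimes_\bZ \widetilde{Q}^\gp)\bigr)/\cR
\]
from Section \ref{sec-log-diff}, with $\cR$ generated by $\bigl(d\beta(\tilde{q}), -\beta(\tilde{q}) \otimes \tilde{q}\bigr)$ for $\tilde{q} \in \widetilde{Q}$ and $\bigl(0, 1 \otimes (u(p), 0)\bigr)$ for $p \in P$, I would observe that the relations indexed by $(q, q') \in \widetilde{Q}$ with $q' \neq 0$ are automatically trivial (both coordinates vanish), while those indexed by $(q, 0)$ only involve the $R\Talg{Q} \otimes_\bZ Q^\gp$-summand.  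Hence $R\Talg{Q} \otimes_\bZ (Q')^\gp$ splits off as a direct summand, and the quotient of the remaining piece is exactly $\Omega^{\log}_{R\Talg{Q}/R\Talg{P}}$.  Applying Proposition \ref{prop-log-diff-monoid} to $u: P \to Q$ then yields the first identification.

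For the second identification, the extra torsion hypothesis on $(Q')^\gp$, together with the original hypothesis on $u^\gp$, ensures that the kernel and torsion of the cokernel of the composition $P^\gp \to \widetilde{Q}^\gp \cong Q^\gp \oplus (Q')^\gp$ are finite groups of orders invertible in $R$.  Proposition \ref{prop-log-diff-monoid} applied to $P \to \widetilde{Q}$ then gives
\[
    \Omega^{\log}_{R\Talg{\widetilde{Q}}/R\Talg{P}} \cong R\Talg{\widetilde{Q}} \otimes_\bZ \bigl(\widetilde{Q}^\gp/u^\gp(P^\gp)\bigr),
\]
and tensoring with $R\Talg{Q}$ over $R\Talg{\widetilde{Q}}$ produces an isomorphism with $R\Talg{Q} \otimes_\bZ (\widetilde{Q}^\gp/u^\gp(P^\gp))$, which by the first part equals $\Omega^{\log}_{S/R\Talg{P}}$.

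The main obstacle I anticipate is passing from log to ordinary differentials on the ambient ring $R\Talg{\widetilde{Q}}$, since the natural forget map $\Omega \to \Omega^{\log}$ is not an isomorphism even after tensoring with $R\Talg{Q}$ (the image of $d\mono{\tilde{q}}$ is $\mono{\tilde{q}} \delta(\tilde{q})$, which vanishes whenever $\tilde{q} \notin Q$).  I would handle this via the second fundamental exact sequence for $R\Talg{P} \to R\Talg{\widetilde{Q}} \to R\Talg{Q}$, which presents $R\Talg{Q} \otimes_{R\Talg{\widetilde{Q}}} \Omega_{R\Talg{\widetilde{Q}}/R\Talg{P}}$ as an extension of $\Omega_{R\Talg{Q}/R\Talg{P}}$ by the conormal module $I/I^2$, where $I = \ker(R\Talg{\widetilde{Q}} \to R\Talg{Q})$.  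Under the torsion hypothesis, a direct computation identifies $I/I^2$ with $R\Talg{Q} \otimes_\bZ (Q')^\gp$ via $\bar{\mono{(0, q')}} \mapsto 1 \otimes q'$, shows the extension is split, and produces an abstract identification of $\Omega_{R\Talg{Q}/R\Talg{P}}$ with $\Omega^{\log}_{R\Talg{Q}/R\Talg{P}}$ via a choice of generators compatible with the decomposition $\widetilde{Q} = Q \oplus Q'$; combining these with the direct-sum decomposition from the first part then completes the proof.
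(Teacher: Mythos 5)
Your proof of the first isomorphism is correct and is essentially the paper's own argument: both read off, from the explicit presentation \Refeq{\ref{eq-def-log-diff-aff}} and the fact that $\widetilde{Q} \to S$ kills $Q' \setminus \{0\}$, the direct sum decomposition $\Omega^{\log}_{S/R\Talg{P}} \cong \Omega^{\log}_{R\Talg{Q}/R\Talg{P}} \oplus \bigl(R\Talg{Q} \otimes_\bZ (Q')^\gp\bigr)$, and then apply Proposition \ref{prop-log-diff-monoid} to $u: P \to Q$.  Your second paragraph---applying Proposition \ref{prop-log-diff-monoid} to $P \to \widetilde{Q}$, which the extra hypothesis on the torsion of $(Q')^\gp$ makes legitimate, and then base changing along $R\Talg{\widetilde{Q}} \to R\Talg{Q}$---is likewise exactly the paper's proof of the second isomorphism, and at that point you are done: the module $\Omega_{R\Talg{\widetilde{Q}}/R\Talg{P}}$ in the second display is the \emph{log} differential module $\Omega^{\log}_{R\Talg{\widetilde{Q}}/R\Talg{P}}$ (the superscript is dropped here just as in Corollary \ref{cor-log-diff-sheaf-strict-cl-imm}, as one sees by comparing with Definition \ref{def-log-diff-sheaf-ex}; the paper's proof explicitly invokes Proposition \ref{prop-log-diff-monoid} for $\Omega^{\log}_{R\Talg{\widetilde{Q}}/R\Talg{P}}$).

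Your final paragraph, which tries to prove the display with the \emph{ordinary} continuous differentials $\Omega_{R\Talg{\widetilde{Q}}/R\Talg{P}}$, cannot be repaired, because that literal statement is false.  Take $R = \bQ_p$, $P = \bZ_{\geq 0} \Em Q = \tfrac{1}{2}\bZ_{\geq 0}$, and $Q' = 0$: all hypotheses hold, $\Omega^{\log}_{S/R\Talg{P}} \cong R\Talg{Q} \otimes_\bZ (\bZ/2) = 0$, yet $R\Talg{Q} \otimes_{R\Talg{\widetilde{Q}}} \Omega_{R\Talg{\widetilde{Q}}/R\Talg{P}} = \Omega_{R\Talg{Q}/R\Talg{P}} \cong R\Talg{Q}/(\mono{1/2}) \neq 0$.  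This also isolates the false step in your plan: under the stated hypotheses there is no identification, even an abstract one, of $\Omega_{R\Talg{Q}/R\Talg{P}}$ with $\Omega^{\log}_{R\Talg{Q}/R\Talg{P}}$---for a genuinely Kummer chart the former is nonzero torsion supported on the boundary while the latter vanishes.  A secondary problem is your claim that $I/I^2 \cong R\Talg{Q} \otimes_\bZ (Q')^\gp$: this uses freeness of $Q'$, whereas the corollary only assumes $Q'$ fine; for $Q' = \langle 2, 3\rangle \subset \bZ_{\geq 0}$ the conormal module is free of rank $2$ while $(Q')^\gp \cong \bZ$ has rank $1$.  Deleting the final paragraph and reading the right-hand side of the second display as $\Omega^{\log}_{R\Talg{\widetilde{Q}}/R\Talg{P}}$ leaves a complete and correct proof.
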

\begin{proof}
    By comparing the definitions of $\Omega^{\log}_{S / R\Talg{P}}$ and $\Omega^{\log}_{R\Talg{Q} / R\Talg{P}}$ as in \Refeq{\ref{eq-def-log-diff-aff}}, we obtain $\Omega^{\log}_{S / R\Talg{P}} \cong \Omega^{\log}_{R\Talg{Q} / R\Talg{P}} \oplus (R\Talg{Q} \otimes_\bZ (Q')^\gp)$, because $\widetilde{Q} \to S$ maps $Q' - \{ 0 \}$ to zero.  Since this isomorphism is compatible with the canonical maps $Q^\gp \to \Omega^{\log}_{R\Talg{Q} / R\Talg{P}}$, $\widetilde{Q}^\gp \to \Omega^{\log}_{S / R\Talg{P}}$, and $\widetilde{Q}^\gp \to \Omega^{\log}_{R\Talg{\widetilde{Q}} / R\Talg{P}}$ denoted by $\delta$, we can finish the proof by applying Proposition \ref{prop-log-diff-monoid} to $P \to Q$ and $P \to \widetilde{Q}$.
\end{proof}

\subsection{Sheaves of log differentials}\label{sec-log-diff-sh}

Our next step is to define sheaves of log differentials for locally noetherian coherent log adic spaces, and show that their formation is compatible with fiber products in the category of locally noetherian coherent, fine, and fs log adic spaces.  Then we shall globalize several results in Section \ref{sec-log-diff} and relate them to the definitions we made in Section \ref{sec-log-sm}.
\begin{defn}\label{def-der-sheaf}
    Let $f: (Y, \cM_Y, \alpha_Y) \to (X, \cM_X, \alpha_X)$ be a morphism of locally noetherian log adic spaces, and let $\cF$ be a sheaf of complete topological $\cO_{Y_\et}$-modules.  By a derivation of $(Y, \cM_Y, \alpha_Y)$ over $(X, \cM_X, \alpha_X)$ valued in $\cF$, we mean a pair $(d, \delta)$, where $d: \cO_{Y_\et} \to \cF$ is a continuous $\cO_{X_\et}$-linear derivation and $\delta: \cM_Y \to \cF$ is a morphism of sheaves of monoids such that $\delta\bigl(f^{-1}(\cM_X)\bigr) = 0$ and $d\bigl(\alpha_Y(m)\bigr) = \alpha_Y(m) \, \delta(m)$, for all sections $m$ of $\cM_Y$.
\end{defn}

\begin{constr}\label{constr-log-diff-sheaf-univ-aff}
    Let $f: (Y, \cM_Y, \alpha_Y) \to (X, \cM_X, \alpha_X)$ be a morphism of noetherian coherent log adic spaces, where $X = \Spa(A, A^+)$ and $Y = \Spa(B, B^+)$ are affinoid.  Suppose that $f$ induces a tft homomorphism of log Huber rings $(A, M, \alpha) \to (B, N, \beta)$, where $M := \cM_X(X)$ and $N := \cM_Y(Y)$, and where $\alpha := \alpha_X(X): M \to A$ and $\beta := \alpha_Y(Y): N \to B$.  Let $\Omega^{\log}_{Y / X}$ denote the coherent sheaf on $Y_\et$ associated with $\Omega^{\log}_{B / A}$ \Pth{see \Refeq{\ref{eq-def-log-diff-aff}}}.  For each $\Spa(C, C^+) \in Y_\et$, by Theorem \ref{thm-log-diff-fund}, the log differential $\Omega^{\log}_{C / A}$ for $(A, M, \alpha) \to (C, N, (B \to C) \circ \beta)$ is naturally isomorphic to $C \otimes_B \Omega^{\log}_{B / A} = \Omega^{\log}_{Y / X}\bigl(\Spa(C, C^+)\bigr)$; and the maps $d_{C / A}: C \to \Omega^{\log}_{C / A}$ and $\delta_{C / A}: N \to \Omega^{\log}_{C / A}$ naturally and compatibly extend to a continuous $\cO_{X_\et}$-linear derivation $d_{Y / X}: \cO_{Y_\et} \to \Omega^{\log}_{Y / X}$ and a morphism $\delta_{Y / X}: N_Y \to \Omega^{\log}_{Y / X}$ of sheaves of monoids satisfying $\delta_{Y / X}\bigl(f^{-1}(M_X)\bigr) = 0$ and $d_{Y / X}\bigl(\alpha_Y(n)\bigr) = \alpha_Y(n) \, \delta_{Y / X}(n)$, for sections $n$ of $N_Y$ over objects of $Y_\et$.  We may further extend $\delta_{Y / X}$ to a morphism $\delta_{Y / X}: \cM_Y \to \Omega^{\log}_{Y / X}$ of sheaves of monoids satisfying $\delta_{Y / X}\bigl(f^{-1}(\cM_X)\bigr) = 0$ and $d_{Y / X}\bigl(\alpha_Y(m)\bigr) = \alpha_Y(m) \, \delta_{Y / X}(m)$, for sections $m$ of $\cM_Y$ over objects of $Y_\et$.
\end{constr}

\begin{lem}\label{lem-log-diff-sheaf-univ-aff}
    In Construction \ref{constr-log-diff-sheaf-univ-aff}, the triple $(\Omega^{\log}_{Y / X}, d_{Y / X}, \delta_{Y / X})$ is a universal object among all derivations of $(Y, \cM_Y, \alpha_Y)$ over $(X, \cM_X, \alpha_X)$.  Moreover, for any affinoid objects $V \in Y_\et$ and $U \in X_\et$ fitting into a commutative diagram
    \[
        \xymatrix{ {V} \ar[d] \ar[r] & {Y} \ar[d] \\
        {U} \ar[r] & {X}, }
    \]
    we have a canonical isomorphism $(\Omega^{\log}_{Y / X}, d_{Y / X}, \delta_{Y / X})|_V \cong (\Omega^{\log}_{V / U}, d_{V / U}, \delta_{V / U})$.
\end{lem}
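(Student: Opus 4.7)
The plan is to reduce both assertions to the affine universal property of Proposition \ref{prop-log-diff-univ} by first establishing the compatibility statement using the first fundamental exact sequence (Theorem \ref{thm-log-diff-fund}), and then assembling local universal properties into a global one by a standard sheafification argument.

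First I would prove the compatibility assertion. Take affinoid $V = \Spa(C, C^+) \in Y_\et$ and $U = \Spa(A', A'^+) \in X_\et$ fitting into the commutative square of the lemma. Since $V \to Y$ and $U \to X$ are strict \'etale, the induced maps of log Huber rings $(A, M) \to (A', M')$ and $(B, N) \to (C, N')$ (where the target log structures are the pulled back ones) are strict and have formally \'etale underlying ring maps, hence are formally log \'etale by Remark \ref{rem-def-log-sm-unram-et-strict}. Applying Theorem \ref{thm-log-diff-fund}\Refenum{\ref{thm-log-diff-fund-4}} to the composition $(A, M) \to (B, N) \to (C, N')$ yields a natural isomorphism
\[
\Omega^{\log}_{Y/X}(V) \;=\; C \otimes_B \Omega^{\log}_{B/A} \;\xrightarrow{\;\sim\;}\; \Omega^{\log}_{C/A}.
\]
A standard lift-of-two-retractions argument (applied to $A' \otimes_A A' \to A'$ with its square-zero kernel) shows $\Omega^{\log}_{A'/A} = 0$. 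Hence Theorem \ref{thm-log-diff-fund}\Refenum{\ref{thm-log-diff-fund-1}} applied to $(A, M) \to (A', M') \to (C, N')$ gives $\Omega^{\log}_{C/A} \xrightarrow{\sim} \Omega^{\log}_{C/A'} = \Omega^{\log}_{V/U}$. Composing the two isomorphisms produces the desired identification, and a direct unwinding of the constructions of $d$ and $\delta$ in both sides shows that it carries $(d_{V/U}, \delta_{V/U})$ to the restrictions $(d_{Y/X}|_V, \delta_{Y/X}|_V)$.

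Next I would prove universality. Let $\cF$ be a sheaf of complete topological $\cO_{Y_\et}$-modules and $(d, \delta)$ a derivation of $(Y, \cM_Y)$ over $(X, \cM_X)$ valued in $\cF$. Since $X$ is affinoid, for every affinoid $V = \Spa(C, C^+) \in Y_\et$ we may take $U = X$ in the compatibility statement. Evaluating $(d, \delta)$ on sections over $V$ produces an $(A, M)$-derivation of $(C, \cM_Y(V))$ valued in the complete topological $C$-module $\cF(V)$; by Proposition \ref{prop-log-diff-univ} this factors through a unique $C$-linear map $\varphi_V \colon \Omega^{\log}_{V/X} \to \cF(V)$. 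Using the compatibility isomorphism we view $\varphi_V$ as a map $\Omega^{\log}_{Y/X}(V) \to \cF(V)$. For a morphism $V' \to V$ in $Y_\et$ between affinoids, the restrictions of $(d, \delta)$ are compatible, so the uniqueness clause of Proposition \ref{prop-log-diff-univ} forces $\varphi_{V'}$ and $\varphi_V$ to commute with restriction. Since the affinoid objects form a basis of $Y_\et$ and $\Omega^{\log}_{Y/X}$ is the sheaf associated with the finite $B$-module $\Omega^{\log}_{B/A}$, these compatible maps glue to a unique morphism $\varphi \colon \Omega^{\log}_{Y/X} \to \cF$ of sheaves of $\cO_{Y_\et}$-modules intertwining the universal pair with $(d, \delta)$. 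Uniqueness of $\varphi$ is immediate from the local uniqueness.

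The main obstacle is the bookkeeping in the compatibility step: one must verify that the two successive isomorphisms obtained from Theorem \ref{thm-log-diff-fund} genuinely transport the distinguished elements $d_{V/U}$ and $\delta_{V/U}$ to the restrictions of $d_{Y/X}$ and $\delta_{Y/X}$ coming from Construction \ref{constr-log-diff-sheaf-univ-aff}. This is a chase through the definitions in \Refeq{\ref{eq-def-log-diff-aff-diag}}--\Refeq{\ref{eq-def-log-diff-aff}}, but once it is carried out, the rest of the proof is routine sheaf-theoretic assembly.
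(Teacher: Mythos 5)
Your proposal is correct and follows essentially the same route as the paper: restrict the given derivation to affinoid objects of $Y_\et$, invoke the affine universal property of Proposition \ref{prop-log-diff-univ}, use its uniqueness to get compatibility with restriction maps, and glue; the compatibility with $(V,U)$ is, in both cases, a consequence of Theorem \ref{thm-log-diff-fund}. The only difference is cosmetic — you establish the compatibility isomorphism first and then deduce universality, whereas the paper proves universality directly and notes that the second assertion follows from Theorem \ref{thm-log-diff-fund}.
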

\begin{proof}
    Let $(d, \delta)$ be a derivation of $(Y, \cM_Y, \alpha_Y)$ over $(X, \cM_X, \alpha_X)$ valued in some complete topological $\cO_{Y_\et}$-module $\cF$.  At each $\Spa(C, C^+) \in Y_\et$, the evaluation of the derivation $(d, \delta)$ defines a derivation
    \[
        \bigl(C, \cM_{Y}(\Spa(C, C^+)), \alpha_Y(\Spa(C, C^+))\bigr) \to \cF\bigl(\Spa(C, C^+)\bigr)
    \]
    over $\bigl(A, \cM_X(X), \alpha_X(X)\bigr)$, which restricts to a derivation
    \[
        \bigl(C, N, (B \to C) \circ \beta\bigr) \to \cF\bigl(\Spa(C, C^+)\bigr)
    \]
    over $(A, M, \alpha)$.  By the universal property of log differentials, it factors through a continuous $C$-linear morphism $\Omega^{\log}_{C / A} \to \cF\bigl(\Spa(C, C^+)\bigr)$.  Moreover, we deduce from the universal property of log differentials a commutative diagram
    \[
        \xymatrix{ {\Omega^{\log}_{C_2 / A}} \ar[d] \ar[r] & {\cF\bigl(\Spa(C_2, C_2^+)\bigr)} \ar[d] \\
        {\Omega^{\log}_{C_1 / A}} \ar[r] & {\cF\bigl(\Spa(C_1, C_1^+)\bigr),} }
    \]
    for any morphism $\Spa(C_1, C_1^+) \to \Spa(C_2, C_2^+)$ in $Y_\et$.  As a result, the morphisms $\Omega^{\log}_{C / A} \to \cF(\Spa(C, C^+))$, for $\Spa(C, C^+) \in Y_\et$, are compatible with each other and define a continuous $\cO_{X_\et}$-linear morphism $\Omega^{\log}_{Y / X} \to \cF$, whose compositions with $d_{Y / X}$ and $\delta_{Y / X}$ are equal to $d$ and $\delta$, respectively.  This proves the first assertion of the lemma.  The second then follows from Theorem \ref{thm-log-diff-fund}.
\end{proof}

\begin{constr}\label{constr-log-diff-sheaf-univ}
    Given any lft morphism $f: Y \to X$ of noetherian coherent log adic spaces, by Proposition \ref{prop-chart-mor-exist}, there exist a finite index set $I$ and \'etale coverings $\{ X_i \to X \}_{i \in I}$ and $\{ Y_i \to Y \}_{i \in I}$, respectively, by affinoid log adic spaces such that $f$ induces a morphism $Y_i \to X_i$ which fits into the setting of Construction \ref{constr-log-diff-sheaf-univ-aff}, for each $i \in I$.  By Lemma \ref{lem-log-diff-sheaf-univ-aff}, the pullbacks of the triples $(\Omega^{\log}_{Y_i / X_i}, d_{Y_i / X_i}, \delta_{Y_i / X_i})$ are canonically isomorphic to each other over the fiber products of $Y_i$ over $Y$.  Thus, by Proposition \ref{prop-et-coh-descent}, we obtain a triple $(\Omega^{\log}_{Y / X}, d_{Y / X}, \delta_{Y / X})$ on $Y_\et$, where $(d_{Y / X}, \delta_{Y / X})$ gives a derivation of $Y$ over $X$ valued in $\Omega^{\log}_{Y / X}$.
\end{constr}

By Lemma \ref{lem-log-diff-sheaf-univ-aff}, we immediately obtain the following:
\begin{lem}\label{lem-log-diff-sheaf-univ}
    In Construction \ref{constr-log-diff-sheaf-univ}, $(\Omega^{\log}_{Y / X}, d_{Y / X}, \delta_{Y / X})$ is a universal object among all derivations of $Y$ over $X$.  As a result, $(\Omega^{\log}_{Y / X}, d_{Y / X}, \delta_{Y / X})$ is well defined, \ie, independent of the choice of \'etale coverings; and its definition extends to all lft morphisms $f: Y \to X$ of locally noetherian coherent log adic spaces.
\end{lem}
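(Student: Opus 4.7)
The plan is to deduce universality in Construction \ref{constr-log-diff-sheaf-univ} from the affinoid case handled in Lemma \ref{lem-log-diff-sheaf-univ-aff} via \'etale descent, and then to derive well-definedness and the locally noetherian extension as formal consequences.

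For a lft morphism $f: Y \to X$ of noetherian coherent log adic spaces, suppose $(d, \delta)$ is a derivation of $Y$ over $X$ valued in a complete topological $\cO_{Y_\et}$-module $\cF$, and consider the \'etale coverings $\{Y_i \to Y\}_{i \in I}$ and $\{X_i \to X\}_{i \in I}$ fixed in Construction \ref{constr-log-diff-sheaf-univ}.  Restricting $(d, \delta)$ to each $Y_i$ produces a derivation of $Y_i$ over $X_i$, which by Lemma \ref{lem-log-diff-sheaf-univ-aff} factors through a unique continuous morphism $\varphi_i: \Omega^{\log}_{Y_i / X_i} \to \cF|_{Y_i}$ of $\cO_{Y_i, \et}$-modules.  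On any affinoid \'etale open $W$ of a pairwise fiber product $Y_{ij} := Y_i \times_Y Y_j$, lying over some affinoid $U \in X_\et$, the second assertion of Lemma \ref{lem-log-diff-sheaf-univ-aff} identifies $\Omega^{\log}_{Y_i / X_i}|_W \cong \Omega^{\log}_{W / U} \cong \Omega^{\log}_{Y_j / X_j}|_W$ compatibly with the restrictions of $d$ and $\delta$.  The universal property on $W$ then forces $\varphi_i|_W = \varphi_j|_W$, so by \'etale descent \Pth{Proposition \ref{prop-et-coh-descent}} the $\varphi_i$ glue to a unique continuous morphism $\varphi: \Omega^{\log}_{Y / X} \to \cF$ satisfying $\varphi \circ d_{Y/X} = d$ and $\varphi \circ \delta_{Y/X} = \delta$; this is the desired universality.

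Well-definedness is then a formal corollary: two choices of \'etale coverings yield triples each enjoying the same universal property, and are thus canonically isomorphic via mutually inverse morphisms that identify the respective derivations.  To extend the definition to a lft morphism $f: Y \to X$ of \emph{locally} noetherian coherent log adic spaces, I would cover $X$ by open noetherian affinoid subspaces $\{U_\alpha\}$, and each preimage $f^{-1}(U_\alpha)$ \Pth{which is locally noetherian since $f$ is lft} by open noetherian affinoid subspaces $\{V_{\alpha, \beta}\}$ mapping into $U_\alpha$.  Each restriction $f|_{V_{\alpha, \beta}}: V_{\alpha, \beta} \to U_\alpha$ is a lft morphism of noetherian coherent log adic spaces, to which Construction \ref{constr-log-diff-sheaf-univ} applies to produce a triple.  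The universal property established above provides canonical gluing isomorphisms over the overlaps, which themselves reduce to the noetherian coherent case; these glue to a globally defined triple $(\Omega^{\log}_{Y / X}, d_{Y / X}, \delta_{Y / X})$ on $Y_\et$, and universality persists because it is a local condition on $Y$.

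The main technical point I expect to need care with is verifying that the gluing isomorphisms on triple overlaps satisfy the cocycle condition; but since every gluing map is characterized by the universal property of log differentials, uniqueness forces the cocycle identity, so the verification is essentially automatic rather than a source of genuine difficulty.
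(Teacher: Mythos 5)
Your proposal is correct and follows exactly the route the paper intends: the paper gives no explicit proof, stating only that the lemma follows immediately from Lemma \ref{lem-log-diff-sheaf-univ-aff}, and your argument supplies precisely the omitted details (restrict the derivation to the covering, apply the affinoid universal property, use the compatibility over fiber products from the second assertion of Lemma \ref{lem-log-diff-sheaf-univ-aff} to glue via Proposition \ref{prop-et-coh-descent}, then deduce well-definedness and the locally noetherian extension formally from uniqueness).
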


\begin{defn}\label{def-log-diff-sheaf}
    We call the $\Omega^{\log}_{Y / X}$ in Lemma \ref{lem-log-diff-sheaf-univ} the \emph{sheaf of log differentials} of $f$ and $(d_{Y / X}, \delta_{Y / X})$ the associated \emph{universal log derivations}.  By abuse of notation, we shall denote the pushforward of $\Omega^{\log}_{Y / X}$ to $Y_\an$ by the same symbols.  \Pth{When there is any risk of confusion, we shall denote the sheaves of log differentials on $Y_\et$ and $Y_\an$ more precisely by $\Omega^{\log}_{Y / X, \et}$ and $\Omega^{\log}_{Y / X, \an}$, respectively.}  If $X = \Spa(k, \cO_k)$, for simplicity, we shall write $\Omega^{\log}_Y$ instead of $\Omega^{\log}_{Y / X}$.
\end{defn}

\begin{prop}\label{prop-log-diff-sheaf-bc}
    Let
    \[
        \xymatrix{ {Y'} \ar[d]_-f \ar[r] & {X'} \ar[d] \\
        {Y} \ar[r] & {X} }
    \]
    be a cartesian diagram in the category of locally noetherian coherent \Pth{\resp fine, \resp fs} log adic spaces in which $Y \to X$ is lft.  Then $f^*(\Omega^{\log}_{Y / X}) \cong \Omega^{\log}_{Y' / X'}$.
\end{prop}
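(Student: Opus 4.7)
The plan is to reduce to the affinoid setting with compatible charts and verify the base change formula using the universal property of log differentials from Lemma \ref{lem-log-diff-sheaf-univ}.  First, by Propositions \ref{prop-chart-mor-exist} and \ref{prop-chart-mor-exist-fine}, I would work \'etale locally and assume that $X = \Spa(A, A^+)$, $Y = \Spa(B, B^+)$, and $X' = \Spa(A', A'^+)$ are affinoid, equipped with compatible coherent \Pth{\resp fine, \resp fs} charts $u: P \to Q$ for $Y \to X$ and $v: P \to P'$ for $X' \to X$.  Following the construction in the proof of Proposition \ref{prop-fiber-prod-log-adic}, the coherent fiber product $Y'_{\mathrm{coh}}$ has underlying affinoid $\Spa(B', B'^+)$ with $B' = B \ho_A A'$ and chart $R := Q \oplus_P P'$; and by Remark \ref{rem-adj-int-sat-rel}, the fine \Pth{\resp fs} fiber product $Y'$ is $(Y'_{\mathrm{coh}})^?$ with $? = \Int$ \Pth{\resp $\Sat$}, modeled on $R^?$ with underlying ring $\widetilde{B}' = B' \otimes_{\bZ[R]} \bZ[R^?]$ \Pth{completed, with integral closure for $\widetilde{B}'^+$}.

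For the coherent case, I would verify the natural bijection
\[
    \Der^{\log}_{A'}(B', L) \Mi \Der^{\log}_A(B, L)
\]
for any complete topological $B'$-module $L$.  Restriction along $B \to B'$ and $Q \to R$ produces an $(A, P)$-derivation of $(B, Q)$ from an $(A', P')$-derivation of $(B', R)$, using that the composite $P \to P' \to R \to L$ vanishes whenever the derivation kills $P'$.  Conversely, given $(d, \delta)$ on $(B, Q)$, I would extend $d$ to a continuous $A'$-linear derivation $d': B' \to L$ via $B' = B \ho_A A'$, and define $\delta': R \to L$ via the pushout description $R = Q \oplus_P P'$, sending $Q$ to $L$ by $\delta$ and $P'$ to zero.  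These assignments are mutually inverse and, combined with Proposition \ref{prop-log-diff-univ}, they yield $\Omega^{\log}_{B'/A'} \cong B' \otimes_B \Omega^{\log}_{B/A}$, which globalizes via Lemma \ref{lem-log-diff-sheaf-univ} to settle the coherent case.

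The principal obstacle, which I would tackle last, is extending the argument to the fine \Pth{\resp fs} case, where the underlying adic space genuinely changes upon passing from $Y'_{\mathrm{coh}}$ to $Y'$ through the $(\cdot)^?$ functor.  The key observation is that the inclusion $R \hookrightarrow R^?$ induces an isomorphism $R^\gp \cong (R^?)^\gp$, so the kernel and cokernel of $R^\gp \to (R^?)^\gp$ are both trivial.  Applying Proposition \ref{prop-log-diff-monoid} to $A'\Talg{R} \to A'\Talg{R^?}$ therefore shows that this homomorphism is formally log \'etale, and the lifting property in Definition \ref{def-log-sm-unram-et} is preserved under base change along $A'\Talg{R} \to B'$, so that $(B', R) \to (\widetilde{B}', R^?)$ is also formally log \'etale.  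Theorem \ref{thm-log-diff-fund}\Refenum{\ref{thm-log-diff-fund-4}} then gives $\Omega^{\log}_{\widetilde{B}'/A'} \cong \widetilde{B}' \otimes_{B'} \Omega^{\log}_{B'/A'}$, and chaining with the coherent base change produces $\Omega^{\log}_{\widetilde{B}'/A'} \cong \widetilde{B}' \otimes_B \Omega^{\log}_{B/A}$.  \'Etale descent along the chosen cover then yields the desired global isomorphism $f^*(\Omega^{\log}_{Y/X}) \cong \Omega^{\log}_{Y'/X'}$.
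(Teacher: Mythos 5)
Your reduction to the affinoid/chart setting and your treatment of the coherent case follow the paper's proof essentially verbatim: the paper also identifies $B'$ with $B \ho_A A'$ \Pth{modified by $\otimes_{\bZ[Q']}\bZ[(Q')^?]$ in the fine/fs cases}, reduces to an isomorphism of $\Der^{\log}$-modules via the universal property, and handles the coherent case by the restriction/extension argument you describe \Pth{citing Ogus, \aProp IV.1.1.3}.  That part of your proposal is correct.

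The gap is in your fine/fs step.  You invoke Proposition \ref{prop-log-diff-monoid} for the homomorphism $R \to R^?$, where $R = Q \oplus_P P'$, but that proposition is stated \Pth{and its proof is carried out} only for homomorphisms of \emph{fine} monoids: the proof identifies monoid homomorphisms $Q \to O$ with group homomorphisms $Q^\gp \to O^\gp$ using the injectivity of $Q \to Q^\gp$.  The amalgamated sum $R = Q \oplus_P P'$ of fine \Pth{\resp fs} monoids is finitely generated but in general \emph{not integral} --- this failure is precisely the reason the functors $(\cdot)^\Int$ and $(\cdot)^\Sat$ change the underlying adic space and must be applied at all \Ref{\Refcf{} Remark \ref{rem-fiber-prod-forget}}.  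So the claim that $A'\Talg{R} \to A'\Talg{R^?}$ is formally log \'etale does not follow from the cited proposition, and consequently the appeal to Theorem \ref{thm-log-diff-fund}\Refenum{\ref{thm-log-diff-fund-4}} is not yet justified.  \Pth{The saturation step $R^\Int \to R^\Sat$ alone would be fine, since there both monoids are fine and $\gp$ is unchanged; the problem is confined to the integralization step.}  The correct underlying idea is the one you already identify --- $R^\gp \cong (R^?)^\gp$ --- but it must be exploited directly at the level of derivation modules rather than through Proposition \ref{prop-log-diff-monoid}: by Remark \ref{rem-def-der-mod}, any log derivation $\delta$ out of $R^?$ factors through $(R^?)^\gp \cong R^\gp$, and $\widetilde{B}' = B' \otimes_{\bZ[R]} \bZ[R^?]$ is generated over $B'$ by the images of $\mono{a}$, $a \in R^?$, on which $d$ is forced by $d(\mono{a}) = \mono{a}\,\delta(a)$; one then checks that restriction $\Der^{\log}_{A'}(\widetilde{B}', L) \to \Der^{\log}_{A'}(B', L)$ is bijective.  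This is exactly what the paper does, citing the argument of \cite[\aProp IV.1.1.9]{Ogus:2018-LLG}.
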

\begin{proof}
    By the \'etale local construction of sheaves of log differentials, we may assume that $Y = \Spa(B, B^+)$, $X = \Spa(A, A^+)$, $X' = \Spa(A', A'^+)$, and $Y' = \Spa(B', B'^+)$ are affinoid, and that $Y \to X$ and $X' \to X$ admit charts $P \to Q$ and $P \to P'$, respectively, given by finitely generated \Pth{\resp fine, \resp fs} monoids.  Let $Q' := Q \oplus_P P'$.  By the proofs of Propositions \ref{prop-adj-int-sat} and \ref{prop-fiber-prod-log-adic}, $B'$ is isomorphic to $B \ho_A A'$ \Pth{\resp $(B \ho_A A') \ho_{\bZ[Q']} \bZ[(Q')^\Int]$, \resp $(B \ho_A A') \ho_{\bZ[Q']} \bZ[(Q')^\Sat]$}, and $Y'$ is modeled on $Q'$ \Pth{\resp $(Q')^\Int$, \resp $(Q')^\Sat$}.  We need to show that
    \[
        \Omega^{\log}_{B' / A'} \cong \Omega^{\log}_{B / A} \otimes_B B'.
    \]
    Since we have
    \[
        \Hom_{B'}(\Omega^{\log}_{B' / A'}, L) \cong \Der^{\log}_{A'}(B', L)
    \]
    and
    \[
        \Hom_{B'}(\Omega^{\log}_{B / A} \otimes_B B', L) \cong \Hom_B(\Omega^{\log}_{B / A}, L) \otimes_B B' \cong \Der^{\log}_A(B, L) \otimes_B B',
    \]
    for each complete topological $B'$-module $L$, it suffices to show that
    \[
        \Der^{\log}_{A'}(B', L) \cong \Der^{\log}_A(B, L) \otimes_B B'.
    \]
    In the case of coherent log adic spaces, by Remark \ref{rem-def-der-mod}, this follows from essentially the same argument as in the proof of \cite[\aProp IV.1.1.3]{Ogus:2018-LLG} \Pth{in the log scheme case}.  Since $(Q')^\gp \cong ((Q')^\Int)^\gp \cong ((Q')^\Sat)^\gp$, by essentially the same argument as in the proof of \cite[\aProp IV.1.1.9]{Ogus:2018-LLG}, we also have
    \[
        \Der^{\log}_{A'}(B', L) \cong \Der^{\log}_{A'}(B \ho_A A', L) \otimes_{B \ho_A A'} B' \cong \Der^{\log}_A(B, L) \otimes_B B',
    \]
    yielding the desired isomorphism in the cases of fine and fs log adic spaces.
\end{proof}

\begin{defn}[{\Refcf{} Definition \ref{def-log-Huber-thick}}]\label{def-log-adic-sp-thick}
    A morphism $i: Z' \to Z$ of log adic spaces is called a \emph{log thickening of first order} if it satisfies the following conditions:
    \begin{enumerate}
        \item\label{def-log-adic-sp-thick-1} It is a strict closed immersion \Pth{see Definition \ref{def-imm} and Example \ref{ex-imm}} defined by an $\cO_Z$-ideal $\cI$ satisfying $\cI^2 = 0$.

        \item\label{def-log-adic-sp-thick-2} The subsheaf $1 + \cI_\et$ of $\cO_{Z_\et}^\times$, where $\cI_\et$ denotes the natural pullback of $\cI$ to $Z_\et$ as a coherent ideal, acts freely on $\cM_Z$, and induces an isomorphism $i^{-1}\bigl(\cM_Z / (1 + \cI_\et)\bigr) \Mi \cM_{Z'}$ \Pth{of sheaves of monoids} over $Z'_\et$.
    \end{enumerate}
\end{defn}

\begin{rk}[{\Refcf{} Remark \ref{rem-def-log-Huber-thick}}]\label{rem-def-log-adic-sp-thick}
    The condition \Refenum{\ref{def-log-adic-sp-thick-2}} in Definition \ref{def-log-adic-sp-thick} is automatic when $\cO_{Z_\et}^\times$ acts freely on $\cM_Z$; or, equivalently, when the canonically induced morphism $\alpha_Z^{-1}(\cO_{Z_\et}^\times) \to \cM_Z^\gp$ is injective.  Hence, the condition is satisfied when $Z$ is \emph{integral}.
\end{rk}

\begin{defn}[{\Refcf{} Definition \ref{def-log-Huber-formal-log-sm-unram-et}}]\label{def-formal-log-sm-unram-et}
    A morphism $f: Y \to X$ of log adic spaces is called \emph{formally log smooth} \Pth{\resp \emph{formally unramified}, \resp \emph{formally log \'etale}} if, for each commutative diagram
    \begin{equation}\label{eq-def-formal-log-sm-unram-et}
        \xymatrix{ {Z'} \ar@{^(->}[d]_-i \ar[r] & {Y} \ar[d]^-f \\
        {Z} \ar[r]^-g \ar@{.>}[ru]^(.4){\widetilde{g}} & {X} }
    \end{equation}
    of solid arrows in which $i$ is a log thickening of first order as in Definition \ref{def-log-adic-sp-thick}, there exists, up to \Pth{strictly} \'etale localization on $Z$, at least one \Pth{\resp at most one, \resp exactly one} lifting $\widetilde{g}: Z \to Y$ of $g$, as the dotted arrow in \Refeq{\ref{eq-log-Huber-thick}}, making the whole diagram commute.
\end{defn}

\begin{rk}\label{rem-def-formal-log-sm-unram-et-compos-bc}
    It follows easily from the definition that we have the following:
    \begin{enumerate}
        \item\label{rem-def-formal-log-sm-unram-et-compos-bc-1} Formally log smooth \Pth{\resp formally log unramified, \resp formally log \'etale} morphisms are stable under compositions and base changes \Pth{when defined}.

        \item\label{rem-def-formal-log-sm-unram-et-compos-bc-2} If a morphism $g: X \to S$ of log adic spaces is formally log \'etale, then a morphism $f: Y \to X$ of log adic spaces is formally log smooth \Pth{\resp formally log \'etale} if and only if $g \circ f: Y \to S$ is.
    \end{enumerate}
\end{rk}

\begin{rk}\label{rem-def-formal-log-sm-unram-et-loc}
    By Lemmas \ref{lem-def-log-Huber-formal-log-sm-unram-et-strict} and \ref{lem-lft-formal}, and by \cite[\aProp 1.7.1]{Huber:1996-ERA}, in order to show that an lft morphism $f: Y \to X$ of locally noetherian fine log adic spaces is formally log smooth \Pth{\resp formally log \'etale}, it suffices to take locally finite \Pth{strictly} \'etale coverings $\{ X_i \to X \}_{i \in I}$ and $\{ Y_i \to Y \}_{i \in I}$ by affinoid log adic spaces such that $f$ induces lft morphisms $f_i: Y_i \to X_i$, and verify for each such $f_i$ the corresponding formal lifting condition in Definition \ref{def-formal-log-sm-unram-et} only for all commutative diagrams \Refeq{\ref{eq-def-formal-log-sm-unram-et}} with affinoid log adic spaces $Z$.
\end{rk}

\begin{lem}\label{lem-def-formal-log-sm-unram-et-strict}
    If $f: Y \to X$ is a strict lft morphism of locally noetherian log adic spaces, then it is \emph{formally log smooth} \Pth{\resp \emph{formally log unramified}, \resp \emph{formally log \'etale}} if the underlying morphism $f: Y \to X$ of adic spaces is \emph{formally smooth} \Pth{\resp \emph{formally unramified}, \resp \emph{formally \'etale}} in the sense that it satisfies the formal lifting conditions in \cite[\aDef 1.6.5]{Huber:1996-ERA}.  If the canonical morphism $\cO_{X_\et}^\times \to \cM_X^\gp$ is injective, then the converse is true.
\end{lem}
\begin{proof}
    In the notation of \Refeq{\ref{eq-def-formal-log-sm-unram-et}}, when $f$ satisfies the assumptions of this lemma, and when $Z$ is noetherian and affinoid, the formal lifting conditions in \cite[\aDef 1.6.5]{Huber:1996-ERA} of the underlying adic spaces can be verified \'etale locally on $Z$, as in the theory for schemes in \cite[III]{SGA:1}, because any liftings over an \'etale covering of $Z$ defines a cohomology class in $H^1\bigl(Z'_\et, \mathcal{H}\mathit{om}_{\cO_{Z'}}((Z' \Em Y)^*(\Omega_{Y / X}), \cI)\bigr)$ \Pth{by working locally as in the proof of Theorem \ref{thm-log-diff-fund} and in Construction \ref{constr-log-diff-sheaf-univ-aff}, ignoring all log structures}, which vanishes exactly when the liftings can be modified \Pth{up to further \'etale localization} to descend to a global lifting on $Z'$; and because $H^1\bigl(Z'_\et, \mathcal{H}\mathit{om}_{\cO_{Z'}}((Z' \Em Y)^*(\Omega_{Y / X}), \cI)\bigr) = 0$, by Proposition \ref{prop-et-coh-descent}, since $Z$ and hence $Z'$ are noetherian and affinoid, since the $\cO_Z$-ideal $\cI$ can be identified with a coherent $\cO_{Z'}$-module in this case, and since $\Omega_{Y / X}$ is a coherent $\cO_Y$-module when $f$ is lft.  Thus, this lemma follows from essentially the same arguments as in the proof of Lemma \ref{lem-def-log-Huber-formal-log-sm-unram-et-strict} \Pth{by working with sheaves of monoids and their stalks instead}.
\end{proof}

\begin{lem}\label{lem-formal-log-sm-log-diff}
    If $f: Y \to X$ is a formally log smooth lft morphism of locally noetherian fs log adic spaces, then $\Omega^{\log}_{Y / X}$ is a locally free $\cO_Y$-module of finite rank.
\end{lem}
\begin{proof}
    Since $X$ and $Y$ are fs, up to \'etale localization, we may assume that $X = \Spa(A, A^+)$ and $Y = \Spa(B, B^+)$ are affinoid, with log structures induced by some homomorphisms $P \to A^+ \to A$ and $Q \to B^+ \to B$ from fs monoids $P$ and $Q$, respectively, and that there exists a surjection $A\Talg{T_1, \ldots, T_n} \Surj B$, for some $n \geq 0$.  Moreover, since $Q$ is fs, it contains the torsion part $Q_\tor$ of $Q^\gp$, which we may assume to be embedded into $B^\times$.  Hence, we may assume that $Q_\tor$ is a finite subgroup of $B^\times$ annihilated by an integer invertible in $B$, so that $B \otimes_\bZ Q^\gp$ is a finite free $B$-module.  It suffices to show that $\Omega^{\log}_{B / A}$ is a finite projective $B$-module.

    Let us equip the Huber ring $D := A\Talg{T_1, \ldots, T_n}\Talg{Q}$ with the log structure induced by the pre-hog structure $P \oplus Q \to D$ given by $P \to A^+ \to A$ and $Q \to A\Talg{T_1, \ldots, T_n}\Talg{Q}: q \mapsto \mono{q}$, for all $q \in Q$.  Then we obtain a strict surjection $D \Surj B$ of log Huber rings over $A$, whose kernel we denote by $H$, which factors as a composition of strict surjections $D \Surj D' := D / H^2 \Surj B \cong D / H$.  Note that $D' \to B$ is a log thickening, as in Definition \ref{def-log-Huber-thick}, because the log structure of $D$ is integral \Pth{see Remark \ref{eq-log-Huber-thick}}, as $P$ and $Q$ are fine.  Since $A \to B$ is formally log smooth, there exists some splitting $B \to D'$ of log Huber rings over $A$.

    By Theorem \ref{thm-log-diff-fund} \Pth{applied to $A \to A\Talg{T_1, \ldots, T_n} \to D$}, Lemma \ref{lem-log-diff-strict} \Pth{applied to the strict homomorphism $A \to A\Talg{T_1, \ldots, T_n}$}, Proposition \ref{prop-log-diff-monoid} \Pth{applied to $A\Talg{T_1, \ldots, T_n} \to D$ with $u: P \to P \oplus Q$}, and Proposition \ref{prop-log-diff-sheaf-bc}, we obtain a \emph{split} short exact sequence $0 \to D^n \to \Omega^{\log}_{D / A} \to D \otimes_\bZ Q^\gp \to 0$ of finite $D$-modules, which remains exact after base change to $B$.  Therefore, since $B \otimes_\bZ Q^\gp$ is a finite free $B$-module, so is $B \otimes_D \Omega^{\log}_{D / A}$.  By construction \Pth{see \Refeq{\ref{eq-def-log-diff-aff}}}, we have canonical surjections $B \otimes_D \Omega^{\log}_{D / A} \to B \otimes_{D'} \Omega^{\log}_{D' / A} \to \Omega^{\log}_{B / A}$ of finite $B$-modules.  \Pth{This assertion can be interpreted as a comparison between the \emph{second fundamental exact sequences} associated with the strict surjections $D \Surj B$ and $D'' \Surj B$ via the strict surjection $D \Surj D''$.}  The first morphism $B \otimes_D \Omega^{\log}_{D / A} \to B \otimes_{D'} \Omega^{\log}_{D' / A}$ is an isomorphism, because its kernel is generated over $B \cong D / H$ by $1 \otimes d(x y) = x \otimes dy + y \otimes dx = 0$ in $B \otimes_D \Omega^{\log}_{D / A}$, for all $x, y \in H$, by \Refeq{\ref{eq-def-log-diff-aff-quot}} and \Refeq{\ref{eq-def-log-diff-aff-quot-mod}}.  On the other hand, any splitting $B \to D'$ above induces a splitting of the second morphism $B \otimes_{D'} \Omega^{\log}_{D' / A} \to \Omega^{\log}_{B / A}$, which embeds $\Omega^{\log}_{B / A}$ as a direct summand of $B \otimes_D \Omega^{\log}_{D' / A}$.  Thus, $\Omega^{\log}_{B / A}$ is a finite projective $B$-module as $B \otimes_D \Omega^{\log}_{D / A}$ is, as desired.
\end{proof}

\begin{prop}\label{prop-formal-log-sm-imply-log-sm}
    Let $f: Y \to X$ be a lft morphism of locally noetherian fs log adic spaces.  Then $f$ is formally log smooth \Pth{\resp formally log \'etale} as in Definition \ref{def-formal-log-sm-unram-et} if and only if it is log smooth \Pth{\resp log \'etale} as in Definition \ref{def-log-sm}.
\end{prop}
\begin{proof}
    Suppose $f$ is log smooth \Pth{\resp log \'etale}.  Then $f$ admits \'etale locally an fs chart $u: P \to Q$ as in Definition \ref{def-log-sm}, and hence is formally log smooth \Pth{\resp log \'etale} by Remarks \ref{rem-def-formal-log-sm-unram-et-compos-bc}\Refenum{\ref{rem-def-formal-log-sm-unram-et-compos-bc-1}} and \ref{rem-def-formal-log-sm-unram-et-loc}, and Proposition \ref{prop-log-diff-monoid}.

    Conversely, suppose $f$ is formally log smooth \Pth{\resp formally log \'etale}.  Let $\AC{y} = \Spa(l, l^+)$ be a geometric point of $Y$, which is mapped to a geometric point $\AC{x} = f(\AC{y})$ of $X$.  By Proposition \ref{prop-chart-stalk-chart-fs}, up to \'etale localization at $\AC{x}$, we may assume that $X$ admits an fs chart $\theta_X: P_X \to \cM_X$, with $P = \overline{\cM}_{X, \AC{x}}$.  We need to show that, up to further \'etale localization at $\AC{x}$ and $\AC{y}$, there exists some fs chart $u: P \to Q$ satisfying the conditions in Definition \ref{def-log-sm}.  \Pth{Note that $f$ remains formally log smooth \Pth{\resp formally log \'etale}, by Remarks \ref{rem-def-formal-log-sm-unram-et-compos-bc} and \ref{rem-def-formal-log-sm-unram-et-loc}.}

    Consider $\delta := \delta_{Y / X, \AC{y}}: \cM_{Y, \AC{y}} \to \Omega^{\log}_{Y / X, \AC{y}}$ \Pth{see Constructions \ref{constr-log-diff-sheaf-univ-aff} and \ref{constr-log-diff-sheaf-univ}}.  Since $\delta(t) = t^{-1} dt$ for every $t \in \cM_{Y, \AC{y}}$ that is mapped to $\cO^\times_{Y_\et, \AC{y}}$, by \Refeq{\ref{eq-def-log-diff-aff-quot}} and \Refeq{\ref{eq-def-log-diff-aff-quot-mod}}, $\delta$ induces a surjection $\cO_{Y_\et, \AC{y}} \otimes_\bZ \cM_{Y, \AC{y}}^\gp \to \Omega^{\log}_{Y / X, \AC{y}}$.  Since $f$ is formally log smooth, by Lemma \ref{lem-formal-log-sm-log-diff}, $\Omega^{\log}_{Y / X}$ is locally free of finite rank.  Take $t_1, \ldots, t_r$ in $\cM_{Y, \AC{y}}$ whose images in $\Omega^{\log}_{Y / X, \AC{y}}$ form a basis over $\cO_{Y_\et, \AC{y}}$.  Consider the homomorphism of monoids $\bZ_{\geq 0}^r \oplus P \to \cM_{Y, \AC{y}}$ induced by sending the $i$-th basis element of $\bZ_{\geq 0}^r$ to $\delta(t_i)$, and by the composition of $P \to \cM_{X, \AC{x}} \cong \bigl(f^{-1}(\cM_X)\bigr)_{\AC{y}} \to \cM_{Y, \AC{y}}$.  By assumption, $\overline{\cM}_{Y, \AC{y}} \cong \cM_{Y, \AC{y}} / \alpha_{Y, \AC{y}}^{-1}(\cO_{Y_\et, \AC{y}}^\times)$ is a sharp fs monoid.  Also, recall that $\AC{y} = \Spa(l, l^+)$.  By \Refeq{\ref{eq-def-log-diff-aff-quot}} and \Refeq{\ref{eq-def-log-diff-aff-quot-mod}} again, the canonical homomorphism $\cM_{Y, \AC{y}} \to \overline{\cM}_{Y, \AC{y}}^\gp / \im(P^\gp)$ induces a surjection $\Omega^{\log}_{Y / X, \AC{y}} \Surj l \otimes_\bZ \bigl(\overline{\cM}_{Y, \AC{y}}^\gp / \im(P^\gp)\bigr)$.  Consequently, $\bZ_{\geq 0}^r \oplus P \to \cM_{Y, \AC{y}}$ induces a surjection $l \otimes_\bZ (\bZ^r \oplus P^\gp) \Surj l \otimes_\bZ \overline{\cM}_{Y, \AC{y}}^\gp$.  Since $\overline{\cM}_{Y, \AC{y}}^\gp$ is a free abelian group of finite rank, the cokernel of $\bZ^r \oplus P^\gp \to \overline{\cM}_{Y, \AC{y}}^\gp$ is a finite group annihilated by some integer $n$ invertible in $l$, and hence in $\cO_{Y_\et, \AC{y}}$.  Since $\cO_{Y_\et, \AC{y}}^\times$ is $n$-divisible, we can \Pth{noncanonically} extend $\bZ^r \oplus P^\gp \to \overline{\cM}_{Y, \AC{y}}^\gp$ to some $h: G \to \cM_{Y, \AC{y}}^\gp$, where $G$ is some free abelian group of finite rank containing $\bZ^r \oplus P^\gp$ such that $G / (\bZ^r \oplus P^\gp)$ is annihilated by $n$, and such that the induced map $G \to \overline{\cM}_{Y, \AC{y}}^\gp$ is surjective.

    Let $Q_1 := h^{-1}(\cM_{Y, \AC{y}})$.  By construction, $P$ is a submonoid of $Q_1$, the induced map $Q_1 \to \cM_{Y, \AC{y}}$ is strict, and the torsion part of $Q_1^\gp / P^\gp$ is annihilated by $n$.  By the same argument as in the proof of Lemma \ref{lem-fs-split-int}, there is a finitely generated submonoid $Q_2$ of $Q_1$ such that $Q_2^\gp = Q_1^\gp$, the induced map $Q_2 \to \cM_{Y, \AC{y}}$ is still strict, and the composition of $Q_2 \to \cM_{Y, \AC{y}} \to \cO_{Y_\et, \AC{y}}$ factors through $\cO_{Y_\et, \AC{y}}^+$.  Let $Q$ be the saturation of the submonoid of $Q_1$ generated by $Q_2$ and $P$, which is an fs submonoid of $Q_1$ with the same properties.  Let $u: P \to Q$ denote the induced map.  By construction, $u$ is injective and compatible with $P \to \cM_{X, \AC{x}} \to \cM_{Y, \AC{y}}$ and $Q \to \cM_{Y, \AC{y}}$, and the torsion part of $Q^\gp / u(P^\gp)$ is annihilated by $n$.  Since these are all finitely generated monoids, because of the explanation in Remark \ref{rem-fg}, up to \'etale localization at $\AC{y}$ we may assume that $Q \to \cM_{Y, \AC{y}}$ extends to a chart $\theta_Y: Q_Y \to \cM_Y$; that $\theta_X$, $\theta_Y$, and $u: P \to Q$ form an fs chart of $f$, as in Definition \ref{def-chart-mor}; that $n$ is invertible in $\cO_Y$; and that $\cO_{Y, \AC{y}} \otimes_\bZ \bigl(Q^\gp / u^\gp(P^\gp)\bigr) \cong \Omega^{\log}_{Y / X, \AC{y}}$.  It remains to show that $u: P \to Q$ satisfies the conditions in Definition \ref{def-log-sm} after these \'etale localizations.

    We already know that $\ker(u^\gp) = 0$ and the torsion part of $Q^\gp / u^\gp(P^\gp)$ is annihilated by $n$.  If $f$ is formally log \'etale \Pth{and hence formally log unramified}, by Theorem \ref{thm-log-diff-fund}\Refenum{\ref{thm-log-diff-fund-3}} \Pth{and the construction of $\Omega^{\log}_{Y / X}$ over affinoid coverings}, we have $\Omega^{\log}_{Y / X, \AC{y}} = 0$, in which case the whole $Q^\gp / u^\gp(P^\gp)$ is torsion and therefore annihilated by $n$.  Thus, $u$ satisfies the condition \Refenum{\ref{def-log-sm-1}} of Definition \ref{def-log-sm}.

    Let $g: Y \to Y' := X \times_{X\Talg{P}} X\Talg{Q} \cong X \times_{\Spa(k\Talg{P}, k^+\Talg{P})} \Spa(k\Talg{Q}, k^+\Talg{Q})$ be the morphism induced by the chart $\theta_X$, $\theta_Y$, and $u$.  Note that $g$ is \emph{strict}.  Since $\Omega^{\log}_{Y / X, \AC{y}}$ is locally free of finite rank, up to further localization at $\AC{y}$, we may assume that $\theta_Y$ and $\delta_{Y / X}$ induce $\cO_Y \otimes_\bZ \bigl(Q^\gp / u^\gp(P^\gp)\bigr) \cong \Omega^{\log}_{Y / X}$.  By Proposition \ref{prop-log-diff-sheaf-bc} \Pth{and the construction of sheaves of log differentials over affinoid coverings}, the canonical morphism $g^*(\Omega^{\log}_{Y' / X}) \to \Omega^{\log}_{Y / X}$ is an isomorphism.  Since $f: Y \to X$ is formally smooth, by Theorem \ref{thm-log-diff-fund}\Refenum{\ref{thm-log-diff-fund-5}} and Remark \ref{rem-def-formal-log-sm-unram-et-loc}, $g: Y \to Y'$ is formally log \'etale.  Since $Y'$ is integral, by Lemma \ref{lem-def-formal-log-sm-unram-et-strict}, the underlying lft morphism of $g$ is formally \'etale, and hence \'etale \Pth{see the definition and the equivalent formulations in \cite[\aSecs 1.6 and 1.7]{Huber:1996-ERA}}.  Thus, $u$ also satisfies the condition \Refenum{\ref{def-log-sm-2}} of Definition \ref{def-log-sm}.
\end{proof}

\begin{thm}\phantomsection\label{thm-log-diff-sheaf-fund}
    \begin{enumerate}
        \item\label{thm-log-diff-sheaf-fund-1}  A composition of lft morphisms $Y \Mapn{f} X \Mapn{g} S$ of locally noetherian coherent log adic spaces naturally induces an exact sequence $f^*(\Omega^{\log}_{X / S}) \to \Omega^{\log}_{Y / S} \to \Omega^{\log}_{Y / X} \to 0$ of coherent $\cO_Y$-modules.

        \item\label{thm-log-diff-sheaf-fund-2}  If $f$ is a log smooth morphism of locally noetherian fs log adic spaces, then $f^*(\Omega^{\log}_{X / S}) \to \Omega^{\log}_{Y / S}$ is injective, and $\Omega^{\log}_{Y / X}$ is a locally free $\cO_Y$-module of finite rank.  \Pth{\'Etale locally on $X$ and $Y$, when $f$ admits an fs chart $u: P \to Q$ as in Definition \ref{def-log-sm}, the rank of $\Omega^{\log}_{Y / X}$ as a locally free $\cO_Y$-module is equal to the rank of $Q^\gp / u^\gp(P^\gp)$ as a finitely generated abelian group.}

        \item\label{thm-log-diff-sheaf-fund-3}  If $f$ is a log \'etale morphism of locally noetherian fs log adic spaces, then $f^*(\Omega^{\log}_{X / S}) \cong \Omega^{\log}_{Y / S}$ and $\Omega^{\log}_{Y / X} = 0$.

        \item\label{thm-log-diff-sheaf-fund-4}  If $X$, $Y$, and $S$ are locally noetherian fs log adic spaces, and if $g \circ f$ is log smooth, then the converses of \Refenum{\ref{thm-log-diff-sheaf-fund-2}} and \Refenum{\ref{thm-log-diff-sheaf-fund-3}} hold.

        \item\label{thm-log-diff-sheaf-fund-5}  If $X$, $Y$, and $S$ are locally noetherian fs log adic spaces, and if $g$ is log \'etale, then $\Omega^{\log}_{Y / S} \Mi \Omega^{\log}_{Y / X}$, and $f$ is log smooth \Pth{\resp log \'etale} if and only if $g \circ f$ is log smooth \Pth{\resp log \'etale}.
    \end{enumerate}
\end{thm}
\begin{proof}
    By the construction of sheaves of log differentials, the assertion \Refenum{\ref{thm-log-diff-sheaf-fund-1}} follows from Theorem \ref{thm-log-diff-fund}\Refenum{\ref{thm-log-diff-fund-1}}.  The assertions \Refenum{\ref{thm-log-diff-sheaf-fund-2}} and \Refenum{\ref{thm-log-diff-sheaf-fund-3}} follow from Theorem \ref{thm-log-diff-fund} \Refenum{\ref{thm-log-diff-fund-2}} and \Refenum{\ref{thm-log-diff-fund-4}}, and Propositions \ref{prop-log-diff-monoid} and \ref{prop-log-diff-sheaf-bc}.  The assertion \Refenum{\ref{thm-log-diff-sheaf-fund-4}} follows from Theorem \ref{thm-log-diff-fund}\Refenum{\ref{thm-log-diff-fund-5}}, Remark \ref{rem-def-formal-log-sm-unram-et-loc}, and Proposition \ref{prop-formal-log-sm-imply-log-sm}.  The assertion \Refenum{\ref{thm-log-diff-sheaf-fund-5}} follows from the assertion \Refenum{\ref{thm-log-diff-sheaf-fund-1}}, Remark \ref{rem-def-formal-log-sm-unram-et-compos-bc}\Refenum{\ref{rem-def-formal-log-sm-unram-et-compos-bc-2}}, and Proposition \ref{prop-formal-log-sm-imply-log-sm}.
\end{proof}

\begin{cor}\label{cor-log-diff-sheaf-strict-cl-imm}
    Suppose that $\widetilde{X} \Mapn{f} X \Mapn{g} S$ are morphisms of locally noetherian fs log adic spaces such that $g$ is log smooth; such that the underlying morphism of adic spaces of $f$ is an isomorphism; and such that the canonical homomorphism $\cM_{X, \AC{x}} \to \cM_{\widetilde{X}, \AC{x}}$ of fs monoids splits as a direct summand, with $\alpha_{\widetilde{X}, \AC{x}}$ mapping the split image of $(\cM_{\widetilde{X}, \AC{x}} / \cM_{X, \AC{x}}) - \{ 0 \}$ to $0$ in $\cO_{\widetilde{X}_\et, \AC{x}} \cong \cO_{X_\et, \AC{x}}$, at each geometric point $\AC{x}$ of $X$.  Then $\Omega^{\log}_{\widetilde{X} / S, \AC{x}} \cong \Omega^{\log}_{X / S, \AC{x}} \oplus \bigl(\cO_{X_\et, \AC{x}} \otimes_\bZ (\cM^{\gp}_{\widetilde{X}, \AC{x}} / \cM^{\gp}_{X, \AC{x}})\bigr)$ at each $\AC{x}$.  Moreover, if there is a strict closed immersion $\imath: \widetilde{X} \to Y$ to a log adic space $Y$ log smooth over $S$, then we also have $\Omega^{\log}_{\widetilde{X} / S} \cong \imath^*(\Omega^{\log}_{Y / S})$.
\end{cor}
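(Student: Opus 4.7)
The plan is to reduce the first assertion to a stalkwise chart computation, then deduce the second assertion from the first via the fundamental exact sequence.

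For the first assertion, I would work \'etale-locally around $\AC{x}$. By log smoothness of $g: X \to S$ and Proposition \ref{prop-log-sm-chart}, choose an fs chart $u: P \to Q$ of $g$ satisfying the log smoothness conditions, so that the canonical morphism $X \to S \times_{S\Talg{P}} S\Talg{Q}$ has \'etale underlying morphism of adic spaces. Since $\cM_{\widetilde{X}, \AC{x}}^* \cong \cO_{X_\et, \AC{x}}^\times \cong \cM_{X, \AC{x}}^*$, the assumed splitting forces $Q' := \cM_{\widetilde{X}, \AC{x}}/\cM_{X, \AC{x}}$ to be sharp, hence toric; in particular $(Q')^\gp$ is torsionfree by the argument in the proof of Lemma \ref{lem-monoid-split}. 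The splitting lifts in an \'etale neighborhood to produce an fs chart $\widetilde{u}: P \to \widetilde{Q} := Q \oplus Q'$ of $\widetilde{X}/S$ at $\AC{x}$, and torsionfreeness of $(Q')^\gp$ ensures that $\widetilde{u}^\gp$ still satisfies the kernel/torsion-cokernel condition.

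Applying Corollary \ref{cor-log-diff-monoid-strict-cl-imm} to the local model of $\widetilde{X}/S$ (with $Q$ giving the log smooth direction from $X$ and $Q'$ as the extra factor from the splitting), combined with the \'etaleness part of the chart of $X/S$ and Theorem \ref{thm-log-diff-sheaf-fund}(\ref{thm-log-diff-sheaf-fund-3}) to transport the computation, gives
\[
    \Omega_{\widetilde{X}/S, \AC{x}} \cong \cO_{X_\et, \AC{x}} \otimes_\bZ \bigl(\widetilde{Q}^\gp/u^\gp(P^\gp)\bigr),
\]
while Proposition \ref{prop-log-diff-monoid} applied to the chart of $X/S$ yields
\[
    \Omega_{X/S, \AC{x}} \cong \cO_{X_\et, \AC{x}} \otimes_\bZ \bigl(Q^\gp/u^\gp(P^\gp)\bigr).
\]
The canonical splitting $\widetilde{Q}^\gp/u^\gp(P^\gp) \cong (Q^\gp/u^\gp(P^\gp)) \oplus (Q')^\gp$, together with the identification of $(Q')^\gp$ with $(\cM_{\widetilde{X}, \AC{x}}/\cM_{X, \AC{x}})^\gp$, then gives the asserted direct sum decomposition at $\AC{x}$.

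For the second assertion, apply Theorem \ref{thm-log-diff-sheaf-fund}(\ref{thm-log-diff-sheaf-fund-1}) to $\widetilde{X} \to Y \to S$ to obtain the exact sequence $\imath^*(\Omega_{Y/S}) \to \Omega_{\widetilde{X}/S} \to \Omega_{\widetilde{X}/Y} \to 0$. Since $\imath$ is strict, Lemma \ref{lem-log-diff-strict} identifies $\Omega_{\widetilde{X}/Y}$ with the non-logarithmic sheaf of differentials, which vanishes because the underlying morphism of adic spaces is a closed immersion (locally a surjection of Huber rings). This yields surjectivity of $\imath^*\Omega_{Y/S} \to \Omega_{\widetilde{X}/S}$. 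For injectivity, choose by Proposition \ref{prop-log-sm-chart} an injective fs chart $v: P \to Q_Y$ of $Y/S$ near $\imath(\AC{x})$ witnessing log smoothness; strictness of $\imath$ makes this a chart of $\widetilde{X}/S$ as well. Proposition \ref{prop-log-diff-monoid} and Proposition \ref{prop-log-diff-sheaf-bc} then compute both $\imath^*\Omega_{Y/S, \AC{x}}$ and (via the first assertion, applied to $\widetilde{X} \to X \to S$ reinterpreted through the ambient $Y$) $\Omega_{\widetilde{X}/S, \AC{x}}$ as isomorphic copies of $\cO_{\widetilde{X}_\et, \AC{x}} \otimes_\bZ (Q_Y^\gp/v^\gp(P^\gp))$, forcing the surjection to be an isomorphism.

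The main obstacle will be the chart bookkeeping in the first assertion, specifically justifying that Corollary \ref{cor-log-diff-monoid-strict-cl-imm} does apply to the local model produced by combining the log smoothness chart with the splitting of $\cM_{\widetilde{X}, \AC{x}}$ — one must verify that the strict surjection structure required by that corollary can indeed be arranged \'etale-locally from the data of the splitting, and that the identifications of stalks under \'etale base change are compatible with the canonical decomposition of $\widetilde{Q}^\gp$.
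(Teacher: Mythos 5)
Your proposal is correct and takes essentially the same route as the paper: the paper's entire proof is the one-line citation of Theorem \ref{thm-log-diff-sheaf-fund} and Corollary \ref{cor-log-diff-monoid-strict-cl-imm}, and your argument is precisely an unpacking of how those two results combine (charts via Propositions \ref{prop-chart-stalk-chart-fs} and \ref{prop-log-sm-chart}, the stalk computation via Proposition \ref{prop-log-diff-monoid} and Corollary \ref{cor-log-diff-monoid-strict-cl-imm}, and the first fundamental exact sequence for the second assertion). The chart bookkeeping you flag as the main obstacle is exactly what the paper leaves implicit, so no further comparison is warranted.
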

\begin{proof}
    This follows from Theorem \ref{thm-log-diff-sheaf-fund} and Corollary \ref{cor-log-diff-monoid-strict-cl-imm}.
\end{proof}

\begin{defn}\label{def-log-diff-sheaf-ex}
    Let $X \to S$ be a log smooth morphism of locally noetherian fs log adic spaces.  Then $\Omega^{\log}_{X / S}$ is a locally free $\cO_X$-module of finite rank, by Theorem \ref{thm-log-diff-sheaf-fund}\Refenum{\ref{thm-log-diff-sheaf-fund-2}}, and we set $\Omega^{\log, \bullet}_{X / S} := \Ex^a \, \Omega^{\log}_{X / S}$, for each integer $a \geq 0$.  More generally, for any $\widetilde{X}$ over $X$ such that $\widetilde{X} \to X \to S$ is as in Corollary \ref{cor-log-diff-sheaf-strict-cl-imm}, and such that $\widetilde{X}$ admits a strict closed immersion to a log adic space $Y$ log smooth over $S$, we also set $\Omega^{\log, a}_{\widetilde{X} / S} := \Ex^a \, \Omega^{\log}_{\widetilde{X} / S}$, which is canonically isomorphic to the pullback of $\Omega^{\log, a}_{Y / S}$, for each integer $a \geq 0$.  When $S = \Spa(k, k^+)$, for some nonarchimedean field $k$ with $k^+ = \cO_k$, and when there is no risk of confusion in the context, we shall often omit $S$ and $k$ from the notation, for the sake of simplicity.  In particular, when $X$ is log smooth over $k$ as in Definition \ref{def-log-sm-base-field}, we shall simply write $\Omega^{\log}_X$ and $\Omega^{\log, \bullet}_X$.
\end{defn}

\begin{exam}\label{ex-log-diff-sheaf-ncd}
    In Example \ref{ex-log-adic-sp-ncd-strict-cl-imm}, the morphisms $X \leftarrow X_J^\partial \to X_J$ satisfy the requirements of the morphisms $Y \leftarrow \widetilde{X} \to X$ in the second half of Corollary \ref{cor-log-diff-sheaf-strict-cl-imm}, and hence we have a canonical isomorphism $\Omega^{\log, \bullet}_{X_J^\partial} \cong (X_J \to X)^*(\Omega^{\log, \bullet}_X)$ and \'etale locally on $X^\partial_{J, \an} \cong X_{J, \an}$ \Pth{depending on the choices of coordinates} some isomorphisms $\Omega^{\log}_{X_J^\partial} \cong \Omega^{\log}_{X_J} \oplus \cO_X^J$ of vector bundles.
\end{exam}

\section{Kummer \'etale topology}\label{sec-ket}

\subsection{The Kummer \'etale site}\label{sec-ket-site}

\begin{defn}\label{def-Kummer}
    A homomorphism $u: P \to Q$ of saturated monoids is called \emph{Kummer} if it is injective and if the following conditions hold:
    \begin{enumerate}
        \item For any $a \in Q$, there exists some integer $n \geq 1$ such that $n a \in u(P)$.

        \item The quotient $Q^\gp / u^\gp(P^\gp)$ is a finite group.
    \end{enumerate}
\end{defn}

\begin{defn}\phantomsection\label{def-ket-mor}
    \begin{enumerate}
        \item\label{def-ket-mor-1} A morphism \Pth{\resp finite morphism} $f: Y \to X$ of locally noetherian fs log adic spaces is called \emph{Kummer} \Pth{\resp \emph{finite Kummer}} if it admits, \'etale locally on $X$ and $Y$ \Pth{\resp \'etale locally on $X$}, an fs chart $u: P \to Q$ that is Kummer as in Definition \ref{def-Kummer}.  Such a chart $u$ is called a \emph{Kummer chart} of $f$.

        \item\label{def-ket-mor-2} An $f$ as above is called \emph{Kummer \'etale} \Pth{\resp \emph{finite Kummer \'etale}} if the Kummer chart $u$ above can be chosen such that $|Q^\gp / u^\gp(P^\gp)|$ is invertible in $\cO_Y$, and such that $f$ and $u$ induce a morphism $Y \to X \times_{X\Talg{P}} X\Talg{Q}$ of log adic spaces \Pth{\Refcf{} Remark \ref{rem-def-chart-rel}} whose underlying morphism of adic spaces is \'etale \Pth{\resp finite \'etale}.

        \item\label{def-ket-mor-3} A Kummer morphism is called a \emph{Kummer cover} if it is surjective.
    \end{enumerate}
\end{defn}

\begin{rk}\label{rem-def-ket-mor}
    Definition \ref{def-ket-mor} can be extended beyond the case of locally noetherian fs log adic spaces, with suitable $P$ and $Q$, when all adic spaces involved are \'etale sheafy.  However, we will not pursue this generality in this paper.
\end{rk}

\begin{rk}\label{rem-Kummer-exact}
    Any Kummer homomorphism $u: P \to Q$ as in Definition \ref{def-Kummer} is exact.  Accordingly, as we shall see in Lemma \ref{lem-ket-stalk}, any Kummer morphism $f: Y \to X$ as in Definition \ref{def-ket-mor} is exact.  In particular, Proposition \ref{prop-lem-four-pt} is applicable to Kummer morphisms.  \Pth{See also Lemma \ref{lem-ket-log-et-ex}.}
\end{rk}

\begin{defn}\phantomsection\label{def-Kummer-n-th}
    \begin{enumerate}
        \item For any saturated torsion-free monoid $P$ and any positive integer $n$, let $\frac{1}{n} P$ be the saturated torsion-free monoid such that the inclusion $P \Em \frac{1}{n} P$ is isomorphic to the $n$-th multiple map $[n]: P \to P$.

        \item Let $X$ be a locally noetherian log adic space modeled on a torsion-free fs monoid $P$, and $n$ any positive integer.  Then we have the log adic space $X^{\frac{1}{n}} := X \times_{X\Talg{P}} X\Talg{\frac{1}{n} P}$, with a natural chart modeled on $\frac{1}{n} P$.
    \end{enumerate}
\end{defn}

The structure morphism $X^{\frac{1}{n}} \to X$ is a finite Kummer cover with a Kummer chart given by the natural inclusion $P \Em \frac{1}{n} P$, which is finite Kummer \'etale when $n$ is invertible in $\cO_X$.  Such morphisms will play an important role in Sections \ref{sec-ket-coh-descent} and \ref{sec-ket-cov-descent}.  More generally, we have the following:
\begin{prop}\label{prop-ket-std}
    Let $X$ be a locally noetherian log adic space with a chart modeled on an fs monoid $P$.  Let $u: P \to Q$ be a Kummer homomorphism of fs monoids such that $G := Q^\gp / u^\gp(P^\gp)$ is a finite group.  Consider
    \[
        Y := X \times_{X\Talg{P}} X\Talg{Q},
    \]
    which is equipped with a canonical action of the group object
    \[
        G^D_X := X\Talg{G}
    \]
    over $X$, which is an analogue of a diagonalizable group scheme that is Cartier dual to the constant group scheme $G$.  Then the following hold:
    \begin{enumerate}
        \item\label{prop-ket-std-1}  The natural projection $f: Y \to X$ is a finite Kummer cover, which is finite \Pth{see \cite[(1.4.4)]{Huber:1996-ERA}} and surjective.

        \item\label{prop-ket-std-2}  When $X$ and therefore $Y$ are affinoid, we have a canonical exact sequence $0 \to \cO_X(X) \to \cO_Y(Y) \to \cO_{Y \times_X Y}(Y \times_X Y)$.

        \item\label{prop-ket-std-3}  The morphism $G^D_X \times_X Y \to Y \times_X Y$ induced by the action and the second projection is an isomorphism.

        \item\label{prop-ket-std-4}  When $G$ is annihilated by an integer $m \geq 1$ invertible in $\cO_X$, the group $G^D_X$ is \'etale over $X$ \Pth{which is simply the constant group $\Hom(G, \cO_X(X)^\times)_X$ when $\cO_X(X)$ contains all the $m$-th roots of unity}; and the cover $f: Y \to X$ is a Galois finite Kummer \'etale cover with Galois group $G^D_X$, which is open.
    \end{enumerate}
\end{prop}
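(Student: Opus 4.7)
The plan is to prove the parts in the order (1), (3), (2), (4), since (2) and (4) build on the isomorphism in (3).  For (1), $f$ is Kummer by hypothesis.  For finiteness, $\bZ[P] \to \bZ[Q]$ is a finite ring map because $Q$ is finitely generated and each generator has a positive multiple in $u(P)$, yielding an explicit finite set of $\bZ[P]$-module generators of $\bZ[Q]$; this transfers to $\cO_Y(Y) \cong \cO_X(X) \ho_{\bZ[P]} \bZ[Q]$ being finite over $\cO_X(X)$.  For surjectivity, $\bZ[P] \hookrightarrow \bZ[Q]$ is injective and integral, so by the standard extension of valuations for integral extensions of Huber pairs, $X\Talg{Q} \to X\Talg{P}$ is surjective, and hence so is its base change $f$.

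For (3), first one constructs the $G^D_X$-action on $Y$ from the homomorphism of fs monoids $Q \to Q \oplus G$, $q \mapsto (q, [q])$, where $[q]$ denotes the class of $q$ in $G = Q^\gp / u^\gp(P^\gp)$, reflecting the natural $G$-grading of $\bZ[Q]$.  The comparison morphism $\mu: G^D_X \times_X Y \to Y \times_X Y$, $(g, y) \mapsto (g \cdot y, y)$, then corresponds on fs charts to $\phi: (Q \oplus_P Q)^\Sat \to Q \oplus G$, $(q_1, q_2) \mapsto (q_1 + q_2, [q_1])$.  The plan for showing $\phi$ is an isomorphism proceeds in two steps: first, at the group level, $\phi^\gp: Q^\gp \oplus_{P^\gp} Q^\gp \to Q^\gp \oplus G$ is an isomorphism with inverse $(q, g) \mapsto (\tilde g, q - \tilde g)$ for any lift $\tilde g \in Q^\gp$ of $g$ \Pth{well-defined modulo the amalgamation relation}; second, under this identification, the saturation of $Q \oplus_P Q$ inside $Q^\gp \oplus G$ equals $Q \oplus G$, because any $(q, g) \in Q \oplus G$ satisfies $|G| \cdot (q, g) = (|G| q, 0)$, which lies in the image of $Q \oplus_P Q$.

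For (2), applying (3) identifies $\cO_{Y \times_X Y}(Y \times_X Y) \cong \cO_Y(Y) \otimes_{\cO_X(X)} \cO_X(X)[G]$; under this identification, the two pullback maps become $p_2^*(a) = 1 \otimes a$ and $p_1^*(a) = \sum_g \mono{g} \otimes a_g$, for $a = \sum_g a_g$ in the $G$-grading of $\cO_Y(Y)$.  Equality then forces $a_g = 0$ for $g \neq 0$, so the equalizer is $\cO_Y(Y)_0 = \cO_X(X) \otimes_{\bZ[P]} \bZ[Q_0]$; by exactness of $u$ \Ref{Remark \ref{rem-Kummer-exact}}, $Q_0 = u^\gp(P^\gp) \cap Q = u(P) \cong P$, so this equals $\cO_X(X)$.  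For (4), Maschke--Wedderburn gives that $\bZ[G][1/|G|]$ is \'etale over $\bZ[1/|G|]$, so $G^D_X = X\Talg{G}$ is finite \'etale and hence open over $X$ when $|G|$ is invertible in $\cO_X$; when $\cO_X(X)$ also contains all $|G|$-th roots of unity, the characters of $G$ yield a complete system of idempotents in $\cO_X(X)[G]$, splitting $G^D_X$ as the constant group $\Hom(G, \cO_X(X)^\times)_X$.  The morphism $f$ is then finite Kummer \'etale by Definition \ref{def-ket-mor}, and the isomorphism from (3) is precisely the defining condition for $f$ to be Galois with group $G^D_X$.

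The main obstacle will be part (3), specifically the verification on monoid charts that $\phi$ becomes an isomorphism after saturation; the exactness of $u$ is the crucial input here, as well as for the computation of $Q_0$ in (2).
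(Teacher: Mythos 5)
Your treatments of \Refenum{\ref{prop-ket-std-1}}--\Refenum{\ref{prop-ket-std-3}} are correct, and they take a more self-contained route than the paper, which simply cites \cite[\aLem 3.3]{Illusie:2002-fknle} for the monoid identity $(Q \oplus_P Q)^\Sat \cong Q \oplus G$ underlying \Refenum{\ref{prop-ket-std-3}} and \cite[\aLem 3.28]{Niziol:2008-ktls-1} for the exactness in \Refenum{\ref{prop-ket-std-2}}.  Your direct verification of the saturation computation is fine: $\phi^\gp$ is an isomorphism, the image of $Q \oplus_P Q$ lands in the saturated monoid $Q \oplus G$, and every element of $Q \oplus G$ has its $|G|$-th multiple of the form $(|G|q, 0)$, which is hit by $(0, |G|q)$.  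Likewise your derivation of \Refenum{\ref{prop-ket-std-2}} from the $G$-grading and the exactness of $u$ \Pth{giving $Q \cap u^\gp(P^\gp) = u(P)$, hence degree-zero part equal to $\cO_X(X)$, split off as a direct summand} is essentially the content of the cited lemma of Nizio{\l}, reproved rather than quoted.  The only caution in \Refenum{\ref{prop-ket-std-1}} is that surjectivity of a finite injective map on adic spectra requires extending \emph{continuous} valuations along the integral extension, not just lying-over for primes; the paper supplies exactly this argument inside the proof of Lemma \ref{lem-quot}, so you should either point there or spell it out.

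The genuine gap is in \Refenum{\ref{prop-ket-std-4}}.  The statement asserts that the cover $f: Y \to X$ is \emph{open}, and Corollary \ref{cor-ket-op} \Pth{openness of all Kummer \'etale morphisms} is deduced precisely from the openness of standard Kummer \'etale covers, so this cannot be absorbed into a later general fact without circularity.  A finite morphism of adic spaces is not open in general, and the isomorphism $G^D_X \times_X Y \Mi Y \times_X Y$ from \Refenum{\ref{prop-ket-std-3}} only gives openness of the base change $Y \times_X Y \to Y$, not of $f$ itself.  The paper's route is to reduce \Pth{\'etale locally} to the case where $\cO_X(X)$ contains the $|G|$-th roots of unity, so that $G^D_X \cong \Gamma_X$ with $\Gamma = \Hom(G, \cO_X(X)^\times)$ acting on $Y = \Spa(S, S^+)$, to compute $(R\Talg{Q})^\Gamma = R\Talg{P}$ \Pth{using that $u$ is Kummer and $P$ is saturated, and that $|\Gamma|$ is invertible so taking invariants commutes with the base change to $R$}, and then to invoke Lemma \ref{lem-quot}, which produces a finite, \emph{open}, surjective morphism $Y \to Y/\Gamma \cong X$.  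This quotient identification $X \cong Y/G^D_X$ \Pth{together with the fact that $Y/\Gamma' \to X$ is again finite Kummer \'etale for every subgroup $\Gamma'$} is also what the paper actually means by \Qtn{Galois} and what is used later, e.g.\ in Lemma \ref{lem-Abhyankar-ref} and Theorem \ref{thm-fket-descent}; declaring the isomorphism of \Refenum{\ref{prop-ket-std-3}} to be \Qtn{the defining condition} skips this content.  You should add the invariants computation and the appeal to Lemma \ref{lem-quot} to close \Refenum{\ref{prop-ket-std-4}}.
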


For the proof of Proposition \ref{prop-ket-std}, we need the following general construction, which will also be useful later in Section \ref{sec-ket-cov-descent}.
\begin{lem}\label{lem-quot}
    Let $Y = \Spa(S, S^+) \to X = \Spa(R, R^+)$ be a finite morphism of noetherian adic spaces, and let $\Gamma$ be a finite group which acts on $Y$ by morphisms over $X$.  Then $(T, T^+) := (S^\Gamma, (S^+)^\Gamma)$ is a Huber pair, and $Z := \Spa(T, T^+)$ is a noetherian adic space finite over $X$.  Moreover, the canonical morphism $Y \to X$ factors through a finite, open, and surjective morphism $Y \to Z$, which induces a homeomorphism $Y / \Gamma \Mi Z$ of underlying topological spaces and identifies $Z$ as the categorical quotient $Y / \Gamma$ in the category of adic spaces.
\end{lem}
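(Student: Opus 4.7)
The plan is to proceed in three stages. First, construct $(T, T^+)$ as a Huber pair and $Z$ as a noetherian adic space finite over $X$. Since $R$ is noetherian and $R \to S$ is finite, the invariant subring $T = S^\Gamma$ is an $R$-submodule of the finitely generated $R$-module $S$, hence itself a finite $R$-module, acquiring its natural Huber ring structure from that of $R$. The subring $T^+ = T \cap S^+$ is open in $T$, contained in $T^\circ = T \cap S^\circ$, and integrally closed in $T$, because any $t \in T$ integral over $T^+$ is integral over $S^+$ and thus lies in $S^+ \cap T = T^+$. Hence $(T, T^+)$ is a Huber pair, $Z = \Spa(T, T^+)$ is noetherian, and the inclusion $R \hookrightarrow T$ yields a finite morphism $Z \to X$.

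Second, analyze the morphism $f : Y \to Z$. Each $s \in S$ is a root of the monic polynomial $\prod_{\gamma \in \Gamma}(X - \gamma(s)) \in T[X]$, so $T \to S$ is integral; being a finite $R$-module, $S$ is also a finite $T$-module, so $f$ is a finite morphism of adic spaces. The $\Gamma$-action on $Y$ is over $Z$ since $\Gamma$ fixes $T$ pointwise, and so $f$ factors through $|Y|/\Gamma \to |Z|$. Surjectivity of $|f|$ follows from standard extension theorems for continuous valuations along integral ring maps, with the $+$-structure handled by the integral closure property of $S^+$. The main obstacle is to show that the fibers of $|f|$ are exactly $\Gamma$-orbits: given continuous valuations $v_1, v_2$ on $S$ restricting to the same $v$ on $T$, we argue in the spirit of the classical Bourbaki theorem on finite group actions; the supports of the $v_i$ in $\Spec S$ both lie over $\OP{supp}(v) \in \Spec T$, so some $\gamma \in \Gamma$ sends $\OP{supp}(v_1)$ to $\OP{supp}(v_2)$, and up to this twist the decomposition group of the common support acts transitively on the continuous valuations of $S$ extending $v$. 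This valuation-theoretic step, requiring careful handling of continuity and the $+$-structure, is the principal technical obstacle.

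Third, deduce the remaining statements. Once the fibers are identified with $\Gamma$-orbits, the continuous $\Gamma$-invariant bijection $|Y|/\Gamma \to |Z|$ is a homeomorphism because $f$ is closed (being finite). The morphism $f$ is then open: for any open $U \subset Y$, the union $\bigcup_{\gamma \in \Gamma}\gamma(U)$ is open and $\Gamma$-stable, descending under the homeomorphism $|Y|/\Gamma \Mi |Z|$ to the open subset $f(U) \subset Z$. Finally, for the categorical quotient property, given any $\Gamma$-invariant morphism $g : Y \to W$ of adic spaces, cover $W$ by affinoid opens and use that $f$ is a topological quotient map to reduce to the affinoid case $W = \Spa(C, C^+)$; here $g$ corresponds to a $\Gamma$-equivariant continuous ring homomorphism $C \to S$ sending $C^+$ into $S^+$, which factors uniquely through $T = S^\Gamma$ and lands in $T^+ = T \cap S^+$, yielding the desired unique factorization $Z \to W$.
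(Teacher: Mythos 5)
Your overall architecture matches the paper's: form $T = S^\Gamma$, note it is a finite $R$-module hence a noetherian Huber ring with $T^+ = S^+ \cap T$, show $Y \to Z$ is finite, identify fibers with $\Gamma$-orbits, deduce openness, and verify the universal property on affinoid targets. However, the step you yourself flag as ``the principal technical obstacle'' --- that every continuous valuation on $T$ extends to a continuous valuation on $S$, and that $\Gamma$ acts transitively on the set of such extensions compatibly with the $+$-structure --- is left unresolved, and it cannot be dismissed as a ``standard extension theorem for continuous valuations along integral ring maps.'' There is no off-the-shelf statement in this generality: the point of the lemma is precisely to remove the analytic/Tate hypothesis from Hansen's quotient theorem, and the Tate hypothesis is used in Hansen's proof exactly at this step. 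Abstract valuation theory gives an extension of the valuation (and integrality of $S^+$ over $T^+$ handles the $+$-condition), but \emph{continuity} of the extension is the crux. The paper's argument is concrete: since $S$ carries the $T$-module topology generated by $\sum_{i} U_i s_i$ for a finite set of $T$-module generators $s_i$ of $S$ and opens $U_i \subset T$ \Ref{\cite[(1.4.2), (1.4.4)]{Huber:1996-ERA}}, one reduces to the case where the value group of $w = v|_T$ has finite index in that of $v$, and then $\{ s \in S : v(s) < \gamma \}$ is open because it contains $\sum_i U_i s_i$ with $U_i = \{ t \in T : w(t) < \gamma - v(s_i) \}$ open by continuity of $w$. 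Once $\Cont(S) \to \Cont(T)$ is shown surjective this way, the transitivity of $\Gamma$ on fibers does follow the classical support/decomposition-group argument you sketch (as in Hansen's proof), but without the continuity input your second stage does not go through.

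There is a second, smaller gap in your verification of the categorical quotient. Covering the target $W$ by affinoids $W_i$ reduces you to factoring $g$ on $g^{-1}(W_i) \to Z_i$, but $Z_i$ and $g^{-1}(W_i)$ need not be affinoid, so you must further localize on $Z$ and know that the formation of $\Gamma$-invariants is compatible with rational localization --- i.e., that $\cO_Z \to \bigl((Y \to Z)_*(\cO_Y)\bigr)^\Gamma$ is an isomorphism of sheaves, not merely of global sections. The paper obtains this from Tate's acyclicity and Kiehl's property for finite modules over noetherian affinoids (Proposition \ref{prop-coh-descent}), using that $Y$ and $Z$ are finite over $X$. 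Your final paragraph implicitly assumes this sheaf-level statement; it needs to be justified.
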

\begin{proof}
    For analytic adic spaces, and for any finite group $\Gamma$ such that $|\Gamma|$ is invertible in $S$, this essentially follows from \cite[\aThm 1.2]{Hansen:2016-qasfg} without the noetherian hypothesis. Nevertheless, we have the noetherian hypothesis, but not the analytic or invertibility assumptions here.  Moreover, we have a base space $X$ over which $Y$ is finite.  Hence, we can resort to the following more direct arguments.

    Since $R$ is noetherian, and since $S$ is a finite $R$-module, $T = S^\Gamma$ is also a finite $R$-module, and $T^+ = (S^+)^\Gamma$ is the integral closure of $R^+$ in $T$.  Therefore, $(T, T^+)$ has a canonical structure of a Huber pair such that $Z := \Spa(T, T^+)$ is a noetherian adic space finite over $X = \Spa(R, R^+)$.  Moreover, $Y \to Z$ is also finite.  By \cite[(1.4.2) and (1.4.4)]{Huber:1996-ERA} and \cite[\aSec 2]{Huber:1994-gfsra}, if $\{ s_1, \ldots, s_r \}$ is any set of generators of $S$ as an $T$-module, then the topology of $S$ is generated by $\sum_{i = 1}^r \, U_i \, s_i$, where $U_i$ runs through a basis of the topology of $T$, for all $i$.  Suppose that $w: T \to \Gamma_w$ is any continuous valuation, and that $v: S \to \Gamma_v$ is any valuation extending $w$.  Note that $v$ and $w$ factor through the domains $\overline{S} := S / v^{-1}(0)$ and $\overline{T} := T / w^{-1}(0)$, respectively, and $\Frac(\overline{S})$ is a finite extension of $\Frac(\overline{T})$.  Therefore, we may assume that $\Gamma_v$ and $\Gamma_w$ are generated by $v(S)$ and $w(T)$, respectively, and that $\Gamma_w$ is a finite index subgroup of $\Gamma_v$.  For each $\gamma \in \Gamma_v$, the subgroup $\{ s \in S : v(s) < \gamma \}$ of $S$ is open because it contains $\sum_{i = 1}^r \, U_i \, s_i$, where $U_i := \{ t \in T : w(t) < \gamma - v(s_i) \}$ is open by the continuity of $w$.  Consequently, $\Cont(S) \to \Cont(T)$ is surjective.  This replaces the main argument in Step 1 of the proof of \cite[\aThm 3.1]{Hansen:2016-qasfg} where the Tate assumption is used.  After this step, the remaining arguments in the proof of \cite[\aThm 3.1]{Hansen:2016-qasfg} work verbatim and show that $\Spa(S, S^+) \to \Spa(T, T^+)$ induces a homeomorphism $\Spa(S, S^+) / \Gamma \to \Spa(T, T^+)$.

    Since $Y$ and $Z$ are finite over $X$, and since $T = S^\Gamma$, by \cite[(1.4.4)]{Huber:1996-ERA} and Proposition \ref{prop-coh-descent}, the canonical morphism $\cO_Z \to (Y \to Z)_*(\cO_Y)$ factors through an isomorphism $\cO_Z \Mi \bigr((Y \to Z)_*(\cO_Y)\bigr)^\Gamma$.  \Pth{This provides a replacement of \cite[\aThm 3.2]{Hansen:2016-qasfg}.}  Thus, the canonical morphism $Y \to Z$ factors through an isomorphism $Y / \Gamma \Mi Z$ of adic spaces, as in \cite[\aThm 1.2]{Hansen:2016-qasfg}, as desired.
\end{proof}

Now we are ready for the following:
\begin{proof}[Proof of Proposition \ref{prop-ket-std}]
    Let us identify $P$ as a submonoid of $Q$ via the injection $u: P \to Q$.  Since the assertions are local in nature on $X$, we may assume that $X = \Spa(R, R^+)$ and hence $Y$ is affinoid.  Then $\cO_X(X) \to \cO_Y(Y)$ is injective, because it is the base change of $\bZ[P] \to \bZ[Q]$ from $\bZ[P]$ to $R$, and $\bZ[P]$ is a direct summand of $\bZ[Q]$ as $\bZ[P]$-modules, as explained in the proof of \cite[\aLem 2.1]{Illusie/Nakayama/Tsuji:2013-olfd}.  Moreover, since $\bZ[P] \to \bZ[Q]$ is finite because $Q$ is finitely generated and $u$ is Kummer, its base change $\cO_X(X) \to \cO_Y(Y)$ is also finite, and therefore \Refenum{\ref{prop-ket-std-1}} follows.  Since the canonical sequence $\cO_X(X) \to \cO_Y(Y) \to \cO_{Y \times_X Y}(Y \times_X Y)$ is exact by \cite[\aLem 3.28]{Niziol:2008-ktls-1}, \Refenum{\ref{prop-ket-std-2}} also follows.

    By \cite[\aLem 3.3]{Illusie:2002-fknle}, $(Q \oplus_P Q)^\Sat \cong Q \oplus G$.  Accordingly, by Remark \ref{rem-fiber-prod-chart}, $X\Talg{G} \times_X X\Talg{Q} \cong X\Talg{Q} \times_{X\Talg{P}} X\Talg{Q}$, and the action of $G^D_X = X\Talg{G}$ on $Y$ induces $X\Talg{G} \times_X Y \Mi Y \times_X Y$.  This verifies \Refenum{\ref{prop-ket-std-3}}.

    As for \Refenum{\ref{prop-ket-std-4}}, since it can be verified \'etale locally on $X$, we may assume that $\cO_X(X) = R$ contains all $|G|$-th roots of unity.  In this case, $X\Talg{G} \cong \Gamma_X$, where $\Gamma := \Hom(G, \cO_X(X)^\times)$, and the action of $G^D_X$ is induced by the canonical actions of $\Gamma$ on $X\Talg{Q}$ and $X\Talg{Q^\gp}$, by sending $q$ to $\gamma(q) q$, for each $q \in Q^\gp$ and $\gamma \in \Gamma$.  Note that $(R\Talg{Q})^\Gamma = (R\Talg{Q^\gp})^\Gamma \cap R\Talg{Q} = R\Talg{P^\gp} \cap R\Talg{Q} = R\Talg{P}$, where the last one follows from the assumptions that $u: P \to Q$ is Kummer and that $P$ is saturated; and the formation of $\Gamma$-invariants commutes with the base change from $R\Talg{P}$ to $R$, because $|\Gamma|$ is invertible in $R$.  Thus, if $Y = \Spa(S, S^+)$, then the morphism $Y \to X = \Spa(R, R^+) \cong \Spa(S^\Gamma, (S^+)^\Gamma)$ is open and induces an isomorphism $Y / \Gamma \Mi X$, by Lemma \ref{lem-quot}.  Moreover, for any subgroup $\Gamma'$ of $\Gamma = \Hom(G, R^\times)$, which is of the form $\Hom(G', R^\times)$ for some quotient $G'$ of $G = Q^\gp / u^\gp(P^\gp)$, we have $(R\Talg{Q})^{\Gamma'} \cong R\Talg{Q'}$ and $(R^+\Talg{Q})^{\Gamma'} \cong R^+\Talg{Q'}$, where $Q'$ is the preimage of $\ker(G \to G')$ under the canonical homomorphism $Q \to G = Q^\gp / u^\gp(P^\gp)$, so that $Y / \Gamma' \cong X \times_{X\Talg{P}} X\Talg{Q'} \to X$ is a finite Kummer \'etale.  Consequently, $f: Y \to X$ is a Galois finite Kummer \'etale cover with Galois group $\Gamma$, as desired.
\end{proof}

\begin{defn}\label{def-ket-std}
    Kummer \Pth{\resp Kummer \'etale} covers $f: Y \to X$ as in Proposition \ref{prop-ket-std} are called \emph{standard Kummer} \Pth{\resp \emph{standard Kummer \'etale}} \emph{covers}.
\end{defn}

\begin{cor}\label{cor-ket-op}
    Kummer \'etale morphisms are open.
\end{cor}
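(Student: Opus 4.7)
The plan is to reduce the statement to Proposition \ref{prop-ket-std}\Refenum{\ref{prop-ket-std-4}} combined with the openness of étale morphisms of underlying adic spaces. Being open is a property that can be checked étale locally on both source and target, so by Definition \ref{def-ket-mor} I may assume that the Kummer étale morphism $f: Y \to X$ comes equipped with an fs chart $u: P \to Q$ such that $|Q^\gp / u^\gp(P^\gp)|$ is invertible in $\cO_Y$, and such that $f$ factors as
\[
    Y \Mapn{g} X \times_{X\Talg{P}} X\Talg{Q} \Mapn{h} X,
\]
where $g$ is étale as a morphism of underlying adic spaces.

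First I would handle $h$: this is precisely a standard Kummer étale cover in the sense of Definition \ref{def-ket-std}, so Proposition \ref{prop-ket-std}\Refenum{\ref{prop-ket-std-4}} gives that $h$ is open. Then I would handle $g$: it is étale on underlying adic spaces by construction, and étale morphisms of locally noetherian adic spaces are open by \cite[\aProp 1.7.8]{Huber:1996-ERA}. Since the composition of open maps is open, $f = h \circ g$ is open, which completes the argument.

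There is really no obstacle here, as the main content has already been done in Proposition \ref{prop-ket-std}\Refenum{\ref{prop-ket-std-4}} (via Lemma \ref{lem-quot}, which produced the openness of the quotient map $Y \to Y/\Gamma \cong X$). The only point worth noting is that one must use the factorization through $X \times_{X\Talg{P}} X\Talg{Q}$ coming from the definition of Kummer étale, rather than trying to argue directly from the chart $u: P \to Q$; this ensures that the strictly étale part $g$ is separated out from the genuinely logarithmic (standard) part $h$, so that each factor can be handled by the appropriate result.
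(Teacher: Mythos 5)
Your proposal is correct and follows essentially the same route as the paper, whose proof is the one-line observation that Kummer \'etale morphisms are locally compositions of strictly \'etale morphisms and standard Kummer \'etale covers, both of which are open (the latter by Proposition \ref{prop-ket-std}\Refenum{\ref{prop-ket-std-4}}). Your write-up just makes the local factorization from Definition \ref{def-ket-mor} explicit.
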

\begin{proof}
    This is because, by definition, Kummer \'etale morphisms are, \'etale locally on the source and target, compositions of standard Kummer \'etale covers and strictly \'etale morphisms, both of which are open.
\end{proof}

In the remainder of this subsection, let us study some general properties of Kummer \'etale morphisms.  Our goal is to introduce the \emph{Kummer \'etale site}.

\begin{lem}\label{lem-ket-chart}
    Let $f: Y \to X$ be a Kummer \'etale morphism of locally noetherian fs log adic spaces.  Suppose that $X$ is modeled on an fs monoid $P$.  Then $f$ admits, \'etale locally on $Y$ and $X$, a Kummer chart $P \to Q$ as in Definition \ref{def-ket-mor}\Refenum{\ref{def-ket-mor-2}}, with the same prescribed $P$ as above.  Moreover, if $P$ is torsion-free \Pth{\resp sharp}, then we can choose $Q$ to be torsion-free \Pth{\resp sharp}.
\end{lem}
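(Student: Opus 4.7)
The proof will closely follow the strategy of the analogous Proposition \ref{prop-log-sm-chart} in the log smooth setting. By Definition \ref{def-ket-mor}, \'etale locally on $X$ and $Y$, the morphism $f$ admits an fs Kummer chart $u_1 : P_1 \to Q_1$, and the plan is to successively modify this chart so that its source becomes $P$, while preserving the Kummer \'etale property. We may assume that $X$ is noetherian affinoid.

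First, I would mimic the first reduction of Proposition \ref{prop-log-sm-chart}: applying Lemma \ref{lem-chart-fg-Ogus} to the natural homomorphism $P_1 \to P \oplus P_1$, there exists, \'etale locally on $X$, an fs chart $P_2$ of $X$ through which both $P \to \cM_X(X)$ and $P_1 \to \cM_X(X)$ factor. Setting $Q_2 := (P_2 \oplus_{P_1} Q_1)^\Sat$, one obtains a new chart $P_2 \to Q_2$ of $f$. Its Kummer \'etale property is inherited from $u_1$: the group completion $Q_2^\gp \cong P_2^\gp \oplus_{P_1^\gp} Q_1^\gp$ gives $Q_2^\gp / P_2^\gp \cong Q_1^\gp / P_1^\gp$, which is finite of order invertible in $\cO_Y$; injectivity and exactness are preserved by the pushout and saturation; and the induced morphism to the corresponding fiber product of fs log adic spaces remains \'etale by Remark \ref{rem-adj-int-sat-rel} and the same base-change considerations used in Proposition \ref{prop-log-sm-bc}.

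Second, following the third reduction of Proposition \ref{prop-log-sm-chart}, at a chosen geometric point $\AC{y}$ of $Y$, I would localize the current $P_1$ and $Q_1$ at the kernels of $P_1 \to \overline{\cM}_{X, f(\AC{y})}$ and $Q_1 \to \overline{\cM}_{Y, \AC{y}}$, respectively, so that we may assume $\overline{P}_1 \cong \overline{\cM}_{X, f(\AC{y})}$ and $\overline{Q}_1 \cong \overline{\cM}_{Y, \AC{y}}$; Remarks \ref{rem-int-sat-loc} and \ref{rem-ama-loc} ensure that the result remains an fs chart and that the Kummer property persists. Next I would invoke the argument of \cite[\aLem 2.8]{Niziol:2008-ktls-1} recalled in diagram \eqref{eq-prop-log-sm-chart-Niziol} to obtain a finitely generated abelian group $H$ sitting in a cartesian square with $P^\gp$, $P_1^\gp$, and $Q_1^\gp$, and then define $Q$ as the preimage of $\cM_{Y, \AC{y}}$ under $H \to \cM_{Y, \AC{y}}^\gp$. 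One verifies that $P \to Q$ is an injective fs chart of $f$ near $\AC{y}$ with $Q^\gp / P^\gp \cong Q_1^\gp / P_1^\gp$ (hence finite of order invertible in $\cO_Y$); the Kummer property then follows from the saturatedness of $Q$, and the induced morphism $Y \to X \times_{X\Talg{P}} X\Talg{Q}$ is \'etale by the same rational-localization-plus-finite-\'etale-cover analysis as in Proposition \ref{prop-log-sm-chart}.

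Finally, for the torsionfree case, I would apply the Nakayama-type splitting argument in the last paragraph of the proof of Proposition \ref{prop-log-sm-chart} verbatim, replacing $Q$ with $Q / Q_\tor$ after introducing an auxiliary finite \'etale cover to trivialize the torsion, which is permissible because the torsion of $Q^\gp$ is annihilated by an integer invertible in $\cO_X$. For the sharp case—which in particular implies the torsionfree case, since a sharp saturated monoid has torsionfree group completion—I would further post-compose a splitting $Q \cong Q^* \oplus \overline{Q}$ obtained from Lemma \ref{lem-monoid-split}, and replace $Q$ by $\overline{Q}$. Injectivity of $P \to \overline{Q}$ will follow from $P^* = 0$ combined with the exactness of the Kummer chart $P \to Q$; the cokernel $\overline{Q}^\gp / P^\gp$ is a quotient of $Q^\gp / P^\gp$; and the \'etaleness of the induced morphism to $X \times_{X\Talg{P}} X\Talg{\overline{Q}}$ will be established by absorbing the auxiliary factor $X\Talg{Q^*}$ (which is a torus times a finite diagonalizable group, hence analytic-plus-\'etale over $X$) into the \'etale covering, after choosing the splitting as in Lemma \ref{lem-fs-split-int} so that $\overline{Q} \to \cO_Y$ lands in $\cO_Y^+$. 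The principal obstacle of the proof is precisely this last step: ensuring that the noncanonical splitting in the sharp case can be arranged compatibly with the \'etale structure of the fiber-product morphism and with the integrality condition on the chart.
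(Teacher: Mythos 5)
Your construction of the Kummer chart $P \to Q$ and your treatment of the torsionfree case follow the paper's proof essentially verbatim: reduce via Lemma \ref{lem-chart-fg-Ogus} and localization to the situation $\overline{P}_1 \cong \overline{\cM}_{X, f(\AC{y})}$ and $\overline{Q}_1 \cong \overline{\cM}_{Y, \AC{y}}$, run the argument of Nizio{\l} recalled in \Refeq{\ref{eq-prop-log-sm-chart-Niziol}} to produce $H$, take $Q$ to be the preimage of $\cM_{Y, \AC{y}}$ in $H$, and quote the last assertion of Proposition \ref{prop-log-sm-chart} for torsionfreeness. That part is fine.

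The sharp case is where you go astray. You correctly note that sharpness of $P$ forces $P^\gp$ to be torsionfree, so one may first arrange $Q$ to be torsionfree; but you then propose to split off $Q^*$, replace $Q$ by $\overline{Q}$, and ``absorb'' the factor $X\Talg{Q^*}$ into the \'etale structure. This is not a workable plan in general: if $Q^*$ had positive rank, then $X\Talg{Q} \to X\Talg{\overline{Q}}$ would have positive relative dimension, so the induced morphism $Y \to X \times_{X\Talg{P}} X\Talg{\overline{Q}}$ could not be \'etale, and no choice of splitting would repair this. The point you are missing---and the paper's actual argument---is that no modification is needed at all: once $Q$ is torsionfree, the Kummer condition together with $P^* = \{ 0 \}$ already forces $Q^* = \{ 0 \}$. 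Indeed, for $q \in Q^*$ there is some $n \geq 1$ with $n q$ and $-n q$ both in $u(P)$; since $u$ is injective their preimages sum to $0$ and hence lie in $P^* = \{ 0 \}$, so $n q = 0$, and torsionfreeness gives $q = 0$. Your ``principal obstacle'' is therefore illusory; the sharp case is a one-line consequence of the torsionfree case.
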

\begin{proof}
    \'Etale locally on $Y$ and $X$, let $u_1: P_1 \to Q_1$ be a Kummer chart of $f$ as in Definition \ref{def-ket-mor}.  \Pth{A priori, $P_1$ might be different from $P$.}  As in the proof of Proposition \ref{prop-log-sm-chart}, up to further \'etale localization and modifying $P_1 \to Q_1$, we can find some group $H$ fitting into the cartesian diagram \Refeq{\ref{eq-prop-log-sm-chart-Niziol}}.  Let
    \[
        Q := \{ a \in H : \Utext{$n a \in P$, for some $n \geq 1$ invertible in $\cO_Y$} \},
    \]
    so that $u: P \to Q$ is Kummer, as in Definition \ref{def-Kummer}.  Let $Q'$ be the preimage of $Q_1$ via $H \to Q_1^\gp$, so that $u': P \to Q'$ is an fs chart of $f$ satisfying the conditions of Definition \ref{def-log-sm}, as explained in the paragraph after \Refeq{\ref{eq-prop-log-sm-chart-Niziol}}.  Since
    \[
        Q_1 = \{ a \in Q_1^\gp : \Utext{$n a \in u_1(P_1)$, for some $n \geq 1$ invertible in $\cO_Y$} \},
    \]
    by the assumption on $u_1$, and since \Refeq{\ref{eq-prop-log-sm-chart-Niziol}} is cartesian, we can identify $Q'$ with the localization of $Q$ with respect to $\ker(Q \to Q_1)$.  Therefore, $u: P \to Q$ is an fs chart of $f$ as $u'$ is, which also satisfies the conditions of Definition \ref{def-log-sm}, or rather of Definition \ref{def-ket-mor}\Refenum{\ref{def-ket-mor-2}}.  By the proof of the last assertion of Proposition \ref{prop-log-sm-chart}, if $P$ is torsion-free, then we may assume that $Q$ is torsion-free as well.  Finally, suppose that $P$ is sharp and $Q$ is torsion-free.  For any $q \in Q^\inv$, there is some $n \geq 1$ such that $n q$ and $-n q$ are both in $u(P)$ and hence in $u(P^\inv) = \{ 0 \}$.  Since $Q$ is torsion-free, this forces $q = 0$.  Thus, $Q$ is also sharp, as desired.
\end{proof}

\begin{lem}\label{lem-ket-stalk}
    Let $f: Y \to X$ be a Kummer morphism of locally noetherian fs log adic spaces.  Then:
    \begin{enumerate}[label=(\arabic*), ref=\arabic*]
        \item\label{lem-ket-stalk-exact}  The morphism $f$ is exact.

        \item\label{lem-ket-stalk-Kummer}  For any geometric point $\AC{y}$ of $Y$, the induced homomorphism of sharp fs monoids $f^\sharp_{\AC{y}}: \overline{\cM}_{X, f(\AC{y})} \to \overline{\cM}_{Y, \AC{y}}$ is Kummer.  Moreover, if $f$ is Kummer \'etale, then $\bigl|\coker\bigl((f^\sharp_{\AC{y}})^\gp\bigr)\bigr|$ is invertible in $\cO_{Y, \AC{y}}$.

        \item\label{lem-ket-stalk-ker}  Suppose that $f$ admits a Kummer chart $u: P \to Q$.  For any geometric point $\AC{y}$ of $Y$, if $K_{f(\AC{y})} := \ker(P \to \overline{\cM}_{X, f(\AC{y})})$ and $K_{\AC{y}} := \ker(Q \to \overline{\cM}_{Y, \AC{y}})$, then $K_{f(\AC{y})} = u^{-1}(K_{\AC{y}})$, and the induced homomorphism $w: K_{f(\AC{y})} \to K_{\AC{y}}$ is Kummer.  Thus, if $K_{f(\AC{y})} = 0$, then $K_{\AC{y}}$ is torsion, and is zero if $Q$ is sharp.
    \end{enumerate}
\end{lem}
\begin{proof}
    Let us start with \Refenum{\ref{lem-ket-stalk-ker}}.  By Remark \ref{rem-chart-stalk}, $u$ and $v := f^\sharp_{\AC{y}}$ are compatible with surjective homomorphisms $\theta_{f(\AC{y})}: P \to \overline{\cM}_{X, f(\AC{y})}$ and $\theta_{\AC{y}}: Q \to \overline{\cM}_{Y, \AC{y}}$, with kernels $K_{f(\AC{y})}$ and $K_{\AC{y}}$ given by preimages of $\cO_{X_\et, f(\AC{y})}^\times$ and $\cO_{Y_\et, \AC{y}}^\times$, respectively.  Since $f^\sharp_{\AC{y}}: \cO_{X_\et, f(\AC{y})} \to \cO_{Y_\et, \AC{y}}$ is local, $K_{f(\AC{y})} = u^{-1}(K_{\AC{y}})$.  If $\overline{a} \in \overline{\cM}_{X, f(\AC{y})}$ and $v(\overline{a}) = 0$, then $\overline{a} = \theta_{f(\AC{y})}(a)$, for some $a \in P$ such that $u(a) \in K_{\AC{y}}$.  Hence, $a \in K_{f(\AC{y})}$, and $\overline{a} = \theta_{f(\AC{y})}(a) = 0$.  This shows that $v$ is injective.  Since $u: P \to Q$ is Kummer, if $b \in K_{\AC{y}} \subset Q$, then $n b = u(a)$ for some integer $n \geq 1$ and $a \in P$, and $v$ maps $\theta_{f(\AC{y})}(a)$ to $\theta_{\AC{y}}(n b) = 0$, and so $\theta_{f(\AC{y})}(a) = 0$ by the injectivity of $v$.  Therefore, $a \in K_{f(\AC{y})}$.  It follows that the induced homomorphism $w: K_{f(\AC{y})} \to K_{\AC{y}}$ is Kummer, with $\coker(w^\gp)$ given by a subgroup of $\coker(u^\gp)$, and \Refenum{\ref{lem-ket-stalk-ker}} follows.

    Next, let us verify \Refenum{\ref{lem-ket-stalk-Kummer}}.  By assumption, up to \'etale localization on $Y$ and $X$, the morphism $f$ admits a Kummer chart $u: P \to Q$.  If $\overline{b} \in \overline{\cM}_{Y, \AC{y}}$, then $\overline{b} = \theta_{\AC{y}}(b)$, for some $b \in Q$.  Since $u$ is Kummer, $n b = u(a)$, for some integer $n \geq 1$ and $a \in P$.  Then $v$ maps $\overline{a} := \theta_{f(\AC{y})}(a)$ to $\theta_{\AC{y}}(n b) = n \overline{b}$.  Furthermore, $\coker(v^\gp)$ is a finite group, because it is a quotient of $\coker(u^\gp)$.  Thus, $v$ is Kummer.  By Lemma \ref{lem-ket-chart}, if $f$ is Kummer \'etale, then we may assume that the order of $\coker(u^\gp)$ is invertible in $\cO_{Y, \AC{y}}$, and the same is true for its quotient $\coker(v^\gp)$.

    Finally, since any Kummer homomorphism of monoids is exact \Pth{see Remark \ref{rem-Kummer-exact}}, \Refenum{\ref{lem-ket-stalk-exact}} follows from \Refenum{\ref{lem-ket-stalk-Kummer}}, \cite[\aProp I.4.2.1]{Ogus:2018-LLG}, and Remark \ref{rem-stalk-sharp}.
\end{proof}

\begin{defn}\label{def-ram-index}
    In Lemma \ref{lem-ket-stalk}, the \emph{ramification index} of $f$ at $\AC{y}$ is defined to be the smallest positive integer $n$ that annihilates $\coker(\overline{\cM}_{X, f(\AC{y})}^\gp \to \overline{\cM}_{Y, \AC{y}}^\gp)$.  The \emph{ramification index} of a Kummer \'etale morphism $f$ is the least common multiple, when defined, of the ramification indices among the geometric points $\AC{y}$ of $Y$.  \Pth{The ramification index is not always defined.}  The ramification index of a Kummer \'etale morphism is 1 if and only if $f$ is strictly \'etale.
\end{defn}

\begin{lem}\label{lem-ket-log-et-ex}
    A morphism $f: Y \to X$ of locally noetherian fs log adic spaces is Kummer \'etale if and only if it is log \'etale and Kummer, and if and only if it is log \'etale and exact.  It is finite Kummer \'etale if and only it is log \'etale and finite Kummer.
\end{lem}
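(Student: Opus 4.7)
The plan is to establish three implications forming a cycle: Kummer \'etale implies log \'etale and Kummer; log \'etale and Kummer implies log \'etale and exact; and finally log \'etale and exact implies Kummer \'etale. The first implication essentially unpacks Definition~\ref{def-ket-mor}: any Kummer \'etale chart $u: P \to Q$ is injective (so $\ker u^\gp = 0$), has $\coker u^\gp \cong Q^\gp / u^\gp(P^\gp)$ finite of order invertible in $\cO_Y$, and induces an \'etale morphism $Y \to X \times_{X\Talg{P}} X\Talg{Q}$, so all conditions for log \'etaleness are satisfied; and $u$ is Kummer by construction. The implication Kummer implies exact is Remark~\ref{rem-Kummer-exact}.

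The main content is the implication log \'etale plus exact implies Kummer \'etale. Working \'etale locally around a geometric point $\AC{y}$ of $Y$ with image $\AC{x}$ in $X$, by Proposition~\ref{prop-chart-stalk-chart-fs} I may assume $X$ is modeled on the toric monoid $P := \overline{\cM}_{X, \AC{x}}$.  Then Proposition~\ref{prop-log-sm-chart}, together with the localization argument in the third step of its proof, yields after further \'etale localization an injective fs chart $u: P \to Q$ of $f$ such that $Q$ is torsionfree, such that $\ker u^\gp = 0$ and $\coker u^\gp$ is finite of invertible order, such that the induced morphism $Y \to X \times_{X\Talg{P}} X\Talg{Q}$ is \'etale, and such that $\overline{Q} \cong \overline{\cM}_{Y, \AC{y}}$.

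Next I exploit exactness of $f$. By the same sharp-quotient argument as in the proof of Lemma~\ref{lem-ket-stalk} (via \cite[\aProp I.4.2.1]{Ogus:2018-LLG}), the induced map $\overline{\cM}_{X, \AC{x}} \to \overline{\cM}_{Y, \AC{y}}$ is exact; under the identifications $P \cong \overline{\cM}_{X, \AC{x}}$ and $\overline{Q} \cong \overline{\cM}_{Y, \AC{y}}$ this precisely says that the composition $v: P \to Q \to \overline{Q}$ is exact. Then $u$ itself is exact: given $p \in P^\gp$ with $u^\gp(p) \in Q$, one has $v^\gp(p) \in \overline{Q}$, hence $p \in P$. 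Combining exactness of $u$ with sharpness of $P$ yields $(u^\gp)^{-1}(Q^*) \subseteq P^* = 0$, and injectivity of $u^\gp$ then embeds $Q^*$ into the finite group $\coker u^\gp$; since $Q$ is torsionfree, this forces $Q^* = 0$, so $Q$ is toric. Finally, for any $q \in Q$, finiteness of $\coker u^\gp$ produces $n \geq 1$ and $p \in P^\gp$ with $u^\gp(p) = nq$; exactness of $u$ then forces $p \in P$, so $nq = u(p) \in u(P)$, making $u$ Kummer. Together with the log \'etale conditions already verified, this shows $u$ is a Kummer \'etale chart of $f$.

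For the finite Kummer \'etale case, the same chart-refinement argument applies; moreover, the induced morphism $Y \to X \times_{X\Talg{P}} X\Talg{Q}$ is automatically finite \'etale when $f$ is finite, because both $Y$ and $X \times_{X\Talg{P}} X\Talg{Q}$ are finite over $X$---the latter by Proposition~\ref{prop-ket-std}\Refenum{\ref{prop-ket-std-1}}---so the \'etale morphism between them is itself finite. The hardest part of the proof is tracking the effect of the localization step in Proposition~\ref{prop-log-sm-chart} on the stalk-level identifications, so as to ensure that exactness of the morphism $f$ transfers cleanly to exactness of the chart $u$ in the sense required above.
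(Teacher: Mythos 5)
Your proof of the two main equivalences (Kummer \'etale $\Leftrightarrow$ log \'etale $+$ Kummer $\Leftrightarrow$ log \'etale $+$ exact) is correct and follows essentially the same route as the paper: reduce via Proposition \ref{prop-log-sm-chart} (and the localization step in its proof) to an injective fs chart $u: P \to Q$ with $\overline{P} \cong \overline{\cM}_{X, \AC{x}}$ and $\overline{Q} \cong \overline{\cM}_{Y, \AC{y}}$ and finite cokernel of invertible order, transfer exactness of $f$ to exactness of $u$ through the sharp quotients, and then extract the Kummer divisibility from $n q \in Q \cap u^\gp(P^\gp) = u(P)$. Your extra reduction to a sharp $P$ and the digression showing $Q$ is toric are harmless but not needed, since Definition \ref{def-Kummer} does not require sharpness of $Q$.

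The finite Kummer \'etale case, however, has a genuine gap. To conclude that $f$ is finite Kummer \'etale you must exhibit, \'etale locally on $X$ \emph{only} (so with a chart that is global on $Y$ over each such piece, as Definition \ref{def-ket-mor} requires), a Kummer chart $u: P \to Q$ for which \emph{both} $|Q^\gp / u^\gp(P^\gp)|$ is invertible in $\cO_Y$ \emph{and} $Y \to X \times_{X\Talg{P}} X\Talg{Q}$ is finite \'etale. Your sentence presupposes that this morphism is already known to be \'etale and only upgrades it to finite \'etale via finiteness of both sides over $X$; but the \'etaleness was only established (in the first part of your argument) for the \emph{refined} chart, which exists merely \'etale locally on $Y$ --- and an \'etale-local piece of $Y$ is no longer finite over $X$, so your finiteness argument does not apply to that chart, while for the given global finite Kummer chart the \'etaleness of $Y \to X \times_{X\Talg{P}} X\Talg{Q}$ has not been shown. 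Nor have you shown that $|Q^\gp / u^\gp(P^\gp)|$ is invertible for that chart; one cannot invoke Proposition \ref{prop-ket-mor} or Proposition \ref{prop-log-sm-strict} here without first knowing this invertibility, since otherwise $X \times_{X\Talg{P}} X\Talg{Q} \to X$ is not known to be Kummer \'etale. The paper closes both points at once by comparing, at each geometric point, the map on characteristic monoids induced by the given finite Kummer chart with the one induced by an arbitrary log \'etale chart as in Definition \ref{def-log-sm}; some such stalkwise comparison is needed to complete your argument.
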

\begin{proof}
    If $f$ is Kummer \'etale \Pth{\resp finite Kummer \'etale}, then it is log \'etale and Kummer \Pth{\resp finite Kummer} by definition, and it is exact by Lemma \ref{lem-ket-stalk}.  Conversely, assume that $f$ is log \'etale and exact.  By Propositions \ref{prop-chart-stalk-chart-fs} and \ref{prop-log-sm-chart}, $f$ admits, \'etale locally at geometric points $\AC{y}$ of $Y$ and $\AC{x} = f(\AC{y})$ of $X$, an injective fs chart $u: P = \overline{\cM}_{X, \AC{x}} \to Q$ satisfy the conditions in Definition \ref{def-log-sm}\Refenum{\ref{def-log-sm-2}}, in which case $\coker(u^\gp)$ is a finite group whose order is invertible in $\cO_Y$.  Since $f$ is exact, by \cite[\aProp I.4.2.1]{Ogus:2018-LLG} and Remark \ref{rem-stalk-sharp}, $\overline{f}^\sharp_{\AC{y}}: \overline{\cM}_{X, \AC{x}} \to \overline{\cM}_{Y, \AC{y}}$ is exact.  Given any $b \in Q$, since $\coker(u^\gp)$ is annihilated by $n$, we have $n b = u^{gp}(a)$ for some $a \in P^\gp$.  Since $a$ is mapped to the image of $n b$ in $\overline{\cM}_{Y, \AC{y}}$, by the exactness of $\overline{f}^\sharp_{\AC{y}}$, we have $a \in P = \overline{\cM}_{X, \AC{x}}$.  Hence, $u: P \to Q$ is a Kummer chart as in Definition \ref{def-ket-mor}\Refenum{\ref{def-ket-mor-2}}.  Since $\AC{y}$ is arbitrary, $f$ is Kummer \'etale, as desired.

    Alternatively, assume that $f$ is log \'etale and finite Kummer.  Up to \'etale localization on $X$, we may assume that it admits a Kummer chart $u: P \to Q$, and that $X$ has at most one positive residue characteristic $\ell$.  When no such $\ell$ exists, we set $\ell = 0$, for simplicity.  Then we have a Kummer homomorphism
    \[
        u': P \to Q' := \{ b \in Q : \Utext{$n b \in u(P)$, for some $n \geq 1$ s.t.~$\ell \nmid n$} \}
    \]
    such that $\ell \nmid \bigl|\coker\bigl((u')^\gp\bigr)\bigr|$.  We claim that the morphism of adic spaces
    \[
        Y \to X \times_{X\Talg{P}} X\Talg{Q'}
    \]
    induced by $u'$ is \'etale.  Note that this can be verified up to \'etale localization on $Y$.  By the previous paragraph, $f$ is Kummer \'etale.  By Lemmas \ref{lem-ket-chart} and \ref{lem-ket-stalk}, $f$ admits, \'etale locally at geometric points $\AC{y}$ of $Y$ and $f(\AC{y})$ of $X$, another Kummer chart $u_1: P_1 \to Q_1$, with $P_1 \Mi \overline{\cM}_{X, f(\AC{y})}$, $Q_1 \Mi \overline{\cM}_{Y, \AC{y}}$, and $\ell \nmid |\coker(u_1^\gp)|$, such that the morphism
    \[
        Y \to X \times_{X\Talg{P_1}} X\Talg{Q_1}
    \]
    induced by $u_1$ is \'etale.  Note that $\ell \nmid |\coker(u_1^\gp)|$ implies that $Q' \to \overline{\cM}_{Y, \AC{y}}$ is surjective as $Q \to \overline{\cM}_{Y, \AC{y}}$ is, and so $u'$ is an fs chart as $u$ is.  Then $u': P \to Q'$ and $u_1: P_1 \to Q_1$ compatibly extend to a Kummer homomorphism $u_2: P_2 \to Q_2$ of fs monoids, where $P_2$ \Pth{\resp $Q_2$} is the localization of $P \oplus P_1$ \Pth{\resp $Q' \oplus Q_1$} with respect to the kernel of $P \oplus P_1 \to \cM_{X, f(\AC{y})}$ \Pth{\resp $Q' \oplus Q_1 \to \cM_{Y, \AC{y}}$}.  Since $P_1 \Mi \overline{\cM}_{X, f(\AC{y})}$ and $Q_1 \Mi \overline{\cM}_{Y, \AC{y}}$, we have $P_2 = P_1 \oplus P_2^\inv$, $Q_2 = Q_1 \oplus Q_2^\inv$, and $u_2 = u_1 \oplus u_2^\inv$, for some Kummer homomorphism $u_2^\inv: P_1^\inv \to H_1^\inv$ such that $\ell \nmid |\coker(u_2^\inv)|$.  In this case, the above morphism $Y \to X \times_{X\Talg{P}} X\Talg{Q'}$ induced by $u'$ is the composition of the morphisms
    \[
        Y \to X \times_{X\Talg{P_2}} X\Talg{Q_2} \to X \times_{X\Talg{P_2}} (X\Talg{P_2} \times_{X\Talg{P}} X\Talg{Q'}) \cong X \times_{X\Talg{P}} X\Talg{Q'},
    \]
    induced by $u_2$ and the canonical homomorphisms among $P$, $Q'$, $P_2$, and $Q_2$.  Note that the second morphism is the pullback of the canonical morphism
    \[
        X\Talg{Q_2} \to X\Talg{P_2} \times_{X\Talg{P}} X\Talg{Q'}.
    \]
    Let $P_2 + Q'$ be the submonoid of $Q_2^\gp$ generated by the images of $P_2$ and $Q'$, and $G' := \coker\bigl((u')^\gp\bigr)$.  Then we have an isomorphism of monoids
    \[
        (P_2 \oplus_P Q')^\Sat \Mi (P_2 + Q') \oplus G': (a, b) \mapsto (a + b, \overline{b}),
    \]
    where $a \in P_2$ and $b \in Q'$, and $\overline{b} \in G'$ denotes the image of $b$, as in the proof of \cite[\aLem 3.3]{Illusie:2002-fknle}.  Moreover, we have an induced isomorphism of adic spaces
    \[
        X\Talg{P_2} \times_{X\Talg{P}} X\Talg{Q'} \Mi X\Talg{P_2 + Q'} \times_X X\Talg{G'},
    \]
    where $X\Talg{G'} \to X$ is \'etale, as in Proposition \ref{prop-ket-std}.  Since $Q' \to \overline{\cM}_{Y, \AC{y}}$ is surjective and $Q_1 \Mi \cM_{Y, \AC{y}}$, we have $P_2 + Q' = P_2 + Q' + P_2^\inv \subset P_2 + Q' + Q_2^\inv = Q_2$ in $Q_2^\gp$, and the monoid $Q_2$ is generated by $P_2 + Q'$ and some finitely many invertible elements of $Q_2$ whose $|\coker(u_2^\inv)|$-th multiples are in $P_2 + Q'$.  Since $\ell \nmid |\coker(u_2^\inv)|$, the induced morphism $X\Talg{Q_2} \to X\Talg{P_2 + Q'}$ is \'etale, by \cite[\aProp 1.7.1]{Huber:1996-ERA}, and so is the above $X\Talg{Q_2} \to X\Talg{P_2} \times_{X\Talg{P}} X\Talg{Q'}$.  Therefore, in order to verify the above claim, it suffices to show that the morphism
    \[
        Y \to X \times_{X\Talg{P_2}} X\Talg{Q_2}
    \]
    induced by $u_2 = u_1 \oplus u_2^\inv$ is \'etale.  Again since $\ell \nmid |\coker(u_2^\inv)|$, this follows from the known exactness of the morphism of adic spaces $Y \to X \times_{X\Talg{P_1}} X\Talg{Q_1}$ induced by $u_1$.  Thus, $f$ is finite Kummer \'etale because it admits, \'etale locally on $X$, an fs chart $u': P \to Q'$ satisfying the conditions in Definition \ref{def-ket-mor}\Refenum{\ref{def-ket-mor-2}}.
\end{proof}

\begin{prop}\label{prop-ket-compos-bc}
    Kummer \'etale \Pth{\resp finite Kummer \'etale} morphisms as in Definition \ref{def-ket-mor} are stable under compositions and base changes under arbitrary morphisms between locally noetherian fs log adic spaces \Pth{which are justified by Remark \ref{rem-def-log-sm} and Proposition \ref{prop-log-sm-bc}}.
\end{prop}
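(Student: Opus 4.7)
My plan is to deduce the stability statements from the characterization in Lemma \ref{lem-ket-log-et-ex}: a morphism is Kummer \'etale if and only if it is log \'etale and exact, and finite Kummer \'etale if and only if it is log \'etale and finite Kummer. Since log \'etaleness is already stable under composition and base change by Propositions \ref{prop-log-sm-compos} and \ref{prop-log-sm-bc}, and since finiteness of underlying morphisms of adic spaces is likewise stable under both operations (using Remark \ref{rem-def-log-sm} to guarantee that the relevant fiber products exist), it remains to check stability of exactness under composition and of the Kummer property under base change.

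For compositions, exactness is a pointwise condition on geometric stalks by Definition \ref{def-log-str}, so the assertion reduces to the elementary fact that a composition $P \Mapn{u} Q \Mapn{v} R$ of exact homomorphisms of integral monoids is itself exact, which follows immediately from the identities $((vu)^\gp)^{-1}(R) = (u^\gp)^{-1}\bigl((v^\gp)^{-1}(R)\bigr) = (u^\gp)^{-1}(Q) = P$ inside $P^\gp$. Combined with the composition-stability of log \'etaleness and of finiteness, this settles both the Kummer \'etale and finite Kummer \'etale cases.

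For base changes, I plan to argue via charts. By Lemma \ref{lem-ket-chart} and Proposition \ref{prop-chart-mor-exist-fine}, after \'etale localization I may assume that $f: Y \to X$ admits a Kummer \'etale fs chart $u: P \to Q$ and that $h: Z \to X$ admits an fs chart $v: P \to R$. By Remark \ref{rem-fiber-prod-chart-int-sat}, the base change $Y \times_X Z \to Z$ is then modeled on $S := (Q \oplus_P R)^\Sat$ via the second-component map $w: R \to S$, and I verify the three conditions making $w$ a Kummer \'etale chart: (i) the cokernel $S^\gp / w^\gp(R^\gp) \cong Q^\gp / u^\gp(P^\gp)$ is finite and has order invertible in $\cO_Y$, hence in $\cO_{Y \times_X Z}$; (ii) $w$ is injective, because if $r_1, r_2 \in R$ have equal images in $S \subset S^\gp \cong R^\gp \oplus_{P^\gp} Q^\gp$, then $r_1 - r_2 = v^\gp(p)$ and $u^\gp(p) = 0$ for some $p \in P^\gp$, which forces $p = 0$ by injectivity of $u^\gp$ (a consequence of $u$ being Kummer and $P$ being integral); and (iii) for every $s \in S$, some positive multiple lies in $(Q \oplus_P R)^\Int$ and thus equals the class of some $(q, r)$, and multiplying further by an $n$ with $nq = u(p) \in u(P)$ places it in the image of $R$, because $(u(p), 0)$ and $(0, v(p))$ coincide in $Q \oplus_P R$. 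The \'etale (resp.\ finite \'etale) condition on underlying adic spaces base-changes correctly because $Z \times_{X\Talg{P}} X\Talg{Q} \cong Z \times_{Z\Talg{R}} Z\Talg{S}$ by Remarks \ref{rem-fiber-prod-chart} and \ref{rem-adj-int-sat-rel}, so the pullback of the \'etale (resp.\ finite \'etale) morphism $Y \to X \times_{X\Talg{P}} X\Talg{Q}$ yields the required morphism $Y \times_X Z \to Z \times_{Z\Talg{R}} Z\Talg{S}$ with the same property.

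The main obstacle is the chart-level calculation in the base change case, namely the verification that $w: R \to S$ remains a Kummer chart after the saturation of the amalgamated sum is taken; once this is in hand, the remainder follows from properties already established for log \'etale morphisms and for finite morphisms of adic spaces.
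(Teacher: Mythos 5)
Your proposal is correct and follows essentially the same route as the paper: compositions are handled via Lemma \ref{lem-ket-log-et-ex} together with Proposition \ref{prop-log-sm-compos} (plus the elementary stability of exactness), and base changes reduce to checking that $R \to (Q \oplus_P R)^\Sat$ is again a Kummer homomorphism, with the \'etale condition on underlying adic spaces inherited from the proof of Proposition \ref{prop-log-sm-bc}. Your write-up merely spells out in more detail the injectivity and divisibility verifications that the paper states tersely.
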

\begin{proof}
    The stability under compositions follows from Proposition \ref{prop-log-sm-compos}, Lemma \ref{lem-ket-log-et-ex}, and the stability of exactness under compositions \Pth{by definition}.  As for the stability under base changes, it suffices to note that, if $P \to Q$ is a Kummer homomorphism \Pth{of fs monoids}, and if $P \to R$ is any homomorphism of fs monoids, then the induced homomorphism $R \to (R \oplus_P Q)^\Sat$ is also Kummer, because it is injective as the composition $R \to (R \oplus_P Q)^\Sat \to R^\gp \oplus_{P^\gp} Q^\gp$ is, and because it satisfies the conditions in Definition \ref{def-Kummer} as $P \to Q$ does.
\end{proof}

\begin{prop}\label{prop-ket-mor}
    Suppose that $f: Y \to X$ and $g: Z \to X$ are Kummer \'etale morphisms of locally noetherian fs log adic spaces.  Then any morphism $h: Y \to Z$ such that $f = g \circ h$ is also Kummer \'etale.
\end{prop}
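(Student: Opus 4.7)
The plan is to invoke the graph construction to reduce to showing that a section of a Kummer \'etale morphism is a strict open immersion, and then to deduce this from a small monoid-retraction observation.

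Consider the graph $\Gamma_h : Y \to Y \times_X Z$, where the fiber product is taken in the category of locally noetherian fs log adic spaces via Proposition \ref{prop-fiber-prod-log-adic} (existence is assured since $f$ and $g$ are lft by Remark \ref{rem-def-log-sm}). The two projections $\pi_1 : Y \times_X Z \to Y$ and $\pi_2 : Y \times_X Z \to Z$ are Kummer \'etale, as base changes of $g$ and $f$ respectively, by Proposition \ref{prop-ket-compos-bc}. Since $h = \pi_2 \circ \Gamma_h$, the same proposition reduces the problem to showing $\Gamma_h$ is Kummer \'etale.

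The identity $\pi_1 \circ \Gamma_h = \Id_Y$ means that at each geometric point $\AC{y}$ of $Y$, setting $\AC{w} := \Gamma_h(\AC{y})$ in $W := Y \times_X Z$, the map $\Gamma_h^\sharp$ retracts $\pi_1^\sharp : \cM_{Y, \AC{y}} \to \cM_{W, \AC{w}}$. Since these homomorphisms are local by Remark \ref{rem-stalk-strict}, the retraction descends to the characteristic-sheaf stalks $\overline{\cM}_{Y, \AC{y}} \to \overline{\cM}_{W, \AC{w}}$, which by Lemma \ref{lem-ket-stalk} is a Kummer injection $u : P \hookrightarrow Q$ of sharp fs (hence toric) monoids. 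I claim that any such $u$ admitting a monoid retraction $r$ must be an isomorphism. For $q \in Q$, pick $n \geq 1$ with $nq = u(p)$; then $p = r(u(p)) = n r(q)$, so $nq = n u(r(q))$ in $Q^\gp$, and since $Q^\gp$ is torsionfree (as $Q$ is toric), $q = u(r(q))$. Hence $\pi_1$ is strict at every point of $\Gamma_h(Y)$.

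By Proposition \ref{prop-chart-stalk-chart-fs} together with Lemma \ref{lem-ket-chart}, after shrinking \'etale-locally around $\AC{w}$ and $\AC{y}$, the Kummer \'etale morphism $\pi_1$ admits a sharp fs chart $u : P \to Q$ with $P \cong \overline{\cM}_{Y, \AC{y}}$ and $Q \cong \overline{\cM}_{W, \AC{w}}$. The preceding paragraph forces $u$ to be an isomorphism, so $W \to Y \times_{Y\Talg{P}} Y\Talg{Q} \cong Y$ is strictly \'etale by the definition of a Kummer \'etale chart; hence $\pi_1$ itself is strictly \'etale on this neighborhood. Restricted there, $\Gamma_h$ is a section of a strictly \'etale morphism of locally noetherian adic spaces, which is a strict open immersion by the standard argument \Ref{see \cite[\aSecs 1.6 and 1.7]{Huber:1996-ERA}} that a section of a separated \'etale morphism is both \'etale and a closed immersion, hence open-and-closed. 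Strict open immersions are Kummer \'etale (via the trivial chart $P \to P$), so Proposition \ref{prop-ket-compos-bc} yields that $h = \pi_2 \circ \Gamma_h$ is Kummer \'etale. The main technical observation is the monoid-retraction lemma in the second paragraph; the rest is a direct assembly of the established chart theory.
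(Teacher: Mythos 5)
Your proof is correct, but it takes a genuinely different route from the paper's. The paper reduces the statement, via Lemma \ref{lem-ket-log-et-ex}, to checking that $h$ is log \'etale and exact: exactness is read off from the characteristic stalks \Pth{since the maps induced by $f$ and $g$ are Kummer, so is the one induced by $h$}, and log \'etaleness is verified by explicitly constructing a chart $w: R \to Q'$ for $h$ and bounding the kernel and cokernel of $w^\gp$ by the integer $n$ controlling $f$ and $g$. You instead run the classical graph argument: factor $h = \pi_2 \circ \Gamma_h$ through $Y \times_X Z$, observe that $\pi_1, \pi_2$ are Kummer \'etale by base change, and reduce to the section $\Gamma_h$ of $\pi_1$. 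Your key new ingredient---that a Kummer injection $u: P \Em Q$ of toric monoids admitting a monoid retraction is an isomorphism, because $nq = n\,u(r(q))$ forces $q = u(r(q))$ in the torsionfree $Q^\gp$---is correct and neatly kills the Kummer part of $\pi_1$ along the image of the section, so that $\Gamma_h$ becomes a section of a strictly \'etale morphism and hence strictly \'etale itself. What your approach buys is conceptual economy: you avoid Lemma \ref{lem-ket-log-et-ex} and the explicit chart-building for $h$; what the paper's approach buys is an explicit Kummer chart for $h$ with controlled torsion, which is reused in nearby arguments \Pth{\eg, in the proof of Theorem \ref{thm-fket-descent}}. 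One small repair: your justification that the section is an open immersion invokes separatedness of $\pi_1$, which is neither established nor needed. The clean statement is that the diagonal of an unramified \Pth{in particular \'etale} morphism is an open immersion, so the section, being a base change of the diagonal, is one too; alternatively, once $\pi_1$ is strictly \'etale near $\Gamma_h(\AC{y})$, the section is a $Y$-morphism between spaces \'etale over $Y$ and is therefore \'etale by \cite[\aCor 1.6.10 and \aProp 1.7.1]{Huber:1996-ERA}, which is all you need to conclude via Proposition \ref{prop-ket-compos-bc}.
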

\begin{proof}
    By Lemma \ref{lem-ket-log-et-ex}, it suffices to show that $h$ is log \'etale and exact.  By Theorem \ref{thm-log-diff-sheaf-fund}\Refenum{\ref{thm-log-diff-sheaf-fund-5}}, $h$ is log \'etale because $f = g \circ h$ and $g$ are.  By Lemma \ref{lem-ket-stalk}, \'etale locally at each geometric point $\AC{y}$ of $Y$, with $\AC{x} = f(\AC{y})$ and $\AC{z} = h(\AC{y})$, the homomorphisms $f^\sharp_{\AC{y}}: \overline{\cM}_{X, \AC{x}} \to \overline{\cM}_{Y, \AC{y}}$ and $g^\sharp_{\AC{z}}: \overline{\cM}_{X, \AC{x}} \to \overline{\cM}_{Z, \AC{z}}$ are both Kummer.  Consequently, the homomorphism $h^\sharp_{\AC{y}}: \overline{\cM}_{Z, \AC{z}} \to \overline{\cM}_{Y, \AC{y}}$ is also Kummer, and hence exact.  Thus, $h$ is exact, by \cite[\aProp I.4.2.1]{Ogus:2018-LLG} and Remark \ref{rem-stalk-sharp}.
\end{proof}

By Proposition \ref{prop-lem-four-pt} and Remark \ref{rem-Kummer-exact}, and by Propositions \ref{prop-ket-compos-bc} and \ref{prop-ket-mor}, we are now ready for the following:
\begin{defn}\label{def-ket-site}
    Let $X$ be a locally noetherian fs log adic space.  The \emph{Kummer \'etale site} $X_\ket$ has as underlying category the full subcategory of the category of locally noetherian fs log adic spaces consisting of objects that are Kummer \'etale over $X$, and has coverings given by the topological coverings.
\end{defn}

\begin{rk}\label{rem-ket-site-mor}
    Let $X$ be as in Definition \ref{def-ket-site}.
    \begin{enumerate}
        \item For each $U \in X_\et$, we can view $U$ as a log adic space by restricting the log structure $\alpha: \cM_X \to \cO_{X_\et}$ to $U_\et$.  This gives rise to a strictly \'etale morphism $U \to X$ of log adic spaces, which is Kummer \'etale by definition.  Therefore, we obtain a natural projection of sites $\varepsilon_\et: X_\ket \to X_\et$, which is an isomorphism when the log structure of $X$ is trivial.

        \item For any morphism $f: Y \to X$ of locally noetherian fs log adic spaces, we have a natural morphism of sites $f_\ket: Y_\ket \to X_\ket$, because base changes of Kummer \'etale morphisms are still Kummer \'etale, by Proposition \ref{prop-ket-compos-bc}.
    \end{enumerate}
\end{rk}

\begin{rk}\label{rem-ket-site-gen}
    By definition, the Kummer \'etale topology on $X$ is generated by surjective \Pth{strictly} \'etale morphisms and standard Kummer \'etale covers.
\end{rk}

\subsection{Abhyankar's lemma}\label{sec-Abhyankar}

An important class of finite Kummer \'etale covers arise in the following way:
\begin{prop}[rigid Abhyankar's lemma]\label{prop-Abhyankar}
    Let $X$ be a smooth rigid analytic variety over a nonarchimedean field $k$ of characteristic zero, and let $D$ be a normal crossings divisor of $X$.  We equip $X$ with the fs log structure induced by $D$ as in Example \ref{ex-log-adic-sp-ncd}.  Suppose that $h: V \to U := X - D$ is a finite \'etale surjective morphism of rigid analytic varieties over $k$.  Then it extends to a finite surjective and Kummer \'etale morphism of log adic spaces $f: Y \to X$, where $Y$ is a normal rigid analytic variety with its log structures defined by the preimage of $D$.  Consequently, $Y_\an$ has a basis consisting of affinoid $W$ satisfying $\pi_0\bigl(W \cap f^{-1}(U)\bigr) = \pi_0(W)$.
\end{prop}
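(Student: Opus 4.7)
The plan is to reduce to a local toric model via Example \ref{ex-log-adic-sp-ncd-chart}, then perform the classical Abhyankar strategy: pull back the cover along a sufficiently ramified Kummer \'etale cover of $X$ so that the pulled-back cover extends to a finite \'etale cover of the entire Kummer base, and finally descend to obtain $Y$. Since the conclusion is \'etale local on $X$, and since normal rigid analytic varieties and finite Kummer \'etale morphisms glue, I may assume analytic-locally that $X = S \times \bD^m$ and $D = S \times \{T_1 \cdots T_m = 0\}$ for some smooth connected affinoid $S$ over $k$. For each $n \geq 1$ invertible in $k$, let $[n]: \bD^m \to \bD^m$ be the map $(T_1, \ldots, T_m) \mapsto (T_1^n, \ldots, T_m^n)$, and set $X^{(n)} := X \times_{\bD^m, [n]} \bD^m$, with projection $\pi_n: X^{(n)} \to X$; by Proposition \ref{prop-ket-std}, $\pi_n$ is a standard Kummer \'etale Galois cover with Galois group $G_n := \mu_n^m$. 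Let $U^{(n)} := U \times_X X^{(n)}$, and set $V^{(n)} := V \times_U U^{(n)}$, a $G_n$-equivariant finite \'etale cover of $U^{(n)}$.

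\textbf{Step 1 (Abhyankar trivialization).} I would prove that, for some $n$ depending only on $h$, the cover $V^{(n)} \to U^{(n)}$ extends to a finite \'etale cover $W \to X^{(n)}$. Since $\chr(k) = 0$, every finite \'etale cover of $U$ is tame along each irreducible component of $D$, and after pulling back along $[n]$ with $n$ divisible by all ramification indices, the resulting cover is unramified along the preimage of $D$ in $X^{(n)}$. Concretely, one applies the classical Abhyankar lemma over the strict henselizations (\Pth{or rigid analytic local rings in the sense of \cite[\aThm 1.18]{Kiehl:1967-eeanf}}) of $X^{(n)}$ at the generic points of the components of $\pi_n^{-1}(D)$, and then assembles these extensions into a global finite \'etale $W \to X^{(n)}$ using the normality of $X^{(n)}$ and Kiehl's gluing for coherent sheaves, together with the fact that a finite \'etale cover of a normal rigid space is determined by its restriction to a Zariski open whose complement has codimension $\geq 2$ around the generic points of the boundary \Pth{realized here by the punctured discs}.

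\textbf{Step 2 (Extension of the Galois action).} The $G_n$-action on $X^{(n)}$ over $X$ restricts to a $G_n$-action on $U^{(n)}$ over $U$, which canonically lifts to $V^{(n)}$ through its presentation as $V \times_U U^{(n)}$. Since $W$ is normal \Pth{being finite \'etale over the normal $X^{(n)}$} and $U^{(n)} \subset X^{(n)}$ is dense, each element of $G_n$ extends uniquely from $V^{(n)}$ to $W$ by normal extension of finite morphisms, yielding a $G_n$-action on $W$ compatible with the one on $X^{(n)}$.

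\textbf{Step 3 (Descent and Kummer \'etaleness).} Working affinoid-locally, I would form $Y := W / G_n$ using Lemma \ref{lem-quot}, and equip it with the fs log structure defined by the preimage of $D$ as in Example \ref{ex-log-adic-sp-div} \Pth{$Y$ is normal as a finite quotient of the normal $W$}. The composition $W \to Y \to X$ refines $\pi_n$, so $Y \to X$ is finite and surjective, and restricts over $U$ to $V \to U$. To verify that $f: Y \to X$ is Kummer \'etale at a geometric point $\AC{y}$, pick $\AC{w}$ above it in $W$: the homomorphism $\overline{\cM}_{X, f(\AC{y})} \to \overline{\cM}_{W, \AC{w}}$ factors as the Kummer homomorphism $\bZ_{\geq 0}^{m'} \to \frac{1}{n}\bZ_{\geq 0}^{m'}$ coming from $\pi_n$ composed with an isomorphism coming from $W \to X^{(n)}$; taking invariants under the stabilizer in $G_n$ yields an exact Kummer homomorphism into $\overline{\cM}_{Y, \AC{y}}$, which together with an \'etale refinement of the underlying adic-space morphism gives a Kummer \'etale chart for $f$ in the sense of Definition \ref{def-ket-mor}.

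\textbf{Main obstacle.} The crux is Step 1: establishing the rigid analytic Abhyankar extension. Characteristic zero makes every cover tame, which is essential, but one still must pass from the formal/algebraic extension result at each boundary generic point to a globally defined finite \'etale $W \to X^{(n)}$, and this requires a Kiehl-style gluing argument compatible with the $G_n$-action. Once Step 1 is in place, Steps 2 and 3 are essentially formal consequences of normality and the structural results for standard Kummer \'etale covers developed earlier.
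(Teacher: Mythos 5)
Your overall strategy is the classical SGA~1 route---pull back along a Kummer cover, extend the pullback to a finite \'etale cover upstairs, then descend by the Galois action---which is genuinely different from the paper's: the paper first extends $h$ to a finite normal cover $f: Y \to X$ directly via \cite[\aThm 1.6]{Hansen:2019-vcrag}, and only then verifies Kummer \'etaleness by an explicit local toric description of $Y$ over small polydiscs (Lemmas \ref{lem-Abhyankar-ref} and \ref{lem-Abhyankar-ess}).  Your Steps 2 and 3 are essentially sound (Step 3 is in substance the quotient statement that the paper later establishes as Corollary \ref{cor-fket-quot}, whose proof does not rely on Proposition \ref{prop-Abhyankar}, so there is no circularity).

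The genuine gap is Step 1, and it sits exactly where all the work in the paper's proof lies.  You assert that $V^{(n)} \to U^{(n)}$ extends to a finite \'etale cover of all of $X^{(n)}$ by \Pth{i} applying classical Abhyankar over the local rings at the generic points of the boundary components and \Pth{ii} invoking a codimension-$\geq 2$ extension \Pth{purity} principle for finite \'etale covers of normal rigid spaces.  Neither ingredient is available off the shelf in the form you need.  Point \Pth{i} at best controls the cover away from the deeper strata of the normal crossings divisor, so you are left needing a rigid-analytic Zariski--Nagata purity theorem, which the paper neither proves nor cites and which you cannot simply assume; the phrase \Qtn{Kiehl-style gluing compatible with the $G_n$-action} names the difficulty rather than resolving it.  The paper's substitute is the induction on the number of divisor components in Lemma \ref{lem-Abhyankar-ess}, which handles the higher-codimension strata one coordinate at a time using L\"utkebohmert's root-extraction lemma \cite[\aLem 3.2]{Lutkebohmert:1993-repaf}; that lemma only applies on polydiscs of radius $\rho = p^{-b(d, p)} < 1$, a constraint your proposal does not engage with at all \Pth{the splitting and toric descriptions in Lemmas \ref{lem-Abhyankar-ref} and \ref{lem-Abhyankar-ess} are established only over $X_\rho$, which suffices because Kummer \'etaleness is local, but this must be tracked}.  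Until Step 1 is actually proved---by something equivalent to the paper's two lemmas, or by an honest rigid-analytic purity theorem---the argument is incomplete.
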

\begin{proof}
    By \cite[\aThm 1.6]{Hansen:2020-vcrag} \Pth{which was based on \cite[\aThm 3.1 and its proof]{Lutkebohmert:1993-repaf}}, $h: V \to U$ extends to a finite ramified cover $f: Y \to X$, for some \emph{normal} rigid analytic variety $Y$ \Pth{viewed as a noetherian adic space}.  Then $Y_\an$ has a basis consisting of affinoid open subspaces $W$ satisfying $\pi_0\bigl(W \cap f^{-1}(U)\bigr) = \pi_0(W)$, by the unique existence of extensions of bounded functions \Pth{which include locally constant functions, in particular} from $W \cap f^{-1}(U)$ to $W$, for any affinoid open subspaces $W$ of $Y$, by \cite[\aSec 3]{Bartenwerfer:1976-erhnf} \Pth{see also \cite[\aThm 2.6]{Hansen:2020-vcrag}}.  Just as $X$ is equipped with the log structure defined by $D$, we equip $Y$ with the log structure defined by the preimage of $D$.  The question is whether the map $f$ is Kummer \'etale \Pth{with respect to the log structures on $X$ and $Y$}, and such a question can be answered analytic locally on $X$, up to replacing $k$ with a finite extension.  As in Example \ref{ex-log-adic-sp-ncd}, we may assume that there is an affinoid smooth rigid analytic variety $S$ over $k$ such that $X = S \times \bD^r \cong S\Talg{\bZ_{\geq 0}^r}$ \Pth{see Example \ref{ex-log-adic-sp-monoid}} for some $r \in \bZ_{\geq 0}$, with $D = S \times \{ T_1 \cdots T_r = 0 \}$.  Thus, we can finish the proof of this proposition by combining the following Lemmas \ref{lem-Abhyankar-ref} and \ref{lem-Abhyankar-ess}.
\end{proof}

For simplicity, let us introduce some notation for the following two lemmas.  We write $P := \bZ_{\geq 0}^r$ and identify $\bD^r$ with $\Spa(k\Talg{P}, k^+\Talg{P})$ as in Example \ref{ex-log-adic-sp-disc}.  For each $m \in \bZ_{\geq 1}$, we also write $\frac{1}{m} P = \frac{1}{m} \bZ_{\geq 0}^r$.  For each power $\rho$ of $p$, we denote by $\bD_\rho$ the \Pth{one-dimensional} disc of radius $\rho$, so that $\bD = \bD_\rho$ when $\rho = 1$.  We also denote by $\bD_\rho^\times$ the punctured disc of radius $\rho$, and by $\bD^\times$ the punctured unit disc.  For any rigid analytic variety with a canonical morphism to $\bD^r$, we denote with a subscript \Qtn{$\rho$} \Pth{\resp superscript \Qtn{$\times$}} its pullback under $\bD_\rho^r \to \bD^r$ \Pth{\resp $(\bD^\times)^r \to \bD^r$}.

\begin{lem}\label{lem-Abhyankar-ref}
    Suppose that $X = S \times \bD^r \cong S\Talg{P}$, $D$, and $U = X - D \cong S\Talg{P}^\times$ are as in the proof of Proposition \ref{prop-Abhyankar}.  Assume there is some $\rho \leq 1$ such that, for each connected component $Y'$ of $Y_\rho$, there exist $d_1, \ldots, d_r \in \bZ_{\geq 1}$ such that induced cover $Y' \to X' := X_\rho$ is refined \Pth{\ie, admits a further cover} by some finite ramified cover $Z := S\Talg{P'}_\rho \to X'$, where $P' = \oplus_{1 \leq i \leq r} \bigl(\frac{1}{d_i} \bZ_{\geq 0}\bigr)$.  Then, up to replacing $k$ with a finite extension, we have $Y' \cong S\Talg{Q}_\rho$, for some sharp fs monoid $Q$ such that $P \subset Q \subset P'$.  Consequently, $Y_\rho \to X' = X_\rho$ is \emph{finite Kummer \'etale}.  Moreover, if $m Q \subset P$ for some $m \in \bZ_{\geq 1}$, and if $X^{\frac{1}{m}} := S\Talg{\frac{1}{m} P}$, then the finite \Pth{a priori ramified} cover $Y \times_X X^{\frac{1}{m}}_\rho \to X^{\frac{1}{m}}_\rho$ \emph{splits completely} \Pth{\ie, the source is a disjoint union of sections} and is therefore \emph{strictly \'etale}.
\end{lem}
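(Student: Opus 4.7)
First, enlarging $k$ to a finite extension, we may assume that $k$ contains all $d_i$-th roots of unity; as $\chr(k) = 0$, all nonzero integers are invertible in $\cO_X$. Set $G := (P')^\gp/P^\gp \cong \bigoplus_i \bZ/d_i\bZ$. By Proposition \ref{prop-ket-std} applied to the Kummer inclusion $P \Em P'$, the morphism $Z \to X$ is a Galois finite Kummer \'etale cover with Galois group $\Gamma := \Hom(G, k^\times)$, with $\gamma \in \Gamma$ acting on $\cO_X(X)\Talg{P'} = \bigoplus_{q \in P'} \cO_X(X) \cdot \mono{q}$ by $\gamma \cdot \mono{q} = \gamma(\bar{q})\, \mono{q}$, where $\bar{q}$ denotes the image of $q$ in $G$. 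Base change to $X_\rho$ preserves this Galois structure, and working one connected component of $S$ at a time, we may assume $S$ and hence $Z_\rho$ are connected. The plan is then to recognize each connected component $Y'$ of $Y_\rho$ as an intermediate cover between $Z_\rho$ and $X'$ via Galois theory, and to compute the corresponding fixed ring explicitly.

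The refinement $Z_\rho \to Y' \to X'$ gives a chain of inclusions $\cO_{X'}(X') \hookrightarrow \cO_{Y'}(Y') \hookrightarrow \cO_{Z_\rho}(Z_\rho)$ of noetherian normal domains. By the standard Galois correspondence for $\Gamma$-Galois extensions of connected normal commutative rings, there is a unique subgroup $\Gamma' \subset \Gamma$ with $\cO_{Y'}(Y') = \cO_{Z_\rho}(Z_\rho)^{\Gamma'}$. Using the character decomposition of the $\Gamma$-action, one computes
\[
    \cO_{Z_\rho}(Z_\rho)^{\Gamma'} \;=\; \bigoplus_{q \in Q} \cO_{X'}(X') \cdot \mono{q} \;=\; \cO_{X'}(X')\Talg{Q},
\]
where $Q := \{q \in P' : \gamma(\bar{q}) = 1, \, \forall \gamma \in \Gamma'\}$ is the preimage in $P'$ of the annihilator $\Gamma'^\perp \subset G$. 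Since $Q = P' \cap Q^\gp$ with $Q^\gp$ a finite-index subgroup of $(P')^\gp$, and since $P'$ is sharp and saturated, $Q$ is itself a sharp fs monoid with $P \subset Q \subset P'$; hence $Y' \cong S\Talg{Q}_\rho$. As each connected component of $Y_\rho$ takes this form with $|Q^\gp/P^\gp|$ dividing $|G|$ (hence invertible in $\cO_X$), Proposition \ref{prop-ket-std} shows that $Y_\rho \to X_\rho$ is finite Kummer \'etale.

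For the splitting assertion, the hypothesis $mQ \subset P$ gives $Q \subset \frac{1}{m}P$, so $S\Talg{Q}$ is an intermediate cover of the Galois finite Kummer \'etale cover $X^{\frac{1}{m}} = S\Talg{\frac{1}{m}P} \to X$ with Galois group $H := (\frac{1}{m}P)^\gp/P^\gp$, corresponding to a subgroup $H'' \subset H$. Applying Proposition \ref{prop-ket-std}\Refenum{\ref{prop-ket-std-3}} to $P \Em \frac{1}{m}P$ and passing to the $H''$-quotient in the first factor yields $S\Talg{Q} \times_X X^{\frac{1}{m}} \cong (H/H'')^D_X \times_X X^{\frac{1}{m}}$ as covers of $X^{\frac{1}{m}}$. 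Consequently, $Y \times_X X^{\frac{1}{m}}_\rho$ splits as a disjoint union of copies of $X^{\frac{1}{m}}_\rho$, and the structure morphism to $X^{\frac{1}{m}}_\rho$ is therefore strictly \'etale.

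The main obstacle is the invocation of the Galois correspondence between intermediate normal subrings and subgroups of $\Gamma$: we need to know that the inclusion $\cO_{Y'}(Y') \hookrightarrow \cO_{Z_\rho}(Z_\rho)$ identifies $\cO_{Y'}(Y')$ with $\cO_{Z_\rho}(Z_\rho)^{\Gamma'}$ for some subgroup $\Gamma' \subset \Gamma$, so that the monoid decomposition can be used to extract $Q$. This follows from the classical Galois theory for $\Gamma$-Galois extensions of connected normal commutative rings, using the normality and finiteness of $\cO_{Y'}(Y')$ over $\cO_{X'}(X')$ to identify it as the fixed ring of its pointwise stabilizer in $\Gamma$.
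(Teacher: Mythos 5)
Your argument is essentially correct, but it takes a genuinely different route from the paper's. The paper first restricts everything to the punctured locus $U' = U_\rho$, where $W = Z|_{U'} \to U'$ is honestly finite \'etale, identifies the intermediate cover $V' = Y'|_{U'}$ as a quotient $W / G$ using the classical Galois theory of finite \'etale covers \Ref{\cite[V]{SGA:1}}, computes $V' \cong S\Talg{Q}^\times_\rho$, and only then recovers $Y' \cong S\Talg{Q}_\rho$ by a second appeal to the uniqueness of normal extensions of finite \'etale covers \Ref{\cite[\aThm 1.6]{Hansen:2019-vcrag}}. You instead run a Galois correspondence directly on the \emph{ramified} covers $Z_\rho \to Y' \to X'$, which lets you skip that second appeal to Hansen's theorem. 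The one place where your write-up is looser than it should be is the phrase \Qtn{the standard Galois correspondence for $\Gamma$-Galois extensions of connected normal commutative rings}: the extension $\cO_{X'}(X') \to \cO_{Z_\rho}(Z_\rho)$ is ramified along the divisor, so it is \emph{not} a Galois extension of rings in the usual (\'etale) sense, and that correspondence does not apply verbatim. What actually carries the argument is field-theoretic Galois theory on $\Frac(\cO_{X'}(X')) \subset \Frac(\cO_{Z_\rho}(Z_\rho))$ \Pth{a Galois extension of degree $|G|$ with group $\Gamma$, by the character decomposition and $P' \cap P^\gp = P$} together with the observation that any intermediate ring that is a normal domain finite over $\cO_{X'}(X')$ must coincide with the integral closure of $\cO_{X'}(X')$ in its fraction field, hence with $\cO_{Z_\rho}(Z_\rho)^{\Gamma'} = B \cap L$; your final paragraph does gesture at exactly this, so I regard it as a presentational rather than mathematical gap. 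Two further small points: for the last assertion you should also enlarge $k$ to contain $\Grpmu_m$ \Pth{your initial enlargement only guarantees $\Grpmu_{d_i}$}, and your quotient-of-$G^D_X$ computation of $S\Talg{Q} \times_X X^{\frac{1}{m}}$ is the same as the paper's identification $\bigl(Q \oplus_P (\tfrac{1}{m} P)\bigr)^\Sat \cong (\tfrac{1}{m} P) \oplus (Q^\gp / P^\gp)$, just phrased dually.
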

\begin{proof}
    Let $V'$ \Pth{\resp $W$} be the preimage of $U' := U_\rho \cong S\Talg{P}_\rho^\times$ in $Y'$ \Pth{\resp $Z$}.  Up to replacing $k$ with a finite extension containing all $d_j$-th roots of unity for all $j$, by Proposition \ref{prop-ket-std}, the finite \'etale cover $W \to U'$ is Galois with Galois group $G' := \Hom\bigl((P')^\gp/P^\gp, k^\times\bigr)$, and $V'$ is \Pth{by the usual arguments, as in \cite[V]{SGA:1}} the quotient of $W$ by some subgroup $G$ of $G'$ \Pth{as in Lemma \ref{lem-quot}}, which is isomorphic to $S\Talg{Q}_\rho^\times$ for some monoid $Q$ such that $P \subset Q \subset P'$ and $Q = Q^\gp \cap P'$.  These conditions imply that $Q$ is toric, and hence $S\Talg{Q}$ is normal because $\Spa(k\Talg{Q}, k^+\Talg{Q})$ is \Pth{see Example \ref{ex-log-adic-sp-toric} and the references given there}.  Since $Y'$ and $S\Talg{Q}_\rho$ are both normal and are both finite ramified covers of $X'$ extending the same finite \'etale cover $V'$ of $U'$, they are canonically isomorphic by \cite[\aThm 1.6]{Hansen:2020-vcrag}, as desired.  Finally, for the last assertion of the lemma, it suffices to note that, for any $Q$ as above satisfying $m Q \subset P$, up to replacing $k$ with a finite extension containing all $m$-th roots of unity, the connected components of $S\Talg{Q} \times_{S\Talg{P}} S\Talg{\frac{1}{m} P}$ are all of the form $S\Talg{\frac{1}{m} P}$, because $\bigl(Q \oplus_P (\frac{1}{m} P)\bigr)^\Sat$ is the product of $\frac{1}{m} P$ with a finite group annihilated by $m$.
\end{proof}

\begin{lem}\label{lem-Abhyankar-ess}
    The \Pth{cover-refinement} assumption in Lemma \ref{lem-Abhyankar-ref} holds up to replacing $k$ with a finite extension and $S$ with a strictly finite \'etale cover; and we may assume that the positive integers $d_1, \ldots, d_r$ there \Pth{for various $Y'$} are no greater than the degree $d$ of $f: Y \to X$.  Moreover, we can take $\rho = p^{-b(d, p)}$, where $b(d, p)$ is defined as in \cite[\aThm 2.2]{Lutkebohmert:1993-repaf}, which depends on $d$ and $p$ but not on $r$; and we can take $m = d!$ in the last assertion of Lemma \ref{lem-Abhyankar-ref}.
\end{lem}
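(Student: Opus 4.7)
The plan is to prove the cover-refinement by induction on $r$, using Lütkebohmert's theorem \cite[\aThm 2.2]{Lutkebohmert:1993-repaf} as the base case. As a preliminary reduction, I replace $V$ by its Galois closure over $U$ and $k$ by a finite separable extension, so that $V \to U$ becomes Galois with Galois group $G$ isomorphic to a subgroup of $S_d$; then every element of $G$ has order at most $d$ and $\exp(G) \mid d!$. Any cover-refinement of this Galois closure refines $V$ itself, so no generality is lost.

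Base case $r = 1$: Applied to $V \to U = S \times \bD^\times$ of degree at most $d$, Lütkebohmert's theorem furnishes the explicit $\rho = p^{-b(d, p)}$ such that, after a further finite separable extension of $k$ and a strictly finite \'etale cover of $S$, each connected component $V'$ of $V_\rho$ is of the form $S \times \bD^\times_\rho \to S \times \bD^\times_\rho$ given by $(s, t) \mapsto (s, t^{d_1})$ for some $d_1 \leq d$; here $d_1$ is the order of the local monodromy at $t = 0$, which is an element of $G$ and therefore has order at most $d$. Extending across $t = 0$ by normality \cite[\aThm 1.6]{Hansen:2019-vcrag} yields $Y' \cong S \times \bD_\rho \cong S\Talg{\bZ_{\geq 0}}_\rho$, which is refined by the Kummer cover $Z := S\Talg{\tfrac{1}{d_1}\bZ_{\geq 0}}_\rho$.

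Inductive step from $r - 1$ to $r$: Peel off the last coordinate, viewing the cover over the affinoid $S \times (\bD^\times_\rho)^{r-1} \times \bD^\times_\rho$ with $\rho = p^{-b(d, p)}$. The base-case argument with affinoid base $S \times (\bD^\times_\rho)^{r-1}$ produces $d_r \leq d$ and a strictly finite \'etale cover $\widetilde{S}^\natural \to S \times (\bD^\times_\rho)^{r-1}$ such that the pullback of each connected component of $V_\rho$ along $[d_r]$ on the last coordinate becomes a disjoint union of sections over $\widetilde{S}^\natural$. The remaining datum is a finite \'etale cover $\widetilde{S}^\natural \to S \times (\bD^\times_\rho)^{r-1}$ of degree at most $d$; by \cite[\aThm 1.6]{Hansen:2019-vcrag}, its normalization extends uniquely across $\{T_1 \cdots T_{r-1} = 0\}$ as a finite ramified cover of $S \times \bD^{r-1}_\rho$, to which the inductive hypothesis applies. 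This produces $d_1, \ldots, d_{r-1} \leq d$ and, after a further strictly finite \'etale cover of $S$, a Kummer refinement in the first $r - 1$ coordinates. Combining the two steps gives the desired refinement by $Z = S\Talg{\bigoplus_{i=1}^r \tfrac{1}{d_i}\bZ_{\geq 0}}_\rho$ with every $d_i \leq d$.

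The main obstacle is the inductive bookkeeping: one must verify that every intermediate cover retains degree at most $d$ (automatic from the Galois closure reduction, since all intermediate covers are quotients or pullbacks of the original $G$-cover) and that the strictly finite \'etale covers of the intermediate bases $S \times (\bD^\times_\rho)^{r-j}$ can be absorbed, via further Kummer refinements in the remaining disc directions, into a single strictly finite \'etale cover of $S$. The uniform choice $\rho = p^{-b(d, p)}$ is consistent across all inductive steps because $b(d, p)$ in Lütkebohmert's theorem depends only on $d$ and $p$ and not on the number of disc factors in the base. Finally, $m = d!$ suffices in the last assertion of Lemma \ref{lem-Abhyankar-ref} because each $d_i \leq d$ divides $d!$, so the monoid $Q$ with $P \subset Q \subset P'$ produced there satisfies $d! \cdot Q \subset P$.
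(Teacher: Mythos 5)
Your overall strategy---induction on $r$ with L\"utkebohmert's results supplying the one-disc case---is the same as the paper's, but the execution has genuine gaps. First, the Galois-closure reduction hurts you quantitatively: the Galois closure of a degree-$d$ cover has degree up to $d!$, so L\"utkebohmert's theorem applied to it yields $\rho = p^{-b(d!,p)}$ rather than the claimed $p^{-b(d,p)}$; and your justification of $d_i \leq d$ via ``every element of $G \leq S_d$ has order at most $d$'' is false \Pth{$(12)(345) \in S_5$ has order $6$}. The correct bound $d_i \leq d$ comes from orbit lengths of the local monodromy on the $d$-element fiber of the \emph{original} cover, which is why the paper works with $Y \to X$ directly \Pth{using normalized fiber products $\breve{\times}$} rather than with a Galois closure.

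Second, and more seriously, your inductive step invokes ``the base-case argument with affinoid base $S \times (\bD^\times_\rho)^{r-1}$,'' but that base is not affinoid, and the relative statement you need, \cite[\aLem 3.2]{Lutkebohmert:1993-repaf}, has as its essential hypothesis that the cover \emph{already splits completely} over a slice $\{T_r = a\}$. For $r \geq 2$ that splitting is not achievable by a finite \'etale cover of $S$ alone---it requires Kummer covers in the first $r-1$ coordinates, i.e., exactly the output of the inductive hypothesis. So you cannot produce $d_r$ and $\widetilde{S}^\natural$ first and defer the first $r-1$ coordinates to afterwards; the order must be reversed. The paper does this by restricting to the slice $X_1 = S \times \bD^{r-1} \times \{a\}$ with $|a| = \rho$ \Pth{a \emph{full} polydisc in the remaining coordinates}, applying the inductive hypothesis to $Y \breve{\times}_X X_1 \to X_1$, passing to $X_1^{\frac{1}{d!}}$ so that the slice cover splits completely by the last assertion of Lemma \ref{lem-Abhyankar-ref}, and only then applying \cite[\aLem 3.2]{Lutkebohmert:1993-repaf} to extract $T_r^{\frac{1}{d_r}}$. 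Your alternative of extending $\widetilde{S}^\natural \to S \times (\bD^\times_\rho)^{r-1}$ across the divisor and then applying induction to the resulting cover of $S \times \bD^{r-1}_\rho$ would also shrink the radius a second time \Pth{the inductive hypothesis only controls the radius-$\rho$ sub-polydisc of its input}, which is incompatible with the uniform $\rho = p^{-b(d,p)}$ you claim. The ``absorption'' of the intermediate covers that you flag as the main obstacle is precisely the content that remains unproved in your write-up.
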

\begin{proof}
    We shall proceed by induction on $r$.  When $r = 0$, the assumption in Lemma \ref{lem-Abhyankar-ref} means, for each connected component $Y'$ of $Y$, the strictly \'etale cover $Y' \to X = S$ splits completely.  This can always be achieved up to replacing $S$ with a Galois strictly finite \'etale cover refining $Y' \to S$ for all $Y'$.

    In the remainder of this proof, suppose that $r \geq 1$, and that the lemma has been proved for all strictly smaller $r$.  Let $\rho = p^{-b(d, p)}$ be as above.  Fix some $a \in k$ such that $|a| = \rho$.  We shall denote normalizations of fiber products by $\breve{\times}$ \Pth{instead of $\times$}.

    Let $P_1$ be the submonoid $\bZ_{\geq 0}^{r - 1} \oplus \{ 0 \}$ of $P = \bZ_{\geq 0}^r$.  Let $X_1 := S \times \bD^{r - 1} \cong S\Talg{P_1}$, which we identify with the subspace $S \times \bD^{r - 1} \times \{ a \}$ of $X = S \times \bD^r \cong S\Talg{P}$.  Let $Y_1 := Y \breve{\times}_X X_1$.  Note that the degree of $Y_1 \to X_1$ is also $d$.    By induction, up to replacing $k$ with a finite extension and $S$ with a strictly finite \'etale cover, for each connected component $Y_1'$ of $(Y_1)_\rho$, there exist $1 \leq d_1, \ldots, d_{r - 1} \leq d$ such that the induced finite ramified cover $Y_1' \to (X_1)_\rho$ is refined by $S\Talg{P_1'}_\rho \to (X_1)_\rho$, where $P_1' := \oplus_{1 \leq i \leq r - 1} (\frac{1}{d_i} \bZ_{\geq 0})$.  Let $X_1^{\frac{1}{d!}} := S\Talg{\frac{1}{d!} P_1}$, with $\frac{1}{d!} P_1 = \frac{1}{d!} \bZ_{\geq 0}^{r - 1}$.

    Let $\widetilde{X} := X_1^{\frac{1}{d!}} \times_{X_1} X \cong X_1^{\frac{1}{d!}} \times \bD$ and $\widetilde{Y} := X_1^{\frac{1}{d!}} \breve{\times}_{X_1} Y$.  Let $\widetilde{f}: \widetilde{Y} \to \widetilde{X}$ denote the induced finite ramified cover, which is also of degree $d$.  Then the \Pth{strictly finite \'etale} pullback of $\widetilde{f}$ to $(X_1^{\frac{1}{d!}})_\rho^\times \times \{ a \}$ can be identified with the pullback of $Y_1 \to X_1$ to $(X_1^{\frac{1}{d!}})_\rho^\times$, which \emph{splits completely}, by the induction hypothesis and the last assertion in Lemma \ref{lem-Abhyankar-ref}.  Hence, since $\rho = p^{-b(d, p)}$, for each connected component $\widetilde{Y}'$ of $\widetilde{Y}_\rho$, by applying \cite[\aLem 3.2]{Lutkebohmert:1993-repaf} to the morphism $(\widetilde{Y}')^\times \to \widetilde{X}_\rho^\times$ induced by $\widetilde{f}$, we obtain a rigid analytic function $\widetilde{T}$ on $(\widetilde{Y}')^\times$ such that $\widetilde{T}^{d_r} = T_r$, where $T_r$ is the coordinate on the $r$-th factor of $\bD^r$.  By \cite[\aSec 3]{Bartenwerfer:1976-erhnf} \Pth{see also \cite[\aThm 2.6]{Hansen:2020-vcrag}}, $\widetilde{T}$ extends to a rigid analytic function on the normal $\widetilde{Y}'$, which still satisfies $\widetilde{T}^{d_r} = T_r$.  Hence, we can view $\widetilde{T}$ as $T_r^{\frac{1}{d_r}}$, and $\widetilde{Y}' \to \widetilde{X}_\rho \cong (X_1^{\frac{1}{d!}} \times \bD)_\rho$ factors through $S\Talg{\widetilde{P}}_\rho \to (X_1^{\frac{1}{d!}} \times \bD)_\rho$, where $\widetilde{P} := (\frac{1}{d!} P_1) \oplus (\frac{1}{d_r} \bZ_{\geq 0})$.  Since these are finite ramified covers of the \emph{same} degree $d_r$ from connected and normal rigid analytic varieties, we obtain an induced isomorphism $\widetilde{Y}' \cong S\Talg{\widetilde{P}}_\rho$.

    Since each connected component $Y'$ of $Y_\rho$ is covered by some connected component $\widetilde{Y}'$ of $\widetilde{Y}_\rho$, by Lemma \ref{lem-Abhyankar-ref}, up to replacing $k$ with a finite extension, $Y' \cong S\Talg{Q}_\rho$ for some monoid $Q$ satisfying $P \subset Q \subset \widetilde{P} = (\frac{1}{d!} P_1) \oplus (\frac{1}{d_r} \bZ_{\geq 0})$, for some $1 \leq d_r \leq d$ and $\widetilde{P}$ determined by $\widetilde{Y}'$ as above.  By the construction of $\widetilde{Y}'$, the monoid $\bigl((\frac{1}{d!} P_1) \oplus_{P_1} Q\bigr)^\Sat$ is the product of $\widetilde{P}$ with a finite group, which forces $\{ 0 \}^{r - 1} \oplus (\frac{1}{d_r} \bZ_{\geq 0}) \subset Q$.  By the construction of $Y_1'$ and the induction hypothesis, the projection $\widetilde{P} \to \frac{1}{d!} P_1$ maps $Q$ into $\oplus_{1 \leq i \leq r - 1} (\frac{1}{d_i} \bZ_{\geq 0})$ for some $1\leq d_1, \ldots, d_{r - 1} \leq d$.  Thus, $P \subset Q \subset P' := \oplus_{1 \leq i \leq r} (\frac{1}{d_i} \bZ_{\geq 0})$, as desired.
\end{proof}

\begin{rk}\label{rem-prop-Abhyankar}
    Proposition \ref{prop-Abhyankar} can be regarded as the \emph{Abhyankar's lemma} \Pth{\Refcf{} \cite[XIII, 5.2]{SGA:1}} in the rigid analytic setting, because of the last assertion in Lemma \ref{lem-Abhyankar-ref}.
\end{rk}

More generally, we have the following basic but useful facts:
\begin{lem}\label{lem-Abhyankar-basic}
    Let $X$ be a noetherian fs log adic space modeled on a sharp fs monoid $P$, and let $f: Y \to X$ be a Kummer \'etale \Pth{\resp finite Kummer \'etale} morphism.  Then $Y \times_X X^{\frac{1}{n}} \to X^{\frac{1}{n}}$ is \'etale \Pth{\resp finite \'etale} for some positive integer $n$.  If $X$ has at most one positive residue characteristic, then we can take $n$ to be invertible on all of $X$.
\end{lem}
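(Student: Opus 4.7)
The plan is to trivialize $f$ \'etale-locally on $Y$ using Kummer charts, then reduce the question to an explicit monoid calculation. By Lemma \ref{lem-ket-chart}, since $P$ is sharp fs, the morphism $f$ admits, \'etale locally on $X$ and $Y$, Kummer charts $u_i: P \to Q_i$ with each $Q_i$ sharp and torsionfree, hence toric. Since $X$ is noetherian, and since $Y$ is quasi-compact---automatic in the finite Kummer \'etale case---I may cover $Y$ by finitely many such chart neighborhoods $Y_i \to Y$, and set $n := \mathrm{lcm}\bigl(|G_i|\bigr)$, where $G_i := Q_i^\gp / u_i^\gp(P^\gp)$ is the finite cokernel group.

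The crucial step is the Abhyankar-type monoid identification
\[
\bigl(Q_i \oplus_P \tfrac{1}{n}P\bigr)^\Sat \;\cong\; \tfrac{1}{n}P \oplus G_i,
\]
valid whenever $n$ annihilates $G_i$. Indeed, $nQ_i \subset u_i(P)$ allows one to define a monoid homomorphism $\phi_i: Q_i \to \tfrac{1}{n}P \oplus G_i$ by $q \mapsto \bigl(\tfrac{1}{n}(nq),\,[q]\bigr)$, agreeing on $P$ with the natural inclusion into the first factor; this produces a map from the amalgamated sum whose induced map on group completions $Q_i^\gp \oplus_{P^\gp} \tfrac{1}{n}P^\gp \to \tfrac{1}{n}P^\gp \oplus G_i$ is an isomorphism, because $\tfrac{1}{n}P^\gp$ is a free abelian group and hence the short exact sequence with kernel $G_i$ splits. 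Saturation then follows from the observation that $n(r, g) = (nr, 0)$ lies in the image for every $(r, g) \in \tfrac{1}{n}P \oplus G_i$.

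Applying Remark \ref{rem-fiber-prod-chart}, the base change of the strict \'etale map $Y_i \to X \times_{X\Talg{P}} X\Talg{Q_i}$ (supplied by the Kummer \'etale chart) along $X^{\frac{1}{n}} \to X$ becomes a strict \'etale morphism $Y_i \times_X X^{\frac{1}{n}} \to X^{\frac{1}{n}}\Talg{G_i}$. Composing with $X^{\frac{1}{n}}\Talg{G_i} \to X^{\frac{1}{n}}$---which is finite \'etale by Proposition \ref{prop-ket-std}\Refenum{\ref{prop-ket-std-4}} whenever $|G_i|$ is invertible in $\cO_{X^{\frac{1}{n}}}$---gives the \'etaleness of $Y_i \times_X X^{\frac{1}{n}} \to X^{\frac{1}{n}}$, and \'etale gluing over the $Y_i$ yields the global result, with finiteness preserved throughout in the finite Kummer \'etale case. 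The main subtlety is the invertibility of $|G_i|$ needed to invoke Proposition \ref{prop-ket-std}\Refenum{\ref{prop-ket-std-4}}: by the Kummer \'etale hypothesis, $|G_i|$ is invertible in $\cO_{Y_i}$, and since \'etaleness is a local property on the source, this can be verified after suitable localization. For the final assertion, when $X$ lies over an affinoid field $(k, k^+)$, each $\cO_{Y_i}$ is a $k$-algebra, so invertibility of $|G_i|$ there forces $|G_i|$ to be nonzero in $k$, whence $n$ is invertible in $k$.
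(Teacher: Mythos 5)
Your proof is correct and follows essentially the same route as the paper's: reduce to finitely many Kummer charts $P \to Q_i$, establish the key identification $\bigl(Q_i \oplus_P \tfrac{1}{n}P\bigr)^\Sat \cong \tfrac{1}{n}P \oplus G_i$, deduce \'etaleness from the invertibility of $|G_i|$, and conclude by \'etale descent. The only cosmetic difference is that you verify the monoid identity directly, whereas the paper deduces it from $(Q_i \oplus_P Q_i)^\Sat \cong Q_i \oplus G_i$ and associativity of amalgamated sums.
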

\begin{proof}
    Since $X$ is noetherian, by taking the least common multiple of the positive integers obtained on finitely many members in an \'etale covering, it suffices to work \'etale locally on $X$.  By Lemma \ref{lem-ket-chart}, up to \'etale localization on $X$, there exists an \'etale covering $\{ Y_i \to Y \}_{i \in I}$ indexed by a finite set $I$ such that each induced Kummer \'etale morphism $Y_i \to X$ admits a Kummer chart $P \to Q_i$ with a sharp $Q_i$.  Then there exists some positive integer $n$, which we may assume to be invertible on all of $X$ when $X$ has at most one positive residue characteristic, such that $P \to \frac{1}{n} P$ factors as $P \to Q_i \Mapn{u_i} \frac{1}{n} P$ for some injective $u_i$, for all $i \in I$.  The induced morphism $Y_i \times_X X^{\frac{1}{n}} \to X^{\frac{1}{n}}$ is \'etale, for each $i \in I$, because it admits a Kummer chart $\frac{1}{n} P \to \bigl(Q_i \oplus_P (\frac{1}{n} P)\bigr)^\Sat \cong \bigl((Q_i \oplus_P Q_i) \oplus_{Q_i} (\frac{1}{n} P)\bigr)^\Sat \cong G_i \oplus (\frac{1}{n} P)$, where $G_i := (Q_i)^\gp / u_i^\gp(P^\gp)$ has order invertible in $\cO_{Y_i}$ by assumption.  \Pth{Since $G_i$ is a group, $\frac{1}{n} P \to G_i \oplus (\frac{1}{n} P)$ is strict, by definition.  See also Remark \ref{rem-stalk-strict}.}  By Proposition \ref{prop-lem-four-pt}, the \'etale map $\coprod_{i \in I} Y_i \times_X X^{\frac{1}{n}} \to Y \times_X X^{\frac{1}{n}}$ is surjective as $\coprod_{i \in I} Y_i \to Y$ is.  Hence, by \'etale descent, $Y \times_X X^{\frac{1}{n}} \to X^{\frac{1}{n}}$ is also \'etale.  Finally, by \cite[\aLem 1.4.5 i)]{Huber:1996-ERA}, $Y \times_X X^{\frac{1}{n}} \to X^{\frac{1}{n}}$ is finite when $Y \to X$ is.
\end{proof}

\begin{lem}\label{lem-Abhyankar-basic-cov}
    Let $X$ be a noetherian fs log adic space modeled on a sharp fs monoid $P$.  Let $\{ U_i \to X \}_{i \in I}$ be a Kummer \'etale covering indexed by a finite set $I$.  Then there exists a Kummer \'etale covering $\{ V_j \to X \}_{j \in J}$ indexed by a finite set $J$ refining $\{ U_i \to X \}_{i \in I}$ such that each $V_j \to X$ admits a chart $P \to \frac{1}{n_j} P$ for some integer $n_j$ invertible in $\cO_{V_j}$, and such that $\{ V_j \times_X X^{\frac{1}{n}} \to X^{\frac{1}{n}} \}_{j \in J}$ is an \'etale covering of $X^{\frac{1}{n}}$ for some integer $n$ divided by all $n_j$.  If $X$ has at most one positive residue characteristic, then we may take $n$ to be invertible in $\cO_X$.
\end{lem}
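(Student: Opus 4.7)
The plan is to refine the argument of Lemma~\ref{lem-Abhyankar-basic}, this time producing explicit global charts on the members of the refinement.  Since $X$ is noetherian and Kummer \'etale morphisms are open by Corollary~\ref{cor-ket-op}, we may freely refine any intermediate Kummer \'etale cover to a finite subcover.

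First, for each $i\in I$, invoke Lemma~\ref{lem-ket-chart} to obtain an \'etale cover $\{U_{i,\alpha}\to U_i\}_\alpha$ such that each $U_{i,\alpha}\to X$ admits a Kummer chart $u_{i,\alpha}:P\hookrightarrow Q_{i,\alpha}$ with $Q_{i,\alpha}$ sharp fs (the sharpness being permitted because $P$ is sharp).  Refine to a finite subcover of $X$.  For each $(i,\alpha)$, the finite group $G_{i,\alpha}:=Q_{i,\alpha}^\gp/u_{i,\alpha}^\gp(P^\gp)$ has order invertible in $\cO_{U_{i,\alpha}}$ by the Kummer \'etale hypothesis, and its exponent $n_{i,\alpha}$ (itself invertible in $\cO_{U_{i,\alpha}}$) provides an injection $Q_{i,\alpha}\hookrightarrow\tfrac{1}{n_{i,\alpha}}P$.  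Set $n:=\operatorname{lcm}_{(i,\alpha)}n_{i,\alpha}$ over the finite index set; in the presence of a base $(k,k^+)$, choose each $n_{i,\alpha}$ to be invertible in $k$ by the corresponding assertion of Lemma~\ref{lem-Abhyankar-basic}, so that $n$ is also invertible in $k$.

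Next, define $V_j:=V_{i,\alpha}:=U_{i,\alpha}\times_X X^{1/n_{i,\alpha}}$ in the category of fs log adic spaces, using the uniform $n$ in place of $n_{i,\alpha}$ in the base-field case.  The computation from the proof of Lemma~\ref{lem-Abhyankar-basic} supplies a canonical isomorphism
\[
    \bigl(Q_{i,\alpha}\oplus_P\tfrac{1}{n_{i,\alpha}}P\bigr)^\Sat\cong\tfrac{1}{n_{i,\alpha}}P\oplus G_{i,\alpha};
\]
since $G_{i,\alpha}$ is a group whose image in $\cO_{V_j}$ lies in $\cO_{V_j}^\times$, appending it does not alter the associated log structure.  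Hence $V_j$ carries a global chart modeled on $\tfrac{1}{n_{i,\alpha}}P$, the morphism $V_j\to X^{1/n_{i,\alpha}}$ is strictly \'etale, and $V_j\to X$ admits the Kummer \'etale chart $P\hookrightarrow\tfrac{1}{n_{i,\alpha}}P$ with $n_j:=n_{i,\alpha}$ invertible in $\cO_{V_j}$.  The family $\{V_j\to X\}$ refines $\{U_i\to X\}$ via $V_j\to U_{i,\alpha}\to U_i$, and base-changing the strictly \'etale morphism $V_j\to X^{1/n_{i,\alpha}}$ along the canonical $X^{1/n}\to X^{1/n_{i,\alpha}}$ shows that each $V_j\times_X X^{1/n}\to X^{1/n}$ is strictly \'etale; the resulting family $\{V_j\times_X X^{1/n}\to X^{1/n}\}$ is then an \'etale covering of $X^{1/n}$ because the $V_j$ cover $X$.

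The main delicacy is the log-structure identification in the previous paragraph, which upgrades the a priori chart $\tfrac{1}{n_{i,\alpha}}P\oplus G_{i,\alpha}$ to a genuine global chart $\tfrac{1}{n_{i,\alpha}}P$ on $V_j$.  This rests on the standard fact (immediate from the pushout description in Definition~\ref{def-log-str}) that appending a group summand whose image lies in $\cO^\times$ does not alter the associated log structure, and is precisely what promotes $V_j\to X^{1/n_{i,\alpha}}$ from being merely Kummer \'etale to being strictly \'etale, so that $V_j$ acquires the chart on the required model.
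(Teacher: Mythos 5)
Your proof is correct and follows essentially the same route as the paper: pass to a finite refinement with sharp Kummer charts $P\to Q_j$ via Lemma \ref{lem-ket-chart}, embed $Q_j\hookrightarrow\tfrac{1}{n_j}P$, and produce covers charted on $\tfrac{1}{n_j}P$ that become \'etale over $X^{\frac{1}{n}}$. The only (harmless) variation is that you take $V_j=U_j\times_X X^{\frac{1}{n_j}}$, whose natural chart is $\tfrac{1}{n_j}P\oplus G_j$, and then discard the group summand, whereas the paper forms $V_j=U_j\times_{U_j\Talg{Q_j}}U_j\Talg{\tfrac{1}{n_j}P}$, which carries the chart $\tfrac{1}{n_j}P$ directly.
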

\begin{proof}
    Since $X$ is noetherian, by Lemma \ref{lem-ket-chart}, we may replace $\{ U_i \to X \}_{i \in I}$ with a finite refinement $\{ U_j \to X \}_{j \in J}$ such that each $U_j \to X$ admits a Kummer chart $P \to Q_j$ with a sharp $Q_j$.  Then there exists some positive integer $n_j$ invertible in $\cO_{U_j}$ such that $P \to \frac{1}{n_j} P$ factors as $P \to Q_j \Mapn{u_j} \frac{1}{n_j} P$ for some injective $u_j$, for each $j \in J$.  Therefore, each $V_j := U_j \times_{U_j\Talg{Q_j}} U_j\Talg{\frac{1}{n_j} P} \to X$ is Kummer \'etale with a Kummer chart $P \to \frac{1}{n_j} P$, and the induced morphism $V_j \to X^{\frac{1}{n_j}} = X \times_{X\Talg{P}} X\Talg{\frac{1}{n_j} P}$ is \'etale \Pth{as in Definition \ref{def-ket-mor}}.  In this case, if $n$ is divisible by all $n_j$, then $V_j \times_X X^{\frac{1}{n}} \to X^{\frac{1}{n}} = X \times_{X\Talg{P}} X\Talg{\frac{1}{n} P}$ is also \'etale, and we can take $n$ to be invertible on $X$ when $X$ has at most one positive residue characteristic \Pth{\Refcf{} the proof of Lemma \ref{lem-Abhyankar-basic}}.  Since $\coprod_{j \in J} \, U_j \to X$ is surjective by assumption, by Proposition \ref{prop-lem-four-pt}, $\{ V_j \times_X X^{\frac{1}{n}} \}_{j \in J} \to X^{\frac{1}{n}}$ is an \'etale covering of $X^{\frac{1}{n}}$, as desired.
\end{proof}

The following two propositions show that the properties of morphisms being Kummer \'etale, log smooth, and log \'etale can be verified up to Kummer \'etale localization on either the source or the target:
\begin{prop}\label{prop-log-sm-et-ket-cov}
    Let $Y \Mapn{f} X \Mapn{g} S$ be lft morphisms of locally noetherian fs log adic spaces such that $f$ is Kummer \'etale and surjective.  Then $g$ is log smooth \Pth{\resp log \'etale, \resp Kummer \'etale} if and only if $g \circ f$ is.
\end{prop}
\begin{proof}
    Since $f$ is Kummer \'etale \Pth{and hence log \'etale}, by Propositions \ref{prop-log-sm-compos} and \ref{prop-ket-compos-bc}, if $g$ is log smooth \Pth{\resp log \'etale, \resp Kummer \'etale}, then so is $g \circ f$.  \Pth{The surjectivity of $f$ is not needed in this direction of implication.}

    Conversely, suppose that $g \circ f$ is log smooth \Pth{\resp log \'etale, \resp Kummer \'etale}.  It suffices to show that $X \to S$ is log smooth \Pth{\resp log \'etale, \resp Kummer \'etale}, \'etale locally at geometric points $\AC{x}$ of $X$ and $\AC{s} = g(\AC{x})$ of $S$.

    Up to \'etale localization at $\AC{x}$, we may assume that $X$ has at most one positive residue characteristic.  By Proposition \ref{prop-chart-mor-exist-fine}, up to \'etale localization at $\AC{x}$ and $\AC{s}$, we may assume that $X \to S$ admits an fs chart $u: L := \overline{\cM}_{S, \AC{s}} \to P'$, inducing a strict morphism $g': X \to X' := S \times_{S\Talg{L}} S\Talg{P'}$.  Let $\overline{u} := g^\sharp_{\AC{x}}: L \to P := \overline{\cM}_{X, \AC{x}}$.  By Remark \ref{rem-chart-stalk}, $P' \to P$ is surjective with kernel given by the preimage of $\cO_{X_\et, \AC{x}}^\times$.  Since $L$ and $P$ are sharp fs, we may assume that the order of the torsion part of $\ker\bigl((P')^\gp \to P^\gp\bigr)$ is invertible in $\cO_{X_\et, \AC{x}}$.  Since $f$ is Kummer \'etale, by Definition \ref{def-ket-mor} and Lemma \ref{lem-Abhyankar-basic-cov}, and by the first paragraph above, we are reduced to the case where $f: Y \to X$ is of the form $X^{\frac{1}{m}} \to X$, for some integer $m$ invertible in $\cO_X$, which admits a global fs chart $v: P \to \frac{1}{m}P$.

    Let $\AC{y}$ be any geometric point of $Y$ such that $f(\AC{y}) = \AC{x}$, which exists because $f$ is surjective.  By Lemma \ref{lem-ket-stalk}, $\overline{v} := f^\sharp_{\AC{y}}: \overline{\cM}_{X, \AC{x}} \to \overline{\cM}_{Y, \AC{y}}$ is also given by $v: P \to \frac{1}{m}P$.  By the same argument as above, up to \'etale localization at $\AC{y}$, we may assume that $Y \to S$ admits an fs global chart $w: L \to Q$, and we have a surjection $Q \to \overline{\cM}_{Y, \AC{y}} \cong \frac{1}{m}P$ such that the order of the torsion part of $\ker\bigl(Q^\gp \to \frac{1}{m}P^\gp\bigr)$ is invertible in $\cO_{X_\et, \AC{x}}$.  By definition, $\overline{w} := \overline{(g \circ f)}^\sharp_{\AC{y}} = \overline{v} \circ \overline{u}$ as homomorphisms from $L$ to $\frac{1}{m}P$.  If $g \circ f$ is log smooth \Pth{\resp log \'etale}, then the kernel and the torsion part of the cokernel \Pth{\resp the kernel and the cokernel} of $\overline{w}^\gp$ have orders invertible in $\cO_{X_\et, \AC{x}}$, and so are $\overline{u}^\gp$ and $u^\gp$ \Pth{\Refcf{} Definition \ref{def-log-sm}}.  If $g \circ f$ is Kummer \'etale, then $\overline{w}$ is Kummer, and so are $\overline{u}$ and $u$ \Pth{see Definitions \ref{def-Kummer} and \ref{def-ket-mor}, and Lemmas \ref{lem-ket-stalk} and \ref{lem-ket-log-et-ex}}.  Therefore, by the first paragraph above, $X' \to S$ is log smooth \Pth{\resp log \'etale, \resp Kummer \'etale} when $g \circ f$ is.  Thus, up to replacing $g$ with $g': X \to X'$, we are reduced to the case where $g$ is strict, and it remains to show that $g: X \to S$ is \Pth{strictly} \'etale when $g \circ f: Y = X^{\frac{1}{m}} \to S$ is log \'etale.  Up to \'etale localization on $S$, we may assume that $\cO_S(S)^\times$ and hence $\cO_X(X)^\times$ contain all $m$-th roots of unity.  Then $Y = X^{\frac{1}{m}} \to X$ is a Galois finite Kummer \'etale cover with Galois group $\Gamma := \Hom\bigl((\frac{1}{m}P^\gp) / P^\gp, \cO_S(S)^\times\bigr)$, by Proposition \ref{prop-ket-std}.

    Let us write $\bigl(\cO_S(S), \cM_S(S)\bigr) = (A, M)$, $\bigl(\cO_X(X), \cM_X(X)\bigr) = (B, N)$, and $\bigl(\cO_Y(Y), \cM_Y(Y)\bigr) = (C, O)$, for simplicity.  Since $X \to S$ is strict, up to further \'etale localization on $X$ and $S$, we may assume that $(A, M) \to (B, N)$ is also strict.  By Lemma \ref{lem-def-formal-log-sm-unram-et-strict}, Remark \ref{rem-def-formal-log-sm-unram-et-loc}, and Proposition \ref{prop-formal-log-sm-imply-log-sm}, it suffices to show that $(A, M) \to (B, N)$ is formally log \'etale \Pth{as in Definition \ref{def-log-Huber-formal-log-sm-unram-et}} when $(A, M) \to (C, O)$ is.  Since $Y \cong X \times_{X\Talg{P}} X\Talg{\frac{1}{m}P}$, we have $O \cong \bigl(N \oplus_P (\frac{1}{m}P)\bigr)^\Sat$ and $C \cong (B \ho_{\bZ[P]} \bZ[\frac{1}{m}P]) \ho_{\bZ[N \oplus_P (\frac{1}{m}P)]} \bZ[\bigl(N \oplus_P (\frac{1}{m}P)\bigr)^\Sat] \cong B \ho_{\bZ[N]} \bZ[O]$.  Consider any commutative diagram as in \Refeq{\ref{eq-log-Huber-thick}} for $(A, M) \to (B, N)$ \Pth{with $\alpha$, $\beta$, \etc omitted here}.  For $? = \emptyset$ and $\prime$, consider $\widetilde{T}^? := \bigl(T^? \oplus_P (\frac{1}{m}P)\bigr)^\Sat$ and $\widetilde{D}^? := D^? \ho_{\bZ[T^?]} \bZ[\widetilde{T}^?]$, so that $(D', T') \to (\widetilde{D}', \widetilde{T}')$ is the completion of the common pullback of $(B, N) \to (C, O)$ and $(D, T) \to (\widetilde{D}, \widetilde{T})$ in the category of log Huber rings with fs log structures.  Moreover, we have $(D^?, T^?) \Mi \bigl((\widetilde{D}^?)^\Gamma, (\widetilde{T}^?)^\Gamma\bigr)$, for $? = \emptyset$ and $\prime$, because the formation of $\Gamma$-invariants is compatible with arbitrary base changes and completions when $|\Gamma|$ is invertible \Pth{as $m$ is}, and because of Remark \ref{rem-int-sat}.  Thus, we have obtained an extended commutative diagram for $(A, M) \to (C, O)$ and the base change $(\widetilde{D}, \widetilde{T}) \to (\widetilde{D}', \widetilde{T}')$ of $(D, T) \to (D', T')$.  Since $(A, M) \to (C, O)$ is formally log \'etale, $(C, O) \to (\widetilde{D}', \widetilde{T}')$ uniquely lifts to $(C, O) \to (\widetilde{D}, \widetilde{T})$, whose pre-composition with $(B, N) \to (C, O)$ is $\Gamma$-invariant and hence factors through $(D, T)$.  This shows that $(A, M) \to (B, N)$ is also formally log \'etale, as desired.
\end{proof}

\begin{prop}\label{prop-log-sm-et-ket-descent}
    Let $f: Y \to X$ and $g: X' \to X$ be lft morphisms of locally noetherian fs log adic spaces such that $g$ is Kummer \'etale and surjective.  Then $f$ is log smooth \Pth{\resp log \'etale, \resp Kummer \'etale} if and only if its pullback $f': Y' := Y \times_X X' \to X'$ under $g$ is.
\end{prop}
\begin{proof}
    By definition, we have the following commutative diagram
    \[
        \xymatrix{ {Y'} \ar[r]^-{f'} \ar[d]_-{g'} & {X'} \ar[d]^-g \\
        {Y} \ar[r]^-f & {X} }
    \]
    in which $g'$ is the pullback of $g$ under $f$.  If $f$ is log smooth \Pth{\resp log \'etale, \resp Kummer \'etale}, then so is $f'$, by Proposition \ref{prop-log-sm-bc} and \ref{prop-ket-compos-bc}.  Conversely, suppose $f'$ is log smooth \Pth{\resp log \'etale, \resp Kummer \'etale}.  Since $g$ is Kummer \'etale \Pth{and hence log \'etale}, by Propositions \ref{prop-log-sm-compos} and \ref{prop-ket-compos-bc}, $g \circ f' = f \circ g'$ is also log smooth \Pth{\resp log \'etale, \resp Kummer \'etale}.  By Propositions \ref{prop-ket-compos-bc} and \ref{prop-lem-four-pt}, $g'$ is Kummer \'etale and surjective as $g$ is.  Thus, by Propositions \ref{prop-log-sm-et-ket-cov}, $f$ is log smooth \Pth{\resp log \'etale, \resp Kummer \'etale}, as desired.
\end{proof}

\subsection{Coherent sheaves}\label{sec-ket-coh-descent}

In this subsection, we show that, when $X$ is a locally noetherian fs log adic space, the presheaf $\cO_{X_\ket}$ \Pth{\resp $\cO_{X_\ket}^+$} on $X_\ket$ defined by $U \mapsto \cO_U(U)$ \Pth{\resp $U \mapsto \cO_U^+(U)$} is indeed a sheaf, generalizing a well-known result of Kato's \cite{Kato:2021-lsfi-2} for log schemes.  We also study some problems related to the Kummer \'etale descent of coherent sheaves.

\begin{thm}\label{thm-O-ket-sheaf}
    Let $X$ be a locally noetherian fs log adic space.
    \begin{enumerate}
        \item The presheaves $\cO_{X_\ket}$ and $\cO_{X_\ket}^+$ are sheaves.

        \item If $X$ is affinoid, then $H^i(X_\ket, \cO_{X_\ket}) = 0$, for all $i > 0$.
    \end{enumerate}
\end{thm}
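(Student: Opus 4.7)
By Remark \ref{rem-ket-site-gen}, the Kummer \'etale topology is generated by strictly \'etale surjections and standard Kummer \'etale covers as in Proposition \ref{prop-ket-std}. The sheaf property of $\cO_{X_\ket}$ and $\cO^+_{X_\ket}$ for strictly \'etale covers is just the sheaf property of $\cO_{X_\et}$ and $\cO^+_{X_\et}$, which holds for locally noetherian $X$ by the results recalled in the introduction. For a standard Kummer \'etale cover $f: Y \to X$ with $X$ affinoid, the exact sequence $0 \to \cO_X(X) \to \cO_Y(Y) \to \cO_{Y \times_X Y}(Y \times_X Y)$ of Proposition \ref{prop-ket-std}\Refenum{\ref{prop-ket-std-2}} gives the sheaf property for $\cO$. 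For $\cO^+$, we intersect this equalizer with $\cO^+_Y(Y) \subset \cO_Y(Y)$: an element of $\cO_X(X)$ that maps into $\cO^+_Y(Y)$ has $|\cdot| \leq 1$ at every point of $Y$, hence at every point of $X$ \Pth{since $f$ is surjective}, so it lies in $\cO^+_X(X)$ by the description recalled in the proof of Lemma \ref{lem-chart-fg-Ogus}.

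For the cohomology vanishing on an affinoid $X$, I would use the Leray spectral sequence for $\varepsilon_\et: X_\ket \to X_\et$. Since $\varepsilon_{\et,*}\cO_{X_\ket} = \cO_{X_\et}$ by the sheaf property just established, and since $H^i(X_\et, \cO_{X_\et}) = 0$ for $i > 0$ on affinoids by Tate acyclicity \Ref{Appendix \ref{app-Kiehl}}, it suffices to prove $R^q \varepsilon_{\et,*} \cO_{X_\ket} = 0$ for $q > 0$. This can be checked on geometric stalks: after strict localization at a geometric point $\AC{x}$, by Proposition \ref{prop-chart-stalk-chart-fs} we may assume $X$ is affinoid and modeled on the sharp fs monoid $P := \overline{\cM}_{X, \AC{x}}$.

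By Lemma \ref{lem-Abhyankar-basic-cov}, every Kummer \'etale covering of such an $X$ can be refined by a covering that, after base change along the standard Kummer \'etale cover $X^{\frac{1}{n}} \to X$ for some $n$ invertible in $\cO_X$, becomes a strictly \'etale covering of $X^{\frac{1}{n}}$. I would then combine two Čech-cohomology vanishing results via the Čech-to-derived spectral sequence of a composition of covers. First, for strictly \'etale coverings of the affinoid $X^{\frac{1}{n}}$, higher Čech cohomology of $\cO_{X_\ket}$ vanishes by Tate acyclicity applied to the \'etale site of $X^{\frac{1}{n}}$ together with the sheaf property just proved. Second, for the cover $X^{\frac{1}{n}} \to X$, after a further strictly \'etale base change ensuring that $\cO_X$ contains the $n$-th roots of unity, Proposition \ref{prop-ket-std}\Refenum{\ref{prop-ket-std-3}} and \Refenum{\ref{prop-ket-std-4}} identify the Čech complex of $\cO_{X_\ket}$ with the group cohomology complex of the finite abelian group $\Gamma = \Hom\bigl((\tfrac{1}{n}P)^\gp/P^\gp, \cO_X(X)^\times\bigr)$ acting on $\cO_{X^{\frac{1}{n}}}(X^{\frac{1}{n}})$, which vanishes in positive degrees because $|\Gamma|$ is invertible in $\cO_X$.

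The main obstacle will be organizing the final step: one must verify that the refinement from Lemma \ref{lem-Abhyankar-basic-cov} interacts correctly with Čech cohomology in both indices of the spectral sequence, so that the vanishing over $X^{\frac{1}{n}}$ in the strictly \'etale direction and the vanishing for the Galois cover $X^{\frac{1}{n}} \to X$ can be assembled into the vanishing for the original covering of $X$. Some care is also needed to guarantee that the \'etale localizations used for the constant-$\Gamma$ reduction and for the geometric-stalk argument can be carried out without enlarging the base field in a way that breaks the invertibility of $n$; but this is ensured by the last clauses of Lemmas \ref{lem-Abhyankar-basic} and \ref{lem-Abhyankar-basic-cov}.
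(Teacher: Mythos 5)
Your part (1) follows the paper's route: reduce via Lemma \ref{lem-Abhyankar-basic-cov} (equivalently, Remark \ref{rem-ket-site-gen}) to strictly \'etale coverings, where sheafiness is known, and to standard Kummer covers, where the exactness of $0 \to \cO(X) \to \cO(Y) \to \cO(Y\times_X Y)$ from Proposition \ref{prop-ket-std}\Refenum{\ref{prop-ket-std-2}} does the work; your valuation-theoretic argument for $\cO^+$ is a fine way to fill in the step the paper dismisses with ``it suffices to prove the statement for $\cO_{X_\ket}$.''

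For part (2), however, your argument has a gap in the stated generality. Theorem \ref{thm-O-ket-sheaf} makes no assumption that $X$ lives over an affinoid field, and in that setting the integer $n$ produced by Lemma \ref{lem-Abhyankar-basic-cov} is only the least common multiple of ramification indices $n_j$, each invertible on its own $V_j$ but with $n$ not necessarily invertible anywhere on $X$: different members of a Kummer \'etale covering may have ramification supported at different residue characteristics. Consequently $X^{\frac{1}{n}} \to X$ is in general only a \emph{standard Kummer cover}, not a Kummer \'etale one; $G^D_X$ need not be \'etale over $X$, the \v{C}ech complex of $X^{\frac{1}{n}} \to X$ is not a group-cohomology complex of a constant finite group, and the averaging argument ``$H^i(\Gamma, -) = 0$ for $i>0$ since $|\Gamma|$ is invertible'' is unavailable. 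The last clauses of Lemmas \ref{lem-Abhyankar-basic} and \ref{lem-Abhyankar-basic-cov}, which you invoke to rescue invertibility, are explicitly conditional on $X$ being defined over an affinoid field $(k,k^+)$. This is exactly the point of the paper's Lemma \ref{lem-O-ket-Cech}: the full \v{C}ech complex $C^\bullet(X^{\frac{1}{n}}/X)$ is exact by an explicit $\cO_X(X)$-linear contracting homotopy (following Kato and Nizio{\l}), with \emph{no} invertibility hypothesis, which is why the paper emphasizes after that lemma that it applies to standard Kummer covers that are not Kummer \'etale. Your double-complex assembly of the two \v{C}ech directions is fine, and your argument does prove the theorem whenever the relevant $n$ can be taken invertible in $\cO_X$ (e.g.\ over an affinoid field, where it essentially reproduces the proof of Theorem \ref{thm-ket-coh}\Refenum{\ref{thm-ket-gen-coh}}); to get the general statement you should replace the group-cohomology step by the contracting-homotopy exactness of Lemma \ref{lem-O-ket-Cech}.
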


A key input is the following:
\begin{lem}\label{lem-O-ket-Cech}
    Let $X$ be an affinoid noetherian fs log adic spaces, endowed with a chart modeled on a sharp fs monoid $P$.  Let $Y \to X$ be a standard Kummer cover \Pth{see Definition \ref{def-ket-std}}.  Then the \v{C}ech complex
    \[
        C^\bullet(Y / X): 0 \to \cO(X) \to \cO(Y) \to \cO(Y \times_X Y) \to \cO(Y \times_X Y \times_X Y) \to \cdots
    \]
    \Pth{where we omit the subscripts of the structure sheaf $\cO$ for simplicity} is exact.
\end{lem}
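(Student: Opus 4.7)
The plan is to reinterpret the Čech complex as the cobar complex of a Hopf--Galois extension and to deduce exactness from the torsor structure together with faithfully flat descent. First, iterating the torsor isomorphism $Y \times_X Y \cong G^D_X \times_X Y$ of Proposition \ref{prop-ket-std}\Refenum{\ref{prop-ket-std-3}} yields canonical identifications $Y^{\times_X(n+1)} \cong (G^D_X)^n \times_X Y$ in the category of fs log adic spaces over $X$. Setting $S := \cO(Y)$ and $H := \cO(G^D_X) = R[G]$ (a finite free commutative Hopf $R$-algebra), the Čech complex then takes the form
\[
0 \to R \to S \to H \otimes_R S \to H \otimes_R H \otimes_R S \to \cdots,
\]
whose differentials are precisely the cobar differentials assembled from the comultiplication on $H$ and the $H$-coaction $\rho\colon S \to H \otimes_R S$ that records the $G^D_X$-action on $Y$.

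Next I would verify that $R \to S$ is faithfully flat. Since $S = R \otimes_{R[P]} R[Q]$, this reduces by base change to the classical fact that $R[P] \to R[Q]$ is faithfully flat for any Kummer homomorphism of fs monoids \Ref{\Refcf{} the proof of Proposition \ref{prop-ket-std}\Refenum{\ref{prop-ket-std-1}} and the argument that $\bZ[Q]$ is a direct summand of $\bZ[P]$-modules used there}. Combined with the torsor identification $\cO(Y \times_X Y) = H \otimes_R S$, this exhibits $S$ as an fs Hopf--Galois extension of $R$ with structure Hopf algebra $H$.

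Exactness of the cobar complex then follows from a standard descent argument: after pullback along the faithfully flat morphism $Y \to X$, the torsor $Y \to X$ trivializes, and the diagonal section $Y \to Y \times_X Y$ corresponds under the torsor identification to the unit section of the trivial $G^D$-torsor over $Y$. This section yields an extra degeneracy—explicitly, apply the counit $H \to R$ in the last Hopf tensor factor—providing a contracting nullhomotopy of the pulled-back cobar complex. Faithfully flat descent of exactness for complexes of $R$-modules then yields exactness of the original complex over $R$.

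The principal obstacle I expect is the discrepancy between the fs fiber product $\cO(Y \times_X Y) = H \otimes_R S$ and the naïve algebraic tensor $S \otimes_R S$, which in general differ by the monoid saturation $Q \oplus_P Q \to (Q \oplus_P Q)^{\Sat} \cong Q \oplus G$ used in the proof of Proposition \ref{prop-ket-std}\Refenum{\ref{prop-ket-std-3}}. Consequently, "tensoring the Čech complex with $S$ over $R$" does not literally return the Čech complex of the base-changed cover $Y \times_X Y \to Y$, and the descent step must be performed within the Hopf algebra framework: the terms $H^{\otimes n} \otimes_R S$ are given intrinsically by the torsor identifications rather than by naïve tensor products, and the contracting homotopy is written out formula-by-formula in terms of $H$, $\Delta$, $\rho$, and the counit, so that everything remains compatible with the saturations throughout.
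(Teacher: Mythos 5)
There is a genuine gap, and it sits at the load-bearing step of your argument: the claim that $R[P] \to R[Q]$ is faithfully flat for an arbitrary Kummer homomorphism of fs monoids is false. Take $Q = \bZ_{\geq 0}^2$ and $P = \{ (a, b) \in \bZ_{\geq 0}^2 : a + b \in 2\bZ \}$; the inclusion $u: P \to Q$ is Kummer with $Q^\gp / u^\gp(P^\gp) \cong \bZ / 2$, but $\bZ[Q] = \bZ[P] \oplus M$, where $M$ is the $\bZ[P]$-module spanned by the monomials of odd total degree. This $M$ is a rank-one reflexive module over the $A_1$-singularity $\bZ[x^2, xy, y^2]$ that requires two generators, hence is not free at the vertex and therefore \Pth{being finite over a noetherian ring} not flat. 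This is precisely the reason Kummer covers are only \emph{log} flat and why Kato and Nizio{\l} prove the sheaf property by an explicit homotopy rather than by flat descent; if faithfully flat descent applied, the lemma would be immediate. The direct-summand argument you cite from the proof of Proposition \ref{prop-ket-std}\Refenum{\ref{prop-ket-std-1}} only yields that $R \to S$ is \Pth{universally} injective, and purity does not let you descend exactness of a complex, since $H^i(C^\bullet \otimes_R S)$ need not agree with $H^i(C^\bullet) \otimes_R S$ without flatness.  Your fallback of performing the descent "within the Hopf algebra framework" does not repair this: the extra degeneracy you invoke \Pth{the counit in the last tensor factor} is one of the ordinary codegeneracies of the cobar complex, not an extra one, and the genuine extra degeneracy of the Amitsur complex \Pth{multiplication of the first two factors} only becomes available after the base change along $R \to S$ whose exactness-descent is exactly what is in question.

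Your opening step --- iterating Proposition \ref{prop-ket-std}\Refenum{\ref{prop-ket-std-3}} to identify $\cO(Y \times_X \cdots \times_X Y)$ with $\cO(Y) \otimes_R R[G]^{\otimes n}$ --- is correct and is also how the paper's proof begins; your caution about the saturated fiber products is likewise well placed.  But the mechanism that finishes the proof is not descent: it is the $G$-grading $S = \oplus_{\chi \in G} \, S_\chi$ coming from $Q = \sqcup_{\chi} \{ q \in Q : \overline{q} = \chi \}$, together with the identity $S_e = R$ \Pth{which holds because $Q \cap P^\gp = P$, \ie, because $u$ is exact and $P$ is saturated; this is the content of Proposition \ref{prop-ket-std}\Refenum{\ref{prop-ket-std-2}}}.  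Using this grading one writes down a contracting homotopy of the original complex $0 \to R \to S \to S \otimes_R R[G] \to \cdots$ directly --- roughly, on $S_\chi \otimes g_1 \otimes \cdots \otimes g_n$ one strips off $g_1$ when $g_1 = \chi$ and kills everything else --- with no base change at all; this is the argument of \cite[\aLem 3.28]{Niziol:2008-ktls-1} that the paper invokes.  Note also that this homotopy is $R$-linear, a feature the paper exploits again in the proof of Theorem \ref{thm-ket-coh}, and which would not come for free from a descent argument even if one were available.
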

\begin{proof}
    This is essentially \cite[\aLem 3.28]{Niziol:2008-ktls-1}, based on the idea in \cite[\aLem 3.4.1]{Kato:2021-lsfi-2}.  Suppose that $Y \to X = \Spa(R, R^+)$ is associated with a Kummer homomorphism $u: P \to Q$ as in Proposition \ref{prop-ket-std}.  Then $C^\bullet(Y / X)$ is already known to be exact at the first three terms; and
    \[
        \cO(Y \times_X Y \times_X \cdots \times_X Y) \cong \cO(Y) \otimes_R R[G] \otimes_R R[G] \cdots \otimes_R R[G],
    \]
    where $G = Q^\gp / u^\gp(P^\gp)$, in which case we can write the differentials of $C^\bullet(Y / X)$ explicitly and construct a contracting homotopy for $C^\bullet(Y / X)$, by the same argument as in the proof of \cite[\aLem 3.28]{Niziol:2008-ktls-1}.
\end{proof}
We emphasize that Lemma \ref{lem-O-ket-Cech} also works for standard Kummer covers that are not necessarily Kummer \'etale.

\begin{proof}[Proof of Theorem \ref{thm-O-ket-sheaf}]
    \begin{enumerate}
        \item It suffices to prove that $\cO_{X_\ket}$ is a sheaf, in which case $\cO_{X_\ket}^+$ also is, because
            \[
                \cO_{X_\ket}^+(U) = \cO_U^+(U) = \{ f \in \cO_{X_\ket}(U) = \cO_U(U) : |f(x)| \leq 1, \Utext{for all $x \in U$} \},
            \]
            exactly as in \cite[\aProp 3.1.7]{Scholze/Weinstein:2020-BLG}.  Since the sheafiness for the \'etale topology is known for all locally noetherian adic spaces, by Lemma \ref{lem-Abhyankar-basic-cov}, the statement is reduced to Lemma \ref{lem-O-ket-Cech}.

        \item By Propositions \ref{prop-chart-stalk-chart-fs} and \ref{prop-et-coh-descent}, we may reduce to the case where $X$ is affinoid with a global sharp fs chart $P$.  By Lemma \ref{lem-Abhyankar-basic-cov}, any Kummer \'etale covering $\{ U_i \to X \}_{i \in I}$ of $X$ admits some refinement $\{ V_j \to X \}_{j \in J}$ as finite Kummer \'etale covering such that $\{ V_j \times_X X^{\frac{1}{m}} \to X^{\frac{1}{m}} \}_{j \in J}$ is an \'etale covering, for some $m$, and such that each $V_j \times_X X^{\frac{1}{m}} \to V_j$ is a composition of \'etale morphisms and standard Kummer \'etale covers.  Thus, by Lemma \ref{lem-O-ket-Cech}, the \v{C}ech complex
            \[
                \cO(X) \to \cO(X^{\frac{1}{m}}) \to \cO(X^{\frac{1}{m}} \times_X X^{\frac{1}{m}}) \to \cdots
            \]
            is exact.  As a result, by Proposition \ref{prop-et-coh-descent}, the \v{C}ech complex
            \[
                \cO(X) \to \oplus_j \, \cO(V_j) \to \oplus_{j, j'} \, \cO(V_j \times_X V_{j'}) \to \cdots
            \]
            is also exact, as desired.  \qedhere
    \end{enumerate}
\end{proof}

\begin{cor}\label{cor-O-ket-et-an}
    Let $X$ be a locally noetherian fs log adic space.  Consider the natural projections of sites $\varepsilon_\an: X_\ket \to X_\an$ and $\varepsilon_\et: X_\ket \to X_\et$.  Then we have canonical isomorphisms $\cO_{X_\an} \Mi R\varepsilon_{\an, *}(\cO_{X_\ket})$ and $\cO_{X_\et} \Mi R\varepsilon_{\et, *}(\cO_{X_\ket})$.  As a result, the pullback functor from the category of vector bundles on $X_\an$ \Pth{\resp $X_\et$} to the category of $\cO_{X_\ket}$-modules is fully faithful \Pth{\Refcf{} Proposition \ref{prop-et-coh-descent}}.
\end{cor}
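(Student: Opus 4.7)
The plan is to compute the higher direct images pointwise and reduce everything to Theorem \ref{thm-O-ket-sheaf}. Recall that for any abelian sheaf $\cF$ on $X_\ket$, the sheaf $R^i\varepsilon_{?,*}(\cF)$ (for $? = \an$ or $\et$) is the sheafification of the presheaf $U \mapsto H^i(U_\ket, \cF|_{U_\ket})$ on $X_?$. Since $X_\an$ and $X_\et$ admit a basis consisting of affinoid objects that are locally noetherian fs log adic spaces in their own right, it suffices to check the statement after evaluating on such affinoids.

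For the degree-zero part, I would unwind the definitions: if $U \in X_?$ is affinoid, then the strictly \'etale morphism $U \to X$ is in particular Kummer \'etale, so $\varepsilon_{?,*}(\cO_{X_\ket})(U) = \cO_{X_\ket}(U) = \cO_U(U) = \cO_{X_?}(U)$, which gives the canonical map $\cO_{X_?} \to \varepsilon_{?,*}(\cO_{X_\ket})$ and shows it is an isomorphism. For higher degrees, Theorem \ref{thm-O-ket-sheaf}(2) already supplies $H^i(U_\ket, \cO_{U_\ket}) = 0$ for $i > 0$ when $U$ is affinoid; since this holds on a basis, the sheafification vanishes, giving $R^i\varepsilon_{?,*}(\cO_{X_\ket}) = 0$ for $i > 0$.

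For the final statement about full faithfulness, I would argue by the projection formula combined with adjunction. Given two vector bundles $\cE, \cF$ on $X_?$, the sheaf $\mathcal{H}om(\cE, \cF)$ is again a vector bundle (hence coherent), so
\[
\Hom_{\cO_{X_\ket}}(\varepsilon_?^*\cE, \varepsilon_?^*\cF) \cong \Gamma\bigl(X_\ket, \varepsilon_?^*\mathcal{H}om(\cE, \cF)\bigr) \cong \Gamma\bigl(X_?, R\varepsilon_{?,*}\varepsilon_?^*\mathcal{H}om(\cE, \cF)\bigr),
\]
and by the projection formula (applicable because $\mathcal{H}om(\cE, \cF)$ is locally free) together with the first part of the corollary, the right-hand side equals $\Gamma\bigl(X_?, \mathcal{H}om(\cE, \cF)\bigr) = \Hom_{\cO_{X_?}}(\cE, \cF)$. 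The reference to Proposition \ref{prop-et-coh-descent} in the statement suggests the authors may bypass a formal projection-formula invocation by using \'etale coherent descent directly, which is fine too.

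The only mildly subtle point is the passage from the vanishing on the affinoid basis to the vanishing of $R^i\varepsilon_{?,*}$, but this is the standard fact that higher direct images are the sheafifications of cohomology presheaves, combined with the existence of a cofinal system of affinoid neighborhoods in both $X_\an$ and $X_\et$ around any point. So the real content is entirely carried by Theorem \ref{thm-O-ket-sheaf}, and the corollary should fall out in a few lines.
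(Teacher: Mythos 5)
Your proposal is correct and follows exactly the route the paper intends (the paper states this as an immediate corollary of Theorem \ref{thm-O-ket-sheaf} without writing out details): the vanishing of $R^i\varepsilon_{?,*}(\cO_{X_\ket})$ for $i>0$ comes from sheafifying the cohomology presheaf over the affinoid basis, the degree-zero identification is the sheaf property, and full faithfulness for locally free sheaves reduces, via $\mathcal{H}om$ and the (local) projection formula, to the first part. The only remark worth making is that your final step implicitly needs $R\varepsilon_{?,*}\varepsilon_?^*\cG \cong \cG$ for a vector bundle $\cG$, which is fine because it can be checked locally where $\cG$ is free and $R\varepsilon_{?,*}$ commutes with finite direct sums.
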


\begin{prop}\label{prop-M-ket-sheaf}
    Let $X$ be a locally noetherian fs log adic space.  Then the presheaf $\cM_{X_\ket}$ assigning $U \mapsto \cM_U(U)$ is a sheaf on $X_\ket$.  In particular, we also have a canonical isomorphism $\varepsilon_{\et, *}(\cM_{X_\ket}) \Mi \cM_X$.
\end{prop}
\begin{proof}
    The proof is similar to \cite[\aLem 3.5.1]{Kato:2021-lsfi-2}.  Since $\cM_X$ is already a sheaf on the \'etale topology, by replacing $X$ with its strict localization at a geometric point $\AC{x}$, it suffices to show the exactness of
    \[
        0 \to \cM_X(X) \to \cM_Y(Y) \rightrightarrows \cM_{Y \times_X Y}(Y \times_X Y),
    \]
    where $X = \Spa(R, R^+)$ admits a chart modeled on a sharp fs monoid $P \cong \overline{\cM}_{X, \AC{x}}$, for some strictly local ring $R$ \Pth{see Proposition \ref{prop-chart-stalk-chart-fs}}; and where $Y \to X$ is a standard Kummer \'etale cover with a Kummer chart $u: P \to Q$ with a sharp $Q$ such that the order of $G := \coker(u^\gp)$ is invertible in $R$ \Pth{see Lemma \ref{lem-ket-chart}}.  Note that $P$ is also sharp, because $u$ is injective \Pth{see Definition \ref{def-Kummer}}.

    Let $R' := \cO_Y(Y)$ and $R'' := \cO_{Y \times_X Y}(Y \times_X Y)$.  By Definition \ref{def-ket-std} and Proposition \ref{prop-ket-std}, we have $R' \cong R \ho_{f_1, R\Talg{P}, f_2} R\Talg{Q}$, where $f_1: R\Talg{P} \to R$ and $f_2: R\Talg{P} \to R\Talg{Q}$ are induced by the charts.  By Proposition \ref{prop-ket-std}, we have
    \[
        R'' \cong R \ho_{R\Talg{P}} R\Talg{(Q \oplus_P Q)^\Sat} \cong R \ho_{R\Talg{P}} R\Talg{Q \oplus G} \cong R'[G].
    \]
    Let $I$ \Pth{\resp $I'$, \resp $I''$} be the ideal of $R$ \Pth{\resp $R'$, \resp $R''$} generated by the image of $P \setminus \{ 0 \}$ \Pth{\resp $Q \setminus \{ 0 \}$, \resp $Q \setminus \{ 0 \}$}, which is a proper ideal because $P$ and $Q$ are sharp.  Since $I$ is contained in the maximal ideal of the strictly local ring $R$, and since $u: P \to Q$ is Kummer, $I'$ must be contained in all maximal ideals of $R'$.  The canonical morphism $R / I \to R' / I'$ is an isomorphism, because it is induced by compatibly completing both sides of the canonical isomorphism $R \otimes_{f_1, R\Talg{P}, f_3} R \Mi (R \otimes_{f_1, R\Talg{P}, f_2} R\Talg{Q}) \otimes_{f_5, R\Talg{Q}, f_4} R$, where:
    \begin{itemize}
        \item $f_1: R\Talg{P} \to R$ and $f_2: R\Talg{P} \to R\Talg{Q}$ are given by the charts, as above;

        \item $f_3: R\Talg{P} \to R$ and $f_4: R\Talg{Q} \to R$ are $R$-algebra homomorphisms defined by sending nonzero elements of $P$ and $Q$ to $0$, respectively; and

        \item $f_5: R\Talg{Q} \to R \otimes_{f_1, R\Talg{P}, f_2} R\Talg{Q}$ is the pullback of $f_1$ under $f_2$.
    \end{itemize}
    Since $I'$ is contained in all maximal ideals of $R'$, this forces $R'$ to be local.  Since $R \to R'$ is finite \Pth{by Proposition \ref{prop-ket-std}\Refenum{\ref{prop-ket-std-1}}}, $R'$ is strictly local as $R$ is.

    Let $V$ \Pth{\resp $V'$, \resp $V''$} be the subgroup of elements in $R^\times$ \Pth{\resp $(R')^\times$, \resp $(R'')^\times$} congruent to 1 modulo $I$ \Pth{\resp $I'$, \resp $I''$}.  Since $R$ and $R'$ are strictly local, and since $R'' \cong R'[G]$, where the order of $G$ is invertible in $R$ and hence in $R'$, we have compatible canonical isomorphisms
    \[
        R^\times / V \Mi (R / I)^\times,
    \]
    \[
        (R')^\times / V' \Mi (R' / I')^\times \cong (R / I)^\times,
    \]
    and
    \[
        (R'')^\times / V'' \Mi (R'' / I'')^\times \cong ((R' / I')[G])^\times \cong ((R / I)[G])^\times.
    \]

    By Lemma \ref{lem-O-ket-Cech}, we know that
    \[
        0 \to R \to R' \rightrightarrows R''
    \]
    is exact.  Since the injection $R \to R'$ is finite, we can identify $R$ as a subring of $R'$ over which $R'$ is integral.  Hence, it is elementary that $R^\times = (R')^\times \cap R$, and
    \[
        0 \to R^\times \to (R')^\times \rightrightarrows (R'')^\times
    \]
    is exact.  Moreover, we have $I = I' \cap R$ and $V = V' \cap R^\times$, and
    \[
        0 \to V \to V' \rightrightarrows V''
    \]
    is also exact.  By some diagram chasing, it suffices to show the exactness of
    \[
        0 \to \cM_X(X) / V \to \cM_Y(Y) / V' \rightrightarrows \cM_{Y \times_X Y}(Y \times_X Y) / V''.
    \]

    Since $\cM_X(X) \to R$ and $\cM_Y(Y) \to R'$ are associated with the pre-log structures $P \to R$ and $Q \to R'$, since $u: P \to Q$ is a Kummer chart, and since $P$ and $Q$ are sharp, we have compatible isomorphisms
    \[
        \cM_X(X) / V \cong (R^\times / V) \oplus P \cong (R / I)^\times \oplus P
    \]
    and
    \[
        \cM_Y(Y) / V' \cong ((R')^\times / V') \oplus Q \cong (R / I)^\times \oplus Q.
    \]
    Since the log structure $\cM_{Y \times_X Y}(Y \times_X Y) \to R''$ is associated with the pre-log structure $Q \oplus G \to R'' \cong R'[G]$ induced by the same $Q \to R'$ as above and by the identity map $G \to G$, we have
    \[
        \cM_{Y \times_X Y}(Y \times_X Y) / V'' \cong ((R / I)[G])^\times \oplus Q.
    \]
    Accordingly, the above sequence can be identified with
    \[
        0 \to (R / I)^\times \oplus P \to (R / I)^\times \oplus Q \rightrightarrows ((R / I)[G])^\times \oplus Q,
    \]
    where the double arrows are $(x, q) \mapsto (x, q)$ and $(x, q) \mapsto (x \mono{\overline{q}}, q)$, with $\overline{q}$ denoting the image of $q$ in $G = Q^\gp / u^\gp(P^\gp)$.  Thus, it suffices to note that the sequence
    \[
        0 \to P \to Q \rightrightarrows Q \oplus G,
    \]
    where the double arrows are $q \mapsto (q, 0)$ and $q \mapsto (q, \overline{q})$, is exact.
\end{proof}

As a byproduct, let us show that representable presheaves are sheaves on $X_\ket$.  The log scheme version can be found in \cite[\aThm 2.6]{Illusie:2002-fknle}, which can be further traced back to \cite[\aThm 3.1]{Kato:2021-lsfi-2}.
\begin{prop}\label{prop-ket-repr}
    Let $Y \to X$ be a morphism of locally noetherian fs log adic spaces.  Then the presheaf $\Mor_X(\,\cdot\,, Y)$ on $X_\ket$ is a sheaf.
\end{prop}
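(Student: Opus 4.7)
The plan is to reduce to the two generating types of Kummer \'etale coverings, namely strictly \'etale coverings and standard Kummer \'etale covers (Remark \ref{rem-ket-site-gen}, Lemma \ref{lem-Abhyankar-basic-cov}), and verify the sheaf axioms separately for each. For strictly \'etale coverings, descent of the underlying morphism of adic spaces is classical (Proposition \ref{prop-et-coh-descent} applied to representable functors), while descent of the log structure homomorphism follows from Proposition \ref{prop-M-ket-sheaf}, which asserts that $\cM_{X_\ket}$ is a sheaf. Since any Kummer \'etale covering can be refined by a composition of these two types, and since the sheaf property is stable under refinement and composition, it suffices to treat a \emph{single} standard Kummer \'etale cover.

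So let $U' \to U$ be a standard Kummer \'etale cover in $X_\ket$. Working \'etale locally on $U$, which is permissible thanks to the already-established case of strictly \'etale coverings, we may assume $U$ is affinoid with a chart modeled on a sharp fs monoid $P$, and that $U' = U \times_{U\Talg{P}} U\Talg{Q}$ for a Kummer chart $u\colon P \to Q$ with $|G| := |Q^\gp / u^\gp(P^\gp)|$ invertible in $\cO_U(U)$. By a further strictly finite \'etale base change adjoining the $|G|$-th roots of unity, we identify $G^D_U$ with the constant group $\Gamma_U$ for $\Gamma := \Hom(G, \cO_U(U)^\times)$, and by Proposition \ref{prop-ket-std}\Refenum{\ref{prop-ket-std-3}}--\Refenum{\ref{prop-ket-std-4}} the two projections $U' \times_U U' \rightrightarrows U'$ correspond to the second projection and the $\Gamma$-action, so a descent datum becomes precisely a $\Gamma$-invariant morphism $f'\colon U' \to Y$ of log adic spaces over $X$ (with $\Gamma$ acting trivially on $Y$).

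To descend, I split the data of $f'$ into its underlying morphism of adic spaces and its log-structure homomorphism $(f')^\sharp\colon (f')^{-1}(\cM_Y) \to \cM_{U'}$. For the former, Lemma \ref{lem-quot} identifies $U$ as the categorical quotient $U'/\Gamma$ in the category of adic spaces, so the $\Gamma$-invariant morphism of adic spaces underlying $f'$ factors uniquely through $U \to Y$. For the latter, Proposition \ref{prop-M-ket-sheaf} ensures that $\cM_{U_\ket}$ is a sheaf, so the $\Gamma$-invariant sections of $\cM_{U'}$ in the image of $(f')^\sharp$ descend uniquely to sections of $\cM_U$, and compatibility with the structure sheaf $\cO_{U_\ket}$ (which is a sheaf by Theorem \ref{thm-O-ket-sheaf}) is automatic from the corresponding compatibility on $U'$. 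Uniqueness throughout follows from the same sheaf properties.

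The main obstacle is the log-structure descent: one must ensure that the $\Gamma$-equivariance of $(f')^\sharp$, the factorization of the underlying morphism through $U$, and the compatibility with the structure morphisms all combine correctly so that the descended pair genuinely defines a morphism of log adic spaces $U \to Y$ over $X$. The auxiliary \'etale base change used to trivialize $G^D$ must also be shown not to introduce new data, which is handled by reapplying the already-proved strictly \'etale case to descend back.
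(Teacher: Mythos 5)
Your argument is correct in substance but takes a genuinely different route from the paper's. The paper avoids descent theory altogether: after reducing to affinoid $Y = \Spa(R, R^+)$ with a chart modeled on a sharp fs monoid $P$, it writes $\Hom(\,\cdot\,, Y)$ as a fiber product of the three presheaves $\cF: T \mapsto \Hom\bigl((R, R^+), (\cO_T(T), \cO_T^+(T))\bigr)$, $\cG: T \mapsto \Hom\bigl(P, \cM_T(T)\bigr)$, and $\cH: T \mapsto \Hom\bigl(P, \cO_T(T)\bigr)$, and shows each is a sheaf by choosing a presentation of $R$ as a quotient of a polynomial ring and a presentation $\bZ_{\geq 0}^r \rightrightarrows \bZ_{\geq 0}^s \to P$ of the monoid, thereby realizing each as a finite limit of copies of the sheaves $\cO_{X_\ket}$ and $\cM_{X_\ket}$ from Theorem \ref{thm-O-ket-sheaf} and Proposition \ref{prop-M-ket-sheaf}. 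You instead reduce to a single standard Kummer \'etale Galois cover $\pi: U' \to U$ and carry out explicit $\Gamma$-descent, using Lemma \ref{lem-quot} (the categorical quotient property) for the underlying morphism of adic spaces and the identifications $\cO_U \cong \bigl(\pi_*(\cO_{U'})\bigr)^\Gamma$ and $\cM_U \cong \bigl(\pi_*(\cM_{U'})\bigr)^\Gamma$ for the log-structure homomorphism $f^\sharp$. Both arguments ultimately rest on the same two inputs, namely the sheaf properties of $\cO_{X_\ket}$ and $\cM_{X_\ket}$; the paper's packaging disposes of separatedness and gluing for arbitrary coverings in one stroke and needs no compatibility checks, whereas yours is more geometric but must track the adjunction for $f^\sharp$, the compatibility with the structure morphisms, and the reduction to the two generating classes of coverings by hand (for which you should cite Lemma \ref{lem-Abhyankar-basic-cov}, as in the proof of Theorem \ref{thm-O-ket-sheaf}). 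One miscitation: Proposition \ref{prop-et-coh-descent} concerns coherent sheaves, not representable functors, so the strictly \'etale case you treat as classical is itself most cleanly established by the paper's limit argument rather than quoted.
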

\begin{proof}
    We follow the idea of \cite[\aThm 3.1]{Kato:2021-lsfi-2}.  It suffices to show that the presheaf $\Mor(\,\cdot\,, Y)$ on $X_\ket$ is a sheaf, because $\Mor_X(\,\cdot\,, Y)$ is just the sub-presheaf of sections of $\Mor(\,\cdot\,, Y)$ with compatible morphisms to $X$.  We may assume that $Y = \Spa(R, R^+)$ is affinoid with a chart modeled on a sharp fs monoid $P$.

    We claim that the presheaves $\cF: T \mapsto \Hom\big((R, R^+\big), (\cO_T(T), \cO_T^+(T))\big)$, $\cG: T \mapsto \Hom\big(P, \cM_T(T)\big)$, $\cH: T \mapsto \Hom\big(P, \cO_T(T)\big)$ on $X_\ket$, where the first $\Hom$ is in the category of Huber pairs, and where the latter two are in the category of monoids, are all sheaves.  As for the case of $\cF$, it suffices to show that $\cF': T \mapsto \Hom_\cont\bigl(R, \cO_T(T)\bigr)$ is a sheaf, where the homomorphisms are continuous ring homomorphisms; or that $\cF'': T \mapsto \Hom\bigl(R, \cO_T(T)\bigr)$ is a sheaf, where we consider all ring homomorphisms.  Consider any presentation $R \cong \bZ[T_i]_{i \in I} / (f_j)_{j \in J}$ of the ring $R$, so that $\cF''(T) \cong \ker(\cO_T(T)^I \to \cO_T(T)^J)$.  Then $\cF''$ is a sheaf on $X_\ket$ as $T \mapsto \cO_T(T)$ is \Pth{see Theorem \ref{thm-O-ket-sheaf}}.  As for the cases of $\cG$ and $\cH$, consider any presentation $\bZ_{\geq 0}^r \rightrightarrows \bZ_{\geq 0}^s \to P \to 0$ of the finitely generated monoid $P$, which exists by \cite[\aThm I.2.1.7]{Ogus:2018-LLG}.  Then $\cG(T)$ \Pth{\resp $\cH(T)$} is the equalizer of $\cM_T(T)^s \rightrightarrows \cM_T(T)^r$ \Pth{\resp $\cO_T(T)^s \rightrightarrows \cO_T(T)^r$}.  Hence, both presheaves are sheaves on $X_\ket$ as $T \mapsto \cM_T(T)$ and $T \mapsto \cO_T(T)$ are \Pth{see Theorem \ref{thm-O-ket-sheaf} and Proposition \ref{prop-M-ket-sheaf}}.

    By the claim just established, since $\Mor(\,\cdot\,, Y)$ \Pth{when $Y = \Spa(R, R^+)$ is modeled on $P$ as above} is the fiber product of the morphisms $\cF \to \cH$ and $\cG \to \cH$ induced by $P \to R$ and $\cM_T(T) \to \cO_T(T)$, respectively, it is also a sheaf, as desired.
\end{proof}

In the remainder of this subsection, we study coherent sheaves on the Kummer \'etale site.

\begin{defn}\label{def-ket-coh}
    Let $X$ be a locally noetherian fs log adic space.
    \begin{enumerate}
        \item An $\cO_{X_\ket}$-module $\cF$ is called an \emph{analytic coherent sheaf} if it is isomorphic to the inverse image of a coherent sheaf on the analytic site of $X$.

        \item An $\cO_{X_\ket}$-module $\cF$ is called a \emph{coherent sheaf} if there exists a Kummer \'etale covering $\{ U_i \to X \}_i$ such that each $\cF|_{U_i}$ is an analytic coherent sheaf.
    \end{enumerate}
\end{defn}

The following results are analogues of \cite[\aProp 6.5]{Kato:2021-lsfi-2}, the proof of which is completed in \cite[\aProp 3.27]{Niziol:2008-ktls-1}.

\begin{thm}\label{thm-ket-coh}
    Suppose that $X$ is an affinoid noetherian fs log adic space.  Then $H^i(X_\ket, \cF) = 0$, for all $i > 0$, in the following two situations:
    \begin{enumerate}
        \item\label{thm-ket-ana-coh} $\cF$ is an analytic coherent $\cO_{X_\ket}$-module.

        \item\label{thm-ket-gen-coh} $\cF$ is a coherent $\cO_{X_\ket}$-module, and $X$ is over an affinoid field $(k, k^+)$.
    \end{enumerate}

\end{thm}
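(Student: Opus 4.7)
The plan is to prove both parts via a Čech cohomology computation along the standard Kummer cover $X^{\frac{1}{n}}\to X$ (coming from the inclusion $P\hookrightarrow \frac{1}{n}P$ of charts), together with the \'etale analogue of Kiehl's property from Appendix \ref{app-Kiehl} and the $\cO(X)$-linear contracting homotopy underlying Lemma \ref{lem-O-ket-Cech}. First I would reduce to the case where $X$ admits a global chart modeled on a sharp fs monoid $P$: by Proposition \ref{prop-chart-stalk-chart-fs} this is possible \'etale locally on $X$, and then a standard Čech-to-derived spectral sequence argument for a strictly \'etale affinoid covering, combined with Corollary \ref{cor-O-ket-et-an} and the \'etale descent in Proposition \ref{prop-et-coh-descent}, completes the reduction \Pth{the vanishing on each member of the covering and on their iterated fiber products follows from the same theorem applied to those spaces}.

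For Part \Refenum{\ref{thm-ket-ana-coh}}, let $\{U_i\to X\}$ be any Kummer \'etale covering. By Lemma \ref{lem-Abhyankar-basic-cov}, it admits a finite refinement $\{V_j\to X\}_{j\in J}$ such that $\{V_j\times_X X^{\frac{1}{n}}\to X^{\frac{1}{n}}\}_{j\in J}$ is a strictly \'etale covering of $X^{\frac{1}{n}}$ for some integer $n$. Each iterated fiber product $V_{j_0}\times_X\cdots\times_X V_{j_k}$ is Kummer \'etale over $X$ and admits a strictly \'etale morphism to the corresponding iterated fiber product of $X^{\frac{1}{n}}$ over $X$, which remains an affinoid noetherian fs log adic space. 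The \'etale analogue of Kiehl's property applied to the pullback of $\cF$ to these iterated products kills higher \'etale Čech cohomology and reduces the question to the exactness of the Čech complex
\[
  C^\bullet(X^{\frac{1}{n}}/X,\cF):\ 0\to \cF(X)\to \cF(X^{\frac{1}{n}})\to \cF(X^{\frac{1}{n}}\times_X X^{\frac{1}{n}})\to\cdots.
\]
By Proposition \ref{prop-ket-std}, we have $(X^{\frac{1}{n}})^{\times_X(k+1)}\cong X^{\frac{1}{n}}\times_X(G^D_X)^{\times_X k}$ with $G=\frac{1}{n}P^\gp/P^\gp$, and Kiehl's property identifies this complex with $\cF(X)\otimes_{\cO(X)}C^\bullet(X^{\frac{1}{n}}/X,\cO_X)$. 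The $\cO(X)$-linear contracting homotopy constructed in the proof of Lemma \ref{lem-O-ket-Cech} \Pth{following \cite[\aLem 3.4.1]{Kato:1991-lsfi-2}} then tensors to give a contracting homotopy here, producing the required exactness.

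For Part \Refenum{\ref{thm-ket-gen-coh}}, since $\cF$ is coherent, there is a Kummer \'etale cover on which $\cF$ becomes analytic coherent. Refining as in Lemma \ref{lem-Abhyankar-basic-cov} and using the affinoid field hypothesis to ensure $n$ is invertible in $k$ \Pth{so that $X^{\frac{1}{n}}\to X$ is itself Kummer \'etale in $X_\ket$}, one sees that $\cF|_{X^{\frac{1}{n}}}$ is analytic coherent by \'etale descent of analytic coherent sheaves. Part \Refenum{\ref{thm-ket-ana-coh}} applied to $X^{\frac{1}{n}}$ kills higher Kummer \'etale cohomology there, and a final Čech-to-derived argument for the cover $X^{\frac{1}{n}}\to X$, using the same contracting homotopy as above, concludes the proof.

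The main technical point---which I expect to be the subtlest---is the identification $\cF(U)\cong \cF(X)\otimes_{\cO(X)}\cO(U)$ for affinoid $U\in X_\ket$ and $\cF$ analytic coherent, which allows the $\cO(X)$-linear contracting homotopy to extend to $\cF$-coefficients. This compatibility follows from the definition of analytic coherent sheaves as the inverse image along $\varepsilon_\an^*$ together with Kiehl's property \Ref{Appendix \ref{app-Kiehl}} applied to the affinoid noetherian adic space $U$, but must be handled with care since $U$ is merely Kummer \'etale and not analytic over $X$; in particular, one must verify that the pullback of the sheaf associated with a finite $\cO(X)$-module $M$ is indeed the sheaf associated with $M\otimes_{\cO(X)}\cO(U)$ on $U_\an$ and then on $U_\ket$.
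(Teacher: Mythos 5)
Your treatment of part \Refenum{\ref{thm-ket-ana-coh}} follows the paper's argument essentially verbatim: reduce via Proposition \ref{prop-chart-stalk-chart-fs}, Lemma \ref{lem-Abhyankar-basic-cov}, and Proposition \ref{prop-et-coh-descent} to the \v{C}ech complex of a standard Kummer cover, use the finiteness of the iterated fiber products over $X$ to identify $C^\bullet_\cF(Y/X)$ with $C^\bullet(Y/X)\otimes_{\cO_X(X)}\cF(X)$, and tensor the $\cO_X(X)$-linear contracting homotopy of Lemma \ref{lem-O-ket-Cech}.  Your closing remark about $\cF(U)\cong\cF(X)\otimes_{\cO(X)}\cO(U)$ is also correctly flagged; for the iterated fiber products this is unproblematic precisely because those are \emph{finite} over $X$ by Proposition \ref{prop-ket-std}\Refenum{\ref{prop-ket-std-1}}, so no completion of the tensor product is needed.

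Part \Refenum{\ref{thm-ket-gen-coh}}, however, has a genuine gap in the final step.  You propose to conclude with ``the same contracting homotopy as above'' applied to the \v{C}ech complex of $U:=X^{\frac{1}{n}}\to X$.  That homotopy is only available after identifying $C^\bullet_\cF(U/X)$ with $\cF(X)\otimes_{\cO(X)}C^\bullet(U/X)$, which requires $\cF$ to be analytic coherent on $X$ itself---i.e., determined by the $\cO(X)$-module $\cF(X)$.  In situation \Refenum{\ref{thm-ket-gen-coh}} the sheaf $\cF$ is only analytic coherent after pullback to $U$, and Example \ref{ex-ket-descent-fails} shows that such sheaves genuinely fail to descend: the terms $\cF(U^{\times_X j})\cong\cF(U)\otimes_{\cO(U)}\cO(U^{\times_X j})$ for $j\geq 1$ are controlled by $\cF(U)$, not by $\cF(X)$, so there is no tensor decomposition of the whole complex over $\cO(X)$ and no homotopy to transport.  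What is needed instead is the observation that, by Proposition \ref{prop-ket-std}\Refenum{\ref{prop-ket-std-3}}, the truncation $\cF(U)\to\cF(U\times_X U)\to\cdots$ is the bar complex computing the group cohomology $H^\bullet\bigl(G,\cF(U)\bigr)$ with $G=(\frac{1}{n}P)^\gp/P^\gp$, which vanishes in positive degrees because $|G|$ is invertible in $k$ and $\cF(U)$ is a $k$-vector space (the averaging projector $\frac{1}{|G|}\sum_{g\in G}g$ furnishes the splitting).  This is the essential use of the affinoid field hypothesis; in your write-up it is invoked only to make $n$ invertible so that $X^{\frac{1}{n}}\to X$ is Kummer \'etale, which does not by itself yield the required exactness.
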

\begin{proof}
    \begin{enumerate}
        \item As in the proof of Theorem \ref{thm-O-ket-sheaf}, by Lemma \ref{lem-Abhyankar-basic-cov} and Proposition \ref{prop-et-coh-descent}, it suffices to show the exactness of the \v{C}ech complex
            \[
                C^\bullet_\cF(Y / X): 0 \to \cF(X) \to \cF(Y) \to \cF(Y \times_X Y) \to \cdots,
            \]
            where $X$ is affinoid with a sharp fs chart $P$, and where $Y \to X$ is a standard Kummer cover.  By Proposition \ref{prop-ket-std}, the morphisms $Y \to X$, $Y \times_X Y \to X$, $Y \times_X Y \times_X Y \to X$, \etc are finite, and hence
            \[
                C^\bullet_\cF(Y / X) \cong C^\bullet(Y / X) \otimes_{\cO_X(X)} \cF(X),
            \]
            where $C^\bullet(Y / X)$ is as in Lemma \ref{lem-O-ket-Cech}.  Since the contracting homotopy used in the proof of Lemma \ref{lem-O-ket-Cech} \Pth{based on the proof of \cite[\aLem 3.28]{Niziol:2008-ktls-1}} is $\cO_X(X)$-linear, $C^\bullet_\cF(Y / X)$ is also exact, as desired.

        \item First assume that $X$ is modeled on a sharp fs monoid $P$.  By definition, there exists a Kummer \'etale covering $\{ U_i \to X \}_i$ such that each $\cF|_{U_i}$ is analytic coherent.  By Lemma \ref{lem-Abhyankar-basic-cov} and Proposition \ref{prop-et-coh-descent}, we may assume that $\cF|_U$ is analytic coherent, where $U = X^{\frac{1}{n}}$ for some $n$ invertible in $k$.  Let $G := (\frac{1}{n} P)^\gp / P^\gp$.  Since $H^j\bigl((U \times_X \cdots \times_X U)_\ket, \cF\bigr) = 0$, for all $j > 0$, by \Refenum{\ref{thm-ket-ana-coh}}, it suffices to show that $H^i\bigl(C^\bullet_\cF(U / X)\bigr) = 0$, for all $i > 0$.  As in the proof of Lemma \ref{lem-O-ket-Cech}, by Proposition \ref{prop-ket-std},
            \[
                \cO_{X_\ket}(U \times_X \cdots \times_X U) \cong \cO_{X_\ket}(U) \otimes_k k[G] \otimes_k \cdots \otimes_k k[G],
            \]
            and we can identify the complex
            \[
                \cF(U) \to \cF(U \times_X U) \to \cF(U \times_X U \times_X U) \to \cdots
            \]
            with the complex computing the group cohomology $H^i\bigl(G, \cF(U)\bigr)$.  Since $|G|$ is invertible in $k$ as $n$ is, and since $\cF(U)$ is a $k$-vector space, we have $H^i\bigl(G, \cF(U)\bigr) = 0$, for all $i > 0$.

            Let $\varepsilon_\et: X_\ket \to X_\et$ denote the natural projection of sites.  Then the argument above shows that $R^j \varepsilon_{\et, *}(\cF) = 0$, for all $j > 0$, and that $\varepsilon_{\et, *}(\cF)$ is a coherent sheaf on $X_\et$.  Since these statements are \'etale local in nature, they extend to all $X$ considered in the statement of the theorem, by Proposition \ref{prop-chart-stalk-chart-fs}.  Thus, we have $H^i(X_\ket, \cF) \cong H^i\bigl(X_\et, \varepsilon_{\et, *}(\cF)\bigr) = 0$, as desired, by Proposition \ref{prop-et-coh-descent}.  \qedhere
    \end{enumerate}
\end{proof}

Kummer \'etale descent of objects \Pth{coherent sheaves, log adic spaces, \etc} are usually not effective, mainly because fiber products of Kummer \'etale covers do not correspond to fiber products of structure rings.  Here is a standard counterexample.
\begin{exam}\label{ex-ket-descent-fails}
    Let $k$ be a nonarchimedean field.  As in Example \ref{ex-log-adic-sp-disc}, consider the unit disc $\bD = \Spa(k\Talg{T}, \cO_k\Talg{T})$ equipped with the log structure modeled on the chart $\bZ_{\geq 0} \to k\Talg{T}: \; 1 \mapsto T$.  By Proposition \ref{prop-ket-std}, we have a Galois standard Kummer \'etale cover $f_n: \bD \to \bD$ corresponding to the chart $\bZ_{\geq 0} \to \bZ_{\geq 0}: \; 1 \mapsto n$, where $n$ is invertible in $k$, with Galois group $\Grpmu_n$.  Then the ideal sheaf $\cI$ of the origin, a $\Grpmu_n$-invariant invertible sheaf on $\bD$, does not descend via $f_n$.
\end{exam}

Kummer \'etale descent of morphisms are more satisfactory.
\begin{prop}\label{prop-ket-descent-mor}
    Let $X$ be a locally noetherian fs log adic space, and let $f: Y \to X$ be a Kummer \'etale cover.  Let $\pr_1, \pr_2: Y \times_X Y \to Y$ denote the two projections.  Suppose that $\cE$ and $\cF$ are analytic coherent $\cO_{X_\ket}$-modules; and that $g': f^*(\cE) \to f^*(\cF)$ is a morphism on $Y$ such that $\pr_1^*(g') = \pr_2^*(g')$ on $Y \times_X Y$.  Then there exists a unique morphism $g: \cE \to \cF$ such that $f^*(g) = g'$.
\end{prop}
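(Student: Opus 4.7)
The plan is to reduce Proposition~\ref{prop-ket-descent-mor} to the full faithfulness of the analytic-to-Kummer-\'etale pullback on analytic coherent sheaves, thereby extending Corollary~\ref{cor-O-ket-et-an} beyond vector bundles.

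Write $\cE_\ket := \varepsilon_\an^*(\cE)$ and $\cF_\ket := \varepsilon_\an^*(\cF)$, where $\varepsilon_\an : X_\ket \to X_\an$ is the natural projection of sites. Since $\cO_{X_\ket}$ is a sheaf by Theorem~\ref{thm-O-ket-sheaf}, the presheaf $U \mapsto \Hom_{\cO_{U_\ket}}(\cE_\ket|_U, \cF_\ket|_U)$ on $X_\ket$ is automatically a sheaf. Consequently, the descent datum $(g', \pr_1^*(g') = \pr_2^*(g'))$ on the Kummer \'etale cover $f: Y \to X$ corresponds uniquely to a morphism $\widetilde g: \cE_\ket \to \cF_\ket$ of $\cO_{X_\ket}$-modules whose pullback to $Y$ recovers $g'$.

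It then remains to show that the canonical map
\[
    \Hom_{\cO_{X_\an}}(\cE, \cF) \to \Hom_{\cO_{X_\ket}}(\cE_\ket, \cF_\ket)
\]
is a bijection; applied to $\widetilde g$, this produces the required unique morphism $g$. This question is local in the analytic topology of $X$, so we may assume $X$ is affinoid noetherian. By the Kiehl property recorded in Appendix~\ref{app-Kiehl}, $\cE$ then admits a finite presentation $\cO_X^{\oplus m} \to \cO_X^{\oplus n} \to \cE \to 0$ by finite free $\cO_X$-modules. Applying $\Hom(-, \cF)$ and $\Hom(-, \cF_\ket)$ in the respective sites yields two left-exact sequences together with a compatible comparison map, so the five lemma reduces the claim to the case $\cE = \cO_X^{\oplus n}$. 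This in turn reduces to the equality $\cF_\ket(X) = \cF(X)$ on an affinoid $X$, which follows because $\cO_{X_\ket}(X) = \cO_X(X)$ by Theorem~\ref{thm-O-ket-sheaf} and because the pullback $\cF_\ket$ is computed by the tensor product $\cF(X) \otimes_{\cO_X(X)} \cO_{X_\ket}(-)$ on affinoid objects of $X_\ket$.

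The main obstacle is precisely the extension of Corollary~\ref{cor-O-ket-et-an} from vector bundles to arbitrary analytic coherent sheaves; once the finite presentation by finite free modules is in hand, the rest reduces to a routine five-lemma computation. The availability of such presentations on affinoids is the essential use of the locally noetherian hypothesis built into the Kummer \'etale framework.
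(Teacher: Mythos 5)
Your argument takes a genuinely different route from the paper's, and it splits into two halves of very different status. The first half is fine: for any sheaves of $\cO_{X_\ket}$-modules the internal Hom presheaf is a sheaf (this needs only that $\cE_\ket$ and $\cF_\ket$ are sheaves, not that $\cO_{X_\ket}$ is), and since a Kummer \'etale cover $f: Y \to X$ is a covering of the final object of $X_\ket$ along which $f^*$ is just restriction to the localized site $Y_\ket \cong {X_\ket}_{/Y}$, the descent datum $(g', \pr_1^*(g') = \pr_2^*(g'))$ glues uniquely to a morphism $\widetilde{g}: \cE_\ket \to \cF_\ket$. Because $\cE$ and $\cF$ are by hypothesis already $\cO_{X_\ket}$-modules, this formal step in fact already yields the proposition as literally stated; the paper instead works at the level of modules because what is needed downstream (\eg, in Lemma \ref{lem-ana-coh-rule}) is the module-level statement that the glued morphism comes from a map $\cE(X) \to \cF(X)$.

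The second half---the bijectivity of $\Hom_{\cO_{X_\an}}(\cE, \cF) \to \Hom_{\cO_{X_\ket}}(\cE_\ket, \cF_\ket)$---is where you place the weight of the proof, and it has a gap exactly where the content lives. The five-lemma reduction is sound, but it bottoms out at the assertion that $\cF_\ket(X) = \cF(X)$, justified by the claim that $\cF_\ket$ is computed by $\cF(X) \otimes_{\cO_X(X)} \cO_{X_\ket}(-)$ on affinoid objects. That claim is not formal: $\varepsilon_\an^*$ involves a sheafification, and knowing that it does not alter sections on affinoids is equivalent to the exactness of the \v{C}ech complex $0 \to N \to B \otimes_A N \to C \otimes_A N \to \cdots$ for standard Kummer covers, \ie, Lemma \ref{lem-O-ket-Cech} together with the identification $C^\bullet_\cF(Y / X) \cong C^\bullet(Y / X) \otimes_{\cO_X(X)} \cF(X)$ from the proof of Theorem \ref{thm-ket-coh}\Refenum{\ref{thm-ket-ana-coh}}. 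You cannot fill this by citing Lemma \ref{lem-ana-coh-rule}, since that lemma is itself deduced from the present proposition. The paper's proof attacks the module-level statement directly: after reducing to a standard Kummer cover by \'etale descent, it rewrites $\Hom_B(B \otimes_A M, B \otimes_A N)$ as $\Hom_A(M, B \otimes_A N)$ by tensor--hom adjunction, so that the equalizer property follows from the left exactness of $\Hom_A(M, \cdot)$ applied to the first three terms of that same \v{C}ech complex. The essential input is thus the same in both approaches; your write-up asserts it at the point where the paper proves it.
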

\begin{proof}
    By Lemma \ref{lem-Abhyankar-basic-cov} and Proposition \ref{prop-et-coh-descent}, we may assume that $X$ is affinoid and that $Y \to X$ is a standard Kummer cover.  Let $A := \cO_X(X)$, $B := \cO_{X_\ket}(Y)$, $C := \cO_{X_\ket}(Y \times_X Y)$, $M := \cE(X)$, and $N := \cF(X)$.  We need to show that
    \[
        0 \to \Hom_A(M, N) \to \Hom_B(B \otimes_A M, B\otimes_A N) \to \Hom_C(C \otimes_A M, C \otimes_A N)
    \]
    is exact, and where the third arrow is the difference between two pullbacks as usual. Equivalently, we need to show that
    \[
        0 \to \Hom_A(M, N) \to \Hom_A(M, B \otimes_A N) \to \Hom_A(M, C \otimes_A N)
    \]
    is exact.  By the left exactness of $\Hom_A(M, \,\cdot\,)$, we are reduced to showing that the sequence $0 \to N \to B \otimes_A N \to C \otimes_A N$ is exact.  But this is just the first three terms in the complex $C^\bullet_\cF(Y / X)$ in the proof of Theorem \ref{thm-ket-coh}\Refenum{\ref{thm-ket-ana-coh}}.
\end{proof}

To wrap up the subsection, let us introduce a convenient basis for the Kummer \'etale topology.
\begin{lem}\label{lem-basis-ket}
    Let $X$ be a locally noetherian fs log adic space.  Let $\cB$ be the full subcategory of $X_\ket$ consisting of affinoid adic spaces $V$ with fs global charts.  Then $\cB$ is a basis for $X_\ket$, and we have an isomorphism of topoi $X_\ket^\sim \Mi \cB^\sim$.
\end{lem}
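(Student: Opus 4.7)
The plan is to verify two things: (i) every object $U \in X_\ket$ admits a Kummer \'etale covering by objects of $\cB$, and (ii) the inclusion $\cB \hookrightarrow X_\ket$, together with the induced topology, yields the asserted equivalence of topoi by the standard comparison lemma for sites.

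First I would establish (i). Let $U \to X$ be an arbitrary object of $X_\ket$. Since $U$ is locally noetherian, we may cover $U$ by affinoid opens. Each such affinoid open $V \subset U$ is an object of $X_\ket$: the composition $V \hookrightarrow U \to X$ is Kummer \'etale because open immersions are strictly \'etale (hence Kummer \'etale) and Kummer \'etale morphisms are closed under composition by Proposition \ref{prop-ket-compos-bc}. Next, by Proposition \ref{prop-chart-stalk-chart-fs}, $V$ admits, \'etale locally at each geometric point, an fs chart modeled on $\overline{\cM}_{V,\AC{x}}$. Refining further if necessary, we may assume the \'etale neighborhoods providing such charts are themselves affinoid. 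The resulting strictly \'etale morphisms $V' \to V$ compose with $V \to X$ to produce Kummer \'etale morphisms $V' \to X$, again by Proposition \ref{prop-ket-compos-bc}, giving the desired covering of $U$ by objects in $\cB$.

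Next I would verify (ii) using the standard comparison lemma: if $\cB \subset X_\ket$ is a full subcategory and every object of $X_\ket$ admits a covering by objects of $\cB$, and if $\cB$ is endowed with the topology whose coverings are those coverings in $X_\ket$ consisting of objects of $\cB$, then restriction and left Kan extension induce an equivalence $X_\ket^\sim \Mi \cB^\sim$. I would simply check the hypotheses: $\cB$ is full in $X_\ket$ by construction; every object of $X_\ket$ admits a covering by $\cB$-objects by (i); and the induced topology on $\cB$ is well defined because any Kummer \'etale covering of an object in $\cB$ may be refined to one by $\cB$-objects (by applying (i) to each member of the covering).

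The main subtlety, though routine, is ensuring at step (i) that the affinoid \'etale refinements producing fs charts remain in $X_\ket$; this is automatic once one invokes the transitivity of Kummer \'etale morphisms. No closure under fiber products is needed for $\cB$ itself, since the comparison lemma only requires $\cB$ to be a \emph{generating} family with the induced topology, and coverings in $X_\ket$ by $\cB$-objects are closed under refinement by coverings in $\cB$. With these observations in place, the proof reduces to citing the comparison lemma.
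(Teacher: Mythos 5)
Your proof is correct and follows essentially the same route as the paper: the paper likewise reduces the statement to the comparison lemma \cite[III, 4.1]{SGA:4} and observes that every object of $X_\ket$ admits a covering by objects of $\cB$ (which the paper dismisses as clear, and which you justify via affinoid open coverings, Proposition \ref{prop-chart-stalk-chart-fs}, and stability of Kummer \'etale morphisms under composition from Proposition \ref{prop-ket-compos-bc}).
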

\begin{proof}
    By \cite[III, 4.1]{SGA:4}, it suffices to show that every object in $X_\ket$ has a covering by objects in $\cB$.  But this is clear.
\end{proof}

\begin{lem}\label{lem-ana-coh-rule}
    Let $X$ be a locally noetherian fs log adic space, and let $\cB$ be as in Lemma \ref{lem-basis-ket}.  Suppose that $\cF$ is a rule that functorially assigns to each $V \in \cB$ a finite $\cO_{X_\ket}(V)$-module $\cF(V)$ such that $\cF(V) \otimes_{\cO_V(V)} \cO_{V'}(V') \Mi \cF(V')$ for all $V' \to V$ that are either \'etale morphisms or standard Kummer \'etale covers.  Then $\cF$ defines an analytic coherent sheaf on $X_\ket$.
\end{lem}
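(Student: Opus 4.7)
The plan is first to promote $\cF$ to a sheaf on $X_\ket$ using $\cB$ as a basis (Lemma \ref{lem-basis-ket}), and then to exhibit this sheaf as the Kummer \'etale pullback of a coherent analytic sheaf on $X_\an$.

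To verify the Kummer \'etale sheaf property on $\cB$, let $\{V_i \to V\}_{i \in I}$ be a Kummer \'etale covering with $V$ affinoid noetherian carrying an fs chart modeled on some fs monoid $P$; using the splitting $P \cong \overline{P} \oplus P^*$ from Lemma \ref{lem-monoid-split}, we may assume this chart is sharp. By Lemma \ref{lem-Abhyankar-basic-cov}, the covering admits a finite refinement $\{W_j \to V\}_{j \in J}$ in $\cB$ such that, for some integer $n$ invertible on $V$, the base change $\{W_j \times_V V^{\frac{1}{n}} \to V^{\frac{1}{n}}\}_{j \in J}$ is a strictly \'etale covering. The sheaf condition factors into two independent pieces. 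For the standard Kummer \'etale Galois cover $V^{\frac{1}{n}} \to V$ of Proposition \ref{prop-ket-std}, the tensor product compatibility of $\cF$ combined with the $\cO_V(V)$-linear \v{C}ech exactness of Lemma \ref{lem-O-ket-Cech} (exactly as in the proof of Theorem \ref{thm-ket-coh}\Refenum{\ref{thm-ket-ana-coh}}) gives the descent. For the strictly \'etale refinement over $V^{\frac{1}{n}}$, the finite $\cO_{V^{\frac{1}{n}}}(V^{\frac{1}{n}})$-module $\cF(V^{\frac{1}{n}})$ yields an analytic coherent sheaf on $V^{\frac{1}{n}}_\an$, whose \'etale descent is the standard Tate--Kiehl sheaf property.

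Next, to realize the resulting sheaf as an analytic coherent one, we cover $X$ \'etale locally by affinoid strictly \'etale objects $U_\alpha \in \cB$, whose existence is secured by Proposition \ref{prop-chart-stalk-chart-fs}. The finite $\cO_{U_\alpha}(U_\alpha)$-modules $\cF(U_\alpha)$ yield analytic coherent sheaves $\widetilde{\cF}_\alpha$ on $U_{\alpha, \an}$, and the \'etale compatibility of $\cF$ endows these with a descent datum for the \'etale covering $\{U_\alpha \to X\}$. By \'etale descent for coherent analytic sheaves (Proposition \ref{prop-et-coh-descent} applied on the analytic side), they glue to a coherent analytic sheaf $\cG$ on $X_\an$. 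It then remains to verify that $\varepsilon_\an^*(\cG) \cong \cF$ as sheaves on $X_\ket$; this holds tautologically on the sub-basis of strictly \'etale objects in $\cB$, and extends to all of $\cB$ via the sheaf property of both sides combined with the same two-step refinement through standard Kummer \'etale covers.

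The main obstacle will be ensuring that the identification $\varepsilon_\an^*(\cG)(V) \cong \cF(V)$ is canonical for every $V \in \cB$, independent of the chosen $n$ and of the chosen strictly \'etale neighborhood. This reduces to applying the just-established sheaf property on $\cB$ at the level of the identification map, which is a routine diagram chase given the tensor product compatibilities of $\cF$ under \'etale morphisms and standard Kummer \'etale covers.
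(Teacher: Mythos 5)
Your argument is correct and is essentially the paper's proof unpacked: the paper's own proof is a one-line citation of Lemma \ref{lem-basis-ket}, Proposition \ref{prop-et-coh-descent}, and Proposition \ref{prop-ket-descent-mor}, which are precisely the basis, \'etale-descent, and gluing-of-identifications ingredients you deploy (together with the \v{C}ech exactness of Lemma \ref{lem-O-ket-Cech} for standard Kummer covers, used exactly as in Theorem \ref{thm-ket-coh}). The only improvement would be to invoke Proposition \ref{prop-ket-descent-mor} explicitly in your final paragraph, where the local identifications $\varepsilon_\an^*(\cG)(V) \cong \cF(V)$ must be glued along Kummer \'etale covers, rather than leaving that step as a diagram chase.
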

\begin{proof}
    This follows from Propositions \ref{prop-ket-descent-mor} and \ref{prop-et-coh-descent}, and Lemma \ref{lem-basis-ket}.
\end{proof}

\subsection{Descent of Kummer \'etale covers}\label{sec-ket-cov-descent}

\begin{defn}\label{def-fket}
    Let $X$ be a locally noetherian fs log adic space $X$.  Let $X_\fket$ denote the full subcategory of $X_\ket$ consisting of log adic spaces that are \emph{finite} Kummer \'etale over $X$.  Let $\underline{\Fket}$ denote the fibered category over the category of locally noetherian fs log adic spaces such that $\underline{\Fket}(X) = X_\fket$.
\end{defn}

The goal of this subsection is to show that Kummer \'etale covers satisfy effective descent in $\underline{\Fket}$.  We first study $X_\fket$ when $X$ is as in Examples \ref{ex-log-adic-sp-pt} and \ref{ex-log-adic-sp-pt-sep-cl}.

\begin{defn}\phantomsection\label{def-log-geom-pt}
    \begin{enumerate}
        \item A \emph{log geometric point} is a log point $\zeta = (\Spa(l, l^+), \cM, \alpha)$ \Pth{\Refcf{} Examples \ref{ex-log-adic-sp-pt} and \ref{ex-log-adic-sp-pt-sep-cl}} such that:
            \begin{enumerate}
                \item $l$ is a complete separably closed nonarchimedean field; and

                \item if $M := \Gamma(\Spa(l, l^+), \cM)$, then $\overline{M} = M / l^\times$ is uniquely $n$-divisible \Pth{see Definition \ref{def-monoid-n-div}} for all positive integers $n$ invertible in $l$.
            \end{enumerate}

        \item Let $X$ be a locally noetherian fs log adic space.  A \emph{log geometric point} of $X$ is a morphism of log adic spaces $\eta: \zeta \to X$ from a log geometric point $\zeta$.

        \item Let $X$ be a locally noetherian fs log adic space.  A \emph{Kummer \'etale neighborhood} of a log geometric point $\eta: \zeta \to X$ is a lifting of $\eta$ to some composition $\zeta \to U \Mapn{\phi} X$ in which $\phi$ is Kummer \'etale.
    \end{enumerate}
\end{defn}

\begin{constr}\label{constr-log-geom-pt}
    For each geometric point $\xi: \Spa(l, l^+) \to X$, let us construct some log geometric point $\widetilde{\xi}$ above it \Pth{\ie, the morphism $\widetilde{\xi} \to X$ of underlying adic spaces factors through $\xi \to X$} as follows.  By Proposition \ref{prop-chart-stalk-chart-fs}, up to \'etale localization on $X$, we may assume that $X$ admits a chart modeled on a sharp fs monoid $P$, so that we have a strict closed immersion $X \to X\Talg{P}$ as in Remark \ref{rem-def-chart-rel}.  We equip $\Spa(l, l^+)$ with the log structure $P^{\log}$ associated with the pre-log structure given by the composition of $P \to \cO_X(X) \to l$, so that $(\Spa(l, l^+), P^{\log})$ is an fs log point with a chart given by $P \to l$.  We shall still denote this fs log point by $\xi$.  For each positive integer $m$, let $P \Em \frac{1}{m} P$ be as in Definition \ref{def-Kummer-n-th}.  Consider
    \[
        \xi^{(\frac{1}{m})} := (\Spa(l, l^+) \times_{X\Talg{P}} X\Talg{\tfrac{1}{m} P})_\red,
    \]
    equipped with the natural log structure modeled on $\frac{1}{m} P$.  Note that $\xi^{(\frac{1}{m})}$ might differ from the $\xi^{\frac{1}{m}}$ in Definition \ref{def-Kummer-n-th}, because we are taking the reduced subspace, so that the underlying adic space of $\xi^{(\frac{1}{m})}$ is still isomorphic to $\Spa(l, l^+)$.  Then
    \[
        \widetilde{\xi} := \varprojlim_m \xi^{(\frac{1}{m})}
    \]
    where the inverse limit runs through all positive integers $m$ invertible in $l$, is a log geometric point above $\xi$.  The underlying adic space of $\widetilde{\xi}$ is isomorphic to $\Spa(l, l^+)$, endowed with the natural log structure modeled on
    \[
        P_{\bQ_{\geq 0}} := \varinjlim_m \tfrac{1}{m} P,
    \]
    where the direct limit runs through all positive integers $m$ invertible in $l$.
\end{constr}

\begin{lem}\label{lem-log-geom-pt}
    Let $\zeta \to X$ be a log geometric point of a locally noetherian fs log adic space.  Then the functor $\Sh(X_\ket) \to \underline{\Sets}: \, \cF \mapsto \cF_\zeta := \varinjlim \cF(U)$ from the category of sheaves on $X_\ket$ to the category of sets, where the direct limit is over Kummer \'etale neighborhoods $U$ of $\zeta$, is a fiber functor.  The fiber functors defined by log geometric points form a conservative system.
\end{lem}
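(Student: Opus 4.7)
The plan is to verify the two defining properties of a fiber functor---preservation of finite limits and of arbitrary colimits---and then establish the conservativity claim. By Proposition \ref{prop-chart-stalk-chart-fs}, I would first reduce to the case where $X$ admits a global chart modeled on a sharp fs monoid $P$, so that, by Construction \ref{constr-log-geom-pt}, $\zeta$ lies over a classical geometric point $\xi$ and factors canonically through each $X^{\frac{1}{n}} = X \times_{X\Talg{P}} X\Talg{\frac{1}{n} P}$ with $n$ invertible in the residue characteristic of $\xi$.

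The pivotal step is an Abhyankar reduction. By Lemma \ref{lem-Abhyankar-basic}, any Kummer \'etale morphism $U \to X$ becomes strictly \'etale after base change to $X^{\frac{1}{n}}$ for some $n$ invertible in $\kappa(\xi)$; strictness holds because the non-strict part of the Kummer chart becomes a finite group of invertible order, which contributes only to the classical \'etale structure. This shows that, inside the category $\cN_\zeta$ of Kummer \'etale neighborhoods of $\zeta$, the full subcategory $\cC_\zeta$ consisting of triples $(n, V \to X^{\frac{1}{n}}, \zeta \to V)$ with $V \to X^{\frac{1}{n}}$ strictly \'etale is cofinal: any $\zeta \to U \to X$ is refined by $\zeta \to U \times_X X^{\frac{1}{n}} \to X^{\frac{1}{n}}$ for suitable $n$, with the induced lift of $\zeta$ arising from the universal property of the fs fiber product.

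To verify finite limit preservation, it suffices that $\cN_\zeta^{\mathrm{op}}$ is filtered, which via cofinality reduces to that of $\cC_\zeta^{\mathrm{op}}$. Given $(V_i \to X^{\frac{1}{n_i}})$ for $i = 1, 2$, set $n := \mathrm{lcm}(n_1, n_2)$, pull both back to a common base $X^{\frac{1}{n}}$, and form the classical \'etale fiber product $V_1 \times_{X^{\frac{1}{n}}} V_2$, which carries the desired lift. For parallel arrows $f, g \colon V_1 \to V_2$ in $\cC_\zeta$ agreeing at $\zeta$, since $V_2 \to X^{\frac{1}{n}}$ is strictly \'etale (after a common refinement), its diagonal $V_2 \to V_2 \times_{X^{\frac{1}{n}}} V_2$ is an open-and-closed immersion, and its preimage under $(f, g)$ is an open-and-closed Kummer \'etale neighborhood of $\zeta$ in $V_1$ on which $f = g$. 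Preservation of arbitrary colimits is then automatic, since the stalk functor is defined by a filtered colimit of section evaluations, which commutes with sheafification and hence with colimits in $\Sh(X_\ket)$.

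For conservativity, suppose $\phi \colon \cF \to \cG$ induces an isomorphism $\phi_\zeta$ at every log geometric point $\zeta$. By Lemma \ref{lem-basis-ket}, to show $\phi$ is an isomorphism it suffices to verify bijectivity of sections on the basis $\cB$ after Kummer \'etale localization; using the Abhyankar reduction, this can be tested in the strictly \'etale topology of each $X^{\frac{1}{n}}$. Since every classical geometric point of $X^{\frac{1}{n}}$ can be promoted to a log geometric point of $X$ lying above it (again via Construction \ref{constr-log-geom-pt}), the hypothesis on stalks at log geometric points translates into the analogous hypothesis at classical geometric points on each $X^{\frac{1}{n}}$, and the classical conservativity of geometric points for the \'etale site of locally noetherian adic spaces then yields the claim. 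The main obstacle lies in the careful bookkeeping of the Abhyankar reduction---ensuring that lifts of $\zeta$ behave naturally through the tower $\{X^{\frac{1}{n}}\}$ and that fs-fiber-products in $\cC_\zeta$ coincide with ordinary \'etale fiber products---after which all three substantive claims inherit from their classical \'etale counterparts.
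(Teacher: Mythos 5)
Your argument for the first assertion takes a genuinely different route from the paper's, and your argument for the second has a gap.

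For filteredness, the paper is far more direct: given two Kummer \'etale neighborhoods $U_1, U_2$ of $\zeta$, it produces a point of $U_1 \times_X U_2$ over the two supports by Proposition \ref{prop-lem-four-pt} (the Four Point Lemma), whose hypothesis of exactness is supplied by Remark \ref{rem-Kummer-exact}; no charts and no tower $\{X^{\frac{1}{n}}\}$ are needed. Your Abhyankar reduction is workable and has the appeal of pushing everything onto classical \'etale fiber products, but it carries real bookkeeping debt that you only gesture at: you must show that an \emph{arbitrary} log geometric point $\zeta = (\Spa(l, l^+), M)$ --- not just one produced by Construction \ref{constr-log-geom-pt} --- factors compatibly through the whole tower, which uses that $M$ itself (not only $\overline{M}$) is $n$-divisible because $l^\times$ is; you must lift $\zeta$, which is \emph{not} fs, along fs fiber products (this works because $\cM_\zeta$ is integral and saturated, so morphisms out of $\zeta$ factor through saturations as in the proof of Proposition \ref{prop-adj-int-sat}, but it is exactly the point the Four Point Lemma is there to handle); and your equalizer step uses the diagonal of $V_2 \to X^{\frac{1}{n}}$ even though $f, g$ are a priori only morphisms over $X$ --- you should use the diagonal of the Kummer \'etale morphism $V_2 \to X$ instead.

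The conservativity argument does not close up as written. The stalk $\cF_\zeta$ is a colimit over the \emph{entire} tower, whereas the \'etale stalk of $\varepsilon_{n, *}(\cF)$ at a classical geometric point $\AC{x}$ of $X^{\frac{1}{n}}$ is only the invariants of $\cF_\zeta$ under the local Galois group at level $n$ (compare Lemma \ref{lem-ket-to-et-stalk}). So knowing that $\phi_\zeta$ is an isomorphism for all log geometric points does not let you ``test in the strictly \'etale topology of each $X^{\frac{1}{n}}$'' and quote classical conservativity there: even if $\varepsilon_{n, *}(\phi)$ were an isomorphism for every $n$, recovering $\phi$ from all of its pushforwards down the tower is itself a statement you would want to check on log-geometric stalks, which is circular. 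The intended argument is the standard one: every point of every $U \in X_\ket$ is the support of a lift of a log geometric point of $X$, and since the neighborhood categories are filtered, agreement (or existence) of sections at a stalk propagates to a Kummer \'etale neighborhood; such neighborhoods cover $U$, and the sheaf axiom finishes the proof.
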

\begin{proof}
    By Proposition \ref{prop-lem-four-pt} and Remark \ref{rem-Kummer-exact}, the category of Kummer \'etale neighborhood of $\zeta$ is filtered, and hence the first statement follows.  Since every point of $X$ admits some geometric point and hence some log geometric point above it \Pth{see Construction \ref{constr-log-geom-pt}}, and since every object $U$ in $X_\ket$ is covered by liftings of log geometric points of $X$, the second statement also follows.
\end{proof}

\begin{defn}\label{def-G-FSets}
    For each profinite group $G$, let $G\Utext{-}\underline{\FSets}$ denote the category of finite sets \Pth{with discrete topology} with continuous actions of $G$.
\end{defn}

\begin{defn}\label{def-mu}
    Let $l$ be a separably closed field.  For each positive integer $m$, let $\Grpmu_m(l)$ denote the group of $m$-th roots of unity in $l$.  Let $\Grpmu_\infty(l) := \varinjlim_m \Grpmu_m(l)$ and $\widehat{\bZ}'(1)(l) := \varprojlim_m \Grpmu_m(l)$, where the limits run through all positive integers $m$ invertible in $l$.  When $\chr(l) = 0$, we shall write $\widehat{\bZ}(1)(l)$ instead of $\widehat{\bZ}'(1)(l)$.  When there is no risk of confusion in the context, we shall simply write $\Grpmu_m$, $\Grpmu_\infty$, and $\widehat{\bZ}'(1)$, without the symbols $(l)$.
\end{defn}

\begin{prop}\label{prop-fket-log-pt}
    Let $\xi = (\Spa(l, l^+), \cM)$ be an fs log point with $l$ complete \Pth{by our convention} and separably closed.  Let $M := \cM_\xi$ and so $\overline{M} \cong M / l^\times$.  Let $\widetilde{\xi}$ be a log geometric point constructed as in Construction \ref{constr-log-geom-pt}.  Then the functor $F_{\widetilde{\xi}}: Y \mapsto \Hom_\xi(\widetilde{\xi}, Y)$ induces an equivalence of categories
    \[
        \xi_\fket \cong \Hom\bigl(\overline{M}^\gp, \widehat{\bZ}'(1)(l)\bigr)\Utext{-}\underline{\FSets}.
    \]
\end{prop}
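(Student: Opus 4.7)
The plan is to exhibit $\widetilde{\xi} \to \xi$ as a pro-universal Kummer \'etale cover whose group of deck transformations is the claimed fundamental group, and then to decompose every $Y \in \xi_\fket$ into standard Kummer \'etale sub-covers of $\widetilde{\xi} \to \xi$.

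First, using Example \ref{ex-log-adic-sp-pt-fs} and Lemma \ref{lem-fs-split-int}, I would choose a splitting giving a chart of $\xi$ modeled on the toric monoid $P := \overline{M}$, so that $\cM \cong l^\times \oplus P$ with $P \setminus \{ 0 \}$ mapping to $0 \in l$. Construction \ref{constr-log-geom-pt} then realizes $\widetilde{\xi}$ as $\Spa(l, l^+)$ with chart modeled on $P_{\bQ'} := \varinjlim_n \tfrac{1}{n} P$, where $n$ ranges over positive integers invertible in $l$. For each such $n$, Proposition \ref{prop-ket-std} identifies the reduction $\xi^{(\frac{1}{n})}$ of $\xi^{\frac{1}{n}} := \xi \times_{\xi\Talg{P}} \xi\Talg{\frac{1}{n} P}$ as a standard finite Kummer \'etale Galois cover of $\xi$ with Galois group $\Hom(P^\gp, \Grpmu_n(l))$, and $\widetilde{\xi} = \varprojlim_n \xi^{(\frac{1}{n})}$ produces $\pi := \Aut_\xi(\widetilde{\xi}) = \varprojlim_n \Hom(P^\gp, \Grpmu_n(l)) = \Hom(\overline{M}^\gp, \widehat{\bZ}'(1)(l))$. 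More directly, by Proposition \ref{prop-ket-repr} and Remark \ref{rem-def-chart}, a $\xi$-automorphism of $\widetilde{\xi}$ is a morphism of monoids $P_{\bQ'} \to l^\times \oplus P_{\bQ'}$ fixing $P$, hence a character $P_{\bQ'}^\gp / P^\gp \to \Grpmu_\infty(l)$, giving the same group. The induced $\pi$-action on $F_{\widetilde{\xi}}(Y)$ is continuous since any $Y \in \xi_\fket$ is trivialized by pullback to some $\xi^{(\frac{1}{n})}$ by Lemma \ref{lem-Abhyankar-basic}.

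Next I would verify the equivalence first on standard covers $Y_Q := \xi \times_{\xi\Talg{P}} \xi\Talg{Q}$ for fs Kummer extensions $P \hookrightarrow Q$ with $G_Q := Q^\gp / P^\gp$ of order invertible in $l$. By Remark \ref{rem-def-chart} and Proposition \ref{prop-ket-repr}, $\Hom_\xi(Y_Q, Y_{Q'}) \cong \Hom_P(Q', Q)$, while the Kummer condition embeds $Q$ into $P_{\bQ'}$ and yields $F_{\widetilde{\xi}}(Y_Q) \cong \Hom_P(Q, P_{\bQ'})$, a $\pi$-torsor of size $|G_Q|$; the induced map on $\Hom$-sets then matches $\pi$-equivariant maps between coset sets by direct inspection. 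For essential surjectivity onto $\pi\Utext{-}\underline{\FSets}$, every finite continuous $\pi$-set is a disjoint union of orbits $\pi / H$, and by Pontryagin duality for the discrete torsion group $P_{\bQ'}^\gp / P^\gp$ every open finite-index $H \leq \pi$ is the annihilator of some finite subgroup, which is $Q^\gp / P^\gp$ for some fs Kummer extension $P \hookrightarrow Q \hookrightarrow P_{\bQ'}$; the quotient $\pi / H$ is then realized by an intermediate cover between $\xi$ and the corresponding Galois $Y_Q$.

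Finally, I must show that every $Y \in \xi_\fket$ decomposes as a disjoint union of such standard covers $Y_{Q_j}$. By Lemma \ref{lem-Abhyankar-basic}, there is some $n$ invertible in $l$ such that $Y \times_\xi \xi^{\frac{1}{n}}$ has finite \'etale underlying morphism of adic spaces; since the reduction of $\xi^{\frac{1}{n}}$ is $\Spa(l, l^+)$ with $l$ separably closed and finite \'etale morphisms are invariant under nilpotent thickenings, $Y \times_\xi \xi^{\frac{1}{n}}$ is a disjoint union of copies of $\xi^{\frac{1}{n}}$ as fs log adic spaces. The Galois group $G^D_n := \Hom(P^\gp, \Grpmu_n(l))$ then acts on this finite set of components, and Galois descent along $\xi^{\frac{1}{n}} \to \xi$ reassembles $Y$ as a disjoint union of intermediate covers $Y_Q$ with $P \subset Q \subset \frac{1}{n} P$, which are standard Kummer \'etale covers.

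The main obstacle will be this final Galois-descent step, since general effective descent of finite Kummer \'etale covers along Kummer \'etale covers is not yet formally established at this point of the paper (indeed, one principal goal of the rest of the subsection is to prove exactly such statements). A workable strategy is to descend directly using Proposition \ref{prop-ket-descent-mor} applied to the structure sheaves and the sheaves $\cM$: the $G^D_n$-equivariant trivialization of $Y \times_\xi \xi^{\frac{1}{n}}$ reduces the problem to classifying continuous $G^D_n$-sets, whose orbits correspond by the classical Galois correspondence for finite groups to intermediate monoids $P \subset Q \subset \tfrac{1}{n} P$, and one then checks by hand, using the explicit coordinate rings of $Y_Q$ from Proposition \ref{prop-ket-std}, that the descended object is indeed $\bigsqcup_j Y_{Q_j}$.
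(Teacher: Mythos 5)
Your overall architecture agrees with the paper's: both identify $\Aut_\xi(\widetilde{\xi})$ with $\Hom\bigl(\overline{M}^\gp, \widehat{\bZ}'(1)(l)\bigr)$, prove full faithfulness by exhibiting both Hom-sets as principally homogeneous under compatible actions of $\Aut_\xi(\xi_{Q_2})$, and get essential surjectivity by Pontryagin duality between finite quotients of the Galois group and finite subgroups of $(P_{\bQ_{\geq 0}})^\gp / P^\gp$, whose preimages in $P_{\bQ_{\geq 0}}$ supply the monoids $Q$. Two points of divergence are worth recording. First, your reduction of a general $Y \in \xi_\fket$ to a disjoint union of standard covers goes through base change to $\xi^{\frac{1}{n}}$ followed by Galois descent; you rightly flag that the general descent machinery (Theorem \ref{thm-fket-descent}) is proved \emph{after} this proposition and in fact uses it, so invoking it would be circular. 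The paper sidesteps this entirely: since the \'etale topos of $\Spa(l, l^+)$ is trivial, Lemma \ref{lem-ket-chart} gives a \emph{global} sharp Kummer chart $P \to Q$ for $Y \to \xi$ directly, so $Y \to \xi_Q$ is strictly finite \'etale onto an Artinian local adic space with separably closed residue field $l$ and hence splits completely. Your workaround via orbit decomposition and explicit coordinate rings can be made to work, but it re-derives by hand what the chart lemma gives for free; I would replace that final paragraph with the direct argument.

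Second, a genuine computational slip: your formulas $\Hom_\xi(Y_Q, Y_{Q'}) \cong \Hom_P(Q', Q)$ and $F_{\widetilde{\xi}}(Y_Q) \cong \Hom_P(Q, P_{\bQ_{\geq 0}})$ drop the units. Taken literally, $\Hom_P(Q, P_{\bQ_{\geq 0}})$ is a \emph{singleton}: since $P_{\bQ_{\geq 0}}$ is sharp and uniquely divisible, an element $q$ with $nq = u(p)$ has exactly one admissible image, namely $\tfrac{1}{n}p$. The correct identifications are $\Hom_\xi(\xi_{Q_1}, \xi_{Q_2}) \cong \Hom_P(Q_2, l^\times \oplus Q_1)$ and $F_{\widetilde{\xi}}(\xi_Q) \cong \Hom_{l^\times \oplus P}(l^\times \oplus Q, l^\times \oplus P_{\bQ_{\geq 0}}) \cong \Hom(Q^\gp / P^\gp, \Grpmu_\infty)$: the entire torsor structure of size $|Q^\gp / P^\gp|$ comes from twisting by roots of unity in $l^\times$, so the splitting $\cM \cong l^\times \oplus P$ must be carried along rather than quotiented out. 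Your stated conclusion (a $\pi$-torsor of size $|G_Q|$) is correct, but only after restoring the $l^\times$ factors in the intermediate computation.
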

\begin{proof}
    For simplicity, we shall omit the symbols $(l)$ as in Definition \ref{def-mu}.  Let $P := \overline{M}$, a sharp and fs monoid.  By Lemma \ref{lem-monoid-split}, we have some splitting $M \Mi l^\times \oplus P$ such that $P \Mapn{1 \oplus \Id} l^\times \oplus P \Mi M$ defines a chart for $\xi$.  For each $m$ invertible in $l$, the cover $\xi^{(\frac{1}{m})} \to \xi$ is given by $M \Mi l^\times \oplus P \Mapn{\Id \oplus [m]} l^\times \oplus P \Mi M$.  Note that any finite Kummer \'etale cover of $\xi$ is a finite disjoint union of fs log adic spaces of the form
    \[
        \xi_Q := \xi \times_{\xi\Talg{P}} \xi\Talg{Q},
    \]
    where $P \to Q$ is a Kummer homomorphism of sharp fs monoids whose cokernel is annihilated by some integer invertible in $l$.  We have
    \[
        F_{\widetilde{\xi}}(\xi_Q) = \Mor_\xi(\widetilde{\xi}, \xi_Q) \cong \Hom_{l^\times \oplus P}(l^\times \oplus Q, l^\times \oplus P_{\bQ_{\geq 0}}) \cong \Hom(Q^\gp / P^\gp, \Grpmu_\infty).
    \]
    The last group has a natural transitive action of
    \[
    \begin{split}
        & \Aut_\xi(\widetilde{\xi}) \cong \Hom_P(P_{\bQ_{\geq 0}}, l^\times) \cong \Hom\bigl((P_{\bQ_{\geq 0}})^\gp / P^\gp, \Grpmu_\infty\bigl) \\
        & \cong \varprojlim_m \Hom\bigl(P^\gp \otimes_\bZ (\tfrac{1}{m} \bZ / \bZ), \Grpmu_m\bigr) \cong \Hom\bigl(P^\gp, \widehat{\bZ}'(1)\bigr).
    \end{split}
    \]
    Hence, $F_{\widetilde{\xi}}$ is indeed a functor from $\xi_\fket$ to $\Hom\bigl(\overline{M}^\gp, \widehat{\bZ}'(1)\bigr)\Utext{-}\underline{\FSets}$.

    Let us verify that $F_{\widetilde{\xi}}$ is fully faithful.  By working with connected components, it suffices to show that, for any $Q_1$ and $Q_2$, the natural map
    \begin{equation}\label{eq-prop-fket-log-pt-ff}
        \Mor_\xi(\xi_{Q_1}, \xi_{Q_2}) \to \Hom\bigl(\Hom(Q_1^\gp / P^\gp, \Grpmu_\infty), \Hom(Q_2^\gp / P^\gp, \Grpmu_\infty)\bigr)
    \end{equation}
    is bijective.  Note that $\Mor_\xi(\xi_{Q_1}, \xi_{Q_2}) \cong \Hom_P(Q_2, l^\times \oplus Q_1)$.  Since $P_{\bQ_{\geq 0}}$ is uniquely divisible, the sharp fs $Q_1$ and $Q_2$ monoids can be viewed as submonoids of $P_{\bQ_{\geq 0}}$.  If $Q_2 \not\subset Q_1$, then both sides of \Refeq{\ref{eq-prop-fket-log-pt-ff}} are zero.  Otherwise, $Q_2 \subset Q_1$, and \Refeq{\ref{eq-prop-fket-log-pt-ff}} sends the morphism induced by $Q_2 \Em Q_1$ in $\Hom_P(Q_2, l^\times \oplus Q_1)$ to the homomorphism induced by restriction from $Q_1^\gp / P^\gp$ to $Q_2^\gp / P^\gp$.  Consequently, \Refeq{\ref{eq-prop-fket-log-pt-ff}} is bijective, because both sides of \Refeq{\ref{eq-prop-fket-log-pt-ff}} are principally homogeneous under compatible actions of $\Aut_\xi(\xi_{Q_2}) \cong \Hom\bigl(Q_2^\gp / P^\gp, \Grpmu_\infty\bigr)$.

    Finally, let us verify that $F_{\widetilde{\xi}}$ is essentially surjective.  Since any discrete finite set $S$ with a continuous action of $\Hom\bigl(P^\gp, \widehat{\bZ}'(1)\bigr) \cong \Hom\bigl((P_{\bQ_{\geq 0}})^\gp / P^\gp, \Grpmu_\infty\bigr)$ is a disjoint union of orbits, we may assume the action on $S$ is transitive.  Then $S$ is a quotient space of $\Hom\bigl((P_{\bQ_{\geq 0}})^\gp / P^\gp, \Grpmu_\infty\bigr)$, which corresponds by Pontryagin duality to a finite subgroup $G \subset (P_{\bQ_{\geq 0}})^\gp / P^\gp$.  Let $Q$ denote the preimage of $G$ in $P_{\bQ_{\geq 0}}$.  Then $Q^\gp / P^\gp \cong G$ and $F_{\widetilde{\xi}}(\xi_Q) \cong S$, as desired.
\end{proof}

\begin{prop}\label{prop-fket-str-loc}
    Let $(X, \cM_X)$ be a locally noetherian fs log adic space.  Let $\xi = \Spa(l, l^+)$ be a geometric point of $X$, and let $X(\xi)$ be the strict localization of $X$ at $\xi$, with its log structure pulled back from $X$.  Without loss of generality, let us assume that $l \cong \AC{\kappa}(x)$, the completion of a separable closure of the residue field $\kappa(x)$ of $\cO_{X, x}$, for some $x \in X$.  Let $M := \cM_{X, \xi}$ and so $\overline{M} \cong M / l^\times$.  Let us view $\xi$ and $X(\xi)$ as log adic spaces by equipping them with the log structures pulled back from $X$.  Let $\widetilde{\xi}$ be the log geometric point over $\xi$ constructed as in Construction \ref{constr-log-geom-pt}.  Then the functor $H_{\widetilde{\xi}}: Y \mapsto \Mor_X(\widetilde{\xi}, Y)$ induces an equivalence of categories
    \[
        X(\xi)_\fket \cong \Hom\bigl(\overline{M}^\gp, \widehat{\bZ}'(1)(l)\bigr)\Utext{-}\underline{\FSets}.
    \]
    In addition, we have $H_{\widetilde{\xi}} = F_{\widetilde{\xi}} \circ \iota^{-1}$, where $F_{\widetilde{\xi}}$ is as in Proposition \ref{prop-fket-log-pt}, and $\iota^{-1}: X(\xi)_\fket \to \xi_\fket$ is the natural pullback functor defined by $\iota: \xi \to X(\xi)$.
\end{prop}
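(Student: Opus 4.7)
The plan is to first verify the compatibility $H_{\widetilde{\xi}} = F_{\widetilde{\xi}} \circ \iota^{-1}$, which is essentially tautological: by Construction \ref{constr-log-geom-pt} the morphism $\widetilde{\xi} \to X$ factors canonically as $\widetilde{\xi} \to \xi \Mapn{\iota} X(\xi) \to X$, and the factorization through $X(\xi)$ is unique by the universal property of strict localizations.  Hence for any $Y \in X(\xi)_\fket$, morphisms $\widetilde{\xi} \to Y$ over $X$ agree with morphisms $\widetilde{\xi} \to Y$ over $X(\xi)$, which by the universal property of $\iota^{-1}(Y) = Y \times_{X(\xi)} \xi$ correspond bijectively to morphisms $\widetilde{\xi} \to \iota^{-1}(Y)$ over $\xi$.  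Combined with Proposition \ref{prop-fket-log-pt}, it therefore suffices to show that the pullback functor $\iota^{-1}: X(\xi)_\fket \to \xi_\fket$ is an equivalence of categories.

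For this, using Proposition \ref{prop-chart-stalk-chart-fs} I would replace $X$ by an \'etale neighborhood of $\xi$ so that $X$ admits a global fs chart modeled on $P := \overline{M}$; via Lemma \ref{lem-fs-split-int} this induces compatible charts $P \to \cO_{X, \xi}$ and $P \to l$ for $X(\xi)$ and $\xi$.  For every Kummer homomorphism $u: P \to Q$ of sharp fs monoids with $|Q^\gp / u^\gp(P^\gp)|$ invertible in $l$, the standard Kummer \'etale cover
\[
    X(\xi)_Q := X(\xi) \times_{X(\xi)\Talg{P}} X(\xi)\Talg{Q}
\]
pulls back along $\iota$ to the cover $\xi_Q$ from the proof of Proposition \ref{prop-fket-log-pt}.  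A computation modeled on that proof, combined with the splittings $\cM_{X(\xi)_Q}(X(\xi)_Q) \cong \cO^\times_{X(\xi)_Q}(X(\xi)_Q) \oplus Q$ and $\cM_{\xi_Q}(\xi_Q) \cong l^\times \oplus Q$ provided by Lemma \ref{lem-fs-split-int}, identifies the sets $\Hom_{X(\xi)}(X(\xi)_{Q_1}, X(\xi)_{Q_2})$ and $\Hom_\xi(\xi_{Q_1}, \xi_{Q_2})$; the essential input is that the strictly Henselian ring $\cO_{X, \xi}$ has residue field containing $l$ as a dense subfield up to completion, so it contains exactly the same roots of unity of orders invertible in $l$.

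The main obstacle is essential surjectivity, namely showing that every finite Kummer \'etale cover $Y \to X(\xi)$ decomposes as a finite disjoint union of spaces of the form $X(\xi)_Q$.  To address this, I would use Lemma \ref{lem-Abhyankar-basic} to find some positive integer $n$ invertible in $l$ such that $Y \times_{X(\xi)} X(\xi)^{\frac{1}{n}} \to X(\xi)^{\frac{1}{n}}$ is strictly finite \'etale.  The structure ring of $X(\xi)^{\frac{1}{n}}$ is a finite free extension of the strictly Henselian ring $\cO_{X, \xi}$ and is therefore a finite product of strictly Henselian local rings; every strictly finite \'etale cover of such a space splits into disjoint copies of its connected components.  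By Proposition \ref{prop-ket-std}\Refenum{\ref{prop-ket-std-4}}, the cover $X(\xi)^{\frac{1}{n}} \to X(\xi)$ is Galois with group $G := \Hom((\tfrac{1}{n}P)^\gp / P^\gp, \Grpmu_n(l))$ (using that $l$ and hence $\cO_{X, \xi}$ contain all $n$-th roots of unity), so Galois descent reconstructs $Y$ from a finite $G$-set, and Pontryagin duality translates finite transitive $G$-sets into sharp fs monoids $Q$ with $P \subset Q \subset \tfrac{1}{n}P$, which recover precisely the connected components of the $X(\xi)_Q$'s.  Comparing this structural description with the analogous one for $\xi_\fket$ extracted from the proof of Proposition \ref{prop-fket-log-pt} then completes the verification of both essential surjectivity and fully faithfulness of $\iota^{-1}$.
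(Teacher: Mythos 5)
Your overall strategy matches the paper's: both reduce the statement to showing that $\iota^{-1}: X(\xi)_\fket \to \xi_\fket$ is an equivalence (the identity $H_{\widetilde{\xi}} = F_{\widetilde{\xi}} \circ \iota^{-1}$ being formal), and both prove full faithfulness by the same explicit computation, identifying $\Hom_{X(\xi)}(X(\xi)_{Q_1}, X(\xi)_{Q_2})$ with $\Hom_P(Q_2, R_{Q_1}^\times \oplus Q_1)$ and then with $\Hom_P(Q_2, l^\times \oplus Q_1)$ using that the $R_{Q_i}$ are strictly local with residue field \Pth{whose completion is} $l$, so that the two rings see the same roots of unity of invertible order.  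The genuine difference is in the structural claim that every object of $X(\xi)_\fket$ is a finite disjoint union of spaces $X(\xi)_Q$: the paper simply asserts this in one sentence, whereas you prove it, via Lemma \ref{lem-Abhyankar-basic}, the complete splitting of strictly finite \'etale covers over the strictly Henselian $X(\xi)^{\frac{1}{n}}$, and the Galois-theoretic classification of intermediate covers.  This is a real improvement in completeness, with one caution: you should not phrase the final step as an application of effective descent of finite Kummer \'etale covers, since in the paper that result \Pth{Theorem \ref{thm-fket-descent}} is proved \emph{after} and \emph{using} the present proposition; invoking it here would be circular.  Instead, having split $Y \times_{X(\xi)} X(\xi)^{\frac{1}{n}}$ completely, one should note that each connected component $Y'$ of $Y$ receives a finite surjective Kummer \'etale map from $X(\xi)^{\frac{1}{n}}$ \Pth{by Proposition \ref{prop-ket-mor} and Corollary \ref{cor-ket-op}}, realize $Y'$ as the quotient of $X(\xi)^{\frac{1}{n}}$ by the stabilizer subgroup of $G$ via Lemma \ref{lem-quot}, and identify that quotient with $X(\xi)_Q$ by the explicit invariant computation in the proof of Proposition \ref{prop-ket-std}\Refenum{\ref{prop-ket-std-4}} --- exactly as the paper does in the analogous situation of Lemma \ref{lem-Abhyankar-ref}.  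With that adjustment your argument is complete and, on this point, more detailed than the paper's.
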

Note that, if $x$ is an analytic point of $X$, then Proposition \ref{prop-fket-str-loc} follows immediately from Proposition \ref{prop-fket-log-pt}, because $\xi \cong X(\xi)$ in this case.  Nevertheless, the proof below works for non-analytic points as well.
\begin{proof}[Proof of Proposition \ref{prop-fket-str-loc}]
    It suffices to show that $\iota^{-1}$ is an equivalence of categories.  Write $P = \overline{M}$ and $X(\xi) = \Spa(R, R^+)$.  By Lemma \ref{lem-monoid-split}, we can choose a splitting $R^\times \oplus P \Mi M$ such that $P \Mapn{1 \oplus \Id} R^\times \oplus P \Mi M$ defines a chart for $X(\xi)$.  Note that objects in $X(\xi)_\fket$ \Pth{\resp $\xi_\fket$} are finite disjoint unions of fs log adic spaces of the form
    \begin{equation}\label{eq-prop-fket-str-loc-def-X-xi-Q}
        X(\xi)_Q := X(\xi) \times_{X(\xi)\Talg{P}} X(\xi)\Talg{Q}
    \end{equation}
    \Pth{\resp $\xi_Q$}, where $P \to Q$ is a Kummer homomorphism of sharp monoids.  Then $\iota^{-1}$ sends $X(\xi)_Q$ to $\xi_Q$, and hence is essentially surjective.  To see that $\iota^{-1}$ is fully faithful, it suffices to show that the canonical map
    \begin{equation}\label{eq-prop-fket-str-loc-ff}
        \Mor_{X(\xi)}(X(\xi)_{Q_1}, X(\xi)_{Q_2}) \to \Mor_\xi(\xi_{Q_1}, \xi_{Q_2})
    \end{equation}
    is bijective.  By definition, we have $X(\xi)_{Q_i} \cong \Spa(R_{Q_i}, R_{Q_i}^+)$, for $i = 1, 2$, where $R_{Q_i} = R \otimes_{R\Talg{P}} R\Talg{Q} \cong R \otimes_{R[P]} R[Q]$ are also strictly local rings with residue field $l$.  Therefore,
    \[
    \begin{split}
        & \Mor_{X(\xi)}(X(\xi)_{Q_1}, X(\xi)_{Q_2}) \cong \Hom_P(Q_2, R_{Q_1}^\times \oplus Q_1) \\
        & \cong \Hom_P(Q_2, l^\times \oplus Q_1) \cong \Mor_\xi(\xi_{Q_1}, \xi_{Q_2}),
    \end{split}
    \]
    and hence the map \Refeq{\ref{eq-prop-fket-str-loc-ff}} is bijective, as desired.
\end{proof}

Now, we are ready to prove the main result of this subsection; \ie, Kummer \'etale covers satisfy effective descent in the fibered category $\underline{\Fket}$.
\begin{thm}\label{thm-fket-descent}
    Let $X$ be a locally noetherian fs log adic space, and let $f: Y \to X$ be a Kummer \'etale cover.  Let $\pr_1, \pr_2: Y \times_X Y \to Y$ denote the two projections.  Suppose that $\breve{Y} \in Y_\fket$ and that there exists an isomorphism $\pr_1^{-1}(\breve{Y}) \Mi \pr_2^{-1}(\breve{Y})$ satisfying the usual cocycle condition.  Then there exists a unique $\breve{X} \in X_\fket$ \Pth{up to isomorphism} such that $\breve{Y} \cong \breve{X} \times_X Y$.
\end{thm}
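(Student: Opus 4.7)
The plan is to first establish uniqueness using the representability result of Proposition \ref{prop-ket-repr}, then prove existence by reducing to the case where $Y \to X$ is a standard Kummer \'etale Galois cover and forming the descent as a categorical quotient.

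For uniqueness, suppose $\breve{X}_1$ and $\breve{X}_2$ both descend $\breve{Y}$ compatibly with the descent datum. The two compatible isomorphisms $\breve{X}_i \times_X Y \Mi \breve{Y}$ yield an isomorphism $g': \breve{X}_1 \times_X Y \Mi \breve{X}_2 \times_X Y$ whose two pullbacks to $Y \times_X Y$ agree. Since $\Hom_X(\breve{X}_1, \breve{X}_2)$ is a sheaf on $X_\ket$ by Proposition \ref{prop-ket-repr}, the morphism $g'$ descends to a morphism $g: \breve{X}_1 \to \breve{X}_2$; applying the same argument to $(g')^{-1}$ shows that $g$ is an isomorphism.

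For existence, the gluing afforded by uniqueness allows us to work \'etale-locally on $X$; by Proposition \ref{prop-chart-stalk-chart-fs} we may assume $X = \Spa(R, R^+)$ is noetherian affinoid with a global sharp fs chart $P$. Lemma \ref{lem-Abhyankar-basic-cov} provides a finite refinement $\{V_j \to X\}$ of $Y \to X$ such that each composition $V_j \to X^{\frac{1}{n}} \to X$ factors with $V_j \to X^{\frac{1}{n}}$ strictly \'etale, for some integer $n$ invertible in $\cO_X$. Pulling back the descent datum along this refinement, we may replace $Y$ by $\widetilde{Y} := \coprod_j V_j$. The descent datum over $\widetilde{Y} \times_X \widetilde{Y}$ restricts to one over $\widetilde{Y} \times_{X^{\frac{1}{n}}} \widetilde{Y}$, and classical effective \'etale descent for finite \'etale morphisms (via Proposition \ref{prop-et-coh-descent} applied to the structure sheaves of algebras) produces $\breve{X}' \in (X^{\frac{1}{n}})_\fket$ with $\breve{X}' \times_{X^{\frac{1}{n}}} \widetilde{Y} \cong \breve{Y}$. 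The remainder of the descent datum then equips $\breve{X}'$ with an action of the Galois group $\Gamma := G^D_X$ of $X^{\frac{1}{n}} \to X$ (Proposition \ref{prop-ket-std}\Refenum{\ref{prop-ket-std-4}}), compatible with the canonical $\Gamma$-action on $X^{\frac{1}{n}}$. After a further strict \'etale base change adjoining the $n$-th roots of unity we may assume $\Gamma$ is constant; we then define $\breve{X} := \breve{X}'/\Gamma$ via Lemma \ref{lem-quot}, which produces a noetherian adic space finite over $X$.

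The main obstacle is verifying that $\breve{X} \to X$ is \emph{finite Kummer \'etale} and that the canonical morphism $\breve{X} \times_X X^{\frac{1}{n}} \to \breve{X}'$ is a $\Gamma$-equivariant isomorphism. For the isomorphism, the key point is that $|\Gamma|$ is invertible in $\cO_X(X)$, so the formation of $\Gamma$-invariants commutes with the base change along the finite locally free ring map $R \to \cO(X^{\frac{1}{n}})$ coming from Proposition \ref{prop-ket-std}, and this identifies $\breve{X} \times_X X^{\frac{1}{n}}$ with $\breve{X}'$ together with its prescribed $\Gamma$-action. For the Kummer \'etale property, we check fiber-by-fiber at each geometric point $\xi = \Spa(l, l^+) \to X$: by Proposition \ref{prop-fket-str-loc} applied both to $X$ and to $X^{\frac{1}{n}}$, the object $\breve{X}'|_{X(\xi)^{\frac{1}{n}}}$ corresponds to a finite continuous $\Hom\bigl(\overline{\cM}_{X^{\frac{1}{n}}, \widetilde{\xi}}^\gp, \widehat{\bZ}'(1)(l)\bigr)$-set with additional compatible $\Gamma$-action coming from the descent datum; these two actions assemble into a continuous action of $\Hom\bigl(\overline{\cM}_{X, \xi}^\gp, \widehat{\bZ}'(1)(l)\bigr)$ on the set $\breve{X}(X(\xi))$, since $\Gamma = G^D_X$ is Cartier dual to $G = (\tfrac{1}{n}P)^\gp / P^\gp$ and the extension of $\Hom(P^\gp, \widehat{\bZ}'(1)(l))$ by $\Hom(G, l^\times) = \Gamma$ is exactly $\Hom(\tfrac{1}{n}P^\gp, \widehat{\bZ}'(1)(l))$ in the limit. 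Under the equivalence of Proposition \ref{prop-fket-str-loc}, this produces a finite Kummer \'etale object of $X(\xi)_\fket$ recovering the base change of $\breve{X}$, which together with finiteness of $\breve{X} \to X$ and the fact that geometric points form a conservative family gives that $\breve{X} \to X$ is finite Kummer \'etale, completing the construction.
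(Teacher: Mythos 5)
Your overall architecture is the same as the paper's: reduce, via strict \'etale descent and refinement, to the case of a standard Kummer \'etale Galois cover with group $\Gamma$, reinterpret the descent datum as a $\Gamma$-action on the given finite Kummer \'etale object, form the candidate $\breve{X}$ as the categorical quotient via Lemma \ref{lem-quot}, and then verify the two key properties. Both verifications, however, have genuine gaps.

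For the identification $\breve{X} \times_X X^{\frac{1}{n}} \cong \breve{X}'$: the fiber product in the statement is taken in the category of fs log adic spaces (Convention \ref{conv-fib-prod}), which differs from the naive adic fiber product by an integralization/saturation step (Remark \ref{rem-fiber-prod-forget}); already $X^{\frac{1}{n}} \times_X X^{\frac{1}{n}}$ is not $\Spa$ of $\cO(X^{\frac{1}{n}}) \otimes_{\cO(X)} \cO(X^{\frac{1}{n}})$. Your computation that \Qtn{$\Gamma$-invariants commute with the finite locally free base change $R \to \cO(X^{\frac{1}{n}})$} at best describes the adic fiber product, and does not show that the canonical morphism from $\breve{X}'$ to the \emph{fs} fiber product $\breve{X} \times_X X^{\frac{1}{n}}$ is an isomorphism. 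The paper instead reduces to strict localizations and works inside the Galois category of Proposition \ref{prop-fket-str-loc}, where the point is that the $\Gamma$-equivariant surjection of finite sets $\breve{\Gamma} \twoheadrightarrow \Gamma$ corresponding to $\breve{Y} \to Y$ splits canonically as $\breve{\Gamma} \cong (\breve{\Gamma} / \Gamma) \times \Gamma$.

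More seriously, your proof that $\breve{X} \to X$ is finite Kummer \'etale checks the condition after pullback to each strict localization $X(\xi)$ and then appeals to \Qtn{geometric points form a conservative family}. Conservativity of geometric points is a statement about morphisms of sheaves on $X_\et$; it is not a criterion for a finite morphism of log adic spaces to be Kummer \'etale, and nothing in the paper lets you spread out from \Qtn{\'etale after base change to every $X(\xi)$} to \Qtn{\'etale on a neighborhood}. This is exactly where the paper's proof does its hardest work: it first shows $\breve{X} \to X$ is finite Kummer, reduces via Lemma \ref{lem-ket-log-et-ex} to log \'etaleness, chooses compatible charts $P \to \breve{P} \to \breve{Q}$ so that $\breve{X} \to X \times_{X\Talg{P}} X\Talg{\breve{P}}$ is strict, and then proves that this strict morphism is formally \'etale by a lifting argument: a unique lift exists after the formally log \'etale base change to $\breve{Y}$, and its uniqueness plus $\Gamma$-invariance (using again that $|\Gamma|$ is invertible, so invariants commute with base change) forces it to descend. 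Without an argument of this kind, the central claim of the theorem is not established.
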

\begin{proof}
    By \'etale descent \Pth{see Proposition \ref{prop-et-coh-descent}}, by \'etale localization on $X$, it suffices to prove the theorem in the case where $X$ is affinoid with a sharp fs chart $P$, and where $Y \to X$ is a standard Kummer \'etale cover induced by a Kummer homomorphism of sharp monoids $u: P \to Q$, with $G := Q^\gp / u^\gp(P^\gp)$ a finite group of order invertible in $\cO_X$.  By Proposition \ref{prop-ket-std}, up to further \'etale localization on $X$, we may assume that the morphism $Y \to X$ is a Galois cover with Galois group $\Gamma := \Hom(G, \cO_X(X)^\times)$; that $|G|$ is invertible in $\cO_X$, and $\cO_X(X)^\times$ contains all the $|G|$-th roots of unity; and that $Y \times_X Y \cong \Gamma_X \times_X Y$.  In this case, the descent datum is equivalent to an action of $\Gamma$ on $\breve{Y}$ over $X$ lifting the action of $\Gamma$ on $Y$ over $X$.  Let us write $X = (\Spa(R, R^+), \cM_X)$ and $\breve{Y} = (\Spa(\breve{S}, \breve{S}^+), \cM_{\breve{Y}})$.  By Lemma \ref{lem-quot}, $(\breve{R}, \breve{R}^+) := (\breve{S}^\Gamma, (\breve{S}^+)^\Gamma)$ is a Huber pair, and $\breve{X} := \Spa(\breve{R}, \breve{R}^+)$ is a noetherian adic space finite over $X$.  Moreover, the morphism $\breve{Y} \to \breve{X}$ is finite, open, surjective, and invariant under the $\Gamma$-action on $\breve{Y}$.  The \'etale sheaf of monoids $\cM_{\breve{X}}$ defined by $\cM_{\breve{X}}(U) := \bigl(\cM_{\breve{Y}}(\breve{Y} \times_{\breve{X}} U)\bigr)^\Gamma$, for each $U \in \breve{X}_\et$, is fine and saturated, and defines a log structure of $\breve{X}$.  We claim that the log adic space $\breve{X}$ thus obtained gives the desired descent.

    Let us first verify that the canonical morphism $\breve{Y} \to \breve{X} \times_X Y$ induced by the structure morphisms $\breve{Y} \to \breve{X}$ and $\breve{Y} \to Y$ is an isomorphism.  Since the morphism is between spaces that are finite over $X$, and since the formation of $\Gamma$-invariants is compatible with \Pth{strict} base change \Pth{as $|\Gamma|$ is invertible in $\cO_X$}, we may assume that $X = X(\xi)$ is strictly local, and so is $Y \cong X(\xi)_Q = X(\xi) \times_{X(\xi)\Talg{P}} X(\xi)\Talg{Q}$ \Pth{as in the proof of Proposition \ref{prop-fket-str-loc}}.  Without loss of generality, we may assume that $\breve{Y} \cong X(\xi)_{\breve{Q}} := X(\xi) \times_{X(\xi)\Talg{P}} X(\xi)\Talg{\breve{Q}}$ for some Kummer homomorphism of fs monoids $\breve{u}: P \to \breve{Q}$ \Pth{as in \Refeq{\ref{eq-prop-fket-str-loc-def-X-xi-Q}}, but without assuming that $\breve{Q}$ is sharp}, which is the composition of $u: P \to Q$ with some homomorphism $Q \to \breve{Q}$.  Under the equivalence of categories $H_{\widetilde{\xi}}: X(\xi)_\fket \cong \Hom\bigl(P^\gp, \widehat{\bZ}'(1)(l)\bigr)\Utext{-}\underline{\FSets}$ as in Proposition \ref{prop-fket-str-loc}, $\breve{Y} \to X$ corresponds to the set $\breve{\Gamma} := \Hom(\breve{Q}^\gp / \breve{u}^\gp(P^\gp), \Grpmu_\infty)$ with a $\Gamma$-action; $Y \to X$ corresponds to $\Gamma$ itself \Pth{with its canonical $\Gamma$-action}; and $\breve{Y} \to Y$ corresponds to a surjective $\Gamma$-equivariant map $\breve{\Gamma} \Surj \Gamma$.  Since $\breve{Y} \to X$ is Kummer, we have $(\cM_{\breve{Y}}^\Gamma)^\gp = (\cM_{\breve{Y}}^\gp)^\Gamma$, and hence $\breve{X} \cong X(\xi)_{\breve{P}}$ for the fs monoid $\breve{P} = \breve{Q} \cap \breve{P}^\gp$ such that $\Hom(\breve{P}^\gp / \breve{u}^\gp(P), \Grpmu_\infty) \cong \breve{\Gamma} / \Gamma$, by explicitly computing $\breve{R} = \breve{S}^\Gamma \cong (R \otimes_{R[P]} R[\breve{Q}])^\Gamma$ using the identifications in the proof of Proposition \ref{prop-fket-str-loc}.  In particular, $\breve{X} \to X$ is finite Kummer \'etale, and $\breve{Y} \to \breve{X}$ corresponds to the quotient $\breve{\Gamma} \to \breve{\Gamma} / \Gamma$ under $H_{\widetilde{\xi}}$.  Since $\breve{\Gamma}$ is an abelian group, the canonical map $\breve{\Gamma} \to (\breve{\Gamma} / \Gamma) \times \Gamma$ is bijective, and hence the corresponding canonical morphism $\breve{Y} \to \breve{X} \times_X Y$ is indeed an isomorphism.

    Consequently, $\breve{Y} \cong \breve{X} \times_X Y \cong \breve{X} \times_{\breve{X}\Talg{P}} \breve{X}\Talg{Q} \to \breve{X}$ is finite Kummer \'etale.  By construction, $\breve{X} \to X$ is also finite Kummer \Pth{firstly by assuming that $X$ is strictly local as above, and then by extending the identifications of charts over \'etale neighborhoods of $X$ in general}.  By Lemma \ref{lem-ket-log-et-ex}, it remains to show that $\breve{X} \to X$ is log \'etale.  Since $\breve{Y} \to Y$ is log \'etale, and since $Y \to X$ is a Kummer \'etale covering, this follows from Proposition \ref{prop-log-sm-et-ket-descent}, as desired.
\end{proof}

\begin{cor}\label{cor-fket-quot}
    Let $X$ be a locally noetherian fs log adic space, and let $f: Y \to X$ be a finite Kummer \'etale cover.  Let $\Gamma$ be a finite group which acts on $Y$ by morphisms over $X$.  Then the canonical morphisms $Y \to Z := Y / \Gamma \to X$ induced by $f$ \Pth{by Lemma \ref{lem-quot}} are both finite Kummer \'etale covers.
\end{cor}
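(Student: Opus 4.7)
The plan is to follow the same template used to construct $\breve{X}$ in the proof of Theorem \ref{thm-fket-descent}. First I would apply Lemma \ref{lem-quot} to the finite morphism $Y \to X$ and the $\Gamma$-action, obtaining the categorical quotient $Z := Y/\Gamma$ as a noetherian adic space with $Z \to X$ finite and $Y \to Z$ finite, open, surjective. I would equip $Z$ with the log structure $\cM_Z(U) := \bigl(\cM_Y(Y \times_Z U)\bigr)^\Gamma$ for $U \in Z_\et$, which is fine and saturated once we reduce to the case where $|\Gamma|$ is invertible in $\cO_X$. This reduction is harmless: after replacing $\Gamma$ with its image in $\Aut_X(Y)$, we may pass to an \'etale localization where $Y \to X$ is a disjoint union of standard Kummer \'etale covers given by some $u: P \to Q$ with $G := Q^\gp / u^\gp(P^\gp)$ of order invertible in $\cO_X$, and by Proposition \ref{prop-ket-std} the Galois group of such a cover is $G^D$, so $\Gamma$ embeds in $G^D$ and hence $|\Gamma|$ is invertible in $\cO_X$.

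Next, to verify that $Z \to X$ is finite Kummer \'etale, I would check it at each strict localization $X(\xi)$ using Proposition \ref{prop-fket-str-loc}. Under the equivalence $X(\xi)_\fket \cong \Hom\bigl(\overline{\cM}_{X, \xi}^\gp, \widehat{\bZ}'(1)(l)\bigr)\Utext{-}\underline{\FSets}$, the base change of $Y$ to $X(\xi)$ corresponds to a finite set with $\Gamma$-action, and the quotient corresponds to the $\Gamma$-orbit space, which is again an object in $X(\xi)_\fket$. This identifies $Z|_{X(\xi)}$ with an object of the form $X(\xi) \times_{X(\xi)\Talg{P}} X(\xi)\Talg{P'}$ for some sharp fs monoid $P'$ with $P \subset P' \subset Q$, so $Z \to X$ is finite Kummer in the sense of Definition \ref{def-ket-mor}. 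To upgrade this to Kummer \'etale, by Lemma \ref{lem-ket-log-et-ex} it suffices to show log \'etaleness, which I would establish by mimicking the formally log \'etale lifting argument at the end of the proof of Theorem \ref{thm-fket-descent}: given any thickening as in \Refeq{\ref{eq-log-Huber-thick}} for $\bigl(\cO_X(X), \cM_X(X)\bigr) \to \bigl(\cO_Z(Z), \cM_Z(Z)\bigr)$, pull back along $Y \to Z$ to get a thickening for $\bigl(\cO_X(X), \cM_X(X)\bigr) \to \bigl(\cO_Y(Y), \cM_Y(Y)\bigr)$, apply the formally log \'etale property of $Y \to X$ to obtain a unique lift, and extract the $\Gamma$-invariants using that $\Gamma$-invariants commute with arbitrary base change when $|\Gamma|$ is invertible.

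Finally, $Y \to Z$ is finite Kummer \'etale by Proposition \ref{prop-ket-mor}, since both $Y \to X$ and $Z \to X$ are Kummer \'etale and the morphism $Y \to Z$ is finite by Lemma \ref{lem-quot}. The main obstacle is verifying that the $\Gamma$-invariant log structure on $Z$ interacts correctly with arbitrary base change so that the formally log \'etale lifting descends; however, this is precisely the kind of argument already carried out in detail in the proof of Theorem \ref{thm-fket-descent}, where an analogous quotient construction was analyzed, so the same template applies essentially verbatim here.
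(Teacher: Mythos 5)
There is a genuine gap at your reduction step. You claim that, after replacing $\Gamma$ by its image in $\Aut_X(Y)$ and localizing so that $Y \to X$ becomes a disjoint union of standard Kummer \'etale covers, $\Gamma$ embeds into the Galois group $G^D$ and hence $|\Gamma|$ is invertible in $\cO_X$. This is false: $\Aut_X(Y)$ also contains permutations of isomorphic connected components, as well as deck transformations of the strictly finite \'etale part of the cover, and neither of these is constrained by the residue characteristics of $X$. For instance, take $Y = X \sqcup X$ with $\Gamma = \bZ/2$ swapping the two copies, over an $X$ all of whose residue fields have characteristic $2$; the corollary holds there \Pth{with $Z = X$}, but $|\Gamma|$ is not invertible. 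Everything downstream in your argument depends on this invertibility: the fineness and saturatedness of the invariant log structure, the identification of the ring-theoretic quotient of Lemma \ref{lem-quot} with the orbit-space quotient under the equivalence of Proposition \ref{prop-fket-str-loc}, and above all the step where you ``extract the $\Gamma$-invariants using that $\Gamma$-invariants commute with arbitrary base change when $|\Gamma|$ is invertible'' in the formally log \'etale lifting. In the proof of Theorem \ref{thm-fket-descent}, where this template is carried out, $\Gamma$ is the Galois group of a standard Kummer \'etale cover, so $|\Gamma|$ is invertible by Proposition \ref{prop-ket-std}\Refenum{\ref{prop-ket-std-4}}; that hypothesis is precisely what is missing here, so the template does not apply ``essentially verbatim.''

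The paper avoids the issue by a different device: it pulls the situation back along $f$ itself, considering $\breve{Y} := Y \times_X Y \to Y$, which \'etale locally splits completely into sections that $\Gamma$ merely permutes. Taking $\Gamma$-invariants of a finite product of rings under a pure permutation action requires no invertibility, so $\breve{Z} := \breve{Y}/\Gamma \to Y$ is visibly finite Kummer \'etale; one then descends $\breve{Z}$ along the Kummer \'etale cover $Y \to X$ via Theorem \ref{thm-fket-descent} and identifies the result with $Z \to X$, using \cite[\aLem 1.7.6]{Huber:1996-ERA} for the compatibility of the quotient with \'etale base change and reducing to the finite \'etale and standard Kummer \'etale cases. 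To salvage your approach you would need to replace the invertibility reduction by some such splitting argument.
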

\begin{proof}
    By Lemma \ref{lem-quot}, both morphisms $Y \to Z$ and $Z \to X$ are finite, and $Y \to Z$ is finite Kummer.  By Lemma \ref{lem-ket-log-et-ex} and Proposition \ref{prop-ket-mor}, it suffices to show that $Z \to X$ is finite Kummer \'etale.  Then the first projection $\breve{f}: \breve{Y} := Y \times_X Y \to Y$ is a pullback of $Y \to X$, which inherits an action of $\Gamma$.  By \cite[\aLem 1.7.6]{Huber:1996-ERA}, under the noetherian hypothesis, the formation of quotients by $\Gamma$ as in Lemma \ref{lem-quot} is compatible with base changes under \'etale morphisms of affinoid adic spaces.  By Proposition \ref{prop-ket-std} and Remark \ref{rem-ket-site-gen}, up to \'etale localization on $X$, we may assume that $Y \to X$ is a composition of a finite \'etale morphism $f_1$ and a standard Kummer \'etale cover $f_2$ as in Proposition \ref{prop-ket-std}, in which case $\breve{Y}$ is a disjoint union of sections of $\breve{f}$.  Then $\Gamma$ acts on $\breve{Y}$ by permuting such sections, and we have a quotient $\breve{Z} := \breve{Y} / \Gamma \to Y$, which is clearly finite Kummer \'etale.  Moreover, the pullbacks of $\breve{Z} \to Y$ to $\breve{Y}$ along the two projections are isomorphic to each other by interchanging the factors, and hence $\breve{Z} \to Y$ descends to a finite Kummer \'etale cover of $X$, by Theorem \ref{thm-fket-descent}.  We claim that this cover is canonically isomorphic to $Z \to X$.  Since the set of sections of $\breve{f}: \breve{Y} \to Y$ is a disjoint union of subsets formed by the sections of pullback of the finite \'etale morphism $f_1$, we can reduce the claim to the extremal cases where either $f = f_1$ is finite \'etale or $f = f_2$ is standard Kummer \'etale.  In the former case, the claim follows from the usual theory for finite \'etale covers of schemes, as in \cite[V]{SGA:1}.  In the latter case, the claim follows from Proposition \ref{prop-ket-std}\Refenum{\ref{prop-ket-std-4}}.
\end{proof}

\begin{defn}\label{def-ket-loc-const}
    Let $X$ be a locally noetherian fs log adic space, and let $\Lambda$ be a commutative ring.
    \begin{enumerate}
        \item A sheaf $\cF$ on $X_\ket$ is called a \emph{constant sheaf of sets} \Pth{\resp \emph{constant sheaf of $\Lambda$-modules}} if it is the sheafification of a constant presheaf given by some set $S$ \Pth{\resp some $\Lambda$-module $M$}.

        \item A sheaf $\cF$ on $X_\ket$ is called \emph{locally constant} if there exists a Kummer \'etale covering $\{ U_i \}_{i \in I} \to X$ such that all $\cF|_{U_i}$ are constant sheaves.  We denote by $\Loc(X_\ket)$ the category of locally constant sheaves of finite sets on $X_\ket$.
    \end{enumerate}
\end{defn}

\begin{thm}\label{thm-fket-loc-syst}
    Let $X$ be a locally noetherian fs log adic space.  The functor
    \[
        \phi: X_\fket \to \Loc(X_\ket): \; Y \mapsto \Mor_X(\,\cdot\,, Y)
    \]
    is an equivalence of categories.  Moreover:
    \begin{enumerate}
        \item\label{thm-fket-loc-syst-1} Fiber products exist in $X_\fket$ and $\Loc(X_\ket)$, and $\phi$ preserves fiber products.

        \item\label{thm-fket-loc-syst-2} Categorical quotients by finite groups exist in $X_\fket$ and $\Loc(X_\ket)$, and $\phi$ preserves such quotients.
    \end{enumerate}
\end{thm}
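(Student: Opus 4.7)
The plan is to establish in turn: (a) $\phi$ takes values in $\Loc(X_\ket)$; (b) $\phi$ is fully faithful; (c) $\phi$ is essentially surjective; and then (d) the compatibilities \Refenum{\ref{thm-fket-loc-syst-1}} and \Refenum{\ref{thm-fket-loc-syst-2}}.  The key inputs will be Proposition \ref{prop-ket-repr} (representability gives a sheaf), Lemma \ref{lem-Abhyankar-basic} (Kummer \'etale covers become \'etale after pullback), and above all Theorem \ref{thm-fket-descent} (effective descent of finite Kummer \'etale covers).

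For (a), the sheaf property of $\phi(Y) = \Hom_X(\,\cdot\,, Y)$ is Proposition \ref{prop-ket-repr}.  For local constancy with finite stalks, I would restrict along $Y \to X$: by Proposition \ref{prop-ket-compos-bc} the projection $Y \times_X Y \to Y$ is again finite Kummer \'etale, and by Lemma \ref{lem-Abhyankar-basic} (applied after \'etale-local reduction to sharp fs charts via Lemma \ref{lem-ket-chart}) combined with the standard \'etale-local splitting for finite \'etale covers, there is a Kummer \'etale cover $V \to Y$ over which $Y \times_X Y$ becomes a disjoint union of finitely many copies of $V$; thus $\phi(Y)|_V$ is the constant sheaf on that finite index set.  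Together with the open immersion of $X \setminus \im(Y) \to X$ (open by Corollary \ref{cor-ket-op}), over which $\phi(Y)$ is empty, this yields the Kummer \'etale cover of $X$ required by Definition \ref{def-ket-loc-const}.  For (b), standard Yoneda-style reasoning applies: any natural transformation $\eta: \phi(Y_1) \to \phi(Y_2)$ is determined by $f := \eta_{Y_1}(\Id_{Y_1}) \in \Hom_X(Y_1, Y_2)$ through naturality, and every such $f$ gives a unique $\phi(f) = \eta$.

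The main step is (c).  Given $\cL \in \Loc(X_\ket)$, I would choose a Kummer \'etale cover $U \to X$ over which $\cL|_U$ is the constant sheaf with value a finite set $S$; this constant sheaf is represented by $\breve{U} := \coprod_{s \in S} U \in U_\fket$, so $\phi(\breve{U}) \cong \cL|_U$.  The sheaf structure of $\cL$ on $X_\ket$ supplies a canonical isomorphism $\pr_1^{-1}(\cL|_U) \Mi \pr_2^{-1}(\cL|_U)$ on $U \times_X U$ satisfying the cocycle condition on $U \times_X U \times_X U$.  By full faithfulness from (b), applied with base $U \times_X U$, this isomorphism lifts uniquely to an isomorphism $\pr_1^*(\breve{U}) \Mi \pr_2^*(\breve{U})$ in $(U \times_X U)_\fket$, and by the same faithfulness applied on the triple fiber product the cocycle condition survives.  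Theorem \ref{thm-fket-descent} then produces a unique $Y \in X_\fket$ with $Y \times_X U \cong \breve{U}$ compatibly, and a diagram chase using (b) yields $\phi(Y) \cong \cL$.

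Finally, fiber products in $X_\fket$ exist by Proposition \ref{prop-ket-compos-bc}, and $\phi$ preserves them via $\Hom_X(\,\cdot\,, Y_1 \times_X Y_2) \cong \phi(Y_1) \times \phi(Y_2)$; the sheaf-theoretic product of locally constant sheaves of finite sets is again such, giving fiber products in $\Loc(X_\ket)$.  Quotients by a finite group $\Gamma$ in $X_\fket$ are furnished by Corollary \ref{cor-fket-quot}, and the corresponding quotients in $\Loc(X_\ket)$ are obtained by transporting through the equivalence; the preservation of quotients then follows from the comparison of universal properties.  The main obstacle I foresee is in step (c): one must check carefully that the lifted descent datum in $\underline{\Fket}$ satisfies the cocycle condition on the triple fiber product, which is taken in the category of fs log adic spaces per Convention \ref{conv-fib-prod}; this reduces to the faithfulness half of (b) on $U \times_X U \times_X U$ together with compatibility of the pullback functors $\pr_i^*$ with the equivalence, but the interplay with the saturation operation in the fs fiber product requires some care.
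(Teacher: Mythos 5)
Your proof is correct and follows essentially the same route as the paper's: representability \Pth{Proposition \ref{prop-ket-repr}} plus the Kummer \'etale local splitting of finite Kummer \'etale covers for well-definedness of $\phi$, Yoneda for full faithfulness, Theorem \ref{thm-fket-descent} for essential surjectivity, and Corollary \ref{cor-fket-quot} together with Kummer \'etale localization for the two compatibilities. The only quibble is that the openness of $X \setminus \im(Y)$ follows from the closedness of the image of the \emph{finite} morphism $Y \to X$, not from Corollary \ref{cor-ket-op} \Pth{which gives openness of the image}; this is immediately repaired.
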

\begin{proof}
    By Proposition \ref{prop-ket-repr}, representable presheaves on $X_\ket$ are sheaves.  By Proposition \ref{prop-ket-std} and Remark \ref{rem-ket-site-gen}, any $Y \in X_\fket$ is Kummer \'etale locally \Pth{on $X$} a disjoint union of finitely many copies of $X$.  Hence, $\Mor_X(\,\cdot\,, Y)$ is indeed a locally constant sheaf of finite sets, and the functor $\phi$ is defined.  The functor $\phi$ is fully faithful for formal reasons.  Since any locally constant sheaf of finite set is Kummer \'etale locally represented by objects in $X_\fket$, these objects glue to a global object $Y$ by Theorem \ref{thm-fket-descent} and the full faithfulness of $\phi$.  This shows that $\phi$ is also essentially surjective, as desired.  As for the statements \Refenum{\ref{thm-fket-loc-syst-1}} and \Refenum{\ref{thm-fket-loc-syst-2}}, by Kummer \'etale localization, we just need to note that the statements become trivial after replacing the source and target of the functor $\phi$ with the categories of finite disjoint unions of copies of $X$ and of constant sheaves of finite sets, respectively.
\end{proof}

Next, let us define the Kummer \'etale fundamental groups.
\begin{lem}\label{lem-fket-Gal-cat}
    Let $X$ be a connected locally noetherian fs log adic space, and $\eta: \zeta \to X$ a log geometric point.  Let $\underline{\FSets}$ denote the category of finite sets.  Consider the fiber functor
    \begin{equation}\label{eq-lem-fket-fiber-functor}
        F: X_\fket \to \underline{\FSets}: \; Y \mapsto Y_\zeta := \Mor_X(\zeta, Y).
    \end{equation}
    Then $X_\fket$ together with the fiber functor $F$ is a Galois category.
\end{lem}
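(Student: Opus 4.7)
The plan is to verify the six axioms of a Galois category from \cite[V.4]{SGA:1}, namely: (G1) existence of finite limits, (G2) existence of finite colimits and quotients by finite group actions, (G3) any morphism factors as a strict epimorphism followed by a monomorphism, and a monomorphism into a connected object is an isomorphism onto a direct summand, (G4)--(G5) that $F$ preserves finite limits, finite colimits, and quotients by finite groups, and (G6) that $F$ is conservative.

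For (G1), the terminal object is $X$ itself, and fiber products in $X_\fket$ exist by Theorem \ref{thm-fket-loc-syst}\Refenum{\ref{thm-fket-loc-syst-1}}; note that the fiber product of two finite Kummer \'etale morphisms is again finite Kummer \'etale by Proposition \ref{prop-ket-compos-bc}. For (G2), finite disjoint unions are constructed as coproducts of adic spaces endowed with the obvious log structures, and quotients by finite groups exist by Corollary \ref{cor-fket-quot}. The nontrivial half of (G3) is the statement about monomorphisms into a connected $Y$: using the equivalence $\phi: X_\fket \Mi \Loc(X_\ket)$ of Theorem \ref{thm-fket-loc-syst} together with the conservative family of fiber functors at log geometric points in Lemma \ref{lem-log-geom-pt}, a monomorphism $Y' \hookrightarrow Y$ corresponds to an injection of locally constant sheaves; Kummer \'etale locally this becomes an inclusion of constant sheaves of finite sets, which splits as a direct summand, and the splitting glues by Theorem \ref{thm-fket-descent}. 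The factorization of an arbitrary morphism $f: Y' \to Y$ through its ``image'' can be constructed similarly in $\Loc(X_\ket)$, or by taking the scheme-theoretic image as a connected finite Kummer \'etale cover and invoking Proposition \ref{prop-ket-mor} to see that the induced morphism is Kummer \'etale.

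For (G4)--(G5), since $F$ factors as $X_\fket \Mi \Loc(X_\ket) \to \underline{\FSets}$, where the second functor is evaluation of the stalk at $\zeta$ (a fiber functor on sheaves of sets), it preserves finite limits and finite colimits; preservation of quotients by finite groups follows by unwinding Corollary \ref{cor-fket-quot} and Theorem \ref{thm-fket-loc-syst}\Refenum{\ref{thm-fket-loc-syst-2}}. For (G6), suppose $f: Y' \to Y$ in $X_\fket$ induces a bijection on $\zeta$-points; since $X$ is connected and $Y$ splits Kummer \'etale locally into a disjoint union of copies of $X$, the statement that $f$ is an isomorphism can be checked after pulling back along a Kummer \'etale cover that trivializes both $Y$ and $Y'$, where the question reduces to a statement about finite sets. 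Alternatively one translates to $\Loc(X_\ket)$ and uses the fact that the log geometric points at each point of $X$ give a conservative family of fiber functors, so bijectivity at $\zeta$ propagates along the connected space $X$ by standard arguments.

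The main subtlety I expect is the factorization and monomorphism-summand part of (G3): one must verify that the ``image'' of a morphism in $X_\fket$ is again finite Kummer \'etale, and that the inclusion of a sub-object into a connected object really comes from a clopen decomposition. This is where one relies most essentially on the effective descent theorem (Theorem \ref{thm-fket-descent}) together with the explicit local description of $X_\fket$ provided by Propositions \ref{prop-fket-log-pt} and \ref{prop-fket-str-loc}, which reduce the question to the analogous statement for finite $G$-sets, where $G = \Hom(\overline{\cM}_{X, \xi}^\gp, \widehat{\bZ}'(1))$.
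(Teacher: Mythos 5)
Your proposal is correct and follows essentially the same route as the paper: existence of the final object, fiber products, finite coproducts, and quotients by finite groups is quoted from the earlier results, preservation under $F$ is checked after Kummer \'etale localization to finite disjoint unions of copies of $X$, and conservativity is deduced from the factorization of $F$ through the equivalence with $\Loc(X_\ket)$ together with connectedness of $X$. The only difference is that you spell out the strict epi--mono factorization axiom, which the paper's proof passes over in silence; your treatment of it via $\Loc(X_\ket)$ and effective descent is consistent with the paper's framework.
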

\begin{proof}
    We already know that the final object, fiber products \Pth{see Proposition \ref{prop-ket-compos-bc}}, categorical quotients by finite groups \Pth{see Corollary \ref{cor-fket-quot}}, and finite coproducts exist in $X_\fket$ \Pth{and $\underline{\FSets}$}.  It remains to verify the following conditions:
    \begin{enumerate}
        \item\label{lem-fket-Gal-cat-1}  $F$ preserves fiber products, finite coproducts, and quotients by finite groups.

        \item\label{lem-fket-Gal-cat-2}  $F$ reflexes isomorphisms \Pth{\ie, $F(f)$ being an isomorphism implies $f$ also being an isomorphism}.
    \end{enumerate}
    \Pth{We refer to \cite[V, 4]{SGA:1} for the basics on Galois categories.}

    As for condition \Refenum{\ref{lem-fket-Gal-cat-1}}, since $F$ is defined Kummer \'etale locally at the log geometric point $\zeta$, it suffices to verify the condition after restricting $F$ to the category of finite disjoint unions of $X$, in which case the condition clearly holds.

    As for condition \Refenum{\ref{lem-fket-Gal-cat-2}}, note that $F$ factors through the equivalence of categories $\phi$ in Theorem \ref{thm-fket-loc-syst} and induces the stalk functor
    \[
        \Loc(X_\ket) \to \underline{\FSets}: \; \cF \mapsto \cF_\zeta := \varinjlim \cF(U),
    \]
    where the direct limit is over Kummer \'etale neighborhoods $U$ of $\zeta$.  Since $X$ is connected, the stalk functors at any two log geometric points are isomorphic.  Thus, whether $f$ is an isomorphism can be checked at just one stalk.
\end{proof}

\begin{cor}\label{cor-fund-grp}
    Let $X$ and $\zeta \to X$ be as in Lemma \ref{lem-fket-Gal-cat}.  Then the fiber functor $F$ in \Refeq{\ref{eq-lem-fket-fiber-functor}} is prorepresentable.  Let $\pi_1^\ket(X, \zeta)$ be the automorphism group of $F$.  Then $F$ induces an equivalence of categories
    \begin{equation}\label{eq-fund-grp-fket}
        X_\fket \Mi \pi_1^\ket(X, \zeta)\Utext{-}\underline{\FSets},
    \end{equation}
    which is the composition of the equivalence of categories $\phi$ in Theorem \ref{thm-fket-loc-syst} with the equivalence of categories
    \begin{equation}\label{eq-fund-grp-loc-syst}
        \Loc(X_\ket) \Mi \pi_1^\ket(X, \zeta)\Utext{-}\underline{\FSets}
    \end{equation}
    induced by the stalk functor $\cF \mapsto \cF_\zeta$.
\end{cor}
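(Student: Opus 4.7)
The plan is to deduce the corollary as a direct application of Grothendieck's Galois category formalism \cite[V, \aSec 4]{SGA:1}. Since Lemma \ref{lem-fket-Gal-cat} already establishes that $(X_\fket, F)$ is a Galois category, the general theorem yields without further work that $F$ is pro-representable by a projective system of pointed Galois objects $\{(P_i, \widetilde{\zeta}_i)\}$ in $X_\fket$, that $\pi_1^\ket(X, \zeta) := \Aut(F)$ carries a natural pro-finite topology given by the inverse limit of the finite automorphism groups $\Aut(P_i)$, and that $F$ induces an equivalence $X_\fket \Mi \pi_1^\ket(X, \zeta)\Utext{-}\underline{\FSets}$, sending each $Y$ to $F(Y) = \Hom_X(\zeta, Y)$ equipped with its natural continuous $\pi_1^\ket(X, \zeta)$-action.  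This directly yields \Refeq{\ref{eq-fund-grp-fket}}.

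For the second equivalence \Refeq{\ref{eq-fund-grp-loc-syst}}, I would precompose with the quasi-inverse of the equivalence $\phi: X_\fket \Mi \Loc(X_\ket)$ from Theorem \ref{thm-fket-loc-syst}.  The remaining task is then to identify the composition $F \circ \phi^{-1}$ with the stalk functor $\cF \mapsto \cF_\zeta$.  Unwinding definitions, for $Y \in X_\fket$ we have $F(Y) = \Hom_X(\zeta, Y)$, while $\phi(Y)_\zeta = \varinjlim_U \Hom_X(U, Y)$, where the direct limit runs over Kummer \'etale neighborhoods $U$ of $\zeta$ as in Lemma \ref{lem-log-geom-pt}.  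These two sets naturally coincide: any morphism $\zeta \to Y$ over $X$ factors through some such $U$, because by Proposition \ref{prop-ket-std} and Remark \ref{rem-ket-site-gen}, $Y \to X$ is, Kummer \'etale locally near $\zeta$, a finite disjoint union of sections of the base, so the universal property of the stalk applies.

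The whole argument is formal, given the substantial preceding work.  The real content was already packaged into Lemma \ref{lem-fket-Gal-cat} (verifying that $X_\fket$ is a Galois category, which required Corollary \ref{cor-fket-quot} on quotients, Proposition \ref{prop-ket-compos-bc} on fiber products, and the analysis of log geometric points) and Theorem \ref{thm-fket-loc-syst} (the equivalence with locally constant sheaves of finite sets, relying on the descent result Theorem \ref{thm-fket-descent}).  The main --- and rather mild --- subtlety that remains is bookkeeping: checking that the stalk identification above is natural in $Y$ and equivariant for the $\pi_1^\ket(X, \zeta)$-actions on both sides, which is immediate from the definitions of these actions as automorphisms of the fiber functor and of the stalk functor, respectively.
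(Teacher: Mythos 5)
Your proposal is correct and follows essentially the same route as the paper: the corollary is a direct application of the Galois category formalism of \cite[V, 4]{SGA:1} to the pair $(X_\fket, F)$ established in Lemma \ref{lem-fket-Gal-cat}, and the identification of $F \circ \phi^{-1}$ with the stalk functor $\cF \mapsto \cF_\zeta$ is exactly the observation already made in the verification of condition (2) in the proof of that lemma. Nothing further is needed.
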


\begin{rk}\label{rem-fund-grp}
    In Corollary \ref{cor-fund-grp}, since stalk functors at any two log geometric points $\zeta$ and $\zeta'$ are isomorphic, the fundamental groups $\pi_1^\ket(X, \zeta)$ and $\pi_1^\ket(X, \zeta')$ are isomorphic.  We shall omit $\zeta$ from the notation when the context is clear.
\end{rk}

\begin{cor}\label{cor-fund-grp-log-pt}
    Let $(X, \cM_X)$, $\xi = \Spa(l, l^+)$, $X(\xi)$, and $M := \cM_{X, \xi}$ be as in Proposition \ref{prop-fket-str-loc}.  In particular, the underlying adic spaces of $\xi$ \Pth{\resp $X(\xi)$} is a geometric point \Pth{\resp a strictly local adic space}.  Then we have
    \[
        \pi_1^\ket\bigl(X(\xi)\bigr) \cong \pi_1^\ket(\xi) \cong \Hom\bigl(\overline{M}^\gp, \widehat{\bZ}'(1)(l)\bigr).
    \]
    Since $\overline{M}$ is sharp and fs, we have $\overline{M}^\gp \cong \bZ^r$ for some $r$, and we obtain a noncanonical isomorphism $\pi_1^\ket(\xi) \cong \bigl(\widehat{\bZ}'(1)(l)\bigr)^r$.
\end{cor}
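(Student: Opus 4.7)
The plan is to combine the equivalences of categories established in Propositions \ref{prop-fket-log-pt} and \ref{prop-fket-str-loc} with the description of the Kummer \'etale fundamental group as the automorphism group of a fiber functor given by Corollary \ref{cor-fund-grp}. The result should be essentially immediate once one tracks the compatibility between the fiber functors used in these propositions and the canonical fiber functor of Lemma \ref{lem-fket-Gal-cat}.

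First I would apply Corollary \ref{cor-fund-grp} to each of $X(\xi)$ and $\xi$ (viewed as log adic spaces with log structures pulled back from $X$), taking as base point the log geometric point $\widetilde{\xi}$ constructed in Construction \ref{constr-log-geom-pt}. This yields equivalences $X(\xi)_\fket \Mi \pi_1^\ket\bigl(X(\xi)\bigr)\Utext{-}\underline{\FSets}$ and $\xi_\fket \Mi \pi_1^\ket(\xi)\Utext{-}\underline{\FSets}$, realized via the fiber functors $H_{\widetilde{\xi}}$ and $F_{\widetilde{\xi}}$ respectively. Proposition \ref{prop-fket-log-pt} then identifies $F_{\widetilde{\xi}}$ with an equivalence $\xi_\fket \cong \Hom\bigl(\overline{M}^\gp, \widehat{\bZ}'(1)(l)\bigr)\Utext{-}\underline{\FSets}$, and Proposition \ref{prop-fket-str-loc} furnishes an equivalence $\iota^{-1} : X(\xi)_\fket \Mi \xi_\fket$ compatible with the fiber functors via the relation $H_{\widetilde{\xi}} = F_{\widetilde{\xi}} \circ \iota^{-1}$. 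Comparing the automorphism groups of these fiber functors then yields the desired isomorphisms $\pi_1^\ket\bigl(X(\xi)\bigr) \cong \pi_1^\ket(\xi) \cong \Hom\bigl(\overline{M}^\gp, \widehat{\bZ}'(1)(l)\bigr)$.

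For the final structural assertion, I would verify that $\overline{M}^\gp$ is a finitely generated free abelian group. Since $X$ is fs, the stalk $M = \cM_{X, \xi}$ is saturated by Lemma \ref{lem-int-sat-stalk}, and hence $\overline{M} = M / l^\times$ is sharp (by Remark \ref{rem-stalk-sharp}) and saturated (by Lemma \ref{lem-int-sat-quot}); it is also finitely generated since $X$ is fs. If $a \in \overline{M}^\gp$ satisfies $n a = 0$ for some $n \geq 1$, then saturation forces $a \in \overline{M}$ and $-a \in \overline{M}$, so $a \in \overline{M}^* = \{ 0 \}$ by sharpness. Thus $\overline{M}^\gp \cong \bZ^r$ for some $r$, and any choice of basis identifies $\Hom\bigl(\overline{M}^\gp, \widehat{\bZ}'(1)(l)\bigr)$ with $\bigl(\widehat{\bZ}'(1)(l)\bigr)^r$ noncanonically.

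No substantive obstacle arises, as all the real work has already been carried out in the preceding propositions; the only point requiring care is tracking the compatibility between the fiber functors $H_{\widetilde{\xi}}$ and $F_{\widetilde{\xi}}$, which is precisely what the last sentence of Proposition \ref{prop-fket-str-loc} records.
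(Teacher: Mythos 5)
Your proposal is correct and follows essentially the same route the paper intends: the corollary is stated without a separate proof precisely because it is the combination of Corollary \ref{cor-fund-grp} with the fiber-functor compatibility $H_{\widetilde{\xi}} = F_{\widetilde{\xi}} \circ \iota^{-1}$ recorded in Propositions \ref{prop-fket-log-pt} and \ref{prop-fket-str-loc}, together with the standard fact that the automorphism group of the forgetful fiber functor on $G\Utext{-}\underline{\FSets}$ is $G$. Your verification that $\overline{M}^\gp \cong \bZ^r$ via sharpness and saturation is also the same argument the paper already uses in the proof of Lemma \ref{lem-monoid-split}.
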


\begin{rk}\label{rem-fund-grp-comp}
    For any connected locally noetherian fs log adic space $X$ and any log geometric point $\xi$ of $X$, the natural inclusion from the category of finite \'etale covers to that of finite Kummer \'etale covers is fully faithful, and hence induces a canonical surjective homomorphism $\pi_1^\ket(X, \xi) \to \pi_1^\et(X, \xi)$ \Pth{see \cite[V, 6.9]{SGA:1}}.
\end{rk}

\begin{exam}\label{ex-fund-grp-log-pt}
    Let $(k, k^+)$ be an affinoid field, and let $s = (\Spa(k, k^+), \cM)$ be an fs log point as in Example \ref{ex-log-adic-sp-pt-fs}.  Let $\AC{s} = (\Spa(K, K^+), M)$ be a geometric point over $s$, where $K$ is the completion of a separable closure $k^\sep$ of $k$, and let $\widetilde{s}$ be a log geometric point over $\AC{s}$.  Then we have a canonical short exact sequence
    \[
        1 \to \pi_1^\ket(\AC{s}, \widetilde{s}) \to \pi_1^\ket(s, \widetilde{s}) \to \pi_1^\et(s, \AC{s}) \to 1,
    \]
    where $\pi_1^\ket(\AC{s}, \widetilde{s}) \cong \Hom\bigl(\overline{M}^\gp, \widehat{\bZ}'(1)(k^\sep)\bigr)$ \Pth{as we have seen in Corollary \ref{cor-fund-grp-log-pt}} and $\pi_1^\et(s, \AC{s}) \cong \Gal(k^\sep / k)$.  If $s$ is a split fs log point as in Example \ref{ex-log-adic-sp-pt-fs-split}, then any choice of a $\Gal(k^\sep / k)$-equivariant splitting of $M \to \overline{M} \cong M / K^\times$ also splits this exact sequence, inducing an isomorphism
    \[
        \pi_1^\ket(s, \widetilde{s}) \Mi \Hom\bigl(\overline{M}^\gp, \widehat{\bZ}'(1)(k^\sep)\bigr) \rtimes \Gal(k^\sep / k).
    \]
\end{exam}

\begin{exam}\label{ex-fund-grp-ket-0-partial}
    Let $(k, k^+)$ be an affinoid field.  Consider $0 = \Spa(k, k^+)$, the point of $\Spa(k\Talg{\bZ_{\geq 0}}, k^+\Talg{\bZ_{\geq 0}}) \cong \Spa(k\Talg{T}, k^+\Talg{T})$ defined by $T = 0$.  Let us denote by $0^\partial$ the log adic space with underlying adic space $0$ and with its log structure pulled back from $\Spa(k\Talg{\bZ_{\geq 0}}, k^+\Talg{\bZ_{\geq 0}})$, which is the split fs log point $(X, \cM_X) = (\Spa(k, k^+), \cO_{X_\et}^\times \oplus (\bZ_{\geq 0})_X)$ as in Example \ref{ex-log-adic-sp-pt-fs-split}.  Let $\AC{0}^\partial$ and $\widetilde{0}^\partial$ be defined over $0^\partial$ as in Example \ref{ex-fund-grp-log-pt} \Pth{with $s = 0^\partial$ there}.  Then
    \[
        \pi_1^\ket(0^\partial, \widetilde{0}^\partial) \cong \widehat{\bZ}'(1)(k^\sep) \rtimes \Gal(k^\sep / k).
    \]
    For each $n$ invertible in $k$, and each $r \geq 1$, we have a $\bZ / n$-local system $\bJ_{r, n}^\partial$ on $0^\partial_\ket$ defined by the representation of $\pi_1^\ket(0^\partial, \widetilde{0}^\partial)$ on $(\bZ / n)^r$ such that a topological generator of $\widehat{\bZ}'(1)(k^\sep)$ acts as the standard upper triangular principal unipotent matrix $J_r$ and $\Gal(k^\sep / k)$ acts diagonally on $(\bZ / n)^r$ and trivially on $\ker(J_r - 1)$. \Pth{The local system thus defined is independent of the choice of the generator of $\widehat{\bZ}'(1)(k^\sep)$ up to isomorphism.}  Moreover, for each $m \geq 1$ with $m$ invertible in $k$, we also have the $\bZ / n$-local system $\bK_{m, n}^\partial$ defined by the representation of $\pi_1^\ket(0^\partial, \widetilde{0}^\partial)$ induced from the trivial representation of $m \widehat{\bZ}'(1)(k^\sep) \rtimes \Gal(k^\sep / k)$ on $\bZ / n$.  \Pth{These local systems will be useful for defining quasi-unipotent nearby cycles in Section \ref{sec-nearby}.}
\end{exam}

\begin{exam}\label{ex-fund-grp-log-pt-mor}
    Let $s$, $\AC{s}$, and $\widetilde{s}$ be as in Example \ref{ex-fund-grp-log-pt}, and let $f: X \to s$ be any strict lft morphism of log adic spaces.  Let $\xi$ be a geometric point of $X$ above $\AC{s}$, and let $\widetilde{\xi}$ be a log geometric point above $\xi$ and $\widetilde{s}$.  Let $X(\xi)$ denote the strict localization of $X$ at $\xi$.  Then, by Proposition \ref{prop-fket-str-loc} and Corollary \ref{cor-fund-grp-log-pt}, we have $\pi_1^\ket(X(\xi), \widetilde{\xi}) \cong \pi_1^\ket(\AC{s}, \widetilde{s}) \cong \Hom\bigl(\overline{M}^\gp, \widehat{\bZ}'(1)(k^\sep)\bigr)$.
\end{exam}

\begin{lem}\label{lem-ket-to-et-stalk}
    Let $(X, \cM_X)$, $\xi = \Spa(l, l^+)$, $X(\xi)$, $x$, and $M := \cM_{X, \xi}$ be as in Proposition \ref{prop-fket-str-loc}.  Let $\varepsilon_\et: X_\ket \to X_\et$ be the natural projection of sites, as before.  Then, for each sheaf $\cF$ of finite abelian groups on $X_\ket$, we have
    \[
        \bigl(R^i\varepsilon_{\et, *}(\cF)\bigr)_\xi \cong H^i\bigl(\pi_1^\ket(\xi, \widetilde{\xi}), \cF_{\widetilde{\xi}}\bigr).
    \]
\end{lem}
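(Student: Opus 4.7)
The plan is to combine two standard inputs: the stalk of $R^i\varepsilon_{\et, *}(\cF)$ at $\xi$ is governed by Kummer \'etale cohomology on the strict localization $X(\xi)$, and the latter is expressible as continuous group cohomology of $\pi_1^\ket$ via a Cartan--Leray argument applied to the universal pro-Kummer \'etale Galois cover of $X(\xi)$.

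For the first reduction, I would use that $X(\xi) = \varprojlim_U U$ along \'etale neighborhoods $U$ of $\xi$ in $X$ (with finite-type transition maps between affinoid noetherian adic spaces), and that Kummer \'etale cohomology of finite abelian sheaves commutes with such cofiltered limits, via the basis of affinoids with fs global charts from Lemma \ref{lem-basis-ket}. This reduces the statement to the isomorphism
\[
    H^i\bigl(X(\xi)_\ket, \cF|_{X(\xi)}\bigr) \cong H^i\bigl(\pi_1^\ket(\xi, \widetilde{\xi}), \cF_{\widetilde{\xi}}\bigr).
\]
By Proposition \ref{prop-chart-stalk-chart-fs} we may then assume $X = X(\xi)$ is strictly local with a global chart modeled on $P := \overline{\cM}_{X, \xi}$, a sharp fs monoid.

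The next step is to build the pro-Galois cover. For each integer $n$ invertible in $l$, form the standard Kummer \'etale Galois cover $X^{(1/n)} := X \times_{X\Talg{P}} X\Talg{\tfrac{1}{n} P}$, with Galois group $G_n := \Hom\bigl((\tfrac{1}{n} P)^\gp / P^\gp, \Grpmu_n(l)\bigr)$, by Proposition \ref{prop-ket-std}. By Proposition \ref{prop-fket-str-loc} and Corollary \ref{cor-fund-grp-log-pt}, the system $\{X^{(1/n)}\}$ is cofinal in the connected finite Kummer \'etale Galois covers of $X$, with pro-Galois group $\pi_1^\ket(X, \widetilde{\xi}) = \varprojlim_n G_n$. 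The functorial Hochschild--Serre spectral sequences $H^p\bigl(G_n, H^q(X^{(1/n)}_\ket, \cF)\bigr) \Rightarrow H^{p + q}(X_\ket, \cF)$, combined with the exactness of filtered colimits, yield
\[
    E_2^{p, q} = H^p_{\cont}\bigl(\pi_1^\ket(\xi, \widetilde{\xi}), \, \varinjlim_n H^q(X^{(1/n)}_\ket, \cF)\bigr) \Rightarrow H^{p + q}(X_\ket, \cF).
\]
In degree $q = 0$ the coefficient module is $\varinjlim_n \cF(X^{(1/n)}) \cong \cF_{\widetilde{\xi}}$, because the connected components of the $X^{(1/n)}$ containing the image of $\widetilde{\xi}$ form a cofinal system of Kummer \'etale neighborhoods of $\widetilde{\xi}$ (by Lemma \ref{lem-Abhyankar-basic-cov}), and the $\pi_1^\ket$-action is continuous by construction.

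The main obstacle will be the vanishing $\varinjlim_n H^q(X^{(1/n)}_\ket, \cF|_{X^{(1/n)}}) = 0$ for $q \geq 1$, which collapses the spectral sequence to the desired isomorphism. To establish it, I would exploit that each $X^{(1/n)}$ is a finite disjoint union of noetherian strictly local log adic spaces: since $\cO_X(X) \otimes_{\bZ[P]} \bZ[\tfrac{1}{n} P]$ is finite free over the strictly henselian $\cO_X(X)$, it decomposes as a finite product of strictly henselian local rings, so every strict \'etale cover of $X^{(1/n)}$ splits. Given any Kummer \'etale covering of $X^{(1/n)}$, Lemma \ref{lem-Abhyankar-basic-cov} produces an integer $m$ invertible in $l$ such that the base change of this covering to $X^{(1/nm)}$ admits a strict \'etale refinement, which then splits by the preceding sentence. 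Combined with the \v{C}ech-to-derived-functor spectral sequence---controlled via the basis of Lemma \ref{lem-basis-ket} and Theorem \ref{thm-O-ket-sheaf}---this shows that every class in positive degree on some $X^{(1/n)}$ is trivialized after passing to a suitable $X^{(1/nm)}$, so the colimit vanishes.
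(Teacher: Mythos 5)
Your proof is correct in outline, but it takes a genuinely different route from the paper's. The paper's proof is a two-step topos-theoretic limit argument: it observes that $\bigl(R^i\varepsilon_{\et, *}(\cF)\bigr)_\xi \cong \varinjlim_U H^i(U_\ket, \cF)$ over \'etale neighborhoods $U$ of $\xi$, then shows directly that the limit site $\varinjlim_U U_\ket$ is equivalent to $\xi_\ket$ for the log point $\xi$ itself \Pth{because every Kummer \'etale covering of $\xi$ is refined by standard covers induced by $[n]: P \to P$, and these all arise from finite levels $U$}, and concludes since the Kummer \'etale topos of a log point is the classifying topos of $\pi_1^\ket(\xi, \widetilde{\xi})$ by Proposition \ref{prop-fket-log-pt}. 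You instead route through the strict localization $X(\xi)$ and run an explicit Cartan--Leray/Hochschild--Serre argument over the tower $\{X^{(1/n)}\}_n$, which forces you to prove the extra vanishing $\varinjlim_n H^q(X^{(1/n)}_\ket, \cF) = 0$ for $q \geq 1$; the paper's identification of the limit topos with a classifying topos makes that vanishing automatic. Your approach buys a concrete spectral-sequence computation \Pth{closer in spirit to how the paper later computes on toric towers in Section \ref{sec-toric-chart}}, at the cost of two steps that you state but do not really prove: \Pth{i} the commutation of Kummer \'etale cohomology with the cofiltered limit $X(\xi) = \varprojlim_U U$ is not a formal consequence of Lemma \ref{lem-basis-ket} --- it needs the same descent-to-finite-level argument for Kummer \'etale covers and coverings that the paper's proof carries out \Pth{so this reduction is not free, and you could equally well skip $X(\xi)$ and run the tower argument over varying $U$ and $n$}; and \Pth{ii} the final colimit-vanishing step requires the standard \v{C}ech d\'evissage by induction on $q$, together with the fact that \'etale coverings of the strictly local spaces $X^{(1/nm)}$ split \Pth{which uses that $X^{(1/nm)}$ is again strictly local as an adic space, not merely that its ring is a product of strictly henselian local rings}. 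One small simplification available to you: each $X^{(1/n)}$ is in fact connected \Pth{its ring is finite and local over the strictly henselian $\cO_{X(\xi)}(X(\xi))$, since modulo the maximal ideal the images of $P \setminus \{0\}$ are nilpotent}, so the component-counting and induction over stabilizers in your identification of the $E_2^{p,0}$-term is unnecessary.
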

\begin{proof}
    By definition, we have $\bigl(R^i\varepsilon_{\et, *}(\cF)\bigr)_\xi \cong \varinjlim H^i(U_\ket, \cF)$, where the direct limit is over the filtered category of \'etale neighborhoods $i_U: \xi \to U$ in $X$.  By Proposition \ref{prop-chart-stalk-chart-fs}, up to \'etale localization, we may assume that $X$ admits a chart modeled on $P := \overline{M}$.  Consider the morphism $i^{-1}: \varinjlim U_\ket \to x_\ket$, where the direct limit of sites $\varinjlim U_\ket$ is as in \cite[VI, 8.2.3]{SGA:4}, induced by the morphisms $i_U^{-1}: U_\ket \to x_\ket$.  Since each Kummer \'etale covering of $\xi$ can be further covered by some standard Kummer \'etale covers induced by $n$-th multiple maps $[n]: P \to P$, for some integers $n \geq 1$ invertible in $l$, and since coverings of the latter kind are in the essential image of $i^{-1}$, by Proposition \ref{prop-fket-log-pt}, we have an equivalence $\xi_\ket^\sim \cong \varprojlim U_\ket^\sim$ of the associated topoi.  Consequently, we also have $\varinjlim H^i(U_\ket, \cF) \cong H^i\bigl(\xi_\ket, \xi^{-1}(\cF)\bigr) \cong H^i\bigl(\pi_1^\ket(\xi, \widetilde{\xi}), \cF_{\widetilde{\xi}}\bigr)$, as desired.
\end{proof}

Let $(X, \cM_X)$ be a locally noetherian fs log adic space, and let $\cM_{X_\ket}$ be as in Proposition \ref{prop-M-ket-sheaf}.  For each positive integer $n$ invertible in $\cO_X$, let
\[
    \Grpmu_{n, \ket} := \ker(\cO_{X_\ket}^\times \Mapn{[n]} \cO_{X_\ket}^\times)
\]
and
\[
    \Grpmu_{n, \et} := \ker(\cO_{X_\et}^\times \Mapn{[n]} \cO_{X_\et}^\times).
\]
By Corollary \ref{cor-O-ket-et-an}, we have canonical isomorphisms
\[
    \varepsilon_\et^*(\Grpmu_{n, \et}) \Mi \Grpmu_{n, \ket}
\]
and
\[
    \Grpmu_{n, \et} \Mi \varepsilon_{\et, *}(\Grpmu_{n, \ket}).
\]
We shall omit the subscripts \Qtn{$\ket$} and \Qtn{$\et$}, and write simply $\Grpmu_n$, when there is no risk of confusion.  Consider the sequence
\[
    1 \to \Grpmu_n \to \cM_{X_\ket}^\gp \Mapn{[n]} \cM_{X_\ket}^\gp \to 1
\]
on $X_\ket$, which is exact by comparing stalks at log geometric points of $X$ \Pth{\Refcf{} Construction \ref{constr-log-geom-pt}}, whose pushforward under $\varepsilon_\et: X_\ket \to X_\et$ induces a long exact sequence
\[
    1 \to \Grpmu_n \to \cM_{X_\et}^\gp \Mapn{[n]} \cM_{X_\et}^\gp \to R^1\varepsilon_{\et, *}(\Grpmu_n),
\]
which is compatible with the Kummer exact sequence
\[
    1 \to \Grpmu_n \to \cO_{X_\et}^\times \Mapn{[n]} \cO_{X_\et}^\times \to 1
\]
and induces a canonical morphism
\begin{equation}\label{eq-Kummer-seq-conn}
    \overline{\cM}_X^\gp \big/ n \overline{\cM}_X^\gp \to R^1\varepsilon_{\et, *}(\Grpmu_n)
\end{equation}
on $X_\et$, which is nothing but the inverse of the isomorphism in Lemma \ref{lem-ket-to-et-stalk}, by comparing stalks at geometric points of $X$.  Therefore, we obtain the following:
\begin{lem}\label{lem-ket-to-et-Kummer}
    The above morphism \Refeq{\ref{eq-Kummer-seq-conn}} is an isomorphism and, for each $i$, the canonical morphism $\Ex^i\bigl(R^1\varepsilon_{\et, *}(\Grpmu_n)\bigr) \to R^i\varepsilon_{\et, *}(\Grpmu_n)$ is an isomorphism.
\end{lem}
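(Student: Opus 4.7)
The plan is to verify both assertions by passing to stalks at geometric points of $X$ and applying Lemma~\ref{lem-ket-to-et-stalk} together with the explicit description of the Kummer \'etale fundamental group from Corollary~\ref{cor-fund-grp-log-pt}.  Fix a geometric point $\xi = \Spa(l, l^+)$ of $X$ with a log geometric point $\widetilde{\xi}$ above it, and write $M = \cM_{X, \xi}$, so that $\overline{M}^\gp \cong \bZ^r$ and Corollary~\ref{cor-fund-grp-log-pt} identifies $\pi_1^\ket(\xi, \widetilde{\xi}) \cong \Hom\bigl(\overline{M}^\gp, \widehat{\bZ}'(1)(l)\bigr)$, which after choosing a basis of $\overline{M}^\gp$ becomes noncanonically a product of $r$ copies of $\widehat{\bZ}'(1)(l)$.

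For the first assertion, the task is to identify the stalk $\overline{M}^\gp / n \overline{M}^\gp \to H^1\bigl(\pi_1^\ket(\xi, \widetilde{\xi}), \Grpmu_n(l)\bigr)$ of \Refeq{\ref{eq-Kummer-seq-conn}} with the inverse of the isomorphism from Lemma~\ref{lem-ket-to-et-stalk}.  The Kummer boundary sends a section $m \in \overline{\cM}_{X, \xi}^\gp$ to the class of the cocycle $\sigma \mapsto \sigma(\widetilde{m}) \, \widetilde{m}^{-1}$, where $\widetilde{m}$ is any $n$-th root of (a multiplicative lift of) $m$ inside $\cM_{X, \widetilde{\xi}}^\gp$.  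Using the splitting $\cM_{X, \widetilde{\xi}} \cong l^\times \oplus \overline{M}_{\bQ_{\geq 0}}$ provided by Lemma~\ref{lem-monoid-split}, and unravelling how $\Aut_\xi(\widetilde{\xi})$ acts on this splitting as in the proof of Proposition~\ref{prop-fket-log-pt}, this cocycle becomes precisely the homomorphism that sends $\sigma \in \Hom\bigl(\overline{M}^\gp, \widehat{\bZ}'(1)(l)\bigr)$ to the image of $m \bmod n$ under the canonical surjection $\widehat{\bZ}'(1)(l) \to \Grpmu_n(l)$.  This exhibits the Kummer boundary as the inverse of the stalkwise isomorphism from Lemma~\ref{lem-ket-to-et-stalk}, giving the first claim.

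For the second assertion, the stalk at $\xi$ of the canonical map $\Ex^i\bigl(R^1\varepsilon_{\et, *}(\Grpmu_n)\bigr) \to R^i\varepsilon_{\et, *}(\Grpmu_n)$ is the cup product map $\Ex^i_{\bZ/n} H^1\bigl(\pi_1^\ket(\xi, \widetilde{\xi}), \Grpmu_n(l)\bigr) \to H^i\bigl(\pi_1^\ket(\xi, \widetilde{\xi}), \Grpmu_n(l)\bigr)$, since the canonical morphism arises from the multiplicativity of the Kummer boundary.  Because $\xi$ is a geometric point, the action of $\pi_1^\ket(\xi, \widetilde{\xi})$ on $\Grpmu_n(l)$ is trivial; and because each factor $\widehat{\bZ}'(1)(l)$ has continuous cohomological dimension one with $n$ invertible in $l$, the K\"unneth formula for continuous cohomology of profinite abelian groups with finite coefficients coprime to the residue characteristic identifies $H^\bullet\bigl(\pi_1^\ket(\xi, \widetilde{\xi}), \Grpmu_n(l)\bigr)$ with the exterior algebra on $r$ generators over $\Grpmu_n(l)$ in degree one.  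Under this identification, the cup product map is the canonical isomorphism from the exterior algebra onto itself, yielding the claim stalkwise.

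The main technical step is the cocycle computation in the first assertion: it requires carefully unpacking the identifications in Construction~\ref{constr-log-geom-pt} and the proof of Proposition~\ref{prop-fket-log-pt} to see how a topological generator of $\Aut_\xi(\widetilde{\xi})$ acts on an $n$-th root of a given section $m$.  Once this bookkeeping is in place, the remaining ingredients --- K\"unneth for profinite abelian groups and cohomological dimension one for $\widehat{\bZ}'(1)(l)$ --- are routine.
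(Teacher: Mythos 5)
Your proposal is correct and follows essentially the same route as the paper: the paper simply observes that, at every geometric point, the morphism \Refeq{\ref{eq-Kummer-seq-conn}} is the inverse of the isomorphism of Lemma \ref{lem-ket-to-et-stalk}, so that both assertions reduce to the standard computation of the continuous cohomology of $\pi_1^\ket(\xi, \widetilde{\xi}) \cong \Hom\bigl(\overline{\cM}_{X, \AC{x}}^\gp, \widehat{\bZ}'(1)\bigr) \cong \bigl(\widehat{\bZ}'(1)\bigr)^r$ with $\bZ / n$-coefficients as an exterior algebra on $H^1$. Your explicit Kummer cocycle computation and the K\"unneth argument are precisely the details the paper leaves implicit.
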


\subsection{Localization and base change functors}\label{sec-ket-bc}

In this subsection, we study the behavior of sheaves on Kummer \'etale sites under certain direct image and inverse image functors.  \Pth{The readers are referred to \cite[IV]{SGA:4} for general notions concerning sites, topoi, and the functors and morphisms among them.}

For any morphism $f: Y \to X$ of locally noetherian fs log adic spaces, since pulling back by $f$ respects fiber products, we have a morphism of topoi
\[
    (f_\ket^{-1}, f_{\ket, *}): Y_\ket^\sim \to X_\ket^\sim.
\]
Concretely, we have the \emph{direct image} \Pth{or \emph{pushforward}} functor
\[
    f_{\ket, *}: \Sh(Y_\ket) \to \Sh(X_\ket): \; \cF \to \bigl(U \mapsto f_{\ket, *}(\cF)(U) := \cF(U \times_X Y)\bigr),
\]
and the \emph{inverse image} \Pth{or \emph{pullback}} functor
\[
    f_\ket^{-1}: \Sh(X_\ket) \to \Sh(Y_\ket)
\]
sending $\cG \in \Sh(X_\ket)$ to the sheafification of $V \mapsto \varinjlim_U \, \cG(U)$, where $U$ runs through the objects in $Y_\ket$ such that $V \to X$ factors through $f^{-1}(U) \to Y$.  It is formal that $f_{\ket, *}$ is the right adjoint of $f_\ket^{-1}$.  Moreover, $f_\ket^{-1}$ is exact, and $f_{\ket, *}$ is left exact.

For any \emph{Kummer \'etale} morphism $f: Y \to X$, the functor $f_{\ket, *}$ is also called the \emph{base change} functor, while the functor $f_\ket^{-1}$ is simply $f_\ket^{-1}(\cF)(U) := \cF(U)$, because any object $U$ of $Y_\ket$ gives an object in $X_\ket$ by composition with $f$.  Moreover, we have the \emph{localization} functor
\[
    f_{\ket, !}: \Sh(Y_\ket) \to \Sh(X_\ket).
\]
sending $\cF \in \Sh(Y_\ket)$ to the sheafification of the presheaf
\[
    f_{\ket, !}^p(\cF): U \mapsto \coprod_{h \in \Mor_X(U, Y)} \, \cF(U, h),
\]
where $\cF(U, h)$ means the value of $\cF$ on the object $U \Mapn{h} Y$ of $X_\ket$.  We shall also denote by $f_{\ket, !}: \Sh_\Ab(Y_\ket) \to \Sh_\Ab(X_\ket)$ the induced functor between the categories of abelian sheaves, in which case the above coproduct becomes a direct sum.  It is also formal that $f_{\ket, !}$ is left adjoint to $f_\ket^{-1}$, and that $f_{\ket, !}$ is right exact.

\begin{lem}\label{lem-ket-split}
    Let $f: V \to W$ be a finite Kummer \'etale morphism in $X_\ket$.  If $f$ has a section $g: W \to V$, then there exists a finite Kummer \'etale morphism $W' \to W$ and an isomorphism $h: V \Mi W \coprod W'$ such that the composition $h \circ g$ is the natural inclusion $W \Em W \coprod W'$.
\end{lem}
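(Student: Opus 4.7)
The plan is to reduce the statement to an elementary splitting on the categorical side, via the equivalence $\phi: W_\fket \Mi \Loc(W_\ket)$ of Theorem \ref{thm-fket-loc-syst} (applied with $X$ replaced by $W$). First I would observe that both $V$ and $W$ lie in $W_\fket$: the former by hypothesis, the latter via the identity. Let $\cV := \phi(V) \in \Loc(W_\ket)$ and note that $\phi(W) = \Hom_W(\,\cdot\,, W)$ is the terminal sheaf $\underline{\ast}$. The section $g: W \to V$ then corresponds, under $\phi$, to a morphism of sheaves $s: \underline{\ast} \to \cV$, i.e.\@\xspace to a distinguished global section of $\cV$ on $W_\ket$.

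Next I would show that $\cV$ decomposes as a coproduct $\cV \cong \underline{\ast} \sqcup \cF'$ in $\Loc(W_\ket)$, where the first factor corresponds to the image of $s$. Since $\cV$ is locally constant, there is a Kummer \'etale covering $\{ U_i \to W \}_{i \in I}$ such that $\cV|_{U_i} \cong \underline{S_i}$ for finite sets $S_i$, and the restriction $s|_{U_i}$ singles out an element $s_i \in S_i$, giving the evident splitting $\underline{S_i} \cong \underline{\ast} \sqcup \underline{S_i \setminus \{ s_i \}}$ compatible with $s|_{U_i}$. Because the distinguished elements $s_i$ are restrictions of a single global section $s$, these local splittings agree on overlaps $U_i \times_W U_j$, so they glue to a global splitting $\cV \cong \underline{\ast} \sqcup \cF'$, where $\cF'$ is locally isomorphic to $\underline{S_i \setminus \{ s_i \}}$ and hence is again in $\Loc(W_\ket)$.

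Finally, since $\phi$ is an equivalence of categories, it preserves the finite coproduct $\underline{\ast} \sqcup \cF'$. Setting $W' := \phi^{-1}(\cF') \in W_\fket$, which is therefore finite Kummer \'etale over $W$, the splitting on the sheaf side transports to an isomorphism $h: V \Mi W \coprod W'$ in $W_\fket$, and hence in $X_\ket$. By construction the component of $s$ corresponding to $\underline{\ast}$ is identified with the natural inclusion $\underline{\ast} \Em \underline{\ast} \sqcup \cF'$, which under $\phi^{-1}$ is exactly the canonical inclusion $W \Em W \coprod W'$; this shows $h \circ g$ is the desired inclusion.

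The only nontrivial step is the gluing in the second paragraph, namely the verification that the pointwise complements $S_i \setminus \{ s_i \}$ assemble into a locally constant sheaf rather than merely a presheaf. This is handled by the coherence of the distinguished elements $s_i$: they are values of a single morphism of sheaves $\underline{\ast} \to \cV$, so the subsheaves they cut out and their complements are stable under restriction along any morphism in $W_\ket$, which is precisely what is needed for the local splittings to descend to a global coproduct decomposition.
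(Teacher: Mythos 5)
Your proof is correct, and it takes a slightly different route from the paper's. The paper first reduces to the case where $W$ is connected, fixes a log geometric point, and invokes the Galois-category equivalence $W_\fket \cong \pi_1^\ket(W)\Utext{-}\underline{\FSets}$ of Corollary \ref{cor-fund-grp}; there the section becomes a $G$-equivariant map from a one-point set into a finite $G$-set $S$, and the decomposition $S = g_*(S_0) \coprod S'$ is immediate. You instead stay at the level of the equivalence $\phi: W_\fket \Mi \Loc(W_\ket)$ of Theorem \ref{thm-fket-loc-syst} and split off the image of the global section $\underline{\ast} \to \cV$ by gluing the evident local splittings. The two arguments are cousins (Corollary \ref{cor-fund-grp} factors the fiber functor through $\phi$), but yours buys independence from the choice of a base point and from the reduction to connected $W$ (which the paper leaves implicit), at the cost of the explicit--though routine--verification that the complements $\underline{S_i \setminus \{s_i\}}$ glue; your last paragraph handles that point adequately, since the trivializing isomorphisms on overlaps carry the distinguished section to the distinguished section and hence preserve the complementary subsheaf. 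The paper's version is shorter once connectedness is in hand, since pointed finite $G$-sets split with no gluing to check.
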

\begin{proof}
    We may assume that $W$ is connected.  Let $G := \pi_1^\ket(W)$ \Pth{see Remark \ref{rem-fund-grp}}.  Via the equivalence \Refeq{\ref{eq-fund-grp-fket}}, the finite Kummer \'etale cover $V \to W$ \Pth{\resp $W \Mi W$} corresponds to a finite set $S$ \Pth{\resp a singleton $S_0$} with a continuous $G$-action \Pth{\resp the trivial action}, and the section $g: W \to V$ corresponds to a $G$-equivariant map $g_*: S_0 \to S$.  This gives rise to a $G$-equivariant decomposition $S = g_*(S_0) \coprod S'$, and hence to the desired decomposition $h: V \Mi W \coprod W'$, by Corollary \ref{cor-fund-grp}.
\end{proof}

\begin{prop}\label{prop-ket-dir-im-ex}
    Given any finite Kummer \'etale morphism $f: Y \to X$ of locally noetherian fs log adic spaces, we have a natural isomorphism
    \[
        f_{\ket, !} \Mi f_{\ket, *}: \Sh_\Ab(Y_\ket) \to \Sh_\Ab(X_\ket).
    \]
    Consequently, both functors are exact.
\end{prop}
\begin{proof}
    Let $\cF$ be an abelian sheaf on $Y_\ket$.  For any $U \in X_\ket$, each morphism $h$ in $\Mor_X(U, Y)$ induces a section $U \to U \times_X Y$ of the natural projection $U \times_X Y \to U$.  By Lemma \ref{lem-ket-split}, we obtain a decomposition $U \times_X Y \cong U \coprod U'$ identifying $U \to U \times_X Y$ with $U \Em U \coprod U'$, which gives rise to a canonical map $\cF(U) \to \cF(U \times_X Y)$ because $\cF$ is a sheaf.  By combining such maps, we obtain a map of presheaves
    \[
        \bigl(f_{\ket, !}^p(\cF)\bigr)(U) = \oplus_{h \in \Mor_X(U, Y)} \, \cF(U, h) \to \bigl(f_{\ket, *}(\cF)\bigr)(U) = \cF(U \times_X Y ),
    \]
    which induces a canonical morphism $f_{\ket, !} \to f_{\ket, *}$ by sheafification.

    By the above construction, it remains to show that, \'etale locally on $U$, there exists a finite Kummer \'etale cover $V \to U$ such that $\Mor_X(V, Y)$ is a finite set and such that the sections $V \to V \times_X Y$ given by $h \in \Mor_X(V, Y)$ induces $\coprod_{h \in \Mor_X(V, Y)} V \Mi V \times_X Y$.  Note that this is true in the special case where $Y \to X$ is strictly finite \'etale, because $Y$ is \'etale locally on $X$ a finite disjoint union of copies of $X$.  In general, up to \'etale localization on $X$, we may assume that $X$ is affinoid and modeled on a sharp fs monoid $P$; and that $Y \to X$ factors as a composition $Y \to X_Q := X \times_{X\Talg{P}} X\Talg{Q} \to X$, where the first morphism is strictly finite \'etale, and where the second morphism is the standard Kummer \'etale cover induced by a Kummer homomorphism $u: P \to Q$ of sharp fs monoids such that the order of $G = Q^\gp / u^\gp(P^\gp)$ is invertible in $\cO_Y$.  By Proposition \ref{prop-ket-std}, $Y \times_X X_Q \cong Y \times_{X_Q} (X_Q \times_X X_Q) \cong Y \times_X X\Talg{G} \to X_Q$ is strictly finite \'etale.  Hence, as explained above, there exists a finite Kummer \'etale cover $V \to U \times_X X_Q$ such that $\coprod_{h \in \Mor_{X_Q}(V, Y \times_X X_Q)} \, V \cong V \times_{X_Q} (Y \times_X X_Q)$.  Since $\coprod_{h \in \Mor_X(V, Y)} \, V \cong \coprod_{h \in \Mor_{X_Q}(V, Y \times_X X_Q)} \, V \cong V \times_{X_Q} (Y \times_X X_Q) \cong V \times_X Y$, the composition of the finite Kummer \'etale covers $V \to U \times_X X_Q \to U$ gives the desired finite Kummer \'etale cover $V \to U$.
\end{proof}

\begin{lem}\label{lem-exc}
    Let $X$ be a locally noetherian fs log adic space.  Let $\imath: Z \to X$ be a strict closed immersion of log adic spaces, and $\jmath: W \to X$ an open immersion of log adic spaces, as in Definition \ref{def-imm}, such that $W = X - Z$.  For $? = \an$, $\et$, or $\ket$, let $(\imath_?^{-1}, \imath_{?, *})$ and $(\jmath_?^{-1}, \jmath_{?, *})$ denote the associated morphisms of topoi, and let $\jmath_{?, !}$ denote the left adjoint of $\jmath_?^{-1}$ \Pth{which is defined as explained above}.
    \begin{enumerate}
        \item\label{lem-exc-ex-seq} For each abelian sheaf $\cF$ on $X_?$, we have the excision short exact sequence $0 \to \jmath_{?, !} \, \jmath_{?}^{-1}(\cF) \to \cF \to \imath_{?, *} \, \imath_{?}^{-1}(\cF) \to 0$ in $\Sh_\Ab(X_?)$.

        \item\label{lem-exc-adj-cl} For each abelian sheaf $\cG$ on $Z_?$, the adjunction morphism $\imath_?^{-1} \, \imath_{?, *}(\cG) \to \cG$ is an isomorphism in $\Sh_\Ab(Z_?)$, and hence $\imath_{?, *}$ is exact and fully faithful.

        \item\label{lem-exc-adj-op} For each abelian sheaf $\cH$ on $W_?$, the adjunction morphism $\cH \Mi \jmath_?^{-1} \, \jmath_{?, !}(\cH)$ is an isomorphism in $\Sh_\Ab(W_?)$, and hence $\jmath_{!, *}$ is exact and fully faithful.
    \end{enumerate}
\end{lem}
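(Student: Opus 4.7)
My plan is to verify all three assertions on stalks at (log) geometric points: ordinary points of $X$ for $? = \an$, geometric points for $? = \et$, and log geometric points for $? = \ket$ (conservative by Lemma~\ref{lem-log-geom-pt}). Since $X = W \sqcup Z$ as underlying sets, each such $\zeta \to X$ either factors through $\imath: Z \to X$ or through $\jmath: W \to X$, according as its topological support lies in $Z$ or in $W$. I will reduce all three statements to four functorial identities:
\[
    \jmath_?^{-1} \, \imath_{?, *} = 0, \quad \imath_?^{-1} \, \jmath_{?, !} = 0, \quad \jmath_?^{-1} \, \jmath_{?, !} \cong \Id, \quad \imath_?^{-1} \, \imath_{?, *} \cong \Id.
\]
Granting these, \Refenum{\ref{lem-exc-adj-cl}} and \Refenum{\ref{lem-exc-adj-op}} are immediate; and for \Refenum{\ref{lem-exc-ex-seq}}, at a (log) geometric point $\zeta$ over $W$ the stalks of the proposed sequence read $0 \to \cF_\zeta \Mi \cF_\zeta \to 0 \to 0$, while at $\zeta$ over $Z$ they read $0 \to 0 \to \cF_\zeta \Mi \cF_\zeta \to 0$; both are exact. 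Exactness of $\imath_{?, *}$ and $\jmath_{?, !}$ in \Refenum{\ref{lem-exc-adj-cl}} and \Refenum{\ref{lem-exc-adj-op}} follows because their compositions with $\imath_?^{-1}$ and $\jmath_?^{-1}$, respectively, are the identity, and the latter are exact.

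The two vanishings together with the isomorphism $\jmath_?^{-1} \, \jmath_{?, !} \cong \Id$ are formal consequences of $\jmath$ being an open immersion and $\imath$ a closed immersion: $\jmath_{?, !}$ extends by zero and $\jmath_?^{-1}$ restricts, while the stalk of $\imath_{?, *} \cG$ at a (log) geometric point of $W$ involves only neighborhoods in $X$ whose base change to $Z$ misses the image of $\zeta$ and hence contributes nothing.  The only substantive statement is the isomorphism $\imath_?^{-1} \, \imath_{?, *} \cong \Id$, which, for a (log) geometric point $\zeta \to Z$, amounts to cofinality of the functor
\[
    \bigl\{ U \in X_? : \zeta \to U \ \Utext{over } X \bigr\} \to \bigl\{ V \in Z_? : \zeta \to V \ \Utext{over } Z \bigr\}, \quad U \mapsto U \times_X Z.
\]
For $? = \an$ this is standard topology, and for $? = \et$ it follows from \cite[\aSec 2.3]{Huber:1996-ERA} via the invariance of the \'etale site under passage to a closed subspace in a suitable neighborhood.

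The main obstacle is the cofinality statement for $? = \ket$, which I plan to handle by locally lifting Kummer \'etale neighborhoods from $Z$ to $X$ along the strict closed immersion $\imath$.  By Proposition~\ref{prop-chart-stalk-chart-fs} and the strictness of $\imath$, an fs chart on $Z$ modeled on $\overline{\cM}_{Z, \zeta}$ near $\zeta$ lifts \'etale locally to an fs chart on $X$ near the image of $\zeta$.  A Kummer chart $P \to Q$ presenting a given $V \to Z$ then produces a standard Kummer \'etale cover $U := X \times_{X\Talg{P}} X\Talg{Q} \to X$ with $U \times_X Z$ matching $V$ up to a strictly \'etale refinement, which itself lifts to a strictly \'etale refinement of $U$ by the standard deformation theory of \'etale morphisms along strict closed immersions \Ref{\Refcf{} \cite[\aProp 1.7.1]{Huber:1996-ERA}}.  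Finally, passage to the strict localization at $\zeta$ and invocation of Proposition~\ref{prop-fket-str-loc} will identify both sides of the cofinality with $\pi_1^\ket$-sets of a common fundamental group, yielding the desired isomorphism of stalks and completing the proof.
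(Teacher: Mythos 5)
Your proposal is correct and follows essentially the same route as the paper, whose own proof simply verifies all three assertions on stalks at points, geometric points, and log geometric points \Pth{the latter forming a conservative family by Lemma \ref{lem-log-geom-pt}}. Your additional elaboration of the cofinality statement for $\imath_\ket^{-1} \, \imath_{\ket, *} \cong \Id$ via strictness of $\imath$ and Proposition \ref{prop-fket-str-loc} fills in detail the paper leaves implicit, but does not change the argument.
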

\begin{proof}
    These follow easily from the definitions of the objects involved, by evaluating them at points \Pth{\resp geometric points, \resp log geometric points} when $? = \an$ \Pth{\resp $\et$, \resp $\ket$}.  \Pth{See \cite[\aProp 2.5.5]{Huber:1996-ERA} and Lemma \ref{lem-log-geom-pt}.}
\end{proof}

\begin{lem}\label{lem-cl-imm-ket-mor}
    Let $f: X \to \breve{X}$ and $g: Z \to \breve{Z}$ be morphisms of locally noetherian fs log adic spaces whose underlying morphisms of adic spaces are isomorphisms, and let $\imath: Z \to X$ and $\breve{\imath}: \breve{Z} \to \breve{X}$ be strict immersions compatible with $f$ and $g$.  Then, for any abelian sheaf $\cF$ on $X_\ket$, and for each $i \geq 0$, we have
    \begin{equation}\label{eq-lem-cl-imm-ket-mor}
        R^i g_{\ket, *} \, \imath_\ket^{-1}(\cF) \cong \breve{\imath}_\ket^{-1} \, R^i f_{\ket, *}(\cF).
    \end{equation}
    This applies, in particular, to the case where $\breve{X}$ and $\breve{Z}$ are the underlying adic spaces of $X$ and $Z$, respectively, equipped with their trivial log structures, in which case $\breve{X}_\ket \cong X_\et$ and $\breve{Z}_\ket \cong Z_\et$, and therefore $f_\ket: X_\ket \to \breve{X}_\ket$ and $g_\ket: Z_\ket \to \breve{Z}_\ket$ can be identified with the natural morphisms $X_\ket \to X_\et$ and $Z_\ket \to Z_\et$, respectively.
\end{lem}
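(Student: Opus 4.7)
The plan is to verify the isomorphism on stalks at log geometric points of $\breve{Z}$, using the conservativity from Lemma \ref{lem-log-geom-pt}. Fix a log geometric point $\zeta \to \breve{Z}$ with underlying geometric point $\xi$, which also serves as a geometric point of $Z$, $X$, and $\breve{X}$ (via the isomorphisms $g$, $f$ and via the strict immersions $\imath$, $\breve{\imath}$). Applying Construction \ref{constr-log-geom-pt} inside $Z$, we choose a log geometric point $\widetilde{\zeta} \to Z$ above $\xi$; then $\widetilde{\zeta}_X := \imath(\widetilde{\zeta})$ is a log geometric point of $X$ compatible, via $f$, with $\breve{\imath}(\zeta)$.

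The main technical input is an analog of Lemma \ref{lem-ket-to-et-stalk} for any morphism $h: A \to B$ of locally noetherian fs log adic spaces whose underlying morphism of adic spaces is an isomorphism.  For any sheaf of finite abelian groups $\cG$ on $A_\ket$ and any compatible pair of log geometric points $\widetilde{\zeta}_A \to A$ and $\zeta \to B$ (above the same underlying $\xi$), we expect
\[
    (R^i h_{\ket,*} \cG)_\zeta \cong H^i\bigl(K_h, \cG_{\widetilde{\zeta}_A}\bigr),
\]
where $K_h := \ker\bigl(\pi_1^\ket(A(\xi), \widetilde{\zeta}_A) \to \pi_1^\ket(B(\xi), \zeta)\bigr)$.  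The proof parallels that of Lemma \ref{lem-ket-to-et-stalk}: by Proposition \ref{prop-ket-compos-bc}, Kummer \'etale neighborhoods of $\zeta$ in $B$ pull back along $h$ to Kummer \'etale neighborhoods of $\widetilde{\zeta}_A$ in $A$; conversely, any Kummer \'etale neighborhood of $\widetilde{\zeta}_A$ is, up to refinement, a standard Kummer \'etale cover classified by $K_h$ over one pulled back from $B$, by the fundamental group descriptions of Propositions \ref{prop-fket-str-loc} and \ref{prop-fket-log-pt}.  Passing to the direct limit identifies the stalk with $H^i(K_h, \cG_{\widetilde{\zeta}_A})$.

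Applying this formula to both $f$ and $g$ yields
\[
    (\breve{\imath}_\ket^{-1} R^i f_{\ket,*} \cF)_\zeta = (R^i f_{\ket,*} \cF)_{\breve{\imath}(\zeta)} \cong H^i\bigl(K_f, \cF_{\widetilde{\zeta}_X}\bigr)
\]
and, using the identification $(\imath_\ket^{-1} \cF)_{\widetilde{\zeta}} \cong \cF_{\widetilde{\zeta}_X}$,
\[
    (R^i g_{\ket,*} \imath_\ket^{-1} \cF)_\zeta \cong H^i\bigl(K_g, \cF_{\widetilde{\zeta}_X}\bigr),
\]
where $K_f := \ker\bigl(\pi_1^\ket(X(\xi)) \to \pi_1^\ket(\breve{X}(\xi))\bigr)$ and $K_g := \ker\bigl(\pi_1^\ket(Z(\xi)) \to \pi_1^\ket(\breve{Z}(\xi))\bigr)$.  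The strictness of $\imath$ and $\breve{\imath}$ gives $\cM_{X, \xi} \cong \cM_{Z, \xi}$ and $\cM_{\breve{X}, \xi} \cong \cM_{\breve{Z}, \xi}$, so Corollary \ref{cor-fund-grp-log-pt} furnishes natural isomorphisms $\pi_1^\ket(X(\xi)) \cong \pi_1^\ket(Z(\xi))$ and $\pi_1^\ket(\breve{X}(\xi)) \cong \pi_1^\ket(\breve{Z}(\xi))$ that are compatible with the maps induced by $f$ and $g$; hence $K_f \cong K_g$ canonically, and the two stalks coincide.  The main obstacle is thus to establish the generalized stalk formula above: the delicate step is the cofinality of Kummer \'etale neighborhoods of $\zeta$ in $B$ inside those of $\widetilde{\zeta}_A$ in $A$ modulo $K_h$-covers, which ultimately reduces to the explicit descriptions of the Kummer \'etale fundamental groups at strict localizations.
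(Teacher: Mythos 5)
Your strategy---verifying the base-change map on stalks at log geometric points---is genuinely different from the paper's, and it rests entirely on the generalized stalk formula $(R^i h_{\ket,*}\cG)_\zeta \cong H^i\bigl(K_h, \cG_{\widetilde{\zeta}_A}\bigr)$, which you explicitly leave unproven.  That formula is in fact false as stated.  The stalk is $\varinjlim_V H^i\bigl((V\times_B A)_\ket, \cG\bigr)$ over Kummer \'etale neighborhoods $V$ of $\zeta$ in $B$; after passing to strict localizations and invoking Proposition \ref{prop-fket-str-loc}, the pullbacks $V \times_B A$ correspond to the $\pi_1^\ket(A(\xi))$-sets $\pi_1^\ket(B(\xi))/H'$ for open subgroups $H'$, and these are in general \emph{disconnected}.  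The limit therefore yields a direct sum of copies of $H^i(K_h, \cG_{\widetilde{\zeta}_A})$ indexed by $\pi_1^\ket(B(\xi))/\overline{\im(\phi_h)}$, where $\phi_h: \pi_1^\ket(A(\xi)) \to \pi_1^\ket(B(\xi))$; and $\phi_h$ need not be surjective even though the underlying adic spaces are isomorphic, since it is dual to $\overline{\cM}_{B,\xi}^\gp \to \overline{\cM}_{A,\xi}^\gp$.  For instance, for two fs log points with $h^\sharp$ inducing $\bZ_{\geq 0} \Mapn{\cdot 2} \bZ_{\geq 0}$ on characteristics, one computes $(\tfrac{1}{n}P \oplus_P \tfrac{1}{2}P)^\Sat \cong \tfrac{1}{n}P \oplus (\bZ/2)$ for even $n$, so $V \times_B A$ has two components and already $(h_{\ket,*}\cG)_\zeta$ is a sum of two stalks, not one.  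Your argument is repairable---the extra index sets on the two sides are $\pi_1^\ket(\breve{X}(\xi))/\overline{\im(\phi_f)}$ and $\pi_1^\ket(\breve{Z}(\xi))/\overline{\im(\phi_g)}$, which strictness of $\imath$ and $\breve{\imath}$ identifies along with $K_f \cong K_g$---but the corrected statement and its proof (the limit-topos identification, the continuity $\varinjlim_{H'} H^i(\phi_h^{-1}(H'), M) = H^i(K_h, M)$, and the passage from torsion coefficients as in Lemma \ref{lem-ket-to-et-stalk} to arbitrary abelian $\cF$) all still need to be supplied; as it stands this is where the entire content of the lemma lies, and it is missing.

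For comparison, the paper's proof is purely formal and avoids fundamental groups altogether.  After shrinking so that $\imath$ and $\breve{\imath}$ are compatible strict closed immersions, it applies the exact and fully faithful functor $\breve{\imath}_{\ket,*}$ from Lemma \ref{lem-exc} to rewrite the assertion as $R^i f_{\ket,*}\,\imath_{\ket,*}\,\imath_\ket^{-1}(\cF) \cong \breve{\imath}_{\ket,*}\,\breve{\imath}_\ket^{-1}\,R^i f_{\ket,*}(\cF)$, and then uses the excision exact sequence to reduce to the analogous statement for the complementary open immersions, namely $R^i f_{\ket,*}\,\jmath_{\ket,!}\,\jmath_\ket^{-1}(\cF) \cong \breve{\jmath}_{\ket,!}\,\breve{\jmath}_\ket^{-1}\,R^i f_{\ket,*}(\cF)$.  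That statement is checked on stalks at log geometric points using only the definition of $\jmath_{\ket,!}$, whose stalks vanish over the closed part and are untouched over the open part.  If you wish to keep your approach you must prove the corrected stalk formula; otherwise the excision route is substantially shorter and sidesteps the disconnectedness issue entirely.
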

\begin{proof}
    Up to compatibly replacing $X$ and $\breve{X}$ with open subspaces, we may assume that $\imath$ and $\breve{\imath}$ are compatible strict closed immersions.  By Lemma \ref{lem-exc}\Refenum{\ref{lem-exc-adj-cl}}, and by applying $\breve{\imath}_{\ket, *}$ to \Refeq{\ref{eq-lem-cl-imm-ket-mor}}, it suffices to show that we have
    \begin{equation}\label{eq-lem-cl-imm-ket-mor-pf}
        R^i f_{\ket, *} \, \imath_{\ket, *} \, \imath_{\ket}^{-1}(\cF) \cong \breve{\imath}_{\ket, *} \, \breve{\imath}_\ket^{-1} \, R^i f_{\ket, *}(\cF).
    \end{equation}
    Let $\jmath: W \to X$ and $\breve{\jmath}: \breve{W} \to \breve{X}$ denote the complementary open immersions.  By Lemma \ref{lem-exc}\Refenum{\ref{lem-exc-ex-seq}}, we have a long exact sequence
    \[
        \cdots \to R^i f_{\ket, *} \, \jmath_{\ket, !} \, \jmath_\ket^{-1}(\cF) \to R^i f_{\ket, *}(\cF) \to R^i f_{\ket, *} \, \imath_{\ket, *} \, \imath_\ket^{-1}(\cF) \to \cdots.
    \]
    By the definition of $\jmath_{\ket, !}$ and $\breve{\jmath}_{\ket, !}$, and by comparing stalks at log geometric points as in Lemma \ref{lem-log-geom-pt}, we obtain $R^i f_{\ket, *} \, \jmath_{\ket, !} \, \jmath_\ket^{-1}(\cF) \cong \breve{\jmath}_{\ket, !} \, \breve{\jmath}_\ket^{-1} \, R^i f_{\ket, *}(\cF)$, which induces the desired \Refeq{\ref{eq-lem-cl-imm-ket-mor-pf}}, by Lemma \ref{lem-exc}\Refenum{\ref{lem-exc-ex-seq}} again.
\end{proof}

\begin{lem}\label{lem-cl-imm-O+-p}
    Let $\imath: Z \to X$ be a strict closed immersion of locally noetherian fs log adic spaces over $\Spa(\bQ_p, \bZ_p)$.
    \begin{enumerate}
        \item For $? = \an$, $\et$, or $\ket$, the canonical morphism $\imath_{Z, ?}^{-1}(\cO_{X_?}^+ / p) \to \cO_{Z_?}^+ / p$ is an isomorphism.

        \item For any $\bF_p$-sheaf $\cF$ on $Z_\ket$, the canonical morphism
            \[
                \bigl(\imath_{Z, \ket, *}(\cF)\bigr) \otimes (\cO_{X_\ket}^+ / p) \to \imath_{Z, \ket, *}\bigl(\cF \otimes (\cO_{Z_\ket}^+ / p)\bigr)
            \]
            is an isomorphism.
    \end{enumerate}
\end{lem}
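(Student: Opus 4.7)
The two parts are closely related: part (2) is a projection-formula consequence of part (1), so the bulk of the work lies in verifying part (1). The statement in part (1) is local on $X$, so I may assume $X = \Spa(B, B^+)$ and $Z = \Spa(A, A^+)$ are affinoid with $B \Surj A$ surjective and $A^+$ the integral closure of the image of $B^+$ in $A$. In each of the three cases, the canonical morphism is induced by $\imath^\sharp: \imath^{-1}(\cO_X^+) \to \cO_Z^+$ reduced modulo $p$, and its surjectivity is built into the definition of a closed immersion. I plan to verify injectivity by reducing to stalks.

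For the \'etale case, by the conservative family of geometric points for locally noetherian adic spaces \Ref{see \cite[\aProp 2.5.5]{Huber:1996-ERA}}, it suffices to compare stalks at each geometric point $\AC{z}$ of $Z$ with image $\AC{x} = \imath(\AC{z})$ in $X$.  Since $X$ and $Z$ are over $\Spa(\bQ_p, \bZ_p)$ and hence analytic at $\AC{x}$ and $\AC{z}$, the strict localizations reduce to the geometric points themselves \Ref{see the conventions following \cite[\aLem 2.5.10]{Huber:1996-ERA}}, so that $\cO_{X, \AC{x}}^+ \cong \AC{\kappa}(x)^+$ and $\cO_{Z, \AC{z}}^+ \cong \AC{\kappa}(z)^+$.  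The surjection $\cO_{X, x} \Surj \cO_{Z, z}$ coming from the closed immersion descends to a surjection of residue fields $\kappa(x) \Surj \kappa(z)$, which is necessarily an isomorphism; hence $\AC{\kappa}(x)^+ \Mi \AC{\kappa}(z)^+$, which persists after reduction modulo $p$.  The Kummer \'etale case then reduces to the \'etale case by invoking Lemma \ref{lem-log-geom-pt} and the strictness of $\imath$: by Remark \ref{rem-stalk-strict}, at corresponding geometric points we have $\overline{\cM}_{X, \AC{x}} \Mi \overline{\cM}_{Z, \AC{z}}$, so the Construction \ref{constr-log-geom-pt} of log geometric points above $\AC{z}$ matches the one above $\AC{x}$, and stalks of $\imath_\ket^{-1}(\cO_{X_\ket}^+/p)$ at log geometric points coincide with stalks of $\imath_\et^{-1}(\cO_{X_\et}^+/p)$ at the underlying geometric points.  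The analytic case follows by pushing forward along the natural morphism of sites $\nu: X_\et \to X_\an$, using that $\cO_{X_\an}^+/p \cong \nu_*(\cO_{X_\et}^+/p)$ on both $X$ and $Z$.

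For part (2), I would verify the canonical morphism at stalks at log geometric points $\AC{u}$ of $X$.  By Lemma \ref{lem-exc}\Refenum{\ref{lem-exc-adj-cl}}, the sheaf $\imath_{\ket, *}(\cF)$ has vanishing stalks at log geometric points of $X$ that do not factor through $\imath$, so both sides of the claimed isomorphism vanish away from $\imath(Z)$.  At a log geometric point $\AC{u}$ that factors through $\imath$ via a log geometric point $\AC{z}$ of $Z$, the stalk of the left-hand side is $\cF_{\AC{z}} \otimes_{\bF_p} (\cO_{X_\ket, \AC{u}}^+/p)$, while the stalk of the right-hand side is $\cF_{\AC{z}} \otimes_{\bF_p} (\cO_{Z_\ket, \AC{z}}^+/p)$.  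By the Kummer \'etale case of part (1), the two tensor factors $\cO_{X_\ket, \AC{u}}^+/p$ and $\cO_{Z_\ket, \AC{z}}^+/p$ are canonically identified, yielding the desired isomorphism.

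The main obstacle is the stalk-level comparison in the analytic case of part (1), since the analytic stalks of $\cO^+$ can be strictly larger than their \'etale analogues (they are not just valuation rings of completed residue fields), so a direct stalk comparison is insufficient.  My strategy for this is to bypass the analytic stalks by factoring through the \'etale statement via the projection $\nu: X_\et \to X_\an$ discussed above; a secondary subtlety that requires care is checking that $\cO_{X_\an}^+/p \cong \nu_*(\cO_{X_\et}^+/p)$, which I plan to handle using the affinoid description together with Proposition \ref{prop-et-coh-descent}.
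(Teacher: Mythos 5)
Your overall strategy---comparing stalks at geometric points for $? = \et$ and at log geometric points for $? = \ket$, matched across $\imath$ using strictness, and deducing part (2) from part (1) at stalks---is the same as the paper's, which simply cites \cite[\aLem 3.14]{Scholze:2013-pss} for $? = \an, \et$ and repeats the stalk computation at log geometric points for $? = \ket$.  However, the one nontrivial input is glossed over, and your handling of the analytic case does not work as proposed.  Your identification $\cO_{X, \AC{x}}^+ \cong \AC{\kappa}(x)^+$ is false as stated: the strict localization is isomorphic to $\AC{x}$ only as an adic space, \ie\ after completion, and the stalk $\cO_{X, \AC{x}}^+$ is not complete.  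What is true, and what the argument needs, is the mod-$p$ statement, and that rests on the fact that $\ker\bigl(\cO_{X, \AC{x}}^+ \to \AC{\kappa}(x)^+\bigr)$ is $p$-divisible: if $f$ vanishes at the point, then $f / p$ lies in $\cO^+$ of the rational neighborhood $\{ |f| \leq |p| \}$ and still vanishes there, so $f \equiv 0 \bmod p$ in the colimit.  The same caveat applies to your claim that the Kummer \'etale stalk of $\cO^+ / p$ at a log geometric point coincides with the \'etale one: the Kummer \'etale stalk of $\cO^+$ is genuinely larger (it contains roots of the chart elements coming from the ramified covers), and one needs the same shrinking argument to see that these extra elements die mod $p$.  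This $p$-divisibility is precisely the point the paper's proof isolates.

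More seriously, the detour through $\nu_*$ for the analytic case is unjustified, and the obstacle motivating it is illusory.  The comparison $\cO_{X_\an}^+ / p \cong \nu_*(\cO_{X_\et}^+ / p)$ is not a consequence of Proposition \ref{prop-et-coh-descent}, which concerns coherent $\cO$-modules; the discrepancy between the two sides over an affinoid $U$ is controlled by the $p$-torsion of $H^1(U_\et, \cO_{X_\et}^+)$, which is in general only almost zero.  Even granting that comparison, you would still need the base change isomorphism $\imath_\an^{-1} \, \nu_{X, *} \cong \nu_{Z, *} \, \imath_\et^{-1}$ applied to $\cO_{X_\et}^+ / p$, which your sketch does not address.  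None of this is needed: the direct stalk comparison works on the analytic site verbatim, because the kernel of $\cO_{X_\an, x}^+ \to \kappa(x)^+$ is $p$-divisible by the same shrinking argument---the analytic stalks are indeed larger than valuation rings, but mod $p$ they all collapse onto $\kappa(x)^+ / p$---and $\kappa(x) \cong \kappa(z)$ for a closed immersion.  With the analytic case repaired in this way, and the $p$-divisibility made explicit, your argument agrees with the paper's.
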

\begin{proof}
    The case where $? = \an$ or $\et$ is already in \cite[\aLem 3.14]{Scholze:2013-pss}.  As for $? = \ket$, the proof is similar, which we explain as follows.  At each log geometric point $\zeta = \bigl(\Spa(l, l^+), M\bigr) \to X$, the stalk of $\cO_{X_\ket}^+ / p$ at $\zeta$ is isomorphic to $l^+ / p$ by construction, because $\ker(\cO_{X_\ket, \zeta}^+ \to l^+)$, which is the same as $\ker(\cO_{X_\zeta, \zeta} \to l)$, is $p$-divisible \Pth{as $X$ is defined over $\Spa(\bQ_p, \bZ_p)$}.  The analogous statement for $\cO_{Z_\ket}^+ / p$ is true.  Thus, we can finish the proof by comparing stalks, by Lemma \ref{lem-log-geom-pt}.
\end{proof}

\begin{lem}\label{lem-ket-mor-O+-p}
    Let $g: Y \to Z$ be a morphism of locally noetherian fs log adic spaces over $\Spa(\bQ_p, \bZ_p)$ such that its underlying morphism of adic spaces is an isomorphism.  Suppose moreover that $g^\sharp_{\AC{y}}: \cM_{Z, g(\AC{y})} \to \cM_{Y, \AC{y}}$ is injective and splits, for each geometric point $\AC{y}$ of $Y$.  Then, for any $\bF_p$-sheaf $\cF$ on $Y_\ket$, the canonical morphism $R^i g_{\ket, *}(\cF) \otimes_{\bF_p} (\cO_{Z_\ket}^+ / p) \to R^i g_{\ket, *}\bigl(\cF \otimes_{\bF_p} (\cO_{Y_\ket}^+ / p)\bigr)$ is an isomorphism.
\end{lem}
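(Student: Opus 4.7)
The plan is to verify the canonical morphism on stalks at log geometric points of $Z$, reducing to a computation of continuous cohomology of a pro-prime-to-$p$ group. By Lemma \ref{lem-log-geom-pt}, it suffices to check the statement after taking stalks at each log geometric point $\zeta \to Z$ lying above a geometric point $\AC{z}$. Let $\AC{y}$ be the unique geometric point of $Y$ above $\AC{z}$. Using Proposition \ref{prop-chart-stalk-chart-fs} \'etale-locally on $Z$, one arranges that $Z$ admits a chart modeled on the sharp fs monoid $P := \overline{\cM}_{Z, \AC{z}}$; and using Lemma \ref{lem-fs-split-int} together with the splitting hypothesis on $g^\sharp$, one upgrades this to a compatible chart for $g$ of the form $P \hookrightarrow Q \cong P \oplus Q'$, where $Q' := \overline{\cM}_{Y, \AC{y}} / \overline{\cM}_{Z, g(\AC{y})}$ is sharp and fs.

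Next, choose a compatible log geometric point $\widetilde{\zeta}_Y$ of $Y$ above $\zeta$. By Corollary \ref{cor-fund-grp-log-pt} and Proposition \ref{prop-fket-str-loc}, the canonical map $\pi_1^\ket(Y(\AC{y}), \widetilde{\zeta}_Y) \twoheadrightarrow \pi_1^\ket(Z(\AC{z}), \zeta)$ is identified with the projection $\Hom(Q^\gp, \widehat{\bZ}'(1)(l)) \twoheadrightarrow \Hom(P^\gp, \widehat{\bZ}'(1)(l))$ whose kernel is $H := \Hom((Q')^\gp, \widehat{\bZ}'(1)(l))$, a pro-finite group of order prime to $p$. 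By a direct adaptation of the proof of Lemma \ref{lem-ket-to-et-stalk}, but applied to the morphism $g_{\ket} : Y_\ket \to Z_\ket$ in place of $\varepsilon_\et$, one obtains
\[
    (R^i g_{\ket, *} \cG)_\zeta \cong H^i_{\cont}(H, \cG_{\widetilde{\zeta}_Y})
\]
for any abelian sheaf $\cG$ on $Y_\ket$ whose stalk at $\widetilde{\zeta}_Y$ is $p$-torsion.

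Applying this identification to $\cG = \cF$ and to $\cG = \cF \otimes_{\bF_p} (\cO_{Y_\ket}^+ / p)$, one sees that both sides of the canonical morphism vanish for $i > 0$, since $H^i_{\cont}(H, -)$ vanishes on $p$-torsion coefficients when $|H|$ is prime to $p$. It therefore remains to verify, for $i = 0$, that the natural map
\[
    (\cF_{\widetilde{\zeta}_Y})^H \otimes_{\bF_p} (\cO_{Z_\ket}^+ / p)_\zeta \longrightarrow \bigl(\cF_{\widetilde{\zeta}_Y} \otimes_{\bF_p} (\cO_{Y_\ket}^+ / p)_{\widetilde{\zeta}_Y}\bigr)^H
\]
is an isomorphism. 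The hard part is this last step, which I expect to be the main obstacle. The strategy is to compute $(\cO_{Y_\ket}^+ / p)_{\widetilde{\zeta}_Y}$ explicitly as a direct limit along the tower of standard Kummer \'etale covers $Z \times_{Z\Talg{P}} Z\Talg{P \oplus \frac{1}{n} Q'}$ (for $n$ prime to $p$), thereby exhibiting it as an $\bF_p[H]$-module whose $H$-invariants coincide with $(\cO_{Z_\ket}^+ / p)_\zeta$. Combined with the semisimplicity of $\bF_p[H]$ (which ensures that tensor products distribute over $H$-isotypic decompositions) and the fact that the isotypic components of $\cF_{\widetilde{\zeta}_Y}$ pair with those of $(\cO_{Y_\ket}^+ / p)_{\widetilde{\zeta}_Y}$ to collapse onto the trivial character piece, this should yield the desired isomorphism.
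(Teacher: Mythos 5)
There is a genuine gap, and it sits exactly where you predicted the difficulty would be---but the problem appears earlier than you think, in the structure of the group $H$ rather than in the final tensor computation.

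You claim that $H := \Hom\bigl((Q')^\gp, \widehat{\bZ}'(1)(l)\bigr)$ is a pro-finite group of order prime to $p$. This is false in the setting of the lemma. By Definition \ref{def-mu}, $\widehat{\bZ}'(1)(l) = \varprojlim_m \Grpmu_m(l)$ with $m$ ranging over all integers invertible in $l$; since $Z$ lives over $\Spa(\bQ_p, \bZ_p)$, the residue field $l$ has characteristic zero, \emph{every} $m$ is invertible, and $\widehat{\bZ}'(1)(l) = \widehat{\bZ}(1)(l) \cong \widehat{\bZ}$ contains a full $\bZ_p(1)$-factor. Concretely, the $p$-power Kummer covers $\xi^{(1/p^n)}$ are Kummer \'etale here, so $H \cong \bigl(\widehat{\bZ}(1)(l)\bigr)^r$ with $r = \rank_\bZ (Q')^\gp$. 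Both conclusions you draw from the prime-to-$p$ hypothesis then collapse: the higher cohomology does \emph{not} vanish (already $H^1_\cont\bigl(\widehat{\bZ}(1), \bF_p\bigr) \cong \bF_p \neq 0$, so $R^i g_{\ket, *}(\cF)$ is genuinely nonzero for $i > 0$ in the basic example $\overline{\cM}_{Y,\AC{y}}/\overline{\cM}_{Z,\AC{z}} \cong \bZ_{\geq 0}$), and $\bF_p[H]$ is not semisimple, so the isotypic-decomposition argument for $i = 0$ is unavailable.

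The reduction to stalks, the choice of compatible charts and of a lift $\widetilde{\zeta}_Y$, and the identification $(R^i g_{\ket,*}\cG)_\zeta \cong H^i_\cont(H, \cG_{\widetilde{\zeta}_Y})$ are all sound and agree with the paper's proof. What is needed to finish is a different mechanism: since $H \cong \bigl(\widehat{\bZ}(1)(l)\bigr)^r$, its continuous cohomology on discrete $p$-torsion modules is computed by a Koszul complex on $\gamma_1 - 1, \ldots, \gamma_r - 1$, which is a \emph{bounded complex of finite free modules} over the coefficients. The stalk $(\cO^+_{Y_\ket}/p)_{\widetilde{\zeta}_Y} \cong l^+/p$ carries the trivial $H$-action (the underlying adic space is constant along the Kummer tower over a point), and $l^+/p$ is flat over the field $\bF_p$; hence tensoring the Koszul complex of $\cF_{\widetilde{\zeta}_Y}$ with $l^+/p$ commutes with taking cohomology in every degree. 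This yields $H^i_\cont(H, \cF_{\widetilde{\zeta}_Y}) \otimes_{\bF_p} (l^+/p) \Mi H^i_\cont\bigl(H, \cF_{\widetilde{\zeta}_Y} \otimes_{\bF_p} (l^+/p)\bigr)$ for all $i$, without any vanishing. You also need the input of Lemma \ref{lem-cl-imm-O+-p} to identify $(\cO^+_{Z_\ket}/p)_\zeta$ and $(\cO^+_{Y_\ket}/p)_{\widetilde{\zeta}_Y}$ with $l^+/p$; your proposed explicit computation of the stalk along the tower $Z \times_{Z\Talg{P}} Z\Talg{P \oplus \frac{1}{n}Q'}$ is not needed once this is in hand.
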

\begin{proof}
    By Lemma \ref{lem-log-geom-pt}, it suffices to show that, for each log geometric point $\widetilde{z}: (\Spa(l, l^+), M) \to Z$ as in Construction \ref{constr-log-geom-pt}, the induced morphism
    \begin{equation}\label{eq-lem-ket-mor-O+-p-stalk}
        \bigl(R^i g_{\ket, *}(\bF_p)\bigr)_{\widetilde{z}} \otimes_{\bF_p} (\cO_{Z_\ket}^+ / p)_{\widetilde{z}} \to \bigl(R^i g_{\ket, *}(\cO_{Y_\ket}^+ / p)\bigr)_{\widetilde{z}}
    \end{equation}
    is an isomorphism.  Since $g$ induces an isomorphism of the underlying adic spaces, the underlying geometric point $\AC{z}$ of $\widetilde{z}$ uniquely lifts to a geometric point $\AC{y}$.  By assumption, $g^\sharp_{\AC{y}}: \cM_{Z, \AC{z}} \to \cM_{Y, \AC{y}}$ is injective and splits, in which case we have $\cM_{Y, \AC{y}} \cong \cM_{Z, \AC{z}} \oplus N$, for some fs monoid $N$.  Therefore, we can lift $\widetilde{z}$ to some \Pth{saturated by not necessarily fine} log point $\bigl(\Spa(l, l^+), M'\bigr) \to Y$ such that $M' \cong M \oplus N$.  By taking fiber products with $\Spa(l\Talg{\frac{1}{m} N}, l^+\Talg{\frac{1}{m} N})$ over $\Spa(l\Talg{N}, l^+\Talg{N})$, by taking reduced subspaces, and by taking the limit with respect to $m$ \Pth{\Refcf{} Construction \ref{constr-log-geom-pt}}, we can further lift this log point to a log geometric point $\widetilde{y}$ of $Y$ above $\AC{y}$.  Thus, by a limit argument similar to the one in the proof of Lemma \ref{lem-ket-to-et-stalk}, and by Proposition \ref{prop-fket-log-pt} and Lemma \ref{lem-cl-imm-O+-p}, we may identify \Refeq{\ref{eq-lem-ket-mor-O+-p-stalk}} with
    \[
        H^i(\Gamma, \cF) \otimes_{\bF_p} (l^+ / p) \to H^i(\Gamma, l^+ / p),
    \]
    where
    \[
        \Gamma := \ker\bigl(\pi_1^\ket(\AC{y}, \widetilde{y}) \to \pi_1^\ket(\AC{z}, \widetilde{z})\bigr) \cong \Hom\bigl(\overline{N}^\gp, \widehat{\bZ}(1)(l)\bigr).
    \]
    Since $H^i(\Gamma, \cF)$ is computed by some bounded complex of free $\bF_p$-modules, and since $H^i(\Gamma, l^+ / p)$ is computed by the tensor product of this complex with the flat $\bF_p$-module $l^+ / p$, we see that \Refeq{\ref{eq-lem-ket-mor-O+-p-stalk}} is an isomorphism, as desired.
\end{proof}

\begin{prop}\label{prop-ket-mor-O+-p}
    Let $f: Y \to X$ be a morphism of locally noetherian fs log adic spaces over $\Spa(\bQ_p, \bZ_p)$ such that its underlying morphism of adic spaces is a closed immersion.  Suppose moreover that $\bigl(f^*(\cM_X)\bigr)_{\AC{y}} \to \cM_{Y, \AC{y}}$ is injective and splits, for each geometric point $\AC{y}$ of $Y$.  Then, for any $\bF_p$-sheaf $\cF$ on $Y_\ket$, the canonical morphism $R^i f_{\ket, *}(\cF) \otimes_{\bF_p} (\cO_{X_\ket}^+ / p) \to R^i f_{\ket, *}\bigl(\cF \otimes_{\bF_p} (\cO_{Y_\ket}^+ / p)\bigr)$ is an isomorphism.
\end{prop}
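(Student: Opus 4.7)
The plan is to factor $f$ as a strict closed immersion preceded by a morphism that induces the identity on underlying adic spaces, and then combine Lemmas~\ref{lem-cl-imm-O+-p} and~\ref{lem-ket-mor-O+-p}. Let $\widetilde{Y}$ denote the log adic space whose underlying adic space is $Y$ and whose log structure is the pullback $f^*(\cM_X)$; by Example~\ref{ex-imm}, the induced morphism $\breve{\imath}: \widetilde{Y} \to X$ is a strict closed immersion, and $f$ factors as $Y \Mapn{g} \widetilde{Y} \Mapn{\breve{\imath}} X$ with $g$ inducing the identity on underlying adic spaces. To see that $\widetilde{Y}$ is a locally noetherian fs log adic space, note that $\widetilde{Y}$ is coherent because any fs chart on $X$ pulls back to a chart on $\widetilde{Y}$ modeled on the same monoid; by Proposition~\ref{prop-int-fine} and Lemma~\ref{lem-int-sat-stalk}, it remains to check saturation of the stalks $(f^*\cM_X)_{\AC{y}}$, which holds because the hypothesis makes this a direct summand of the fs monoid $\cM_{Y,\AC{y}}$. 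By construction, the stalk morphism $g^\sharp_{\AC{y}}$ is exactly the injective split morphism $(f^*\cM_X)_{\AC{y}} \to \cM_{Y,\AC{y}}$, so Lemma~\ref{lem-ket-mor-O+-p} applies to $g$.

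The next step is to use this decomposition to factor the canonical map. Since $\breve{\imath}$ is a strict closed immersion, Lemma~\ref{lem-exc}\Refenum{\ref{lem-exc-adj-cl}} shows that $\breve{\imath}_{\ket,*}$ is exact; hence the Grothendieck composition $R f_{\ket,*} \cong R\breve{\imath}_{\ket,*} \circ R g_{\ket,*}$ yields a canonical isomorphism $R^i f_{\ket,*}(\cF) \cong \breve{\imath}_{\ket,*}\bigl(R^i g_{\ket,*}(\cF)\bigr)$, and likewise with $\cF$ replaced by $\cF \otimes_{\bF_p}(\cO^+_{Y_\ket}/p)$. The canonical morphism in the proposition then factors as the composition of two isomorphisms: first, the second part of Lemma~\ref{lem-cl-imm-O+-p} applied to the strict closed immersion $\breve{\imath}$ and the $\bF_p$-sheaf $R^i g_{\ket,*}(\cF)$ on $\widetilde{Y}_\ket$, which moves $\cO^+_{X_\ket}/p$ inside $\breve{\imath}_{\ket,*}$ and identifies it with $\cO^+_{\widetilde{Y}_\ket}/p$; second, Lemma~\ref{lem-ket-mor-O+-p} applied to $g$, which moves $\cO^+_{\widetilde{Y}_\ket}/p$ inside $R^i g_{\ket,*}$.

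The main item I expect to require care is verifying that the two-step composite map built above really coincides with the canonical morphism in the statement of the proposition. This reduces to a diagram chase tracing the adjunctions $(g^{-1},g_{\ket,*})$ and $(\breve{\imath}^{-1},\breve{\imath}_{\ket,*})$ and the natural projection-formula maps through the Grothendieck composition, and poses no conceptual difficulty once the factorization $f = \breve{\imath} \circ g$ is in place. The quantitative content of the proposition is thus delegated entirely to the two supporting lemmas, with the factorization serving as the structural bridge between the strict-closed-immersion case and the identity-on-adic-spaces case.
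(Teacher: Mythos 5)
Your proposal is correct and follows essentially the same route as the paper: the paper also factors $f$ through the intermediate space $Z$ (your $\widetilde{Y}$) carrying the pullback log structure, and then combines Lemmas \ref{lem-exc}, \ref{lem-cl-imm-O+-p}, and \ref{lem-ket-mor-O+-p}. Your additional checks (that $\widetilde{Y}$ is fs and that the stalkwise hypothesis of Lemma \ref{lem-ket-mor-O+-p} holds for $g$) are details the paper leaves implicit, and they are carried out correctly.
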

\begin{proof}
    In this case, let $Z$ denote the underlying adic space of $Y$ equipped with the log structure pulled back from $X$.  Then $f: Y \to X$ factors as the composition of a morphism $g: Y \to Z$ as in Lemma \ref{lem-ket-mor-O+-p} and a strict closed immersion $\imath: Z \to X$ as in Lemma \ref{lem-cl-imm-O+-p}, and we can combine Lemmas \ref{lem-exc}, \ref{lem-cl-imm-O+-p}, and \ref{lem-ket-mor-O+-p}.
\end{proof}

\subsection{Purity of torsion local systems}\label{sec-purity}

We have the following \emph{purity} result for torsion Kummer \'etale local systems.
\begin{thm}\label{thm-purity}
    Let $X$, $D$, and $k$ be as in Example \ref{ex-log-adic-sp-ncd}.  Let $U := X - D$, and let $\jmath: U \to X$ denote the canonical open immersion.  Suppose moreover that $\chr(k) = 0$ and $k^+ = \cO_k$.  Let $\bL$ be a torsion local system on $U_\et$.  Then $\jmath_{\ket, *}(\bL)$ is a torsion local system on $X_\ket$, and $R^i\jmath_{\ket, *}(\bL) = 0$ for all $i > 0$.
\end{thm}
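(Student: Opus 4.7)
Both assertions can be checked on stalks at log geometric points of $X$ by Lemma~\ref{lem-log-geom-pt}. Working analytically locally via Example~\ref{ex-log-adic-sp-ncd-chart}, we may assume $X \cong \bD^n$ with $D = \{T_1 \cdots T_m = 0\}$ and consider a log geometric point $\widetilde{x}$ above a point $x \in D$, with $\overline{\cM}_{X, \widetilde{x}} \cong \bZ_{\geq 0}^m$. Let $X(\widetilde{x})$ denote the strict localization of $X$ at the underlying geometric point, endowed with the pulled-back log structure, and let $U(\widetilde{x}) := X(\widetilde{x}) \times_X U$. By Proposition~\ref{prop-fket-str-loc} and Corollary~\ref{cor-fund-grp-log-pt}, finite Kummer \'etale covers of $X(\widetilde{x})$ are classified by continuous representations of $\pi_1^\ket(X(\widetilde{x})) \cong \widehat{\bZ}(1)(l)^m$.

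The key intermediate step is the comparison claim that the restriction functor
\[
    X(\widetilde{x})_\fket \to U(\widetilde{x})_\fet
\]
is an equivalence of Galois categories, or equivalently that the canonical homomorphism $\pi_1^\et\bigl(U(\widetilde{x})\bigr) \to \pi_1^\ket\bigl(X(\widetilde{x})\bigr)$ is an isomorphism. Full faithfulness follows from the normality of the toric affinoids appearing in the Kummer tower (Example~\ref{ex-log-adic-sp-toric}) together with the density of $U(\widetilde{x})$ in $X(\widetilde{x})$, which forces morphisms to be determined by their restrictions to $U$. Essential surjectivity is the content of the rigid Abhyankar's lemma (Proposition~\ref{prop-Abhyankar}): a finite \'etale cover of $U(\widetilde{x})$ spreads out to an \'etale neighborhood, where Proposition~\ref{prop-Abhyankar} produces a finite Kummer \'etale extension to the corresponding open of $X$, which we then restrict back to $X(\widetilde{x})$ using Lemma~\ref{lem-Abhyankar-basic}.

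Granting this comparison, write $\bL = \varinjlim_n \bL[n]$ and reduce both assertions to the case of a finite locally constant $\bL$, corresponding by Theorem~\ref{thm-fket-loc-syst} (applied over $U$) to a finite \'etale cover $V \to U$. By the comparison, $V \to U$ extends (in a Kummer \'etale neighborhood of $\widetilde{x}$) to a finite Kummer \'etale cover $Y \to X$; the associated Kummer \'etale torsion local system $\widetilde{\bL}$ satisfies $\jmath_\ket^{-1}(\widetilde{\bL}) \cong \bL$, and an adjunction--plus--stalk computation identifies $\widetilde{\bL}$ with $\jmath_{\ket, *}(\bL)$, proving the first claim. For the vanishing of $R^i \jmath_{\ket, *}(\bL)$ with $i > 0$, one computes
\[
    \bigl(R^i \jmath_{\ket, *}(\bL)\bigr)_{\widetilde{x}} \cong \varinjlim_V H^i_\et(V \times_X U, \bL),
\]
with $V$ running over Kummer \'etale neighborhoods of $\widetilde{x}$; a cofinal family is furnished by the standard covers $X^{\frac{1}{N}}$ as $N$ ranges over positive integers, and their restrictions to $U$ form a pro-system trivializing $\bL$ by the comparison claim (since the Galois group of the pro-system is exactly $\pi_1^\ket(X(\widetilde{x})) \cong \pi_1^\et(U(\widetilde{x}))$). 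The limit of these cohomologies then vanishes in positive degrees by a standard continuous group cohomology argument. The main obstacle is the careful verification of the comparison equivalence, which requires promoting Proposition~\ref{prop-Abhyankar}, stated for rigid analytic varieties, to the non-analytic strictly local setting via a spreading-out and limit argument.
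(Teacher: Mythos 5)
Your overall strategy---use the rigid Abhyankar lemma to reduce to constant coefficients and then compute stalks at log geometric points---is in the same spirit as the paper's proof, and your treatment of the first assertion (that $\jmath_{\ket, *}(\bL)$ is a Kummer \'etale local system) essentially reproduces the paper's use of Proposition \ref{prop-Abhyankar} and Lemma \ref{lem-Abhyankar-basic}.  However, there is a genuine gap in your argument for the vanishing of $R^i\jmath_{\ket, *}(\bL)$ for $i > 0$: a comparison of fundamental groups does not control higher cohomology.

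After reducing to $\bL = \bZ / n$, the stalk $\bigl(R^i\jmath_{\ket, *}(\bZ / n)\bigr)_{\widetilde{x}}$ is a double limit: over \'etale neighborhoods $V_0$ of $x$ in $X$ and over the Kummer tower $V_0^{\frac{1}{N}}$.  The inner limit computes the stalk of $R^i\jmath_{\et, *}(\bZ / n)$, \ie, the \'etale cohomology of the germ of the punctured polydisc, and there is no a priori reason this is governed by $\pi_1$; asserting that the tower kills it because the Galois group of the tower exhausts $\pi_1^\et\bigl(U(\widetilde{x})\bigr)$ amounts to assuming that the punctured polydisc germ is a $K(\pi, 1)$ for torsion coefficients.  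That is precisely the content of cohomological purity, namely $R^i\jmath_{\et, *}(\bZ / n) \cong \bigl(\Ex^i(\overline{\cM}_X^\gp / n \overline{\cM}_X^\gp)\bigr)(-i)$, which the paper establishes in Lemma \ref{lem-ket-to-et-const} by reducing to the corresponding algebraic statement via \cite[\aProp 2.1.4 and \aThm 3.8.1]{Huber:1996-ERA}.  Only once you know that every class in $R^i\jmath_{\et, *}(\bZ / n)$ is a sum of cup products of Kummer classes of the coordinates $T_j$ can you conclude that it dies in the tower $X^{\frac{1}{N}}$ (each Kummer class is multiplied by $N$ under pullback, hence vanishes once $n \mid N$).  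Your ``standard continuous group cohomology argument'' silently assumes this input; without it, the argument says nothing about $H^i$ for $i \geq 2$, nor even about the part of $H^1$ not detected by $\pi_1$.  Relatedly, the claimed equivalence $X(\widetilde{x})_\fket \cong U(\widetilde{x})_\fet$---which you correctly flag as requiring a delicate spreading-out argument in the non-analytic strictly local setting---is not actually needed: the paper applies Proposition \ref{prop-Abhyankar} on genuine rigid-analytic neighborhoods and never forms $U(\widetilde{x})$, and even granting the equivalence it would only yield the $i = 0$ case and the $\pi_1$-part of the $i = 1$ case of the vanishing.
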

Let us start with some preparations.
\begin{lem}\label{lem-ket-to-et-const}
    In the setting of Theorem \ref{thm-purity}, consider the commutative diagram:
    \begin{equation}\label{eq-lem-ket-to-et-const-diag}
        \xymatrix{ {U_\ket} \ar[d]^-\cong \ar[r]^-{\jmath_\ket} & {X_\ket} \ar[d]^-{\varepsilon_\et} \\
        {U_\et} \ar[r]^-{\jmath_\et} & {X_\et.} }
    \end{equation}
    Then the canonical morphism
    \begin{equation}\label{eq-lem-ket-to-et-const}
        R\varepsilon_{\et, *}(\bZ / n) \to R\jmath_{\et, *}(\bZ / n)
    \end{equation}
    is an isomorphism; and $R^i\jmath_{\et, *}(\bZ / n) \cong \bigl(\Ex^i (\overline{\cM}_X^\gp / n \overline{\cM}_X^\gp)\bigr)(-i)$, for every $i \geq 0$.
\end{lem}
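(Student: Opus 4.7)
The plan is to verify the first claim stalk-by-stalk at geometric points $\xi = \Spa(l, l^+) \to X$, which form a conservative family on $X_\et$ by \cite[\aProp 2.5.5]{Huber:1996-ERA}. When $\xi$ is supported in $U$, the log structure at $\xi$ is trivial, $X(\xi) \cong \xi$ is analytic, and both sides reduce to $\bZ/n$ concentrated in degree zero. The substantive case is when $\xi$ is supported in $D$; by Example \ref{ex-log-adic-sp-ncd}, $P := \overline{\cM}_{X, \xi} \cong \bZ_{\geq 0}^r$ for the number $r \geq 1$ of components of $D$ meeting the support of $\xi$.

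For the stalk of $R^i\varepsilon_{\et, *}(\bZ/n)$, I would invoke Lemma \ref{lem-ket-to-et-stalk} and Corollary \ref{cor-fund-grp-log-pt} to identify it with the continuous group cohomology $H^i(\widehat{\bZ}(1)(l)^r, \bZ/n)$, with trivial action. Since $\widehat{\bZ}(1)$ is procyclic of cohomological dimension one and satisfies $H^1(\widehat{\bZ}(1), \bZ/n) \cong \Hom(\Grpmu_n, \bZ/n) \cong (\bZ/n)(-1)$, the K\"unneth formula yields a canonical identification with $(\wedge^i(\bZ/n)^r)(-i) \cong (\wedge^i(\overline{\cM}_{X,\xi}^\gp / n\overline{\cM}_{X,\xi}^\gp))(-i)$.

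For the stalk of $R^i\jmath_{\et, *}(\bZ/n)$, a colimit over \'etale neighborhoods identifies it with $H^i_\et(U \times_X X(\xi), \bZ/n)$. A smooth toric chart (Example \ref{ex-log-adic-sp-ncd-chart}) reduces us analytic-locally to the case $X \cong S \times \bD^n$ with $D$ cut out by $T_1 \cdots T_m = 0$, so that $U \times_X X(\xi)$ becomes a relative punctured polydisc in the $r$ directions along which $\xi$ is singular, over a strictly local base. I would then run a Cartan--Leray argument along the pro-Kummer tower of Construction \ref{constr-log-geom-pt}: the finite \'etale Galois coverings $U \times_X X(\xi)^{(1/m)} \to U \times_X X(\xi)$ with Galois groups $\Grpmu_m^r$ assemble, in the limit, into a pro-\'etale cover with Galois group $\widehat{\bZ}(1)(l)^r$ whose total space is \'etale-acyclic for $\bZ/n$-coefficients---adjoining all $n$-th roots of the $T_i$ renders the cover strictly local. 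The resulting Cartan--Leray spectral sequence identifies the stalk with $H^i(\widehat{\bZ}(1)(l)^r, \bZ/n)$, matching the previous paragraph; compatibility of the natural morphism with these identifications follows from the Kummer character in degree one and the cup-product structure in higher degrees.

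The second claim then follows by combining the first with Lemma \ref{lem-ket-to-et-Kummer} after the evident Tate twist relating $\bZ/n$ to $\Grpmu_n^{\otimes i}$. The principal technical obstacle is the vanishing step of the Cartan--Leray argument: one must verify that the pro-Kummer tower is \'etale-acyclic in the limit for $\bZ/n$-coefficients, which can be established inductively on $r$, reducing to the $r = 1$ case---the classical computation of $H^*_\et(\bD^\times_l, \bZ/n)$ together with the henselian nature of $\cO_{X(\xi)}^+$.
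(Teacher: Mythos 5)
Your identification of the stalk of $R^i\varepsilon_{\et,*}(\bZ/n)$ with $H^i\bigl(\widehat{\bZ}(1)(l)^r,\bZ/n\bigr)$ is fine---it is Lemma \ref{lem-ket-to-et-stalk} together with Corollary \ref{cor-fund-grp-log-pt}, already packaged as Lemma \ref{lem-ket-to-et-Kummer}---and so is the deduction of the second claim from the first.  The gap is in the computation of $R^i\jmath_{\et,*}(\bZ/n)$.  First, a point of rigor: the stalk of $R^i\jmath_{\et,*}(\bZ/n)$ at $\xi$ is $\varinjlim_V H^i(U\times_X V,\bZ/n)$ over \'etale neighborhoods $V$ of $\xi$, and this is \emph{not} $H^i_\et\bigl(U\times_X X(\xi),\bZ/n\bigr)$: for an analytic geometric point supported on $D$ one has $X(\xi)\cong\xi$ \Pth{see the conventions around \cite[\aProp 2.5.13]{Huber:1996-ERA}}, so $U\times_X X(\xi)$ is empty, unlike in the scheme setting where the punctured strictly henselian localization carries all the information.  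You must therefore work with the colimit of punctured neighborhoods directly.  Second, and more seriously, the acyclicity your Cartan--Leray argument needs---the vanishing of $\varinjlim_m\varinjlim_V H^i\bigl((U\times_X V)^{(\frac{1}{m})},\bZ/n\bigr)$ for $i>0$---is not a routine consequence of \Qtn{the classical computation of $H^*_\et(\bD^\times_l,\bZ/n)$ and henselianness.}  For $i=1$ it can be extracted from the rigid Abhyankar lemma \Pth{Proposition \ref{prop-Abhyankar}}, but for $i\geq 2$ the statement that the cup products of the Kummer classes exhaust the cohomology of the punctured polydisc in the limit is essentially equivalent to the purity statement the lemma is meant to establish; asserting it as the \Qtn{vanishing step} is circular unless you supply an independent proof \Pth{e.g., an induction on $r$ requiring a base change theorem for relative punctured discs, which is not available in the paper}.

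The paper takes a different and much shorter route: by Lemma \ref{lem-ket-to-et-Kummer} it suffices to show that the composite $\bigl(\wedge^i(\overline{\cM}_X^\gp/n\overline{\cM}_X^\gp)\bigr)(-i)\to R^i\varepsilon_{\et,*}(\bZ/n)\to R^i\jmath_{\et,*}(\bZ/n)$ is an isomorphism; this assertion is \'etale-local on $X$, so one may assume $D\subset X$ is the analytification of a normal crossings divisor on a smooth scheme over $k$ and then invoke Huber's comparison theorems \cite[\aProp 2.1.4 and \aThm 3.8.1]{Huber:1996-ERA} to import the known algebraic statement \cite[\aThm 7.2]{Illusie:2002-fknle}.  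If you want a proof avoiding the algebraic comparison, you would first need to establish the rigid-analytic local acyclicity directly, which is precisely what your sketch assumes.
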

\begin{proof}
    By Lemma \ref{lem-ket-to-et-Kummer}, it suffices to show that the composition
    \[
        \bigl(\Ex^i(\overline{\cM}_X^\gp / n \overline{\cM}_X^\gp)\bigr)(-i) \to R^i\varepsilon_{\et, *}(\bZ / n) \to R^i\jmath_{\et, *}(\bZ / n)
    \]
    \Pth{induced by \Refeq{\ref{eq-Kummer-seq-conn}} and \Refeq{\ref{eq-lem-ket-to-et-const}}} is an isomorphism.  Since this assertion is \'etale local on $X$, we may assume that $D \subset X$ is the analytification of a normal crossings divisor on a smooth scheme over $k$, and further reduce the assertion to its classical analogue for schemes by \cite[\aProp 2.1.4 and \aThm 3.8.1]{Huber:1996-ERA}, which is known \Pth{see, for example, \cite[\aThm 7.2]{Illusie:2002-fknle}}.
\end{proof}

\begin{lem}\label{lem-ket-const}
    In the setting of Lemma \ref{lem-ket-to-et-const}, the canonical morphism
    \begin{equation}\label{eq-lem-ket-const}
        \bZ / n \to R\jmath_{\ket, *}(\bZ / n)
    \end{equation}
    is an isomorphism.
\end{lem}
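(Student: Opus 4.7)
The plan is to verify that $\bZ/n \to R\jmath_{\ket,*}(\bZ/n)$ is an isomorphism on stalks at log geometric points of $X$, using Lemma \ref{lem-log-geom-pt}. Let $\widetilde\xi$ be a log geometric point above $\xi \in X$. If $\xi \in U$, the log structure is trivial in an étale neighborhood of $\xi$, so Kummer étale neighborhoods of $\widetilde\xi$ are cofinally pulled back from étale neighborhoods of $\xi$ in $X$, and $\jmath$ induces a local isomorphism of sites near $\widetilde\xi$; hence the stalk is $\bZ/n$ in degree $0$ and vanishes in positive degrees.

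Suppose instead $\xi \in D$, and let $r \geq 1$ denote the number of analytic branches of $D$ at $\xi$, so that $P := \overline{\cM}_{X, \xi} \cong \bZ_{\geq 0}^r$. Pass to the strict localization $X(\xi)$. For each positive integer $m$ (automatically invertible in $k$, since $\chr(k) = 0$), the standard Kummer étale cover $X(\xi)^{1/m} \to X(\xi)$ of Definition \ref{def-Kummer-n-th} has underlying ring $\cO_{X(\xi)}[S_1, \ldots, S_r]/(S_i^m - T_i)$, where $T_i \in \cO_{X(\xi)}$ are lifts of the generators of $P$; this is strictly henselian local because $\cO_{X(\xi)}$ is and the $T_i$ lie in the maximal ideal with $m$ invertible. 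Consequently, strictly étale covers of $X(\xi)^{1/m}$ split near the unique lift of $\widetilde\xi$, and combined with Lemma \ref{lem-Abhyankar-basic} this shows that $\{X(\xi)^{1/m}\}$ is cofinal among Kummer étale neighborhoods of $\widetilde\xi$. Setting $V_m := U \times_X X(\xi)^{1/m}$ (which carries the trivial log structure), we obtain
\begin{equation*}
  (R^i \jmath_{\ket,*}(\bZ/n))_{\widetilde\xi} \cong \varinjlim_m \, H^i_\et(V_m, \bZ/n).
\end{equation*}

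Applying Lemma \ref{lem-ket-to-et-const} to the smooth rigid analytic space $X'^{1/m}$, where $X' \to X$ is an étale affinoid neighborhood of $\xi$ admitting $P$ as a chart and $X'^{1/m}$ is formed as in Example \ref{ex-log-adic-sp-ncd-cov} (again a smooth rigid analytic space with a normal crossings divisor given by the preimage of $D$), and then taking stalks at the unique lift of $\xi$, I identify
\begin{equation*}
  H^i_\et(V_m, \bZ/n) \cong \bigl(\wedge^i\bigl(\tfrac{1}{m} P^\gp / n\bigr)\bigr)(-i) \cong \bigl(\wedge^i (\bZ/n)^r\bigr)(-i),
\end{equation*}
generated as an exterior algebra by the Kummer classes $[S^{(m)}_1], \ldots, [S^{(m)}_r]$ of the coordinates on $X(\xi)^{1/m}$ (with $(S^{(m)}_i)^m = T_i$). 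For $m \mid m'$, the identification $S^{(m)}_i = (S^{(m')}_i)^{m'/m}$ shows that the transition $H^i(V_m, \bZ/n) \to H^i(V_{m'}, \bZ/n)$ acts as multiplication by $(m'/m)^i$. For $i = 0$ this is the identity, yielding $\bZ/n$ in the limit; for $i > 0$, taking $m' = nm$ gives multiplication by $n^i = 0$ on $\bZ/n$-coefficients, so the direct limit vanishes.

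The main obstacle is establishing the cofinality of $\{X(\xi)^{1/m}\}$ among Kummer étale neighborhoods of $\widetilde\xi$; this relies both on the strict locality of each $X(\xi)^{1/m}$ and on the rigid Abhyankar's lemma (via Lemma \ref{lem-Abhyankar-basic}). Once this and the explicit identification of transition maps via Kummer classes are in place, what remains is a short direct computation.
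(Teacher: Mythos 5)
Your argument is correct, but it takes a genuinely different route from the paper's. The paper avoids stalks entirely: it forms the cone $C$ of \Refeq{\ref{eq-lem-ket-const}}, evaluates on each $W \to X$ that is a composition of an \'etale covering and a standard Kummer \'etale covering (these generate the topology), observes that such a $W$ is again of normal-crossings type, and then notes that applying $R\varepsilon_{\et,*}$ to \Refeq{\ref{eq-lem-ket-const}} over $W$ recovers exactly the isomorphism of Lemma \ref{lem-ket-to-et-const} for $W$ itself; hence $R\varepsilon_{\et,*}(C|_{W_\ket}) = 0$ and $H^\bullet(W_\ket, C) = 0$. That trick buys brevity and sidesteps any analysis of transition maps. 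You instead compute stalks at log geometric points (legitimate by Lemma \ref{lem-log-geom-pt}), using the cofinality of the tower $\{X'^{\frac{1}{m}}\}$ among Kummer \'etale neighborhoods of $\widetilde{\xi}$ (which is the content of Lemma \ref{lem-Abhyankar-basic} plus the splitting of \'etale covers of the strictly local $X(\xi)^{\frac{1}{m}}$, exactly as in the proof of Lemma \ref{lem-ket-to-et-stalk}), and then the explicit Kummer-class description of the isomorphism in Lemma \ref{lem-ket-to-et-const} --- which is indeed given by \Refeq{\ref{eq-Kummer-seq-conn}}, so your claim that the transition map from level $m$ to level $m'$ is multiplication by $(m'/m)^i$ is justified --- to conclude that the positive-degree colimit is killed by multiplication by $n^i$. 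This makes the underlying mechanism (divisibility of Kummer classes up the tower) explicit, at the cost of two points that deserve a word of care: the identification of the stalk with $\varinjlim_m H^i_\et(V_m, \bZ/n)$ silently commutes a double colimit over $(X', m)$ with passage to the limit space $X(\xi)^{\frac{1}{m}}$ (harmless if one simply reads $H^i_\et(V_m, \bZ/n)$ as the stalk $(R^i\jmath_{\et,*}(\bZ/n))_{\xi}$ on $X'^{\frac{1}{m}}$, which is how you in fact compute it), and the cofinality statement should be phrased for the pro-object $\{X'^{\frac{1}{m}}\}_{X', m}$ rather than for the single tower over the strict localization. Neither is a gap; both are routine to make precise with the tools already in Sections \ref{sec-Abhyankar} and \ref{sec-ket-cov-descent}.
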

\begin{proof}
    Let $C$ be the cone of \Refeq{\ref{eq-lem-ket-const}} \Pth{in the derived category}.  It suffices to show that $H^\bullet(W_\ket, C) = 0$, for each $W \to X$ that is the composition of an \'etale covering and a standard Kummer \'etale cover of $X$.  Note that the complement of $U \times_X W$ in $W$ is a normal crossings divisor, which induces the fs log structure of $W$ as in Example \ref{ex-log-adic-sp-ncd}.  Consider the diagram \Refeq{\ref{eq-lem-ket-to-et-const-diag}}, with $U \to X$ replaced with $U \times_X W \to W$.  Since the corresponding morphism \Refeq{\ref{eq-lem-ket-to-et-const}} for this new diagram is an isomorphism by Lemma \ref{lem-ket-to-et-const}, and since \Refeq{\ref{eq-lem-ket-to-et-const}} is obtained from \Refeq{\ref{eq-lem-ket-const}} by applying $\varepsilon_\et$ to both sides, we have $R\varepsilon_{\et, *}(C|_{W_\ket}) = 0$ on $W_\et$, and so $H^\bullet(W_\ket, C) = 0$, as desired.
\end{proof}

\begin{proof}[Proof of Theorem \ref{thm-purity}]
    Let $V \to U$ be a finite \'etale cover trivializing $\bL$.  By Proposition \ref{prop-Abhyankar}, it extends to a finite Kummer \'etale cover $f: Y \to X$, where $Y$ is a normal rigid analytic variety with its log structures defined by the preimage of $D$.  Moreover, if $Y' \to Y$ is Kummer \'etale, then $Y'$ is locally a normal rigid analytic variety, and any section of a finite torsion constant sheaf over the preimage of $U$ uniquely extends to a section of the constant sheaf with the same coefficients over $Y'$, by Example \ref{ex-log-adic-sp-toric}, Proposition \ref{prop-ket-std}, Corollary \ref{cor-ket-op}, and Proposition \ref{prop-Abhyankar}.  Thus, $\jmath_{\ket, *}(\bL)|_{Y_\ket}$ is constant, and $\jmath_{\ket, *}(\bL)$ is a torsion local system on $X_\ket$.  Given this $f: Y \to X$, up to \'etale localization on $X$, we have some $X^{\frac{1}{m}} \to X$ as in Lemma \ref{lem-Abhyankar-basic}.  Then the underlying adic space of $Z := Y \times_X X^{\frac{1}{m}}$ is a smooth rigid analytic variety, its fs log structure is defined by some normal crossings divisor as in Example \ref{ex-log-adic-sp-ncd}, and the induced morphism $Z \to X$ is Kummer \'etale, because these are true for $X^{\frac{1}{m}}$.  \Pth{Alternatively, we can construct $Z \to X$, as in the proof of Proposition \ref{prop-Abhyankar}, by using Lemma \ref{lem-Abhyankar-ess} and the last assertion of Lemma \ref{lem-Abhyankar-ref}.}  Thus, in order to show that $R^i\jmath_{\ket, *}(\bL) = 0$, for all $i > 0$, up to replacing $X$ with $Z$, we may assume that $\bL = \bZ / n$ is constant, in which case Lemma \ref{lem-ket-const} applies.
\end{proof}

\begin{cor}\label{cor-purity}
    Let $k$ and $\jmath: U \Em X$ be as in Theorem \ref{thm-purity}.  Let $\bL$ be an \'etale $\bF_p$-local system on $U_\et$.  Then $\overline{\bL} := \jmath_{\ket, *}(\bL)$ is a Kummer \'etale $\bF_p$-local system extending $\bL$.  Conversely, any \'etale $\bF_p$-local system $\overline{\bL}$ on $X_\ket$ is of this form.  In either case, $H^i(U_\et, \bL) \cong H^i(X_\ket, \overline{\bL})$, for all $i \geq 0$.
\end{cor}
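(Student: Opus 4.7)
The plan is to derive this corollary directly from Theorem \ref{thm-purity} in two short steps, without any new geometric input. First I would establish that $\overline{\bL} := \jmath_{\ket, *}(\bL)$ is a Kummer \'etale $\bF_p$-local system extending $\bL$. Since the log structure on $U = X - D$ is trivial, Remark \ref{rem-ket-site-mor} gives $U_\ket \cong U_\et$, so we may regard $\bL$ as a sheaf on $U_\ket$ and apply Theorem \ref{thm-purity} to conclude that $\overline{\bL}$ is a torsion local system on $X_\ket$; as $p$ annihilates $\bL$ and $\jmath_{\ket, *}$ is additive, $p$ annihilates $\overline{\bL}$ as well, making it an $\bF_p$-local system. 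To see that it extends $\bL$, I would unwind the definitions: for any $V \in U_\ket$, viewed as an object of $X_\ket$ via $\jmath$, the fiber product $V \times_X U$ equals $V$, so $(\jmath_{\ket, *}\bL)(V) = \bL(V)$ already at the level of presheaves, and therefore the counit $\jmath_\ket^{-1}\,\overline{\bL} \to \bL$ is an isomorphism.

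Next I would compare cohomology via the Leray spectral sequence associated with $\jmath_\ket: U_\ket \to X_\ket$:
\[
    E_2^{p, q} = H^p\bigl(X_\ket, R^q\jmath_{\ket, *}(\bL)\bigr) \Rightarrow H^{p + q}(U_\ket, \bL).
\]
Theorem \ref{thm-purity} provides the crucial vanishing $R^q\jmath_{\ket, *}(\bL) = 0$ for $q > 0$, so only the $q = 0$ row of $E_2$ is nonzero and the spectral sequence degenerates, yielding $H^i(X_\ket, \overline{\bL}) \cong H^i(U_\ket, \bL)$ for every $i \geq 0$. Combined with the identification $H^i(U_\ket, \bL) \cong H^i(U_\et, \bL)$ coming again from $U_\ket \cong U_\et$, this produces the claimed isomorphism.

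The argument is essentially formal once Theorem \ref{thm-purity} is available; the only points that deserve explicit verification are the identification $U_\ket \cong U_\et$ in the trivial-log-structure regime and the site-theoretic validity of the Leray spectral sequence for $\jmath_\ket$. Neither is a real obstacle, so the main content of Corollary \ref{cor-purity} is simply a repackaging of the purity theorem.
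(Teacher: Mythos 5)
Your argument is correct and is exactly the intended unpacking of the paper's one-line proof (``This follows immediately from Theorem \ref{thm-purity}''): the identification $U_\ket \cong U_\et$ from the triviality of the log structure on $U$, the vanishing $R^q\jmath_{\ket,*}(\bL) = 0$ for $q > 0$ from Theorem \ref{thm-purity}, and the resulting degeneration of the Leray spectral sequence. There is nothing to add.
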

\begin{proof}
    This follows from Theorem \ref{thm-purity} and the fact that, for any Kummer \'etale $\bF_p$-local system $\overline{\bL}$ on $X$, the canonical adjunction morphism $\overline{\bL} \to R \jmath_{\ket, *} \, \jmath_\ket^{-1}(\overline{\bL})$ is an isomorphism because it is a morphism between local systems whose restriction to the open dense $U$ is the identity morphism of $\bL$.
\end{proof}

\section{Pro-Kummer \'etale topology}\label{sec-proket}

\subsection{The pro-Kummer \'etale site}\label{sec-proket-site}

In this subsection, we define the pro-Kummer \'etale site on log adic spaces, a log analogue of Scholze's \emph{pro-\'etale site} in \cite{Scholze:2013-phtra}.

For any category $\cC$, by \cite[\aProp 3.2]{Scholze:2013-phtra}, the category $\pro\cC$ is equivalent to the category whose objects are functors $F: I \to \cC$ from small cofiltered index categories and whose morphisms are $\Mor(F, G) = \varprojlim_J \varinjlim_I \Mor\bigl(F(i), G(j)\bigr)$, for each $F: I \to \cC$ and $G: J \to \cC$.  We shall use this equivalent description in what follows.  For each $F: I \to \cC$ as above, we shall denote $F(i)$ by $F_i$, for each $i \in I$, and denote the corresponding object in $\pro\cC$ as $\varprojlim_{i \in I} F_i$.

Let $X$ be a locally noetherian fs log adic space, with the category $\pro{}X_\ket$ as above.  Then any object in \pro{}$X_\ket$ is of the form $U = \varprojlim_{i \in I} U_i$, where each $U_i \to X$ is Kummer \'etale, with underlying topological space $|U| := \varprojlim_i |U_i|$.

\begin{defn}\phantomsection\label{def-proket-mor}
    \begin{enumerate}
        \item\label{def-proket-mor-1}  We say that a morphism $U \to V$ in \pro$X_\ket$ is \emph{Kummer \'etale} \Pth{\resp \emph{finite Kummer \'etale}, \resp \emph{\'etale}, \resp \emph{finite \'etale}} if it is the pullback under some morphism $V \to V_0$ in \pro$X_\ket$ of some Kummer \'etale \Pth{\resp finite Kummer \'etale, \resp strictly \'etale, \resp strictly finite \'etale} morphism $U_0 \to V_0$ in $X_\ket$.

        \item\label{def-proket-mor-2}  We say that a morphism $U \to V$ in \pro$X_\ket$ is \emph{pro-Kummer \'etale} if it can be written as a cofiltered inverse limit $U = \varprojlim_{i \in I} U_i$ of objects $U_i \to V$ Kummer \'etale over $V$ such that $U_j \to U_i$ is finite Kummer \'etale and surjective for all sufficiently large $i$ \Pth{\ie, all $i \geq i_0$, for some $i_0 \in I$}.  Such a presentation $U = \varprojlim_i U_i \to V$ is called a \emph{pro-Kummer \'etale presentation}.

        \item\label{def-proket-mor-3}  We say that a morphism $U \to V$ as in \Refenum{\ref{def-proket-mor-2}} is \emph{pro-finite Kummer \'etale} if all $U_i \to V$ there are finite Kummer \'etale.
    \end{enumerate}
\end{defn}

\begin{defn}\label{def-proket-site}
    The \emph{pro-Kummer \'etale site} $X_\proket$ has as underlying category the full subcategory of $\pro{}X_\ket$ consisting of objects that are pro-Kummer \'etale over $X$, and each covering of an object $U \in X_\proket$ is given by a family of pro-Kummer \'etale morphisms $\{ f_i: U_i \to U \}_{i \in I}$ such that $|U| = \cup_{i \in I} f_i(|U_i|)$ and such that $f_i: U_i \to U$ can be written as an inverse limit $U_i = \varprojlim_{\mu < \lambda} U_\mu \to U$ satisfying the following conditions \Pth{\Refcf{} \cite{Scholze:2016-phtra-corr}}, for each $i \in I$:
    \begin{enumerate}
        \item\label{def-proket-site-1} Each $U_\mu \in X_\proket$, and $U = U_0$ is an initial object in the limit.

        \item\label{def-proket-site-2} The limit runs through the set of ordinals $\mu$ less than some ordinal $\lambda$.

        \item\label{def-proket-site-3} For each $\mu < \lambda$, the morphism $U_\mu \to U_{<\mu} := \varprojlim_{\mu' < \mu} U_{\mu'}$ is the pullback of a Kummer \'etale morphism in $X_\ket$, and is the pullback of a surjective finite Kummer \'etale morphism in $X_\ket$ for all sufficiently large $\mu$.
    \end{enumerate}
\end{defn}

\begin{rk}\label{rem-pro-et-Scholze-Weinstein}
    There is another version of pro-\'etale site introduced in \cite{Scholze/Weinstein:2020-BLG}.  But we will not try to introduce the corresponding version of pro-Kummer \'etale site in this paper.
\end{rk}

This definition is justified by the following analogue of \cite[\aLem 3.10]{Scholze:2013-phtra}:
\begin{lem}\phantomsection\label{lem-proket-site-check}
    \begin{enumerate}
        \item\label{lem-proket-site-check-1}  Let $U$, $V$, and $W$ be objets in $\pro{}X_\ket$.  Suppose that $U \to V$ is a Kummer \'etale \Pth{\resp finite Kummer \'etale, \resp \'etale, \resp finite \'etale, \resp pro-Kummer \'etale, \resp pro-finite Kummer \'etale} morphism and that $W \to V$ is any morphism.  Then the fiber product $U \times_V W$ exists in $\pro{}X_\ket$, and $U \times_V W \to W$ is Kummer \'etale \Pth{\resp finite Kummer \'etale, \resp \'etale, \resp finite \'etale, \resp pro-Kummer \'etale, \resp pro-finite Kummer \'etale}.  Moreover, the induced map $|U \times_V W| \to |U| \times_{|V|} |W|$ is surjective.

        \item\label{lem-proket-site-check-2}  A composition of two Kummer \'etale \Pth{\resp finite Kummer \'etale, \resp \'etale, \resp finite \'etale} morphisms in $\pro{}X_\ket$ is still Kummer \'etale \Pth{\resp finite Kummer \'etale, \resp \'etale, \resp finite \'etale}.

        \item\label{lem-proket-site-check-3}  Let $U$ be an object in $\pro{}X_\ket$, and let $W \subset |U|$ be a quasi-compact open subset.  Then there exists an object $V$ in $\pro{}X_\ket$ with an \'etale morphism $V \to U$ such that $|V| \to |U|$ induces a homeomorphism $|V| \Mi W$.  If, in addition, $U$ is an object in $X_\proket$, then there exists $V$ as above that, for any morphism $V' \to U$ in $X_\proket$ such that $|V'| \to |U|$ factors through $W$, the morphism $V' \to U$ also factors through $V$.

        \item\label{lem-proket-site-check-4}  Pro-Kummer \'etale morphisms in $\pro{}X_{ket}$ are open \Pth{\ie, they induce open maps between the underlying topological spaces}.

        \item\label{lem-proket-site-check-5}  Let $V$ be an object in $X_\proket$.  A surjective Kummer \'etale \Pth{\resp surjective finite Kummer \'etale} morphism $U \to V$ in $\pro{}X_\ket$ is the pullback under some morphism $V \to V_0$ in $\pro{}X_\ket$ of a surjective Kummer \'etale \Pth{\resp surjective finite Kummer \'etale} morphism $U_0 \to V_0$ in $X_\ket$.

        \item\label{lem-proket-site-check-6}  Let $W$ be an object in $X_\proket$, and let $U \to V \to W$ be pro-Kummer \'etale \Pth{\resp pro-finite Kummer \'etale} morphisms in $\pro{}X_\ket$.  Then $U$ and $V$ are also objects in $X_\proket$, and the composition $U \to W$ is pro-Kummer \'etale \Pth{\resp pro-finite Kummer \'etale}.

        \item\label{lem-proket-site-check-7}  Arbitrary finite inverse limits exist in $X_\proket$.

        \item\label{lem-proket-site-check-8}  Any base change of a covering in $X_\proket$ is also a covering.
    \end{enumerate}
\end{lem}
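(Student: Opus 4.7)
The plan is to follow the strategy of \cite[\aLem 3.10]{Scholze:2013-phtra}, adapting each step by substituting in the Kummer \'etale analogues developed in Section \ref{sec-ket-site}. The eight assertions are proved in the stated order, with later parts relying on earlier ones.

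For \Refenum{\ref{lem-proket-site-check-1}}, I would first write $U \to V$ as a pullback of some $U_0 \to V_0$ at finite level; then $U \times_V W$ may be constructed at that level (using Proposition \ref{prop-ket-compos-bc} and Remark \ref{rem-def-log-sm} to see that the fiber products exist in $X_\ket$) and pulled back. The pro-Kummer \'etale case follows by taking a cofinal inverse system. The main subtle point is the surjectivity of $|U \times_V W| \to |U| \times_{|V|} |W|$, which is the only place the log structures really intervene: Kummer \'etale morphisms are exact by Lemma \ref{lem-ket-log-et-ex} and Remark \ref{rem-Kummer-exact}, so the Four Point Lemma (Proposition \ref{prop-lem-four-pt}) applies and yields surjectivity at each finite level; then one passes to the inverse limit using quasi-compactness of the fibers. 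Assertion \Refenum{\ref{lem-proket-site-check-2}} on compositions then follows by combining Proposition \ref{prop-ket-compos-bc} with the presentations at finite level.

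For \Refenum{\ref{lem-proket-site-check-3}}, given a quasi-compact open $W \subset |U| = \varprojlim |U_i|$, some $W$ is the preimage of a quasi-compact open $W_i \subset |U_i|$ at a sufficiently large index $i$; then take $V_i \subset U_i$ corresponding to $W_i$ (which exists in $X_\ket$ since $U_i$ is \'etale sheafy) and let $V := V_i \times_{U_i} U$. The universality assertion comes from the fact that, when $U \in X_\proket$, the quasi-compact open $W$ is determined at a finite level by the cofiltered presentation. Part \Refenum{\ref{lem-proket-site-check-4}} follows from \Refenum{\ref{lem-proket-site-check-3}} together with Corollary \ref{cor-ket-op}: locally on the target, any pro-Kummer \'etale morphism is a composition of a Kummer \'etale morphism (open) and a cofiltered limit of surjective finite Kummer \'etale morphisms (whose topological images stabilize).

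Assertion \Refenum{\ref{lem-proket-site-check-5}} is the main technical obstacle and where most care is needed. The argument is to spread the surjectivity out to a finite level: write $U = U_0 \times_{V_0} V$ with $U_0 \to V_0$ Kummer \'etale; then the image of $|U_0| \to |V_0|$ is a constructible-like open (by Corollary \ref{cor-ket-op} and quasi-compactness of $U_0$ if we may reduce to that case), and its preimage in $|V|$ contains $|U|$; enlarging $V_0$ or shrinking $U_0$ within the presentation of $V$, surjectivity can be arranged. The finite Kummer \'etale case proceeds similarly but requires that the partial limits $U_{<\mu}$ still be qcqs; this is where one must invoke Proposition \ref{prop-lem-four-pt} once more to transfer surjectivity along transition maps. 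The hard part is handling the fact that our spaces need not be noetherian at the pro-\'etale level, so one cannot directly copy Scholze's quasi-compact reductions; instead one works locally in a pro-Kummer \'etale presentation satisfying \Refenum{\ref{def-proket-site-1}}--\Refenum{\ref{def-proket-site-3}}.

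Finally, \Refenum{\ref{lem-proket-site-check-6}} is a diagonal argument: concatenate the transfinite presentations of $V \to W$ and $U \to V$, using \Refenum{\ref{lem-proket-site-check-5}} to ensure that the transition maps become surjective finite Kummer \'etale from some stage on. For \Refenum{\ref{lem-proket-site-check-7}}, the terminal object is $X$ itself, and fiber products are provided by \Refenum{\ref{lem-proket-site-check-1}}. For \Refenum{\ref{lem-proket-site-check-8}}, a base change of a covering family is a covering by \Refenum{\ref{lem-proket-site-check-1}} (preservation of the pro-Kummer \'etale property and of surjectivity on underlying topological spaces). Throughout, the overarching obstacle is transferring properties---most importantly surjectivity and openness---between finite levels and the inverse limit without a noetherian hypothesis, which is overcome by systematic use of the Four Point Lemma and the openness of Kummer \'etale morphisms.
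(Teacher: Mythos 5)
Your proposal follows essentially the same route as the paper: the paper proves \Refenum{\ref{lem-proket-site-check-1}}--\Refenum{\ref{lem-proket-site-check-7}} by invoking the arguments of \cite[\aLem 3.10]{Scholze:2013-phtra} with exactly the inputs you identify \Pth{the Four Point Lemma applied via exactness of Kummer morphisms for the surjectivity statements, Corollary \ref{cor-ket-op} for openness, and Propositions \ref{prop-fiber-prod-log-adic} and \ref{prop-ket-compos-bc} for fiber products and base change}, and it handles \Refenum{\ref{lem-proket-site-check-8}} by the same reduction to \Refenum{\ref{lem-proket-site-check-1}} that you sketch. No substantive gaps or differences.
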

\begin{proof}
    The statements \Refenum{\ref{lem-proket-site-check-1}}--\Refenum{\ref{lem-proket-site-check-7}} follow from essentially the same arguments as in the proof of \cite[\aLem 3.10]{Scholze:2013-phtra}, with inputs from Propositions \ref{prop-lem-four-pt}, \ref{prop-adj-int-sat}, and \ref{prop-fiber-prod-log-adic}, Corollary \ref{cor-ket-op}, and Proposition \ref{prop-ket-compos-bc} here.

    As for the remaining statement \Refenum{\ref{lem-proket-site-check-8}}, suppose that $\{ U_i \to U \}_{i \in I}$ is a covering of $U \in X_\proket$, and that $V \to U$ is a morphism in $X_\proket$.  We need to show that $\{ U_i \times_U V \to V \}_{i \in I}$ is also a covering.  Firstly, if $U_i = \varprojlim_{\mu < \lambda} U_\mu \to U$ is an inverse limit satisfying the conditions in Definition \ref{def-proket-site}, then so is the pullback $U_i \times_U V \cong \varprojlim_{\mu < \lambda} (U_\mu \times_U V) \to V$.  As for the surjectivity, by working locally $U$, we are reduced to the case where $U$ is quasi-compact, in which case we may assume that $\{ U_i \to U \}_{i \in I}$ is a finite covering.  By taking the disjoint union of $U_i \to U$, we are further reduced the special case where $I = \{ i_0 \}$ is a singleton, in which case $|U_{i_0} \times_U V| \to |V|$ is surjective, by \Refenum{\ref{lem-proket-site-check-1}}.
\end{proof}

Let us also record the following analogue of \cite[\aProp 3.12]{Scholze:2013-phtra}.
\begin{prop}\label{prop-proket-site-qcqs}
    Let $X$ be a locally noetherian fs log adic space.
    \begin{enumerate}
        \item\label{prop-proket-site-qcqs-1}  Let $U = \varprojlim_i U_i \to X$ be a pro-Kummer \'etale presentation of $U \in X_\proket$ such that all $U_i$ are affinoid.  Then $U$ is quasi-compact and quasi-separated.

        \item\label{prop-proket-site-qcqs-2}  Objects $U$ as in \Refenum{\ref{prop-proket-site-qcqs-1}} generates $X_\proket$, and are stable under fiber products.

        \item\label{prop-proket-site-qcqs-3}  The topos $X_\proket^\sim$ is algebraic.

        \item\label{prop-proket-site-qcqs-4}  An object $U$ in $X_\proket$ is quasi-compact \Pth{\resp quasi-separated} if and only if $|U|$ is quasi-compact \Pth{\resp quasi-separated}.

        \item\label{prop-proket-site-qcqs-5}  Suppose that $U \to V$ is an inverse limit of finite Kummer \'etale surjective morphisms in $X_\proket$.  Then $U$ is quasi-compact \Pth{\resp quasi-separated} if and only if $V$ is.

        \item\label{prop-proket-site-qcqs-6}  A morphism $U \to V$ in $X_\proket$ is quasi-compact \Pth{\resp quasi-separated} if and only if $|U| \to |V|$ is quasi-compact \Pth{\resp quasi-separated}.

        \item\label{prop-proket-site-qcqs-7}  The site $X_\proket$ is quasi-separated \Pth{\resp coherent} if and only if $|X|$ is quasi-separated \Pth{\resp coherent}.
    \end{enumerate}
\end{prop}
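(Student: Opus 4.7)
The plan is to follow the blueprint of \cite[\aProp 3.12]{Scholze:2013-phtra} for the pro-\'etale site, substituting in the structural results for Kummer \'etale morphisms established earlier. All the technical inputs are in place: Propositions \ref{prop-lem-four-pt}, \ref{prop-ket-compos-bc}, and \ref{prop-ket-mor}, together with Corollary \ref{cor-ket-op}, ensure that $X_\ket$ is a site and that all the standard functoriality that one expects of an \'etale-style site holds; Lemma \ref{lem-proket-site-check} supplies the pro-object analogues.

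For \Refenum{\ref{prop-proket-site-qcqs-1}}, since each $U_i$ is affinoid its underlying space $|U_i|$ is spectral, hence qcqs; and $|U| = \varprojlim_i |U_i|$ is a cofiltered limit of spectral spaces with quasi-compact transition maps, which is again spectral by standard facts about spectral spaces. For \Refenum{\ref{prop-proket-site-qcqs-2}}, given any $U \in X_\proket$ written as $U = \varprojlim_\mu U_\mu$ with $U_\mu \to U_{<\mu}$ Kummer \'etale and eventually surjective finite Kummer \'etale, any point of $|U|$ lies over an affinoid Kummer \'etale neighborhood in some $U_{\mu_0}$, and one refines the presentation past $\mu_0$ by pulling back to an affinoid; this is exactly the log adic version of the argument in \cite[\aProp 3.12(ii)]{Scholze:2013-phtra}, and it uses Lemma \ref{lem-proket-site-check}\Refenum{\ref{lem-proket-site-check-3}} to localize. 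Stability under fiber products is immediate: a fiber product of two such pro-systems is again such a pro-system, since fiber products of affinoids in $X_\ket$ are affinoid (they are finite over the first projection followed by a fiber product of affinoids in the \'etale sense, see Proposition \ref{prop-fiber-prod-log-adic} and Proposition \ref{prop-ket-std}). Assertion \Refenum{\ref{prop-proket-site-qcqs-3}} is then formal from \Refenum{\ref{prop-proket-site-qcqs-1}} and \Refenum{\ref{prop-proket-site-qcqs-2}}: one has a generating family of qcqs objects stable under finite fiber products, which is the definition of an algebraic topos in \cite[VI]{SGA:4}.

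For \Refenum{\ref{prop-proket-site-qcqs-4}}, the forward direction is trivial once one notes that a covering $\{U_i \to U\}$ of $U \in X_\proket$ covers $|U|$ by open subsets (each $|U_i| \to |U|$ is open, by Lemma \ref{lem-proket-site-check}\Refenum{\ref{lem-proket-site-check-4}}). For the converse, if $|U|$ is quasi-compact, any pro-Kummer \'etale covering refines to one by objects as in \Refenum{\ref{prop-proket-site-qcqs-1}}, which have quasi-compact underlying spaces, so a finite subcollection already covers $|U|$; the quasi-separated case is similar by considering $U \times_U U$ using \Refenum{\ref{prop-proket-site-qcqs-2}}. Part \Refenum{\ref{prop-proket-site-qcqs-5}} combines \Refenum{\ref{prop-proket-site-qcqs-4}} with the fact that a cofiltered limit of nonempty qcqs spectral spaces along surjections is again qcqs and surjects onto each term; concretely, $|U| \to |V|$ is surjective with qcqs fibers and qcqs spectral target iff $|V|$ is qcqs (via a Tychonoff-style limit argument). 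Part \Refenum{\ref{prop-proket-site-qcqs-6}} is then \Refenum{\ref{prop-proket-site-qcqs-4}} applied to base changes (quasi-compactness of $U \to V$ amounts to quasi-compactness of the base change $U \times_V V'$ for every quasi-compact $V'$, and similarly on topological spaces, using Lemma \ref{lem-proket-site-check}\Refenum{\ref{lem-proket-site-check-1}}). Finally, \Refenum{\ref{prop-proket-site-qcqs-7}} is the case $V = X$ of \Refenum{\ref{prop-proket-site-qcqs-6}} combined with \Refenum{\ref{prop-proket-site-qcqs-3}}.

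The main obstacle is the refinement step in \Refenum{\ref{prop-proket-site-qcqs-2}}: one needs to know that, after passing far enough in the cofiltered system $\{U_\mu\}$, any quasi-compact open of $|U|$ descends to an affinoid open at some finite stage. This is where Lemma \ref{lem-proket-site-check}\Refenum{\ref{lem-proket-site-check-3}} is indispensable; granting it, the rest of the proof is a mechanical translation of \cite[\aProp 3.12]{Scholze:2013-phtra} into the log setting, with Lemma \ref{lem-proket-site-check} and Proposition \ref{prop-ket-std} playing the roles that \'etale morphisms and standard \'etale covers play in \emph{loc.\@ cit.}
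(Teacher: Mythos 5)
Your proposal is correct and follows essentially the same route as the paper: the paper's proof simply observes that pro-Kummer \'etale morphisms are open by Lemma \ref{lem-proket-site-check}\Refenum{\ref{lem-proket-site-check-4}} and then declares that the arguments of \cite[\aProp 3.12]{Scholze:2013-phtra} carry over verbatim. Your write-up is just a fleshed-out version of that same translation, with the correct identification of Lemma \ref{lem-proket-site-check} \Refenum{\ref{lem-proket-site-check-3}}--\Refenum{\ref{lem-proket-site-check-4}} as the inputs replacing their pro-\'etale counterparts.
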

\begin{proof}
    By Lemma \ref{lem-proket-site-check}\Refenum{\ref{lem-proket-site-check-4}}, pro-Kummer \'etale morphisms are open.  Hence, the same arguments as in the proof of \cite[\aProp 3.12]{Scholze:2013-phtra} also work here.
\end{proof}

Let $\upsilon: X_\proket \to X_\ket$ be the natural projection of sites.  We have induced functors $\upsilon^{-1}: \Sh(X_\ket) \to \Sh(X_\proket)$ and $\upsilon_*: \Sh(X_\proket) \to \Sh(X_\ket)$.

\begin{prop}\label{prop-proket-vs-ket}
    Let $\cF$ be any abelian sheaf on $X_\ket$, and let $U = \varprojlim_i U_i$ be any qcqs object in $X_\proket$.  Then $H^j\bigl(U_\proket, \upsilon^{-1}(\cF)\bigr) = \varinjlim_i H^j(U_{i, \ket}, \cF)$, for all $j \geq 0$.
\end{prop}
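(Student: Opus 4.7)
The plan is to follow the strategy of \cite[\aLem 3.16]{Scholze:2013-phtra}, adapted to our setting. By Proposition \ref{prop-proket-site-qcqs}, $X_\proket$ is an algebraic (coherent) topos, and qcqs objects of the form $U = \varprojlim_i U_i$ with each $U_i$ affinoid in $X_\ket$ form a basis closed under finite fiber products. This puts us exactly in the situation where the general machinery of \cite[VI]{SGA:4} applies to qcqs objects.

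The first step is to handle the case $j = 0$: I would show that the presheaf $\cF^p$ on the full subcategory of such qcqs $U$ defined by $\cF^p(U) := \varinjlim_i \cF(U_i)$ is already a sheaf, and is naturally identified with the restriction of $\nu^{-1}(\cF)$. Any covering of a qcqs $U$ in $X_\proket$ can be refined, by Lemma \ref{lem-proket-site-check} and Proposition \ref{prop-proket-site-qcqs}, by a finite qcqs covering $\{V_\alpha \to U\}_{\alpha \in A}$, and each $V_\alpha$, together with the $n$-fold fiber products $V_{\alpha_1} \times_U \cdots \times_U V_{\alpha_n}$, descends to a covering in some $U_{i_0, \ket}$, compatibly as $i_0$ varies. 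Since filtered colimits commute with finite limits (in particular with equalizers), the sheaf axioms for $\cF$ on each $U_{i, \ket}$ imply the sheaf axiom for $\cF^p$ on qcqs objects.

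For $j \geq 1$, I would proceed by induction on $j$ via the \v{C}ech-to-derived functor spectral sequence
\begin{equation*}
    E_2^{p, q} = \check{H}^p\bigl(\{V_\alpha \to U\}, \mathcal{H}^q(\nu^{-1}\cF)\bigr) \Longrightarrow H^{p + q}\bigl(U_\proket, \nu^{-1}(\cF)\bigr),
\end{equation*}
applied to covers $\{V_\alpha \to U\}$ as above, and similarly for $\cF$ on each $U_{i, \ket}$. By the inductive hypothesis, each $E_2$-term on the pro-Kummer \'etale side is the filtered colimit over $i$ of the corresponding $E_2$-term on the Kummer \'etale side. Since filtered colimits are exact, passing to the colimit of spectral sequences yields the desired identification on the $E_\infty$-terms, and hence on $H^j$ itself.

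The main technical obstacle is the first step: one must check, with some care, that on qcqs objects the candidate presheaf $\cF^p$ actually sheafifies to $\nu^{-1}(\cF)$ (not merely to some quotient), and that the compatibility of Čech covers with the pro-presentations is preserved under passage to iterated fiber products. This is where the coherence of $X_\proket$ and the stability assertions in Lemma \ref{lem-proket-site-check}\Refenum{\ref{lem-proket-site-check-1}} and Proposition \ref{prop-proket-site-qcqs}\Refenum{\ref{prop-proket-site-qcqs-2}} are indispensable; once these are in place, the general formalism of \cite[VI, \aProp 8.7.3]{SGA:4} can be invoked to conclude.
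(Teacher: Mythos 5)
Your proposal is correct and follows essentially the same route as the paper, which simply invokes the argument of Scholze's Lemma 3.16 together with Proposition \ref{prop-proket-site-qcqs}\Refenum{\ref{prop-proket-site-qcqs-1}}--\Refenum{\ref{prop-proket-site-qcqs-2}}; your write-up just spells out the steps (the $j=0$ sheaf identification on qcqs objects, then the \v{C}ech-to-derived induction via the coherence of the site) that the paper leaves implicit in that citation.
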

\begin{proof}
    This follows from essentially the same argument as in the proof of \cite[\aLem 3.16]{Scholze:2013-phtra}, by using Proposition \ref{prop-proket-site-qcqs} \Refenum{\ref{prop-proket-site-qcqs-1}} and \Refenum{\ref{prop-proket-site-qcqs-2}} here.
\end{proof}

\begin{prop}\label{prop-proket-vs-ket-adj}
    For any abelian sheaf $\cF$ on $X_\ket$, the canonical morphism $\cF \to R\upsilon_* \upsilon^{-1}(\cF)$ is an isomorphism.
\end{prop}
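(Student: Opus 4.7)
The plan is to extract the result directly from Proposition \ref{prop-proket-vs-ket} via the standard sheafification formalism, with the main task being only to unwind definitions carefully. Given an abelian sheaf $\cF$ on $X_\ket$, the sheaf $R^i \nu_* \nu^{-1}(\cF)$ on $X_\ket$ is by definition the sheafification of the presheaf $V \mapsto H^i\bigl(V_\proket, \nu^{-1}(\cF)\bigr)$, where a given $V \in X_\ket$ is regarded as an object of $X_\proket$ via the natural inclusion of categories. To verify that $\cF \to R \nu_* \nu^{-1}(\cF)$ is a quasi-isomorphism in $D^+(X_\ket)$, it suffices to check the corresponding identification of sheaves on a basis; by Lemma \ref{lem-basis-ket} and Proposition \ref{prop-proket-site-qcqs}\Refenum{\ref{prop-proket-site-qcqs-4}}, I would use the basis $\cB$ of $X_\ket$ consisting of affinoid objects with fs global charts, all of which are noetherian and thus qcqs both in $X_\ket$ and \Pth{via the inclusion} in $X_\proket$.

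Second, for such $V \in \cB$, apply Proposition \ref{prop-proket-vs-ket} to the tautological pro-Kummer \'etale presentation of $V$ \Pth{indexed by a one-point category with $V_i = V$}, which yields $H^i\bigl(V_\proket, \nu^{-1}(\cF)\bigr) \cong H^i(V_\ket, \cF)$ functorially in $V \in \cB$. Consequently, $R^i \nu_* \nu^{-1}(\cF)$ agrees with the sheafification on $X_\ket$ of the cohomology presheaf $V \mapsto H^i(V_\ket, \cF)$.

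Third, I would invoke the standard identification: choosing an injective resolution $\cF \to \cI^\bullet$ in $\Sh_\Ab(X_\ket)$, the presheaf $V \mapsto H^i(V_\ket, \cF) = H^i\bigl(\cI^\bullet(V)\bigr)$ sheafifies to the cohomology sheaf $\mathcal{H}^i(\cI^\bullet)$, which equals $\cF$ for $i = 0$ and vanishes for $i > 0$. Tracking the adjunction map $\cF \to R^0 \nu_* \nu^{-1}(\cF)$ through these identifications shows that it reduces to the identity on $\cB$-sections, which concludes the argument.

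There is no substantive obstacle, since all the analytic content is packaged into Proposition \ref{prop-proket-vs-ket}. The only point requiring some care is the final bookkeeping, namely checking that the natural adjunction morphism \Pth{rather than some twist thereof} corresponds to the identity after the three identifications above; this is routine once one writes out the adjunction on sections for $V \in \cB$.
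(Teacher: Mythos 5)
Your proof is correct and follows essentially the same route as the paper: both reduce to Proposition \ref{prop-proket-vs-ket} applied to qcqs objects of $X_\ket$ viewed in $X_\proket$, identify $R^i\nu_*\nu^{-1}(\cF)$ with the sheafification of the Kummer \'etale cohomology presheaf, and conclude that this gives $\cF$ in degree $0$ and vanishes in higher degrees. The only difference is that you spell out the basis $\cB$ and the adjunction bookkeeping explicitly, which the paper leaves implicit.
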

\begin{proof}
    For each $j \geq 0$, the sheaf $R^j \upsilon_* \upsilon^{-1}(\cF)$ on $X_\ket$ is associated with the presheaf $U \mapsto H^j\bigl(U_\proket, \upsilon^{-1}(\cF)\bigr)$.  If $j = 0$, then $\cF \Mi \upsilon_* \upsilon^{-1}(\cF)$, because $H^j(U_\ket, \cF) \Mi H^i\bigl(U_\proket, \upsilon^{-1}(\cF)\bigr)$, for all qcqs objects $U$ in $X_\ket$, by Proposition \ref{prop-proket-vs-ket}.  If $j > 0$, essentially by definition, the cohomology $H^j(U, \cF)$ vanishes locally in the Kummer \'etale topology, and hence the associated sheaf $R^j \upsilon_* \upsilon^{-1}(\cF)$ is zero, as desired.
\end{proof}

\begin{cor}\label{cor-proket-vs-ket}
    The functor $\upsilon^{-1}: \Sh_\Ab(X_\ket) \to \Sh_\Ab(X_\proket)$ is fully faithful.
\end{cor}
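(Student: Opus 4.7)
The plan is to deduce this directly from Proposition \ref{prop-proket-vs-ket-adj} via the standard adjunction argument. First I would observe that $\nu_*$ is right adjoint to $\nu^{-1}$, so for any two abelian sheaves $\cF, \cG$ on $X_\ket$ there is a natural adjunction isomorphism
\[
    \Hom_{X_\proket}(\nu^{-1}\cF, \nu^{-1}\cG) \cong \Hom_{X_\ket}(\cF, \nu_*\nu^{-1}\cG).
\]
Next I would invoke the degree-zero part of Proposition \ref{prop-proket-vs-ket-adj}, which says that the unit $\cG \to R\nu_*\nu^{-1}(\cG)$ is an isomorphism in the derived category; in particular the $H^0$-component $\cG \Mi \nu_*\nu^{-1}(\cG)$ is an isomorphism of abelian sheaves on $X_\ket$. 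Composing, we obtain a natural isomorphism
\[
    \Hom_{X_\ket}(\cF, \cG) \Mi \Hom_{X_\proket}(\nu^{-1}\cF, \nu^{-1}\cG),
\]
and one checks (purely formally, from the triangle identities for the adjunction) that this natural map coincides with the map induced by the functor $\nu^{-1}$. This is exactly the assertion of full faithfulness.

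There is no real obstacle here, since all the work has already been done in Proposition \ref{prop-proket-vs-ket-adj}; the only thing to verify is the identification of the composite isomorphism with $\Hom(\cF,\cG) \to \Hom(\nu^{-1}\cF,\nu^{-1}\cG)$, which is a standard abstract nonsense about units and counits of an adjunction. Accordingly, the proof can be written as a two-line argument.
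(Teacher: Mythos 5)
Your argument is correct and is exactly the intended one: the paper states this as an immediate corollary of Proposition \ref{prop-proket-vs-ket-adj} without further proof, relying on the standard fact that a left adjoint is fully faithful precisely when the unit of the adjunction is an isomorphism, which is the $H^0$-part of that proposition. Nothing further is needed.
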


For technical purposes, let us also define the pro-finite Kummer \'etale site.
\begin{defn}\label{def-profket}
    The \emph{pro-finite Kummer \'etale site} $X_\profket$ has as underlying category the category $\pro{}X_\fket$, and each covering of $U \in X_\profket$ is given by a family of pro-finite Kummer \'etale morphisms $\{ f_i: U_i \to U \}_{i \in I}$ such that $|U| = \cup_{i \in I} \, f_i(|U_i|)$ and such that each $f_i: U_i \to U$ can be written as an inverse limit $U_i = \varprojlim_{\mu < \lambda} U_\mu \to U$ satisfying the following conditions:
    \begin{enumerate}
        \item Each $U_\mu \in X_\profket$, and $U = U_0$ is an initial object in the limit.

        \item The limit runs through the set of ordinals $\mu$ less than some ordinal $\lambda$.

        \item For each $\mu < \lambda$, the morphism $U_\mu \to U_{<\mu} := \varprojlim_{\mu' < \mu} U_{\mu'}$ is the pullback of a finite Kummer \'etale morphism in $X_\fket$, and is the pullback of a surjective finite Kummer \'etale morphism for all sufficiently large $\mu$.
    \end{enumerate}
\end{defn}

\begin{defn}\label{def-G-PFSets}
    For each profinite group $G$, the site $G\Utext{-}\underline{\PFSets}$ has as underlying category the category of profinite sets with continuous actions of $G$, and each covering of $S \in G\Utext{-}\underline{\PFSets}$ is given by a family of continuous $G$-equivariant maps $\{ f_i: S_i \to S \}_{i \in I}$ such that $S = \cup_{i \in I} f_i(S_i)$ and such that each $f_i: S_i \to S$ can be written as an inverse limit $S_i = \varprojlim_{\mu < \lambda} S_\mu \to S$ satisfying the following conditions:
    \begin{enumerate}
        \item Each $S_\mu \in G\Utext{-}\underline{\PFSets}$, and $S = S_0$ is an initial object in the limit.

        \item The limit runs through the set of ordinals $\mu$ less than some ordinal $\lambda$.

        \item For each $\mu < \lambda$, the map $S_\mu \to S_{<\mu} := \varprojlim_{\mu' < \mu} S_{\mu'}$ is the pullback of a surjective map of finite sets.
    \end{enumerate}
\end{defn}

\begin{rk}\label{rem-G-PFSets}
    Since a profinite set with a continuous action of a profinite group $G$ is equivalent to an inverse limit of finite sets with continuous $G$-actions, we have a canonical equivalence of categories $G\Utext{-}\underline{\PFSets} \cong \pro(G\Utext{-}\underline{\FSets})$.
\end{rk}

\begin{prop}\label{prop-fund-grp-profket}
    Let $X$ be a connected locally noetherian fs log adic space, and let $\zeta$ be a log geometric point of $X$.  Then there is an equivalence of categories
    \[
        X_\profket \cong \pi_1^\ket(X, \zeta)\Utext{-}\underline{\PFSets}
    \]
    sending $U = \varprojlim_i U_i \to X$ to $S(U) := \varprojlim_i \Mor_X(\zeta, U_i)$.
\end{prop}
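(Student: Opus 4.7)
The plan is to derive this proposition from Corollary \ref{cor-fund-grp} by passing to pro-categories on both sides.  Writing $G := \pi_1^\ket(X, \zeta)$, Corollary \ref{cor-fund-grp} provides an equivalence of categories $X_\fket \cong G\Utext{-}\underline{\FSets}$ via $Y \mapsto \Hom_X(\zeta, Y)$.  Since the underlying category of $X_\profket$ is $\pro{}X_\fket$ by Definition \ref{def-profket}, and since $G\Utext{-}\underline{\PFSets} \cong \pro(G\Utext{-}\underline{\FSets})$ by Remark \ref{rem-G-PFSets}, applying the pro-construction to both sides immediately produces an equivalence of underlying categories sending $U = \varprojlim_i U_i$ to $\varprojlim_i \Hom_X(\zeta, U_i) = S(U)$.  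This equivalence preserves fiber products because the equivalence in Corollary \ref{cor-fund-grp} does so by Theorem \ref{thm-fket-loc-syst}\Refenum{\ref{thm-fket-loc-syst-1}}, and finite projective limits pass through the pro-construction.

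Next, I would verify compatibility of coverings.  Comparing Definitions \ref{def-profket} and \ref{def-G-PFSets}, coverings on both sides are described by parallel recipes: jointly surjective families of morphisms, each of which is a transfinite composition of pullbacks of (eventually surjective) finite morphisms.  The structural transfinite-presentation conditions transport directly through the equivalence of underlying categories, using Corollary \ref{cor-fund-grp} to match finite Kummer \'etale morphisms with morphisms of finite $G$-sets, together with the functoriality of the pro-construction.  Hence the essential thing to check is that under the equivalence, the joint surjectivity condition $|U| = \bigcup_i f_i(|U_i|)$ on underlying topological spaces corresponds to the joint surjectivity condition $S(U) = \bigcup_i S(f_i)(S(U_i))$ on pro-finite sets.

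The main obstacle will be this surjectivity comparison.  Taking disjoint unions reduces it to the case of a single morphism $f: U \to V$.  At the finite level, a morphism $Y \to Z$ in $X_\fket$ is surjective on underlying topological spaces if and only if every connected component of $Z$ has a nonempty preimage; and since connected components of objects in $X_\fket$ correspond to $G$-orbits of the associated finite $G$-sets under Corollary \ref{cor-fund-grp}, this is equivalent to the corresponding map $S(Y) \to S(Z)$ of finite $G$-sets being surjective.  For a pro-finite Kummer \'etale morphism $f: U = \varprojlim_\mu U_\mu \to V$ as in Definition \ref{def-profket}, the underlying space $|U|$ is a cofiltered inverse limit of nonempty quasi-compact spectral spaces $|U_\mu|$ by Proposition \ref{prop-proket-site-qcqs}, and similarly $S(U) = \varprojlim_\mu S(U_\mu)$ is a cofiltered inverse limit of nonempty finite sets.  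In both settings, the image of the structure map to $V$ (respectively to $S(V)$) equals the intersection of the images of the individual maps $|U_\mu| \to |V|$ (respectively $S(U_\mu) \to S(V)$), by standard quasi-compactness arguments for cofiltered limits of spectral maps (respectively by finiteness of each $S(U_\mu)$).  Since the transition data on both sides come from the same finite Kummer \'etale morphisms in $X_\fket$ and the finite-level comparison applies at every stage, the surjectivity of $|U| \to |V|$ translates into the surjectivity of $S(U) \to S(V)$, yielding the desired equivalence of sites.
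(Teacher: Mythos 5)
Your proposal is correct and follows essentially the same route as the paper: identify $X_\profket = \pro{}X_\fket$ with $\pro(\pi_1^\ket(X,\zeta)\Utext{-}\underline{\FSets}) \cong \pi_1^\ket(X,\zeta)\Utext{-}\underline{\PFSets}$ via Corollary \ref{cor-fund-grp} and Remark \ref{rem-G-PFSets}, and then match coverings by comparing Definitions \ref{def-profket} and \ref{def-G-PFSets}. The paper compresses the covering comparison into a single sentence; your elaboration of the surjectivity check (connected components versus $G$-orbits, and images of cofiltered limits) is a reasonable filling-in of that step rather than a different argument.
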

\begin{proof}
    By Corollary \ref{cor-fund-grp}, $X_\fket \cong \pi_1^\ket(X, \zeta)\Utext{-}\underline{\FSets}$, and hence the composition of $X_\profket = \pro{}X_\fket \cong \pro(\pi_1^\ket(X, \zeta)\Utext{-}\underline{\FSets}) \cong \pi_1^\ket(X, \zeta)\Utext{-}\underline{\PFSets}$ sends $U$ to $S(U)$, and gives the desired equivalence of categories.  By comparing definitions, the equivalence thus obtained also matches the coverings.
\end{proof}

\subsection{Localization and base change functors}\label{sec-proket-bc}

For any morphism $f: Y \to X$ of locally noetherian fs log adic spaces, by the same explanations as in Section \ref{sec-ket-bc}, we have a morphism of topoi
\[
    (f_\proket^{-1}, f_{\proket, *}): Y_\proket^\sim \to X_\proket^\sim.
\]

\begin{prop}\label{prop-dir-im-ket-vs-proket}
    Let $f: Y \to X$ be a qcqs morphism of locally noetherian fs log adic spaces.  Consider the natural functors $\upsilon_X^{-1}: \Sh(X_\ket) \to \Sh(X_\proket)$ and $\upsilon_Y^{-1}: \Sh(Y_\ket) \to \Sh(Y_\proket)$.  Then, for any abelian sheaf $\cF$ on $Y_\ket$, we have a natural isomorphism $\upsilon_X^{-1} \, Rf_{\ket, *}(\cF) \Mi Rf_{\proket, *} \, \upsilon_Y^{-1}(\cF)$.
\end{prop}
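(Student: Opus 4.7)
The plan is to check that the natural comparison morphism
\[
    \alpha: \nu_X^{-1}\, Rf_{\ket, *}(\cF) \to Rf_{\proket, *}\, \nu_Y^{-1}(\cF)
\]
becomes an isomorphism on derived global sections over a generating family of qcqs objects of $X_\proket$, after which the desired quasi-isomorphism will follow from a standard descending induction on the mapping cone.

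First, I would promote Proposition \ref{prop-proket-vs-ket} to bounded below complexes: for any $\cG^\bullet \in D^+(X_\ket)$ and any qcqs $U = \varprojlim_i U_i \in X_\proket$ with each $U_i$ qcqs in $X_\ket$, there is a canonical isomorphism
\[
    \varinjlim_i R\Gamma(U_{i, \ket}, \cG^\bullet) \Mi R\Gamma\bigl(U_\proket, \nu_X^{-1}(\cG^\bullet)\bigr).
\]
This follows from Proposition \ref{prop-proket-vs-ket} by taking an injective resolution $\cG^\bullet \to I^\bullet$ on $X_\ket$, observing that each $\nu_X^{-1}(I^q)$ is $R\Gamma(U_\proket, -)$-acyclic on any qcqs $U$, and comparing the two hypercohomology spectral sequences whose $E_2^{p,q}$-pages agree via Proposition \ref{prop-proket-vs-ket} applied to the cohomology sheaves $H^q(\cG^\bullet)$.

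Second, I would apply the preceding to $\cG^\bullet := Rf_{\ket, *}(\cF)$ (which lies in nonnegative degrees) and combine it with the standard localization identity $R\Gamma(U_i, Rf_{\ket, *}\cF) = R\Gamma((U_i \times_X Y)_\ket, \cF)$, obtaining
\[
    R\Gamma\bigl(U_\proket, \nu_X^{-1}(Rf_{\ket, *}\cF)\bigr) \cong \varinjlim_i R\Gamma\bigl((U_i \times_X Y)_\ket, \cF\bigr).
\]
For the other side, localization on $X_\proket$ gives $R\Gamma(U_\proket, Rf_{\proket, *}\nu_Y^{-1}\cF) = R\Gamma((U \times_X Y)_\proket, \nu_Y^{-1}\cF)$. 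Since $f$ is qcqs and $U$ is qcqs, $U \times_X Y = \varprojlim_i (U_i \times_X Y)$ is qcqs in $Y_\proket$ with each $U_i \times_X Y$ qcqs in $Y_\ket$; hence Proposition \ref{prop-proket-vs-ket} on $Y$ identifies this with $\varinjlim_i R\Gamma((U_i \times_X Y)_\ket, \cF)$ as well. Tracing through the adjunction defining $\alpha$ (or equivalently, the commutative square of sites relating $\nu_X$, $\nu_Y$, $f_\ket$, and $f_\proket$) shows that $R\Gamma(U_\proket, \alpha)$ coincides with the identity map on this common limit.

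Thus, for the mapping cone $C^\bullet$ of $\alpha$, we have $R\Gamma(U_\proket, C^\bullet) = 0$ for every $U$ in a generating family of qcqs objects of $X_\proket$ (Proposition \ref{prop-proket-site-qcqs}), and $C^\bullet$ lies in the bounded below derived category. A descending induction on the lowest nonvanishing cohomology sheaf of $C^\bullet$, using the local-to-global spectral sequence $H^p(U, H^q C^\bullet) \Rightarrow H^{p+q}(U, C^\bullet)$, then forces $C^\bullet \cong 0$ in $D^+(X_\proket)$, so $\alpha$ is a quasi-isomorphism. The main obstacle I expect is the derived-level promotion of Proposition \ref{prop-proket-vs-ket} in the first step; the remaining steps are formal site-theoretic manipulations, made available by the qcqs hypothesis on $f$, which is what ensures that fiber products of qcqs pro-Kummer \'etale objects over $X$ with $Y$ remain qcqs in $Y_\proket$.
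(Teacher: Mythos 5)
Your argument is correct and is essentially the paper's own proof: both sides are reduced, via Proposition \ref{prop-proket-vs-ket} and the observation that $U \times_X Y = \varprojlim_j (U_j \times_X Y)$ remains qcqs when $f$ is qcqs, to the common colimit $\varinjlim_j H^i\bigl((U_j \times_X Y)_\ket, \cF\bigr)$ over qcqs objects $U = \varprojlim_j U_j$ of $X_\proket$. The paper packages this more economically by identifying, for each $i$, the $i$-th cohomology sheaves of both sides with the sheafification of that single presheaf, which sidesteps your derived-level promotion of Proposition \ref{prop-proket-vs-ket} and the mapping-cone induction, but the mathematical content is the same.
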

\begin{proof}
    This is because, for each $i \geq 0$, the $i$-th cohomology of both sides of the morphism $\upsilon_X^{-1} \, Rf_{\ket, *}(\cF) \to Rf_{\proket, *} \, \upsilon_Y^{-1}(\cF)$ \Pth{canonically defined by adjunction} can be identified with the sheafification of the presheaf sending a qcqs object $U = \varprojlim_j U_j$ in $X_\proket$ to $\varinjlim_j \, H^i(U_j \times_X Y, \cF)$.
\end{proof}

When $Y \in X_\proket$, let ${X_\proket}_{/Y}$ denote the localized site.  Then we have the following natural functors:
\begin{enumerate}
    \item The \emph{inverse image} \Pth{or \emph{pullback}} functor
        \[
        \begin{split}
            f_\proket^{-1}: \Sh(X_\proket) & \to \Sh({X_\proket}_{/Y}): \\
            \cF & \mapsto \bigl(U \mapsto f_\proket^{-1}(\cF)(U) := \cF(U)\bigr).
        \end{split}
        \]

    \item The \emph{base change} functor
        \[
        \begin{split}
            f_{\proket, *}: \Sh({X_\proket}_{/Y}) & \to \Sh(X_\proket): \\
            \cF & \mapsto \bigl(U \mapsto f_{\proket, *}(\cF)(U) := \cF(U \times_X Y)\bigr).
        \end{split}
        \]

    \item The \emph{localization} functor
        \[
            f_{\proket, !}: \Sh({X_\proket}_{/Y}) \to \Sh(X_\proket)
        \]
        sending $\cF \in \Sh({X_\proket}_{/Y})$ to the sheafification of the presheaf
        \[
            f_{\proket, !}^p(\cF): U \mapsto \coprod_{h: U \to Y} \cF(U, h),
        \]
        where the coproduct is over all pro-Kummer \'etale morphisms $h: U \to Y$ over $X$.  We also denote by $f_{\proket, !}: \Sh_\Ab({X_\proket}_{/Y}) \to \Sh_\Ab(X_\proket)$ the induced functor between the categories of abelian sheaves, in which case the above coproduct becomes a direct sum.
\end{enumerate}
It is formal that $f_{\proket, !}$ is left adjoint to $f_\proket^{-1}$, and hence $f_{\proket, !}$ is right exact.

\begin{rk}\label{rem-ket-proket-loc}
    If $Y \to X$ is Kummer \'etale, then naturally ${X_\proket}_{/Y} \cong Y_\proket$.
\end{rk}

\begin{lem}\label{lem-ket-split-proket}
    Let $f: V \to W$ be a finite Kummer \'etale morphism between objects in $X_\proket$.  If $f$ has a section $g: W \to V$, then there exists a finite Kummer \'etale morphism $W' \to W$ between objects in $X_\proket$, and an isomorphism $h: V \Mi W \coprod W'$ such that the composition $h \circ g$ is the natural inclusion $W \Em W \coprod W'$.
\end{lem}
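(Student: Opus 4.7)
The strategy is to reduce to Lemma \ref{lem-ket-split} by descending the datum $(f,g)$ to a finite level in the pro-Kummer \'etale presentation of $W$, and then pulling the resulting decomposition back.

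First, by Definition \ref{def-proket-mor}\Refenum{\ref{def-proket-mor-1}}, the morphism $f: V \to W$ arises as the pullback of some finite Kummer \'etale morphism $f_0: V_0 \to W_0$ in $X_\ket$ along some morphism $\alpha: W \to W_0$ in $\pro X_\ket$; so $V \cong W \times_{W_0} V_0$. Choose a pro-Kummer \'etale presentation $W = \varprojlim_{i \in I} W_i$ with each $W_i \in X_\ket$. The section $g: W \to V$ is equivalent to the datum of a morphism $\widetilde{g}: W \to V_0$ in $\pro X_\ket$ such that $f_0 \circ \widetilde{g} = \alpha$. By the description of morphisms in $\pro X_\ket$ recalled at the start of Subsection \ref{sec-proket-site}, since $V_0$ and $W_0$ lie in $X_\ket$, the morphisms $\widetilde{g}$ and $\alpha$ factor through some $W_{i_0}$, and (after passing to a larger index) the relation $f_0 \circ \widetilde{g} = \alpha$ can be arranged to hold already at the level of $W_{i_0}$; this yields a section $g_{i_0}: W_{i_0} \to V_{i_0}$ of the finite Kummer \'etale morphism $f_{i_0}: V_{i_0} := W_{i_0} \times_{W_0} V_0 \to W_{i_0}$ in $X_\ket$, whose pullback along $W \to W_{i_0}$ recovers $(f,g)$.

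Next, I apply Lemma \ref{lem-ket-split} to $f_{i_0}$ and $g_{i_0}$ at the Kummer \'etale level: there exist a finite Kummer \'etale morphism $W'_{i_0} \to W_{i_0}$ in $X_\ket$ and an isomorphism $h_{i_0}: V_{i_0} \Mi W_{i_0} \coprod W'_{i_0}$ such that $h_{i_0} \circ g_{i_0}$ is the natural inclusion. Pulling this diagram back along $W \to W_{i_0}$ (using that fiber products distribute over finite coproducts) produces a finite Kummer \'etale morphism $W' := W'_{i_0} \times_{W_{i_0}} W \to W$ in $\pro X_\ket$ and an isomorphism $h: V \Mi W \coprod W'$ such that $h \circ g$ is the natural inclusion $W \Em W \coprod W'$, as required.

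Finally, I must check that $W'$ is genuinely an object of $X_\proket$. Since $W' \to W$ is finite Kummer \'etale and $W \in X_\proket$, this follows directly from Lemma \ref{lem-proket-site-check}\Refenum{\ref{lem-proket-site-check-6}} (with the chain $W' \to W \to X$, all pro-Kummer \'etale).

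The main technical point is the descent step in the first paragraph: one has to exploit the fact that $V_0, W_0 \in X_\ket$ (not merely in $\pro X_\ket$) to argue, via the explicit description $\Hom(F,G) = \varprojlim_J \varinjlim_I \Hom(F(i), G(j))$ of hom-sets in pro-categories, that both the factorization of $\widetilde{g}$ through some $W_{i_0}$ and the compatibility $f_0 \circ \widetilde{g} = \alpha$ can be realized at one common finite level; everything after that is a formal pullback of Lemma \ref{lem-ket-split}.
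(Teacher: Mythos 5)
Your proof is correct, but it takes a genuinely different route from the paper's. You descend the pair $(f, g)$ to a finite level: writing $V \cong W \times_{W_0} V_0$ for a finite Kummer \'etale $V_0 \to W_0$ in $X_\ket$, you use the formula $\Hom_{\pro{}X_\ket}(\varprojlim_i W_i, T) = \varinjlim_i \Hom(W_i, T)$ for constant targets $T$ to realize both the section and the compatibility $f_0 \circ \widetilde{g} = \alpha$ at a single index $i_0$, apply the finite-level Lemma \ref{lem-ket-split} to $V_{i_0} \to W_{i_0}$, and pull the resulting decomposition back (checking via Lemma \ref{lem-proket-site-check}\Refenum{\ref{lem-proket-site-check-6}} that $W'$ lands in $X_\proket$). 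The paper instead stays ``at infinity'': it first reduces to the case where $W \to W_0$ and hence $V \to W_0$ are pro-finite Kummer \'etale over a connected $W_0$, and then invokes the equivalence $W_{0, \profket} \cong \pi_1^\ket(W_0)\Utext{-}\underline{\PFSets}$ of Proposition \ref{prop-fund-grp-profket} to translate the section into a $G$-equivariant splitting $S \Mi S_0 \coprod S'$ of profinite sets, mirroring the proof of Lemma \ref{lem-ket-split} one categorical level up. Your argument is the more elementary one: it needs only the pro-category formalism plus the already-proved finite-level statement, and it sidesteps both the reduction to pro-finite Kummer \'etale towers over a connected base and the (mild) point-set issues about clopen images of continuous sections of maps of profinite sets; the paper's argument, in exchange, exercises the $\underline{\PFSets}$ dictionary that it has set up for other purposes. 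The only steps you compress are the distributivity of fiber products over finite disjoint unions in $\pro{}X_\ket$ (which holds because filtered colimits of hom-sets commute with finite products) and the identification $W \times_{W_{i_0}} V_{i_0} \cong W \times_{W_0} V_0$; both are routine.
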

\begin{proof}
    By Lemma \ref{lem-proket-site-check}\Refenum{\ref{lem-proket-site-check-5}}, we may assume that $f: V \to W$ is the pullback of some finite Kummer \'etale morphism $V_0 \to W_0$ in $X_\ket$.  Let $W = \varprojlim_i W_i$ be a pro-Kummer \'etale presentation.  Without loss of generality, we may assume that all transition morphisms $W_j \to W_i$ are finite Kummer \'etale, and that $W \to W_0$ factors through $W_i \to W_0$ for all $i$, so that $V \cong \varprojlim_i (W_i \times_{W_0} V_0)$.  Then we may replace $W_0$ with some $W_i$ and assume that $W \to W_0$ and hence $V \to W_0$ are pro-finite Kummer \'etale.  We may also assume that $W_0$ is connected.  Let $G := \pi_1^\ket(W_0)$ \Pth{see Remark \ref{rem-fund-grp}}.  Via the equivalence in Proposition \ref{prop-fund-grp-profket}, $V \to W_0$ and $W \to W_0$ corresponds to profinite sets $S$ and $S_0$ with continuous $G$-actions, and the morphism $f: V \to W$ and the splitting $g: W \to V$ gives rise to a $G$-equivariant decomposition $S \Mi S_0 \coprod S'$ and hence to the desired decomposition $h: V \Mi W \coprod W'$ \Pth{\Refcf{} the proof of Lemma \ref{lem-ket-split}}.
\end{proof}

\begin{prop}\label{prop-proket-dir-im-ex}
    Given any finite Kummer \'etale morphism $f: Y \to X$ of locally noetherian fs log adic spaces, we have a natural isomorphism
    \[
        f_{\proket, !} \Mi f_{\proket, *}: \Sh_\Ab(Y_\proket) \to \Sh_\Ab(X_\proket).
    \]
    Consequently, both functors are exact.
\end{prop}
\begin{proof}
    For each $U \in X_\proket$, any pro-Kummer \'etale morphism $h: U \to Y$ over $X$ induces a splitting $U \to Y \times_X U$ of the finite Kummer \'etale morphism $Y \times_X U \to U$, and hence we have a decomposition $Y \times_X U \cong U \coprod U'$, by Lemma \ref{lem-ket-split-proket}.  Then we can finish the proof by the same arguments as in the proof of Proposition \ref{prop-ket-dir-im-ex}.
\end{proof}

\subsection{Log affinoid perfectoid objects}\label{sec-log-aff-perf}

Recall that affinoid perfectoid objects form a basis for the pro-\'etale topology of any locally noetherian adic space over $\Spa(\bQ_p, \bZ_p)$ \Pth{see \cite[\aDef 4.3 and \aProp 4.8]{Scholze:2013-phtra} and \cite[\aLem 15.3]{Scholze:2017-ecd}}.  We would like to establish a suitable \emph{log analogue} of this fact.

\begin{defn}\label{def-log-aff-perf}
    Let $X$ be an analytic locally noetherian fs log adic space over $\Spa(\bZ_p, \bZ_p)$.  An object $U$ in $X_\proket$ is called \emph{log affinoid perfectoid} if it admits a pro-Kummer \'etale presentation
    \[
        U = \varprojlim_{i \in I} U_i = \varprojlim_{i \in I} (\Spa(R_i, R_i^+), \cM_i, \alpha_i) \to X
    \]
    satisfying the following conditions:
    \begin{enumerate}
        \item\label{def-log-aff-perf-1}  There is an initial object $0 \in I$.

        \item\label{def-log-aff-perf-2}  Each $U_i$ admits a global sharp fs chart $P_i$ such that each transition morphism $U_j \to U_i$ is modeled on a Kummer chart $P_i \to P_j$.

        \item\label{def-log-aff-perf-3}  The affinoid algebra $(R, R^+) := \bigl(\varinjlim_{i \in I} \, (R_i^u, R_i^{u+})\bigr)^\wedge$, where each $(R_i^u, R_i^{u+})$ is the \emph{uniformization} of $(R_i, R_i^+)$ as in \cite[\aDef 2.8.13]{Kedlaya/Liu:2015-RPH} \Pth{\ie, $R_i^u$ is the completion of $R_i$ for the spectral seminorm and $R_i^{u+}$ is the completion of the image of $R_i^+$ in $R_i^u$} and where the completion is as in \cite[\aDef 2.6.1]{Kedlaya/Liu:2015-RPH}, is a \emph{perfectoid} affinoid algebra.

        \item\label{def-log-aff-perf-4}  The monoid $P := \varinjlim_{i \in I} P_i$ is \emph{$n$-divisible}, for all $n \geq 1$.
    \end{enumerate}
    In this situation, we say that $U = \varprojlim_{i\in I} U_i$ is a perfectoid presentation of $U$.
\end{defn}

The following remark provides an equivalent form of Definition \ref{def-log-aff-perf}\Refenum{\ref{def-log-aff-perf-3}} when $X$ is defined over $\Spa(\bQ_p, \bZ_p)$, and explains the compatibility of the definitions of affinoid perfectoid objects in \cite[\aDef 4.3]{Scholze:2013-phtra} and \cite[\aDef 9.2.4]{Kedlaya/Liu:2015-RPH}.
\begin{rk}\label{rem-unif-compl-lim}
    In Definition \ref{def-log-aff-perf}, suppose that $X$ is defined over $\Spa(\bQ_p, \bZ_p)$.  In this case, $p$ is invertible in $\ker(R_i^+ \to R_i^{u+})$.  It follows that $R_i^+ / p^n \cong R_i^{u+} / p^n$, for each $n \geq 1$.  Then the completion $(R, R^+) = \bigl(\varinjlim_{i \in I} \, (R_i^u, R_i^{u+})\bigr)^\wedge$ is simply the $p$-adic completion of $\varinjlim_{i \in I} \, (R_i, R_i^+)$.
\end{rk}

\begin{rk}\label{rem-def-log-aff-perf}
    By Proposition \ref{prop-proket-site-qcqs}, a log affinoid perfectoid object $U$ as in Definition \ref{def-log-aff-perf} is qcqs.  By abuse of language, we shall sometimes say that $U$ is \emph{modeled on $P$}.  Since the transition morphisms $P_i \to P_j$ are Kummer and hence injective \Pth{by Definition \ref{def-Kummer}}, $P$ is sharp and saturated because each $P_i$ is \Pth{by assumption}.  Therefore, the condition \Refenum{\ref{def-log-aff-perf-4}} in Definition \ref{def-log-aff-perf} is equivalent to the condition that $P$ is \emph{uniquely $n$-divisible}, for all $n \geq 1$.  \Pth{The condition \Refenum{\ref{def-log-aff-perf-4}} in Definition \ref{def-log-aff-perf} will be useful in the proof of Lemma \ref{lem-log-aff-perf-ket} below.}
\end{rk}

\begin{lem}\label{lem-log-aff-perf-toric}
    Let $P$ be a sharp fs monoid.  Suppose that $X$ is an analytic locally noetherian adic space over $\Spa(\bZ_p, \bZ_p)$ equipped with the trivial log structure as in Example \ref{ex-log-adic-sp-triv}, and that $Y = X\Talg{P}$ is as in Example \ref{ex-log-adic-sp-monoid}.  Suppose that $\varprojlim_{i \in I} U_i$ is an affinoid perfectoid object in $X_\proket$, which exists by \cite[\aLem 15.3]{Scholze:2017-ecd}, where all $U_i$ are equipped with the trivial log structures as well.  Let us equip $I \times \bZ_{\geq 1}$ with the partial ordering such that $(i, m) \geq (j, n)$ exactly when $i \geq j$ and $n | m$.  Then $\varprojlim_{(i, n) \in I \times \bZ_{\geq 1}} \, U_i\Talg{\frac{1}{n} P}$ is a log affinoid perfectoid object in $Y_\proket$, which gives a pro-Kummer \'etale \Pth{\resp pro-finite Kummer \'etale} cover of $Y$ when $\varprojlim_{i \in I} U_i$ is a pro-\'etale \Pth{\resp pro-finite \'etale} cover of $X$.
\end{lem}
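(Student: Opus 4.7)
The plan is to verify the four conditions of Definition \ref{def-log-aff-perf} for the inverse system $V_{(i,n)} := U_i\Talg{\frac{1}{n}P}$ indexed by $(i,n) \in I \times \bZ_{\geq 1}$, and then to establish the cover property. First I confirm that $V_\infty := \varprojlim_{(i,n)} V_{(i,n)}$ lies in $Y_\proket$: each morphism $V_{(i,n)} \to Y = X\Talg{P}$ factors as the strictly \'etale base change $U_i\Talg{\frac{1}{n}P} \to X\Talg{\frac{1}{n}P}$ followed by the standard Kummer \'etale cover $X\Talg{\frac{1}{n}P} = Y^{\frac{1}{n}} \to Y$ from Proposition \ref{prop-ket-std}; since $U_i \in X_\et = X_\ket$ (trivial log structure), this exhibits $V_{(i,n)} \in Y_\ket$. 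For $(i,m) \geq (j,n)$, the transition $V_{(i,m)} \to V_{(j,n)}$ is the base change of the \'etale $U_i \to U_j$ along the finite Kummer \'etale cover modeled on the Kummer chart $\frac{1}{n}P \hookrightarrow \frac{1}{m}P$, and is finite Kummer \'etale surjective for $i$ in the cofinal range coming from the given pro-\'etale presentation of $\varprojlim U_i$.

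Conditions \Refenum{\ref{def-log-aff-perf-1}}, \Refenum{\ref{def-log-aff-perf-2}}, and \Refenum{\ref{def-log-aff-perf-4}} are then formal: the initial object is $(0,1)$; each $V_{(i,n)}$ carries the global sharp fs chart $\frac{1}{n}P$ (the group $P^\gp$ is torsionfree because $P$ is sharp and saturated, so each $\frac{1}{n}P$ is again sharp fs), with transitions modeled on the Kummer maps $\frac{1}{n}P \hookrightarrow \frac{1}{m}P$; and $\varinjlim_n \frac{1}{n}P \cong P_{\bQ_{\geq 0}}$ is uniquely $n$-divisible for every $n \geq 1$.

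The main content is condition \Refenum{\ref{def-log-aff-perf-3}}. Let $(R_\infty, R_\infty^+)$ denote the uniform completion of $\varinjlim_i (R_i, R_i^+)$, which is perfectoid by hypothesis. Since $P_{\bQ_{\geq 0}}$ is uniquely $p$-divisible, Lemma \ref{lem-monoid-alg-perf} (together with Remark \ref{rk-lem-monoid-alg-perf} in the not-necessarily-Tate case) gives that $\bigl(R_\infty\Talg{P_{\bQ_{\geq 0}}}, R_\infty^+\Talg{P_{\bQ_{\geq 0}}}\bigr)$ is perfectoid. I plan to identify this Huber pair with the uniform completion of the colimit $\varinjlim_{(i,n)} \bigl(R_i\Talg{\tfrac{1}{n}P}, R_i^+\Talg{\tfrac{1}{n}P}\bigr)$. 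Using Remark \ref{rem-unif-compl-lim}, both sides are characterized by their reductions modulo $p^k$; the formation of the monoid algebra $R^+[\frac{1}{n}P]$ is functorial in $R^+$ and commutes with the filtered colimit over $i$, while the monoid colimit $\varinjlim_n \frac{1}{n}P = P_{\bQ_{\geq 0}}$ commutes with the ring colimit. This reduces the identification to the naive ring-theoretic isomorphism $R_\infty^+\Talg{P_{\bQ_{\geq 0}}}/p^k \cong \varinjlim_{(i,n)} R_i^+[\frac{1}{n}P]/p^k$.

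Finally, the cover property follows from Proposition \ref{prop-ket-std}\Refenum{\ref{prop-ket-std-1}}: each $V_{(i,n)} \to U_i \times_X Y$ is surjective, so when $\{U_i\}$ covers $X$ pro-\'etale-ly, $\{V_{(i,n)}\}$ covers $Y$ pro-Kummer \'etale-ly; and when each $U_i \to X$ is finite \'etale, each $V_{(i,n)} \to Y$ is finite Kummer \'etale, yielding a pro-finite Kummer \'etale cover. The principal technical obstacle is the perfectoid identification in condition \Refenum{\ref{def-log-aff-perf-3}}: correctly interchanging uniform completion with the monoid colimit and with the colimit over $i$. I expect to resolve this by the $p$-adic truncation argument sketched above, appealing to the uniqueness statements in Remark \ref{rem-unif-compl-lim} and to the compatibility of $\Spa(R\Talg{Q}, R^+\Talg{Q})$ with rational localizations and with base change in $R$ recorded in Lemmas \ref{lem-str-noe} and \ref{lem-monoid-alg-perf}.
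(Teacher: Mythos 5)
Your proposal is correct and follows essentially the same route as the paper, whose proof is a one-line citation of Lemma \ref{lem-monoid-alg-perf} (for the perfectoid condition \Refenum{\ref{def-log-aff-perf-3}}), Lemma \ref{lem-proket-site-check} (for compositions and base changes giving the pro-Kummer \'etale cover), and Definition \ref{def-log-aff-perf}; your write-up is just the expanded verification that this citation compresses. The only small caution is that Remark \ref{rem-unif-compl-lim} is stated for spaces over $\Spa(\bQ_p, \bZ_p)$, whereas here $X$ is only over $\Spa(\bZ_p, \bZ_p)$, so the identification of the uniform completion of the colimit should be phrased via the spectral-seminorm completion of Definition \ref{def-log-aff-perf}\Refenum{\ref{def-log-aff-perf-3}} rather than via reduction modulo powers of $p$.
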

\begin{proof}
    This follows from Lemmas \ref{lem-monoid-alg-perf} and \ref{lem-proket-site-check}, and Definition \ref{def-log-aff-perf}.
\end{proof}

\begin{rk}\label{rem-def-log-aff-perf-sp}
    Let $U \in X_\proket$ be a log affinoid perfectoid object as in Definition \ref{def-log-aff-perf}.  Then $\widehat{U} = \Spa(R, R^+)$ is an affinoid perfectoid space, called the \emph{associated affinoid perfectoid space}, or simply the \emph{associated perfectoid space}.  In this case, we write $\widehat{U} \sim \varprojlim_i U_i$.  The assignment $U \mapsto \widehat{U}$ defines a functor from the category of log affinoid perfectoid objects to the category of affinoid perfectoid spaces.  We emphasize that $\widehat{U}$ does not live in $X_\proket$ in general.  Thanks to the following Lemma \ref{lem-log-aff-perf-sp}, we can identify the underlying topological spaces of $\widehat{U}$ and $\varprojlim_i U_i$.
\end{rk}

\begin{lem}\label{lem-log-aff-perf-sp}
    Let $U = \varprojlim_i U_i \in X_\proket$ be a log affinoid perfectoid object, and let $\widehat{U}$ be the associated affinoid perfectoid space, as in Remark \ref{rem-def-log-aff-perf-sp}.  Then the natural map of topological spaces $|\widehat{U}| \to \varprojlim_i |U_i|$ is a homeomorphism.
\end{lem}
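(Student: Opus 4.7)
The plan is to adapt the argument of \cite[\aProp 4.8]{Scholze:2013-phtra} (see also \cite[\aLem 15.3]{Scholze:2017-ecd}) to the present log setting, since the log structure plays no essential role at the level of underlying topological spaces. First, I would check that the map in question is well-defined and continuous: for each $i$, the composition $(R_i, R_i^+) \to (R_i^u, R_i^{u+}) \to (R, R^+)$ induces a continuous map $|\widehat{U}| \to |\Spa(R_i^u, R_i^{u+})| \to |U_i|$, where the second arrow is a homeomorphism because every continuous valuation on $R_i$ bounded by $1$ on $R_i^+$ automatically factors through the uniformization $R_i^u$ (being trivial on the kernel of the spectral seminorm). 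Taking limits over $i$ gives the canonical continuous map $|\widehat{U}| \to \varprojlim_i |U_i|$. Both sides are spectral topological spaces, so it suffices to establish that the map is a continuous bijection that identifies rational open subsets.

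For surjectivity, I would show that a compatible system $(x_i) \in \varprojlim_i |U_i|$ yields a compatible system of continuous valuations on the $(R_i^u, R_i^{u+})$, hence a continuous valuation on $\varinjlim_i (R_i^u, R_i^{u+})$ that extends uniquely by continuity to the perfectoid completion $(R, R^+)$ and defines a preimage in $|\widehat{U}|$. Injectivity follows by density: two distinct continuous valuations on $R$ must disagree on some element, and since $\varinjlim_i R_i^u$ is dense in $R$, they must in fact disagree on an element coming from some $R_i^u$, giving distinct images in $|U_i|$. Finally, to upgrade the continuous bijection to a homeomorphism, I would argue that rational subsets of $|\widehat{U}|$ of the form $\{ |f| \le |g| \neq 0 \}$ with $f, g$ coming from some $R_i^u$ form a basis of the topology, which is again a straightforward density argument approximating general $f, g \in R$ by elements of $\varinjlim_i R_i^u$.

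The main obstacle is the careful bookkeeping around the uniformizations and completions: one must check that the natural map $|\Spa(R_i^u, R_i^{u+})| \to |\Spa(R_i, R_i^+)|$ is a homeomorphism (which uses analyticity of $U_i$, so that the spectral seminorm topology agrees with the given topology), that $\varinjlim_i R_i^u$ is dense in $R$ for its Huber-ring topology, and that the Kummer \'etale transition maps -- although ramified on underlying adic spaces -- still induce well-behaved spectral maps on $\Spa$'s that match the \emph{ordinary} inverse limit of topological spaces. These verifications mirror the non-log perfectoid case and, once in place, make the argument essentially formal.
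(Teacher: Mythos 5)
Your proposal is correct and follows essentially the same route as the paper's (much terser) proof: bijectivity via the identification of continuous valuations on $R$ with compatible systems of continuous valuations on the $R_i$'s, and agreement of topologies because every rational subset of $\widehat{U}$ is pulled back from a rational subset of some $U_i$ — the density/approximation argument you sketch for this last point is exactly the content of the reference \cite[\aLem 2.6.5]{Kedlaya/Liu:2015-RPH} that the paper cites.
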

\begin{proof}
    The map is bijective because a continuous valuation on $R$ is equivalent to a compatible system of continuous valuations on $R_i$'s.  By \cite[\aLem 2.6.5]{Kedlaya/Liu:2015-RPH}, each rational subset of $\widehat{U}$ comes from the pullback of a rational subset of some $U_i$, and hence the topologies also agree, as desired.
\end{proof}

\begin{lem}\label{lem-log-aff-perf-cl-imm}
    Let $\imath: Z \to X$ be a strict closed immersion \Pth{see Definition \ref{def-imm}} of analytic locally noetherian fs log adic spaces over $\Spa(\bZ_p, \bZ_p)$.  Then, for each log affinoid perfectoid object $U = \varprojlim_{i \in I} U_i$ of $X_\proket$, the pullback $V := U \times_X Z := \varprojlim_{i \in I} (U_i \times_X Z)$ is a log affinoid perfectoid object of $Z_\proket$.  Moreover, the natural morphism $\widehat{V} \to \widehat{U}$ is a closed immersion of adic spaces.
\end{lem}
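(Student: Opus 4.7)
The plan is to first verify that $V = \varprojlim_{i \in I} V_i$ with $V_i := U_i \times_X Z$ gives a valid pro-Kummer \'etale presentation of an object in $Z_\proket$ satisfying conditions \Refenum{\ref{def-log-aff-perf-1}}, \Refenum{\ref{def-log-aff-perf-2}}, and \Refenum{\ref{def-log-aff-perf-4}} of Definition \ref{def-log-aff-perf}, and then to establish the perfectoidness condition \Refenum{\ref{def-log-aff-perf-3}} together with the final assertion that $\widehat{V} \to \widehat{U}$ is a closed immersion of adic spaces.

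First I would check that each $V_i \to U_i$ is a strict closed immersion. Since $\imath: Z \to X$ is strict, the log structure on $Z$ is pulled back from $X$, and hence the log structure on $V_i$ is pulled back from $U_i$; the underlying morphism of adic spaces is a closed immersion by base change of closed immersions. Consequently, $V_i = \Spa(S_i, S_i^+)$ is affinoid, with $S_i$ a quotient of $R_i$ by a finitely generated closed ideal, and $V_i$ inherits the global sharp fs chart $P_i$ from $U_i$. Since each transition morphism $V_j \to V_i$ is the base change of $U_j \to U_i$, it is Kummer \'etale and modeled on the same Kummer homomorphism $P_i \to P_j$. This yields a pro-Kummer \'etale presentation of $V$ in $Z_\proket$ verifying conditions \Refenum{\ref{def-log-aff-perf-1}}, \Refenum{\ref{def-log-aff-perf-2}}, and \Refenum{\ref{def-log-aff-perf-4}}, with the same initial object $0$ and the same colimit monoid $P = \varinjlim_i P_i$.

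The main step is to check condition \Refenum{\ref{def-log-aff-perf-3}}, namely that $(S, S^+) := \bigl(\varinjlim_i (S_i^u, S_i^{u+})\bigr)^\wedge$ is a perfectoid Huber pair; once this is done, the morphism $\widehat{V} = \Spa(S, S^+) \to \widehat{U} = \Spa(R, R^+)$ will be realized as the vanishing locus of the ideal of $R$ generated by the defining equations of $Z$ in $X$, and hence will be a closed immersion of adic spaces. To establish perfectoidness, I would work locally on $X$: since $X$ is locally noetherian, up to replacing $X$ by an affinoid open containing the image of $U_0$, we may assume $X = \Spa(A, A^+)$ and that $Z$ is cut out by a finitely generated ideal $J \subset A$, so that $S_i$ is the quotient of $R_i$ by the pullback of $J$. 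By Remark \ref{rem-unif-compl-lim}, $(S, S^+)$ is then the $p$-adic completion of the quotient of the perfectoid pair $(R, R^+)$ by the closed ideal $\overline{J R}$. I expect the main obstacle to be precisely this perfectoidness assertion, since in full generality the quotient of a perfectoid algebra by a finitely generated closed ideal need not be perfectoid. What rescues us here is that the ideal $\overline{JR}$ comes from the noetherian base $A$ and is therefore independent of the index $i$ in the pro-Kummer \'etale tower, so it interacts only with the adic-space part of the structure and not with the log side through which the Frobenius-like behavior on $R^+/p$ is produced \Pth{via the unique $p$-divisibility of $P$}. Concretely, I would use the tilting equivalence \Ref{\Refcf{} \cite[\aThm 6.2.11]{Scholze/Weinstein:2017-blpag}} to lift generators of $J$ to $W(R^{\flat+})$ and verify directly that the Frobenius on $R^+/p$ descends to a surjective Frobenius on $S^+/p \cong R^+/(\overline{JR} + p)$, thereby exhibiting $(S, S^+)$ as a perfectoid Huber pair and $\widehat{V}$ as a Zariski-closed perfectoid subspace of $\widehat{U}$.
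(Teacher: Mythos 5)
Your reduction of conditions \Refenum{1}, \Refenum{2}, and \Refenum{4} of Definition \ref{def-log-aff-perf} to the strictness of $\imath$ and base change of Kummer charts matches the paper's argument. The gap is in the main step, condition \Refenum{3}. You identify $(S, S^+)$ with the $p$-adic completion of the naive quotient of $(R, R^+)$ by $\overline{JR}$, but that is not what Definition \ref{def-log-aff-perf}\Refenum{3} asks for: $(S, S^+)$ is by definition the completion of $\varinjlim_i (S_i^u, S_i^{u+})$, where $S_i^u$ is the \emph{uniformization} of $S_i = \cO(U_i \times_X Z)$, and Remark \ref{rem-unif-compl-lim} does not let you suppress the uniformizations here (it is a statement about the $U_i$, over $\Spa(\bQ_p,\bZ_p)$, not about the $V_i$; the present lemma is over $\Spa(\bZ_p,\bZ_p)$). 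Consequently your proposed verification — lifting generators of $J$ to $W(R^{\flat+})$ and checking that Frobenius is surjective on $R^+/(\overline{JR}+p)$ — is aimed at the wrong ring, and even for the right ring it only addresses the Frobenius condition and not \emph{uniformity}, which is exactly the reason a quotient of a perfectoid ring by a closed ideal can fail to be perfectoid. Your heuristic that the ideal ``comes from the noetherian base and interacts only with the adic-space part'' is not an argument and does not produce boundedness of the power-bounded elements of the quotient.

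The paper closes this gap differently: strictness of $\imath$ gives that each $R_i \to S_i$, hence each $R_i^u \to S_i^u$, is surjective, and the quotient norm from $R_i^u$ agrees with the spectral norm on $S_i^u$; passing to the completed direct limit, $S$ is \emph{uniform by construction} and receives a surjective bounded homomorphism from the perfectoid ring $R$, so \cite[\aThm 3.6.17(b)]{Kedlaya/Liu:2015-RPH} \Pth{a uniform quotient of a perfectoid ring is perfectoid} applies at once, and the surjection $R \to S$ also yields the closed immersion $\widehat{V} \to \widehat{U}$. If you want to salvage your route, you would need to either prove directly that the naive completed quotient is already uniform \Pth{which is essentially equivalent to what you are trying to show} or invoke a result on Zariski-closed subsets of affinoid perfectoid spaces and then separately identify that universal perfectoid quotient with $\varprojlim_i V_i$; as written, neither step is carried out.
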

\begin{proof}
    By definition and by Proposition \ref{prop-ket-compos-bc}, the conditions \Refenum{\ref{def-log-aff-perf-1}}, \Refenum{\ref{def-log-aff-perf-2}}, and \Refenum{\ref{def-log-aff-perf-4}} in Definition \ref{def-log-aff-perf} are satisfied.  It remains to verify the condition \Refenum{\ref{def-log-aff-perf-3}}.  Let $(R, R^+)$ be the completion of $\varinjlim_{i \in I} \, (R_i^u, R_i^{u+})$, which is perfectoid by assumption.  For each $i \in I$, write $\Spa(R_i, R_i^+) \times_X Z = \Spa(S_i, S_i^+)$.  Then the induced homomorphism $R_i^u \to S_i^u$ is surjective, because $\imath: Z \to X$ is strict and hence the underlying adic space of the fiber product coincides with the fiber product of the underlying adic spaces.  Since $R_i^u$ and $S_i^u$ are uniform, the quotient norm on $S_i^u$ induced by the one on $R_i^u$ is just the \Pth{spectral} norm on $S_i^u$.  Let $(S, S^+)$ be the completion of $\varinjlim_{i \in I} \, (S_i^u, S_i^{u+})$.  Then $S$ is uniform and admits a surjective bounded homomorphism $R \to S$.  In this case, $S$ is perfectoid, by \cite[\aThm 3.6.17(b)]{Kedlaya/Liu:2015-RPH}, and the natural morphism $\Spa(S, S^+) \to \Spa(R, R^+)$ is a closed immersion, as desired.
\end{proof}

\begin{lem}\label{lem-log-aff-perf-ket}
    Let $U = \varprojlim_{i \in I} U_i \in X_\proket$ be a log affinoid perfectoid object, with associated perfectoid space $\widehat{U}$, as in Remark \ref{rem-def-log-aff-perf-sp}.  Suppose that $V \to U$ is a Kummer \'etale \Pth{\resp finite Kummer \'etale} morphism in $X_\proket$ that is the pullback of some Kummer \'etale \Pth{\resp finite Kummer \'etale} morphism $V_0 \to U_0$ between affinoid log adic spaces in $X_\ket$.  Then $V \to U$ is \'etale \Pth{\resp finite \'etale}, and $V$ is log affinoid perfectoid.  The induced morphism $\widehat{V} \to \widehat{U}$ is \'etale \Pth{\resp finite \'etale}.  The construction $V \mapsto \widehat{V}$, for each log affinoid perfectoid object $V$ of ${X_\proket}_{/U}$, induces an equivalence of topoi $\widehat{U}^\sim_\proet \cong {X^\sim_\proket}_{/U}$.
\end{lem}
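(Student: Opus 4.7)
The plan is to reduce the Kummer \'etale case to the strictly \'etale case by exploiting the unique $n$-divisibility of $P = \varinjlim_i P_i$, invoke a standard perfectoid base change argument, and finally handle the equivalence of topoi via a basis comparison.

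First I would reduce, using Remark \ref{rem-ket-site-gen}, to the case where $V_0 \to U_0$ is either strictly \'etale or a standard Kummer \'etale cover; the former is already strict. For a standard Kummer \'etale cover induced by a Kummer homomorphism $P_0 \to Q$ of sharp fs monoids with $|Q^\gp / P_0^\gp|$ invertible in $\cO_{U_0}$, pick $n$ invertible in $\cO_{U_0}$ annihilating $Q^\gp / P_0^\gp$. By Lemma \ref{lem-Abhyankar-basic}, the base change $V_0 \times_{U_0} U_0^{\frac{1}{n}} \to U_0^{\frac{1}{n}}$ is strictly \'etale. Since $P$ is uniquely $n$-divisible, the canonical map $P_0 \to P$ factors through $\frac{1}{n} P_0$, so $U \to U_0$ factors through $U_0^{\frac{1}{n}}$, and hence $V = V_0 \times_{U_0} U \to U$ is the base change of a strictly \'etale morphism, and is therefore itself strictly \'etale.

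Having reduced to $V_0 \to U_0$ strictly (finite) \'etale, I would set $V_i := V_0 \times_{U_0} U_i \in X_\ket$, so that $V = \varprojlim_i V_i$ is a pro-Kummer \'etale presentation of $V$. Strictness of $V_i \to U_i$ transports the sharp fs chart $P_i$ from $U_i$ to $V_i$, so conditions \Refenum{\ref{def-log-aff-perf-1}}, \Refenum{\ref{def-log-aff-perf-2}}, and \Refenum{\ref{def-log-aff-perf-4}} of Definition \ref{def-log-aff-perf} hold for free. Writing $V_i = \Spa(S_i, S_i^+)$, the completion $(S, S^+) = \bigl(\varinjlim_i (S_i^u, S_i^{u+})\bigr)^\wedge$ is obtained from the perfectoid $(R, R^+)$ by strictly (finite) \'etale base change along the fixed-level morphism $V_0 \to U_0$, and is therefore perfectoid by almost purity \cite[\aThm 7.9(iii)]{Scholze:2012-ps}. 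This yields condition \Refenum{\ref{def-log-aff-perf-3}}, showing that $V$ is log affinoid perfectoid with $\widehat{V} = \Spa(S, S^+)$, and the morphism $\widehat{V} \to \widehat{U}$ is (finite) \'etale by construction.

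For the equivalence of topoi, I would extend $V \mapsto \widehat{V}$ to all of ${X_\proket}_{/U}$ by taking pro-objects. Fully faithfulness reduces, via the presentations above, to a limit statement on $\Hom$-sets at finite levels, handled by a standard approximation argument for uniform Huber pairs. For essential surjectivity, any affinoid perfectoid pro-\'etale $W \to \widehat{U}$ can be written by \cite[\aLem 15.3]{Scholze:2017-ecd} as a cofiltered limit of \'etale affinoid perfectoids over $\widehat{U}$; using \cite[\aLem 2.6.5]{Kedlaya/Liu:2015-RPH} to approximate rational subsets and the tilting equivalence to approximate finite \'etale covers of perfectoid spaces, each such \'etale morphism descends, after enlarging $i$, to a strictly (finite) \'etale morphism over some $U_i$, and combining with the first two steps produces the required preimage $V$. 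The hard part will be precisely this descent step, which is the log analogue of \cite[\aProp 6.4.4]{Scholze/Weinstein:2017-blpag} and requires a careful noetherian approximation of \'etale algebras over the uniformizations $R_i^u$ together with the tilting-based treatment of the finite \'etale ingredients.
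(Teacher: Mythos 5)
Your proposal is correct and follows essentially the same route as the paper: both use Lemma \ref{lem-Abhyankar-basic} together with the $n$-divisibility of $P = \varinjlim_i P_i$ to factor the Kummer part through some $\frac{1}{n}P_0 \to P_i$ and thereby reduce to a strictly \'etale morphism at a finite level, after which perfectoidness of $\widehat{V}$ and the topos equivalence follow from the standard results on \'etale morphisms of perfectoid spaces \Pth{the paper cites \cite[\aLem 2.6.5 and \aProp 2.6.8]{Kedlaya/Liu:2015-RPH} for exactly the descent step you flag as the hard part}. The only cosmetic difference is your preliminary decomposition via Remark \ref{rem-ket-site-gen}, which is unnecessary since Lemma \ref{lem-Abhyankar-basic} already handles a general Kummer \'etale morphism; just be sure, as the paper is, that the factorization $P_0 \to \frac{1}{n}P_0 \to P_i$ occurs at a finite index $i$ so that the charts transport to the $V_i$.
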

\begin{proof}
    We may assume that $0 \in I$ and that $0$ is an initial object \Pth{up to replacing $I$ with a cofinal subcategory}.  Let $U_0^{\frac{1}{m}}$ be as in Lemma \ref{lem-Abhyankar-basic}, for some $m \geq 1$ such that $V_0 \times_{U_0} U_0^{\frac{1}{m}} \to U_0^{\frac{1}{m}}$ is strictly \'etale \Pth{\resp strictly finite \'etale}.  Since $P = \varinjlim_{i \in I} P_i$ is $m$-divisible, there is some $i \in I$ such that $P_0 \to P_i$ factors as $P_0 \to \frac{1}{m} P_0 \to P_i$.  Then $V_0 \times_{U_0} U_i \to U_i$ is strictly \'etale \Pth{\resp strictly finite \'etale}.  We may replace $I$ with the cofinal full subcategory of objects that receive morphisms from $i$.  Then $V := (V_0 \times_{U_0} U_i) \times_{U_i} U \to U$ is strictly \'etale \Pth{\resp strictly finite \'etale}, and hence so is $\widehat{V} \to \widehat{U}$.  This shows that we have a well-defined morphism of sites $\widehat{U}_\proet \to {X_\proket}_{/U}$.  This induces an equivalence of topoi, because every \'etale morphism $W \to \widehat{U}$ that is a composition of rational localizations and finite \'etale morphisms arises in the above way, by \cite[\aLem 2.6.5 and \aProp 2.6.8]{Kedlaya/Liu:2015-RPH}.
\end{proof}

\begin{cor}\label{cor-log-aff-perf-loc-fket}
    Let $U = \varprojlim_{i \in I} U_i$ be an object in $X_\proket$ as in Definition \ref{def-log-aff-perf} such that $U_i \to X$ is a composition of rational localizations and finite Kummer \'etale morphisms, for all sufficiently large $i$.  Then $U \times_X V$ is a log affinoid perfectoid of $X_\proket$, for each log affinoid perfectoid object $V$ of $X_\proket$.
\end{cor}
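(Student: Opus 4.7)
The plan is to apply Lemma \ref{lem-log-aff-perf-ket} to each stage of the perfectoid presentation of $U$ and assemble the resulting perfectoid pieces into a single log affinoid perfectoid presentation of $U \times_X V$. Write $V = \varprojlim_{j \in J} V_j$ for the given presentation with sharp fs charts $Q_j$. Since each transition $U_{i'} \to U_i$ is eventually finite Kummer \'etale and surjective, the morphism $U \to X$ is pro-Kummer \'etale; Lemma \ref{lem-proket-site-check}\Refenum{\ref{lem-proket-site-check-1}} therefore guarantees that the fiber product $U \times_X V$ exists in $X_\proket$, and it is canonically identified with $\varprojlim_{(i,j) \in I \times J} U_i \times_X V_j$.

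For each sufficiently large $i$, the hypothesis says that $U_i \to X$ is a composition of rational localizations and finite Kummer \'etale morphisms; its base change along the affinoid $V_0 \to X$ yields a Kummer \'etale morphism $U_i \times_X V_0 \to V_0$ between affinoid log adic spaces in $X_\ket$. Lemma \ref{lem-log-aff-perf-ket} applied to the pullback $U_i \times_X V \to V$ therefore shows that $U_i \times_X V$ is log affinoid perfectoid and that $\widehat{U_i \times_X V} \to \widehat{V}$ is \'etale. Inspecting the proof of that lemma (and using the unique divisibility of $Q := \varinjlim_j Q_j$), for each such $i$ there exists $\widetilde{j}_i \in J$ so that $U_i \times_X V_j \to V_j$ is strictly \'etale for all $j \geq \widetilde{j}_i$, and hence $U_i \times_X V_j$ inherits the sharp fs chart $Q_j$ from $V_j$. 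A chart-level calculation—strict \'etaleness over $V_j$ descends along the Kummer \'etale transitions $U_i \to U_{i'}$ because the local amalgamated-sum chart is forced to collapse onto $Q_j$—shows that $\widetilde{j}_i$ may be taken monotonic in $i$.

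It remains to verify Definition \ref{def-log-aff-perf} on the full subcategory $I' := \{(i, j) \in I \times J : j \geq \widetilde{j}_i\}$, which I would check to be cofiltered, cofinal in $I \times J$, and to have initial object $(0, \widetilde{j}_0)$. Each $U_i \times_X V_j$ in $I'$ carries the chart $Q_j$, and each transition morphism $U_{i'} \times_X V_{j'} \to U_i \times_X V_j$ is modeled on the Kummer chart $Q_j \to Q_{j'}$ (the $i$-direction transitions are strictly \'etale over $V_j$, so contribute the identity on charts, while the $j$-direction transitions come from $V$); moreover these transitions are eventually finite Kummer \'etale surjective. The monoid colimit $\varinjlim_{(i,j) \in I'} Q_j$ equals $Q$ and is uniquely $n$-divisible by the hypothesis on $V$. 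The main obstacle is the perfectoid condition in Definition \ref{def-log-aff-perf}\Refenum{\ref{def-log-aff-perf-3}}: one must show that the completion of $\varinjlim_{(i,j) \in I'} (R_{i,j}^u, R_{i,j}^{u+})$ is perfectoid. I would resolve this by identifying the associated affinoid adic space with the inverse limit $\varprojlim_i \widehat{U_i \times_X V}$, where each $\widehat{U_i \times_X V}$ is affinoid perfectoid and \'etale over $\widehat{V}$, and where the transition morphisms are (eventually) finite \'etale and surjective (as base changes of the finite Kummer \'etale surjective $U_{i'} \to U_i$ that become finite \'etale over the strictly \'etale targets); such a cofiltered limit is affinoid perfectoid by the same stability argument used in the proof of Lemma \ref{lem-log-aff-perf-ket} (and in essence \cite[\aLem 15.3]{Scholze:2017-ecd}), completing the verification.
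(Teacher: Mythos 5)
Your proof is correct and follows essentially the same route as the paper's: apply Lemma \ref{lem-log-aff-perf-ket} to conclude that each $U_i \times_X V$ is log affinoid perfectoid for sufficiently large $i$, then pass to the limit $U \times_X V \cong \varprojlim_i (U_i \times_X V)$ using that the $p$-adic completion of a direct limit of perfectoid affinoid algebras is again perfectoid. The extra bookkeeping you do with the double index category and the charts $Q_j$ is a legitimate (and more explicit) verification of the conditions in Definition \ref{def-log-aff-perf} that the paper's two-sentence proof leaves implicit.
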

\begin{proof}
    By Lemma \ref{lem-log-aff-perf-ket}, $U_i \times_X V$ is log affinoid perfectoid, for all sufficiently large $i$.  Hence, $U \times_X V  \cong \varprojlim_{i \in I} \, (U_i \times_X V)$ is also log affinoid perfectoid, because the $p$-adic completion of a direct limit of perfectoid affinoid algebras is again perfectoid, and any direct limit of divisible monoids is still divisible.
\end{proof}

\begin{lem}\label{lem-log-add-perf-mor}
    Let $U$ and $V$ be log affinoid perfectoid objects as in Definition \ref{def-log-aff-perf}, with a morphism $V \to U$, in $X_\proket$.  Suppose that $U = \varprojlim_{i \in I} U_i$ is a pro-Kummer \'etale presentation with $U_i$ and $U$ modeled on $P_i$ and $P = \varinjlim_{i \in I} P_i$, respectively.  Then $V$ admits a pro-Kummer \'etale presentation $V = \varprojlim_{j \in J} V_j$ with each $V_j$ modeled on some $P_i$, so that $V$ is also modeled on $P$.
\end{lem}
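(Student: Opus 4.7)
The plan is to use the morphism $V \to U$ together with the unique divisibility of $P = \varinjlim_{i \in I} P_i$ to transfer the chart structure of the pro-system $\{U_i\}_{i \in I}$ to a refined pro-system for $V$, so that every term of the new presentation is strictly \'etale over some $U_{i'}$ and therefore inherits the global chart $P_{i'}$.

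To begin, I would fix any pro-Kummer \'etale presentation $V = \varprojlim_{j \in J} V_j$ satisfying Definition \ref{def-log-aff-perf}, with each $V_j$ modeled on a sharp fs monoid $Q_j$. Since morphisms in a pro-category from a pro-object to an object factor through finite stages, the morphism $V \to U$ corresponds to a compatible system of morphisms $V \to U_i$, each factoring as $V \to V_{j(i)} \to U_i$ for some $j(i) \in J$. By standard cofinality arguments, I can arrange that $i \mapsto j(i)$ is order-preserving and cofinal, and that the compatibility $V_{j(i_2)} \to V_{j(i_1)} \to U_{i_1}$ agrees with $V_{j(i_2)} \to U_{i_2} \to U_{i_1}$ strictly (not just after pullback to $V$), by iteratively enlarging indices using cofilteredness. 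Each resulting map $V_{j(i)} \to U_i$ is then Kummer \'etale by Proposition \ref{prop-ket-mor}.

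Next, for each $i \in I$, Lemma \ref{lem-Abhyankar-basic} produces a positive integer $n_i$ such that $V_{j(i)} \times_{U_i} U_i^{1/n_i} \to U_i^{1/n_i}$ is strictly \'etale. Because $P$ is uniquely $n$-divisible for every $n$ and each $P_i$ is finitely generated, after passing to a further cofinal subsystem of $I$ I can assume that for each $i$ there exists $i' \geq i$ with $\frac{1}{n_i} P_i \subset P_{i'}$ in $P$, yielding a factorization $U_{i'} \to U_i^{1/n_i} \to U_i$ of the transition map in the pro-system of $U$. Setting
\[
    V'_i := V_{j(i)} \times_{U_i} U_{i'} \cong (V_{j(i)} \times_{U_i} U_i^{1/n_i}) \times_{U_i^{1/n_i}} U_{i'},
\]
one sees that $V'_i \to U_{i'}$ is strictly \'etale as a base change of a strictly \'etale morphism. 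Hence $V'_i$ lies in $X_\ket$ by Propositions \ref{prop-ket-compos-bc} and \ref{prop-fiber-prod-log-adic}, and inherits a global chart $P_{i'}$ from $U_{i'}$. The universal property of the fiber product, together with the compatibility arranged above, gives natural morphisms $V \to V'_i$ and transition morphisms $V'_{i_2} \to V'_{i_1}$ for $i_1 \leq i_2$.

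Finally, I would verify that $\{V'_i\}_{i \in I}$ is a pro-Kummer \'etale presentation of $V$ modeled on $P$. The transitions $V'_{i_2} \to V'_{i_1}$ are Kummer \'etale by Proposition \ref{prop-ket-mor}, and eventually finite Kummer \'etale surjective because the corresponding transitions $V_{j(i_2)} \to V_{j(i_1)}$ and $U_{i_2'} \to U_{i_1'}$ in the respective pro-systems are. The identification $V \cong \varprojlim V'_i$ follows from the compatibility of fiber products with inverse limits and the cofinality of the $j(i)$ and $i'$ indices, yielding $\varprojlim V'_i \cong (\varprojlim V_{j(i)}) \times_{\varprojlim U_i} (\varprojlim U_{i'}) \cong V \times_U U \cong V$. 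Since each $V'_i$ is modeled on $P_{i'}$ and the $i'$ form a cofinal subset of $I$, the colimit chart of the new presentation is exactly $P$. The main technical obstacle is the bookkeeping in the first step: ensuring that the choices of $j(i)$, $i'$, and $n_i$ can be made compatibly so that the transition morphisms $V'_{i_2} \to V'_{i_1}$ are well-defined on the nose rather than merely up to equivalence in the pro-category, which is resolved either by iteratively enlarging indices via cofilteredness or by indexing the new pro-system over a suitable poset of compatible tuples in $I \times I \times J$.
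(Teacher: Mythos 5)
Your proposal is correct and follows essentially the same route as the paper: both factor $V \to U$ through finite stages $V_h \to U_i$, invoke the argument of Lemma \ref{lem-log-aff-perf-ket} (i.e., Lemma \ref{lem-Abhyankar-basic} combined with the divisibility of $P$) to find a later index over which the pullback $V_h \times_{U_i} U_t \to U_t$ becomes strictly \'etale and hence inherits the chart $P_t$, and then re-index the resulting fiber products. The bookkeeping issue you flag at the end is handled in the paper exactly by your second suggested fix, indexing the new pro-system by tuples $(i, h, t)$ with $t \geq t_{i,h}$.
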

\begin{proof}
    Let $V = \varprojlim_{h \in H} V_h$ be a pro-Kummer \'etale presentation.  For each $i$, the morphism $V \to U$ in $X_\proket$ factors through some morphism $V_h \to U_i$ in $X_\ket$, for all sufficiently large $h$.  For each such $(i, h)$, by the argument in the proof of Lemma \ref{lem-log-aff-perf-ket}, there is some $t_{i, h} \in I$ such that $V_h \times_{U_i} U_t \to U_t$ is \'etale for all $t \geq t_{i, h}$, in which case $V_h \times_{U_i} U_t$ is modeled on $P_t$.  Hence, we obtain the desired presentation $V = \varprojlim_{j \in J} V_j$ by considering the index category $J$ formed by $j = (i, h, t)$ such that $t \geq t_{i, h}$, with the partial ordering such that $(i, h, t) \geq (i', h', t')$ in $J$ exactly when $i \geq i'$, $h \geq h'$, and $t \geq t'$; and by taking $V_j := V_h \times_{U_i} U_t$ for each $j = (i, h, t) \in J$.
\end{proof}

\begin{prop}\label{prop-log-aff-perf-fiber-prod}
    Suppose that $X$ is an analytic locally noetherian fs log adic space over $\Spa(\bZ_p, \bZ_p)$.  Then the subcategory of log affinoid perfectoid objects in $X_\proket$ is stable under fiber products.
\end{prop}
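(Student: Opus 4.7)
The plan is to construct an explicit pro-Kummer \'etale presentation of $V \times_U W$ and then verify that it satisfies the four conditions of Definition \ref{def-log-aff-perf}. Fix a pro-Kummer \'etale presentation $U = \varprojlim_{i \in I} U_i$ with each $U_i$ modeled on a sharp fs monoid $P_i$ and $P = \varinjlim_i P_i$. By applying Lemma \ref{lem-log-add-perf-mor} separately to $V \to U$ and $W \to U$, we may refine the presentations so that $V = \varprojlim_{j \in J} V_j$ and $W = \varprojlim_{k \in K} W_k$, where each $V_j$ (\resp{} $W_k$) is modeled on some $P_{i(j)}$ (\resp{} $P_{i(k)}$) and lies Kummer \'etale over $U_{i(j)}$ (\resp{} $U_{i(k)}$). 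In particular, both $V$ and $W$ are modeled on $P$.

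Next, form the level-wise fiber products. Let $L$ be the index category of triples $(j, k, \ell)$ with $\ell \in I$ satisfying $\ell \geq i(j), i(k)$, ordered componentwise, and set
\[
Z_{(j, k, \ell)} := \bigl(V_j \times_{U_{i(j)}} U_\ell\bigr) \times_{U_\ell} \bigl(W_k \times_{U_{i(k)}} U_\ell\bigr)
\]
in the category of locally noetherian fs log adic spaces, using Propositions \ref{prop-fiber-prod-log-adic} and \ref{prop-ket-compos-bc}.  Each $Z_{(j, k, \ell)}$ is then Kummer \'etale over $U_\ell$, the transitions in the pro-system are Kummer by construction, and $\varprojlim_L Z_{(j, k, \ell)}$ represents $V \times_U W$ in $X_\proket$. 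Conditions \Refenum{\ref{def-log-aff-perf-1}}, \Refenum{\ref{def-log-aff-perf-2}}, and \Refenum{\ref{def-log-aff-perf-4}} of Definition \ref{def-log-aff-perf} then follow after standard manipulations: \Refenum{\ref{def-log-aff-perf-1}} by adjoining an initial object; \Refenum{\ref{def-log-aff-perf-2}} by passing to a suitable \'etale refinement using Proposition \ref{prop-chart-stalk-chart-fs}, whose effect on the index category yields sharp fs charts with Kummer transitions (note that the pushout $(Q_j \oplus_{P_{i(j)}} P_\ell)^\Sat \oplus_{P_\ell} (R_k \oplus_{P_{i(k)}} P_\ell)^\Sat$ is, after sharpening \'etale locally, still a Kummer extension of $P_\ell$); and \Refenum{\ref{def-log-aff-perf-4}} because $\varinjlim_i P_i = P$ is uniquely $n$-divisible for all $n \geq 1$ and sits inside the characteristic monoid of the limit.

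The main step will be verifying condition \Refenum{\ref{def-log-aff-perf-3}}, the perfectoid condition on the completion. At each level, the affinoid algebra of $Z_{(j, k, \ell)}$ is (up to uniformization) a tensor product of the affinoid algebras of $V_j$, $W_k$, and $U_\ell$; commuting filtered colimits with uniform (equivalently, $p$-adic by Remark \ref{rem-unif-compl-lim}) completion then identifies the resulting completed algebra with the completed tensor product $\widehat{V}\,\widehat{\otimes}_{\widehat{U}}\,\widehat{W}$ in the category of perfectoid affinoid algebras, so that the associated perfectoid space of $V \times_U W$ is the fiber product $\widehat{V} \times_{\widehat{U}} \widehat{W}$ taken in the category of perfectoid spaces. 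This completed tensor product is perfectoid by standard perfectoid theory \Ref{for instance via \cite[\aThm 3.6.17]{Kedlaya/Liu:2015-RPH}, or by reduction to the tilt}. The principal obstacles are bookkeeping in nature: rigorously matching the uniform completion of the colimit of the affinoid algebras of the $Z_{(j, k, \ell)}$'s with the perfectoid completed tensor product, and ensuring that the sharp fs chart condition in \Refenum{\ref{def-log-aff-perf-2}} can be arranged compatibly across levels of the pro-system. Neither is conceptually delicate; both proceed by combining the results already established in Section \ref{sec-log-aff-perf} with the standard \'etale and perfectoid descent machinery.
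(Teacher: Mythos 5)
Your proposal is correct and follows essentially the same route as the paper's \Pth{much terser} proof: re-present $V$ and $W$ over the tower $\{ U_i \}_{i \in I}$ via Lemma \ref{lem-log-add-perf-mor}, form level-wise fiber products, and identify the associated perfectoid space with $\widehat{V} \times_{\widehat{U}} \widehat{W}$ \Pth{for which the paper cites \cite[\aProp 6.18]{Scholze:2012-ps} rather than \cite[\aThm 3.6.17]{Kedlaya/Liu:2015-RPH}}.  One simplification you are entitled to but do not use: the presentations produced by Lemma \ref{lem-log-add-perf-mor} have each $V_j$ \emph{strictly} \'etale over $U_{i(j)}$ with chart exactly $P_{i(j)}$, so each $Z_{(j, k, \ell)}$ is modeled on $P_\ell$ outright and the amalgamated-sum/sharpening discussion in your verification of condition \Refenum{\ref{def-log-aff-perf-2}} is unnecessary.
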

\begin{proof}
    Let $U$, $V$, and $W$ be log affinoid perfectoid objects in $X_\proket$, with morphisms $V \to U$ and $W \to U$.  Let $U = \varprojlim_i U_i$ be as in Lemma \ref{lem-log-add-perf-mor}.  By Lemma \ref{lem-log-add-perf-mor}, $V$ admits a pro-Kummer \'etale presentation $V = \varprojlim_j V_j$ such that each $V_j$ is modeled on some $P_i$, and the same is true for $W$.  Consequently, $V \times_U W$ also admits a pro-Kummer \'etale presentation of this kind, and hence is log affinoid perfectoid, with associated perfectoid space $\widehat{V \times_U W} \cong \widehat{V} \times_{\widehat{U}} \widehat{W}$.  \Pth{The last fiber product is indeed a perfectoid space, by \cite[\aProp 6.18]{Scholze:2012-ps}.}
\end{proof}

\begin{prop}\label{prop-log-aff-perf-basis}
    Suppose that $X$ is an analytic locally noetherian fs log adic space over $\Spa(\bZ_p, \bZ_p)$.  Then the log affinoid perfectoid objects in $X_\proket$ form a basis.
\end{prop}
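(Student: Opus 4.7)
The plan is to show that each $V \in X_\proket$ admits a covering by log affinoid perfectoid objects pro-Kummer \'etale over $V$. By standard reductions using Proposition \ref{prop-proket-site-qcqs} together with Proposition \ref{prop-chart-stalk-chart-fs}, and by pulling back through the given pro-Kummer \'etale presentation, it suffices to treat the special case in which $V = \Spa(R, R^+)$ is an affinoid object in $X_\ket$ endowed with a global sharp fs chart modeled on a sharp fs monoid $P$.

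For such a $V$, let $Y$ denote the underlying adic space of $V$ equipped with the trivial log structure.  Remark \ref{rem-def-chart-rel} supplies a strict closed immersion of log adic spaces $\imath: V \Em Y\Talg{P}$.  Since $Y$ is a locally noetherian analytic adic space over $\Spa(\bZ_p, \bZ_p)$, by \cite[\aLem 15.3]{Scholze:2017-ecd} it admits a pro-\'etale affinoid perfectoid cover $\widetilde{Y} = \varprojlim_{i \in I} \, Y_i \to Y$ in $Y_\proet$ (with each $Y_i$ equipped with the trivial log structure).  Applying Lemma \ref{lem-log-aff-perf-toric} to this cover produces a log affinoid perfectoid object
\[
    \widetilde{U} := \varprojlim_{(i, n) \in I \times \bZ_{\geq 1}} \, Y_i\Talg{\tfrac{1}{n} P}
\]
in $(Y\Talg{P})_\proket$ that is pro-Kummer \'etale and surjective over $Y\Talg{P}$.

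Pulling back along $\imath$, the fiber product $\widetilde{V} := \widetilde{U} \times_{Y\Talg{P}} V$ is pro-Kummer \'etale and surjective over $V$ by Lemma \ref{lem-proket-site-check}\Refenum{\ref{lem-proket-site-check-1}}, and is a log affinoid perfectoid object in $V_\proket$ by Lemma \ref{lem-log-aff-perf-cl-imm}.  This produces the desired covering of $V$ and hence shows that log affinoid perfectoids form a basis.

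The substantive content lies in the interplay between the toric tower built in Lemma \ref{lem-log-aff-perf-toric} and the closed-immersion stability of Lemma \ref{lem-log-aff-perf-cl-imm}: the main technical point—ensuring that adjoining all $n$-th roots of elements of $P$ to an affinoid perfectoid algebra and then intersecting with the closed subspace cut out by the chart still yields an affinoid perfectoid algebra with a uniquely divisible chart monoid—is already encoded in those lemmas.  The initial reduction to the case of a global sharp fs chart, via Proposition \ref{prop-chart-stalk-chart-fs}, is essential; without it the explicit $\frac{1}{n} P$-tower construction could not be carried out.
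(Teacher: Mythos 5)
Your argument is essentially the paper's own proof: both reduce to an affinoid base with a global sharp fs chart $P$, embed it strictly into $Y\Talg{P}$ via Remark \ref{rem-def-chart-rel}, build the log affinoid perfectoid tower $\varprojlim_{(i,n)} Y_i\Talg{\frac{1}{n}P}$ out of a pro-\'etale affinoid perfectoid cover of the underlying space using Lemma \ref{lem-log-aff-perf-toric}, and pull back along the strict closed immersion via Lemma \ref{lem-log-aff-perf-cl-imm}. The one step you pass over too quickly is the opening reduction: for a general object $U = \varprojlim_i U_i$ of $X_\proket$ it does \emph{not} formally suffice to cover a single affinoid $U_{i_0} \in X_\ket$, because after constructing a log affinoid perfectoid cover $W \to U_{i_0}$ you still must check that $W \times_{U_{i_0}} U$ is again log affinoid perfectoid; this requires knowing that the completed colimit of the resulting tower of perfectoid affinoid algebras is perfectoid, which is exactly the content of Corollary \ref{cor-log-aff-perf-loc-fket} (used at the corresponding point in the paper's proof). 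Citing that corollary closes the gap; with it, your proof is complete.
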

\begin{proof}
    We need to show that, for each $U = \varprojlim_{i \in I} U_i \in X_\proket$, \'etale locally on $U$ and $X$, there exists a pro-Kummer \'etale cover of $U$ by log affinoid perfectoid objects; and we may assume that it has a final object $U_0$ such that $U_i \to U_0$ is finite Kummer \'etale, for all $i \in I$, and that $U_0 \to X$ is a composition of rational localizations and finite Kummer \'etale morphisms.  By Lemma \ref{lem-proket-site-check} and \cite[\aProp 4.8]{Scholze:2013-phtra}, we may assume that $X = \Spa(R, R^+)$ is affinoid, and that its underlying adic space admits a pro-\'etale cover by an affinoid perfectoid object $\varprojlim_{j \in J} U_j$ in $X_\proet$.  Also, we may assume that $X$ admits a sharp fs chart $P_X \to \cM$, which induces a \emph{strict} closed immersion $X \Em Y := X\Talg{P}$ as in Remark \ref{rem-def-chart-rel}.  Consider the pro-Kummer \'etale cover of $Y$, as in Lemma \ref{lem-log-aff-perf-toric}, given by the log affinoid perfectoid object $\varprojlim_{(j, n) \in J \times \bZ_{\geq 1}} U_j\Talg{\frac{1}{n} P}$ in $Y_\proket$, whose pullback to $X$ gives a pro-Kummer \'etale cover of $X$ by a log affinoid perfectoid object $V$ of $X_\proket$, by Lemma \ref{lem-log-aff-perf-cl-imm}.  Thus, $U \times_X V \to U$ is a desired pro-Kummer \'etale cover of $U$ by a log affinoid perfectoid object in $X_\proket$, by Lemma \ref{lem-proket-site-check} and Corollary \ref{cor-log-aff-perf-loc-fket}.
\end{proof}

\begin{prop}\label{prop-proket-K-pi-1}
    Suppose that $X$ is an analytic locally noetherian fs log adic space over $\Spa(\bZ_p, \bZ_p)$.  Then $X_\proket$ has a basis $\cB$ such that
    \[
        H^i\bigl({X_\proket}_{/V}, \upsilon^{-1}(\bL)\bigr) = 0,
    \]
    for all $V \in \cB$, all $p$-torsion locally constant sheaf $\bL$ on $X_\ket$, and all $i > 0$.
\end{prop}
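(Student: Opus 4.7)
The plan is to take $\cB$ to consist of those log affinoid perfectoid objects $V = \varprojlim_{i \in I} V_i \in X_\proket$ (in the sense of Definition \ref{def-log-aff-perf}) with the additional property that, for each $i \in I$ and each connected finite Kummer \'etale Galois cover $W \to V_i$, there exists some $j \geq i$ in $I$ for which the transition morphism $V_j \to V_i$ factors through $W$. This property ensures, via Lemma \ref{lem-proket-site-check}\Refenum{\ref{lem-proket-site-check-1}}, that every finite Kummer \'etale cover of $V$ in $\pro X_\ket$ splits over each connected component of $V$.

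First I would verify that $\cB$ is a basis. Starting from an arbitrary log affinoid perfectoid $V^{(0)} = \varprojlim_i V^{(0)}_i \in X_\proket$, which exists by Proposition \ref{prop-log-aff-perf-basis}, one transfinitely adjoins all connected finite Kummer \'etale Galois covers of each $V^{(0)}_i$ to the indexing system (in the sense of Definition \ref{def-proket-site}), producing a pro-Kummer \'etale cover $V \to V^{(0)}$ satisfying the defining property of $\cB$. This construction preserves the log affinoid perfectoid condition: each finite Kummer \'etale cover of a log affinoid perfectoid object is again log affinoid perfectoid by Lemma \ref{lem-log-aff-perf-ket}, and the $p$-adic completion of the resulting filtered colimit of perfectoid affinoid algebras is again perfectoid, as already used in the proof of Proposition \ref{prop-log-aff-perf-fiber-prod}.

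Next, fix $V = \varprojlim V_i \in \cB$ and a $p$-torsion locally constant sheaf $\bL$ on $X_\ket$. By Proposition \ref{prop-proket-vs-ket},
\[
    H^i({X_\proket}_{/V}, \nu^{-1}(\bL)) \cong \varinjlim_{i \in I} H^i(V_{i, \ket}, \bL).
\]
By the defining property of $\cB$, any finite Kummer \'etale cover of some $V_i$ trivializing $\bL|_{V_i}$ is dominated by a later $V_j$ in the pro-system, so we may assume (after passing sufficiently far into the system) that $\bL|_{V_j}$ is globally constant equal to a finite $p$-torsion abelian group $M$; this reduces the problem to showing $\varinjlim_j H^i(V_{j, \ket}, M) = 0$ for $i > 0$. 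For $i = 1$, each class is represented by an $M$-torsor on $V_j$, which is a finite Kummer \'etale cover and hence dominated by some $V_k \to V_j$ in the pro-system by construction, killing the class.

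The main obstacle will be vanishing for $i \geq 2$. Here I would invoke the equivalence of topoi ${X^\sim_\proket}_{/V} \cong \widehat{V}^\sim_\proet$ from Lemma \ref{lem-log-aff-perf-ket} to transport the problem to pro-\'etale cohomology of the associated affinoid perfectoid space $\widehat{V}$ with coefficients in the pullback $\widehat{\bL}$. The defining property of $\cB$ ensures that every finite \'etale cover of $\widehat{V}$ splits, so that $\widehat{V}$ plays the role of a \Qtn{universal} affinoid perfectoid on which pro-\'etale cohomology of $p$-torsion locally constant sheaves should vanish in positive degrees; this is the log analogue of the strategy underlying \cite[\aSec 4 and 5]{Scholze:2013-phtra}, wherein \v{C}ech cohomology along pro-finite Kummer \'etale Galois covers computes pro-Kummer \'etale cohomology, and by our construction every \v{C}ech cocycle is annihilated by a further transition in the pro-system. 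Verifying cleanly that this \v{C}ech-theoretic argument goes through in the logarithmic setting, accommodating the mix of strict open localizations and finite Kummer \'etale covers appearing in pro-Kummer \'etale presentations, is the technically delicate point.
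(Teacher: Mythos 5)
Your overall strategy---enlarging a log affinoid perfectoid object until every finite Kummer \'etale cover of it splits, so that $\bL$ becomes trivial there and the computation can be transported to the associated perfectoid space---is the same as the paper's, which simply takes $V \to U$ realizing the completed universal pro-finite-\'etale cover $\Spa(\widehat{A}_\infty, \widehat{A}_\infty^+)$ of $\widehat{U} = \Spa(A, A^+)$. Your construction of $\cB$ and your treatment of $H^1$ via splitting of torsors are fine. But there is a genuine gap exactly where you flag one: the vanishing in degrees $i \geq 2$. The mechanism you propose---\Qtn{every \v{C}ech cocycle is annihilated by a further transition in the pro-system}---does not work there. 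Splitting all finite Kummer \'etale covers kills torsors, i.e.\ first cohomology, but classes in $H^{\geq 2}$ of a constant sheaf are not represented by covers and need not die under pullback along finite covers; moreover, the comparison between \v{C}ech cohomology along a pro-(Kummer) \'etale Galois cover and derived-functor cohomology (Cartan--Leray) has $E_2$-terms involving the higher cohomology of the cover itself, so arguing this way is circular: one must already know the cover is acyclic.

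The missing ingredient is analytic, not formal. After the identifications $H^i\bigl({X_\proket}_{/V}, \nu^{-1}(\bL)\bigr) \cong H^i\bigl(\widehat{V}_\proet, \nu^{-1}(\bL)\bigr) \cong H^i(\widehat{V}_\et, \bL)$, which follow from the equivalence of topoi in Lemma \ref{lem-log-aff-perf-ket} together with \cite[\aCor 3.17(i)]{Scholze:2013-phtra}, the vanishing of $H^i(\widehat{V}_\et, \bL)$ for $i > 0$ is obtained as in the last paragraph of the proof of \cite[\aThm 4.9]{Scholze:2013-phtra}: one combines the Artin--Schreier sequence on the tilt with the almost acyclicity of $\cO^{+a}/p$ in the \'etale topology of affinoid perfectoid spaces \Ref{\cite[\aThm 7.13]{Scholze:2012-ps}}, using the splitting of all finite \'etale covers of $\widehat{V}$ only to control the degree-$0$ and degree-$1$ terms. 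Without importing this input, your argument establishes the proposition only for $i = 1$.
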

\begin{proof}
    Let $U$ be a log affinoid perfectoid object of $X_\proket$, with $\widehat{U} = \Spa(A, A^+)$ the associated affinoid perfectoid space.  By passing to a covering, we may assume that $A$ is integral.  Let $(A_\infty, A_\infty^+)$ be a universal cover of $A$ \Pth{\ie, $A_\infty$ is the union of all finite \'etale extensions $A_j$ of $A$ in a fixed algebraic closure of the fractional field of $A$, and $A_\infty^+$ is the integral closure of $A^+$ in $A_\infty$}.  Let $(\widehat{A}_\infty, \widehat{A}_\infty^+) := \bigl(\varinjlim_j (A_j, A_j^+)\bigr)^\wedge$.  Then $\widehat{U}_\infty := \Spa(\widehat{A}_\infty, \widehat{A}_\infty^+)$ is affinoid perfectoid.  By the argument in the proof of Lemma \ref{lem-log-aff-perf-ket}, there is some $V \to U$ in $X_\proket$, with $V = \varprojlim_j V_j$ log affinoid perfectoid, such that $\widehat{V} \cong \widehat{U}_\infty$ over $\widehat{U}$.  Note that $\bL|_V$ is a trivial local system because, for any finite Kummer \'etale cover $Y \to X$ trivializing $\bL$, the pullback $W := Y \times_X V \to V$ and the induced morphism $\widehat{W} \to \widehat{V}$ are strictly finite \'etale by Lemma \ref{lem-log-aff-perf-ket}, and therefore $\widehat{W} \to \widehat{V}$ has a section, by the assumption on $\widehat{V} \cong \widehat{U}_\infty$.  Consequently, we have $H^i\bigl({X_\proket}_{/V}, \upsilon^{-1}(\bL)\bigr) \cong H^i\bigl(\widehat{V}_\proet, \upsilon^{-1}(\bL)\bigr) \cong H^i\bigl(\widehat{V}_\et, \bL\bigr) = 0$, for all $i > 0$, where the first and second isomorphisms follow from Lemma \ref{lem-log-aff-perf-ket} and \cite[\aCor 3.17(i)]{Scholze:2013-phtra} \Pth{note that the locally noetherian assumption there on $X$ is not needed}, respectively, and the last equality follows essentially verbatim from the last paragraph of the proof of \cite[\aThm 4.9]{Scholze:2013-phtra}.
\end{proof}

\subsection{Structure sheaves}\label{sec-proket-sheaves}

\begin{defn}\label{def-proket-sheaves}
    Suppose that $X$ is a locally noetherian fs log adic space over $\Spa(\bQ_p, \bZ_p)$.  We define the following sheaves on $X_\proket$.
    \begin{enumerate}
        \item The \emph{integral structure sheaf} is $\cO_{X_\proket}^+ := \upsilon^{-1}(\cO_{X_\ket}^+)$, and the \emph{structure sheaf} is $\cO_{X_\proket} := \upsilon^{-1}(\cO_{X_\ket})$.

        \item The \emph{completed integral structure sheaf} is $\widehat{\cO}_{X_\proket}^+ := \varprojlim_n \bigl(\cO_{X_\proket}^+ / p^n\bigr)$, and the \emph{completed structure sheaf} is $\widehat{\cO}_{X_\proket} := \widehat{\cO}_{X_\proket}^+[\frac{1}{p}]$.

        \item The \emph{tilted structure sheaves} are $\widehat{\cO}_{X_\proket}^{\flat+} := \varprojlim_\Phi \widehat{\cO}_{X_\proket}^+ \cong \varprojlim_\Phi \bigl(\cO_{X_\proket}^+ / p\bigr)$ and $\widehat{\cO}_{X_\proket}^\flat := \varprojlim_\Phi \widehat{\cO}_{X_\proket}$, where the transition morphisms $\Phi$ are given by $x \mapsto x^p$.  When the context is clear, we shall simply write $(\widehat{\cO}^\flat, \widehat{\cO}^{\flat+})$ instead of $(\widehat{\cO}_{X_\proket}^{\flat+}, \widehat{\cO}_{X_\proket}^\flat)$.

        \item We have canonical morphisms $\alpha: \cM_{X_\proket} := \upsilon^{-1}(\cM_{X_\ket}) \to \cO_{X_\proket}$ and $\alpha^\flat: \cM_{X_\proket}^\flat := \varprojlim_{a \mapsto a^p} \cM_{X_\proket} \to \widehat{\cO}_{X_\proket}^\flat$ induced by the constructions.  When the context is clear, we shall simply write $\cM$ and $\cM^\flat$ instead of $\cM_{X_\proket}$ and $\cM_{X_\proket}^\flat$, respectively.
    \end{enumerate}
\end{defn}

\begin{prop}\label{prop-proket-log-str}
    In Definition \ref{def-proket-sheaves}, we have $\cM_{X_\proket}(U) = \varinjlim_i \cM_{U_i}(U_i)$, for any pro-Kummer \'etale presentation $U = \varprojlim_i U_i \in X_\proket$.
\end{prop}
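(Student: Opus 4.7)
The plan is to adapt the degree-zero case of Proposition \ref{prop-proket-vs-ket} to sheaves of monoids. While that proposition is stated for abelian sheaves, the identification $(\nu^{-1}\cF)(U) = \varinjlim_i \cF(U_i)$ for qcqs $U = \varprojlim_i U_i$ rests only on the sheaf-of-sets structure, since its essential input is that filtered colimits commute with finite limits. This property also holds in the category of monoids, because the forgetful functor from monoids to sets admits a left adjoint (the free monoid functor) and hence preserves all limits; it also obviously preserves filtered colimits.

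First, I would recall from Proposition \ref{prop-M-ket-sheaf} that $\cM_{X_\ket}$ is a sheaf of monoids on $X_\ket$, so that $\cM_{X_\proket} = \nu^{-1}\cM_{X_\ket}$ is well-defined as the sheafification of the presheaf pullback. For $U = \varprojlim_i U_i$, cofilteredness of the indexing category ensures that every morphism from $U$ to an object of $X_\ket$ factors through some $U_i$, so the presheaf pullback evaluates to $\varinjlim_i \cM_{U_i}(U_i)$. It then suffices to show that this presheaf pullback already satisfies the sheaf axiom on the full subcategory of qcqs objects of $X_\proket$, which by Proposition \ref{prop-proket-site-qcqs} forms a basis stable under fiber products. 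Given such a qcqs $U$ and a pro-Kummer \'etale cover of it, we may assume by quasi-compactness that the cover is finite and, invoking the transfinite limit structure of Definition \ref{def-proket-site} and the refinement argument from the proof of \cite[\aLem 3.16]{Scholze:2013-phtra}, reduce to the case where every covering object is the pullback of a Kummer \'etale cover $\{U_i^{(k)} \to U_i\}$ at a single finite level $i$. The sheaf condition at $U$ then follows by passing to the filtered colimit over $i' \geq i$ of the sheaf condition for $\cM_{X_\ket}$ at level $i'$, since finite limits commute with filtered colimits in the category of monoids. Because a presheaf satisfying the sheaf axiom on a basis coincides with its sheafification on that basis, we obtain $(\nu^{-1}\cM_{X_\ket})(U) = \varinjlim_i \cM_{U_i}(U_i)$ for qcqs $U$. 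For a general pro-Kummer \'etale presentation $U = \varprojlim_i U_i$, I would cover each $U_i$ by affinoid Kummer \'etale pieces, pull these back to qcqs covers of $U$, and invoke the sheaf property of $\cM_{X_\proket}$ together with the qcqs case just established, using that the formation of $\varinjlim_i \cM_{U_i}(U_i)$ is compatible with such covers by the same commutation of filtered colimits with finite limits.

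The main technical obstacle is the refinement step, where a finite pro-Kummer \'etale cover of a qcqs object is reduced to one arising as the pullback of a Kummer \'etale cover at a single finite level. This requires unpacking the transfinite construction in Definition \ref{def-proket-site} together with the qcqs structure results of Proposition \ref{prop-proket-site-qcqs}, following Scholze's arguments in the pro-\'etale setting of \cite{Scholze:2013-phtra}. Once this reduction is secured, the remainder is essentially formal manipulation of filtered colimits and equalizers.
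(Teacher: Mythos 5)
Your proposal is correct and follows essentially the same route as the paper: identify $\varinjlim_i \cM_{U_i}(U_i)$ with the presheaf pullback, check the sheaf condition on the basis of quasi-compact objects by reducing (as in Proposition \ref{prop-proket-vs-ket} and \cite[\aLem 3.16]{Scholze:2013-phtra}) to a single Kummer \'etale cover arising at a finite level, and conclude from Proposition \ref{prop-M-ket-sheaf}. Your explicit remark that filtered colimits commute with finite limits in the category of monoids (via the forgetful functor to sets) is a useful justification of a point the paper leaves implicit.
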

\begin{proof}
    The proof is similar to the one of Proposition \ref{prop-proket-vs-ket}.  Note that, by definition, $\upsilon^{-1}(\cM_{X_\proket})$ is the sheaf associated with the presheaf $\widetilde{\cM}$ sending $U = \varprojlim_j U_j$ to $\varinjlim_j \cM_{U_j}(U_j)$.  Also, quasi-compact objects form a basis of $X_\proket$.  Hence, it suffices to show that exactness of
    \[
        0 \to \widetilde{\cM}(U) \to \prod_h \widetilde{\cM}(V_h) \rightrightarrows \prod_{h, h'}\widetilde{\cM}(V_h \times_U V_{h'}),
    \]
    for any quasi-compact $U$ and any finite covering $\{ V_h \to U \}_h$ by quasi-compact objects in $X_\proket$.  By Proposition \ref{prop-proket-site-qcqs} and the same argument as in the proof of \cite[\aLem 3.16]{Scholze:2013-phtra}, this is reduced to the case of a single Kummer \'etale cover $V \to U$, and therefore to the known exactness of
    \[
        0 \to \cM_{X_\ket}(U_0) \to \cM_{X_\ket}(V_0) \rightrightarrows \cM_{X_\ket}(V_0 \times_{U_0} V_0),
    \]
    for some Kummer \'etale cover $V_0 \to U_0$ in $X_\ket$, by Proposition \ref{prop-M-ket-sheaf}.
\end{proof}

The following result is an analogue of \cite[\aThm 4.10]{Scholze:2013-phtra}.
\begin{thm}\label{thm-almost-van-hat}
    Suppose that $X$ is a locally noetherian fs log adic space over $\Spa(\bQ_p, \bZ_p)$.  Let $U \in X_\proket$ be a log affinoid perfectoid object, with associated perfectoid space $\widehat{U} = \Spa(R, R^+)$.  Let $(R^\flat, R^{\flat+})$ be its tilt.
    \begin{enumerate}
        \item\label{thm-almost-van-hat-1}  For each $n > 0$, we have $\cO_{X_\proket}^+(U) / p^n \cong R^+ / p^n$, and it is canonically almost isomorphic to $(\cO_{X_\proket}^+ / p^n)(U)$.

        \item\label{thm-almost-van-hat-2}  For each $n > 0$, we have $H^i(U, \cO_{X_\proket}^+ / p^n)^a = 0$, for all $i > 0$.  Consequently, $H^i(U, \widehat{\cO}_{X_\proket}^+)^a = 0$, for all $i > 0$.

        \item\label{thm-almost-van-hat-3}  We have $\widehat{\cO}_{X_\proket}^+(U) \cong R^+$ and $\widehat{\cO}_{X_\proket}(U) \cong R$, and the ring $\widehat{\cO}_{X_\proket}^+(U)$ is canonically isomorphic to the $p$-adic completion of $\cO_{X_\proket}^+(U)$.

        \item\label{thm-almost-van-hat-4}  We have $\widehat{\cO}_{X_\proket}^{\flat+}(U) \cong R^{\flat+}$ and $\widehat{\cO}_{X_\proket}^\flat(U) \cong R^\flat$.

        \item\label{thm-almost-van-hat-5}  We have $H^i(U, \widehat{\cO}_{X_\proket}^{\flat+})^a = 0$, for all $i > 0$.
    \end{enumerate}
\end{thm}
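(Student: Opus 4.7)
The plan is to reduce Theorem \ref{thm-almost-van-hat} to Scholze's analogous theorem \cite[\aThm 4.10]{Scholze:2013-phtra} on the pro-\'etale site of a perfectoid space, via the topos equivalence ${X^\sim_\proket}_{/U} \cong \widehat{U}^\sim_\proet$ provided by Lemma \ref{lem-log-aff-perf-ket}. The layout of the proof mirrors Scholze's: statements \Refenum{\ref{thm-almost-van-hat-1}} and \Refenum{\ref{thm-almost-van-hat-2}} constitute the technical core, while \Refenum{\ref{thm-almost-van-hat-3}}, \Refenum{\ref{thm-almost-van-hat-4}}, \Refenum{\ref{thm-almost-van-hat-5}} follow by standard inverse limit and tilting arguments.

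First I would prove the genuine isomorphism $\cO^+_{X_\proket}(U)/p^n \cong R^+/p^n$ in \Refenum{\ref{thm-almost-van-hat-1}} by combining Proposition \ref{prop-proket-vs-ket} (which, in degree zero, gives $\cO^+_{X_\proket}(U) \cong \varinjlim_i R_i^+$) with Remark \ref{rem-unif-compl-lim} (which identifies $R^+$ as the $p$-adic completion of this direct limit, so reduction modulo $p^n$ yields the claimed equality). Applied to every log affinoid perfectoid $V$ over $U$, this identity shows that, under the topos equivalence of Lemma \ref{lem-log-aff-perf-ket}, the sheaf $\cO^+_{X_\proket}/p^n$ restricted to ${X_\proket}_{/U}$ corresponds to $\cO^+_{\widehat{U}_\proet}/p^n$, because both arise as sheafifications of the same presheaf $V \mapsto R_V^+/p^n$ on the basis of log affinoid perfectoid objects (which exists by Propositions \ref{prop-log-aff-perf-basis} and \ref{prop-log-aff-perf-fiber-prod}). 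Statement \Refenum{\ref{thm-almost-van-hat-2}} is then inherited directly from \cite[\aThm 4.10(iv)]{Scholze:2013-phtra}. The almost isomorphism in \Refenum{\ref{thm-almost-van-hat-1}} now follows from the long exact sequence of $R\Gamma(U, \cdot)$ applied to $0 \to \cO^+_{X_\proket} \Mapn{p^n} \cO^+_{X_\proket} \to \cO^+_{X_\proket}/p^n \to 0$: the discrepancy $(\cO^+_{X_\proket}/p^n)(U) / (\cO^+_{X_\proket}(U)/p^n)$ embeds into the $p^n$-torsion of $H^1(U, \cO^+_{X_\proket})$, which is almost zero by \Refenum{\ref{thm-almost-van-hat-2}} applied to the compatible system $\{\cO^+_{X_\proket}/p^m\}_m$ together with Mittag-Leffler.

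Statements \Refenum{\ref{thm-almost-van-hat-3}}--\Refenum{\ref{thm-almost-van-hat-5}} then follow by standard manipulations: for \Refenum{\ref{thm-almost-van-hat-3}}, taking the inverse limit over $n$ of the genuine isomorphisms in \Refenum{\ref{thm-almost-van-hat-1}} and using the $p$-adic completeness of $R^+$ gives $\widehat{\cO}^+_{X_\proket}(U) \cong R^+$ as a genuine isomorphism, and $\widehat{\cO}_{X_\proket}(U) \cong R$ follows after inverting $p$; for \Refenum{\ref{thm-almost-van-hat-4}}, the construction $\widehat{\cO}^{\flat+}_{X_\proket} = \varprojlim_\Phi (\cO^+_{X_\proket}/p)$ matches termwise the construction $R^{\flat+} = \varprojlim_\Phi (R^+/p)$, so the equality transfers along the inverse limit; for \Refenum{\ref{thm-almost-van-hat-5}}, the almost vanishing in \Refenum{\ref{thm-almost-van-hat-2}} applied to $n = 1$, combined with the exactness of Frobenius in characteristic $p$ and a Mittag-Leffler argument along the Frobenius tower, yields the analogous almost vanishing for $\widehat{\cO}^{\flat+}_{X_\proket}$. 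The main obstacle lies in the identification of sheaves under the topos equivalence in the second paragraph, which depends crucially on the first part of \Refenum{\ref{thm-almost-van-hat-1}} holding at every log affinoid perfectoid object; once this matching is set up, the almost vanishing is imported wholesale from the perfectoid case, and the remaining upgrades from almost isomorphisms to genuine ones in \Refenum{\ref{thm-almost-van-hat-3}}--\Refenum{\ref{thm-almost-van-hat-4}} are delicate but routine, resting on the $p$-adic completeness and torsion properties of $R^+$.
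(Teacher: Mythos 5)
Your overall strategy---reduce everything to the perfectoid space $\widehat{U}$ via the topos equivalence of Lemma \ref{lem-log-aff-perf-ket} and import the almost acyclicity from the perfectoid theory---is essentially the paper's: the paper likewise verifies the almost exactness of the \v{C}ech complex of $(\cO^+_{X_\proket}/p^n)^a$ on the basis of log affinoid perfectoid objects by passing to \'etale covers $\widehat{V} \to \widehat{U}$ of perfectoid spaces and invoking \cite[\aThm 7.13]{Scholze:2012-ps}.  However, your derivation of the \emph{almost isomorphism} in \Refenum{\ref{thm-almost-van-hat-1}} has a genuine gap.  From the long exact sequence of $0 \to \cO^+_{X_\proket} \Mapn{p^n} \cO^+_{X_\proket} \to \cO^+_{X_\proket}/p^n \to 0$, the cokernel of $\cO^+_{X_\proket}(U)/p^n \to (\cO^+_{X_\proket}/p^n)(U)$ is the $p^n$-torsion of $H^1(U, \cO^+_{X_\proket})$ for the \emph{uncompleted} integral structure sheaf.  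Statement \Refenum{\ref{thm-almost-van-hat-2}} controls $H^i(U, \cO^+_{X_\proket}/p^m)$ and, via \cite[\aLem 3.18]{Scholze:2013-phtra}, $H^i(U, \widehat{\cO}^+_{X_\proket}) = H^i(U, \varprojlim_m \cO^+_{X_\proket}/p^m)$; it says nothing about $H^1(U, \cO^+_{X_\proket})$, because $\cO^+_{X_\proket}$ is \emph{not} $\varprojlim_m (\cO^+_{X_\proket}/p^m)$.  No Mittag-Leffler argument converts the former into the latter, and the almost vanishing of $H^1(U, \cO^+_{X_\proket}) \cong \varinjlim_i H^1(U_{i, \ket}, \cO^+_{X_\ket})$ (Proposition \ref{prop-proket-vs-ket}) is neither available nor needed.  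The correct route, which the paper takes, is to obtain \Refenum{\ref{thm-almost-van-hat-1}} and \Refenum{\ref{thm-almost-van-hat-2}} \emph{simultaneously}: one shows that the presheaf $V \mapsto (\cO^+_{X_\proket}(V)/p^n)^a \cong (R_V^+/p^n)^a$ on the basis of log affinoid perfectoid objects is already a sheaf with vanishing higher \v{C}ech cohomology, so that its sheafification $(\cO^+_{X_\proket}/p^n)^a$ takes the value $(R^+/p^n)^a$ at $U$.  (If you insist on importing from \cite[\aThm 4.10]{Scholze:2013-phtra} under the topos equivalence, it is part (iii) of that theorem, not a long exact sequence, that delivers the almost isomorphism.)

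A secondary issue concerns \Refenum{\ref{thm-almost-van-hat-3}}: ``taking the inverse limit over $n$ of the genuine isomorphisms in \Refenum{\ref{thm-almost-van-hat-1}}'' conflates $\cO^+_{X_\proket}(U)/p^n$ with $(\cO^+_{X_\proket}/p^n)(U)$.  By definition $\widehat{\cO}^+_{X_\proket}(U) = \varprojlim_n \bigl((\cO^+_{X_\proket}/p^n)(U)\bigr)$, and these terms are only \emph{almost} isomorphic to $R^+/p^n$; an inverse limit of almost isomorphisms need not be a genuine isomorphism.  The paper closes this by showing that, for $n > m$, the image of $(\cO^+_{X_\proket}/p^n)(U) \to (\cO^+_{X_\proket}/p^m)(U)$ is exactly $R^+/p^m$ (a short argument using the injectivity of multiplication by $p^{n - m}$), so that the inverse limit is genuinely $R^+$.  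A similar remark applies to the genuine identification $\widehat{\cO}^{\flat+}_{X_\proket}(U) \cong R^{\flat+}$ in \Refenum{\ref{thm-almost-van-hat-4}}, for which the paper runs a separate sheaf-property argument for the presheaf $U \mapsto \cO^+_{\widehat{U}^\flat}(\widehat{U}^\flat)$ using the vanishing of $R^1\varprojlim$ along the Frobenius tower; your ``matches termwise'' hides exactly this point.
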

\begin{proof}
    Let us temporarily omit the subscripts \Qtn{$\proket$} \etc from $\cO_{X_\proket}^+$ \etc.

    Let us first prove \Refenum{\ref{thm-almost-van-hat-1}} and \Refenum{\ref{thm-almost-van-hat-2}}.  By definition, $\cO_X^+(U) / p^n \cong R^+ / p^n$.  By Proposition \ref{prop-log-aff-perf-basis}, giving a sheaf on $X_\proket$ is equivalent to giving a presheaf on the full subcategory of log affinoid perfectoid objects $U$ in $X_\proket$, satisfying the sheaf property for pro-Kummer \'etale coverings by such objects.  Consider such a presheaf of almost $R^+$-algebras $\cF$ given by assigning $\cF(U) := (\cO_X^+(U) / p^n)^a$ to each such $U$.  We claim that $\cF$ is a sheaf with cohomology vanishing above degree zero.  By Proposition \ref{prop-proket-site-qcqs} and the same argument as in the proof of \cite[\aLem 3.16]{Scholze:2013-phtra}, it suffices to verify the exactness of the \v{C}ech complex
    \[
        0 \to \cF(U) \to \cF(V) \to \cF(V \times_U V) \to \cF(V \times_U V\times_U V) \to \cdots
    \]
    for some Kummer \'etale cover $V \to U$ in $X_\proket$ that is the pullback of a Kummer \'etale cover $V_0 \to U_0$ in $X_\ket$.  Furthermore, we may assume that $V_0 \to U_0$ is a composition of finite Kummer \'etale morphisms and rational localizations.  By Lemma \ref{lem-log-aff-perf-ket}, $V$ is log affinoid perfectoid, and $\widehat{V}$ is \'etale over $\widehat{U}$.  Moreover,
    \[
        \cF(U) = (\cO_X^+(U) / p^n)^a \cong (\cO_{\widehat{U}}^+(\widehat{U}) / p^n)^a
    \]
    and
    \[
        \cF(V \times_U \cdots \times_U V) = (\cO_X^+(V \times_U \cdots \times_U V) / p^n)^a \cong(\cO_{\widehat{U}}^+(\widehat{V} \times_{\widehat{U}} \cdots \times_{\widehat{U}} \widehat{V}) / p^n)^a.
    \]
    Hence, \Refenum{\ref{thm-almost-van-hat-1}} and the first statement of \Refenum{\ref{thm-almost-van-hat-2}} follows from the almost exactness of
    \[
        0 \to \cO_{\widehat{U}}^+(\widehat{U}) / p^n \to \cO_{\widehat{U}}^+(\widehat{V}) / p^n \to \cO_{\widehat{U}}^+(\widehat{V} \times_{\widehat{U}} \widehat{V}) / p^n \to \cdots,
    \]
    by \cite[\aThm 7.13]{Scholze:2012-ps} and the $p$-torsionfreeness of $\cO_{\widehat{U}}^+$.  From these, by \cite[\aLem 3.18]{Scholze:2013-phtra}, the remaining statement of \Refenum{\ref{thm-almost-van-hat-2}} also follows.

    As for \Refenum{\ref{thm-almost-van-hat-3}}, we first show that the image of $(\cO_X^+ / p^n)(U) \to (\cO_X^+ / p^m)(U)$ is equal to $R^+ / p^m$ \Pth{under the canonical isomorphisms}, for any $n > m$.  For each $f \in (\cO_X^+ / p^n)(U)$, by \Refenum{\ref{thm-almost-van-hat-1}}, there exists some $g \in R^+$ such that $p^{n - m} f = g$ in $(\cO_X^+ / p^n)(U)$.  Let $h = g / p^{n - m} \in R^+$.  Since the multiplication by $p^{n - m}$ induces an injection $(\cO_X^+ / p^m)(U) \to (\cO_X^+ / p^n)(U)$, it follows that $f = h$ in $(\cO_X^+ / p^m)(U)$.  Therefore, $(\cO_X^+ / p^n)(U) \to (\cO_X^+ / p^m)(U)$ maps $f$ into $R^+ / p^m$, and the assertion follows.  Consequently, $\widehat{\cO}_{X_\proket}^+(U) \cong \varprojlim_n (\cO_X^+ / p^n)(U) = \varprojlim_n R^+ / p^n \cong R^+$, and hence $\widehat{\cO}_{X_\proket}(U) = R$.

    Next, let us prove \Refenum{\ref{thm-almost-van-hat-5}} and an almost version of \Refenum{\ref{thm-almost-van-hat-4}}.  Let $\cG := \cO_{X_\proket}^+ / p$.  By Proposition \ref{thm-almost-van-hat}, $H^i(U_\proket, \cG)^a = 0$, for all log affinoid perfectoid $U \in X_\proket$ and $i > 0$.  Moreover, $\cG(U)^a \cong (\cO_{X_\proket}^+(U) / p)^a$, for any such $U$.  By definition, $\widehat{\cO}_{X_\proket}^{\flat+} \cong \varprojlim_\Phi \cG$.  Let $\cB$ be the basis of $X_\proket$ formed by log affinoid perfectoid objects.  By applying \cite[\aLem 3.18]{Scholze:2013-phtra} to the sheaf $\cG$ and the basis $\cB$, we know that $R^j\varprojlim_\Phi \cG$ is almost zero, for all $j > 0$, and there are almost isomorphisms $\widehat{\cO}_{X_\proket}^{\flat+}(U) \cong (\varprojlim_\Phi \cG)(U) \cong \varprojlim_\Phi \bigl(\cG(U)\bigr) \cong \varprojlim_\Phi (R^+ / p) \cong R^{\flat+}$.  By \cite[\aLem 3.18]{Scholze:2013-phtra} again, $H^i(U_\proket, \widehat{\cO}_{X_\proket}^{\flat+})^a \cong H^i(U_\proket, \varprojlim_\Phi \cG)^a = 0$, for all $i > 0$.

    Finally, let us prove \Refenum{\ref{thm-almost-van-hat-4}}.  Consider the sheaf associated with the presheaf $\cH$ on $X_\proket$ determined by $\cH(U) = \cO_{\widehat{U}^\flat}^+(\widehat{U}^\flat)$, for each $U \in \cB$.  It suffices to show that $\cH$ satisfies the sheaf property for coverings by objects in $\cB$.  Let $U$ and $V$ be log affinoid perfectoid objects in $X_\proket$, and let $V \to U$ be a pro-Kummer \'etale cover.  Let $R$, $S$, and $T$ be the perfectoid algebras associated with $U$, $V$ and $U\times_V U$, respectively.  Then it suffices to show the exactness of $0 \to R^{\flat+} \to S^{\flat+} \to T^{\flat+}$.  Note that this is the inverse limit \Pth{along Frobenius} of $0 \to R^+ / p \to S^+ / p \to T^+ / p$, and this last sequence is exact by the fact that $\cO_{\widehat{U}_\ket}^+$ is a sheaf and $p$-torsion free.  Thus, the desired exactness follows.
\end{proof}

The following proposition is an analogue of \cite[\aThm 9.2.15]{Kedlaya/Liu:2015-RPH}.
\begin{thm}\label{thm-loc-syst-vs-proj}
    Suppose that $X$ is a locally noetherian fs log adic space over $\Spa(\bQ_p, \bZ_p)$.  Let $U$ be a log affinoid perfectoid object of $X_\proket$.  The functor
    \[
        \cH \mapsto H := \cH(U)
    \]
    is an equivalence from the category of finite locally free $\widehat{\cO}_{X_\proket}|_U$-modules on ${X^\sim_\proket}_{/U}$ to the category of finite projective $\widehat{\cO}_{X_{\proket}}(U)$-modules, with a quasi-inverse given by
    \[
        H \mapsto \cH(V) := H \otimes_{\widehat{\cO}_{X_\proket}(U)} \widehat{\cO}_{X_\proket}(V),
    \]
    for each log affinoid perfectoid object $V$ in $X_\proket$ over $U$.  Moreover, for each finite locally free $\widehat{\cO}_{X_\proket}|_U$-module $\cH$, for all $i > 0$, we have
    \[
        H^i({X_\proket}_{/U}, \cH) = 0.
    \]
\end{thm}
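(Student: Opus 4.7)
The plan is to reduce the statement to its analogue for affinoid perfectoid spaces on the pro-\'etale site, namely \cite[\aThm 9.2.15]{Kedlaya/Liu:2015-RPH}, by means of the equivalence of topoi furnished by Lemma \ref{lem-log-aff-perf-ket}.

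First, I would invoke Lemma \ref{lem-log-aff-perf-ket} to fix an equivalence of topoi $\phi: \widehat{U}^\sim_\proet \Mi {X^\sim_\proket}_{/U}$ induced, on a cofinal collection of objects, by the construction $V \mapsto \widehat{V}$.  By Proposition \ref{prop-log-aff-perf-basis} and Proposition \ref{prop-log-aff-perf-fiber-prod}, the log affinoid perfectoid $V \to U$ in $X_\proket$ form a basis of ${X_\proket}_{/U}$ that is stable under fiber products; this basis matches, under $\phi$, with the basis of affinoid perfectoid objects of $\widehat{U}_\proet$, which is itself stable under fiber products.

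Next, I would upgrade $\phi$ to an equivalence of \emph{ringed} topoi, by checking that $\widehat{\cO}_{X_\proket}|_U$ corresponds to the completed structure sheaf $\widehat{\cO}_{\widehat{U}_\proet}$ on the pro-\'etale side.  For each log affinoid perfectoid $V \to U$, Theorem \ref{thm-almost-van-hat}\Refenum{\ref{thm-almost-van-hat-3}} identifies $\widehat{\cO}_{X_\proket}(V)$ canonically with the perfectoid ring attached to $\widehat{V}$, and the analogous, unlogarithmic result of Scholze makes the same identification for $\widehat{\cO}_{\widehat{U}_\proet}(\widehat{V})$.  These identifications are functorial in $V$, and since both sheaves are determined by their sections on the matched bases, the desired isomorphism of structure sheaves follows.

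Once the equivalence of ringed topoi is in place, finite locally free modules, global sections, and higher cohomology all transport across $\phi$; combined with the identification $\widehat{\cO}_{X_\proket}(U) \cong R$ from Theorem \ref{thm-almost-van-hat}\Refenum{\ref{thm-almost-van-hat-3}}, the theorem becomes a direct consequence of \cite[\aThm 9.2.15]{Kedlaya/Liu:2015-RPH}.  The main obstacle is the middle step: the completed structure sheaves are defined via an inverse limit of the quotients $\cO^+ / p^n$, where the relevant sheaf-level identifications are only \emph{almost} isomorphisms; one should verify that the integral identifications after inverting $p$ provided by Theorem \ref{thm-almost-van-hat}\Refenum{\ref{thm-almost-van-hat-3}} on the common basis suffice to produce an honest (not merely almost) isomorphism of the completed structure sheaves globally, so that the reduction to \cite[\aThm 9.2.15]{Kedlaya/Liu:2015-RPH} is rigorous.
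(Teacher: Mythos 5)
Your reduction of the equivalence of categories to \cite[\aThm 9.2.15]{Kedlaya/Liu:2015-RPH} via the equivalence of topoi of Lemma \ref{lem-log-aff-perf-ket} is exactly what the paper does, and your middle step is sound: the identification of $\widehat{\cO}_{X_\proket}|_U$ with the completed structure sheaf on $\widehat{U}_\proet$ needs only the \emph{honest} (not almost) isomorphisms $\widehat{\cO}^+_{X_\proket}(V) \cong R_V^+$ and $\widehat{\cO}_{X_\proket}(V) \cong R_V$ of Theorem \ref{thm-almost-van-hat}\Refenum{\ref{thm-almost-van-hat-3}}, functorially on the basis of log affinoid perfectoid objects furnished by Propositions \ref{prop-log-aff-perf-basis} and \ref{prop-log-aff-perf-fiber-prod}, and a sheaf is determined by its sections on a basis, so your worry about almost isomorphisms is already resolved by the paper's statement of \Refenum{\ref{thm-almost-van-hat-3}}. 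Where you diverge from the paper is the cohomology vanishing: you propose to transport it across the ringed equivalence of topoi, whereas the paper instead reruns the \v{C}ech-to-derived-functor argument of \cite[\aProp 2.3]{Liu/Zhu:2017-rrhpl} directly on ${X_\proket}_{/U}$, replacing the input \cite[\aLem 2.6.5(a)]{Kedlaya/Liu:2015-RPH} by Lemma \ref{lem-proket-site-check}\Refenum{\ref{lem-proket-site-check-3}}. Your route is legitimate \emph{provided} the acyclicity of finite locally free $\widehat{\cO}$-modules on $\widehat{U}_\proet$ for an affinoid perfectoid space $\widehat{U}$ is actually available in the reference you cite; \cite[\aThm 9.2.15]{Kedlaya/Liu:2015-RPH} is an equivalence-of-categories statement, and the fact that the paper proves the vanishing separately (rather than citing it alongside the equivalence) indicates the authors did not regard it as directly quotable. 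So either locate a citable acyclicity statement on the perfectoid pro-\'etale side (e.g.\ the non-logarithmic case of \cite[\aProp 2.3]{Liu/Zhu:2017-rrhpl}, which applies to $\widehat{U}_\proet$ via the analogous topos equivalence), or be prepared to supply the \v{C}ech argument yourself; the transport mechanism itself is fine, but as written the last step leans on a reference that may not contain what you need.
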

\begin{proof}
    By Lemma \ref{lem-log-aff-perf-ket}, the first statement follows from \cite[\aThm 9.2.15]{Kedlaya/Liu:2015-RPH}.  The proof of the second statement is similar to that of \cite[\aProp 2.3]{Liu/Zhu:2017-rrhpl}, with the input of \cite[\aLem 2.6.5(a)]{Kedlaya/Liu:2015-RPH} replaced with Lemma \ref{lem-proket-site-check}\Refenum{\ref{lem-proket-site-check-3}} here.
\end{proof}

By combining Lemma \ref{lem-log-aff-perf-cl-imm} and Theorem \ref{thm-almost-van-hat}, we obtain
the following:
\begin{prop}\label{prop-surj-O-hat-cl-imm}
    Let $\imath: Z \to X$ be a strict closed immersion of locally noetherian fs log adic spaces over $\Spa(\bQ_p, \bZ_p)$.  Then the natural morphism $\widehat{\cO}_{X_\proket} \to \imath_{\proket, *}(\widehat{\cO}_{Z_\proket})$ is surjective.  More precisely, its evaluation at every log affinoid perfectoid object $U$ in $X_\proket$ is surjective.
\end{prop}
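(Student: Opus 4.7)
The plan is to reduce to the explicit local case of log affinoid perfectoid objects, where everything is computable via the associated perfectoid spaces and the behavior of strict closed immersions on them.

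First, by Proposition \ref{prop-log-aff-perf-basis}, log affinoid perfectoid objects form a basis of $X_\proket$. Therefore, to prove surjectivity of the morphism of sheaves $\widehat{\cO}_{X_\proket} \to \imath_{\proket, *}(\widehat{\cO}_{Z_\proket})$, it suffices to verify surjectivity of sections on these objects---which is precisely the ``more precisely'' statement. In fact, since any section of $\imath_{\proket, *}(\widehat{\cO}_{Z_\proket})$ on an arbitrary $W \in X_\proket$ can be checked after passing to a pro-Kummer \'etale covering by log affinoid perfectoid objects (which gives an honest covering of $|W|$ by Proposition \ref{prop-log-aff-perf-basis}), the two statements are equivalent.

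So fix a log affinoid perfectoid $U = \varprojlim_{i \in I} U_i \in X_\proket$, with associated affinoid perfectoid space $\widehat{U} = \Spa(R, R^+)$. By Lemma \ref{lem-log-aff-perf-cl-imm}, the pullback $V := U \times_X Z$ is a log affinoid perfectoid object in $Z_\proket$, with associated perfectoid space $\widehat{V} = \Spa(S, S^+)$, and the natural morphism $\widehat{V} \to \widehat{U}$ is a closed immersion of adic spaces. By Theorem \ref{thm-almost-van-hat}\Refenum{\ref{thm-almost-van-hat-3}}, we have canonical identifications $\widehat{\cO}_{X_\proket}(U) \cong R$ and $\widehat{\cO}_{Z_\proket}(V) \cong S$. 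Since $\imath_{\proket, *}(\widehat{\cO}_{Z_\proket})(U) = \widehat{\cO}_{Z_\proket}(U \times_X Z) = \widehat{\cO}_{Z_\proket}(V) \cong S$, the map we need to show is surjective is exactly the induced map $R \to S$ coming from the closed immersion $\widehat{V} \to \widehat{U}$.

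Finally, this surjectivity is immediate from the construction used in the proof of Lemma \ref{lem-log-aff-perf-cl-imm}: there $S$ is exhibited as the completion of the direct limit $\varinjlim_{i \in I} S_i^u$, where each $S_i^u$ is a uniform quotient of $R_i^u$ via the strict closed immersion $\Spa(R_i, R_i^+) \times_X Z = \Spa(S_i, S_i^+) \to \Spa(R_i, R_i^+)$; passing to the completed direct limit preserves surjectivity (using \cite[\aThm 3.6.17(b)]{Kedlaya/Liu:2015-RPH} together with the fact that the quotient norm on $S_i^u$ agrees with its spectral norm), yielding the surjection $R \to S$. There is no real obstacle here---the proposition is essentially a bookkeeping consequence of the two cited results; the only point requiring care is that one genuinely needs the surjectivity of $R \to S$ (not merely an almost-surjection), which is why it is important to appeal to the explicit construction in Lemma \ref{lem-log-aff-perf-cl-imm} rather than a purely almost-mathematical argument.
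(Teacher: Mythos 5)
Your proof is correct and follows exactly the route the paper intends: the paper records this proposition as an immediate consequence of Lemma \ref{lem-log-aff-perf-cl-imm} and Theorem \ref{thm-almost-van-hat}, and you have simply spelled out that reduction, correctly locating the genuine (not merely almost) surjectivity of $R \to S$ in the quotient construction inside the proof of Lemma \ref{lem-log-aff-perf-cl-imm}. The only minor quibble is your claim that the two assertions are ``equivalent''---surjectivity on a basis implies sheaf surjectivity, but the converse would only give lifting after a covering, so the second assertion is strictly stronger; since you prove the stronger one directly, this does not affect anything.
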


\section{Kummer \'etale cohomology}\label{sec-loc-syst}

\subsection{Toric charts revisited}\label{sec-toric-chart}

Let $V = \Spa(S_1, S_1^+)$ be a log smooth affinoid fs log adic space over $\Spa(k, k^+)$, where $(k, k^+)$ is as in Definition \ref{def-log-sm-base-field} and where $k^+ = \cO_k$, with a toric chart
\[
    V \to \bE = \Spa(k\Talg{P}, k^+\Talg{P}) = \Spa(R_1, R_1^+),
\]
as in Proposition \ref{prop-toric-chart} and Definition \ref{def-toric-chart}, where $P$ is a sharp fs monoid.  The goal of this subsection is to prove the following:
\begin{prop}\label{prop-coh-tor-chart}
    In the above setting, assume moreover that $k$ is characteristic zero and contains all roots of unity.  Let $V \to \bE$ be a toric chart as above, and let $\bL$ be an $\bF_p$-local system on $V_\ket$.  Then we have the following:
    \begin{enumerate}
        \item\label{prop-coh-tor-chart-1}  $H^i\bigl(V_\ket, \bL \otimes_{\bF_p} (\cO_V^+ / p)\bigr)$ is almost zero, for all $i > n = \dim(V)$.

        \item\label{prop-coh-tor-chart-2}  Let $V' \subset V$ be a rational subset such that $V'$ is strictly contained in $V$ \Pth{\ie, the closure $\overline{V}'$ of $V'$ is contained in $V$}.  Then the image of the canonical morphism $H^i\bigl(V_\ket, \bL \otimes_{\bF_p} (\cO_V^+ / p)\bigr) \to H^i\bigl(V'_\ket, \bL \otimes_{\bF_p} (\cO_V^+ / p)\bigr)$ is an almost finitely generated $k^+$-module, for each $i\geq 0$.
    \end{enumerate}
\end{prop}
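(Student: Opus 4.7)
The plan is to compute the cohomology on $V$ by descending from a pro-Kummer \'etale log affinoid perfectoid Galois cover built from the toric chart. For each $m \geq 1$, consider $\bE^{(m)} := \Spa(k\Talg{\tfrac{1}{p^m} P}, k^+\Talg{\tfrac{1}{p^m} P})$; by Proposition \ref{prop-ket-std}, the natural morphism $\bE^{(m)} \to \bE$ is a finite Kummer \'etale Galois cover, and by Lemma \ref{lem-log-aff-perf-toric}, the inverse limit $\bE^{(\infty)} := \varprojlim_m \bE^{(m)}$ is log affinoid perfectoid and pro-Kummer \'etale Galois over $\bE$ with group $\Delta \cong \bZ_p(1)^n \cong \bZ_p^n$, where $n = \rank P^\gp = \dim V$ \Pth{using the hypothesis that $k$ contains all roots of unity}. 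By Corollary \ref{cor-log-aff-perf-loc-fket}, the pullback $\widetilde{V} := V \times_\bE \bE^{(\infty)}$ is log affinoid perfectoid, Galois over $V$ with group $\Delta$. To trivialize $\bL$, choose a Galois finite Kummer \'etale cover $W \to V$ with group $G$ such that $\bL|_W$ is a constant $\bF_p^r =: \bL_0$, and set $\widetilde{W} := W \times_V \widetilde{V}$, which is again log affinoid perfectoid by Lemma \ref{lem-log-aff-perf-ket}; it is pro-Kummer \'etale Galois over $V$ with group $\widetilde{G}$ fitting in $1 \to \Delta \to \widetilde{G} \to G \to 1$.

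By Theorem \ref{thm-almost-van-hat}, we have $H^j(\widetilde{W}, \bL \otimes_{\bF_p} \cO^+/p)^a = 0$ for $j > 0$ and $\cong \bL_0 \otimes_{\bF_p} (\cO^+(\widetilde{W})/p)^a$ for $j = 0$. The Cartan--Leray spectral sequence for the pro-Kummer \'etale Galois cover $\widetilde{W} \to V$ then yields an almost isomorphism
\[
H^i(V_\ket, \bL \otimes_{\bF_p} \cO_V^+/p)^a \cong H^i_\cont\bigl(\widetilde{G}, \bL_0 \otimes_{\bF_p} (\cO^+(\widetilde{W})/p)\bigr)^a.
\]
Applying the Hochschild--Serre spectral sequence for $1 \to \Delta \to \widetilde{G} \to G \to 1$ and using that $\Delta$ acts trivially on $\bL_0$ \Pth{since the monodromy of $\bL$ factors through $\widetilde{G} \to G$}, we obtain
\[
E_2^{p, q} = H^p\bigl(G, \bL_0 \otimes_{\bF_p} H^q(\Delta, \cO^+(\widetilde{W})/p)\bigr) \Rightarrow H^{p+q}\bigl(\widetilde{G}, \bL_0 \otimes_{\bF_p} \cO^+(\widetilde{W})/p\bigr).
\]
Since $\Delta \cong \bZ_p^n$ has $p$-cohomological dimension $n$, the inner cohomology vanishes for $q > n$, so part \Refenum{\ref{prop-coh-tor-chart-1}} reduces to showing that $E_2^{p, q}$ is almost zero for $p > 0$.

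The main obstacle is precisely this: $G$ may contain $p$-torsion, so a priori $H^p(G, -)$ can be nonzero in arbitrary degrees. The resolution, which amounts to a log analogue of Faltings' almost purity, is that $\cO^+(\widetilde{W})/p$ is almost $G$-cohomologically trivial: applying Cartan--Leray to the finite Kummer \'etale Galois cover $\widetilde{W} \to \widetilde{V}$ between log affinoid perfectoids and invoking Theorem \ref{thm-almost-van-hat} on both sides forces $H^a(G, \cO^+(\widetilde{W})/p)^a = 0$ for $a > 0$. Since $\bL_0$ is a finite $\bF_p$-module, tensoring preserves almost $G$-cohomological triviality, and the same property transfers to each $H^q(\Delta, \cO^+(\widetilde{W})/p)$ by a projection-formula argument, yielding the desired vanishing of $E_2^{p, q}$ for $p > 0$. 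For part \Refenum{\ref{prop-coh-tor-chart-2}}, interpose rational subsets $V' \Subset V'' \Subset V$ \Pth{\ie, with $\overline{V'} \subset V''$ and $\overline{V''} \subset V$}, apply the same construction in parallel to obtain compatible log affinoid perfectoid covers $\widetilde{W}', \widetilde{W}''$, and factor the restriction $H^i(V_\ket, -) \to H^i(V'_\ket, -)$ through $H^i(V''_\ket, -)$; the almost finite generation of the image then follows from the above group-cohomological description together with a standard boundedness argument for affinoid algebras \Pth{exploiting $\overline{V'} \subset V''$}, which constrains the restriction map $\cO^+(\widetilde{W}'')/p \to \cO^+(\widetilde{W}')/p$ to land in an almost finitely generated $k^+/p$-submodule.
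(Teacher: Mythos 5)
Your overall strategy---descending along a pro-Kummer \'etale perfectoid Galois tower built from the toric chart and computing via Cartan--Leray and continuous group cohomology---is the same as the paper's, but two steps do not go through as written. First, you take only the $p$-power tower $\bE^{(m)} = \Spa(k\Talg{\tfrac{1}{p^m}P}, k^+\Talg{\tfrac{1}{p^m}P})$, so the limit monoid $\varinjlim_m \tfrac{1}{p^m}P$ is only $p$-divisible, whereas Definition \ref{def-log-aff-perf}\Refenum{\ref{def-log-aff-perf-4}} requires $n$-divisibility for \emph{all} $n \geq 1$, and Lemma \ref{lem-log-aff-perf-toric} is stated for the tower indexed by all $m \in \bZ_{\geq 1}$. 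Hence your $\bE^{(\infty)}$, $\widetilde{V}$, and $\widetilde{W}$ are not log affinoid perfectoid objects in the sense of the paper, and Theorem \ref{thm-almost-van-hat}, Lemma \ref{lem-log-aff-perf-ket}, and Corollary \ref{cor-log-aff-perf-loc-fket} cannot be invoked for them. This is not mere bookkeeping: an $\bF_p$-local system can have monodromy of order divisible by primes $\ell \neq p$, so the trivializing cover $W \to V$ may have prime-to-$p$ Kummer ramification, and then $\widetilde{W} \to \widetilde{V}$ remains a genuinely ramified Kummer cover rather than a finite \'etale cover of perfectoids---your almost-purity step, which needs $H^a(G, \cO^+(\widetilde{W})/p)^a = 0$ for $a > 0$ via Cartan--Leray between perfectoids, collapses exactly there. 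The paper instead uses the full tower, with Galois group $\Gamma \cong \widehat{\bZ}(1)^n$ (which still has cohomological dimension $n$ on $p$-torsion modules), and handles $\bL$ directly at the log affinoid perfectoid level by almost faithfully flat descent (Lemma \ref{lem-coh-tor-log-aff-perf}), so no auxiliary group $G$ and no extension $1 \to \Delta \to \widetilde{G} \to G \to 1$ ever appears. Your remaining reduction is also shaky: the transfer of almost $G$-cohomological triviality from $\cO^+(\widetilde{W})/p$ to each $H^q(\Delta, \cO^+(\widetilde{W})/p)$ is not a projection formula (cohomology of a complex of cohomologically trivial modules need not be cohomologically trivial), and $\widetilde{G}$ need not split as $\Delta \times G$.

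Second, part \Refenum{\ref{prop-coh-tor-chart-2}} is only gestured at, and the ``standard boundedness argument'' you defer to is in fact the entire content of the proof. The paper needs the eigenspace decomposition $k^+[P_{\bQ_{\geq 0}}] = \oplus_\chi \, k^+[P_{\bQ_{\geq 0}}]_\chi$ and the finiteness of each $\chi$-summand over $k^+[P]$ (Lemma \ref{lem-k-P-decomp}), the resulting almost finite generation of $H^i_\cont(m\Gamma, R^+/p)$ over $R_m^+/p$ (Lemma \ref{lem-Gal-coh-afg} and Remark \ref{rem-Gal-coh}), the approximation Lemma \ref{lem-Scholze-ref} to replace $S^{(j)+} \otimes_{R_m^+} R^+$ by the perfectoid rings, and a chain of $n+2$ nested rational subsets rather than three, because \cite[\aLem 5.4]{Scholze:2013-phtra} must be applied once per stage of the Hochschild--Serre spectral sequence for $m\Gamma \subset \Gamma$. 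With only $V' \subset V'' \subset V$ the iterated-image argument does not close up. None of these ingredients appear in your sketch, so part \Refenum{\ref{prop-coh-tor-chart-2}} remains unproved.
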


In order to prove Proposition \ref{prop-coh-tor-chart}, we need some preparations.  Let us first introduce an explicit pro-finite Kummer \'etale cover of $\bE$.  For each $m \geq 1$, consider
\[
    \bE_m := \Spa(k\Talg{\tfrac{1}{m} P}, k^+\Talg{\tfrac{1}{m} P}) = \Spa(R_m, R_m^+)
\]
and the log affinoid perfectoid object
\[
    \widetilde{\bE} := \varprojlim_m \bE_m \in \bE_\proket,
\]
where the transition maps $\bE_{m'} \to \bE_m$ \Pth{for $m | m'$} are induced by the natural inclusions $\frac{1}{m} P \Em \frac{1}{m'} P$.  Let $P_{\bQ_{\geq 0}} := \varinjlim_m \bigl(\frac{1}{m} P\bigr)$ as before.  Then the associated perfectoid space is
\[
    \widehat{\widetilde{\bE}} := \Spa(k\Talg{P_{\bQ_{\geq 0}}}, k^+\Talg{P_{\bQ_{\geq 0}}}) = \Spa(R, R^+).
\]
For each $m \geq 1$, let us write
\[
    V_m := V \times_\bE \bE_m = \Spa(S_m, S_m^+)
\]
and
\[
    \widetilde{V} := V \times_\bE \widetilde{\bE} \in V_\proket.
\]
Then $\widetilde{V} \cong \varprojlim_m V_m$ is also a log affinoid perfectoid object in $V_\proket$, with associated perfectoid space $\widehat{\widetilde{V}} \cong \Spa(S, S^+)$, where $(S, S^+) = \bigl(\varinjlim_m \, (S_m, S_m^+)\bigr)$.

\begin{defn}\label{def-prof-Gal}
    Suppose that $X$ is a locally noetherian fs log adic space over $\Spa(k, k^+)$, where $(k, k^+)$ is an affinoid field, and where $k$ is of characteristic zero and contains all roots of unity.  Let $G$ be a profinite group.  A pro-Kummer \'etale cover $U \to X$ is a \emph{Galois cover with \Pth{profinite} Galois group $G$} if there exists a pro-Kummer \'etale presentation $U = \varprojlim_i U_i \to X$ such that each $U_i \to X$ is a Galois finite Kummer \'etale cover with Galois group $G_i$ \Pth{as in Proposition \ref{prop-ket-std}, where $G_i$ is a constant group object because contains all roots of unity}, and such that $G \cong \varprojlim_i G_i$, in which case the group action and the second projection induces a canonical isomorphism $G \times U \cong U \times_X U$ over $X$.
\end{defn}

Since $P$ is a sharp fs monoid, $P^\gp$ is a finitely generated free abelian group.  Let $P^\gp_\bQ := (P_{\bQ_{\geq 0}})^\gp \cong P^\gp \otimes_\bZ \bQ$.  Then $\bE_m \to \bE$ and therefore $V_m \to V$ are finite Kummer \'etale covers with Galois group
\begin{equation}\label{eq-geom-tower-Gal-mod-m}
\begin{split}
    \Gamma_{/m} := \Hom\bigl((\tfrac{1}{m} P)^\gp / P^\gp, \Grpmu_\infty\bigr) & \cong \Hom(P^\gp / m P^\gp, \Grpmu_m) \\
    & \cong \Hom(P^\gp, \Grpmu_m),
\end{split}
\end{equation}
and $\widetilde{V} \to V$ is a Galois pro-finite Kummer \'etale cover with Galois group
\begin{equation}\label{eq-geom-tower-Gal}
\begin{split}
    \Gamma := \varprojlim_m \Gamma_{/m} & \cong \Hom\bigl(P^\gp, \varprojlim_m \Grpmu_m\bigr) \cong \Hom\bigl(P^\gp, \widehat{\bZ}(1)\bigr) \\
    & \cong \Hom\bigl(\varinjlim_m (\tfrac{1}{m} P)^\gp / P^\gp, \Grpmu_\infty\bigr) \cong \Hom(P^\gp_\bQ / P^\gp, \Grpmu_\infty),
\end{split}
\end{equation}
where $\Grpmu_m$, $\Grpmu_\infty$, and $\widehat{\bZ}(1)$ are as in Definition \ref{def-mu} \Pth{with the symbols $(k)$ omitted}.

Consider the $k^+[P]$-module decomposition
\begin{equation}\label{eq-k-P-decomp}
    k^+[P_{\bQ_{\geq 0}}] = \oplus_\chi \bigl( k^+[P_{\bQ_{\geq 0}}]_\chi \bigr)
\end{equation}
according to the action of $\Gamma$, where the direct sum is over all finite-order characters $\chi$ of $\Gamma$.  Note that the set of finite-order characters of $\Gamma$ can be naturally identified with $P^\gp_\bQ / P^\gp$, via \Refeq{\ref{eq-geom-tower-Gal}}.  If we denote by $\pi$ the natural map $\pi: P_{\bQ_{\geq 0}} \to P^\gp_\bQ / P^\gp$, then we have the $k^+$-module decomposition
\[
    k^+[P_{\bQ_{\geq 0}}]_\chi = \oplus_{a \in P_{\bQ_{\geq 0}}, \, \pi(a) = \chi} \, \bigl( k^+ \mono{a} \bigr).
\]

\begin{lem}\phantomsection\label{lem-k-P-decomp}
    \begin{enumerate}
        \item\label{lem-k-P-decomp-1}  $k^+[P_{\bQ_{\geq 0}}]_1 = k^+[P]$ for the trivial character $\chi = 1$.

        \item\label{lem-k-P-decomp-2}  Each direct summand $k^+[P_{\bQ_{\geq 0}}]_\chi$ is a finite $k^+[P]$-module.
    \end{enumerate}
\end{lem}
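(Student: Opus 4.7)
The plan is to deduce both parts from two ingredients: the saturation property of the fs monoid $P$, and the fact that the $P^\gp_\bQ/P^\gp$-grading on $k^+[P_{\bQ_{\geq 0}}]$ refines, on each subalgebra $k^+[\frac{1}{d}P]$, to the finite grading by $\frac{1}{d}P^\gp/P^\gp$. No deep input is required; the only substantive point is a saturation argument.

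For part (1), the trivial character corresponds to $\{a \in P_{\bQ_{\geq 0}} : a \in P^\gp\}$. Any such $a$ lies in $\frac{1}{m}P$ for some $m \geq 1$, so $ma \in P$; since $P$ is saturated and $a \in P^\gp$, this forces $a \in P$. The reverse inclusion $P \subset \{\pi(a) = 0\}$ is obvious, so the $\chi = 1$ piece equals $k^+[P]$.

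For part (2), let $d$ be the order of $\chi$ in $P^\gp_\bQ/P^\gp$. First I would show $\{a \in P_{\bQ_{\geq 0}} : \pi(a) = \chi\} \subset \frac{1}{d}P$: such an $a$ satisfies $da \in P^\gp$, and picking $m$ with $ma \in P$ gives $m(da) \in P$, so saturation yields $da \in P$, i.e., $a \in \frac{1}{d}P$. Hence $k^+[P_{\bQ_{\geq 0}}]_\chi \subset k^+[\frac{1}{d}P]$. Next, $k^+[\frac{1}{d}P]$ is a finite $k^+[P]$-module: the monoid $\frac{1}{d}P \cong P$ is finitely generated, so $k^+[\frac{1}{d}P]$ is a finitely generated $k^+[P]$-algebra, and each generator $\mono{a}$ with $a \in \frac{1}{d}P$ satisfies $(\mono{a})^d = \mono{da} \in k^+[P]$, giving a finitely generated integral, hence module-finite, extension. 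Finally, the decomposition of $k^+[\frac{1}{d}P]$ according to the finite grading group $\frac{1}{d}P^\gp/P^\gp$ realizes $k^+[P_{\bQ_{\geq 0}}]_\chi$ as a $k^+[P]$-module direct summand (indexed by the image of $\chi$, which makes sense because $d\chi = 0$); a direct summand of a finite module is finite, completing the proof. No step presents a serious obstacle; the saturation of $P$ is the only nontrivial ingredient.
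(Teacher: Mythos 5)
Your proof is correct, and part \Refenum{\ref{lem-k-P-decomp-1}} coincides with the paper's (the paper's ``observation'' that $P_{\bQ_{\geq 0}} \cap P^\gp = P$ is exactly your saturation argument). For part \Refenum{\ref{lem-k-P-decomp-2}}, however, you take a genuinely different and arguably cleaner route. The paper bounds denominators by convex geometry: it chooses elements $b_1, \ldots, b_s \in P$ forming a $\bQ$-basis of $P^\gp_\bQ$ and spanning the cone generated by $P$, picks $N$ with $P \subset \frac{1}{N}\sum_j \bZ_{\geq 0}\, b_j$, and deduces $\pi^{-1}(\chi) \subset \frac{1}{m N} P$ whenever $\chi \in (\frac{1}{m}P)^\gp/P^\gp$; finiteness then comes from the explicit generating set $S_\chi$ of $\pi^{-1}(\chi)$ modulo $P$. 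You instead get the containment $\pi^{-1}(\chi) \subset \frac{1}{d}P$ (with $d$ the order of $\chi$) purely from saturation --- $da \in P^\gp$ and $m(da) \in P$ force $da \in P = P^\Sat$ --- and then conclude via the standard fact that $k^+[\frac{1}{d}P]$ is module-finite over $k^+[P]$ because it is a finitely generated algebra whose generators $\mono{a}$ satisfy $(\mono{a})^d = \mono{da} \in k^+[P]$, with $k^+[P_{\bQ_{\geq 0}}]_\chi$ split off as a graded direct summand. Your version avoids the choice of a basis inside the cone and gives the sharper denominator bound $d$; the paper's version produces the explicit finite set of module generators $S_\chi$ in terms of the monoid generators of $P$. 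Both are complete proofs.
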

\begin{proof}
    The assertion \Refenum{\ref{lem-k-P-decomp-1}} follows from the observation that $P_{\bQ_{\geq 0}} \cap P^\gp = P$ as subsets of $P^\gp_\bQ$.  As for the assertion \Refenum{\ref{lem-k-P-decomp-2}}, it suffices to show that, for each $\chi$ in $P^\gp_\bQ / P^\gp$, if $\chi \in (\frac{1}{m} P)^\gp / P^\gp$ for some $m \geq 1$, and if $P$ is generated as a monoid by some finite subset $\{ a_1, \ldots, a_r \}$, then there exists some integer $m' \geq m$ \Pth{depending on $m$} such that $\pi^{-1}(\chi) \subset \frac{1}{m'} P$, so that $\pi^{-1}(\chi) = S_\chi + P$ for the finite subset $S_\chi := \{ \sum_{i = 1}^r \frac{c_i}{m'} a_i \in \pi^{-1}(\chi) : 0 \leq c_i < m' \}$ of $\pi^{-1}(\chi)$.  Concretely, since $P$ is sharp by assumption, $\sigma := \bR_{\geq 0} \, a_1 + \cdots + \bR_{\geq 0} \, a_r$ is a convex subset of $P^\gp \otimes_\bZ \bR$ of the form $\sigma = \{ a \in P^\gp \otimes_\bZ \bR : \Utext{$b_j(a) \geq 0$, for all $j = 1, \ldots, s$} \}$ for some homomorphisms $b_j: P^\gp \to \bZ$ \Pth{\Refcf{} \cite[\aCh I, \aSec 1, \apages 6--7]{Kempf/Knudsen/Mumford/Saint-Donat:1973-TE-1}} such that $\Hom_\bZ(P^\gp, \bQ) = \sum_{j = 1}^s \bQ \, b_j$.  It follows that $\Hom_\bZ(P^\gp, \bZ) \subset \sum_{j = 1}^s \frac{1}{N} \bZ \, b_j$, for some $N \geq 1$, and hence $\{ a \in P^\gp \otimes_\bZ \bQ : \Utext{$b_j(a) \in \bZ$, for all $j = 1, \ldots, s$} \} \subset \frac{1}{N} P^\gp$ by duality.  Thus, $\pi^{-1}(\chi) \subset P_{\bQ_{\geq 0}} \cap \frac{1}{m'} P^\gp = \frac{1}{m'} P$, for $m' = m N$, as desired.
\end{proof}

\begin{lem}\label{lem-Gal-coh-afg}
    Fix $n \geq 0$.  Let $M$ be any $k^+ / p^n$-module on which $\Gamma$ acts via a primitive character $\chi: \Gamma \to \Grpmu_m$.  Then $H^i(\Gamma, M)$ is annihilated by $\zeta_m - 1$, where $\zeta_m \in \Grpmu_m$ is any primitive $m$-th root of unity, for each $i \geq 0$.  Moreover, if we have a finite extension $k_0$ of $\bQ_p(\Grpmu_m)$ in $k$ with ring of integers $k_0^+$, a finitely generated $k_0^+ / p^n$-algebra $T_0$, and a finite \Pth{and therefore finitely presented} $T_0$-module $M_0$ such that $M \cong M_0 \otimes_{k_0^+ / p^n} (k^+ / p^n)$ as $\Gamma$-modules over $T := T_0 \otimes_{k_0^+ / p^n} (k^+ / p^n)$, then $H^i(\Gamma, M)$ is a finitely presented $T$-module, for each $i \geq 0$.
\end{lem}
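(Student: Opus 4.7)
The plan is to compute $H^i(\Gamma, M)$ via a Koszul complex. Since $P^\gp$ is a finitely generated free abelian group of some rank $r$, the group $\Gamma \cong \widehat{\bZ}(1)^r$ admits a set of $r$ topological generators $\gamma_1, \ldots, \gamma_r$; set $c_j := \chi(\gamma_j) \in \Grpmu_m$. For any discrete $\Gamma$-module $M$, the continuous group cohomology $H^*(\Gamma, M)$ is computed by the Koszul complex $K^\bullet(M)$ with $K^i := M^{\binom{r}{i}}$ and differentials induced by the operators $\gamma_j - 1$ (this arises from the standard Koszul resolution of the trivial module over $\bZ[\Gamma]$). When $\Gamma$ acts on $M$ through $\chi$, each $\gamma_j - 1$ operates as multiplication by the scalar $c_j - 1 \in k^+/p^n$, so $K^\bullet(M)$ is naturally a complex of $T$-modules, and $H^i(\Gamma, M)$ acquires a $T$-module structure.

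For the first assertion, I will invoke the standard fact that a central element of a group acts trivially on the group's own cohomology. Concretely, for any $\gamma \in \Gamma$ (which is abelian), the action on $H^i(\Gamma, M)$ induced by the coefficient action of $\gamma$ on $M$ is the identity. Since $\chi$ is primitive, there exists some $\gamma \in \Gamma$ with $\chi(\gamma) = \zeta_m$; multiplication by $\zeta_m$ is therefore the identity on $H^i(\Gamma, M)$, so $\zeta_m - 1$ annihilates it.

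For the second assertion, note that $c_j - 1 \in k_0^+/p^n$ for every $j$, since $k_0 \supset \bQ_p(\Grpmu_m)$. Hence the Koszul complex $K^\bullet(M)$ over $T$ is the base change, along the ring map $T_0 \to T$, of the analogous Koszul complex $K_0^\bullet(M_0)$ computing $H^*(\Gamma, M_0)$. The key point is the flatness of $T_0 \to T$: since $k_0^+ = \cO_{k_0}$ is a DVR and $k^+ \subset k$ is $k_0^+$-torsion-free, $k^+$ is flat over $k_0^+$; consequently $k^+/p^n \cong k^+ \otimes_{k_0^+}(k_0^+/p^n)$ is flat over $k_0^+/p^n$, and $T = T_0 \otimes_{k_0^+/p^n}(k^+/p^n)$ is flat over $T_0$. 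Flatness yields $H^i(\Gamma, M) \cong H^i(\Gamma, M_0) \otimes_{T_0} T$. Because $T_0$ is noetherian (being finitely generated over $k_0^+/p^n$, which is a quotient of a DVR) and each $K_0^i = M_0^{\binom{r}{i}}$ is a finite $T_0$-module, $H^i(\Gamma, M_0)$ is a finitely generated, hence finitely presented, $T_0$-module; finite presentation of $H^i(\Gamma, M)$ over $T$ then follows by base change.

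The principal technical ingredient is the identification of the continuous cohomology of $\Gamma \cong \widehat{\bZ}(1)^r$ on a discrete module with the cohomology of a $T$-linear Koszul complex; once this is in hand, the rest is essentially formal, with the only concrete verification being flatness of $k^+$ over $k_0^+$, which follows immediately from the DVR hypothesis on $k_0^+$.
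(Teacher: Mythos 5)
Your proof is correct and follows essentially the same route as the paper's: identify $H^i(\Gamma, M)$ with the cohomology of the Koszul complex on the operators $\gamma_j - 1$ (as in Scholze's Lemma 5.5), and for the second assertion combine noetherianity of $T_0$ with the flatness of $k^+ / p^n$ over $k_0^+ / p^n$ and the compatibility of Koszul complexes with flat base change. The only (harmless) deviation is in the first assertion, where you derive annihilation by $\zeta_m - 1$ from the fact that an element of an abelian group acts trivially on that group's cohomology via the coefficient action, whereas the paper reads it off the Koszul complex directly --- which, since no single $\chi(\gamma_j)$ need be a primitive $m$-th root of unity, additionally requires a small Bezout argument on the exponents to see that $\zeta_m - 1$ lies in the ideal generated by the $\chi(\gamma_j) - 1$; both justifications are valid.
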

\begin{proof}
    By choosing a $\bZ$-basis of $P^\gp$, we have $\Gamma \cong \widehat{\bZ}(1)^n$, where $n = \rank_\bZ(P^\gp)$ \Pth{see \Refeq{\ref{eq-geom-tower-Gal}}}.  Then the lemma follows from a direct computation using the Koszul complex of $\Gamma$ \Pth{as in the proof of \cite[\aLem 5.5]{Scholze:2013-phtra}} and \Pth{for the last assertion of the lemma} using the flatness of $k^+ / p^n$ over $k_0^+ / p^n$ \Pth{and the compatibility with flat base change in the formation of Koszul complexes}.
\end{proof}

\begin{rk}\label{rem-Gal-coh}
    Since $R_1^+ / p \cong (k^+ / p)[P]$ and $R^+ / p \cong (k^+ / p)[P_{\bQ_{\geq 0}}]$, by Lemmas \ref{lem-k-P-decomp} and \ref{lem-Gal-coh-afg}, the natural injection $R_1^+ / p \Em R^+ / p$ induces an injection $H^i\bigl(\Gamma, (R_1^+ / p)\bigr) \Em H^i\bigl(\Gamma, (R^+ / p)\bigr)$, with cokernel annihilated by $\zeta_p - 1$, for each $i \geq 0$.  Moreover, the $R_1^+ / p$-module $H^i\bigl(\Gamma, (R^+ / p)\bigr)$ is almost finitely presented, because, for each $\epsilon > 0$ such that $p^\epsilon$-torsion makes sense, there are only finitely many $\chi$ such that the finitely presented $R^+ / p$-module direct summand $H^i\bigl(\Gamma, (k^+ / p)[P_{\bQ_{\geq 0}}]_\chi\bigr)$ is nonzero and not $p^\epsilon$-torsion.  By the use of Koszul complexes as in the proof of Lemma \ref{lem-Gal-coh-afg}, for any composition of rational localizations and finite \'etale morphisms $\Spa(S_1, S_1^+) \to \Spa(R_1, R_1^+)$, we have $H^i\bigl(\Gamma, (S_1^+ / p) \otimes_{R_1^+ / p} (R^+ / p)\bigr) \cong (S_1^+ / p) \otimes_{R_1^+ / p} H^i(\Gamma, R^+ / p)$.
\end{rk}

By the same argument as in the proof of \cite[\aLem 4.5]{Scholze:2013-phtra}, we obtain the following:
\begin{lem}\label{lem-Scholze-ref}
    Let $X$ be a locally noetherian fs log adic space over $\Spa(k, k^+)$.  Let
    \[
        U = \varprojlim_{i \in I} U_i = \varprojlim_{i \in I} \, (\Spa(R_i, R_i^+), \cM_i)
    \]
    be a log affinoid perfectoid object in $X_\proket$, and let $(R, R^+) := \bigl(\varinjlim_i \, (R_i, R_i^+)\bigr)^\wedge$, so that $\widehat{U} = \Spa(R, R^+)$ is the associated affinoid perfectoid space.

    Suppose that, for some $i \in I$, there exists a strictly \'etale morphism
    \[
        V_i = \Spa(S_i, S_i^+) \to U_i
    \]
    that is a composition of rational localizations and strictly finite \'etale morphisms.  For each $j \geq i$, let $V_j := V_i \times_{U_i} U_j = \Spa(S_j, S_j^+)$, and let
    \[
        V := V_i \times_{U_i} U \cong \varprojlim_j V_j \in X_\proket.
    \]
    Let $(S, S^+) := \bigl(\varinjlim_j \, (S_j, S_j^+)\bigr)^\wedge$.  Let $T_j$ be the $p$-adic completion of the $p$-torsion free quotient of $S_j^+ \otimes_{R_j^+} R^+$.  Then we have the following:
    \begin{enumerate}
        \item $(S, S^+)$ is a perfectoid affinoid $(k, k^+)$-algebra, and $V$ is a log affinoid perfectoid object in $X_\proket$ with associated perfectoid space $\widehat{V} = \Spa(S, S^+)$.  Moreover, $\widehat{V} = V_j \times_{U_j} \widehat{U}$ in the category of adic spaces.

        \item For each $j \geq i$, we have $S \cong T_j[\frac{1}{p}]$, and the cokernel of $T_j \to S^+$ is annihilated by some power of $p$.

        \item For each $\varepsilon \in \bQ_{> 0}$, there exists some $j \geq i$ such that the cokernel of $T_j \to S^+$ is annihilated by $p^\varepsilon$.
    \end{enumerate}
\end{lem}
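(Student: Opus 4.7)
The plan is to reduce the statement to its non-logarithmic analogue, namely \cite[\aLem 4.5]{Scholze:2013-phtra}, by exploiting that $V_i \to U_i$ is \emph{strictly} étale and hence a composition of rational localizations and strictly finite étale morphisms.  This hypothesis ensures that no new Kummer ramification appears along the tower $\{V_j\}_{j \geq i}$: the log structure on each $V_j$ is pulled back from $U_j$, so all the perfectoid content of the lemma concerns purely the underlying adic spaces $\Spa(S_j, S_j^+) \to \Spa(R_j, R_j^+)$ and their $p$-adic limits.

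For part (1), I would invoke Lemma \ref{lem-log-aff-perf-ket} directly.  Since $V_i \to U_i$ is strictly étale, its pullback $V \to U$ in $X_\proket$ is a Kummer étale morphism whose source is a log affinoid perfectoid object, and the associated perfectoid space $\widehat{V}$ is étale over $\widehat{U}$.  By construction $\widehat{V} = \Spa(S, S^+)$.  Writing $V = V_j \times_{U_j} U$ for any $j \geq i$ and observing that the associated-perfectoid-space functor is compatible with fiber products along strictly étale morphisms over affinoid bases \Ref{see Proposition \ref{prop-log-aff-perf-fiber-prod} and the proof of Lemma \ref{lem-log-aff-perf-ket}}, one obtains $\widehat{V} \cong V_j \times_{U_j} \widehat{U}$ as adic spaces.

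For parts (2) and (3), the factorization of $V_i \to U_i$ reduces everything to the two extremal cases where $V_i \to U_i$ is (a) a rational localization or (b) strictly finite étale.  In case (b) the module $S_j^+$ is finite over $R_j^+$, so $S_j^+ \otimes_{R_j^+} R^+$ is already $p$-adically complete up to integral closure, and the almost purity theorem in the perfectoid setting \Ref{\cite[\aThm 7.9]{Scholze:2012-ps}} forces the cokernel of $T_j \to S^+$ to be almost zero, hence $p^\varepsilon$-annihilated for every $\varepsilon > 0$.  In case (a), writing $V_i = U_i(\frac{f_1, \ldots, f_n}{g})$, the algebra $T_j$ captures the naive base change of $S_j^+$ modulo $p$-torsion, and the failure of $T_j \to S^+$ to be surjective is controlled by the integral closure step in the definition of $S^+$; a fixed power of $p^{-1}$ bounds this defect, giving part (2), while tracking how the bound improves as $j$ grows along the tower gives part (3).

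The main obstacle is the quantitative part (3) in case (a).  Here one must exploit that $\widehat{U}$ is perfectoid — equivalently, that Frobenius on $R^+ / p$ is surjective — to approximate $p$-th power roots of units and generators to arbitrary precision in the perfectoid limit, and then show that any denominator in the integral closure defining $S^+$ can be moved into some $S_j^+$ up to an error of size $p^\varepsilon$ provided $j$ is large enough depending on $\varepsilon$.  This is precisely the estimate that Scholze establishes in the proof of \cite[\aLem 4.5]{Scholze:2013-phtra}; since log structures play no active role beyond guaranteeing (via Lemma \ref{lem-log-aff-perf-ket}) the perfectoid limit in the first place, his argument transfers with only notational changes once part (1) is in hand.
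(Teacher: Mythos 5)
Your proposal is correct and follows essentially the same route as the paper, whose entire proof is the one-line citation ``by the same argument as in the proof of \cite[\aLem 4.5]{Scholze:2013-phtra}'': you correctly identify that strictness of $V_i \to U_i$ eliminates any logarithmic content, reduce part (1) to Lemma \ref{lem-log-aff-perf-ket}, and transfer Scholze's treatment of the rational-localization and finite-\'etale cases (the latter via almost purity, the former via the perfectoid approximation estimate) for parts (2) and (3). No substantive gap.
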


\begin{rk}\label{rem-lem-Scholze-ref}
    Lemma \ref{lem-Scholze-ref} is applicable, in particular, to the log affinoid perfectoid object $U = \varprojlim_{m \geq 1} \bE_m$ in $\bE_\proket$ and any strictly \'etale morphism $V = \Spa(S_1, S_1^+) \to \bE$ \Pth{for $m = 1$} giving a toric chart.
\end{rk}

\begin{lem}\label{lem-coh-tor-log-aff-perf}
    Let $X$ be a locally noetherian fs log adic space over $\Spa(k, k^+)$.  Suppose that $U$ is a log affinoid perfectoid object of $X_\proket$, with associated perfectoid space $\widehat{U} = \Spa(R, R^+)$.  Let $\bL$ be an $\bF_p$-local system on $U_\ket$.  Then:
    \begin{enumerate}
        \item $H^i\bigl(U_\ket, \bL \otimes_{\bF_p} (\cO_X^+ / p)\bigr)$ is almost zero, for all $i > 0$.

        \item $L(U) := H^0\bigl(U_\ket, \bL \otimes_{\bF_p} (\cO_X^+ / p)\bigr)$ is an almost finitely generated projective $R^+ / p$-module \Pth{see \cite[\aDef 2.4.4]{Gabber/Ramero:2003-ART}}.  In addition, for any morphism $U' \to U$ in $X_\proket$, where $U'$ is a log affinoid perfectoid object in $X_\proket$, with associated perfectoid space $\widehat{U}' = \Spa(R', R'^+)$, we have a canonical almost isomorphism $L(U') \cong L(U) \otimes_{R^+ / p} (R'^+ / p)$.
    \end{enumerate}
\end{lem}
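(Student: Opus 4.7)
The plan is to trivialize $\bL$ by a finite Kummer \'etale Galois cover and apply Cartan--Leray together with Theorem \ref{thm-almost-van-hat}. Since $U$ is quasi-compact and $\bL$ is locally constant of finite $\bF_p$-stalks, I can find a finite Kummer \'etale Galois cover $V \to U$ in $X_\proket$, with group $G$, that trivializes $\bL$; this $V \to U$ is obtained by pulling back a finite Kummer \'etale Galois cover $V_i \to U_i$ via $U \to U_i$ for some sufficiently large $i$ in a pro-Kummer \'etale presentation $U = \varprojlim_j U_j$ (applying Corollary \ref{cor-fket-quot} to take a Galois closure if needed). By Lemma \ref{lem-log-aff-perf-ket}, $V$ is log affinoid perfectoid with $\widehat V = \Spa(S, S^+)$, and $\widehat V \to \widehat U$ is finite \'etale; since $V \times_U V \cong G \times V$, it is moreover Galois with group $G$. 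Let $M := \bL|_V$, a constant sheaf with $G$-action $\rho: G \to \Aut_{\bF_p}(M)$ coming from the Galois action.

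The Cartan--Leray spectral sequence for the Galois cover $V \to U$ reads
\[
    E_2^{p,q} = H^p\bigl(G, H^q(V_\ket, \bL|_V \otimes_{\bF_p} \cO_X^+/p)\bigr) \Rightarrow H^{p+q}(U_\ket, \bL \otimes_{\bF_p} \cO_X^+/p).
\]
Flatness of $M$ over $\bF_p$ gives $H^q(V_\ket, \bL|_V \otimes_{\bF_p} \cO_X^+/p) \cong M \otimes_{\bF_p} H^q(V_\ket, \cO_X^+/p)$, which by Theorem \ref{thm-almost-van-hat} is almost isomorphic to $M \otimes_{\bF_p} S^+/p$ for $q = 0$ and almost zero for $q > 0$. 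Thus the spectral sequence almost degenerates, yielding canonical almost isomorphisms $H^p(U_\ket, \bL \otimes_{\bF_p} \cO_X^+/p) \cong H^p\bigl(G, M \otimes_{\bF_p} S^+/p\bigr)$, where $G$ acts diagonally via $\rho$ on $M$ and via Galois on $S^+/p$.

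By almost purity applied to the finite \'etale Galois cover $\widehat V \to \widehat U$, the ring extension $R^+/p \to S^+/p$ is almost finite \'etale Galois with group $G$. Consequently, the base change functor $N \mapsto N \otimes_{R^+/p} S^+/p$ is an equivalence between the category of almost finitely generated projective $R^+/p$-modules and the category of almost finitely generated projective $S^+/p$-modules equipped with a semilinear $G$-action; its inverse is $(-)^G$, and higher Galois cohomology almost vanishes on the essential image. Since $M \otimes_{\bF_p} S^+/p$ is free of rank $\dim_{\bF_p} M$ over $S^+/p$ with semilinear $G$-action, it descends to $N := (M \otimes_{\bF_p} S^+/p)^G$, and $H^p(G, M \otimes_{\bF_p} S^+/p)$ is almost zero for $p > 0$. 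This proves (1) and the projectivity part of (2), with $L(U) \cong N$ almost. For the base change statement, set $V' := V \times_U U'$; by Proposition \ref{prop-log-aff-perf-fiber-prod} and Lemma \ref{lem-log-aff-perf-ket}, $V'$ is log affinoid perfectoid with $\widehat V' \cong \widehat V \times_{\widehat U} \widehat U'$ a finite \'etale Galois cover of $\widehat U'$ with group $G$, so the same argument gives $L(U') \cong (M \otimes_{\bF_p} S'^+/p)^G$ almost, where $S'^+/p$ is almost $S^+/p \otimes_{R^+/p} R'^+/p$. Since almost \'etale Galois descent is compatible with the almost-flat base change $R^+/p \to R'^+/p$, this identifies with $L(U) \otimes_{R^+/p} R'^+/p$ almost. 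The main technical obstacle is the Galois descent for the diagonal action on $M \otimes_{\bF_p} S^+/p$; the almost \'etale machinery is what allows us to bypass the fact that $|G|$ need not be invertible in $\bF_p$.
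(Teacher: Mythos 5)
Your proof is correct and follows essentially the same route as the paper's: trivialize $\bL$ on a finite Kummer \'etale Galois cover (which Lemma \ref{lem-log-aff-perf-ket} converts into a finite \'etale cover of the associated perfectoid spaces), apply Theorem \ref{thm-almost-van-hat} to each term, and descend back to $U$.  The paper phrases the descent step via the \v{C}ech complex of the cover and almost faithfully flat descent rather than via Cartan--Leray and almost Galois descent, but for a Galois cover these are the same complex and the same descent statement, so the difference is purely one of presentation.
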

\begin{proof}
    By replacing $X$ with its connected components, we may assume that $X$ is connected.  Choose any Galois finite Kummer \'etale cover $Y \to X$ trivializing $\bL \Mi \bF_p^r$.  By Lemma \ref{lem-log-aff-perf-ket}, $W := U \times_X Y \to U$ is finite \'etale, and $W$ is log affinoid perfectoid, with associated perfectoid space $\widehat{W} = \Spa(T, T^+)$.  For each $j \geq 1$, let $W^{j / U}$ denote the $j$-fold fiber product of $W$ over $U$.  By Proposition \ref{prop-proket-vs-ket-adj} and Theorem \ref{thm-almost-van-hat}, $H^i\bigl(W^{j / U}_\ket, \bL \otimes_{\bF_p} (\cO_W^+ / p)\bigr)$ is almost zero, for all $i > 0$ and $j$, and $H^0\bigl(W^{j / U}_\ket, \bL \otimes_{\bF_p} (\cO_V^+ / p)\bigr)$ is canonically almost isomorphic to $(\cO_{W^{j / U}}^+(W^{j / U}) / p)^r$.  By the faithful flatness of $T^{+a} / p \to R^{+a} / p$, the desired results follow from almost faithfully flat descent \Pth{see \cite[\aSec 3.4]{Gabber/Ramero:2003-ART}}.
\end{proof}

Now we are ready for the following:
\begin{proof}[Proof of Proposition \ref{prop-coh-tor-chart}]
    Consider the Galois cover $\widetilde{V} \to V = \Spa(S_1, S_1^+)$ with Galois group $\Gamma$, and with $\widehat{\widetilde{V}} = \Spa(S, S^+)$, as above.  Since $\widetilde{V}^{j / V} \cong \widetilde{V} \times \Gamma^{j - 1}$ is a log affinoid perfectoid object in $V_\proket$, for each $j \geq 1$, we have
    \[
        H^i\bigl(\widetilde{V}^{j / V}_\ket, \bL \otimes_{\bF_p} (\cO_V^+ / p)\bigr) \cong \Hom_\cont\bigl(\Gamma^{j - 1}, L\bigr),
    \]
    where $L := H^0\bigl(\widetilde{V}_\ket, \bL \otimes_{\bF_p} (\cO_V^+ / p)\bigr)^a$ is an almost finitely generated projective $S^{+a} / p$-module, equipped with the discrete topology, by Propositions \ref{prop-proket-vs-ket} and \ref{prop-proket-vs-ket-adj}.  By Proposition \ref{prop-proket-vs-ket-adj} again, and by Lemma \ref{lem-coh-tor-log-aff-perf} and the Cartan--Leray spectral sequence \Pth{see \cite[V, 3.3]{SGA:4}}, we have an almost isomorphism
    \[
        H^i\bigl(V_\ket, \bL \otimes_{\bF_p} (\cO_V^+ / p)\bigr) \cong \check{H}^i\bigr(\{ \widetilde{V} \to V \}, \bL \otimes_{\bF_p} (\cO_V^+ / p)\bigr) \cong H^i(\Gamma, L),
    \]
    where the last isomorphism follows from Proposition \ref{prop-fund-grp-profket} and \cite[\aProp 3.7(iii)]{Scholze:2013-phtra} \Pth{and the correction in \cite{Scholze:2016-phtra-corr}}.  Hence, the statement \Refenum{\ref{prop-coh-tor-chart-1}} of Proposition \ref{prop-coh-tor-chart} follows from the fact that $\Gamma \cong \widehat{\bZ}(1)^n$ has cohomological dimension $n$.

    As for the statement \Refenum{\ref{prop-coh-tor-chart-2}}, write $V' = \Spa(S'_1, S'^+_1)$ and $\widehat{\widetilde{V}}{}' = \Spa(S', S'^+)$.  We need to show that the image of $H^i(\Gamma, L) \to H^i\bigl(\Gamma, L \otimes_{S^+ / p} (S'^+/p)\bigr)$ is an almost finitely generated $k^+$-module.  Since $L$ is an almost finitely generated projective $S^{+a} / p$-module, it suffices to show that the image of
    \[
        H^i(\Gamma, S^+ / p) \to H^i(\Gamma, S'^+ / p)
    \]
    is an almost finitely generated $k^+$-module.  Choose rational subsets $\{ V^{(j)} \}_{1 \leq j \leq n + 2}$ such that $V^{(n + 2)} = V'$, $V^{(1)} = V$, and $V^{(j + 1)}$ is strictly contained in $V^{(j)}$, for $1 \leq j \leq n + 1$.  Write $V_m^{(j)} := V^{(j)} \times_\bE \bE_m = \Spa(S_m^{(j)}, S_m^{(j)+})$, for all $1 \leq j \leq n + 2$ and $m \geq 1$.  Then $\widetilde{V}^{(j)} := \varprojlim_m V_m^{(j)}$ is a log affinoid perfectoid object in $V_\proket$, with associated perfectoid space $\widehat{\widetilde{V}}{}^{(j)} = \Spa(S^{(j)}, S^{(j)+})$.  By Lemma \ref{lem-Scholze-ref} and Remark \ref{rem-lem-Scholze-ref}, it suffices to show that the image of
    \[
        H^i(\Gamma, (S_m^{(1)+} \otimes_{R_m^+} R^+) / p) \to H^i(\Gamma, (S_m^{(n + 2)+} \otimes_{R_m^+} R^+) / p)
    \]
    is almost finitely generated, for all $m \in \bZ_{\geq 1}$.  Note that $m\Gamma$ acts trivially on $S_m^{(j)+}$, and we have the Hochschild--Serre spectral sequence
    \[
        H^{i_1}\bigl(\Gamma_{/m}, H^{i_2}(m\Gamma, (S_m^{(j)+} \otimes_{R_m^+} R^+) / p)\bigr) \Rightarrow H^{i_1 + i_2}(\Gamma, (S_m^{(j)+} \otimes_{R_m^+} R^+) / p).
    \]
    By \cite[\aLem 5.4]{Scholze:2013-phtra} and Remark \ref{rem-Gal-coh}, it suffices to show that the image of
    \[
        (S_m^{(j)+} / p) \otimes_{R_m^+ / p} H^i(m\Gamma, R^+ / p) \to (S_m^{(j + 1)+} / p) \otimes_{R_m^+ / p} H^i(m\Gamma, R^+ / p)
    \]
    is almost finitely generated, for all $j = 1, \ldots, n + 1$ and $m \geq 1$.  Since the image of $S_m^{(j)+} / p \to S_m^{(j + 1)+} / p$ is an almost finitely generated $k^+$-module, it suffices to note that $H^i(m\Gamma, R^+ / p)$ is almost finitely generated over $R_m^+ / p$, by Remark \ref{rem-Gal-coh} \Pth{up to replacing $(R_1, R_1^+)$, $\Gamma$, \etc with $(R_m, R_m^+)$, $\Gamma_m$, etc}.
\end{proof}

\subsection{Primitive comparison theorem}\label{sec-thm-prim-comp}

The main goal of this subsection is to prove the following \emph{primitive comparison theorem}, with the finiteness of cohomology as a byproduct, generalizing the strategy in \cite[\aSec 5]{Scholze:2013-phtra}:
\begin{thm}\label{thm-prim-comp}
    Let $(k, k^+)$ be an affinoid field, where $k$ is algebraically closed and of characteristic zero, and let $X$ be a proper log smooth fs log adic space over $\Spa(k, k^+)$ \Pth{see Definitions \ref{def-log-str} and \ref{def-log-sm}}.  Let $\bL$ be an $\bF_p$-local system on $X_\ket$.  Then we have the following:
    \begin{enumerate}
        \item\label{thm-prim-comp-1} $H^i\bigl(X_\ket, \bL \otimes_{\bF_p} (\cO_X^+ / p)\bigr)$ is an almost finitely generated $k^+$-module \Pth{see \cite[\aDef 2.3.8]{Gabber/Ramero:2003-ART}} for each $i \geq 0$, and is almost zero for $i \gg 0$.

        \item\label{thm-prim-comp-2} There is a canonical almost isomorphism
            \[
                H^i(X_\ket, \bL) \otimes_{\bF_p} (k^+ / p) \Mi H^i\bigl(X_\ket, \bL \otimes_{\bF_p} (\cO_X^+ / p)\bigr)
            \]
            of $k^+$-modules, for each $i \geq 0$.
    \end{enumerate}
    Consequently, $H^i(X_\ket, \bL)$ is a finite-dimensional $\bF_p$-vector space for each $i \geq 0$, and $H^i(X_\ket, \bL) = 0$ for $i \gg 0$.  In addition, if $X$ is as in Example \ref{ex-log-adic-sp-ncd}, then $H^i(X_\ket, \bL) = 0$ for $i > 2 \dim(X)$.
\end{thm}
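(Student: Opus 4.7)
The plan is to follow the strategy of \cite[\aSec 5]{Scholze:2013-phtra}, suitably adapted to the Kummer \'etale setting by replacing affinoid perfectoid covers with \emph{log} affinoid perfectoid covers built from toric charts. First I would set up the geometry. Since $X$ is proper and hence qcqs, Propositions \ref{prop-chart-stalk-chart-fs} and \ref{prop-toric-chart} give a finite covering of $X$ by affinoid open subsets $V_\alpha = \Spa(S_{1,\alpha}, S_{1,\alpha}^+)$, $\alpha \in A$, each equipped with a toric chart $V_\alpha \to \Spa(k\Talg{P_\alpha}, k^+\Talg{P_\alpha})$. Exploiting the classical fact that $X$ is \emph{taut} \Pth{so that one can shrink while preserving coverings} I would choose, for each $\alpha$, a rational open $V_\alpha' \Subset V_\alpha$ so that $\{V_\alpha'\}_{\alpha \in A}$ still covers $X$. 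For each nonempty $J \subset A$, the fibre products $V_J := \bigcap_{\alpha \in J} V_\alpha$ and $V_J' := \bigcap_{\alpha \in J} V_\alpha'$ remain log smooth affinoid with $V_J' \Subset V_J$; they themselves inherit toric charts, and Proposition \ref{prop-coh-tor-chart} applies verbatim, giving that $H^i\bigl(V_{J,\ket}, \bL \otimes (\cO_X^+/p)\bigr)$ is almost zero for $i > n := \dim(X)$ and that the restriction map to $H^i\bigl(V_{J,\ket}', \bL \otimes (\cO_X^+/p)\bigr)$ has almost finitely generated image over $k^+$.

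Part \Refenum{\ref{thm-prim-comp-1}} would then be obtained by the classical two-cover approximation argument. Writing $\cF := \bL \otimes (\cO_X^+/p)$, the two coverings produce a morphism of \v{C}ech-to-cohomology spectral sequences
\[
\check{H}^p\bigl(\{V_\alpha\}, \mathcal{H}^q(\cF)\bigr) \to \check{H}^p\bigl(\{V_\alpha'\}, \mathcal{H}^q(\cF)\bigr),
\]
both abutting to the \emph{same} abutment $H^{p+q}(X_\ket, \cF)$ \Pth{by Proposition \ref{prop-proket-vs-ket} and the usual Cartan--Leray machinery, the right-hand side being a variant on $\{V_\alpha'\}$}. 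At the $E_2$ level each term on the left maps into the corresponding term on the right with almost finitely generated image, by Proposition \ref{prop-coh-tor-chart}\Refenum{\ref{prop-coh-tor-chart-2}}. Since $|A|$ is finite and the local vanishing bound $i > n$ bounds the \v{C}ech bicomplex horizontally, a standard dévissage in the almost category \Ref{using that almost finitely generated $k^+$-modules are closed under subquotients and extensions, \Refcf{} \cite[\aProp 2.3.10]{Gabber/Ramero:2003-ART}} concludes that each $H^i(X_\ket, \cF)$ is almost finitely generated and vanishes for $i \gg 0$. This is the main obstacle of the proof: one must track the approximation carefully to upgrade ``almost finitely generated image'' to ``almost finitely generated cohomology'' in the abutment, and the finiteness of the cover is what makes the double complex honest.

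For part \Refenum{\ref{thm-prim-comp-2}} I would construct the comparison map $H^i(X_\ket, \bL) \otimes_{\bF_p} (k^+/p) \to H^i(X_\ket, \bL \otimes (\cO_X^+/p))$ from the adjunction $\bF_p \to \cO_{X_\ket}^+/p$ and the map $k^+/p \to H^0(X_\ket, \cO_X^+/p)$. To show it is an almost isomorphism, the key observation is that on the basis of log affinoid perfectoid objects $U \in X_\proket$ \Ref{which forms a basis by Proposition \ref{prop-log-aff-perf-basis}}, Theorem \ref{thm-almost-van-hat} and Lemma \ref{lem-coh-tor-log-aff-perf} give an almost identification $R\Gamma\bigl(U_\ket, \bL \otimes (\cO_X^+/p)\bigr)^a \simeq \bL(U) \otimes_{\bF_p}^{L} (R^+/p)^a$ concentrated in degree zero, where $(R, R^+)$ is the associated perfectoid algebra. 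Running this through the pro-Kummer \'etale-to-Kummer \'etale projection $\nu$ and applying the Leray spectral sequence reduces the desired almost isomorphism to an almost isomorphism of \emph{sheaves} on $X_\proket$, which in turn follows from the stalk-wise computation at log geometric points \Pth{Lemma \ref{lem-log-geom-pt}} combined with $\cO_X^+/p$ being almost equal to the stalk's $k^+/p$-module upon passage to the cofinal system of log affinoid perfectoid neighbourhoods.

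For the consequences: finiteness of $H^i(X_\ket, \bL)$ as an $\bF_p$-vector space follows from part \Refenum{\ref{thm-prim-comp-2}} and part \Refenum{\ref{thm-prim-comp-1}} because an $\bF_p$-vector space $W$ with $W \otimes_{\bF_p} (k^+/p)$ almost finitely generated over $k^+$ must be finite dimensional \Pth{pick any element $\varpi \in k^+$ with $0 < |\varpi| < 1$; almost finite generation modulo $\varpi$ gives finiteness modulo torsion after tensoring, hence finiteness}; and the vanishing for $i \gg 0$ transfers similarly. Finally, in the setting of Example \ref{ex-log-adic-sp-ncd}, Proposition \ref{prop-coh-tor-chart}\Refenum{\ref{prop-coh-tor-chart-1}} upgrades to cohomological dimension $\leq n$ on the pro-Kummer \'etale level, and I would combine this with the classical bound $\leq n$ for the analytic \Pth{Kiehl-type} cohomology of coherent sheaves on proper $n$-dimensional analytic varieties \Ref{\Refcf{} Appendix \ref{app-Kiehl}}, via the Leray spectral sequence for $X_\ket \to X_\an$, to obtain $H^i(X_\ket, \bL \otimes (\cO_X^+/p)) = 0$ almost for $i > 2n$, and then transfer back to $H^i(X_\ket, \bL)$ using part \Refenum{\ref{thm-prim-comp-2}}.
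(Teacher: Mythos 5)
Your argument for part \Refenum{\ref{thm-prim-comp-2}} has a fatal gap. The comparison map is constructed correctly, but the claim that proving it an almost isomorphism ``reduces to an almost isomorphism of sheaves on $X_\proket$'' checked stalk-wise at log geometric points cannot work: the statement is irreducibly global. Already for $\bL = \bF_p$ it asserts $H^i(X_\ket, \bF_p) \otimes_{\bF_p} (k^+ / p) \cong H^i\bigl(X_\ket, \cO_X^+ / p\bigr)^a$, which fails for non-proper $X$ \Pth{\eg, for the disc, by Artin--Schreier theory}, whereas every local input you invoke is insensitive to properness. The underlying morphism of sheaves $\bL \otimes_{\bF_p} \underline{k^+ / p} \to \bL \otimes_{\bF_p} (\cO_{X_\ket}^+ / p)$ is nowhere near an isomorphism \Pth{the stalks of $\cO^+_{X}/p$ are $l^+/p$ for the residue fields $l$, not $k^+/p$}, so the real question is whether $- \otimes_{\bF_p} (k^+/p)$ commutes with $R\Gamma(X_\ket, -)$, and that \emph{is} the theorem, not a formal consequence of Lemma \ref{lem-coh-tor-log-aff-perf}. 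The missing engine is the Artin--Schreier sequence $0 \to \bL \to \bL \otimes_{\bF_p} \widehat{\cO}^\flat_{X_\proket} \Mapn{\Phi - \Id} \bL \otimes_{\bF_p} \widehat{\cO}^\flat_{X_\proket} \to 0$ on $X_\proket$: one feeds part \Refenum{\ref{thm-prim-comp-1}} into the classification of almost finitely generated $\cO_{k^\flat}$-modules equipped with a Frobenius \Ref{\cite[\aLem 2.12]{Scholze:2013-phtra}}, using that $k$ is algebraically closed, to obtain $H^i(X_\proket, \bL \otimes_{\bF_p} \widehat{\cO}^{\flat+}_{X_\proket})^a \cong (\cO_{k^\flat}^a)^r$ compatibly with Frobenius, and then extracts both $H^i(X_\ket, \bL) \cong \bF_p^r$ and the almost isomorphism of \Refenum{\ref{thm-prim-comp-2}} from the resulting long exact sequence. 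None of the tilting or Frobenius input appears in your proposal.

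Part \Refenum{\ref{thm-prim-comp-1}} follows the right outline but has two structural defects. First, toric charts exist only \emph{\'etale} locally on a log smooth $X$ \Ref{Proposition \ref{prop-toric-chart}}, not analytically locally, so you cannot in general cover $X$ by affinoid \emph{open} subsets carrying toric charts; the correct substitute \Ref{Lemma \ref{lem-Scholze-cov}} is a chain of \'etale coverings $\{ V_h^{(j)} \}$ obtained by pulling back finite \'etale covers $V_h^{(1)} \to U_h^{(1)}$ along nested rational subsets $U_h^{(N)} \subset \cdots \subset U_h^{(1)}$, arranged so that the fiber products $V_H^{(j)}$ still admit toric charts. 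Second, and more seriously, \emph{two} nested coverings do not suffice for the d\'evissage: knowing that the maps of \v{C}ech $E_2$-terms have almost finitely generated images does not imply that the common abutment is almost finitely generated, because the two filtrations on $H^n$ induced by the two coverings need not be comparable in the required way---a filtered map lifting the identity can have all graded images zero while the underlying module is arbitrary, simply by shifting one filtration relative to the other. This is exactly why \cite[\aLem 5.4]{Scholze:2013-phtra} is formulated for a chain of $N$ spectral sequences with $N$ large relative to the cohomological degree, and why the paper constructs $N$ coverings rather than two. You also need to first reduce to the case $k^+ = \cO_k$ \Pth{via $X \times_{\Spa(k, k^+)} \Spa(k, \cO_k)$ and Lemma \ref{lem-coh-tor-log-aff-perf}}, since $k^+$ may be a higher-rank valuation ring while Proposition \ref{prop-coh-tor-chart} is stated over $\Spa(k, \cO_k)$.
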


\begin{rk}\label{rem-coh-fin}
    Recall that there is no general finiteness results for the \'etale cohomology of $\bF_p$-local systems on non-proper rigid analytic varieties over $k$, as is well known \Pth{via Artin--Schreier theory} that $H^1(\bD, \bF_p)$ is infinite.
\end{rk}
Nevertheless, we have the following:
\begin{cor}\label{cor-coh-fin}
    Let $U$ be a smooth rigid analytic variety that is Zariski open in a proper rigid analytic variety over $k$.  Then $H^i(U_\et, \bL)$ is a finite-dimensional $\bF_p$-vector space, for each $\bF_p$-local system $\bL$ on $U_\et$ and each $i \geq 0$.  Moreover, $H^i(U_\et, \bL) = 0$ for $i > 2 \dim(U)$.
\end{cor}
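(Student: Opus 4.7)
The plan is to reduce the statement to the primitive comparison theorem (Theorem \ref{thm-prim-comp}) via a suitable log smooth compactification. By assumption, $U$ is Zariski open in some proper rigid analytic variety $Y$ over $k$ (which, given the hypotheses of Theorem \ref{thm-prim-comp}, we take to be algebraically closed of characteristic zero). First I would apply resolution of singularities and embedded resolution for rigid analytic varieties in characteristic zero (e.g., Temkin's functorial desingularization, transported to the rigid analytic setting via analytification and by working locally) to obtain a proper smooth rigid analytic variety $X$ over $k$, together with a proper morphism $X \to Y$ that is an isomorphism over $U$, such that $D := X - U$ is a \Pth{reduced} normal crossings divisor of $X$. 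We then equip $X$ with the fs log structure induced by $D$ as in Example \ref{ex-log-adic-sp-ncd}, so that $X$ becomes a proper log smooth fs log adic space over $\Spa(k, \cO_k)$ by Example \ref{ex-log-adic-sp-ncd-chart}.

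Next, I would invoke Corollary \ref{cor-purity}. Writing $\jmath: U \hookrightarrow X$ for the canonical open immersion, this corollary tells us that $\overline{\bL} := \jmath_{\ket, *}(\bL)$ is a Kummer \'etale $\bF_p$-local system on $X_\ket$ extending $\bL$, and that there is a canonical isomorphism
\[
    H^i(U_\et, \bL) \cong H^i(X_\ket, \overline{\bL})
\]
for each $i \geq 0$. Strictly speaking, Corollary \ref{cor-purity} is stated for torsion \'etale local systems on $U_\et$; we apply it with $n = p$ to the given $\bF_p$-local system $\bL$.

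Finally, I would apply Theorem \ref{thm-prim-comp} to the proper log smooth fs log adic space $X$ and the $\bF_p$-local system $\overline{\bL}$ on $X_\ket$. The theorem directly yields that $H^i(X_\ket, \overline{\bL})$ is a finite-dimensional $\bF_p$-vector space for each $i \geq 0$, and, since $X$ is of the shape considered in Example \ref{ex-log-adic-sp-ncd}, the final assertion of Theorem \ref{thm-prim-comp} gives the vanishing $H^i(X_\ket, \overline{\bL}) = 0$ for $i > 2 \dim(X) = 2 \dim(U)$. Combined with the previous paragraph, this gives both conclusions of the corollary.

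The main obstacle is the existence of the compactification $X \supset U$ with normal crossings boundary: one needs an embedded resolution of singularities for rigid analytic varieties in characteristic zero, turning the a priori arbitrary closed complement $Y - U$ into a normal crossings divisor on a smooth proper $X$. Once such an $X$ is available, the remaining steps are essentially formal applications of Corollary \ref{cor-purity} and Theorem \ref{thm-prim-comp}. A minor secondary point is to check that Corollary \ref{cor-purity} applies to the pair $(X, D)$ as constructed, which is immediate since $\chr(k) = 0$, $k^+ = \cO_k$, and $D$ is a normal crossings divisor by construction.
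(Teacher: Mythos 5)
Your proposal is correct and follows essentially the same route as the paper: resolve singularities to obtain a smooth compactification $X$ of $U$ with normal crossings boundary $D$ (the paper cites \cite{Bierstone/Milman:1997-cdbml} for this), then combine the purity result (Theorem \ref{thm-purity}, via Corollary \ref{cor-purity}) with the primitive comparison theorem (Theorem \ref{thm-prim-comp}). The only difference is the choice of reference for resolution of singularities, which is immaterial.
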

\begin{proof}
    By resolution of singularities \Pth{as in \cite{Bierstone/Milman:1997-cdbml}}, we may assume that we have a smooth compactification $U \Em X$ such that $U = X - D$ for some normal crossings divisor $D$ of $X$.  Now apply Theorems \ref{thm-purity} and \ref{thm-prim-comp}.
\end{proof}

\begin{lem}\label{lem-Scholze-cov}
    Let $X$ be a proper log smooth fs log adic space over $\Spa(k, \cO_k)$.  For each integer $N \geq 2$, we can find $N$ affinoid \'etale coverings of $X$
    \[
        \{ V_h^{(N)} \}_{h = 1}^m, \ldots, \{ V_h^{(1)} \}_{h = 1}^m
    \]
    satisfying the following properties:
    \begin{itemize}
        \item $V_h^{(N)} \subset \cdots \subset V_h^{(1)}$ is a chain of rational subsets, for each $h = 1, \ldots, m$.

        \item $V_h^{(j + 1)} \subset \overline{V}_h^{(j + 1)} \subset V_h^{(j)}$, for all $h = 1, \ldots, m$ and $j = 1, \ldots, N - 1$.

        \item $V_{h_1}^{(1)} \times_X V_{h_2}^{(1)} \to V_{h_1}^{(1)}$ is a composition of rational localizations and finite \'etale morphisms, for $1 \leq h_1, h_2 \leq m$.

        \item Each $V_h^{(1)}$ admits a toric chart $V_h^{(1)} \to \Spa(k\Talg{P_h}, \cO_k\Talg{P_h})$, for some sharp fs monoid $P_h$.
    \end{itemize}
\end{lem}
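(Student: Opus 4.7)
The plan is to adapt the strategy from \cite[\aSec 5]{Scholze:2013-phtra} to the log setting, with toric charts (as in Proposition \ref{prop-toric-chart}) replacing the smooth coordinate charts to the torus used in the non-log case.

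First, by Proposition \ref{prop-toric-chart}, every point $x \in X$ admits an affinoid \'etale neighborhood $W_x \to X$ together with a toric chart $W_x \to \Spa(k\Talg{P_x}, \cO_k\Talg{P_x})$ that factors as a composition of rational localizations and finite \'etale morphisms. Since $X$ is proper and hence quasi-compact, finitely many such neighborhoods, say $\{W_h \to X\}_{h = 1}^{m_0}$, already cover $X$.

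Second, I would refine this covering to arrange the fiber product condition. For each pair $(h_1, h_2)$, the fiber product $W_{h_1} \times_X W_{h_2}$ is strictly \'etale and quasi-compact over $W_{h_1}$. By Huber's local structure theorem for \'etale morphisms of noetherian adic spaces \Ref{see \cite[\aProp 1.7.11]{Huber:1996-ERA}}, it is covered by finitely many affinoid opens each mapping to $W_{h_1}$ as a composition of rational localizations and finite \'etale morphisms. A careful inductive refinement, carried out pair by pair, produces a finite family $\{V_h^{(1)} \to X\}_{h = 1}^m$ of rational subsets of the $W_h$'s that still covers $X$ and satisfies the third condition for every pair of indices. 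The key input is the stability of ``composition of rational localizations and finite \'etale morphisms'' under base change to rational subsets of the target, which allows the refinement for one pair to be preserved when we further refine for subsequent pairs. The inherited toric charts on the $V_h^{(1)}$'s (restricted from those on the $W_h$'s) then yield the fourth condition automatically.

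Third, for each $h$, I would inductively construct the chain $V_h^{(N)} \subset \overline{V}_h^{(N)} \subset \cdots \subset V_h^{(1)}$: for $j = 2, \ldots, N$ in order, given that $\{V_h^{(j - 1)}\}_h$ covers $X$, choose for each $h$ a rational subset $V_h^{(j)} \subset V_h^{(j - 1)}$ with $\overline{V}_h^{(j)} \subset V_h^{(j - 1)}$ such that $\{V_h^{(j)}\}_h$ still covers $X$. This is a standard shrinking argument, using the quasi-compactness of $X$ together with the fact that every point of an affinoid adic space admits a basis of rational neighborhoods. The hard part of the proof will be the second step: simultaneously arranging the fiber product condition for all pairs $(h_1, h_2)$, since refining the covering to achieve the condition for one pair might disrupt it for others. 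The proof must therefore track the stability of the condition under further rational-subset refinement and verify that it is preserved throughout the inductive process.
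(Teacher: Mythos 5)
There is a genuine gap in your second step, which you yourself flag as ``the hard part'' but never actually carry out. The problem is that shrinking the members $W_h$ of an \'etale covering to rational subsets $V_h^{(1)} \subset W_h$ does not control the global structure of $V_{h_1}^{(1)} \times_X V_{h_2}^{(1)}$ over $V_{h_1}^{(1)}$. Huber's structure theorem only tells you that $W_{h_1} \times_X W_{h_2}$ is \emph{covered} by finitely many affinoid pieces, each nice over $W_{h_1}$; but $V_{h_1}^{(1)} \times_X V_{h_2}^{(1)}$ is the intersection of the preimages of $V_{h_1}^{(1)}$ and $V_{h_2}^{(1)}$ under the two projections, an open subset that can be spread across several of those pieces and need not be contained in any single one. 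Worse, a composition of rational localizations and finite \'etale morphisms out of the affinoid $V_{h_1}^{(1)}$ necessarily lands in an affinoid space, whereas a fiber product of affinoids over the non-affinoid base $X$ need not be affinoid at all; no amount of pairwise refinement by rational subsets of the sources repairs this, because the obstruction lives in how the images of the $W_h$ intersect inside $X$.

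The paper's proof (following \cite[\aLem 5.3]{Scholze:2013-phtra}) resolves exactly this point by separating the ``open'' and ``finite \'etale'' parts of the covering \emph{before} worrying about fiber products: one first constructs $N$ nested \emph{analytic open} affinoid coverings $\{U_h^{(j)}\}$ of $X$ such that each pairwise intersection $U_{h_1}^{(1)} \cap U_{h_2}^{(1)}$ is a \emph{rational subset} of $U_{h_1}^{(1)}$, together with finite \'etale covers $V_h^{(1)} \to U_h^{(1)}$ carrying the toric charts, and then sets $V_h^{(j)} := V_h^{(1)} \times_{U_h^{(1)}} U_h^{(j)}$. With this arrangement the third condition is automatic: $V_{h_1}^{(1)} \times_X V_{h_2}^{(1)} \to V_{h_1}^{(1)}$ factors as the rational localization onto the preimage of $U_{h_1}^{(1)} \cap U_{h_2}^{(1)}$ followed by the base change of the finite \'etale morphism $V_{h_2}^{(1)} \to U_{h_2}^{(1)}$, and in particular the fiber product is affinoid. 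Your first and third steps (quasi-compactness to extract a finite covering of toric charts, and the shrinking argument producing the nested chains) are fine, but to close the gap you would need to add the structural step that each chart can be arranged to be finite \'etale over an affinoid analytic open of $X$, and that these opens can be chosen with pairwise intersections rational in each member.
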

\begin{proof}
    By Proposition \ref{prop-toric-chart} and the same argument as in the proof of \cite[\aLem 5.3]{Scholze:2013-phtra}, there exist $N$ affinoid analytic open coverings of $X$
    \[
        \{ U_h^{(N)} \}_{h = 1}^m, \ldots, \{ U_h^{(1)} \}_{h = 1}^m
    \]
    satisfying the following properties:
    \begin{itemize}
        \item $U_h^{(N)} \subset \cdots \subset U_h^{(1)}$ is a chain of rational subsets, for each $h = 1, \ldots, m$.

        \item $U_h^{(j + 1)} \subset \overline{U}_h^{(j + 1)} \subset U_h^{(j)}$, for all $h = 1, \ldots m$ and $j = 1, \ldots, N - 1$.

        \item $U_{h_1}^{(1)} \cap U_{h_2}^{(1)} \subset U_{h_1}^{(1)}$ is a rational subset, for $1 \leq h_1, h_2 \leq m$.

        \item There exist finite \'etale covers $V_h^{(1)} \to U_h^{(1)}$ such that each $V_h^{(1)}$ admits a toric chart $V_h^{(1)} \to \bE_j = \Spa(k\Talg{P_h}, \cO_k\Talg{P_h})$ \Pth{which is, in particular, a composition of rational localizations and finite \'etale morphisms} for some sharp fs monoid $P_h$.
    \end{itemize}
    Then it suffices to take $V_h^{(j)} := V_h^{(1)} \times_{U_h^{(1)}} U_h^{(j)}$, for all $h$ and $j$.
\end{proof}

\begin{proof}[Proof of Theorem \ref{thm-prim-comp}\Refenum{\ref{thm-prim-comp-1}}]
    Consider $X' := X \times_{\Spa(k, k^+)} \, \Spa(k, \cO_k) \subset X$.  Consider any covering $\{ U_h \}_h$ of $X$ by log affinoid perfectoid objects in $X_\proket$, whose pullback $\{ U_h \times_X X' \}_h$ is a covering of $X'$ by log affinoid perfectoid objects in $X'_\proket$.  By Lemma \ref{lem-coh-tor-log-aff-perf}, we have a canonical almost isomorphism
    \[
        H^i\bigl(U_{h, \ket}, \bL \otimes_{\bF_p} (\cO_X^+ / p)\bigr) \Mi H^i\bigl(U_{k, \ket} \times_X X', \bL \otimes_{\bF_p} (\cO_X^+ / p)\bigr),
    \]
    for all $i \geq 0$ and all $h$.  By Proposition \ref{prop-proket-vs-ket-adj} and by comparing the spectral sequences associated with the coverings, we obtain an almost isomorphism
    \[
        H^i\bigl(X_\ket, \bL \otimes_{\bF_p} (\cO_X^+ / p)\bigr) \Mi H^i\bigl(X'_\ket, \bL \otimes_{\bF_p} (\cO_X^+ / p)\bigr),
    \]
    for each $i \geq 0$.  Hence, for the purpose of this proof, up to replacing $X$ with $X'$, we may assume that $k^+ = \cO_k$ in what follows.

    Let $\{ V_h^{(N)} \}_{h = 1}^m, \ldots, \{ V_h^{(1)} \}_{h = 1}^m$ be affinoid \'etale coverings of $X$ satisfying the same properties as in Lemma \ref{lem-Scholze-cov}.  For each subset $H = \{ h_1, \ldots, h_s \}$ of $\{ 1, \ldots, m \}$, let $V_H^{(j)} := V_{h_1}^{(j)} \times_X \cdots \times_X V_{h_s}^{(j)}$.  For each $j = 1, \ldots, N$, we have a spectral sequence
    \[
        E_{1, (j)}^{i_1, i_2} = \oplus_{|H| = i_1 + 1} \, H^{i_2}\bigl(V_{H, \ket}^{(j)}, \bL \otimes_{\bF_p} (\cO_X^+ / p)\bigr) \Rightarrow H^{i_1 + i_2}\bigl(X_\ket, \bL \otimes_{\bF_p} (\cO_X^+ / p)\bigr).
    \]
    For $j = 1, \ldots, N - 1$, we also have natural morphisms between spectral sequences $E_{*, (j)}^{i_1, i_2} \to E_{*, (j + 1)}^{i_1, i_2}$.  Then the desired finiteness result follows from Proposition \ref{prop-coh-tor-chart} and \cite[\aLem 5.4]{Scholze:2013-phtra}.  Moreover, by Proposition \ref{prop-coh-tor-chart} and the spectral sequence for $j = 1$, we have $H^i\bigl(X_\ket, \bL \otimes_{\bF_p} (\cO_X^+ / p)\bigr)^a = 0$ for $i \gg 0$.
\end{proof}

\begin{proof}[Proof of Theorem \ref{thm-prim-comp}\Refenum{\ref{thm-prim-comp-2}}]
    Consider the Artin--Schreier sequence
    \begin{equation}\label{eq-thm-prim-comp-pf-AS}
        0 \to \bL \to \bL \otimes_{\bF_p} \widehat{\cO}_{X_\proket}^\flat \Mapn{\sigma} \bL \otimes_{\bF_p} \widehat{\cO}_{X_\proket}^\flat \to 0,
    \end{equation}
    where $\sigma = \Id \otimes (\Phi - \Id)$ and $\Phi$ is the Frobenius morphism \Pth{induced by $x \mapsto x^p$}.  The exactness of \Refeq{\ref{eq-thm-prim-comp-pf-AS}} can be checked locally on log affinoid perfectoid objects $U \in X_\proket$ over which $\bL$ is trivial, which then follows \Pth{by using Lemma \ref{lem-log-aff-perf-ket}} from the same argument in the proof of \cite[\aThm 5.1]{Scholze:2013-phtra}.

    Choose any $\varpi \in k^\flat$ such that $\varpi^\sharp = p$.  By Theorem \ref{thm-prim-comp}\Refenum{\ref{thm-prim-comp-1}} and \cite[\aLem 2.12]{Scholze:2013-phtra}, there exists some $r \geq 0$ such that we have
    \[
        H^i\bigl(X_\proket, \bL \otimes_{\bF_p} (\widehat{\cO}_{X_\proket}^{\flat+} / \varpi^m)\bigr)^a \cong (\cO_{k^\flat}^a / \varpi^m)^r,
    \]
    for all $m$, which are compatible with each other and with the Frobenius morphism.  By \cite[\aLem 3.18]{Scholze:2013-phtra}, we have
    \[
        R\varprojlim_m \bigl(\bL \otimes_{\bF_p} (\widehat{\cO}_X^{\flat+} / \varpi^m)\bigr)^a \cong (\bL \otimes_{\bF_p} \widehat{\cO}_X^{\flat+})^a,
    \]
    and so
    \[
        H^i(X_\proket, \bL \otimes_{\bF_p} \widehat{\cO}_{X_\proket}^{\flat+})^a \cong (\cO_{k^\flat}^a)^r
    \]
    and
    \[
        H^i(X_\proket, \bL \otimes_{\bF_p} \widehat{\cO}_{X_\proket}^\flat) \cong (k^\flat)^r
    \]
    \Pth{by inverting $\varpi$}, which are still compatible with the Frobenius morphisms.

    Thus, by considering the long exact sequence associated with \Refeq{\ref{eq-thm-prim-comp-pf-AS}}, and by Proposition \ref{prop-proket-vs-ket-adj}, we see that
    \[
        H^i(X_\ket, \bL) \cong H^i(X_\proket, \bL \otimes_{\bF_p} \widehat{\cO}_{X_\proket}^\flat)^{\Phi - \Id} \cong \bF_p^r
    \]
    and
    \[
        H^i(X_\ket, \bL) \otimes_{\bF_p} (k^{+a} / p) \cong H^i\bigl(X_\ket, \bL \otimes_{\bF_p} (\cO_{X_\ket}^+ / p)\bigr)^a,
    \]
    as desired.
\end{proof}

\begin{proof}[Proof of the remaining statements of Theorem \ref{thm-prim-comp}]
    It remains to show that, if $X$ is as in Example \ref{ex-log-adic-sp-ncd}, then $H^i(X_\ket, \bL) = 0$ for $i > 2\dim(X)$.  By Theorem \ref{thm-prim-comp}\Refenum{\ref{thm-prim-comp-2}}, it suffices to show that $H^i\bigl(X_\ket, \bL \otimes_{\bF_p} (\cO_{X_\ket}^+ / p)\bigr)^a = 0$, for $i > 2\dim(X)$.  Note that, in Example \ref{ex-log-adic-sp-ncd-chart}, since $k$ is algebraically closed, $X$ \emph{analytic locally} admits smooth toric charts $X \to \bD^n$.  Hence, by the same argument as in the proof of \cite[\aLem 5.3]{Scholze:2013-phtra}, all the \'etale coverings $\{ V_j^{(1)} \}_{j = 1}^m$ in Lemma \ref{lem-Scholze-cov} can be chosen to be analytic coverings.  Let $\lambda: X_\ket \to X_\an$ denote the natural projection of sites.  By Proposition \ref{prop-coh-tor-chart}, $R^j\lambda_*\bigl(\bL \otimes_{\bF_p} (\cO_{X_\ket}^+ / p)\bigr)^a = 0$, for all $j > \dim(X)$.  Since the cohomological dimension of $X_\an$ is bounded by $\dim(X)$, by \cite[\aProp 2.5.8]{deJong/vanderPut:1996-ecras}, the desired vanishing follows.  \Pth{This is essentially the same argument as in the proof of \cite[\aLem 5.9]{Scholze:2013-phtra}.}
\end{proof}

\subsection{{$p$}-adic local systems}\label{sec-lisse}

\begin{defn}\label{def-ket-lisse}
    Let $X$ be a locally noetherian fs log adic space.
    \begin{enumerate}
        \item A \emph{$\bZ_p$-local system} on $X_\ket$, also called a \emph{lisse $\bZ_p$-sheaf} on $X_\ket$, is an inverse system of $\bZ / p^n$-modules $\bL = (\bL_n)_{n \geq 1}$ on $X_\ket$ such that each $\bL_n$ is a locally constant sheaf which are locally \Pth{on $X_\ket$} associated with finitely generated $\bZ / p^n$-modules, and such that the inverse system is isomorphic in the pro-category to an inverse system in which $\bL_{n + 1} / p^n \cong \bL_n$.

        \item A \emph{$\bQ_p$-local system} \Pth{or \emph{lisse $\bQ_p$-sheaf}} on $X_\ket$ is an object of the stack associated with the fibered category of isogeny lisse $\bZ_p$-sheaves.
    \end{enumerate}
\end{defn}

\begin{defn}\label{def-proket-loc-syst}
    Let $X$ be a locally noetherian fs log adic space.  Let
    \[
        \widehat{\bZ}_p := \varprojlim_n (\bZ/p^n)
    \]
    as a sheaf of rings on $X_\proket$, and let
    \[
        \widehat{\bQ}_p := \widehat{\bZ}_p[\tfrac{1}{p}].
    \]
    A \emph{$\widehat{\bZ}_p$-local system} on $X_\proket$ is a sheaf of $\widehat{\bZ}_p$-modules on $X_\proket$ that is locally \Pth{on $X_\proket$} isomorphic to $L \otimes_{\bZ_p} \widehat{\bZ}_p$ for some finitely generated $\bZ_p$-modules $L$.  The notion of \emph{$\widehat{\bQ}_p$-local system} on $X_\proket$ is defined similarly.
\end{defn}

\begin{lem}\label{lem-proket-lisse}
    Let $X$ be a locally noetherian fs log adic space over $\Spa(\bQ_p, \bZ_p)$.  Let $\upsilon: X_\proket \to X_\ket$ denote the natural projection of sites.
    \begin{enumerate}
        \item\label{lem-proket-lisse-1}  The functor
            \[
                \bL = (\bL_n)_{n \geq 1} \mapsto \widehat{\bL} := \varprojlim_n \upsilon^{-1}(\bL_n)
            \]
            is an equivalence of categories from the category of $\bZ_p$-local systems on $X_\ket$ to the category of $\widehat{\bZ}_p$-local systems on $X_\proket$.  Moreover, $\widehat{\bL} \otimes_{\widehat{\bZ}_p} \widehat{\bQ}_p$ is a $\widehat{\bQ}_p$-local system.

        \item\label{lem-proket-lisse-2}  For all $i > 0$, we have $R^i\varprojlim_n \upsilon^{-1}(\bL_n) = 0$.
    \end{enumerate}
\end{lem}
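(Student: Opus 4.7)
The plan is to prove part (2) first, since it is the key technical ingredient that will make part (1) tractable. For part (2), the main input will be Proposition \ref{prop-proket-K-pi-1}, which supplies a basis $\cB$ of $X_\proket$ by log affinoid perfectoid objects $V$ with the property that $H^i\bigl({X_\proket}_{/V}, \nu^{-1}(\bL)\bigr) = 0$ for every $p$-torsion locally constant sheaf $\bL$ on $X_\ket$ and every $i > 0$; by d\'evissage along the filtration by powers of $p$, this vanishing will extend to all $\bZ / p^n$-local systems, and hence applies to each $\bL_n$. I would then replace $(\bL_n)_{n \geq 1}$ by a cofinal subsystem with literally surjective transition maps $\bL_{n + 1} \to \bL_n$ (possible by the pro-isomorphism $\bL_{n + 1} / p^n \cong \bL_n$), so that, combined with the vanishing of $H^1$ of the kernels, the induced transition maps on sections over each $V \in \cB$ are surjective, making the inverse system $\bigl(\nu^{-1}(\bL_n)(V)\bigr)_{n \geq 1}$ Mittag--Leffler. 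Applying the analogue of \cite[\aLem 3.18]{Scholze:2013-phtra} to the basis $\cB$ and the inverse system $\{\nu^{-1}(\bL_n)\}_{n \geq 1}$ will then yield the desired vanishing $R^i \varprojlim_n \nu^{-1}(\bL_n) = 0$ for $i > 0$.

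For the forward direction of part (1), the goal is to verify that $\widehat{\bL} := \varprojlim_n \nu^{-1}(\bL_n)$ is locally on $X_\proket$ isomorphic to $L \otimes_{\bZ_p} \widehat{\bZ}_p$ for some finitely generated $\bZ_p$-module $L$. I would pass to a refined pro-Kummer \'etale cover $V \in \cB$ that trivializes all $\bL_n$ simultaneously---constructed, for instance, by further pulling back along the cofiltered inverse limit of finite Kummer \'etale covers $Y_n \to X$ trivializing each $\bL_n$---and use part (2) together with the description of sections of $\nu^{-1}$ on pro-Kummer \'etale limits from Proposition \ref{prop-proket-vs-ket} to obtain $\widehat{\bL}(V) \cong \varprojlim_n L_n =: L$; applying the same reasoning to the constant inverse system $\bZ / p^n$ identifies $\widehat{\bZ}_p|_V$ in a compatible way, yielding the desired local trivialization.

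To construct a quasi-inverse, given a $\widehat{\bZ}_p$-local system $\cM$ on $X_\proket$, I would set $\bL_n := \nu_*(\cM / p^n \cM)$ and verify that $(\bL_n)_{n \geq 1}$ defines a $\bZ_p$-local system, with the two adjunction morphisms recovering the original data; this verification reduces to computations on the basis $\cB$ via part (2) and Proposition \ref{prop-proket-vs-ket-adj}. The main technical obstacle here will be ensuring that each $\bL_n = \nu_*(\cM / p^n \cM)$ is genuinely locally constant on $X_\ket$, which requires showing that the pro-finite Kummer \'etale covers trivializing $\cM / p^n$ descend to finite Kummer \'etale covers in $X_\ket$---the effective descent results in Section \ref{sec-ket-cov-descent} are what would make this work. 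Finally, the assertion that $\widehat{\bL} \otimes_{\widehat{\bZ}_p} \widehat{\bQ}_p$ is a $\widehat{\bQ}_p$-local system follows immediately from the local identification $\widehat{\bL}|_V \cong L \otimes_{\bZ_p} \widehat{\bZ}_p|_V$ by inverting $p$.
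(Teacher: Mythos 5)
Your proposal is correct and follows essentially the same route as the paper, whose entire proof is the one-line instruction to apply Proposition \ref{prop-proket-K-pi-1} together with \cite[\aLem 3.18]{Scholze:2013-phtra}; your write-up simply makes explicit the d\'evissage, the Mittag--Leffler verification, and the construction of the quasi-inverse that the paper leaves to the reader.
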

\begin{proof}
    Apply Proposition \ref{prop-proket-K-pi-1} and \cite[\aLem 3.18]{Scholze:2013-phtra}.
\end{proof}

\begin{cor}\label{cor-purity-lisse}
    Let $k$, $X$, and $U$ be as in Theorem \ref{thm-purity}.  Let $\bL$ be an \'etale $\bZ_p$-local system on $U_\et$.  Then $\overline{\bL} := \jmath_{\ket, *}(\bL)$ is a Kummer \'etale $\bZ_p$-local system extending $\bL$.  Conversely, any \'etale $\bZ_p$-local system $\overline{\bL}$ on $X_\ket$ is of this form.  In either case, there are canonical isomorphisms
    \[
        H^i(U_{\AC{k}, \et}, \bL) \cong H^i(X_{\AC{k}, \ket}, \overline{\bL}) \cong H^i(X_{\AC{k}, \proket}, \widehat{\overline{\bL}})
    \]
    of finite $\bZ_p$-modules, for each $i \geq 0$, where $\AC{k}$ denotes any algebraic closure of $k$.
\end{cor}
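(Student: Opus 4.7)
My plan is to assemble the corollary from the three already-established tools: Theorem \ref{thm-purity} (purity for torsion Kummer \'etale local systems), Theorem \ref{thm-prim-comp} (primitive comparison, giving finiteness of $\bF_p$-cohomology on proper log smooth fs log adic spaces), and Lemma \ref{lem-proket-lisse} (equivalence between $\bZ_p$- and $\widehat{\bZ}_p$-local systems and vanishing of the higher $R\varprojlim$). The overall strategy is: construct $\overline{\bL}$ level-by-level, deduce the first isomorphism at each finite level via the Leray spectral sequence for $\jmath_\ket$, then pass to the inverse limit using finiteness to apply Mittag--Leffler, and finally invoke Lemma \ref{lem-proket-lisse} for the second isomorphism.

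First I would write $\bL = (\bL_n)_{n\geq 1}$ with $\bL_{n+1}/p^n \cong \bL_n$, and set $\overline{\bL}_n := \jmath_{\ket,*}(\bL_n)$. Theorem \ref{thm-purity} tells us each $\overline{\bL}_n$ is a torsion Kummer \'etale local system on $X_\ket$, and that $R^i\jmath_{\ket,*}(\bL_n) = 0$ for $i > 0$. Applying $\jmath_{\ket,*}$ to the short exact sequences $0\to \bL_n\to \bL_{n+1}\to \bL_1\to 0$ on $U_\et$ and using this higher vanishing yields short exact sequences $0\to \overline{\bL}_n\to \overline{\bL}_{n+1}\to \overline{\bL}_1\to 0$, so that $\overline{\bL}_{n+1}/p^n\cong \overline{\bL}_n$; hence $\overline{\bL} := (\overline{\bL}_n)_n$ is a $\bZ_p$-local system on $X_\ket$ extending $\bL$.

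Next, for the first cohomology isomorphism, the Leray spectral sequence for $\jmath_\ket$ collapses thanks to $R^i\jmath_{\ket,*}(\bL_n) = 0$ for $i>0$, giving $H^i(U_{\AC{k},\et}, \bL_n) \cong H^i(X_{\AC{k},\ket}, \overline{\bL}_n)$ for every $n$ and $i$. Since $X_{\AC{k}}$ is proper log smooth over $\Spa(\AC{k}, \cO_{\AC{k}})$, Theorem \ref{thm-prim-comp} applied to the $\bF_p$-local system $\overline{\bL}_1$ makes each $H^i(X_{\AC{k},\ket}, \overline{\bL}_1)$ a finite-dimensional $\bF_p$-vector space. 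Induction on $n$ using the short exact sequences above then shows each $H^i(X_{\AC{k},\ket}, \overline{\bL}_n)$ is a finite $\bZ/p^n$-module, so the inverse system $\{H^i(X_{\AC{k},\ket}, \overline{\bL}_n)\}$ is trivially Mittag--Leffler and the limit $\varprojlim_n H^i(X_{\AC{k},\ket}, \overline{\bL}_n)$ is a finite $\bZ_p$-module, which we identify with $H^i(X_{\AC{k},\ket}, \overline{\bL})$ (and similarly on the $U$-side). This yields the first isomorphism.

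For the second isomorphism, by Lemma \ref{lem-proket-lisse}\Refenum{\ref{lem-proket-lisse-1}} we have $\widehat{\overline{\bL}} \cong \varprojlim_n \nu^{-1}(\overline{\bL}_n)$, and by Lemma \ref{lem-proket-lisse}\Refenum{\ref{lem-proket-lisse-2}} the higher $R\varprojlim$ vanishes; combined with Proposition \ref{prop-proket-vs-ket-adj} (which gives $H^i(X_{\AC{k},\ket}, \overline{\bL}_n) \cong H^i(X_{\AC{k},\proket}, \nu^{-1}(\overline{\bL}_n))$), this gives $H^i(X_{\AC{k},\proket}, \widehat{\overline{\bL}}) \cong \varprojlim_n H^i(X_{\AC{k},\proket}, \nu^{-1}(\overline{\bL}_n))$ via the corresponding spectral sequence, and hence the desired second isomorphism. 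The main obstacle is the justification that cohomology commutes with the inverse limit on both sides: this is where the finiteness coming from Theorem \ref{thm-prim-comp} is essential, as without it one would need more delicate topological arguments. Everything else is a routine assembly of the preceding theory.
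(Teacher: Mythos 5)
Your proposal is correct and follows essentially the same route as the paper: level-wise purity (Theorem \ref{thm-purity}/Corollary \ref{cor-purity}) plus the finiteness from Theorem \ref{thm-prim-comp} to pass to the limit for the first isomorphism, and Proposition \ref{prop-proket-vs-ket-adj} together with Lemma \ref{lem-proket-lisse} for the second. You merely spell out the Mittag--Leffler/d\'evissage details that the paper leaves implicit (note that, as in your write-up, the properness of $X$ is needed and is implicitly assumed in the corollary).
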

\begin{proof}
    The assertions on $\bL$ and $\overline{\bL}$ in the first four sentences, together with the first isomorphism \Pth{displayed above}, follow from Corollary \ref{cor-purity} by taking limits of $\bZ_p / p^m$-local systems over $m \in \bZ_{\geq 1}$, which is justified by the finite-dimensionality of the cohomology of $\bF_p$-local systems on $X_{\AC{k}, \ket}$ shown in Theorem \ref{thm-prim-comp}.  The second isomorphism follows from Proposition \ref{prop-proket-vs-ket-adj} and Lemma \ref{lem-proket-lisse}\Refenum{\ref{lem-proket-lisse-2}}.  The finiteness of these isomorphic $\bZ_p$-modules follows, again, from Theorem \ref{thm-prim-comp}.
\end{proof}

\begin{cor}\label{cor-lisse-pr}
    Let $f: X \to Y$ be a log smooth morphism of log adic spaces whose log structures are defined by normal crossings divisors $D$ and $E$ of smooth rigid analytic varieties $X$ and $Y$, respectively, as in Example \ref{ex-log-adic-sp-ncd}.  Assume that the underlying morphisms of adic spaces of $f$ and $f|_{X - D}: X - D \to Y - E$ are both proper.  Let $\bL$ be any $\bZ_p$-local system $\bL$ on $X_\ket$.  Then $R^i f_{\ket, *}(\bL)$ is a $\bZ_p$-local system on $Y_\ket$, for each $i$.
\end{cor}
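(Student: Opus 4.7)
The plan is to exploit purity for torsion local systems (Theorem~\ref{thm-purity} and Corollary~\ref{cor-purity-lisse}) in order to reduce the problem to classical proper smooth base change on the open complement $U = X - D$, where the log structures vanish and everything reduces to an assertion about honest rigid analytic varieties.

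First, I would reduce to $\bZ/p^n$-coefficients: by Definition~\ref{def-ket-lisse}, it suffices to show that for each $n$ the sheaf $R^i f_{\ket, *}(\bL_n)$ is a locally constant sheaf of finite $\bZ/p^n$-modules on $Y_\ket$ in a manner compatible with the transition maps. Let $\jmath_X : U \Em X$ and $\jmath_Y : V := Y - E \Em Y$ be the canonical open immersions. Since $U$ and $V$ carry trivial log structures, $U_\ket = U_\et$ and $V_\ket = V_\et$. A local-ness argument (Remark~\ref{rem-stalk-strict}, applied to the characteristic monoids at points of $U$) shows that $f(U) \subset V$, so $f \circ \jmath_X = \jmath_Y \circ f|_U$ as morphisms of log adic spaces.

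Second, by Theorem~\ref{thm-purity} we have $\bL_n \cong \jmath_{X, \ket, *}(\bL_n|_U)$ and $R^j \jmath_{X, \ket, *}(\bL_n|_U) = 0$ for $j > 0$. The Grothendieck composition spectral sequence for $f \circ \jmath_X = \jmath_Y \circ f|_U$ then degenerates on the $\jmath_X$-side, and---once we know that $R^i(f|_U)_{\et, *}(\bL_n|_U)$ is a torsion étale local system on $V_\et$---it also degenerates on the $\jmath_Y$-side by Theorem~\ref{thm-purity} again, yielding a canonical isomorphism
\[
    R^i f_{\ket, *}(\bL_n) \cong \jmath_{Y, \ket, *}\bigl(R^i (f|_U)_{\et, *}(\bL_n|_U)\bigr),
\]
where $R^i(f|_U)_{\ket, *}$ is identified with $R^i(f|_U)_{\et, *}$ using the trivial log structures on $U$ and $V$.

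Third, by hypothesis $f|_U : U \to V$ is a proper smooth morphism of rigid analytic varieties, and $\bL_n|_U$ is a genuine étale $\bZ/p^n$-local system on $U_\et$. Huber's classical proper smooth base change for étale cohomology of rigid analytic varieties implies that $R^i(f|_U)_{\et, *}(\bL_n|_U)$ is an étale $\bZ/p^n$-local system on $V_\et$. Passing to the inverse limit over $n$ (using Lemma~\ref{lem-proket-lisse}) gives an étale $\bZ_p$-local system on $V_\et$, and Corollary~\ref{cor-purity-lisse} identifies $\jmath_{Y, \ket, *}$ of this as a Kummer étale $\bZ_p$-local system on $Y_\ket$. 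The main obstacle will be the proper smooth base change step: ensuring both the finiteness and the local constancy of the higher direct images on the open part, since Theorem~\ref{thm-prim-comp} in this excerpt handles only absolute proper log smooth spaces and does not directly address the relative situation; one either invokes Huber's theorem as a black box or bootstraps from fiberwise primitive comparison via a specialization argument to get constructibility in families.
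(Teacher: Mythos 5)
Your proposal is correct and is essentially the paper's argument spelled out in full: the paper's proof is the one-line citation of \cite[\aThm 10.5.1]{Scholze/Weinstein:2017-blpag} together with Corollary \ref{cor-purity-lisse}, which encapsulates exactly your reduction via purity to the open locus $U \to V$ (where the log structures are trivial and $f$ is proper smooth), the black-box input of proper smooth base change there, and the extension back across the boundary by purity. The only cosmetic difference is attribution of the black box (you cite Huber; the paper uses the Scholze--Weinstein theorem, which is stated directly for $\bZ_p$-local systems and so also disposes of the relative finiteness worry you raise at the end).
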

\begin{proof}
    This follows from \cite[\aThm 10.5.1]{Scholze/Weinstein:2020-BLG} and Corollary \ref{cor-purity-lisse}.
\end{proof}

The combination of pullbacks of $\widehat{\bQ}_p$-local systems and completed structure sheaves under strict closed immersions can be described as follows:
\begin{lem}\label{lem-Q-p-loc-cl-imm}
    Let $\imath: Z \to X$ be a strict closed immersion of locally noetherian fs log adic spaces over $\Spa(\bQ_p, \bZ_p)$.  Let $\widehat{\bL}$ be a $\widehat{\bQ}_p$-local system on $X_\proket$.  Then we have a canonical isomorphism
    \[
    \begin{split}
        & (\widehat{\bL} \otimes_{\widehat{\bQ}_p} \widehat{\cO}_{X_\proket})(U) \otimes_{\widehat{\cO}_{X_\proket}(U)} \widehat{\cO}_{Z_\proket}(U \times_X Z) \\
        & \Mi \bigl(\imath_\proket^{-1}(\widehat{\bL}) \otimes_{\widehat{\bQ}_p} \widehat{\cO}_{Z_\proket}\bigr)(U \times_X Z),
    \end{split}
    \]
    for each log affinoid perfectoid object $U$ of $X_\proket$.
\end{lem}
\begin{proof}
    By Lemma \ref{lem-log-aff-perf-cl-imm}, $U \times_X Z$ is a log affinoid perfectoid object of $Z_\proket$, and the natural morphism $\widehat{\cO}_{X_\proket} \to \imath_{\proket, *}(\widehat{\cO}_{Z_\proket})$ induces a surjective homomorphism $\widehat{\cO}_{X_\proket}(U) \to \widehat{\cO}_{Z_\proket}(U \times_X Z)$.  By Theorem \ref{thm-loc-syst-vs-proj}, it suffices to prove the lemma by replacing $U$ with some log affinoid perfectoid object $V$ of $X_\proket$ over $U$ such that $\widehat{\bL}|_V$ is trivial, in which case the assertion is clear.
\end{proof}

Finally, let us define and study the notion of unipotent and quasi-unipotent geometric monodromy actions along a normal crossings divisor.  Let $\imath: D \to X$ and $k$ be as in Example \ref{ex-log-adic-sp-ncd}, with $\jmath: U := X - D \to X$ the complementary open immersion.  Let $\bL$ be a $\bQ_p$-local system on $X_\ket$.

\begin{defn}\label{def-unip-qunip-monod}
    With $k$, $X$, $D$, and $\bL$ as above, we say that $\bL|_{U_\et}$ has \emph{unipotent} \Pth{\resp \emph{quasi-unipotent}} \emph{geometric monodromy along $D$} if $\pi_1^\ket\bigl(X(\xi), \widetilde{\xi}\bigr)$ acts unipotently \Pth{\resp quasi-unipotently} on the stalk $\bL_{\widetilde{\xi}}$, for each log geometric points $\widetilde{\xi}$ of $X$ lying above each geometric point $\xi$ of $D$, where the log structure of the strict localization $X(\xi)$ is pulled back from $X$, as in Proposition \ref{prop-fket-str-loc}.  By abuse of language, when there is no risk of confusion, we shall also say that $\bL$ has unipotent \Pth{\resp quasi-unipotent} geometric monodromy along $D$, without writing $\bL|_{U_\et}$.
\end{defn}

\begin{exam}\label{ex-ncd-monod}
    Suppose that $\{ D_j \}_{j \in I}$ is the set of irreducible components of $D$ \Pth{see \cite{Conrad:1999-icrs}}.  For each $J \subset I$, suppose moreover that $X_J := X \cap \bigl(\cap_{j \in J} \, D_j\bigr)$ is smooth and geometrically connected, and consider the fs log adic spaces $U_J$ and $U_J^\partial$ introduced in Example \ref{ex-log-adic-sp-ncd-strict-cl-imm}, together with a canonical morphism $\varepsilon_J^\partial: U_J^\partial \to U_J$ \Pth{whose underlying morphism of adic spaces is a canonical isomorphism} and a strict immersion $\imath_J^\partial: U_J^\partial \to X$.  Note that the log structure of $U_J$ is trivial, while the one of $U_J^\partial$ is pulled back from $X_J$.  We shall simply denote the underlying adic space of $U_J^\partial$ by $U_J$.  By construction, $X$ is set-theoretically the disjoint union of such locally closed subspaces $U_J$.  At each geometric point $\xi = \Spa(l, l^+)$ of $U_J$ \Pth{and hence also of $U_J^\partial$}, by projection to factors of polydiscs as in Examples \ref{ex-log-adic-sp-ncd} and \ref{ex-log-adic-sp-ncd-strict-cl-imm}, we have locally a strict morphism from $U_J^\partial$ to $s = (\Spa(k, \cO_k), \bZ_{\geq 0}^J)$ as in Example \ref{ex-fund-grp-log-pt-mor}, which is the restriction of a strict morphism from a neighborhood of $\xi$ in $X$ to a neighborhood of $s$ in $\bD^{|J|}$ \Pth{with its canonical log structure defined as in Example \ref{ex-log-adic-sp-disc}}.  As result, by Corollary \ref{cor-fund-grp-log-pt}, we have compatible isomorphisms
    \begin{equation}\label{eq-ex-ncd-monod-char}
        \bZ_{\geq 0}^J \Mi \overline{\cM}_{X, \xi} \Mi \overline{\cM}_{U_J^\partial, \xi}
    \end{equation}
    and
    \begin{equation}\label{eq-ex-ncd-monod-fund-grp}
    \begin{split}
        & \pi_1^\ket\bigl(U_J^\partial(\xi)\bigr) \cong \Hom\bigl(\overline{\cM}_{U_J^\partial, \xi}^\gp, \widehat{\bZ}'(1)\bigr) \\
        & \Mi \pi_1^\ket\bigl(X(\xi)\bigr) \cong \Hom\bigl(\overline{\cM}_{X, \xi}^\gp, \widehat{\bZ}'(1)\bigr) \Mi \Gamma^J := \bigl(\widehat{\bZ}'(1)\big)^J
    \end{split}
    \end{equation}
    \Pth{with $(l)$ omitted from the notation of $\widehat{\bZ}'(1)(l)$, whose operations will be denoted multiplicatively}.  Therefore, any $\bZ_p$-local system on $X(\xi)_\ket$ is equivalent to a $\bZ_p$-local system on $U_J^\partial(\xi)_\ket$, which is in turn equivalent to a \Pth{trivial} $\bZ_p$-local system on $U_J(\xi)_\et$ with $\Gamma^J$-action.  \Pth{The analogous statement for $\bQ_p$-local systems follows.}  Thus, in Definition \ref{def-unip-qunip-monod}, the local system $\bL$ on $X_\ket$ has unipotent \Pth{\resp quasi-unipotent} geometric monodromy along $D$ if and only if, for each $J \subset I$ and each geometric point $\xi$ of $U_J$, the action of $\pi_1^\ket\bigl(X(\xi)\bigr) \cong \Gamma^J$ on $\bL_\xi$ is unipotent \Pth{\resp quasi-unipotent}, and this property depends only the pullback of $\bL$ to $U_J^\partial(\xi)_\ket$.
\end{exam}

\begin{lem}\label{lem-unip-qunip-monod-irred}
    In Definition \ref{def-unip-qunip-monod}, it suffices to verify the condition for geometric points $\xi$ of $X$ lying above the smooth locus of $D$.  \Pth{That is, $\xi$ does not lie on the intersections, including self-intersections, of irreducible components of $D$.}
\end{lem}
\begin{proof}
    Since Definition \ref{def-unip-qunip-monod} requires only strict localizations of $X$, we may replace $k$ with a complete algebraic closed extension.  Moreover, up to \'etale localization, we may assume that $X$ is affinoid and admits a smooth toric chart $X \to \bD^n$ as in Example \ref{ex-log-adic-sp-ncd-chart}, with the log structure induced by maps $\bZ_{\geq 0}^n \to \cM_X(X) \to \cO_X(X)$ sending the $i$-th standard basis element $e_i$ to the images of the $i$-th coordinate $T_i$ of $\bD^n$.  Consider the tower $\cdots \to X_m \to \cdots \to X$ defined by the toric chart $X \to \bD^n$ as in Section \ref{sec-toric-chart} \Pth{with $P = \bZ_{\geq 0}^n$}, with Galois group $\Gamma \cong \bigl(\widehat{\bZ}'(1)\big)^n$.  Up to further \'etale localization, we may assume that the subspace of $X$ defined by $T_i = 0$ is either empty or irreducible.  Then, in the setting of Example \ref{ex-ncd-monod}, we may identify $I$ with a subset of $\{ 1, \ldots, n \}$, with irreducible components $D_j$ of $D$ defined by $T_j = 0$, for $j \in I$.  In this case, if $J' \subset J \subset I$, then we have canonical projections $\bZ_{\geq 0}^n \Surj \bZ_{\geq 0}^J \Surj \bZ_{\geq 0}^{J'}$ which induce inclusions $\Gamma^{J'} \Em \Gamma^J \Em \Gamma$, by \Refeq{\ref{eq-ex-ncd-monod-char}} and \Refeq{\ref{eq-ex-ncd-monod-fund-grp}}.  Let $\xi$ and $\xi'$ be any geometric points of $U_J$ and $U_{J'}$, respectively.  By pulling back the tower above to $X(\xi)$ and $X(\xi')$, respectively, and by Proposition \ref{prop-fket-str-loc}, we can identify the above inclusions $\Gamma^J \Em \Gamma$ and $\Gamma^{J'} \Em \Gamma$ with homomorphisms $\pi_1^\ket\bigl(X(\xi)\bigr) \to \Gamma$ and $\pi_1^\ket\bigl(X(\xi')\bigr) \to \Gamma$.  As a result, we obtain an inclusion $\pi_1^\ket\bigl(X(\xi')\bigr) \Em \pi_1^\ket\bigl(X(\xi)\bigr)$, which can be canonically identified with the above inclusion $\Gamma^{J'} \Em \Gamma^J$ above inclusion inside $\Gamma$, for any $\xi$ and $\xi'$ as above.  Since $U_J$ is contained in the closure $X_{J'}$ of $U_{J'}$, every Kummer \'etale neighborhood of $\xi$ admits the lifting of \emph{some} geometric point $\xi'$ of $X_{J'}$.  Thus, since each $\bZ_p$-local system is trivialized by some inverse system of such neighborhoods, and since each $\bQ_p$-local system is \Pth{by definition} an isogeny class of $\bZ_p$-local systems, if $\pi_1^\ket\bigl(X(\xi')\bigr)$ acts unipotently \Pth{\resp quasi-unipotently} on $\bL|_{\xi'}$, for \emph{all} geometric points $\xi'$ of $U_{J'}$, then the subgroup $\Gamma^{J'}$ of $\Gamma^J \cong \pi_1^\ket\bigl(X(\xi)\bigr)$ acts unipotently \Pth{\resp quasi-unipotently} on $\bL_\xi$.  Since $\Gamma^J \cong \prod_{j \in J} \Gamma^{\{ j \}}$ is generated by $\Gamma^{\{ j \}}$, for $j \in J$; and since the smooth locus of $D$ is \Pth{set-theoretically} $\cup_{j \in I} \, U_{\{ j \}}$, the lemma follows.
\end{proof}

\begin{rk}\label{rk-unip-qunip-monod-alg}
    Suppose that $X$, $D$, and $\bL$ is the analytification of $X_0$, $D_0$, and $\bL_0$, respectively, where $D_0$ is a normal crossings divisor on a smooth algebraic variety $X_0$ over $k$, and where $\bL_0$ is an \'etale $\bQ_p$-local system on $X_0$.  Since the construction of standard Kummer \'etale covers are compatible with analytification, by comparing the constructions in Proposition \ref{prop-ket-std} and \cite[\aProp 3.2]{Illusie:2002-fknle}, we have obvious analogues of Definition \ref{def-unip-qunip-monod}, Example \ref{ex-ncd-monod}, and Lemma \ref{lem-unip-qunip-monod-irred} in the algebraic setting, which are all compatible with analytification.
\end{rk}

\begin{rk}\label{rk-unip-qunip-monod-alg-cl}
    In Remark \ref{rk-unip-qunip-monod-alg}, for each irreducible component of $D_0$, its generic point is a point of codimension one, and hence the strict localization $X_0(\xi_0)$ at any geometric point $\xi_0 = \Spec(l)$ above such a generic point is the spectrum of a strictly local ring $R$ with residue field $l$.  Let $K := \Frac(R)$, let $\AC{K}$ be any algebraic closure of $K$, and let $K^\tr$ be the maximal tamely ramified extension of $K$ in $\AC{K}$.  Let $\eta_0 := \Spec(\AC{K})$.  Then $\bL_0|_{\eta_0}$ is naturally a representation of $\Gal(\AC{K} / K)$.  As explained in \cite[\aEx 4.7(b)]{Illusie:2002-fknle}, $\pi_1^\ket\bigl(X_0(\xi_0)\bigr)$ is canonically isomorphic to the tame inertia group, which is $\Gal(K^\tr / K) \cong \widehat{\bZ}'(1)$ in this case; and the induced isomorphism $\pi_1^\ket\bigl(X_0(\xi_0)\bigr) \cong \widehat{\bZ}'(1)$ can be canonically identified with $\pi_1^\ket\bigl(X_0(\xi_0)\bigr) \cong \Hom\bigl(\overline{\cM}_{X_0, \xi_0}^\gp, \widehat{\bZ}'(1)\bigr) \cong \widehat{\bZ}'(1)$, with the last isomorphism induced by $\overline{\cM}_{X_0, \xi_0} \cong \bZ_{\geq 0}$.  Since $\bL_0|_{U_{0, \et}}$ extends over $X_{0, \ket}$, the action of $\Gal(\AC{K} / K)$ on $\bL_0|_{\eta_0}$ factors through $\Gal(K^\tr / K)$.  Note that, in the algebraic setting, if $\xi_0$ specializes to some geometric point $\xi_0'$ of $X_0$, then we have a canonical morphism $X_0(\xi_0) \to X_0(\xi_0')$, and hence a canonical homomorphism $\pi_1^\ket\bigl(X_0(\xi_0)\bigr) \to \pi_1^\ket\bigl(X_0(\xi_0')\bigr)$.  When $\xi_0'$ does not lie on any other irreducible component of $D_0$, this last homomorphism can be canonically identified with the identity homomorphism of $\widehat{\bZ}'(1)$.  Since every geometric point of the smooth locus of $D_0$ is some such $\xi_0'$, by the algebraic analogue of Lemma \ref{lem-unip-qunip-monod-irred}, we see that $\bL_0$ has unipotent \Pth{\resp quasi-unipotent} monodromy along $D_0$ \Pth{by the algebraic analogue of Definition \ref{def-unip-qunip-monod}} if and only if, for each $\xi_0$ as above, the induced action of $\Gal(K^\tr / K) \cong \widehat{\bZ}'(1)$ is unipotent \Pth{\resp quasi-unipotent}.  In fact, this last condition is a more classical definition for schemes, whose formulation does not rely on log geometry at all.  Nevertheless, our Definition \ref{def-unip-qunip-monod} has the advantage of not relying on the notion of generic points \Pth{or specializations}.
\end{rk}

\subsection{Quasi-unipotent nearby cycles}\label{sec-nearby}

In this subsection, as an application of our results, we reformulate Beilinson's ideas \Pth{see \cite{Beilinson:1987-hgps}; \Refcf{} \cite{Reich:2010-nbhgp}} and define the unipotent and quasi-unipotent nearby cycles in the rigid analytic setting.

Let $k$ be any field of characteristic zero, and let $\AC{k}$ be any algebraic closure of $k$.  Let $\bG_m := \Spec(k[z, z^{-1}])$ be the multiplicative group scheme over $k$.  Let $\AC{k}$ be any fixed algebraic closure of $k$.  Then $\pi_1(\bG_m, 1) \cong \pi_1(\bG_{m, \AC{k}}, 1) \rtimes \Gal(\AC{k} / k)$, and $\pi_1(\bG_{m, \AC{k}}, 1) \cong \widehat{\bZ}(1)$ as $\Gal(\AC{k} / k)$-modules.  For each $r \geq 1$, let $\bJ_r$ denote the rank $r$ unipotent \'etale $\bZ_p$-local system on $\bG_m$ defined by the representation of $\pi_1(\bG_{m, \AC{k}}, 1)$ on $\bZ_p^r$ such that a topological generator $\gamma \in \pi_1(\bG_{m, \AC{k}}, 1)$ acts as a principal unipotent matrix $J_r$ and such that $\Gal(\AC{k} / k)$ acts diagonally on $\bZ_p^r$ and trivially on $\ker(J_r - 1)$.  \Pth{As in Example \ref{ex-fund-grp-ket-0-partial}, the local system thus defined is independent of the choice of $\gamma$ up to isomorphism.}  There is a natural inclusion $\bJ_r \Em \bJ_{r + 1}$, together with a projection $\bJ_{r + 1} \to \bJ_r(-1)$ such that the composition $\bJ_r \to \bJ_r(-1)$ is given by the monodromy action.  For each $m \geq 1$, let $[m]$ denote the $m$-th power homomorphism $[m]$ of $\bG_m$, and let $\bK_m := [m]_*(\bZ_p)$.  When $m \mid m'$, there is a natural inclusion $\bK_m \Em \bK_{m'}$ \Pth{defined by adjunction}.

Now let $k$ be a nontrivial nonarchimedean field, and let $k^+ = \cO_k$.  We shall denote the analytifications of the above objects and morphisms to $\bG_m^\an$, and their further pullbacks to $\bD^\times = \bD - \{ 0 \}$, by the same symbols.

Let $X$ be a rigid analytic variety over $k$.  Let $f: X \to \bD$ be a morphism over $k$ that induces an open immersion $\jmath: U := f^{-1}(\bD^\times) \to X$ and a closed immersion $\imath: f^{-1}(0) \to X$ such that $D := f^{-1}(0)_\red$ \Pth{the reduced subspace} is a \emph{normal crossings divisor}, so that $X$ is equipped with the fs log structure defined by $D \Em X$, as in Example \ref{ex-log-adic-sp-ncd}.  Note that $\bigr(f^{-1}(0)\bigr)_\et \cong D_\et$.  Let $U$ be equipped with the trivial log structure, with an open immersion $\jmath: U \to X$.  Let $D^\partial$ be the adic space $D$ equipped with the log structure pulled back from $X$, with a canonical morphism $\varepsilon^\partial: D^\partial \to D$ and a strict closed immersion $\imath^\partial: D^\partial \to X$.

\begin{defn}\label{def-nearby}
    In the above setting, for any given $\bQ_p$-local system $\bL$ on $U_\et \cong U_\ket$, its sheaf of \emph{unipotent nearby cycles} \Pth{with respect to $f$} is
    \[
          R\Psi_f^\unip(\bL) := R\varepsilon^\partial_{\et, *} \, \Bigl(\varinjlim_r \, \imath^{\partial, -1}_\ket \, \jmath_{\ket, *}\bigl(\bL \otimes_{\bZ_p} f_\et^{-1}(\bJ_r)\bigr)\Bigr),
    \]
    and its sheaf of \emph{quasi-unipotent nearby cycles} is
    \[
          R\Psi^\qunip_f(\bL) := R\varepsilon^\partial_{\et, *} \, \Bigl(\varinjlim_{m, r} \, \imath^{\partial, -1}_\ket \, \jmath_{\ket, *}\bigl(\bL \otimes_{\bZ_p} f_\et^{-1}(\bK_m) \otimes_{\bZ_p} f_\et^{-1}(\bJ_r)\bigr)\Bigr).
    \]
\end{defn}

Suppose that $\{ D_j \}_{j \in I}$ is the set of irreducible components of $D$ \Pth{see \cite{Conrad:1999-icrs}}, so that $f^{-1}(0) = \sum_{j \in I} \, n_j D_j$ \Pth{as Cartier divisors on $X$; see \cite[\aLec 5.3, especially \aProp 5.3.4]{Scholze/Weinstein:2020-BLG}}, for some integers $n_j \geq 1$ giving the multiplicities of $D_j$.  For each $J \subset I$, let $X_J$, $\varepsilon_J^\partial: U_J^\partial \to U_J$, and $\imath_J^\partial: U_J^\partial \to X$ be as in Example \ref{ex-ncd-monod}.  Given any geometric point $\xi = \Spa(l, l^+)$ of $U_J$ \Pth{and hence also of $U_J^\partial$}, let $\varepsilon^\partial_{J, \xi}: U_J^\partial(\xi) \to U_J(\xi)$ denote the pullback of $\varepsilon_J^\partial$ to $U_J(\xi)$.  Let $\Gamma^J \cong \bigl(\widehat{\bZ}(1)\big)^J$ be as in \Refeq{\ref{eq-ex-ncd-monod-fund-grp}}.  Then, by Lemma \ref{lem-ket-to-et-stalk} and the explanations in Example \ref{ex-ncd-monod}, we have a canonical isomorphism $R^i\varepsilon^\partial_{J, \xi, \et, *}(\bM) \cong H^i(\Gamma^J, \bM)$, for each $i \geq 0$.

Let $0^\partial$ and $\widetilde{0}^\partial$, and the $(\bZ / n)$-local systems $\bJ_{r, n}^\partial$ and $\bK_{m, n}^\partial$ on $0^\partial_\ket$ defined by representations of $\pi_1^\ket(0^\partial, \widetilde{0}^\partial) \cong \widehat{\bZ}(1) \rtimes \Gal(\AC{k} / k)$, be as in Example \ref{ex-fund-grp-ket-0-partial}.  By taking limits over $n \in p^{\bZ_{\geq 1}}$, we obtain $\bZ_p$-local systems $\bJ_r^\partial$ and $\bK_m^\partial$ on $0^\partial_\ket$, which can be identified with the pullbacks of $\overline{\bJ}_r := \jmath_{\ket, *}(\bJ_r)$ and $\overline{\bK}_m := \jmath_{\ket, *}(\bK_m)$, respectively.  By pulling back $f: X \to \bD$ \Pth{as a morphism of fs log adic spaces}, we obtain a canonical morphism $f_\xi: U_J^\partial(\xi) \to 0^\partial$ for any $\xi$ and $\widetilde{\xi}$ as in the last paragraph, and the induced homomorphism $\pi_1^\ket(U_J^\partial(\xi), \widetilde{\xi}) \to \pi_1^\ket(0^\partial, \widetilde{0}^\partial)$ can be identified with the composition of $\Gamma^J \cong \bigl(\widehat{\bZ}(1)\bigr)^J \to \widehat{\bZ}(1): (x_j)_{j \in J} \mapsto \sum_{j \in J} \, n_j x_j$ with the canonical homomorphism $\widehat{\bZ}(1) \Em \widehat{\bZ}(1) \rtimes \Gal(\AC{k} / k)$.  Let $\gamma_j$ be any element of the $j$-th factor of $\Gamma^J \cong \bigl(\widehat{\bZ}(1)\bigr)^J$ that is mapped to $n_j \gamma$ in $\widehat{\bZ}(1)$.  Then $\gamma_j$ acts by $J_r^{n_j}$ on the rank $r$ local system $\bigl(f^{-1}(\overline{\bJ}_r)\bigr)|_{U_J^\partial(\xi)} \cong f_\xi^{-1}(\bJ_r^\partial)$.

For each $\bQ_p$-local system $\bM$ on $U_J^\partial(\xi)_\ket$, let us denote by $W$ a formal variable on which $\gamma_j^{-1}$ acts by $W \mapsto W + n_j$, and write $\bM[W] = \varinjlim_r \, \bigl(\bM[W]^{\leq r - 1}\bigr)$, where the superscript \Qtn{$\leq r - 1$} means \Qtn{up to degree $r - 1$}.  Note that, by matching a standard basis of $\bJ_r^\partial$ with binomial monomials up to degree $r - 1$ in $W$, as in the proof of \cite[\aLem 2.10]{Liu/Zhu:2017-rrhpl}, we have $\bM[W]^{\leq r - 1} \cong \bM \otimes_{\bZ_p} f_\xi^{-1}(\bJ_r^\partial)$.
\begin{lem}\label{lem-nearby-coh-stab}
    Suppose there exists some $j_0 \in J$ such that $\gamma_{j_0}$ acts quasi-unipotently \Pth{\ie, a positive power of $\gamma_{j_0}$ acts unipotently} on $\bM$.  Then the local systems $H^i\bigl(\Gamma^J, \bM[W]^{\leq r - 1}\bigr)$ stabilize as $r \rightarrow \infty$, and hence the direct limit $H^i\bigl(\Gamma^J, \bM[W])$ exists as a $\bQ_p$-local system, for each $i \geq 0$.  When $J = \{ j_0 \}$ is a singleton, $H^i\bigl(\Gamma^{\{ j_0 \}}, \bM[W])$ vanishes when $i \neq 0, 1$; is canonically isomorphic to the maximal subsheaf of $\bM$ on which $\Gamma^{\{ j_0 \}}$ acts unipotently, when $i = 0$; is finite-dimensional when $i = 1$; and is zero when $i = 1$ and $\gamma_j$ acts unipotently.
\end{lem}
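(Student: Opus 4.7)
The plan is to compute $H^\bullet\bigl(\Gamma^J, \bM[W]^{\leq r-1}\bigr)$ via the Koszul complex $K_J^\bullet$ associated with the commuting operators $\{\gamma_j^{-1} - 1\}_{j \in J}$ acting on the finite-dimensional $\bQ_p$-vector space $\bM[W]^{\leq r-1} \cong \bM \otimes_{\bZ_p} f_\xi^{-1}(\bJ_r^\partial)$. For each fixed $r$ this is a Koszul computation for a finite-dimensional representation of $\Gamma^J \cong \widehat{\bZ}(1)^J$ factoring through a finite quotient, so it correctly computes continuous cohomology, as in the proof of Lemma \ref{lem-Gal-coh-afg}. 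The case $J = \emptyset$ being trivial, I assume $J \neq \emptyset$ with a distinguished $j_0 \in J$ on which $\gamma_{j_0}$ acts quasi-unipotently.

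The crux is the singleton case $J = \{j_0\}$. After extending scalars to a finite extension of $\bQ_p$ (harmless by faithful flatness; this is where the quasi-unipotent hypothesis is used, to split $\bM$ according to generalized eigenvalues of $\gamma_{j_0}$), I decompose $\bM = \bM^\unip \oplus \bM^{\mathrm{nu}}$, with $\bM^\unip$ the maximal subspace on which $\gamma_{j_0}$ acts unipotently and $\bM^{\mathrm{nu}}$ carrying the generalized eigenvalues distinct from $1$. In the monomial basis $\{W^k\}$ of $\bM[W]^{\leq r-1}$, the operator $\gamma_{j_0}^{-1} - 1$ is triangular with respect to the $W$-degree grading: its degree-preserving diagonal part equals $\gamma_{j_0}^{-1} - 1$ on $\bM$, while the off-diagonal part is built from the strictly degree-lowering finite-difference operator $T : W^k \mapsto (W + n_{j_0})^k - W^k$. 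On $\bM^{\mathrm{nu}}[W]^{\leq r-1}$ the diagonal blocks are invertible, so the full operator is invertible and contributes nothing to cohomology at any stage $r$.

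On $\bM^\unip[W]^{\leq r-1}$, writing $N := \gamma_{j_0}^{-1} - 1|_{\bM^\unip}$ (nilpotent), the operator equals $(1 + N) \otimes (1 + T) - 1$. The computation is cleanest in the divided-power basis $\bigl\{\binom{W/n_{j_0}}{k}\bigr\}_{k \geq 0}$, under which $T$ becomes the standard shift $e_k \mapsto e_{k-1}$ (with $e_0 \mapsto 0$). A direct induction on the Jordan structure of $N$ then shows that once $r$ exceeds the nilpotency index of $N$, the kernel of $\gamma_{j_0}^{-1} - 1$ stabilizes to a $\bQ_p$-space canonically isomorphic to $\bM^\unip$, and the transition maps on cokernels become zero, so that $\varinjlim_r H^1$ vanishes (the key mechanism being that any polynomial in $\bQ_p[W]$ can be written as $g(W + n_{j_0}) - g(W)$ for some $g \in \bQ_p[W]$, by solving the leading-coefficient equation recursively using divisibility in $\bQ_p$). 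Combined with the contribution from $\bM^{\mathrm{nu}}$, this yields the singleton case: $H^0(\Gamma^{\{j_0\}}, \bM[W]) \cong \bM^\unip$ (canonically the maximal unipotent subsheaf), $H^1(\Gamma^{\{j_0\}}, \bM[W]) = 0$, and $H^i = 0$ for $i \geq 2$ since the Koszul complex has length one.

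For general $J$, the Hochschild--Serre spectral sequence for $\Gamma^J = \Gamma^{\{j_0\}} \times \Gamma^{J \setminus \{j_0\}}$ (equivalently, the Künneth decomposition of the Koszul complex into the tensor product $K^\bullet_{\{j_0\}} \otimes K^\bullet_{J \setminus \{j_0\}}$) gives $E_2^{p,q} = H^p\bigl(\Gamma^{J \setminus \{j_0\}}, H^q(\Gamma^{\{j_0\}}, \bM[W])\bigr) \Rightarrow H^{p+q}(\Gamma^J, \bM[W])$. The inner cohomology is finite-dimensional by the singleton case (and commutes with the direct limit in $r$, since cohomology of profinite groups commutes with filtered direct limits of discrete modules); the outer cohomology of $\widehat{\bZ}(1)^{|J| - 1}$ on a finite-dimensional $\bQ_p$-vector space is again finite-dimensional (computed by a finite Koszul complex). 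Hence $H^i(\Gamma^J, \bM[W])$ is a $\bQ_p$-local system for every $i$, and the claimed stabilization holds. The main technical obstacle is the bookkeeping in the unipotent-part computation, namely verifying uniformly in the Jordan data that the stable $H^0$ equals $\bM^\unip$ rather than some combinatorial enlargement, and that the cokernel transitions vanish in the limit; the divided-power basis together with induction on Jordan block size makes this tractable, but care is needed to handle the tensor of the nilpotent structure of $N$ with the degree-lowering structure of $T$.
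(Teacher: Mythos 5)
Your proposal is correct and follows essentially the same route as the paper: reduce to the singleton case, establish that case by the binomial/divided-power computation (your argument in the divided-power basis, with the kernel stabilizing once $r$ exceeds the nilpotency index and the cokernel transition maps eventually vanishing, is precisely the content of the cited \cite[\aLem 2.10]{Liu/Zhu:2017-rrhpl}), and then handle general $J$ by Hochschild--Serre. The one genuine difference is in how the quasi-unipotent singleton case is reduced to the unipotent one: the paper uses Hochschild--Serre for the open subgroup $[m](\Gamma^{\{j_0\}})$ and its finite quotient $\bZ / m \bZ$, whose higher cohomology with $\bQ_p$-coefficients vanishes, whereas you split $\bM$ into generalized-eigenspace summands for $\gamma_{j_0}$ (a splitting already available over $\bQ_p$ via the coprime cyclotomic factors, so the scalar extension you invoke is unnecessary) and kill the non-unipotent summand by invertibility of $\gamma_{j_0}^{-1} - 1$ along the $W$-degree filtration; both reductions are valid, and both in fact yield the slightly stronger conclusion that $H^1$ vanishes in the quasi-unipotent singleton case, not merely that it is finite-dimensional.
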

\begin{proof}
    By the Hochschild--Serre spectral sequence, by first considering the action of $[m](\Gamma^{\{ j_0 \}})$ for some $m$, and then the induced action of the finite quotient $\Gamma^{\{ j_0 \}} / [m](\Gamma^{\{ j_0 \}}) \cong \bZ / m \bZ$, and then the induced action of $\Gamma^J / \Gamma^{\{ j_0 \}} \cong \Gamma^{J - \{ j_0 \}}$, it suffices to treat the special case where $J = \{ j_0 \}$ is a singleton and $\gamma_{j_0}$ acts unipotently.  Then the lemma is reduced to its last statement, which follows from the same argument as in the proof of \cite[\aLem 2.10]{Liu/Zhu:2017-rrhpl}, by matching a basis of $\bJ_r$ with binomial monomials up to degree $r - 1$ in $W$.
\end{proof}

\begin{lem}\label{lem-nearby-ket-cov}
    Let $\bL$ be a $\bQ_p$-local system on $X_\ket$ such that $\bL|_{U_\et}$ has quasi-unipotent geometric monodromy along $D$ \Pth{as in Definition \ref{def-unip-qunip-monod}}.  For each integer $m \geq 1$, consider the canonical morphism $[m]: \bD \to \bD$ induced by sending the standard coordinate of $\bD$ to its $m$-th power, whose pullback under $f: X \to \bD$ is a finite Kummer \'etale cover $g_m: X_m \to X$, which induces $f_m: X_m \Mapn{g_m} X \Mapn{f} \bD$ by composition.  Let $D_m$ denote the reduced subspace of $X_m \times_X D$ \Pth{in the category of adic spaces}, which is canonical isomorphic to $D$ via the second projection, and let $U_m := f_m^{-1}(\bD^\times) = g_m^{-1}(U) = X_m - D_m$.  Then there exists $m_0 \geq 1$ such that $R\Psi^\qunip_f(\bL|_U) \cong R\Psi^\unip_{f_m}\bigl(g_m^{-1}(\bL)|_{U_m}\bigr)$ over $D_\et$, whenever $m_0 | m$.
\end{lem}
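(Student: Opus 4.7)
The plan is to proceed in three steps: (i) to choose $m_0$ so that, for $m_0 \mid m$, the pullback $g_m^{-1}(\bL)|_{U_m}$ has unipotent monodromy along each irreducible component of $D_m$; (ii) to establish, for every $n$ and $r$, a canonical projection-formula isomorphism between the $(n,r)$-term of $R\Psi^\qunip_f(\bL|_U)$ and the $r$-term of $R\Psi^\unip_{f_n}(g_n^{-1}(\bL)|_{U_n})$; and (iii) to show that the extra colimit direction collapses once the monodromy has been made unipotent.

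For step (i), I would observe that at a log geometric point $\widetilde{\xi}$ above a stratum $U_J^\partial$ (in the notation of Example \ref{ex-log-adic-sp-ncd-strict-cl-imm}), the Kummer \'etale fundamental group is $\Gamma^J \cong \bigl(\widehat{\bZ}(1)\bigr)^J$ by Corollary \ref{cor-fund-grp-log-pt}, and the pullback under $g_m$ corresponds to restricting each local monodromy of $\bL|_U$ to the finite-index subgroup determined by $[m]\colon \bD \to \bD$ and the ramification indices $n_j$. Choose $m_0$ so that this restriction kills the finite-order part of every monodromy operator appearing in $\bL|_U$; such an $m_0$ exists by the quasi-unipotence hypothesis together with local finiteness of the set of irreducible components of $D$ passing through any given point.

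For step (ii), since $g_n$ is finite Kummer \'etale, $g_{n,\ket,*}$ is exact by Proposition \ref{prop-ket-dir-im-ex}. Restricted to $U$, the morphism $g_n$ is the pullback of $[n]$, and flat base change gives $g_{n,U,\ket,*}(\bZ_p) \cong f^{-1}(\bK_n)|_U$; combined with the projection formula this yields
\[
    g_{n,U,\ket,*}\bigl(g_n^{-1}(\bL)|_{U_n} \otimes_{\bZ_p} f_n^{-1}(\bJ_r)\bigr) \cong \bL|_U \otimes_{\bZ_p} f^{-1}(\bK_n) \otimes_{\bZ_p} f^{-1}(\bJ_r).
\]
Applying $\jmath_{\ket,*}$ (with base change across the finite $g_n$), then $\imath^{\partial,-1}_\ket$ (using Lemma \ref{lem-cl-imm-ket-mor} applied to $g_n$ restricted over $D$), and finally $R\varepsilon^\partial_{\et,*}$ (using $\varepsilon^\partial \circ g_{n,D^\partial} = g_{n,D} \circ \varepsilon^\partial_n$, where $g_{n,D}$ is an isomorphism of underlying adic spaces), I would obtain the canonical isomorphism
\[
    R\varepsilon^\partial_{\et,*}\bigl(\imath^{\partial,-1}_\ket \jmath_{\ket,*}(\bL|_U \otimes f^{-1}(\bK_n) \otimes f^{-1}(\bJ_r))\bigr) \cong R\varepsilon^\partial_{n,\et,*}\bigl(\imath^{\partial,-1}_{n,\ket} \jmath_{n,\ket,*}(g_n^{-1}(\bL)|_{U_n} \otimes f_n^{-1}(\bJ_r))\bigr).
\]
Passing to the colimit in $r$ and $n$ then identifies $R\Psi^\qunip_f(\bL|_U)$ with $\varinjlim_n R\Psi^\unip_{f_n}(g_n^{-1}(\bL)|_{U_n})$.

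For step (iii), the same identity applied with $(X,\bL,f)$ replaced by $(X_m, g_m^{-1}(\bL), f_m)$, together with cofinality of the subsystem of $n$ divisible by $m$, yields
\[
    R\Psi^\qunip_f(\bL|_U) \cong R\Psi^\qunip_{f_m}(g_m^{-1}(\bL)|_{U_m}).
\]
It therefore remains to show, for $m_0 \mid m$, that $R\Psi^\qunip_{f_m}(g_m^{-1}(\bL)|_{U_m}) \cong R\Psi^\unip_{f_m}(g_m^{-1}(\bL)|_{U_m})$. I would verify this at stalks via Lemma \ref{lem-ket-to-et-stalk}: at a geometric point above a stratum of $D_m$, taking the colimit in $k$ decomposes the extra factor $f_m^{-1}(\bK_k)$ into finite-order characters $\chi$ of $\Gamma^J$; since $g_m^{-1}(\bL)$ has purely unipotent monodromy by step (i), Lemma \ref{lem-nearby-coh-stab} implies that the group cohomology $H^\bullet\bigl(\Gamma^J, (g_m^{-1}\bL)_{\widetilde{\xi}} \otimes \chi [W]\bigr)$ vanishes for nontrivial $\chi$, so only the trivial character contributes and the two stalks coincide. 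The main technical obstacle will be the bookkeeping in step (ii): combining base change for finite Kummer \'etale morphisms with base change for strict closed immersions across the Kummer \'etale and \'etale sites of $X$, $X_m$, $D^\partial$, $D^\partial_m$, and $D$, and in particular verifying that $\varepsilon^\partial \circ g_{n,D^\partial}$ and $g_{n,D} \circ \varepsilon^\partial_n$ agree as morphisms of sites (not merely as morphisms of log adic spaces).
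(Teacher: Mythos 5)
Your proposal is correct and follows essentially the same route as the paper's proof: the projection formula for the finite Kummer \'etale cover $g_n$ (via Proposition \ref{prop-ket-dir-im-ex} and the base-change Lemma \ref{lem-cl-imm-ket-mor}) converts the $\bK_n$-twist into a pushforward from $X_n$, the integer $m_0$ is supplied by quasi-unipotence, and the stabilization is checked stalkwise on the strata $U_J^\partial(\xi)$ by a Hochschild--Serre computation. The only substantive deviation is organizational---the paper proves the transition maps $H^i\bigl([m_0](\Gamma^J), \bL[W]\bigr) \to H^i\bigl([m](\Gamma^J), \bL[W]\bigr)$ are isomorphisms rather than killing the nontrivial isotypic pieces of $\bK_k$---and in your variant note that Lemma \ref{lem-nearby-coh-stab} as stated only yields the vanishing of $H^0$ for a nontrivial character $\chi$, so the vanishing in higher degrees must be extracted from the same Jordan-block computation (on $M \otimes \chi \otimes f_\xi^{-1}(\bJ_r^\partial)$ the operator $\gamma_j - 1$ has all eigenvalues equal to $\chi(\gamma_j) - 1 \neq 0$ and is therefore invertible).
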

\begin{proof}
    For each $m \geq 1$, let $D_m^\partial$ denote the adic space $D_m$ equipped with the log structure pulled back from $X_m$.  Let $\jmath_m: U_m \to X_m$, $\imath_m^\partial: D_m^\partial \to X_m$, and $\varepsilon^\partial_m: D_m^\partial \to D_m$ denote the canonical morphisms.  Then
    \[
        \bL|_U \otimes_{\bZ_p} f_\et^{-1}(\bK_m) \cong (g_m|_{U_m})_{\et, *}(\bL|_{U_m})
    \]
    over $U_\et$, and
    \[
    \begin{split}
        & R\varepsilon^\partial_{\et, *} \, \imath_\ket^{\partial, -1} \, \jmath_{\ket, *}\bigl(\bL|_U \otimes_{\bZ_p} f_\et^{-1}(\bK_m) \otimes_{\bZ_p} f_\et^{-1}(\bJ_r)\bigr) \\
        & \cong R\varepsilon^\partial_{m, \et, *} \, \imath_{m, \ket}^{\partial, -1}\bigl((\bL|_{X_m}) \otimes_{\bZ_p} f_\ket^{-1}(\overline{\bJ}_r)\bigr)
    \end{split}
    \]
    over $D_\et$, by Proposition \ref{prop-ket-dir-im-ex} and Lemma \ref{lem-cl-imm-ket-mor}.  Since $\bL$ has quasi-unipotent geometric monodromy along $D$, there exists some $m_0 \geq 1$ such that $\bL|_{U_m}$ has unipotent geometric monodromy along $D_m$, whenever $m_0 | m$.

    We claim that, when $m_0 | m$, the canonical morphism
    \[
        R\varepsilon^\partial_{m_0, \et, *} \, \imath_{m_0, \ket}^{\partial, -1}\bigl(\bL|_{X_{m_0}}) \otimes_{\bZ_p} f_\ket^{-1}(\overline{\bJ}_r)\bigr) \to R\varepsilon^\partial_{m, \et, *} \, \imath_{m, \ket}^{\partial, -1}\bigl(\bL|_{X_m}) \otimes_{\bZ_p} f_\ket^{-1}(\overline{\bJ}_r)\bigr)
    \]
    induced by $\bK_{m_0} \Em \bK_m$ is an isomorphism for all sufficiently large $r$ \Pth{depending on $m_0$ and $m$}.  Given this claim, for all $m$ divisible by $m_0$, we have
    \[
    \begin{split}
        & R\Psi^\qunip_f(\bL) \cong R\varepsilon^\partial_{\et, *} \, \imath^{\partial, -1}_\ket \, \jmath_{\ket, *}\bigl((\bL|_U) \otimes_{\bZ_p} f_\et^{-1}(\bK_m) \otimes_{\bZ_p} f_\et^{-1}(\bJ_r)\bigr) \\
        & \cong R\varepsilon^\partial_{m, \et, *} \, \imath^{\partial, -1}_{m, \ket} \, \jmath_{m, \ket, *}\bigl((\bL|_{U_m}) \otimes_{\bZ_p} f_{m, \et}^{-1}(\bJ_r)\bigr) \cong R\Psi^\unip_{f_m}\bigl(g_m^{-1}(\bL)\bigr)
    \end{split}
    \]
    for all sufficiently large $r$, and the lemma follows.

    It remains to verify the claim.  For this purpose, by \Refeq{\ref{eq-ex-ncd-monod-fund-grp}}, we may pullback to $U_J^\partial(\xi)$, for all nonempty $J \subset I$ and all geometric point $\xi$ of $U_J^\partial$.  By Lemmas \ref{lem-cl-imm-ket-mor} and \ref{lem-nearby-coh-stab}, and by \Refeq{\ref{eq-ex-ncd-monod-fund-grp}} again, it suffices to show that the canonical morphism
    \[
        H^i\bigl([m_0](\Gamma^J), \bL|_{U_J^\partial(\xi)}[W]\bigr) \to H^i\bigl([m](\Gamma^J), \bL|_{U_J^\partial(\xi)}[W]\bigr)
    \]
    is an isomorphism.  By the Hochschild--Serre spectral sequence, we may first compare the cohomology of $[m_0](\Gamma^{\{ j_0 \}})$ and $[m](\Gamma^{\{ j_0 \}})$, for some $j_0 \in J$, which is concentrated in degree zero and gives the full $\bL|_{U_J^\partial(\xi)}$ in both cases, because $\gamma_j^{m_0}$ and $\gamma_j^m$ act unipotently, by assumption.  Then we compare the cohomology groups of $[m_0](\Gamma^{J - \{ j_0 \}})$ and $[m](\Gamma^{J - \{ j_0 \}})$, which coincide as they are related by a Hochschild--Serre spectral sequence in terms of the cohomology of $\bQ_p$-modules with unipotent actions of the finite group $[m_0](\Gamma^{J - \{ j_0 \}}) / [m](\Gamma^{J - \{ j_0 \}})$, and the claim follows.
\end{proof}

\begin{prop}\label{prop-nearby}
    Let $\bL$ be a $\bQ_p$-local system on $X_\ket$ such that $\bL|_{U_\et}$ has quasi-unipotent geometric monodromy along $D$ \Pth{as in Definition \ref{def-unip-qunip-monod}}.  Consider any integer $m \geq 1$ such that $\bL|_{U_m}$ has unipotent geometric monodromy along $D_m$, where $U_m$ and $D_m$ are as in Lemma \ref{lem-nearby-ket-cov}.  Then, for each nonempty $J \subset I$ and each $i \geq 0$, and for each geometric point $\xi$ of $U_J^\partial$, we have
    \[
        R^i\Psi_f^\unip(\bL|_U)|_{U_J^\partial(\xi)} \cong H^i\bigl(\Gamma^J, \bL|_{U_J^\partial(\xi)}[W]\bigr)
    \]
    and
    \[
        R^i\Psi_f^\qunip(\bL|_U)|_{U_J^\partial(\xi)} \cong H^i\bigl([m](\Gamma^J), \bL|_{U_J^\partial(\xi)}[W]\bigr)
    \]
    as $\bQ_p$-local systems on $U_J(\xi)_\et$.

    Consequently, if $D = \bigl(f^{-1}(0)\bigr)_\red$ is smooth over $k$ and if $\bL|_U$ has quasi-unipotent monodromy along $D$, then $R\Psi_f^\unip(\bL|_U)$ is concentrated in degree zero and can be identified with the subsheaf $\bL|_{D^\partial}^{\Utext{unip}}$ of $\bL|_{D^\partial}$ whose pullback to $D^\partial(\xi)$ is the maximal subsheaf on which $\pi_1^\ket(D^\partial(\xi), \widetilde{\xi}) \cong \widehat{\bZ}(1)$ \Pth{as in Example \ref{ex-fund-grp-log-pt-mor}} acts unipotently, for each log geometric point $\widetilde{\xi}$ of $D^\partial$ above each geometric point $\xi$ of $D$; and $R\Psi_f^\qunip(\bL|_U)$ \Pth{which is the same $R\Psi_f^\unip(\bL|_U)$ as above when $\bL|_U$ has unipotent monodromy along $D$} is also concentrated in degree zero and can be identified with the whole $\bL|_{D^\partial}$.
\end{prop}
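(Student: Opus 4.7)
The plan is to reduce the quasi-unipotent statement to the unipotent one via Lemma \ref{lem-nearby-ket-cov}, and then to establish the unipotent formula by a stalkwise computation combining Theorem \ref{thm-purity} with the continuous-group-cohomology interpretation of $R\varepsilon^\partial_{\et, *}$ from Lemma \ref{lem-ket-to-et-stalk}.  For the reduction, choose $m \geq 1$ so that $\bL|_{U_m}$ has unipotent monodromy along $D_m$; by Lemma \ref{lem-nearby-ket-cov}, $R\Psi^\qunip_f(\bL|_U) \cong R\Psi^\unip_{f_m}\bigl(g_m^{-1}(\bL)|_{U_m}\bigr)$ on $D_\et$.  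Once the unipotent formula is established, applying it on $X_m$ at a point above $\xi$ identifies the right-hand side with $H^i\bigl([m](\Gamma^J), \bL|_{U_J^\partial(\xi)}[W]\bigr)$, after matching the fundamental group of the relevant stratum in $X_m$ with the $m$-th power subgroup $[m](\Gamma^J) \subset \Gamma^J$, as is done in the proof of Lemma \ref{lem-nearby-ket-cov} by means of a Hochschild--Serre argument.

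For the unipotent case, fix a geometric point $\xi$ of $U_J^\partial$ with a log geometric lift $\widetilde{\xi}$.  The first move is to replace $\jmath_{\ket, *}(\bL|_U \otimes f^{-1}(\bJ_r))$ by the restriction to $D^\partial$ of a Kummer \'etale local system on $X_\ket$: since $\bL \otimes f^{-1}(\bJ_r)$ is already defined on $X_\ket$ and extends $\bL|_U \otimes f^{-1}(\bJ_r)$, Theorem \ref{thm-purity} (together with Corollary \ref{cor-purity-lisse} to treat $\bZ_p$- and hence $\bQ_p$-coefficients by inverting $p$) gives $\jmath_{\ket, *}\bigl(\bL|_U \otimes f^{-1}(\bJ_r)\bigr) \cong \bL \otimes f^{-1}(\bJ_r)$, with vanishing higher direct images.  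Applying $\imath^{\partial, -1}_\ket$, then $R^i\varepsilon^\partial_{\et, *}$, and specialising at $\xi$ via Lemma \ref{lem-ket-to-et-stalk} yields $H^i\bigl(\Gamma^J, (\bL \otimes f^{-1}(\bJ_r))_{\widetilde{\xi}}\bigr)$.  The paragraph immediately preceding the proposition already identifies this stalk, $\Gamma^J$-equivariantly, with $\bL|_{U_J^\partial(\xi)} \otimes f_\xi^{-1}(\bJ_r^\partial) \cong \bL|_{U_J^\partial(\xi)}[W]^{\leq r-1}$.  Taking the colimit in $r$ and invoking the stabilisation in Lemma \ref{lem-nearby-coh-stab} delivers $H^i\bigl(\Gamma^J, \bL|_{U_J^\partial(\xi)}[W]\bigr)$, as desired.

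The consequence for smooth $D$ follows by specialising to $J = \{j_0\}$ a singleton, so that $\Gamma^J \cong \widehat{\bZ}(1)$.  For $R\Psi^\qunip$ the action of $[m](\Gamma^{\{j_0\}})$ is unipotent by the choice of $m$, so the last sentence of Lemma \ref{lem-nearby-coh-stab} gives concentration in degree zero with value the full $\bL|_{D^\partial}$.  For $R\Psi^\unip$ under only a quasi-unipotent hypothesis, Lemma \ref{lem-nearby-coh-stab} already identifies $H^0$ with the maximal unipotent subsheaf $\bL|_{D^\partial}^{\Utext{unip}}$; to also force $H^1 = 0$, I would decompose $\bL|_{U_J^\partial(\xi)}$ into the generalised eigenspaces of the semisimple part of the monodromy of $\gamma_{j_0}$, which have roots-of-unity eigenvalues by the quasi-unipotence, and observe that on each summand $\bM_\zeta$ with $\zeta \neq 1$ the operator $\gamma_{j_0}$ acts on $\bM_\zeta[W]^{\leq r-1}$ by an upper triangular matrix with diagonal entries $\zeta$, so that $\gamma_{j_0} - 1$ is invertible and both $H^0$ and $H^1$ vanish on that summand.

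The main technical point I expect to require care is the interchange of $\varinjlim_r$ with the stalk functor and with $R^i\varepsilon^\partial_{\et, *}$, which is not automatic for continuous cohomology of profinite groups with $\bQ_p$-coefficients.  This is, however, tamed by Lemma \ref{lem-nearby-coh-stab}: the cohomologies $H^i\bigl(\Gamma^J, \bL|_{U_J^\partial(\xi)}[W]^{\leq r-1}\bigr)$ become eventually independent of $r$, and the transition maps $\bJ_r \Em \bJ_{r+1}$ match tautologically with the inclusions of the $W$-filtration under the combinatorial identification between a standard basis of $\bJ_r$ and degree-$\leq r-1$ binomial polynomials in $W$, as in the proof of \cite[\aLem 2.10]{Liu/Zhu:2017-rrhpl}.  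With this stabilisation in hand, the colimit passes through the relevant functors without difficulty.
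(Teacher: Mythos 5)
Your proposal is correct and follows essentially the same route as the paper, whose proof simply combines Lemmas \ref{lem-cl-imm-ket-mor}, \ref{lem-nearby-coh-stab}, and \ref{lem-nearby-ket-cov} (with purity and the base-change Lemma \ref{lem-cl-imm-ket-mor} doing the work you assign to ``specialising at $\xi$'' via Lemma \ref{lem-ket-to-et-stalk}).  Your generalised-eigenspace argument for the vanishing of $H^1\bigl(\Gamma^{\{j_0\}}, \bM[W]\bigr)$ under merely quasi-unipotent monodromy is a welcome explicit supplement, since Lemma \ref{lem-nearby-coh-stab} as stated only records that vanishing in the unipotent case, and the paper's one-line proof leaves this step implicit.
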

\begin{proof}
    Combine Lemmas \ref{lem-cl-imm-ket-mor}, \ref{lem-nearby-coh-stab}, and \ref{lem-nearby-ket-cov}.
\end{proof}

\numberwithin{equation}{section}

\appendix

\section{Kiehl's property for coherent sheaves}\label{app-Kiehl}

In this appendix, by adapting the gluing argument in \cite[\aSec 2.7]{Kedlaya/Liu:2015-RPH} and by using \cite[\aThm 2.5]{Huber:1994-gfsra}, we establish Kiehl's property for coherent sheaves on \Pth{possibly nonanalytic} noetherian adic spaces.  By combining this with results in \cite[\aSec 8.2]{Kedlaya/Liu:2015-RPH} and \cite[\aSecs 1.3--1.4]{Kedlaya:2019-AWS}, we also state some versions of Tate's sheaf property and Kiehl's gluing property for adic spaces that are either locally noetherian, or analytic and \emph{stably adic}.  \Pth{We will review the definition below.}

Recall the following definition from \cite[\aDef 1.3.7]{Kedlaya/Liu:2015-RPH}:
\begin{defn}\label{def-gluing-dat}
    By a \emph{gluing diagram}, we will mean a commuting diagram of ring homomorphisms
    \[
        \xymatrix{ {R} \ar[r] \ar[d] & {R_1} \ar[d] \\
        {R_2} \ar[r] & {R_{12}} }
    \]
    such that the $R$-module sequence
    \[
        0 \to R \to R_1 \oplus R_2 \to R_{12} \to 0,
    \]
    in which the last nontrivial arrow is the difference between the given homomorphisms, is exact.  By a \emph{gluing datum} over this diagram, we mean a datum consisting of modules $M_1$, $M_2$, and $M_{12}$ over $R_1$, $R_2$, and $R_{12}$, respectively, equipped with isomorphisms $\psi_1: M_1 \otimes_{R_1} R_{12} \Mi M_{12}$ and $\psi_2: M_2 \otimes_{R_2} R_{12} \Mi M_{12}$.  We say such a gluing datum is \emph{finite} if the modules are finite over the respective rings.
\end{defn}
Given a gluing datum as above, let $M := \ker(\psi_1 - \psi_2: M_1 \oplus M_2 \to M_{12})$.  There are natural morphisms $M \to M_1$ and $M \to M_2$ of $R$-modules, which induce maps $M \otimes_R R_1 \to M_1$ and $M \otimes_R R_2 \to M_2$, respectively.

The following is \cite[\aLem 1.3.8]{Kedlaya/Liu:2015-RPH}:
\begin{lem}\label{lem-Kiehl-surj}
    Consider a finite gluing datum for which $M \otimes_R R_1 \to M_1$ is surjective.  Then the following are true.
    \begin{enumerate}
        \item The morphism $\psi_1 - \psi_2: M_1 \oplus M_2 \to M_{12}$ is surjective.

        \item The morphism $M \otimes_R R_2 \to M_2$ is also surjective.

        \item There exists a finitely generated $R$-submodule $M_0$ of $M$ such that, for $i = 1, 2$, the morphism $M_0 \otimes_R R_i \to M_i$ is surjective.
    \end{enumerate}
\end{lem}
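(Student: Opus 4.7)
The plan is to exploit the surjection $R_1 \oplus R_2 \twoheadrightarrow R_{12}$ supplied by the gluing diagram (the difference map), together with the defining relation $\psi_1(\alpha \otimes 1) = \psi_2(\beta \otimes 1)$ for elements $(\alpha, \beta) \in M \subset M_1 \oplus M_2$, to rewrite arbitrary elements of $M_{12}$ and $M_2$ in the desired forms. The hypothesis provides enough such $(\alpha, \beta)$ to hit any prescribed element of $M_1$, and the entire argument transports this abundance across $\psi_1$ and $\psi_2$.

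For (1), given $z \in M_{12}$, I will use that $\psi_1$ is an isomorphism to write $z = \psi_1(x)$ with $x = \sum_i m_{1,i} \otimes c_i$ in $M_1 \otimes_{R_1} R_{12}$. By the hypothesis, each $m_{1,i}$ expands as $\sum_j \alpha_{ij} s_{ij}$ with $(\alpha_{ij}, \beta_{ij}) \in M$ and $s_{ij} \in R_1$; absorbing the $s_{ij}$ into the second tensor factor yields $x = \sum_{i,j} \alpha_{ij} \otimes c'_{ij}$ for suitable $c'_{ij} \in R_{12}$. Decomposing $c'_{ij} = r_{1,ij} - r_{2,ij}$ via the gluing surjection and invoking $\psi_1(\alpha_{ij} \otimes 1) = \psi_2(\beta_{ij} \otimes 1)$ together with the $R_{12}$-linearity of both $\psi_k$, each summand $\psi_1(\alpha_{ij} \otimes c'_{ij})$ rearranges into $\psi_1(r_{1,ij} \alpha_{ij}) - \psi_2(r_{2,ij} \beta_{ij})$; summing these displays $z$ as $(\psi_1 - \psi_2)\bigl(\sum_{i,j} r_{1,ij} \alpha_{ij},\, \sum_{i,j} r_{2,ij} \beta_{ij}\bigr)$.

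For (2), I will apply the argument of (1) to $z := \psi_2(m_2 \otimes 1)$ for a given $m_2 \in M_2$. The resulting identity $\psi_2(m_2) = \psi_1(\alpha) - \psi_2(\beta)$ rewrites as $\psi_1(\alpha) = \psi_2(m_2 + \beta)$, so $(\alpha, m_2 + \beta) \in M$ and therefore $m_2 + \beta$ lies in the image of the second projection $M \to M_2$. The correction $\beta = \sum_{i,j} r_{2,ij} \beta_{ij}$ is an $R_2$-linear combination of elements $\beta_{ij}$ each of which is the second coordinate of an element of $M$; hence $\beta$ lies in the image of $M \otimes_R R_2 \to M_2$, and subtraction shows $m_2$ does too.

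For (3), since $M_1$ is finitely generated as an $R_1$-module, I will pick finitely many generators, apply the hypothesis to expand each as an $R_1$-combination involving finitely many pairs $(\alpha_{kj}, \beta_{kj}) \in M$, and do the symmetric thing for $M_2$ using (2) to produce finitely many pairs $(\gamma_{lm}, \delta_{lm}) \in M$. The $R$-submodule $M_0 \subset M$ generated by this finite collection is finitely generated over $R$ and, by construction, the images of the $\alpha_{kj}$'s (resp.\ $\delta_{lm}$'s) under $M_0 \otimes_R R_1 \to M_1$ (resp.\ $M_0 \otimes_R R_2 \to M_2$) contain a generating set, forcing surjectivity. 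The main obstacle is notational rather than conceptual: the map $\psi_1 - \psi_2$ is only $R$-linear, and the argument must cleanly swap between the $R_1$-structure on $M_1$ and the $R_2$-structure on $M_2$ through the bridging relation $\psi_1(\alpha) = \psi_2(\beta)$ for $(\alpha, \beta) \in M$---this is precisely why the hypothesis allowing us to absorb $R_1$-scalars into elements of $M$ is essential for both (1) and (2).
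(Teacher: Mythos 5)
Your proof is correct, and it follows essentially the same route as the source: the paper does not prove this lemma itself but cites \cite[\aLem 1.3.8]{Kedlaya/Liu:2015-RPH}, whose argument is exactly this one --- expand elements of $M_1$ via the hypothesis, split scalars in $R_{12}$ through the surjection $R_1 \oplus R_2 \to R_{12}$, and transport across the relation $\psi_1(\alpha \otimes 1) = \psi_2(\beta \otimes 1)$ for $(\alpha, \beta) \in M$. All three parts check out, including the observation in (2) that $\beta$ lies in the image of $M \otimes_R R_2 \to M_2$ because it is an $R_2$-combination of second coordinates of elements of $M$.
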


\begin{lem}\label{lem-Kiehl-descent}
    In the above setting, suppose in addition that $R_i$ is noetherian and that $R_i \to R_{12}$ is flat, for $i = 1, 2$.  Suppose that, for every finite gluing datum, the map $M \otimes_R R_1 \to M_1$ is surjective.  Then, for any finite gluing datum, $M$ is a finitely presented $R$-module, and $M \otimes_R R_1 \to M_1$ and $M \otimes_R R_2 \to M_2$ are bijective.
\end{lem}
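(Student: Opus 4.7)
The plan is to construct a finite free presentation of $M$ by running the hypothesis twice---once to control $M$ itself and once to control the relation module---and then to deduce the bijectivity $M \otimes_R R_i \cong M_i$ by right exactness. To begin, apply the hypothesis to $(M_1, M_2, M_{12})$ together with Lemma \ref{lem-Kiehl-surj}(2)--(3) to find a finitely generated $R$-submodule $M_0 \subseteq M$ with $M_0 \otimes_R R_i \to M_i$ surjective for $i = 1, 2$. Choose a surjection from a finite free $R$-module $F := R^m \to M_0$; then $F \otimes_R R_i \to M_i$ is surjective with kernel $N_i$, which is finitely generated over $R_i$ by noetherianness. Using flatness of $R_i \to R_{12}$, both $N_1 \otimes_{R_1} R_{12}$ and $N_2 \otimes_{R_2} R_{12}$ are canonically identified with $N_{12} := \ker(F \otimes_R R_{12} \to M_{12})$, giving a finite gluing datum $(N_1, N_2, N_{12})$; set $N := \ker(N_1 \oplus N_2 \to N_{12})$.

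The crux is to identify $N$ with $\ker(F \to M)$ and to deduce that $F \to M$ is surjective, so that $M = M_0$ is finitely generated. Since $F = R^m = \ker(R_1^m \oplus R_2^m \to R_{12}^m)$ by tensoring the gluing diagram with $R^m$, an element of $F$ lies in $N$ iff its images in $F \otimes_R R_i$ both lie in $N_i$, iff it maps to zero in both $M_1$ and $M_2$, iff it lies in $\ker(F \to M)$ (using $M \hookrightarrow M_1 \oplus M_2$). Applying the hypothesis to $(N_1, N_2, N_{12})$ shows $N \otimes_R R_i \to N_i$ is surjective, whence Lemma \ref{lem-Kiehl-surj}(1) gives that $N_1 \oplus N_2 \to N_{12}$ is surjective. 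To lift any $m \in M$, pick preimages $f_i \in F \otimes_R R_i$ of its components $m_i \in M_i$; the difference $f_1 - f_2 \in R_{12}^m$ lies in $N_{12}$, so by the surjectivity of $N_1 \oplus N_2 \to N_{12}$ we may write $f_1 - f_2 = n_1 - n_2$ with $n_i \in N_i$, and then $(f_1 - n_1, f_2 - n_2)$ lies in $F$ and maps to $m$.

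The main obstacle is to show that $L := \ker(F \to M) = N$ is itself finitely generated over $R$, since $R$ is not assumed to be noetherian. The plan is to iterate the argument one level deeper: apply the hypothesis and Lemma \ref{lem-Kiehl-surj}(3) to the datum $(N_1, N_2, N_{12})$ to find a finitely generated $R$-submodule $L_0 \subseteq L$ with $L_0 \otimes_R R_i \to N_i$ surjective; pick a finite free cover $F' \to L_0$; form the next level of relations $N'_i := \ker(F' \otimes_R R_i \to N_i)$, which (again by noetherianness and flatness) assembles into a new finite gluing datum $(N'_1, N'_2, N'_{12})$ to which the hypothesis applies. The same lifting-with-correction argument, with $(N_1, N_2, N_{12})$ playing the role of $(M_1, M_2, M_{12})$ and $F'$ playing the role of $F$, then shows $F' \to L$ is surjective, so $L$ is finitely generated and $0 \to L \to F \to M \to 0$ is a finite presentation of $M$.

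The bijectivity $M \otimes_R R_i \cong M_i$ will then follow immediately from this presentation: applying $\,\cdot\, \otimes_R R_i$ and using right exactness yields an isomorphism $M \otimes_R R_i \cong (F \otimes_R R_i)/\mathrm{image}(L \otimes_R R_i)$, and this image equals $N_i$ by the surjectivity of $L \otimes_R R_i \to N_i$ established in the previous paragraph, so $M \otimes_R R_i \cong (F \otimes_R R_i)/N_i = M_i$.
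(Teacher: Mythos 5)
Your proof is correct and follows essentially the same route as the paper's: choose $M_0$ via Lemma \ref{lem-Kiehl-surj}, take a finite free cover $F$, form the relation gluing datum $(N_1, N_2, N_{12})$ using noetherianness and flatness, apply the hypothesis to it to deduce surjectivity of $F \to M$, and iterate once more to get finite generation of the relation module. The only cosmetic differences are that you spell out the diagram chase and the iteration that the paper leaves implicit, and you obtain the bijectivity of $M \otimes_R R_i \to M_i$ by a direct cokernel computation where the paper invokes the five lemma; both arguments rest on the same surjectivity of $N \otimes_R R_i \to N_i$.
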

\begin{proof}
    Let $M_0$ be as in Lemma \ref{lem-Kiehl-surj}.  Choose a surjection $F \to M_0$ of $R$-modules, with $F$ finite free.  Let $F_1 := F \otimes_R R_1$, $F_2 := F \otimes_R R_2$, and $F_{12} := F \otimes_R R_{12}$.  Let $N := \ker(F \to M)$, $N_1 := \ker(F_1 \to M_1)$, $N_2 := \ker(F_2 \to M_2)$, and $N_{12} := \ker(F_{12} \to M_{12})$.  By Lemma \ref{lem-Kiehl-surj}, we have a commutative diagram
    \begin{equation}\label{eq-lem-Kiehl-descent}
        \xymatrix{ & {0} \ar[d] & {0} \ar[d] & {0} \ar[d] & \\
        {0} \ar[r] & {N} \ar[r] \ar[d] & {N_1 \oplus N_2} \ar[r] \ar[d] & {N_{12}} \ar@{..>}[r] \ar[d] & {0} \\
        {0} \ar[r] & {F} \ar[r] \ar[d] & {F_1 \oplus F_2} \ar[r] \ar[d] & {F_{12}} \ar[r] \ar[d] & {0} \\
        {0} \ar[r] & {M} \ar[r] \ar@{..>}[d] & {M_1 \oplus M_2} \ar[r] \ar[d] & {M_{12}} \ar[d] \ar[r] & {0} \\
        & {0} & {0} & {0} }
    \end{equation}
    with exact rows and columns, excluding the dotted arrows.  Since $R_{12}$ is flat over $R_i$, the sequence
    \[
        0 \to N_i \otimes_{R_i} R_{12} \to F_{12} \to M_{12} \to 0
    \]
    is exact, and hence $N_i \otimes_{R_i} R_{12} \cong N_{12}$.  By hypothesis, $R_i$ is noetherian, and so $N_i$ is finite over $R_i$.  Consequently, $N_1$, $N_2$, and $N_{12}$ form a finite gluing datum as well.  By Lemma \ref{lem-Kiehl-surj} again, the dotted horizontal arrow in \Refeq{\ref{eq-lem-Kiehl-descent}} is surjective.  By diagram chasing, the dotted vertical arrow in \Refeq{\ref{eq-lem-Kiehl-descent}} is also surjective; that is, we may add the dotted arrows to \Refeq{\ref{eq-lem-Kiehl-descent}} while preserving exactness of the rows and columns.  In particular, $M$ is a finitely generated $R$-module.  It follows that $N$ is finitely generated.  This implies that $M$ is finitely presented.

    For $i = 1, 2$, we obtain a commutative diagram
    \[
        \xymatrix{ & {N \otimes_R R_i} \ar[r] \ar[d] & {F_i} \ar[r] \ar@{=}[d] & {M \otimes_R R_i} \ar[r] \ar[d] & {0} \\
        {0} \ar[r] & {N_i} \ar[r] & {F_i} \ar[r] & {M_i} \ar[r] & {0} }
    \]
    with exact rows---the first one is derived from the left column of \Refeq{\ref{eq-lem-Kiehl-descent}} by tensoring with $R_i$ over $R$, while the second one is derived from the middle column of \Refeq{\ref{eq-lem-Kiehl-descent}}.  Since the left vertical arrow is surjective, by the five lemma, the right vertical arrow is injective.  It follows that the map $M \otimes_R R_i \to M_i$ is a bijection, as desired.
\end{proof}

\begin{defn}\label{def-Huber-mor-strict-adic}
    We call a homomorphism of Huber rings $f: A \to B$ \emph{strict adic} if, for one \Pth{and hence every} choice of an ideal of definition $I \subset A$, the image $f(I)$ is an ideal of definition of $B$.  It is clear that a strict adic morphism is adic.
\end{defn}

The following is modeled on \cite[\aLem 2.7.2]{Kedlaya/Liu:2015-RPH}.
\begin{lem}\label{lem-Cartan-fac}
    Let $R_1 \to S$ and $R_2 \to S$ be homomorphisms of complete Huber rings such that their sum $\psi: R_1 \oplus R_2 \to S$ is strict adic.  Then, for any ideal of definition $I_S$ of $S$, there exists some integer $l \geq 1$ such that, for every $n > 0$, every $U \in \GL_n(S)$ with $U - 1 \in \M_n(I^l_S)$ is of the form $\psi(U_1) \, \psi(U_2)$ for some $U_i \in \GL_n(R_i)$, for $i = 1, 2$.
\end{lem}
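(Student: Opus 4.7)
The plan is to adapt the classical Cartan-type successive approximation argument, as in the proof of \cite[\aLem 2.7.2]{Kedlaya/Liu:2015-RPH}.

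First I would normalize the choice of ideal of definition. Fix ideals of definition $I_i \subset R_i$ for $i = 1, 2$; by the strict adic hypothesis applied to the ideal of definition $I_1 \oplus I_2$ of $R_1 \oplus R_2$, the subset $J := \psi_1(I_1) + \psi_2(I_2) \subset S$ is already an ideal of definition of $S$. Since powers of $J$ and powers of any other ideal of definition of $S$ are cofinal in the topology of $S$, it suffices to exhibit some integer $l > 0$ such that every $U \in \GL_n(S)$ with $U - 1 \in \M_n(J^l)$ admits a factorization $U = \psi(U_1) \psi(U_2)$ with $U_i \in \GL_n(R_i)$; the general statement for a given $I_S$ then follows by adjusting $l$ upward.

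The main step is a single approximation, to be iterated. Given $U = 1 + V$ with $V \in \M_n(J^l)$, I would construct matrices $A \in \M_n(I_1^{m})$ and $B \in \M_n(I_2^{m})$ for $m := \lceil (l+1)/2 \rceil$ satisfying
\[
    V \equiv \psi_1(A) + \psi_2(B) \pmod{\M_n(J^{l+1})}.
\]
Setting $U_1^{(1)} := 1 + A$ and $U_2^{(1)} := 1 + B$, the identity
\[
    \psi(U_1^{(1)}) \cdot \psi(U_2^{(1)}) = 1 + \psi_1(A) + \psi_2(B) + \psi_1(A) \psi_2(B),
\]
together with the inclusion $\psi_1(A) \psi_2(B) \in \M_n(J^{2m}) \subset \M_n(J^{l+1})$, shows that $\psi(U_1^{(1)})^{-1} \cdot U \cdot \psi(U_2^{(1)})^{-1} = 1 + V'$ for some $V' \in \M_n(J^{l+1})$. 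Iterating this construction produces sequences of matrices $U_1^{(k)} \in \GL_n(R_1)$ and $U_2^{(k)} \in \GL_n(R_2)$ with $U_i^{(k)} - 1$ lying in progressively higher powers of $\M_n(I_i)$; the infinite products $\prod_k U_i^{(k)}$ converge in $\GL_n(R_i)$ by the completeness of $R_i$, yielding the desired factorization $U = \psi(U_1) \psi(U_2)$.

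The main obstacle is the approximate lifting: showing that every $V \in \M_n(J^l)$ can be written as $\psi_1(A) + \psi_2(B)$ modulo $\M_n(J^{l+1})$, with $A, B$ of the required adic smallness. The subtlety is that each $\psi_i(I_i)$ is only an additive subgroup of $S$ and not an $S$-ideal, so the expansion $J^l = (\psi_1(I_1) + \psi_2(I_2))^l$ produces products of the form $\psi_1(a) \cdot \psi_2(b)$ in which neither individual factor alone provides a lift through $\psi_1$ or $\psi_2$. My plan is to group each such product according to whether at least $m$ of its factors come from $\psi_1(I_1)$ or from $\psi_2(I_2)$; the majority factors then give either $\psi_1(a')$ with $a' \in I_1^m$ or $\psi_2(b')$ with $b' \in I_2^m$, and the remaining minority-factor contribution, which lies in $J^{l-m}$, can be absorbed modulo $J^{l+1}$ by a further application of the decomposition $J = \psi_1(I_1) + \psi_2(I_2)$ to each $S$-coefficient. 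Carrying out this combinatorial bookkeeping carefully, in parallel with the iteration described above, constitutes the bulk of the argument.
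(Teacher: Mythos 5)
Your overall strategy---approximate $V = U - 1$ by $\psi_1(A) + \psi_2(B)$, correct $U$ multiplicatively on both sides, and iterate using completeness of $R_1$ and $R_2$---is the same as the paper's, and your reduction to the ideal of definition $J = \psi_1(I_1) + \psi_2(I_2)$ matches the paper's reduction to $I'_S$. The gap is in the step you yourself single out as the main obstacle, and the resolution you sketch does not work. You need $J^l \subseteq \psi_1(I_1^m) + \psi_2(I_2^m) + J^{l+1}$ with $m$ growing with $l$. After grouping, a monomial of $J^l$ has the form $\psi_1(A)\,\psi_2(B)$ with $A \in I_1^s$, $B \in I_2^t$, $s + t = l$; when both $s, t \geq 1$ this is a genuine cross product, and ``a further application of the decomposition $J = \psi_1(I_1) + \psi_2(I_2)$'' to it only uses that $J$ is an ideal of $S$, hence only rewrites $\psi_1(A)\,\psi_2(B)$ as $\psi_1(a') + \psi_2(b')$ with $a' \in I_1$ and $b' \in I_2$---i.e., at level one, losing all the gain from the exponent $l$. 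The subgroups $\psi_1(I_1^s)$ and $\psi_2(I_2^t)$ are not ideals of $S$, and nothing in the hypotheses converts their product into an element of $\psi_1(I_1^m) + \psi_2(I_2^m)$ plus an error in $J^{l+1}$; so the congruence on which your whole iteration rests is not established. (There is also an off-by-one: with $m = \lceil (l+1)/2 \rceil$ the pigeonhole only gives $\max(s, t) \geq \lceil l/2 \rceil$, which is smaller than $m$ when $l$ is even.)

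The paper does not expand powers of $J$ at all. It applies the strict adic hypothesis to the ideal of definition $I_1^m \oplus I_2^m$ of $R_1 \oplus R_2$ for each $m$: by the ``for one, and hence every, choice'' clause of Definition \ref{def-Huber-mor-strict-adic}, the image $I'^m_S := \psi(I_1^m \oplus I_2^m) = \psi_1(I_1^m) + \psi_2(I_2^m)$ is itself an ideal of definition of $S$, and an element of $\M_n(I'^m_S)$ lifts to a pair $(X, Y) \in \M_n(I_1^m) \times \M_n(I_2^m)$ tautologically---there is no approximate congruence to prove. The induction is then run on the decreasing chain of ideals $I'^m_S$ rather than on the powers of the single ideal $J$: from $U - 1 \in \M_n(I'^m_S)$ one passes to $U' = \psi(1 - X)\, U\, \psi(1 - Y)$, whose difference from $1$ is a sum of products of at least two factors drawn from $\{\psi_1(X), \psi_2(Y)\} \subset \M_n(I'^m_S)$ and is placed in $\M_n(I'^{2m}_S)$, and one iterates. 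The same cross term $\psi_1(X)\,\psi_2(Y)$ that defeats your expansion reappears there, but only once per iteration and inside the square of the ideal $I'^m_S$, rather than scattered through a full multinomial expansion of $J^l$. If you rework your argument, replace the combinatorial approximate-lifting claim by this direct appeal to the hypothesis; that is precisely where the strength of ``strict adic'' (as opposed to merely ``adic'') is used.
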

\begin{proof}
    Since $\psi$ is strict adic, for any ideals of definition $I_1 \subset R_1$ and $I_2 \subset R_2$, we have an ideal of definition $I'_S := \psi(I_1 \oplus I_2) \subset S$.  Choose $l > 0$ such that $I_S^l \subset I'_S$.  Then it suffices to show that every $U \in \GL_n(S)$ with $U - 1 \in \M_n(I'_S)$ is of the form $\psi(U_1) \, \psi(U_2)$ for some $U_i \in \GL_n(R_i)$, for $i = 1, 2$.  Given $U \in \GL_n(S)$ with $U - 1\in \M_n(I'^m_S)$ for some $m > 0$, put $V = U - 1$.  By assumption, we may lift $V$ to a pair $(X, Y) \in \M_n(I^{m}_1) \times \M_n(I_2^{m})$.  Then it is straightforward that the matrix $U' = \psi(1 - X) \, U \, \psi(1 - Y)$ satisfies $U' - 1 \in \M_n(I'^{2m}_S)$.  Hence, we may construct the desired matrices by iterating this construction.
\end{proof}

The following is modeled on \cite[\aLem 2.7.4]{Kedlaya/Liu:2015-RPH}.
\begin{lem}\label{lem-Kiehl-surj-str-adic}
    In the context of Definition \ref{def-gluing-dat} and the paragraph following it, suppose in addition that
    \begin{enumerate}
        \item the Huber rings $R_1$, $R_2$ and $R_{12}$ are complete;

        \item $R_1 \oplus R_2 \to R_{12}$ is strict adic; and

        \item the map $R_2 \to R_{12}$ has a dense image.
    \end{enumerate}
    Then, for $i = 1, 2$, the natural map $M \otimes_R R_i \to M_i$ is surjective.
\end{lem}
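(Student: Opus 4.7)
The plan is to adapt the strategy of \cite[\aLem 2.7.4]{Kedlaya/Liu:2015-RPH} to the strict adic setting at hand.  By Lemma \ref{lem-Kiehl-surj}\Refenum{2}, it suffices to prove surjectivity of $M \otimes_R R_1 \to M_1$, since the analogous statement for $i = 2$ then follows.  Since $M_1$ is a finitely generated module over the complete Huber ring $R_1$ and $I_1 \subset R_1$ is an ideal of definition, Nakayama's lemma applies \Pth{the matrix of a putative relation $m_j = \sum i_{jk} m_k$ with $i_{jk} \in I_1$ is topologically nilpotent, so $1$ minus it is invertible}, and it is enough to show that the image of $M \to M_1$ surjects onto $M_1 / I_1 M_1$, \ie, that for each $m_1 \in M_1$ there exists a pair $(a, b) \in M$ with $a \equiv m_1 \pmod{I_1 M_1}$.

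Fix ideals of definition $I_j \subset R_j$, so that $I_{12} := I_1 R_{12} + I_2 R_{12}$ is an ideal of definition of $R_{12}$ by strict adicity.  By density of $R_2 \to R_{12}$ combined with finite generation of $M_2$, the induced map $M_2 \to M_{12}$ has dense image.  I would construct the desired pair $(a, b)$ as the limit of a sequence of approximations $(a^{(k)}, b^{(k)}) \in M_1 \times M_2$, initializing with $a^{(0)} := m_1$ and some $b^{(0)} \in M_2$ whose image approximates $\psi_1(m_1 \otimes 1)$ modulo $I_{12} M_{12}$.  At step $k$, the error $\epsilon_k := \psi_1(a^{(k)} \otimes 1) - \psi_2(b^{(k)} \otimes 1)$ should lie in $I_{12}^k M_{12}$; one then chooses corrections $\Delta a^{(k)} \in M_1$ and $\Delta b^{(k)} \in M_2$ of controlled size so that $\psi_1(\Delta a^{(k)} \otimes 1) - \psi_2(\Delta b^{(k)} \otimes 1) \equiv \epsilon_k \pmod{I_{12}^{k+1} M_{12}}$, and updates $a^{(k+1)} := a^{(k)} - \Delta a^{(k)}$ and $b^{(k+1)} := b^{(k)} + \Delta b^{(k)}$.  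If each $\Delta a^{(k)} \in I_1^k M_1$ and each $\Delta b^{(k)} \in I_2^k M_2$, then by completeness of $M_1$ and $M_2$ \Pth{finite modules over complete Huber rings} the series $a := m_1 - \sum_k \Delta a^{(k)}$ and $b := b^{(0)} + \sum_k \Delta b^{(k)}$ converge, yielding $(a, b) \in M$ with $a \equiv m_1 \pmod{I_1 M_1}$ as required.

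The central obstacle is the decomposition step: given $\epsilon_k \in I_{12}^k M_{12}$, to produce $\Delta a^{(k)} \in I_1^k M_1$ and $\Delta b^{(k)} \in I_2^k M_2$ whose images differ from $\epsilon_k$ by something in $I_{12}^{k+1} M_{12}$.  The difficulty is that $I_{12}^k = \sum_{i+j=k} I_1^i I_2^j R_{12}$ contains cross terms mixing contributions from $I_1$ and $I_2$, and density of $R_2 \to R_{12}$ alone does not suffice to split them correctly into pieces of the required sizes on the two sides.  This is precisely where Cartan's Lemma \ref{lem-Cartan-fac} enters: by choosing surjective presentations $R_j^n \to M_j$, lifting $\epsilon_k$ to a vector in $R_{12}^n$, and using density to bring the relevant transition datum close to the identity, the Cartan factorization $U = \psi(U_1)\psi(U_2)$ provides the required splitting of the error into $R_1$- and $R_2$-contributions, producing $\Delta a^{(k)}$ and $\Delta b^{(k)}$ within the appropriate $I_1^k M_1$ and $I_2^k M_2$.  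Once this technical step is verified, the iterative construction converges, Nakayama applies, and the surjectivity follows.
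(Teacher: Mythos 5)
Your overall architecture is genuinely different from the paper's, and the step you yourself flag as ``the central obstacle'' is exactly where the argument breaks down: the appeal to Lemma \ref{lem-Cartan-fac} does not repair it. Lemma \ref{lem-Cartan-fac} is a \emph{multiplicative} statement --- it factors an invertible matrix $U$ with $U - 1 \in \M_n(I_S^l)$ as $\psi(U_1)\,\psi(U_2)$ --- whereas your iteration needs an \emph{additive} one: given $\epsilon_k \in I_{12}^k M_{12}$, produce $\Delta a^{(k)} \in I_1^k M_1$ and $\Delta b^{(k)} \in I_2^k M_2$ with $\psi_1(\Delta a^{(k)} \otimes 1) - \psi_2(\Delta b^{(k)} \otimes 1) \equiv \epsilon_k$ modulo $I_{12}^{k+1} M_{12}$. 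There is no invertible matrix near the identity in sight at this step: $\epsilon_k$ lifts to a \emph{vector} in $R_{12}^n$, not to an element of $\GL_n(R_{12})$, so there is nothing to feed into the factorization. Meanwhile the additive splitting is precisely where the cross terms bite: for $k \geq 2$, $I_{12}^k M_{12}$ contains elements of the form $f_1(u) f_2(v) m$ with $u \in I_1^i$, $v \in I_2^j$, $i + j = k$, $0 < i, j$, and neither the density of $R_2 \to R_{12}$ \Pth{which controls approximation only up to high powers of $I_{12}$, not of $I_1$ or $I_2$ separately} nor strict adicity lets you rewrite such an element as an $I_1^k M_1$-contribution minus an $I_2^k M_2$-contribution up to higher order. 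As written, the decomposition step is asserted rather than proved, and I do not see how to prove it; so the proof has a genuine gap. \Pth{The Nakayama reduction itself is fine, since elements of $I_1$ are topologically nilpotent in the complete ring $R_1$, and the initialization of $b^{(0)}$ is fine since $M_2 \otimes_{R_2} R_{12} \Mi M_{12}$ and $R_2 \to R_{12}$ has dense image; but note also that the convergence of $\sum_k \Delta a^{(k)}$ in the finitely generated module $M_1$ would itself need care for general complete Huber rings.}

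The way Lemma \ref{lem-Cartan-fac} actually enters --- both here and in \cite[\aLem 2.7.4]{Kedlaya/Liu:2015-RPH}, whose proof is likewise a one-shot matrix argument rather than a successive approximation --- is multiplicatively, on the \emph{transition matrices} between generating sets, and it produces exact elements of $M$ rather than approximations of a single $m_1$. Choose generating sets $\{ m_{j, 1} \}_{j = 1}^n$ of $M_1$ and $\{ m_{j, 2} \}_{j = 1}^n$ of $M_2$ of the same cardinality, and let $A, B \in \M_n(R_{12})$ express the images of the $m_{j, 1}$ in $M_{12}$ in terms of the images of the $m_{i, 2}$ and vice versa. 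Density of $R_2 \to R_{12}$ provides $B' \in \M_n(R_2)$ with $B' - B$ small enough that $1 + A(B' - B)$ satisfies the hypothesis of Lemma \ref{lem-Cartan-fac}, hence factors as $C_1 C_2^{-1}$ with $C_i \in \GL_n(R_i)$; one then checks directly that the pairs $x_j := \bigl(\sum_i (C_1)_{ij} \, m_{i, 1}, \sum_i (B' C_2)_{ij} \, m_{i, 2}\bigr)$ satisfy $\psi_1 = \psi_2$ exactly and hence lie in $M$, while the invertibility of $C_1$ and $C_2$ guarantees that their components still generate $M_1$ and $M_2$. This yields both surjectivities at once, with no Nakayama and no limit process; I would rebuild the proof along these lines.
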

\begin{proof}
    Choose sets of generators $\{ m_{1, 1}, \ldots, m_{n, 1} \}$ and $\{ m_{1, 2}, \dots, m_{n, 2} \}$ of $M_1$ and $M_2$, respectively, of the same cardinality.  Then there exist $A, B \in \M_n(R_{12})$ such that $\psi_2(m_{j, 1}) = \sum_i \, A_{ij} \, \psi_1(m_{i, 2})$ and $\psi_1(m_{j, 2}) = \sum_i \, B_{ij} \, \psi_2(m_{i, 1})$, for all $j$.  Since $R_2 \to R_{12}$ has a dense image, by Lemma \ref{lem-Cartan-fac}, there exists $B' \in \M_n(R_2)$ such that $1 + A (B' - B) = C_1 C_2^{-1}$ for some $C_i \in \GL_n(R_i)$, for $i = 1, 2$.  For $j = 1, \ldots, n$, let
    \[
        x_j := (x_{j, 1}, x_{j, 2}) = \Bigl(\sum_i \, (C_1)_{ij} \, m_{i, 1}, \sum_i \, (B' C_2)_{ij} \, m_{i, 2}\Bigr) \in M_1 \times M_2.
    \]
    Then $x_j \in M$, because
    \[
    \begin{split}
        \psi_1(x_{j, 1}) - \psi_2(x_{j, 2}) & = \sum_i \, (C_1 - A B' C_2)_{ij} \, \psi_1(m_{i, 1}) \\ & = \sum_i \, \bigl((1 - A B) C_2\bigr){}_{ij} \, \psi_1(m_{i, 2}) = 0.
    \end{split}
    \]
    For $i = 1, 2$, since $C_i \in \GL_n(R_i)$, we see that $\{ x_{j, i} \}_{j = 1}^n$ generates $M_i$ over $R_i$ as well.  Thus, $M \otimes_R R_i \to M_i$ is surjective, as desired.
\end{proof}

\begin{thm}\label{thm-Kiehl}
    Let $X = \Spa(R, R^+)$ be a noetherian affinoid adic space.  The categories of coherent sheaves on $X$ and finitely generated $R$-modules are equivalent via the global sections functor.
\end{thm}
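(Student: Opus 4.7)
The plan is to construct the quasi-inverse to the global sections functor by sending a finitely generated $R$-module $M$ to the presheaf $\widetilde{M}$ on rational subsets defined by $\widetilde{M}\bigl(\Spa(S, S^+)\bigr) := M \otimes_R S$. The main task is then to verify Tate's sheaf property and Kiehl's acyclicity: that $\widetilde{M}$ is a sheaf with vanishing higher \v{C}ech cohomology on any rational covering. Once this is established, for any coherent sheaf $\cF$ the canonical morphism $\widetilde{\Gamma(X, \cF)} \to \cF$ will be an isomorphism by working locally on a rational covering where $\cF$ comes from a finite module, so that fully faithfulness and essential surjectivity follow.

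To establish the sheaf and acyclicity property, I would reduce (by standard refinement arguments for finite rational coverings, as in \cite[\aSec 2.7]{Kedlaya/Liu:2015-RPH}) to the case of a two-element covering $X = U_1 \cup U_2$ by rational subsets, and in fact to simple Laurent coverings where $U_1 = \{|f| \leq 1\}$ and $U_2 = \{|f| \geq 1\}$ for some $f \in R$. Writing $U_i = \Spa(R_i, R_i^+)$ and $U_1 \cap U_2 = \Spa(R_{12}, R_{12}^+)$, the input from \cite[\aThm 2.5]{Huber:1994-gfsra} yields the exact sequence $0 \to R \to R_1 \oplus R_2 \to R_{12} \to 0$, which is precisely a gluing diagram in the sense of Definition \ref{def-gluing-dat}. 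Then I would apply Lemma \ref{lem-Kiehl-descent}: the noetherianness of $R_i$ and the flatness of $R_i \to R_{12}$ are standard for rational localizations of noetherian Huber rings, and the surjectivity hypothesis on $M \otimes_R R_1 \to M_1$ is provided by Lemma \ref{lem-Kiehl-surj-str-adic}, once we verify that $R_1 \oplus R_2 \to R_{12}$ is strict adic and that $R_2 \to R_{12}$ has dense image. Strict adicness follows from the explicit description of the topologies on the rational localizations, and density in the Laurent case is clear since every element of $R_{12} = R\Talg{f, f^{-1}}$ is the sum of an element in $R\Talg{f^{-1}} = R_2$ and a polynomial in $f$ with coefficients tending to zero.

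The main obstacle will be the reduction from general finite rational coverings to Laurent-type two-element coverings in a way that preserves the strict adic and density conditions needed for Lemma \ref{lem-Kiehl-surj-str-adic}, without the Tate/analytic hypothesis used in \cite[\aSec 2.7]{Kedlaya/Liu:2015-RPH}. The difficulty is that in the possibly nonanalytic noetherian setting we cannot freely invoke spectral-norm or uniformization arguments, so the density of $R_2 \to R_{12}$ for a general rational covering must be deduced directly from the description of rational localizations as $I$-adic completions, for an ideal of definition $I$ of some ring of definition of $R$. Once this reduction is in place, the gluing/descent machinery of Lemmas \ref{lem-Kiehl-surj}--\ref{lem-Kiehl-surj-str-adic} applies uniformly to yield both the sheaf property and the identification $M \otimes_R R_i \Mi M_i$; combined with the exactness of the \v{C}ech complex for $\widetilde{M}$, this produces the desired equivalence of categories, with higher cohomology vanishing as a byproduct of Lemma \ref{lem-Kiehl-descent} applied iteratively to refinements of the covering.
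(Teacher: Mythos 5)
Your proposal follows essentially the same route as the paper: reduce to simple Laurent coverings (the paper cites \cite[\aLem 2.4.20]{Kedlaya/Liu:2015-RPH} for this purely formal step, which needs no Tate hypothesis, so the ``main obstacle'' you identify is not actually one), obtain the gluing diagram from \cite[\aThm 2.5]{Huber:1994-gfsra}, verify flatness, strict adicness, and density of $R_2 \to R_{12}$ directly in the Laurent case, and conclude with Lemmas \ref{lem-Kiehl-surj-str-adic} and \ref{lem-Kiehl-descent}. The only cosmetic difference is that you also set out to reprove the sheaf/acyclicity property of $\widetilde{M}$, which the paper takes as already given by Huber's theorem.
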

\begin{proof}
    By \cite[\aLem 2.4.20]{Kedlaya/Liu:2015-RPH}, it suffices to verify Kiehl's gluing property for any simple Laurent covering $\{ \Spa(R_i, R_i^+) \to X \}_{i = 1, 2}$.  In this case, let us write $\Spa(R_{12}, R_{12}^+) = \Spa(R_1, R_1^+) \times_X \Spa(R_2, R_2^+)$, with all Huber pairs completed by our convention.  By the noetherian hypothesis, and by \cite[\aThm 2.5]{Huber:1994-gfsra}, $R$, $R_i$, and $R_{12}$ form a gluing diagram.  Also, $R_i \to R_{12}$ is flat with dense image, for $i = 1, 2$.  Hence, we can finish the proof by applying Lemmas \ref{lem-Kiehl-descent} and \ref{lem-Kiehl-surj-str-adic}.
\end{proof}

Thus, we have the following version of Tate's sheaf property and Kiehl's gluing property \Pth{see \cite[\aDef 2.7.6]{Kedlaya/Liu:2015-RPH}} over certain affinoid adic spaces:
\begin{prop}\label{prop-coh-descent}
    Let $X = \Spa(R, R^+)$ be a noetherian \Pth{\resp analytic} affinoid adic space, and let $M$ be a finite \Pth{\resp finite projective} $R$-module.  Let $\widetilde{M}$ denote the presheaf on $X$ defined by setting $\widetilde{M}(U) = M \otimes_R \cO_X(U)$, for each open subset $U \subset X$.  Then the following are true:
    \begin{enumerate}
        \item\label{prop-coh-descent-1}  The presheaf $\widetilde{M}$ is a sheaf.  Moreover, the sheaf $\widetilde{M}$ is \emph{acyclic} in the sense that $H^i(U, \widetilde{M}) = 0$ for every rational subset $U\subset X$ and every $i > 0$.

        \item\label{prop-coh-descent-2}  The functor $M \mapsto \widetilde{M}$ defines an equivalence of categories from the category of finite \Pth{\resp finite projective} $R$-modules to the category of coherent sheaves \Pth{\resp vector bundles} on $X$, with a quasi-inverse given by $\cF \mapsto \cF(X)$.
    \end{enumerate}
\end{prop}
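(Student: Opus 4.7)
The plan is to prove the proposition by treating the noetherian and analytic cases in parallel, invoking Theorem \ref{thm-Kiehl} for the noetherian case and citing the corresponding results from \cite[\aSec 8.2]{Kedlaya/Liu:2015-RPH} and \cite[\aSecs 1.3--1.4]{Kedlaya:2019-AWS} for the analytic case, and then assembling the statements about sheafiness, acyclicity, and the equivalence of categories.

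For \Refenum{\ref{prop-coh-descent-1}}, I would first observe that in both cases the completed tensor product $\widetilde{M}(U) = M \widehat{\otimes}_R \cO_X(U)$ is well behaved: in the noetherian case it coincides with the ordinary tensor product $M \otimes_R \cO_X(U)$ because $M$ is finitely presented over the noetherian ring $R$, while in the analytic case the finite projective hypothesis on $M$ ensures that $\widetilde{M}(U)$ is finite projective over $\cO_X(U)$. To establish the sheaf property, I would reduce to the exactness of the augmented \v{C}ech complex associated with a simple Laurent covering $\{ \Spa(R_i, R_i^+) \to X \}_{i=1,2}$, and combine this with the gluing diagram $0 \to R \to R_1 \oplus R_2 \to R_{12} \to 0$ provided by \cite[\aThm 2.5]{Huber:1994-gfsra} in the noetherian case and by the cited results in the analytic case. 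Tensoring this exact sequence with $M$ (using the flatness of $R_i$ and $R_{12}$ over $R$, and the compatibility of $\widehat{\otimes}$ with these operations in the relevant settings) preserves exactness. The acyclicity in higher degrees follows from Tate's classical inductive argument, refining a general rational covering by successive simple Laurent coverings and using the acyclicity of $\cO_X$ itself on rational subsets.

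For \Refenum{\ref{prop-coh-descent-2}}, the noetherian case is Theorem \ref{thm-Kiehl} directly. In the analytic case, the functor $M \mapsto \widetilde{M}$ is fully faithful because $\widetilde{M}(X) \cong M$ by \Refenum{\ref{prop-coh-descent-1}}. For essential surjectivity, any vector bundle on $X$ is by definition locally of the form $\widetilde{M_i}$ for finite projective $M_i$ over some finite rational covering, and the corresponding gluing data descend to a finite projective $R$-module via the formalism in Lemmas \ref{lem-Kiehl-descent} and \ref{lem-Kiehl-surj-str-adic} (or the analytic analogue in the cited references), whose associated sheaf recovers the vector bundle.

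The principal obstacle is confirming that the formalism of completed tensor products, flatness, and gluing in \cite{Kedlaya/Liu:2015-RPH} and \cite{Kedlaya:2019-AWS} is indeed applicable here, particularly under the standing hypothesis of \'etale sheafiness (Convention \ref{conv-et-sheafy}) for analytic adic spaces that need not be strongly noetherian. Fortunately, those references were designed precisely to cover this generality, so the verification reduces to identifying the right hypotheses; the technical heart of the argument in the noetherian case is already captured in Theorem \ref{thm-Kiehl}, and the analytic case follows the same scheme with completed tensor products replacing ordinary ones.
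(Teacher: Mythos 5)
Your proposal is correct and follows essentially the same route as the paper: the noetherian case of \Refenum{\ref{prop-coh-descent-1}} is exactly \cite[\aThm 2.5]{Huber:1994-gfsra}, the noetherian case of \Refenum{\ref{prop-coh-descent-2}} is Theorem \ref{thm-Kiehl}, and the analytic case is quoted from \cite[\aThms 1.4.2 and 1.3.4]{Kedlaya:2019-AWS}. The paper's proof is just this two-line citation, so your additional elaboration of the \v{C}ech-theoretic mechanics is harmless but not needed.
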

\begin{proof}
    When $X$ is noetherian, \Refenum{\ref{prop-coh-descent-1}} is \cite[\aThm 2.5]{Huber:1994-gfsra}, while \Refenum{\ref{prop-coh-descent-2}} is Theorem \ref{thm-Kiehl}.  When $X$ is analytic, these follow from \cite[\aThms 1.4.2 and 1.3.4]{Kedlaya:2019-AWS}.
\end{proof}

Recall that an adic space $X$ is called \emph{stably adic} \Pth{as in \cite[\aDef 8.2.19]{Kedlaya/Liu:2015-RPH}} if $X_\et$ is a site with a \emph{stable basis} $\cB$; i.e., a basis stable under fiber products such that, for any morphism $Y' \to Y$ in $X_\et$ that is either finite \'etale or a rational localization, if $Y \in \cB$, then $Y' \in \cB$ as well.  We know $X$ is stably adic if $X$ is locally noetherian \Pth{see \cite[(1.1.1) and \aSec 1.7]{Huber:1996-ERA}} or a perfectoid space \Pth{see \cite[\aLec 7]{Scholze/Weinstein:2020-BLG}}.

By \cite[\aProp 8.2.20]{Kedlaya/Liu:2015-RPH}, we have the following analogue of Proposition \ref{prop-coh-descent} for the \'etale topology:
\begin{prop}\label{prop-et-coh-descent}
    Let $X = \Spa(R, R^+)$ be a noetherian \Pth{\resp analytic stably adic} affinoid adic space.  Let $\cB$ be a stable basis of $X_\et$ as above, which exists because $X$ is stably adic.  Let $M$ be a finite \Pth{\resp finite projective} $R$-module.  Let $\widetilde{M}$ denote the presheaf on $X_\et$ defined by setting $\widetilde{M}(U) = M \otimes_R \cO_X(U)$, for each $U \in X_\et$.  Then the following are true:
    \begin{enumerate}
        \item The presheaf $\widetilde{M}$ is a sheaf.  Moreover, $\widetilde{M}$ is acyclic on $\cB$; \ie, for every $Y \in \cB$, we have $H^0(Y, \widetilde{M}) = \widetilde{M}(U)$ and $H^i(Y, \widetilde{M}) = 0$, for all $i > 0$.

        \item The functor $M \mapsto \widetilde{M}$ defines an equivalence of categories from the category of finite \Pth{\resp finite projective} $R$-modules to the category of coherent sheaves \Pth{\resp vector bundles} on $X_\et$, with a quasi-inverse given by $\cF \mapsto \cF(X)$.
    \end{enumerate}
\end{prop}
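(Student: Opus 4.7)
The plan is to bootstrap from the analytic version (Proposition \ref{prop-coh-descent}) using faithfully flat descent along finite \'etale morphisms, so that the only genuinely new input needed beyond \cite[\aProp 8.2.20]{Kedlaya/Liu:2015-RPH} is verifying that the basis $\cB$ is rich enough to compute everything. First I would recall that, by definition of \'etale sheafiness, every object of $X_\et$ admits a covering by objects of $\cB$, and that within $\cB$ itself morphisms are generated by rational localizations and finite \'etale maps. So to check the sheaf property of $\widetilde M$ on $X_\et$ it suffices to check it for such generating coverings between objects of $\cB$, and for this I can treat the two types separately.

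For rational coverings of an affinoid $Y=\Spa(S,S^+)\in\cB$, Proposition \ref{prop-coh-descent} directly gives both the sheaf property and the vanishing of higher \v Cech cohomology of $\widetilde M|_{Y_\an}$, because $M\ho_R S$ is again finite (resp.\@ finite projective) over $S$, and because either $S$ is noetherian or $S$ is analytic \'etale sheafy (both properties being inherited by rational localizations). For finite \'etale covers $Y'\to Y$ in $\cB$, with $Y'=\Spa(S',S'^+)$, the map $S\to S'$ is faithfully flat and finite; hence for any finite (resp.\@ finite projective) $S$-module $N$ the augmented \v Cech complex
\[
0\to N\to N\otimes_SS'\to N\otimes_S(S'\ho_SS')\to\cdots
\]
is exact, and its terms are finite (resp.\@ finite projective) over the respective rings. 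Combining these two cases via the usual spectral-sequence argument (first passing to a refinement of a given \'etale covering by compositions of rational localizations and finite \'etale covers, then running a Cartan--Leray-type argument) yields that $\widetilde M$ is a sheaf and that $H^i(Y,\widetilde M)=0$ for all $i>0$ and all $Y\in\cB$, which is the acyclicity statement.

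Having this, full faithfulness of $M\mapsto\widetilde M$ is immediate from $\widetilde M(X)=M$ together with $\Hom_R(M,N)\cong\Hom_{\cO_{X_\et}}(\widetilde M,\widetilde N)$, the latter identity being a straightforward consequence of the sheaf property and acyclicity. For essential surjectivity, given a coherent sheaf (resp.\@ vector bundle) $\cF$ on $X_\et$, I would set $M:=\cF(X)$ and construct a canonical morphism $\widetilde M\to\cF$; to show this is an isomorphism, I restrict to a cover of $X$ by objects $Y\in\cB$ on which $\cF|_Y$ is the pullback of a finite (resp.\@ finite projective) $\cO_Y(Y)$-module---this is guaranteed by Proposition \ref{prop-coh-descent} applied to each $Y$, because by definition a coherent sheaf on $X_\et$ is locally in the essential image of pullback from the analytic site of some object of $\cB$---and then use the already established full faithfulness, together with \'etale descent of modules along $Y\to X$, to conclude.

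The main obstacle is really just the finite \'etale step: one must know that a finite (resp.\@ finite projective) module descends along a faithfully flat finite \'etale map $S\to S'$ in the two allowed categories of Huber rings, and that the \v Cech cohomology vanishes. In the noetherian case this is classical, and in the analytic \'etale sheafy case it is precisely the content of \cite[\aProp 8.2.20]{Kedlaya/Liu:2015-RPH} (cf.\@ also \cite[\aSec 8.2]{Kedlaya/Liu:2015-RPH} for the vector bundle version and \cite[\aSecs 1.3--1.4]{Kedlaya:2019-AWS} for the requisite Tate--Kiehl input used above), so the proof reduces to citing these and assembling the pieces as described.
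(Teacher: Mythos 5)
Your proposal is correct and follows essentially the same route as the paper: the paper's proof is simply a citation of \cite[\aProp 8.2.20]{Kedlaya/Liu:2015-RPH} together with Proposition \ref{prop-coh-descent}, and your argument unfolds exactly what that citation contains, namely refining \'etale coverings by rational coverings (handled by Proposition \ref{prop-coh-descent}) and finite \'etale coverings (handled by classical faithfully flat descent), then assembling via Cartan--Leray. Your write-up is somewhat more self-contained, correctly noting that the noetherian nonanalytic case needs the appendix's Theorem \ref{thm-Kiehl} rather than the literature, but the underlying decomposition is the intended one.
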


\begin{cor}\label{cor-et-sheafy}
    For any $X$ in Proposition \ref{prop-et-coh-descent}, the presheaf $\cO_{X_\et}$ is a sheaf.  Therefore, $X$ is \'etale sheafy.
\end{cor}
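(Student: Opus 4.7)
The plan is to deduce the corollary immediately from Proposition \ref{prop-et-coh-descent} by specializing it to the tautological case $M = R$. In both cases covered by the hypothesis on $X$---noetherian, or analytic \'etale sheafy---the module $R$ over itself trivially satisfies the relevant finiteness requirement: it is finite in the noetherian case and finite projective (in fact, free of rank one) in the analytic case. Hence Proposition \ref{prop-et-coh-descent} applies with $M = R$, yielding a sheaf $\widetilde{R}$ on $X_\et$.

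The second step is to identify $\widetilde{R}$ with $\cO_{X_\et}$. By the construction of the presheaf in the proposition, and since completed tensor product with $R$ over itself is the identity functor on complete Huber $R$-modules, we have
\[
    \widetilde{R}(U) = R \ho_R \cO_X(U) \cong \cO_X(U) = \cO_U(U) = \cO_{X_\et}(U)
\]
for every $U \in X_\et$, and these identifications are clearly compatible with the restriction maps along \'etale morphisms $U' \to U$. Thus $\widetilde{R}$ is canonically isomorphic to $\cO_{X_\et}$ as a presheaf on $X_\et$, and the sheafiness of the former (established by the proposition on the whole \'etale site, not merely on the basis $\cB$) transfers to the latter.

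There is essentially no obstacle: all of the content has already been packaged into Proposition \ref{prop-et-coh-descent}, and the only point that warrants mention is that the proposition produces a sheaf on $X_\et$ itself rather than merely on a basis, which is exactly what is needed to conclude sheafiness of $\cO_{X_\et}$ on the full \'etale site.
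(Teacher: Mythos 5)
Your proof is correct and is precisely the intended deduction: the paper states the corollary without proof because it follows immediately from Proposition \ref{prop-et-coh-descent}\Refenum{1} applied to $M = R$, together with the identification $R \ho_R \cO_X(U) \cong \cO_X(U)$ that you spell out. Nothing further is needed.
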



\providecommand{\bysame}{\leavevmode\hbox to3em{\hrulefill}\thinspace}
\providecommand{\MR}{\relax\ifhmode\unskip\space\fi MR }
\providecommand{\MRhref}[2]{%
  \href{http://www.ams.org/mathscinet-getitem?mr=#1}{#2}
}
\providecommand{\href}[2]{#2}

\end{document}